\numberwithin{equation}{section}
\def\div{\mathrm{div}}
\def\eps{\epsilon}
\def\ze{\zeta}
\def\varep{\varepsilon}
\def\D{\mathcal{D}}
\def\phii{\widetilde{\varphi}}
\def\div{{\,\mathrm{div}\,}}
\def\curl{{\,\mbox{curl}\,}}
\def\hat{\widehat}
\def\bar{\overline}
\def\R{{\mathbb R}}
\def\e{{\varepsilon}}
\def\M{{\mathcal M}}
\newtheorem{theorem}{Theorem}[section]
\newtheorem{lemma}[theorem]{Lemma}
\newtheorem{corollary}[theorem]{Corollary}
\newtheorem{proposition}[theorem]{Proposition}
\newtheorem{definition}[theorem]{Definition}
\newtheorem{remark}[theorem]{Remark}
\begin{document}

\title[Gravity-capillary water waves in 3D]{Global solutions of the gravity-capillary water-wave system in three dimensions}

\author{Y. Deng}
\address{Princeton University}
\email{yudeng@math.princeton.edu}

\author{A. D. Ionescu}
\address{Princeton University}
\email{aionescu@math.princeton.edu}

\author{B. Pausader}
\address{Brown University}
\email{benoit.pausader@math.brown.edu}

\author{F. Pusateri}
\address{Princeton University}
\email{fabiop@math.princeton.edu}

\thanks{Y. Deng was supported in part by a Jacobus Fellowship from Princeton University. A. D. Ionescu was supported in part by NSF grant DMS-1265818 and NSF-FRG grant DMS-1463753. 
B. Pausader was supported in part by NSF grant DMS-1362940, and a Sloan fellowship. F. Pusateri was supported in part by NSF grant DMS-1265875.}

\begin{abstract}

{\small
In this paper we prove global regularity for the full water waves system in 3 dimensions for small data, under the influence of both gravity and surface tension. This problem presents essential difficulties which were absent in all of the earlier global regularity results for other water wave models.

To construct global solutions we use a combination of energy estimates and matching dispersive estimates. There is a significant new difficulty in proving energy estimates in our problem, namely the combination of slow pointwise decay of solutions (no better than $|t|^{-5/6}$) and the presence of a large, codimension 1, set of quadratic time-resonances. To deal with such a situation we propose here a new mechanism, which exploits a non-degeneracy property of the time-resonant hypersurfaces and some special structure of the quadratic part of the nonlinearity, connected to the conserved energy of the system.

The dispersive estimates rely on analysis of the Duhamel formula in the Fourier space. The main contributions come from the set of space-time resonances, which is a large set of dimension 1. To control the corresponding bilinear interactions we use Harmonic Analysis techniques, such as orthogonality arguments in the Fourier space and atomic decompositions of functions. Most importantly, we construct and use a refined norm which is well adapted to the geometry of the problem.}
\end{abstract}

\maketitle

\setcounter{tocdepth}{1}
\tableofcontents
\setcounter{secnumdepth}{3}

\section{Introduction}\label{Intro}

The study of the motion of water waves, such as those on the surface of the ocean, is a classical question, 
and one of the main problems in fluid dynamics. 
The origins of water waves theory can be traced back\footnote{We refer to the review paper of Craik \cite{Craik}, 
and references therein,
for more details about these early studies.}
at least to the work of Laplace and Lagrange, Cauchy \cite{CauchyMemoirs} and Poisson, 
and then Russel, Green and Airy, among others.
Classical studies include those by Stokes \cite{Stokes}, Levi-Civita \cite{LeviC}
and Struik \cite{Struik} on progressing waves, 
the instability analysis of Taylor \cite{Taylor},
the works on solitary waves by Friedrichs and Hyers \cite{FriedHy}, 
and on steady waves by Gerber \cite{Gerber}.

The main questions one can ask about water waves are the typical ones for any physical evolution problem:
the local in time well-posedness of the Cauchy problem, the regularity of solutions and the formation of singularities,
the existence of special solutions (such as solitary waves) and their stability, and the global existence and long-time behavior of solutions.
There is a vast body of literature dedicated to all of these aspects.
As it would be impossible to give exhaustive references, we will mostly mention works that are connected to our results,
and refer to various books and review papers for others.

Our main interest here is the existence of global solutions for the initial value problem.
In particular, we will consider the full irrotational water waves problem
for a three dimensional fluid occupying a region of infinite depth and infinite extent below the graph of a function.
This is a model for the motion of waves on the surface of the deep ocean.
We will consider such dynamics under the influence of the gravitational force and surface tension acting on particles at the interface.
Our main result is the existence of global classical solutions
for this problem, for sufficiently small initial data.


\subsection{Free boundary Euler equations and water waves}\label{secWW}
The evolution of an inviscid perfect fluid that occupies a domain $\Omega_t \subset \R^n$, for $n \geq 2$, at time $t \in \R$,
is described by the free boundary incompressible Euler equations.
If $v$ and $p$ denote respectively the velocity and the pressure of the fluid (with constant density equal to $1$)
at time $t$ and position $x \in \Omega_t$, these equations are
\begin{equation}\label{E}
(\partial_t + v \cdot \nabla) v = - \nabla p - g e_n,\qquad \nabla \cdot v = 0,\qquad x \in \Omega_t,
\end{equation}
where $g$ is the gravitational constant. The first equation in \eqref{E} is the conservation of momentum equation, while the second is the incompressibility condition.
The free surface $S_t := \partial \Omega_t$ moves with the normal component of the velocity according to the kinematic boundary condition
 \begin{equation}\label{BC1}
\partial_t + v \cdot \nabla  \,\, \mbox{is tangent to} \,\, {\bigcup}_t S_t \subset \R^{n+1}_{x,t}.
\end{equation}
The pressure on the interface is given by
\begin{equation}
\label{BC2}
p (x,t) = \sigma \kappa(x,t),  \qquad x \in S_t,
\end{equation}
where $\kappa$ is the mean-curvature of $S_t$ and $\sigma \geq 0$ is the surface tension coefficient.
At liquid-air interfaces, the surface tension force results from the greater attraction
of water molecules to each other than to the molecules in the air.

One can also consider the free boundary Euler equations \eqref{E}-\eqref{BC2} in various types of domains $\Omega_t$ (bounded, periodic, unbounded)
and study flows with different characteristics (rotational/irrotational, with gravity and/or surface tension),
or even more complicated scenarios where the moving interface separates two fluids.

In the case of irrotational flows, $\rm{\curl} v = 0$, one can reduce \eqref{E}-\eqref{BC2} to a system on the boundary.
Indeed, assume also that $\Omega_t \subset \R^n$ is the region below the graph of a function $h : \R^{n-1}_x \times I_t \rightarrow \R$,
that is
\begin{align*}
\Omega_t = \{ (x,y) \in \R^{n-1} \times \R \, : y \leq h(x,t) \} \quad \mbox{and} \quad S_t = \{ (x,y) : y = h(x,t) \}.
\end{align*}
Let $\Phi$ denote the velocity potential, $\nabla_{x,y} \Phi(x,y,t) = v (x,y,t)$, for $(x,y) \in \Omega_t$.
If $\phi(x,t) := \Phi (x, h(x,t),t)$ is the restriction of $\Phi$ to the boundary $S_t$,
the equations of motion reduce to the following system for the unknowns $h, \phi : \R^{n-1}_x \times I_t \rightarrow \R$:
\begin{equation}
\label{WWE}
\left\{
\begin{array}{l}
\partial_t h = G(h) \phi,
\\
\partial_t \phi = -g h  + \sigma \div \Big[ \dfrac{\nabla h}{ (1+|\nabla h|^2)^{1/2} } \Big]
  - \dfrac{1}{2} {|\nabla \phi|}^2 + \dfrac{{\left( G(h)\phi + \nabla h \cdot \nabla \phi \right)}^2}{2(1+{|\nabla h|}^2)}.
\end{array}
\right.
\end{equation}
Here
\begin{equation}
\label{defG0}
G(h) := \sqrt{1+{|\nabla h|}^2} \mathcal{N}(h),
\end{equation}
and $\mathcal{N}(h)$ is the Dirichlet-Neumann map associated to the domain $\Omega_t$.
Roughly speaking, one can think of $G(h)$ as a first order, non-local, linear operator that depends nonlinearly on the domain.
We refer to  \cite[chap. 11]{SulemBook} or the book of Lannes \cite{LannesBook} for the derivation of \eqref{WWE}.
For sufficiently small smooth solutions, this system admits the conserved energy
\begin{equation}\label{CPWHam}
\begin{split}
\mathcal{H}(h,\phi) &:= \frac{1}{2} \int_{\R^{n-1}} G(h)\phi \cdot \phi \, dx + \frac{g}{2} \int_{\R^{n-1}} h^2 \,dx
  + \sigma\int_{\R^{n-1}} \frac{{|\nabla h|}^2}{1 + \sqrt{1+|\nabla h|^2} } \, dx
  \\
  &\approx {\big\| |\nabla |^{1/2} \phi \big\|}_{L^2}^2 + {\big\| (g-\sigma\Delta)^{1/2}h \big\|}_{L^2}^2,
\end{split}
\end{equation}
which is the sum of the kinetic energy corresponding to the $L^2$ norm of the velocity field and the potential energy due to gravity and surface tension.
It was first observed by Zakharov \cite{Zak0} that \eqref{WWE} is the Hamiltonian flow associated to \eqref{CPWHam}.


One generally refers to the system \eqref{WWE} as the gravity water waves system when $g>0,\sigma=0$,
as the capillary water waves system when $g=0,\sigma>0$, and as the gravity-capillary water waves system when $g>0,\sigma>0$.

\subsection{The main theorem}\label{MainResult}

Our results in this paper concern the gravity-capillary water waves system \eqref{WWE}, in the case $n=3$.
In this case $h$ and $\phi$ are real-valued functions defined on $\mathbb{R}^2\times I$.

To state our main theorem we first introduce some notation. The rotation vector-field
\begin{equation}\label{Ama10}
\Omega:=x_1\partial_{x_2}-x_2\partial_{x_1}
\end{equation}
commutes with the linearized system. For $N\geq 0$ let $H^N$ denote the standard Sobolev spaces on $\mathbb{R}^2$. More generally, for $N,N'\geq 0$ and $b\in[-1/2,1/2]$, $b\leq N$, we define the norms
\begin{equation}\label{normH}
{\|f\|}_{H^{N',N}_{\Omega}} := \sum_{j\leq N'}\|\Omega^jf\|_{H^N},\qquad {\|f\|}_{\dot{H}^{N,b}} := {\big\| (|\nabla|^N + |\nabla|^b)f \big\|}_{L^2}.
\end{equation}
For simplicity of notation, we sometimes let $H^{N'}_\Omega:=H^{N',0}_{\Omega}$. Our main theorem is the following:

\begin{theorem}[Global Regularity] \label{MainTheo}
Assume that $g,\sigma>0$, $\delta>0$ is sufficiently small, and $N_0,N_1,N_3,N_4$ are sufficiently large\footnote{The
values of $N_0$ and $N_1$, the total number of derivatives we assume under control, can certainly be decreased by reworking parts of the argument.
We prefer, however, to simplify the argument wherever possible instead of aiming for such improvements. For convenience, we arrange that $N_1-N_4=(N_0-N_3)/2-N_4=1/\delta$.}
(for example $\delta=1/2000$, $N_0:=4170$, $N_1:=2070$, $N_3:=30$, $N_4:=70$, compare with Definition \ref{MainZDef}). Assume that the data $(h_0,\phi_0)$ satisfies
\begin{equation}\label{h0p0}
\begin{split}
&\|\mathcal{U}_0\|_{H^{N_0}\cap H_\Omega^{N_1,N_3}} + \sup_{2m+|\alpha|\leq N_1+N_4}{\|(1+|x|)^{1-50\delta}D^\alpha \Omega^m\mathcal{U}_0\|}_{L^2}= \e_0 \leq \bar{\e_0},\\
&\mathcal{U}_0:=(g-\sigma\Delta)^{1/2}h_0+i|\nabla|^{1/2}\phi_0,
\end{split}
\end{equation}
where $\bar{\e_0}$ is a sufficiently small constant and $D^\alpha=\partial_1^{\alpha^1}\partial_2^{\alpha^2}$, $\alpha=(\alpha^1,\alpha^2)$. Then, there is a unique global solution $(h,\phi)\in C\big([0,\infty) : H^{N_0+1}\times \dot{H}^{N_0+1/2,1/2}\big)$ of the system \eqref{WWE}, with $(h(0),\phi(0))=(h_0,\phi_0)$.
In addition
\begin{align}\label{mainconcl1}
(1+t)^{-\delta^2} {\|\mathcal{U}(t)\|}_{H^{N_0}\cap H_\Omega^{N_1,N_3}}\lesssim \e_0,\qquad (1+t)^{5/6-3\delta^2} {\|\mathcal{U}(t)\|}_{L^\infty}\lesssim \e_0,
\end{align}
for any $t\in[0,\infty)$, where $\mathcal{U}:=(g-\sigma\Delta)^{1/2}h+i|\nabla|^{1/2}\phi$.
\end{theorem}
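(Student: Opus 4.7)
The plan is to prove Theorem \ref{MainTheo} via a bootstrap argument that simultaneously propagates two families of estimates for the complex unknown $\mathcal{U}$: the high-order Sobolev and rotation-weighted energies appearing on the left of \eqref{mainconcl1}, with slow $(1+t)^{\delta^2}$ growth, together with a sharp dispersive bound in a carefully constructed $Z$-norm (cf.\ Definition \ref{MainZDef}) that will imply the pointwise decay $\|\mathcal{U}(t)\|_{L^\infty}\lesssim \e_0(1+t)^{-5/6+3\delta^2}$. Local well-posedness for \eqref{WWE} at the regularity \eqref{h0p0}, combined with a standard continuation criterion, reduces global regularity to \textit{a priori} control of these norms. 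Concretely, I would assume weak versions of both bounds on a maximal interval $[0,T]$ with a larger constant $\e_1\gg\e_0$ (e.g.\ $\e_1=\e_0^{2/3}$), and recover them with $\e_0$ on the right-hand side.

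For the energy estimates, I would first rewrite \eqref{WWE} as a quasilinear dispersive equation for $\mathcal{U}$ by paralinearizing the Dirichlet--Neumann operator $G(h)$ and symmetrizing. Because the quadratic resonances of the dispersion relation $\Lambda(\xi)=\sqrt{g|\xi|+\sigma|\xi|^3}$ form a full codimension-one set, no classical normal-form transformation is available. Instead, one constructs a modified, almost-conserved energy that absorbs the quadratic interactions through paradifferential symmetrization, and then bounds its time derivative by exploiting (i) a nondegeneracy property of the resonant hypersurface---the derivative of the phase transverse to it is nonzero outside a small exceptional region---and (ii) a hidden null/commutator structure in the quadratic part of the nonlinearity, which yields the extra decay needed to compensate for the non-integrable pointwise bound on $\mathcal{U}$. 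Analogous estimates must be carried out on the $\Omega$-commuted equations to propagate $H^{N_1,N_3}_\Omega$, and on the profile side to control the position weight $(1+|x|)^{1-50\delta}$ in $L^2$.

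The dispersive estimates reduce to controlling the $Z$-norm of the profile $f(t):=e^{it\Lambda}\mathcal{U}(t)$ through the Duhamel formula. I would split the nonlinearity into quadratic and cubic-and-higher parts and analyze the resulting oscillatory integrals in frequency space by the method of space-time resonances. Three regions drive the design of the $Z$-norm: the space resonances $\{\nabla_\eta\Phi=0\}$, which govern the bulk of the stationary-phase decay; the finite set of \emph{space-time resonances} where additionally $\Phi=0$, which produce long-lived oscillatory contributions and force the $Z$-norm to admit square-root-type singularities near these points; and the inflection frequency of $\Lambda$, responsible for the slower $t^{-5/6}$ decay and isolated via a dedicated angularly sharper component of the norm. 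The pointwise $L^\infty$ bound in \eqref{mainconcl1} then follows from the $Z$-norm together with the energy estimates, through a linear stationary-phase estimate applied to $\mathcal{U}(t)=e^{-it\Lambda}f(t)$.

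The principal obstacle is that the pointwise decay $t^{-5/6}$ is \emph{not time-integrable}. One cannot therefore close the energy estimate by naively integrating an inequality of the shape $\tfrac{d}{dt}E\lesssim \|\mathcal{U}\|_{L^\infty}E$, since that would yield growth of order $\exp(t^{1/6})$. The remedy is precisely the combination of (a) the modified, symmetrized energies, which remove all quadratic contributions up to genuinely cubic, better-behaved remainders, and (b) the recovery of extra integrable $t$-decay from carefully chosen integrations by parts in frequency that exploit the nondegeneracy of the resonant set. Simultaneously managing the singular behavior at the space-time resonances and at the inflection frequency inside the $Z$-norm estimate, while retaining only a $(1+t)^{\delta^2}$ loss in the energy, is anticipated to be the most delicate aspect of the argument.
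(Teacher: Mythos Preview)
Your proposal captures the paper's overall architecture correctly: a bootstrap coupling energy estimates in $H^{N_0}\cap H^{N_1,N_3}_\Omega$ with a sharp $Z$-norm on the profile, paralinearization to a scalar equation, and a $Z$-norm built around the slowly decaying sphere $|\xi|=\gamma_0$ and the space-time resonant sphere $|\xi|=\gamma_1$. The reduction of Theorem \ref{MainTheo} to the two bootstrap propositions via local existence and continuation is exactly how the paper proceeds.

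There is, however, one substantive misstatement that would cause the argument to fail as you have written it. In (a) you say the modified, symmetrized energies ``remove all quadratic contributions up to genuinely cubic, better-behaved remainders.'' That is the classical Shatah normal-form picture, and it does \emph{not} work here: the time-resonant set $\{\Phi=0\}$ is codimension one, so the normal-form divisor $1/\Phi$ is genuinely singular and there is no null structure to cancel it. The paper's energies (Proposition \ref{ProBulk}) retain quadratic bulk terms $\mathcal{B}_0,\mathcal{B}_1$ in $\tfrac{d}{dt}\mathcal{E}$; these are not removed but are instead shown to be \emph{strongly semilinear}: either they carry an explicit gain of one derivative (the $\mu_1$ symbols), or they carry the depletion factor $\mathfrak{d}(\xi,\eta)=\big((\xi-\eta)\cdot(\xi+\eta)/|\xi-\eta||\xi+\eta|\big)^2$, which satisfies the algebraic correlation $|\mathfrak{d}|\lesssim 2^{-k}$ on $\{|\Phi|\lesssim 1\}$. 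This structure comes from a specific quadratic correction $T_{m'}\omega$ to the good unknown, not from a normal form. The quadratic space-time integrals are then controlled by (i) integration by parts in $s$ when the modulation $|\Phi|$ is large, using the depletion factor to absorb the derivative loss, and (ii) an $L^2$ bound on a localized Fourier integral operator (the ``main $L^2$ lemma'') when $|\Phi|$ is small, proved by a $TT^\ast$ argument that requires the \emph{restricted nondegeneracy condition} $\Upsilon=\nabla^2_{\xi\eta}\Phi[\nabla_\xi^\perp\Phi,\nabla_\eta^\perp\Phi]\neq 0$ on $\{\Phi=0\}$. Your item (b) gestures toward this but conflates it with the normal-form step; in the actual proof these are the mechanisms that \emph{replace} the failed normal form, and the depletion factor in particular is a new structural observation you would need to identify explicitly.
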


\begin{remark}\label{MainRemark} (i) One can derive additional information about the global solution $(h,\phi)$. Indeed, by rescaling we may assume that $g=1$ and $\sigma=1$. Let
\begin{equation}\label{Ama30}
\mathcal{U}(t):=(1-\Delta)^{1/2}h+i|\nabla|^{1/2}\phi,\qquad \mathcal{V}(t):=e^{it\Lambda}\mathcal{U}(t),\qquad \Lambda(\xi):=\sqrt{|\xi|+|\xi|^3}.
\end{equation}
Here $\Lambda$ is the linear dispersion relation, and $\mathcal{V}$ is the profile of the solution $\mathcal{U}$.
The proof of the theorem gives the strong uniform bound
\begin{align}\label{Ama30.1}
\sup_{t\in [0,\infty)}{\|\mathcal{V}(t)\|}_{Z}\lesssim \e_0,
\end{align}
see Definition \ref{MainZDef}.
The pointwise decay bound in \eqref{mainconcl1} follows from this and the linear estimates in Lemma \ref{LinEstLem} below.

(ii) The global solution $\mathcal{U}$ scatters in the $Z$ norm as $t\to\infty$, i.e. there is $\mathcal{V}_\infty \in Z$ such that
\begin{align*}
\lim_{t \rightarrow \infty} {\| e^{it\Lambda}\mathcal{U}(t) - \mathcal{V}_\infty \|}_Z = 0 .
\end{align*}
However, the asymptotic behavior is somewhat nontrivial since $|\widehat{\mathcal{U}}(\xi,t)|\gtrsim \log t\to\infty$ for frequencies $\xi$ on a circle in $\mathbb{R}^2$ (the set of space-time resonance outputs) and for some data. This unusual behavior is due to the presence of a large set of space-time resonances.

(iii) The function $\mathcal{U}:=(g-\sigma\Delta)^{1/2}h+i|\nabla|^{1/2}\phi$ is called the ``Hamiltonian variable'', due to its connection to the Hamiltonian \eqref{CPWHam}. This variable is important in order to keep track correctly of the relative Sobolev norms of the functions $h$ and $\phi$ during the proof.
\end{remark}

\subsection{Background}

We now discuss some background on the water waves system and review some of the history and previous work on this problem.

\subsubsection{The equations and the local well-posedness theory}
The free boundary Euler equations \eqref{E}-\eqref{BC2} are a time reversible system of evolution equations
which preserve the total (kinetic plus potential) energy.
Under the Rayleigh-Taylor sign condition \cite{Taylor}
\begin{align}
 \label{RT}
- \nabla_{n(x,t)} p(x,t) < 0, \qquad  x \in S_t,
\end{align}
where $n$ is the outward pointing unit normal to $\Omega_t$, the system has a (degenerate) hyperbolic structure.
This structure is somewhat hard to capture because of the moving domain and the quasilinear nature of the problem. 
Historically, this has made the task of establishing local wellposedness
(existence and uniqueness of smooth solutions for the Cauchy problem) non-trivial.

Early results on the local wellposedness of the system  include those by Nalimov \cite{Nalimov}, 
Yosihara \cite{Yosi}, and Craig \cite{CraigLim}; these results deal with small perturbations of a flat interface for which \eqref{RT} always holds.
It was first observed by Wu \cite{Wu2} that in the irrotational case the Rayleigh-Taylor sign condition holds without smallness assumptions,
and that local-in-time solutions can be constructed with initial data of arbitrary size in Sobolev spaces \cite{Wu1,Wu2}.

Following the breakthrough of Wu, in recent years
the question of local wellposedness of the water waves and free boundary Euler equations has been addressed by several authors.
Christodoulou--Lindblad \cite{CL} and Lindblad \cite{Lindblad} considered the gravity problem with vorticity,
Beyer--Gunther \cite{BG} took into account the effects of surface tension,
and Lannes \cite{Lannes} treated the case of non-trivial bottom topography.
Subsequent works by Coutand-Shkoller \cite{CS2} and Shatah-Zeng \cite{ShZ1,ShZ3}
extended these results to more general scenarios with vorticity and surface tension, including two-fluids systems \cite{CCS1,ShZ3} where surface tension is necessary for wellposedness.
Some recent papers that include surface tension and/or low regularity analysis are those by Ambrose-Masmoudi \cite{AM},
Christianson-Hur-Staffilani \cite{CHS}, and Alazard-Burq-Zuily \cite{ABZ1,ABZ2}.

Thanks to all the contributions mentioned above 
the local well-posedness theory is presently well-understood in a variety of different scenarios.
In short, one can say that for sufficiently nice initial configurations, it is possible to find classical smooth solutions on a small time interval, which depends on the smoothness of the initial data.

\subsubsection{Asymptotic models} We note that 
many simplified models have been derived and studied in special regimes, with the goal of understanding the complex dynamics of the water wave system.
These include the KdV equation, the Benjamin--Ono equation, the Boussinesq and the KP equations, as well as the nonlinear Schr\"odinger equation.
We refer to \cite{CraigLim,SWgsigma,ASL,TotzWuNLS} and to the book \cite{LannesBook} and references therein for more
about approximate models.

\subsubsection{Previous work on long-time existence}
The problem of long time existence of solutions is more challenging, and fewer results have been obtained so far.
As in all quasilinear problems, the long-time regularity has been studied in a perturbative (and dispersive) setting,
that is in the regime of small and localized perturbations of a flat interface. 
Large perturbations can lead to breakdown in finite time, see for example the papers on ``splash'' singularities \cite{CCFGG,CSSplash}.

The first long-time result for the water waves system \eqref{WWE} is due to Wu \cite{WuAG}
who showed almost global existence for the gravity problem ($g>0$, $\sigma=0$) in two dimensions ($1$d interfaces).
Subsequently, Germain-Masmoudi-Shatah \cite{GMS2} and Wu \cite{Wu3DWW} proved global existence of gravity waves in three dimensions ($2$d interfaces).
Global regularity in $3$d was also proved for the capillary problem ($g=0$, $\sigma>0$) by Germain-Masmoudi-Shatah \cite{GMSC}.
See also the recent work of Wang \cite{Wa2,Wa3} on the gravity problem in $3$d over a finite flat bottom.

Global regularity for the gravity water waves system in $2$d (the harder case) has been proved by two of the authors in \cite{IoPu2} and, independently, by Alazard-Delort \cite{ADa,ADb}.
A different proof of Wu's $2$d almost global existence result was later given by Hunter-Ifrim-Tataru \cite{HIT},
and then complemented to a proof of global regularity in \cite{IT}.
Finally, Wang \cite{Wa1} proved global regularity for a more general class of small data of infinite energy,
thus removing the momentum condition on the velocity field that was present in all the previous $2$d results.
For the capillary problem in $2$d, global regularity 
was proved by two of the authors in \cite{IoPu4} and, independently,
by Ifrim-Tataru \cite{IT2} in the case of data satisfying an additional momentum condition. 

We remark that all the global regularity results that have been proved so far require 3 basic assumptions: small data (small perturbations of the rest solution), trivial vorticity inside the fluid, and flat Euclidean geometry. Additional properties are also important, such as the Hamiltonian structure of the equations, the rate of decay of the linearized waves, and the resonance structure of the bilinear wave interactions.

\subsection{Main ideas} The classical mechanism to establish global regularity for quasilinear equations has two main components:

\setlength{\leftmargini}{1.8em}
\begin{itemize}
  \item[(1)] Propagate control of high frequencies (high order Sobolev norms);
\smallskip
  \item[(2)] Prove dispersion/decay of the solution over time.
\end{itemize}

The interplay of these two aspects has been present since the seminal work of Klainerman \cite{K0,K1} on nonlinear wave equations and vector-fields,
Shatah \cite{shatahKGE} on Klein-Gordon and normal forms, and Christodoulou-Klainerman \cite{CK} on the stability of Minkowski space, and Delort \cite{DelortKGE} on $1$d Klein-Gordon equations.
We remark that even in the weakly nonlinear regime (small perturbations of trivial solutions) smooth and localized initial
data can lead to blow-up in finite time, see John \cite{John} on quasilinear wave equations and Sideris \cite{Sideris} on compressible Euler.

In the last few years new methods have emerged in the study of global solutions of quasilinear evolutions, inspired by the advances in semilinear theory.
The basic idea is to combine the classical energy and vector-fields methods with refined analysis of the Duhamel formula, using the Fourier transform.
This is the essence of the ``method of space-time resonances'' of Germain-Masmoudi-Shatah \cite{GMS2, GMSC,GM}, see also Gustafson-Nakanishi-Tsai \cite{GNT1},
and of the refinements in \cite{IP1,IP2,GIP,IoPu1,IoPu2,IoPu3,IoPu4,DIP,Deng}, using atomic decompositions and more sophisticated norms.

The situation we consider in this paper is substantially more difficult, due to the combination of the following factors:

\begin{itemize}
\item Strictly less than $|t|^{-1}$ pointwise decay of solutions. In our case, the dispersion relation is $\Lambda(\xi)=\sqrt{g|\xi|+\sigma|\xi|^3}$ and the 
best possible pointwise decay, even for solutions of the linearized equation corresponding to Schwartz data, is $|t|^{-5/6}$ (see Fig. \ref{DispRelFigure} below). 

\item Large set of time resonances. In certain cases one can overcome the slow pointwise decay using the method of normal forms of Shatah \cite{shatahKGE}. The critical ingredient needed is the absence of time resonances (or at least a 
suitable ``null structure'' of the quadratic nonlinearity matching the set of time resonances). Our system, however, has a full (codimension 1) set of time 
resonances (see Fig. \ref{ResonantSet} below) and no meaningful null structures.
\end{itemize}

We remark that all the previous work on long term solutions of water waves models was under the assumption that either $g=0$ or $\sigma=0$. 
This is not coincidental: in these cases {\it{the combination of slow decay and full set of time resonances was not present}}. More precisely, in all the previous global 
results in 3 dimensions in \cite{GMS2, Wu3DWW, GMSC, Wa2, Wa3} it was possible to prove $1/t$ pointwise 
decay of the nonlinear solutions and combine this with high order energy estimates with slow growth.

On the other hand, in all the two-dimensional models analyzed in 
\cite{WuAG, IoPu2, ADa, ADb, HIT, IT, IoPu4, IT2, Wa1} there were no significant time 
resonances for the quadratic terms.\footnote{More precisely, 
the only time resonances are at the $0$ frequency, but they are canceled by a suitable null structure. 
Some additional ideas are needed in the case of capillary waves \cite{IoPu4} where certain singularities arise.
Moreover, new ideas, which exploit the Hamiltonian structure of the system as in \cite{IoPu2},
are needed to prove global (as opposed to almost-global) regularity.} As a result, in all of these papers it was possible to prove a {\it{quartic energy inequality}} of the form
\begin{equation*}
\big|\mathcal{E}_N(t)-\mathcal{E}_N(0)\big|\lesssim \int_0^t\mathcal{E}_N(s)\cdot\|U(s)\|^2_{W^{N/2+4,\infty}}\,ds,
\end{equation*}
for a suitable functional $\mathcal{E}_N(t)$ satisfying $\mathcal{E}_N(t)\approx \|U(t)\|_{H^N}^2$. The point is to get two factors of $\|U(s)\|_{W^{N/2+4,\infty}}$ in the right-hand side, in order to have suitable decay, and simultaneously avoid loss of derivatives.
A quartic energy inequality of this form cannot hold in our case due to the presence of large resonant sets.

To address these issues, in this paper we use a combination of improved energy estimates and Fourier analysis. The main components of our analysis are:

\begin{itemize}
\item The energy estimates, which are used to control high Sobolev norms and weighted norms (corresponding to the rotation vector-field). They rely on several new ingredients, most importantly on a {\it{strongly semilinear structure}} of the space-time integrals that control the increment of energy, and on a {\it{restricted nondegeneracy condition}} (see \eqref{IntroUpsilon}) of the time resonant hypersurfaces. The strongly semilinear structure is due to an algebraic correlation (see \eqref{corre}) between the size of the multipliers of the space-time integrals and the size of the modulation, and is related to the Hamiltonian structure of the original system. 

\item The dispersive estimates, which lead to decay and rely on a partial bootstrap argument in a suitable {\it{$Z$ norm}}. We analyze carefully the Duhamel formula, in particular the quadratic interactions related to the slowly decaying frequencies and to the {\it{set of space-time resonances}}. The choice of the $Z$ norm in this argument is very important; we use an atomic norm, based on a space-frequency decomposition of the profile of the solution, which depends in a significant way on the location and the shape of the space-time resonant set, thus on the quadratic part of the nonlinearity.  
\end{itemize}  

We hope that such ideas can be used in other quasilinear problems in two and three dimensions (such as other fluid and plasma models) that involve large resonant sets and slowly decaying solutions. We illustrate some of these main ideas in a simplified model below.

\subsection{A simplified model}\label{SimpleModel} To illustrate these ideas, consider the initial-value problem
\begin{equation}\label{ModelWW}
\begin{split}
&(\partial_t+i\Lambda)U=\nabla V\cdot\nabla U+(1/2)\Delta V\cdot U,\qquad U(0)=U_0,\\
&\Lambda(\xi):=\sqrt{|\xi|+|\xi|^3},\qquad V:=P_{[-10,10]}\Re U.
\end{split}
\end{equation} 

Compared to the full equation, this model has the same linear part and a quadratic nonlinearity leading to similar resonant sets. It is important that $V$ is real-valued, such that solutions of \eqref{ModelWW} satisfy the $L^2$ conservation law
\begin{equation}\label{qaz2}
\|U(t)\|_{L^2}=\|U_0\|_{L^2},\quad t\in[0,\infty).
\end{equation}

The model \eqref{ModelWW} carries many 
of the difficulties of the real problem and has the advantage that it is much more transparent algebraically. There are, however, significant additional 
issues when dealing with the full problem, see subsection \ref{Comparison} below for a short discussion.

The specific dispersion relation $\Lambda(\xi)=\sqrt{|\xi|+|\xi|^3}$ in \eqref{ModelWW} is important. It is radial and has stationary points 
when $|\xi|=\gamma_0:=(2/\sqrt{3}-1)^{1/2}\approx 0.393$ (see Figure \ref{DispRelFigure} below). As a result, linear solutions can only have $|t|^{-5/6}$ pointwise decay, i.e.
\begin{equation*}
\|e^{it\Lambda}\phi\|_{L^\infty}\approx |t|^{-5/6},
\end{equation*}
even for Schwartz functions $\phi$ whose Fourier transforms do not vanish on the sphere $\{|\xi|=\gamma_0\}$.

\begin{figure}[ht!]
\centering
\includegraphics[width=0.45\linewidth]{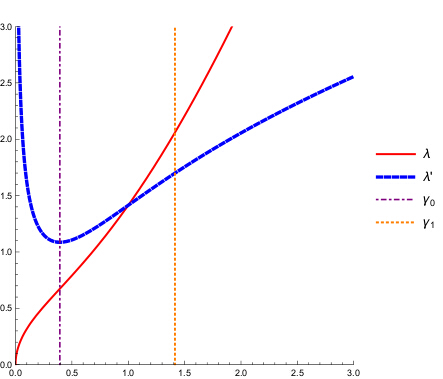}
\\
\caption{\small The curves represent the dispersion relation $\lambda(r)=\sqrt{r^3+r}$ and the group velocity $\lambda'$, for $g = 1 = \sigma$. Notice that $\lambda''(r)$ vanishes at $r=\gamma_0\approx 0.393$. The frequency $\gamma_1=\sqrt{2}$ corresponds to the sphere of space-time resonant outputs. 
Notice that while the slower decay at $\gamma_0$ is due to some degeneracy in the linear problem, $\gamma_1$ is unremarkable from the point of view of
the linear dispersion.}
\label{DispRelFigure}
\end{figure}

\subsubsection{Energy estimates}\label{SimModEnergy} We would like to control the increment of both high order Sobolev norms and weighted norms for 
solutions of \eqref{ModelWW}. It is convenient to do all the estimates in the Fourier space, using a quasilinear I-method as 
in some of our earlier work. This has similarities with the well-known I-method of Colliander--Keel--Staffilani--Takaoka--Tao 
\cite{CKSTT1,CKSTT2} used in semilinear problems, and to the energy methods of \cite{GM,ADa,ADb,HIT}. Our main estimate is the 
following partial bootstrap bound:
\begin{equation}\label{qaz0}
 \text{ if }\sup_{t\in[0,T]}\big[(1+t)^{-\delta^2}\mathcal{E}(t)^{1/2}+\|e^{it\Lambda}U(t)\|_{Z}\big]\leq \varep_1\,\,\text{ then }
\,\,\sup_{t\in[0,T]}(1+t)^{-\delta^2}\mathcal{E}(t)^{1/2}\lesssim \varep_0+\varep_1^{3/2},
\end{equation}
where $U$ is a solution on $[0,T]$ of \eqref{ModelWW}, $$\mathcal{E}(t)=\|U(t)\|_{H^N}^2+\|U(t)\|_{H^{N'}_\Omega}^2,$$ and the initial data has small 
size $\sqrt{\mathcal{E}(0)}+\|U(0)\|_{Z}\leq \varep_0$. The $Z$ norm is important and will be discussed in detail in the next subsection. 
For simplicity, we focus on the high order Sobolev norms, and divide the argument into four steps.

{\bf{Step 1.}} For $N$ sufficiently large, let
\begin{equation}\label{qaz3}
W:=W_N:=\langle\nabla\rangle^N U,\qquad E_N(t):=\int_{\mathbb{R}^2}|\widehat{W}(\xi,t)|^2\,d\xi.
\end{equation}
A simple calculation, using the equation and the fact that $V$ is real, shows that
\begin{equation}\label{qaz4}
\frac{d}{dt}E_N=\int_{\mathbb{R}^2\times\mathbb{R}^2}m(\xi,\eta)\widehat{W}(\eta)\widehat{\overline{W}}(-\xi)\widehat{V}(\xi-\eta)\,d\xi d\eta,
\end{equation}
where
\begin{equation}\label{qaz5}
m(\xi,\eta)=\frac{(\xi-\eta)\cdot (\xi+\eta)}{2}\frac{(1+|\eta|^2)^N-(1+|\xi|^2)^N}{(1+|\eta|^2)^{N/2}(1+|\xi|^2)^{N/2}}.
\end{equation}
Notice that $|\xi-\eta|\in[2^{-11},2^{11}]$ in the support of the integral, due to the Littlewood-Paley operator in the definition of $V$. We notice that $m$ satisfies
\begin{equation}\label{qaz6}
m(\xi,\eta)=\mathfrak{d}(\xi,\eta)m'(\xi,\eta),\quad\text{ where }\quad\mathfrak{d}(\xi,\eta):=\frac{[(\xi-\eta)\cdot(\xi+\eta)]^2}{1+|\xi+\eta|^2},\quad m'\approx 1.
\end{equation}
The {\it{depletion factor}} $\mathfrak{d}$ is important in establishing energy estimates, due to its correlation with the 
modulation function $\Phi$ (see \eqref{corre} below). The presence of this factor is related to the exact conservation law \eqref{qaz2}.

{\bf{Step 2.}} We would like to estimate now the increment of $E_N(t)$. We use \eqref{qaz4} and consider only the main case, when $|\xi|,|\eta|\approx 2^k\gg 1$, and $|\xi-\eta|$ is close to the slowly decaying frequency $\gamma_0$. So we need to bound space-time integrals of the form
\begin{equation*}
I:=\int_0^t\int_{\mathbb{R}^2\times\mathbb{R}^2}m(\xi,\eta)\widehat{P_kW}(\eta,s)\widehat{P_k\overline{W}}(-\xi,s)\widehat{U}(\xi-\eta,s)\chi_{\gamma_0}(\xi-\eta)\,d\xi d\eta ds,
\end{equation*} 
where $\chi_{\gamma_0}$ is a smooth cutoff function supported in the set $\{\xi:||\xi|-\gamma_0|\ll 1\}$, and we replaced $V$ by $U$ (replacing $V$ by $\overline{U}$ leads to a similar calculation). Notice that it is not possible to estimate $|I|$ by moving the absolute value inside the time integral, due to the slow decay of $U$ in $L^\infty$. So we need to integrate by parts in time; for this define the profiles
\begin{equation}\label{qaz6.5}
u(t):=e^{it\Lambda}U(t),\qquad w(t):=e^{it\Lambda}W(t).
\end{equation}
Then decompose the integral in dyadic pieces over the size of the modulation and over the size of the time variable. In terms of the profiles $u,w$, we need to consider the space-time integrals
\begin{equation}\label{qaz7}
\begin{split}
I_{k,m,p}:=\int_{\mathbb{R}}q_m(s)\int_{\mathbb{R}^2\times\mathbb{R}^2}e^{is\Phi(\xi,\eta)}&m(\xi,\eta)\widehat{P_kw}(\eta,s)\widehat{P_k\overline{w}}(-\xi,s)\\
&\times\widehat{u}(\xi-\eta,s)\chi_{\gamma_0}(\xi-\eta)\varphi_p(\Phi(\xi,\eta))\,d\xi d\eta ds,
\end{split}
\end{equation} 
where $\Phi(\xi,\eta):=\Lambda(\xi)-\Lambda(\eta)-\Lambda(\xi-\eta)$ is the associated modulation, $q_m$ is smooth and supported in 
the set $s\approx 2^m$ and $\varphi_p$ is supported in the set $\{x:|x|\approx 2^p\}$.

\begin{figure}[h!]
\includegraphics[width=0.36\linewidth]{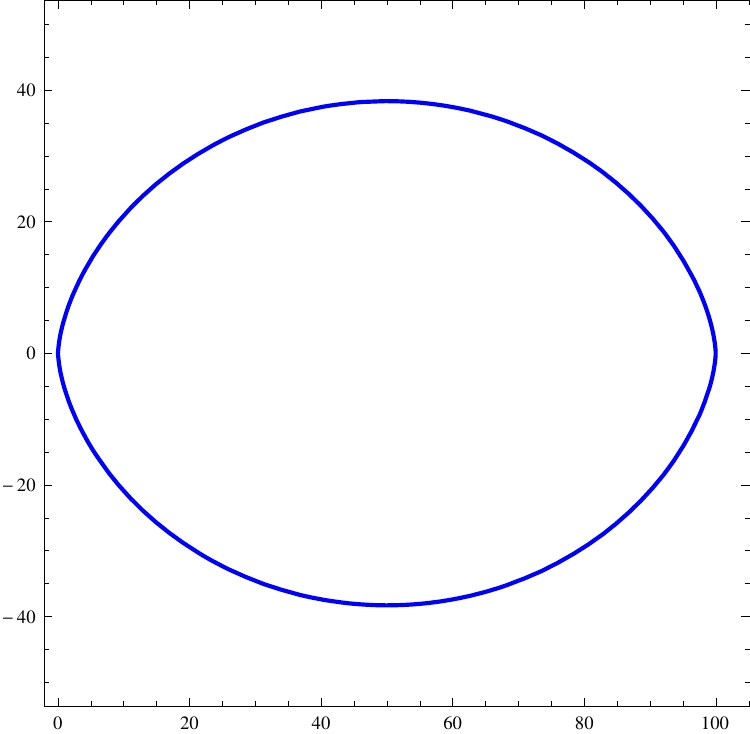}
\hskip20pt \includegraphics[width=0.403\linewidth]{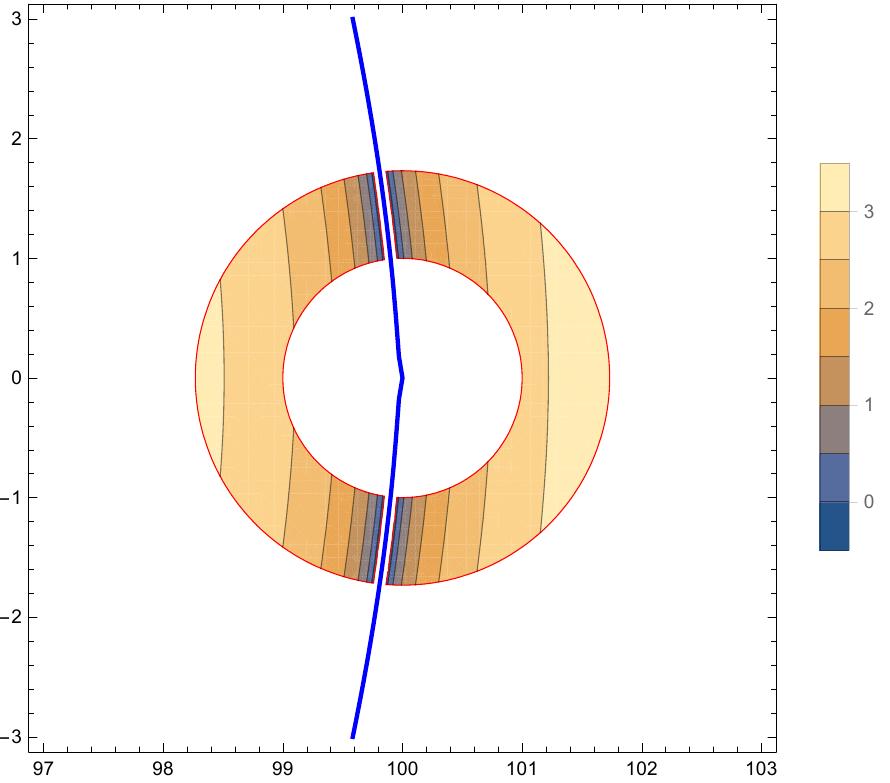} 
\\
\caption{\small The first picture illustrates the resonant set $\{\eta:0=\Phi(\xi,\eta)=\Lambda(\xi)-\Lambda(\eta)-\Lambda(\xi-\eta)\}$ for a fixed large frequency $\xi$ (in the picture $\xi=(100,0)$). 
The second picture illustrates the intersection of a neighborhood of this resonant set with the set where $|\xi-\eta|$ is close to $\gamma_0$. 
Note in particular that near the resonant set $\xi-\eta$ is almost perpendicular to $\xi$ (see \eqref{qaz6}, \eqref{corre}). Finally, the colors show the 
level sets of $\log |\Phi|$.} 
\label{ResonantSet}
\end{figure}

{\bf{Step 3.}} To estimate the integrals $I_{k,m,p}$ we consider several cases depending on the relative size of $k,m,p$. Assume that $k,m$ are large, i.e. $2^k\gg 1, 2^m\gg 1$, which is the harder case. To deal with the case of small modulation, when one cannot integrate by parts in time, we need an $L^2$ bound on the Fourier integral operator
\begin{equation*}
T_{k,m,p}(f)(\xi) := \int_{\R^2} e^{is\Phi(\xi,\eta)} \varphi_k(\xi) \varphi_{\leq p}(\Phi(\xi,\eta))\chi_{\gamma_0}(\xi-\eta)
  f(\eta) \, d\eta,
\end{equation*}
where $s\approx 2^m$ is fixed. The critical bound we prove in Lemma \ref{L2Prop0} (``the main $L^2$ lemma'') is
\begin{equation}\label{qaz9}
\|T_{k,m,p}(f)\|_{L^2}\lesssim_\eps 2^{\eps m}(2^{(3/2)(p-k/2)}+2^{p-k/2-m/3})\|f\|_{L^2},\qquad \eps>0,
\end{equation}
provided that $p-k/2\in[-0.99 m,-0.01m]$. The main gain here is the factor $3/2$ in $2^{(3/2)(p-k/2)}$ in the right-hand side (Schur's test would only give a factor of $1$). 

The proof of \eqref{qaz9} uses a $TT^\ast$ argument, which is a standard tool to prove $L^2$ bounds for Fourier integral operators. This argument depends on a key nondegeneracy property of the function $\Phi$, more precisely on what we call the {\it{restricted nondegeneracy condition}}  
\begin{align}\label{IntroUpsilon}
 \Upsilon(\xi,\eta) = \nabla_{\xi,\eta}^2\Phi(\xi,\eta)[\nabla_\xi^\perp\Phi(\xi,\eta), \nabla^\perp_\eta \Phi(\xi,\eta)]\neq 0\qquad\text{ if }\qquad\Phi(\xi,\eta)=0.
\end{align}
This condition, which appears to be new, 
 can be verified explicitly in our case, when 
$||\xi-\eta|-\gamma_0|\ll 1$. The function $\Upsilon$ does in fact vanish at two points on the resonant set $\{\eta:\Phi(\xi,\eta)=0\}$ 
(where $||\xi-\eta|-\gamma_0|\approx 2^{-k}$), but our argument can tolerate vanishing up to order $1$.

The nondegeneracy condition \eqref{IntroUpsilon} can be interpreted geometrically: the nondegeneracy of the mixed Hessian of $\Phi$ is a standard condition 
that leads to optimal $L^2$ bounds on Fourier integral operators. In our case, however, we have the additional cutoff function $\varphi_{\leq p}(\Phi(\xi,\eta))$, so we
can only integrate by parts in the directions tangent to the level sets of $\Phi$. This explains the additional restriction to these directions in the 
definition of $\Upsilon$ in \eqref{IntroUpsilon}.

Given the bound \eqref{qaz9}, we can control the contribution of small modulations, i.e.
\begin{equation}\label{qaz10}
 p-k/2\leq -2m/3-\epsilon m.
\end{equation}

{\bf{Step 4.}} In the high modulation case we integrate by parts in time in the formula \eqref{qaz7}. The main contribution is when the time derivative 
hits the high frequency terms, so we focus on estimating the resulting integral
\begin{equation}\label{qaz12}
\begin{split}
I'_{k,m,p}:=\int_{\mathbb{R}}q_m(s)\int_{\mathbb{R}^2\times\mathbb{R}^2}e^{is\Phi(\xi,\eta)}&m(\xi,\eta)\frac{d}{ds}
\big[\widehat{P_kw}(\eta,s)\widehat{P_k\overline{w}}(-\xi,s)\big]\\
&\times\widehat{u}(\xi-\eta,s)\chi_{\gamma_0}(\xi-\eta)\frac{\varphi_p(\Phi(\xi,\eta))}{\Phi(\xi,\eta)}\,d\xi d\eta ds.
\end{split}
\end{equation} 

Notice that $\partial_tw$ satisfies the equation
\begin{align}\label{qaz13}
\partial_t w =\langle\nabla\rangle^Ne^{it\Lambda}\big[\nabla V\cdot\nabla U+(1/2)\Delta V\cdot U\big].
\end{align}
The right-hand side of \eqref{qaz13} is quadratic. We thus see that replacing $w$ by $\partial_t w$ essentially gains a unit of decay 
(which is $|t|^{-5/6+}$), but loses a derivative. This causes a problem in some range of parameters, for example when $2^{p}\approx 2^{k/2-2m/3}$, 
$1\ll 2^k\ll 2^m$, compare with \eqref{qaz10}.

We then consider two cases: if the modulation is sufficiently small then 
we can use the depletion factor $\mathfrak{d}$ in the multiplier $m$, see \eqref{qaz6}, and the following key algebraic correlation
\begin{equation}\label{corre}
\text{ if }\qquad |\Phi(\xi,\eta)|\lesssim 1\qquad\text{ then }\qquad |m(\xi,\eta)| \lesssim 2^{-k}.
\end{equation}
See Fig. \ref{ResonantSet}. As a result, we gain one derivative in the integral $I'_{k,m,p}$, which compensates for the loss of one derivative in \eqref{qaz13}, and the integral can be estimated again using \eqref{qaz9}.

On the other hand, if the modulation is not small, $2^p\geq 1$, then the denominator $\Phi(\xi,\eta)$ becomes a favorable factor, and one can use the formula \eqref{qaz13} and reiterate the symmetrization procedure implicit in the energy estimates. This symmetrization avoids the loss of one derivative and gives suitable estimates on $|I'_{k,m,p}|$ in this case. The proof of \eqref{qaz0} follows. 

\subsubsection{Dispersive analysis}\label{SimModDisp} It remains to prove a partial bootstrap estimate for the $Z$ norm, i.e.
\begin{equation}\label{waz0}
 \text{ if }\sup_{t\in[0,T]}\big[(1+t)^{-\delta^2}\mathcal{E}(t)^{1/2}+\|e^{it\Lambda}U(t)\|_{Z}\big]\leq \varep_1\quad\text{ then }
\quad\sup_{t\in[0,T]}\|e^{it\Lambda}U(t)\|_{Z}\lesssim \varep_0+\varep_1^2.
\end{equation}
This complements the energy bootstrap estimate \eqref{qaz0}, and closes the full bootstrap argument. 

The first main issue is to define an effective $Z$ norm. We use the Duhamel formula, written in terms of the profile $u$ (recall the equation \eqref{ModelWW})
\begin{equation}\label{waz1}
\widehat{u}(\xi,t)=\widehat{u}(\xi,0)-\frac{1}{2}\int_0^t\int_{\mathbb{R}^2}e^{is\Lambda(\xi)}(|\xi|^2-|\eta|^2)\widehat{V}(\xi-\eta,s)e^{-is\Lambda(\eta)}\widehat{u}(\eta,s)\,d\eta ds.
\end{equation}
For simplicity, consider one of the terms, corresponding to the component $U$ of $V$ (the contribution of $\overline{U}$ is similar). So we are looking to understand bilinear expressions of the form
\begin{equation}\label{waz2}
\begin{split}
&\widehat{h}(\xi,t):=\int_0^t\int_{\mathbb{R}^2}e^{is\Phi(\xi,\eta)}n(\xi,\eta)\widehat{u}(\xi-\eta,s)\widehat{u}(\eta,s)\,d\eta ds,\\
&n(\xi,\eta):=(|\xi|^2-|\eta|^2)\varphi_{[-10,10]}(\xi-\eta),\qquad \Phi(\xi,\eta)=\Lambda(\xi)-\Lambda(\eta)-\Lambda(\xi-\eta).
\end{split}
\end{equation}
The idea is to estimate the function $\widehat{h}$ by integrating by parts either in $s$ or in $\eta$. 
This is the method of space-time resonances of Germain--Masmoudi--Shatah \cite{GMS2}. The main contribution is expected to come from the {\it{set of space-time resonances}} (the stationary points of the integral)
\begin{equation}\label{waz3}
\mathcal{SR}:=\{(\xi,\eta):\,\Phi(\xi,\eta)=0,\,(\nabla_\eta\Phi)(\xi,\eta)=0\}.
\end{equation}

To illustrate how this analysis works in our problem, we consider the contribution of the integral over $s\approx 2^m\gg 1$ in \eqref{waz2}, and assume that 
the frequencies are $\approx 1$. 

{\bf{Case 1.}} Start with the contribution of small modulations,
\begin{equation}\label{waz4}
\widehat{h_{m,l}}(\xi):=\int_{\mathbb{R}}q_m(s)\int_{\mathbb{R}^2}\varphi_{\leq l}(\Phi(\xi,\eta))e^{is\Phi(\xi,\eta)}n(\xi,\eta)\widehat{u}(\xi-\eta,s)\widehat{u}(\eta,s)\,d\eta ds,
\end{equation}
where $l=-m+\delta m$ ($\delta$ is a small constant) and $q_m(s)$ restricts the time integral to $s\approx 2^m$. Assume that $u(.,s)$ is a Schwartz function supported at frequency $\approx 1$, independent of $s$ (this is the situation at the first iteration). Integration by parts in $\eta$ (using the formula \eqref{loca2} to avoid taking $\eta$ derivatives of the factor $\varphi_{\leq l}(\Phi(\xi,\eta))$) shows that the main contribution comes from a small neighborhood of the stationary points where $|\nabla_{\eta}\Phi(\xi,\eta)|\leq 2^{-m/2+\delta m}$, up to negligible errors. Thus, the main contribution comes from space-time resonant points as in \eqref{waz3}. 

In our case, the space-time resonant set is
\begin{equation}\label{waz5}
\{(\xi,\eta)\in\mathbb{R}^2\times\mathbb{R}^2:|\xi|=\gamma_1=\sqrt{2},\,\eta=\xi/2\}.
\end{equation}
Moreover, the space-time resonant points are {\it{nondegenerate}} (according to the terminology introduced in \cite{IP2}), in the sense that the Hessian of the matrix $\nabla_{\eta\eta}^2\Phi(\xi,\eta)$ is non-singular at these points. A simple calculation shows that
\begin{equation*}
\widehat{h_{m,l}}(\xi)\approx c(\xi)\varphi_{\leq -m}(|\xi|-\gamma_1),
\end{equation*}
up to smaller contributions, where we have also ignored factors of $2^{\delta m}$, and $c$ is smooth. 

We are now ready to describe more precisely the $Z$ space. This space should include all Schwartz functions. It also has to include functions like $\widehat{u}(\xi)=\varphi_{\leq -m}(|\xi|-\gamma_1)$, due to the calculation above, for any $m$ large. It should measure localization in both space and frequency, and be strong enough, at least, to recover the $t^{-5/6+}$ pointwise decay. 

We use the 
framework introduced by two of the authors in \cite{IP1}, which was later refined by some of the authors in \cite{IP2,GIP,DIP}. 
The idea is to decompose the profile as a superposition of atoms, using localization in both space and frequency,
\begin{equation*}
f={\sum}_{j,k}Q_{jk}f,\qquad Q_{jk}f=\varphi_j(x)\cdot P_kf(x).
\end{equation*} 
The $Z$ norm is then defined by measuring suitably every atom. We define first 
\begin{equation}\label{waz6}
\|f\|_{Z_1}=\sup_{j,k} 2^j\cdot\|||\xi|-\gamma_1|^{1/2}\widehat{Q_{jk}f}(\xi)\|_{L^2_\xi}
\end{equation} 
up to small corrections (see Definition \ref{MainZDef} for the precise formula, including the small but important $\delta$-corrections), and then we define the $Z$ norm by applying a suitable number of vector-fields $D$ and $\Omega$.

These considerations and \eqref{waz1} can also be used to justify the approximate formula
\begin{equation}\label{waz7}
(\partial_t\widehat{u})(\xi,t)\approx (1/t){\sum}_j \,\, g_j(\xi)e^{it\Phi(\xi,\eta_j(\xi))}+\,\text{lower order terms},
\end{equation}
as $t\to\infty$, where $\eta_j(\xi)$ denote the stationary points where $\nabla_\eta\Phi(\xi,\eta_j(\xi))=0$. This approximate formula, which holds 
at least as long as the stationary points are nondegenerate, is consistent with the asymptotic behavior of the solution described in Remark \ref{MainRemark} (ii). 
Indeed, at space-time resonances $\Phi(\xi,\eta_j(\xi))=0$, which leads to logarithmic growth for $\widehat{u}(\xi,t)$, while away from 
these space-time resonances the oscillation of $e^{it\Phi(\xi,\eta_j(\xi))}$ leads to convergence.

{\bf{Case 2.}} Consider now the case of higher modulations, $l\geq -m+\delta m$. We start from a formula similar to \eqref{waz4} and integrate by parts in $s$. The main case is when $d/ds$ hits one of the profiles $u$. Using again the equation (see \eqref{waz1}), we have to estimate cubic expressions of the form 
\begin{equation}\label{waz8}
\begin{split}
\widehat{h'_{m,l}}(\xi):=\int_{\mathbb{R}}q_m(s)&\int_{\mathbb{R}^2\times\mathbb{R}^2}\frac{\varphi_{l}(\Phi(\xi,\eta))}{\Phi(\xi,\eta)}e^{is\Phi(\xi,\eta)}n(\xi,\eta)\widehat{u}(\xi-\eta,s)\\
&\times e^{is\Phi'(\eta,\sigma)}n(\eta,\sigma)\widehat{\overline{u}}(\eta-\sigma,s)\widehat{u}(\sigma,s)\,d\eta d\sigma ds,
\end{split}
\end{equation}
where $\Phi'(\eta,\sigma)=\Lambda(\eta)+\Lambda(\eta-\sigma)-\Lambda(\sigma)$. Assume again that the three functions $u$ are Schwartz functions supported at frequency $\approx 1$. We combine $\Phi$ and $\Phi'$ into a combined phase,
\begin{equation*}
\widetilde{\Phi}(\xi,\eta,\sigma):=\Phi(\xi,\eta)+\Phi'(\eta,\sigma)=\Lambda(\xi)-\Lambda(\xi-\eta)+\Lambda(\eta-\sigma)-\Lambda(\sigma).
\end{equation*}
We need to estimate $h'_{m,l}$ according to the $Z_1$ norm. Integration by parts in $\xi$ (approximate finite speed of propagation) shows that the main contribution in $Q_{jk}h'_{m,l}$ is when $2^j\lesssim 2^m$. 

We have two main cases: if $l$ is not too small, say $l\geq -m/14$, then we use first multilinear H\"{o}lder-type estimates, placing two of the factors $e^{is\Lambda}u$ in $L^\infty$ and one in $L^2$, together with analysis of the stationary points of $\widetilde{\Phi}$ in $\eta$ and $\sigma$. This suffices is most cases, except when all the variables are close to $\gamma_0$. In this case we need a key algebraic property, of the form
\begin{equation}\label{waz9}
\text{ if }\quad\nabla_{\eta,\sigma}\widetilde{\Phi}(\xi,\eta,\sigma)=0\quad\text{ and }\quad\widetilde{\Phi}(\xi,\eta,\sigma)=0\quad\text{ then }\quad\nabla_{\xi}\widetilde{\Phi}(\xi,\eta,\sigma)=0,
\end{equation}
if $|\xi-\eta|,|\eta-\sigma|,|\sigma|$ are all close to $\gamma_0$.

On the other and, if $l$ is very small, $l\leq -m/14$, then the denominator $\Phi(\xi,\eta)$ in \eqref{waz8} is dangerous. However, we can restrict to small neighborhoods of the stationary points of $\widetilde{\Phi}$ in $\eta$ and $\sigma$, thus to space-time resonances. This is the most difficult case in the dispersive analysis. We need to rely on one more algebraic property, of the form
\begin{equation}\label{waz9.1}
\text{ if }\quad\nabla_{\eta,\sigma}\widetilde{\Phi}(\xi,\eta,\sigma)=0\quad\text{ and }\quad|\Phi(\xi,\eta)|+|\Phi'(\eta,\sigma)|\ll 1\quad\text{ then }\quad\nabla_{\xi}\widetilde{\Phi}(\xi,\eta,\sigma)=0.
\end{equation} 
See Lemma \ref{cubicphase} for the precise quantitative claims for both \eqref{waz9} and \eqref{waz9.1}.
 
The point of both \eqref{waz9} and \eqref{waz9.1} is that in the resonant region for the cubic integral we have that $\nabla_{\xi}\widetilde{\Phi}(\xi,\eta,\sigma)=0$. We call them {\it{slow propagation of iterated resonances}} properties; as a consequence the resulting function is essentially supported when $|x|\ll 2^m$, using the approximate finite speed of propagation. This gain is reflected in the factor $2^j$ in \eqref{waz6}.

We remark that the analogous property for quadratic resonances
\begin{equation*}
\text{ if }\quad\nabla_{\eta}\Phi(\xi,\eta)=0\quad\text{ and }\quad\Phi(\xi,\eta)=0\quad\text{ then }\quad\nabla_{\xi}\Phi(\xi,\eta)=0
\end{equation*} 
fails. In fact, in our case $|\nabla_{\xi}{\Phi}(\xi,\eta)|\approx 1$ on the space-time resonant set.

In proving \eqref{waz0}, there are, of course, many cases to consider. The full proof covers sections \ref{partialt} and \ref{Sec:Z1Norm}. 
The type of arguments presented above are typical in the proof: we decompose our profiles in space and frequency, localize to small sets in the frequency space, 
keeping track in particular of the special frequencies of size $\gamma_0,\gamma_1,\gamma_1/2,2\gamma_0$, use integration by parts in $\xi$ to control the location of the output, and use multilinear H\"{o}lder-type estimates to bound $L^2$ norms. We remark that the dispersive analysis in the $Z$ norm is much more involved in this paper than in the 
earlier papers mentioned above.

\subsubsection{The special quadratic structure of the full water-wave system}\label{Comparison} The model \eqref{ModelWW} is useful in understanding the full problem. There are, however, additional difficulties to keep in mind. 

 In this paper we use {\it{Eulerian coordinates}}. The local well-posedness theory, which is nontrivial because of 
the quasilinear nature of the equations and the hidden hyperbolic structure, then relies on the so-called ``good unknown'' of 
Alinhac \cite{Alinhac,AlMet1,ABZ1,ADb}.

In our problem, however, this is not enough. Alinhac's good unknown $\omega$ is suitable for the local theory, in the sense that it prevents loss of 
derivatives in energy estimates. However, for the global theory, we need to adjust the main complex variable $U$ which diagonalizes the system, 
using a quadratic correction of the form $T_{m'}\omega$ (see \eqref{defscalarunk}). This way we can identify certain {\it{special quadratic structure}}, 
somewhat similar to the structure in the nonlinearity of \eqref{ModelWW}. This structure, 
which appears to be new, is ultimately responsible for the favorable multipliers of the space-time integrals (similar to \eqref{qaz6}), and leads to 
global energy bounds.

Identifying this structure is, unfortunately, technically involved. Our main result is in Proposition \ref{proeqU}, but its proof depends on 
paradifferential calculus using the Weyl quantization (see Appendix \ref{SecParaOp}) and on a suitable paralinearization of the Dirichlet--Neumann operator. We include all the details of this paralinearization in Appendix \ref{DNOpe}, mostly because its exact form has to be properly adapted to our norms and suitable for global analysis. For this we need some auxiliary spaces: (1) the $\mathcal{O}_{m,p}$ hierarchy, which measures functions, keeping track of both multiplicity (the index $m$) and smoothness (the index $p$), and (2) the $\mathcal{M}^{l,m}_r$ hierarchy, which measures the symbols of the paradifferential operators, keeping track also of the order $l$. 

\subsubsection{Additional remarks} We list below some other issues one needs to keep in mind in the proof of the main theorem.

\setlength{\leftmargini}{1.8em}
\begin{itemize}
\item[(1)] Another significant difficulty of the full water wave system, which is not present in \eqref{ModelWW}, is that the ``linear'' part of the equation 
is given by a more complicated paradifferential operator $T_{\Sigma}$, not by the simple operator $\Lambda$. The operator $T_\Sigma$ 
includes nonlinear cubic terms that lose $3/2$ derivatives, and an additional smoothing effect is needed. 
\smallskip
\item[(2)] The very low frequencies $|\xi|\ll 1$ play an important role in all the global results for water wave systems. These frequencies are not captured 
in the model \eqref{ModelWW}. In our case, there is a suitable {\it{null structure}}: the multipliers of the quadratic terms are bounded by $|\xi|\min(|\eta|,|\xi-\eta|)^{1/2}$, 
see \eqref{Assumptions2}, which is an important ingredient in the dispersive part of the argument.  
\smallskip
\item[(3)] It is important to propagate energy control of both high Sobolev norms and weighted norms using many copies of the rotation vector-field. 
Because of this control, we can pretend that all the profiles in the dispersive part of the argument are almost radial and located at 
frequencies $\lesssim 1$. The linear estimates (in Lemma \ref{LinEstLem}) and many of the bilinear estimates are much stronger because of 
this almost radiality property of the profiles.     
\smallskip
\item[(4)] At many stages it is important that the four spheres, the sphere of slow decay $\{|\xi|=\gamma_0\}$, the sphere 
of space-time resonant outputs $\{|\xi|=\gamma_1\}$, and the sphere of space-time resonant inputs $\{|\xi|=\gamma_1/2\}$, and 
the sphere $\{|\xi|=2\gamma_0\}$ are all separated from each other. Such separation conditions played an important role also in other papers, such as \cite{GM,GIP,DIP}. 
\end{itemize}

\subsection{Organization}\label{Orga} The rest of the paper is organized as follows: in section \ref{MPO} we state the main propositions and summarize the main definitions and notation in the paper.

In sections \ref{Eqs}--\ref{L2proof} we prove Proposition \ref{MainBootstrapEn}, which is the main improved energy estimate. The key components of the proof are Proposition \ref{proeqU} (derivation of the main quasilinear scalar equation, identifying the special quadratic structure), Proposition \ref{ProBulk} (the first energy estimate, including the strongly semilinear structure), Proposition \ref{EEMainProp} (reduction to a space-time integral bound), Lemma \ref{L2Prop0} (the main $L^2$ bound on a localized Fourier integral operator), and Lemma \ref{EELemmaMain} (the main interactions in Proposition \ref{EEMainProp}). The proof of Proposition \ref{MainBootstrapEn} uses also the material presented in the appendices, in particular the paralinearization of the Dirichlet--Neumann operator in Proposition \ref{DNmainpro}.

In sections \ref{NotationsF}--\ref{Sec:Z1Norm} we prove Proposition \ref{MainBootstrapDisp}, which is the main improved dispersive estimate. The key components of the proof are the reduction to Proposition \ref{bootstrap}, the precise analysis of the time derivative of the profile in Lemmas \ref{dtfLem1}--\ref{dtfLem2}, and the analysis of the Duhamel formula, divided in several cases, in Lemmas \ref{FSPLem}--\ref{lsmallBound}.

In sections \ref{phacolle}-\ref{upsilon} we collect estimates on the dispersion relation and the phase functions. The main results are Proposition \ref{spaceres11} (structure of the resonance sets), Proposition \ref{volume} (bounds on sublevel sets), Lemma \ref{cubicphase} (slow propagation of iterated resonances), and Lemmas \ref{Geomgamma0}--\ref{lemmaD2} (the restricted nondegeneracy property of the resonant hypersurfaces).  

\subsection{Acknowledgements}\label{Ack} We would like to thank Thomas Alazard for very useful discussions and for sharing an unpublished note on 
paralinearization, and Javier G\'{o}mez-Serrano for discussions on numerical simulations. The third author would like to thank Vladimir Georgescu for inspiring discussions on the Weyl quantization. The last author would also like to thank Jalal Shatah 
for generously sharing his expertise on water waves on many occasions.

\section{The main propositions}\label{MPO}

Recall the water-wave system with gravity and surface tension,
\begin{equation}\label{WW0}
\left\{
\begin{array}{l}
\partial_t h = G(h) \phi,\\
\partial_t \phi = -g h  + \sigma \div \Big[ \dfrac{\nabla h}{ (1+|\nabla h|^2)^{1/2} } \Big]
  - \dfrac{1}{2} {|\nabla \phi|}^2 + \dfrac{{\left( G(h)\phi + \nabla h \cdot \nabla \phi \right)}^2}{2(1+{|\nabla h|}^2)},
\end{array}
\right.
\end{equation}
where $G(h)\phi$ denotes the Dirichlet-Neumann operator associated to the water domain. Theorem \ref{MainTheo} is a consequence of 
Propositions \ref{LocalEx}, \ref{MainBootstrapEn}, and \ref{MainBootstrapDisp} below.

\begin{proposition}\label{LocalEx} (Local existence and continuity)
(i) Assume that $N\geq 10$. There is $\overline{\varep}>0$ such that if
\begin{equation}
\label{Ama20}
\|h_0\|_{H^{N+1}}+\|\phi_0\|_{\dot{H}^{N+1/2,1/2}}\leq\overline{\varep}
\end{equation}
then there is a unique solution $(h,\phi)\in C([0,1]:H^{N+1}\times \dot{H}^{N+1/2,1/2})$ of the system \eqref{WW0} with $g=1$ and $\sigma=1$, 
with initial data $(h_0,\phi_0)$.

(ii) Assume $T_0\geq 1$, $N =N_1 + N_3$, and $(h,\phi)\in C([0,T_0]:H^{N+1}\times \dot{H}^{N+1/2,1/2})$ is a solution of the system \eqref{WW0} 
with $g=1$ and $\sigma=1$. With the $Z$ norm as in Definition \ref{MainZDef} below and the profile $\mathcal{V}$ defined as 
in \eqref{Ama30}, assume that for some $t_0\in[0,T_0]$
\begin{equation}\label{Ama21}
\mathcal{V}(t_0)\in H^{N_0}\cap H^{N_1,N_3}_{\Omega}\cap Z, \qquad {\| \mathcal{V}(t_0) \|}_{H^N} \leq 2\bar{\e}.
\end{equation}
Then there is $\tau=\tau(\|\mathcal{V}(t_0)\|_{H^{N_0}\cap H^{N_1,N_3}\cap Z})$ such that the mapping 
$t \to {\|\mathcal{V}(t) \|}_{H^{N_0}\cap H^{N_1,N_3}_{\Omega}\cap Z} $ is continuous on $[0,T_0]\cap [t_0,t_0+\tau]$, and
\begin{equation}\label{Ama22}
\sup_{t\in[0,T_0]\cap [t_0,t_0+\tau]}\|\mathcal{V}(t)\|_{H^{N_0}\cap H^{N_1,N_3}_{\Omega}\cap Z}\leq 2\|\mathcal{V}(t_0)\|_{H^{N_0}\cap H^{N_1,N_3}_{\Omega}\cap Z}.
\end{equation}
\end{proposition}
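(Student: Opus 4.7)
The approach is the classical paradifferential route, leveraging the machinery the paper sets up in Appendix~\ref{DNOpe}. First I would paralinearize the Dirichlet--Neumann operator via Proposition~\ref{DNmainpro} and substitute into \eqref{WW0}, then introduce Alinhac's good unknown to eliminate the apparent derivative loss at top order, and finally pass to the diagonalizing complex variable $\mathcal{U}=(1-\Delta)^{1/2}h+i|\nabla|^{1/2}\phi$. This yields a paradifferential evolution equation of the schematic form $(\partial_t+iT_\Sigma)\mathcal{U}=T_V\cdot\nabla\mathcal{U}+R$, where $T_\Sigma$ has principal symbol $\sqrt{|\xi|+|\xi|^3}$ up to a small (in the small-data regime) correction, $T_V$ is the transport velocity, and $R$ is a smoother remainder controlled by a lower-order Sobolev norm of the solution. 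Standard $L^2$-type paradifferential energy estimates on $\langle D\rangle^N\mathcal{U}$, using the (essential) self-adjointness of $T_\Sigma$ to kill the principal contribution and symmetrization to kill the subprincipal transport term, produce an a priori bound $\frac{d}{dt}\|\mathcal{U}(t)\|_{H^N}^2\lesssim (1+\|\mathcal{U}(t)\|_{W^{k,\infty}})\|\mathcal{U}(t)\|_{H^N}^2$ for some $k\le N/2$. For data of size $\overline{\varep}$ in $H^N$, smallness forces the Rayleigh--Taylor condition and gives control on $[0,1]$. Existence then follows by a standard mollification/Picard scheme and uniqueness by a difference estimate at one derivative less.

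\textbf{Plan for part (ii) (short-time continuity).} The Sobolev and weighted-Sobolev parts of the norm propagate continuously by standard a priori estimates, since the rotation $\Omega$ commutes with the linearized flow and can be treated as an admissible vector field in the paradifferential energy calculus. The substantive issue is the $Z$ norm. I would work with the profile $\mathcal{V}(t)=e^{it\Lambda}\mathcal{U}(t)$, which satisfies Duhamel's formula
\[
\widehat{\mathcal{V}}(\xi,t)-\widehat{\mathcal{V}}(\xi,t_0)=\int_{t_0}^t e^{is\Lambda(\xi)}\widehat{\mathcal{N}[\mathcal{U}]}(\xi,s)\,ds,
\]
with $\mathcal{N}[\mathcal{U}]$ the (leading-order quadratic) nonlinearity produced by Proposition~\ref{proeqU}. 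Using crude bilinear bounds on the symbols of $\mathcal{N}$ together with the uniform control of $\mathcal{V}(s)$ in $H^{N_0}\cap H^{N_1,N_3}_\Omega$ furnished by part (i), one obtains
\[
\|\mathcal{V}(t)-\mathcal{V}(t_0)\|_Z\lesssim |t-t_0|\cdot P\bigl(\|\mathcal{V}\|_{L^\infty_{[t_0,t]}(H^{N_0}\cap H^{N_1,N_3}_\Omega\cap Z)}\bigr),
\]
for some polynomial $P$. Since the $Z$ norm (Definition~\ref{MainZDef}) is an atomic sup over localized pieces $Q_{jk}$, continuity of $t\mapsto\|\mathcal{V}(t)\|_Z$ on the short interval follows from the uniform-in-$(j,k)$ control of the atoms together with dominated convergence. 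Choosing $\tau$ small enough depending only on the size of $\|\mathcal{V}(t_0)\|_{H^{N_0}\cap H^{N_1,N_3}_\Omega\cap Z}$ then produces the factor-of-two bound \eqref{Ama22}.

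\textbf{Main obstacle.} The delicate point is the bilinear $Z$-norm bound on $\mathcal{N}[\mathcal{U}]$, since the $Z$ norm carries simultaneous space and frequency localization including the sharp weights near the slow-decay sphere $\{|\xi|=\gamma_0\}$ and the space-time resonant sphere $\{|\xi|=\gamma_1\}$. The key point that makes this tractable in the local-existence setting is that on a short time interval one does not need any of the sharp quadratic gains (dispersive decay, space-time resonance analysis, slow propagation of iterated resonances) used in Proposition~\ref{MainBootstrapDisp}; a linear-in-$|t-t_0|$ bound, obtained by trading a few derivatives from $H^{N_0}$ against the localization weights in $Z$ and using only Schur-type estimates on the bilinear multipliers, is sufficient. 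Thus the proof of part (ii) borrows the algebraic setup of the global analysis but none of its sharp content.
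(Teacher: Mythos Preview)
Your sketch for part (i) is essentially what the paper has in mind: it refers to the standard paradifferential local theory (citing Wu and Alazard--Burq--Zuily) and does not give further details, so your outline is consistent with the paper's treatment.

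For part (ii), your overall shape is right but you are missing the one technical point the paper actually singles out. The norms $H^{N_1,N_3}_\Omega$ and $Z$ carry \emph{unbounded} spatial weights (many copies of the unbounded vector field $\Omega$, and physical-space weights $\sim 2^{(1-50\delta)j}$ inside $B_j$). In a quasilinear, non-local problem you cannot simply ``run a priori estimates'' in such spaces: you do not know a priori that $\mathcal{V}(s)$ belongs to $H^{N_1,N_3}_\Omega$ or $Z$ for $s>t_0$, so your Duhamel/Gronwall argument is circular as written (indeed, you invoke ``uniform control of $\mathcal{V}(s)$ in $H^{N_0}\cap H^{N_1,N_3}_\Omega$ furnished by part (i)'', but part (i) gives no such control). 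The paper's remedy is to introduce $\varepsilon$-truncated weights, $\Omega_\varepsilon:=(1+\varepsilon^2|x|^2)^{-1/2}\Omega$ and an analogous $Z_\varepsilon$, so that all quantities are automatically finite; one checks that $\Omega_\varepsilon T_ab=T_{\Omega_\varepsilon a}b+T_a\Omega_\varepsilon b+R$ with $R$ bounded uniformly in $\varepsilon$, reproves the relevant energy and bilinear estimates in the truncated spaces, obtains bounds uniform in $\varepsilon$, and lets $\varepsilon\to 0$. Once finiteness is secured this way, your crude-bilinear, linear-in-$|t-t_0|$ bound for the $Z$ increment is exactly the right endgame. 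So the gap is not in the analysis of the nonlinearity but in the justification step that makes the a priori estimates applicable; the $\varepsilon$-truncation device (cf.\ the paper's reference to \cite{IP2}, Proposition~2.4) is the missing ingredient.
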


Proposition \ref{LocalEx} is a local existence result for the water waves system. 
We will not provide the details of its proof in the paper, but only briefly discuss it. Part (i) is a standard wellposedness statement 
in a sufficiently regular Sobolev space, see for example \cite{Wu1,ABZ1}.

Part (ii) is a continuity statement for the Sobolev norm $H^{N_0}$ as well as for the 
$H_{\Omega}^{N_1,N_3}$ and $Z$ norms\footnote{Notice 
that we may assume uniform in time smallness of the high Sobolev norm $H^N$ with $N=N_1 + N_3$, 
thanks to the uniform control on the $Z$ norm, see Proposition \ref{MainBootstrapEn}, and Definition \ref{MainZDef}.}. Continuity for the $H^{N_0}$ 
norm is standard. A formal proof of continuity for the $H_{\Omega}^{N_1,N_3}$ and $Z$ norms and of \eqref{Ama22} requires some adjustments of the 
arguments given in the paper, due to the quasilinear and non-local nature of the equations.

More precisely, we can define $\eps$-truncations of the rotational vector-field $\Omega$, i.e. $\Omega_\eps := (1+\eps^2 |x|^2)^{-1/2} \Omega$, and the associated 
spaces $H_{\Omega_\eps}^{N_1,N_3}$, with the obvious adaptation of the norm in \eqref{normH}. Then we notice that
\begin{equation*}
\Omega_\eps T_a b = T_{\Omega_\e a} b + T_{a} \Omega_\e b + R
\end{equation*}
where $R$ is a suitable remainder bounded uniformly in $\e$. Because of this we can adapt the arguments in Proposition \ref{ProBulk} and in 
appendices \ref{SecParaOp} and \ref{DNOpe} to prove energy estimates in the $\eps$-truncated spaces $H_{\Omega_\eps}^{N_1,N_3}$. 
For the $Z$ norm one can proceed similarly using an $\eps$-truncated version $Z_\eps$ (see the proof of Proposition 2.4 in \cite{IP2} for a similar argument) 
and the formal expansion of the Dirichlet--Neumann operator in section \ref{ProofYu}. 
The conclusion follows from the uniform estimates by letting $\eps \rightarrow 0$.

The following two propositions summarize our main bootstrap argument.

\begin{proposition}\label{MainBootstrapEn}
(Improved energy control) Assume that $T\geq 1$ and $(h,\phi)\in C([0,T]:H^{N_0+1}\times \dot{H}^{N_0+1/2,1/2})$
is a solution of the system \eqref{WW0} with $g=1$ and $\sigma=1$, with initial data $(h_0,\phi_0)$. Assume that, with $\mathcal{U}$ and $\mathcal{V}$ defined as in \eqref{Ama30},
\begin{equation}\label{Ama31}
\|\mathcal{U}(0)\|_{H^{N_0}\cap H^{N_1,N_3}_{\Omega}}+\|\mathcal{V}(0)\|_{Z}\leq \varep_0\ll 1,
\end{equation}
and, for any $t\in[0,T]$,
\begin{equation}\label{Ama32}
(1+t)^{-\delta^2}\|\mathcal{U}(t)\|_{H^{N_0}\cap H^{N_1,N_3}_{\Omega}}+\|\mathcal{V}(t)\|_{Z}\leq \varep_1\ll 1,
\end{equation}
where the $Z$ norm is as in Definition \ref{MainZDef}. Then, for any $t\in[0,T]$,
\begin{equation}\label{Ama33}
(1+t)^{-\delta^2}\|\mathcal{U}(t)\|_{H^{N_0}\cap H^{N_1,N_3}_{\Omega}}\lesssim \varep_0+\varep_1^{3/2}.
\end{equation}
\end{proposition}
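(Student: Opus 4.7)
The plan is to run a quasilinear I-method argument in the spirit of \cite{IoPu3,IoPu4,DIP}, adapted to the three-dimensional gravity-capillary system. The first step is to replace the Hamiltonian variable $\mathcal{U}$ by a good complex unknown $U$ obtained from Alinhac's good unknown $\omega$ via a carefully chosen quadratic paradifferential correction $T_{m'}\omega$, as sketched in subsection \ref{Comparison}. This reformulation, which is the content of Proposition \ref{proeqU}, achieves two things simultaneously: it diagonalizes the linearized operator into a single paradifferential half-wave equation $(\partial_t+iT_\Sigma)U=\mathcal{N}$ where the quadratic part of $\mathcal{N}$ carries the special structure described in the introduction, and it exposes a depletion-type factor of the form \eqref{qaz6} in the symbol of that quadratic part. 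The precise paralinearization of the Dirichlet--Neumann operator carried out in Appendix \ref{DNOpe}, together with the Weyl calculus of Appendix \ref{SecParaOp} and the $\mathcal{O}_{m,p}$, $\mathcal{M}^{l,m}_r$ hierarchies, is a prerequisite for this step.

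With this reformulation in hand, the next step is to define the total energy $\mathcal{E}(t)\sim\|\mathcal{U}(t)\|_{H^{N_0}\cap H^{N_1,N_3}_\Omega}^2$ by summing $\|\langle\nabla\rangle^{N_0}U(t)\|_{L^2}^2$ and $\|\langle\nabla\rangle^{N_3}\Omega^jU(t)\|_{L^2}^2$ for $j\leq N_1/N_3$. Since $\Omega$ commutes with the linear flow, one commutes $\Omega^j$ and $\langle\nabla\rangle^{N_0}$ through the equation, verifies that the quasilinear and paradifferential structure is preserved, and absorbs the cubic terms via a smoothing effect associated with the operator $T_\Sigma$. The goal is Proposition \ref{ProBulk}: an identity of the form $\tfrac{d}{dt}\mathcal{E}(t)=\mathcal{B}(t)+\mathcal{C}(t)$, where $\mathcal{B}$ is a strongly semilinear quadratic space-time integral whose symbol factors through the depletion factor $\mathfrak{d}(\xi,\eta)=[(\xi-\eta)\cdot(\xi+\eta)]^2/(1+|\xi+\eta|^2)$, and $\mathcal{C}$ is a cubic remainder that is controlled directly using the dispersive bounds implied by \eqref{Ama32}.

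The heart of the proof is to show $\int_0^t \mathcal{B}(s)\,ds \lesssim \varepsilon_1^3(1+t)^{2\delta^2}$, which after taking a square root yields the $\varepsilon_1^{3/2}$ improvement in \eqref{Ama33}. This is the content of Proposition \ref{EEMainProp}. Following the scheme outlined in Steps 2--4 of subsection \ref{SimModEnergy}, one dyadically decomposes in output frequency $2^k$, in time $2^m$, and in modulation $2^p$, and analyzes the oscillatory integrals \eqref{qaz7}. The low-modulation regime, roughly $p-k/2\leq -2m/3-\epsilon m$, is controlled by the main $L^2$ lemma (Lemma \ref{L2Prop0}), whose proof via $TT^*$ depends crucially on the restricted nondegeneracy condition $\Upsilon\neq 0$ on $\{\Phi=0\}$ from \eqref{IntroUpsilon}, verified by explicit computation in Lemmas \ref{Geomgamma0}--\ref{lemmaD2}. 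In the high-modulation regime one integrates by parts in time (Lemma \ref{EELemmaMain}); the dangerous term is obtained when the $s$-derivative falls on a profile, and one uses either the algebraic correlation \eqref{corre} to absorb the derivative loss when $2^p\lesssim 1$, or a symmetrization via the equation when $2^p\gtrsim 1$.

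The main obstacle will be the borderline regime $2^p\approx 2^{k/2-2m/3}$ at large frequency $2^k$, where the low-modulation $L^2$ bound and the integration-by-parts bound just barely meet: here the symmetrization after time-integration by parts produces a cubic term that threatens to lose a full derivative, and only the correlation \eqref{corre} between the multiplier $m(\xi,\eta)$ and the modulation $\Phi(\xi,\eta)$ saves the estimate. Since this correlation is itself a direct consequence of the special quadratic structure identified in Proposition \ref{proeqU}, everything hinges on paralinearizing the Dirichlet--Neumann operator so that this structure survives commutation with $\langle\nabla\rangle^{N_0}$ and $\Omega^{N_1}$. Once all the dyadic pieces are summed, using the $Z$-norm hypothesis \eqref{Ama32} to provide the $L^\infty$ decay of one of the three factors in each trilinear estimate and the energy hypothesis for the remaining $L^2$ factors, the bound \eqref{Ama33} follows.
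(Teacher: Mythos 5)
Your proposal follows essentially the same strategy as the paper: scalar reformulation via the good unknown with the quadratic paradifferential correction $T_{m'}\omega$ (Proposition \ref{proeqU}), strongly semilinear energy estimates exploiting the depletion factor $\mathfrak{d}$ and the correlation \eqref{corre} (Proposition \ref{ProBulk}), reduction to the space-time integral bound (Proposition \ref{EEMainProp}), and the modulation dichotomy resolved on one side by the main $L^2$ lemma (Lemma \ref{L2Prop0}) relying on the restricted nondegeneracy of $\Upsilon$, and on the other by integration by parts in time combined with symmetrization. One small but important technical point: the paper's energy functional \eqref{bin3.1} is built from $W_n:=T_\Sigma^n U$ and $Y_{m,p}:=\Omega^p T_\Sigma^m U$, not from $\langle\nabla\rangle^{N_0}U$; applying $T_\Sigma^n$ rather than a constant-coefficient Fourier multiplier is precisely what eliminates the commutator $[\langle\nabla\rangle^{N_0},T_\Sigma]$ that would otherwise lose derivatives, so the ``smoothing effect'' you allude to is really this choice of quasilinear derivative.
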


\begin{proposition}\label{MainBootstrapDisp}
(Improved dispersive control) With the same assumptions as in Proposition \ref{MainBootstrapEn} above, in particular \eqref{Ama31}--\eqref{Ama32}, we have, for any $t\in[0,T]$,
\begin{equation}\label{Ama34}
\|\mathcal{V}(t)\|_Z\lesssim \varep_0+\varep_1^2.
\end{equation}
\end{proposition}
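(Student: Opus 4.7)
The plan is to close the partial bootstrap \eqref{Ama34} by a careful Duhamel analysis of the profile $\mathcal{V}(t) = e^{it\Lambda}\mathcal{U}(t)$, following the framework already illustrated on the model problem in subsection \ref{SimModDisp}. First I would exploit the quasilinear scalar equation (supplied by Proposition \ref{proeqU}) to write, schematically,
\[
\widehat{\mathcal{V}}(\xi,t) = \widehat{\mathcal{V}}(\xi,0) + \int_0^t \int_{\mathbb{R}^2} e^{is\Phi(\xi,\eta)} \mathfrak{n}(\xi,\eta)\widehat{\mathcal{V}}(\xi-\eta,s)\widehat{\mathcal{V}}(\eta,s)\, d\eta\, ds + \mathcal{R}(\xi,t),
\]
with $\Phi(\xi,\eta) = \Lambda(\xi) \mp \Lambda(\eta) \mp \Lambda(\xi-\eta)$. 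Crucially, the quadratic multiplier $\mathfrak{n}$ carries the low-frequency null structure $|\mathfrak{n}(\xi,\eta)|\lesssim |\xi|\min(|\eta|,|\xi-\eta|)^{1/2}$ noted in item (2) of subsection \ref{Comparison}, and $\mathcal{R}$ collects the cubic and higher remainders. The remainder $\mathcal{R}$ is treated by crude multilinear estimates, placing two factors in $L^\infty$ via the linear $t^{-5/6}$ decay of Lemma \ref{LinEstLem} (applied to $\mathcal{V}$ in the $Z$ norm) and one in $L^2$ via the energy control \eqref{Ama32}; the cubic gain absorbs the $t^{\delta^2}$ growth allowed in \eqref{Ama32}.

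Next I would reduce the $Z$ bound to dyadic pieces. Insert cutoffs $q_m(s)$ supported in $s\approx 2^m$ and $\varphi_\ell(\Phi(\xi,\eta))$ for the modulation $|\Phi|\approx 2^\ell$, and decompose outputs by $|\xi|\approx 2^k$ and by spatial scale $2^j$ using the atoms $Q_{jk}$ from Definition \ref{MainZDef}. The target becomes a bound of $\varep_1^2 2^{-\delta m}$ on every dyadic piece, which sums in $m$ to \eqref{Ama34}. Because the $Z_1$ norm \eqref{waz6} weights atoms by $2^j$, controlling the spatial factor is essential: repeated integration by parts in $\xi$ (approximate finite speed of propagation) forces $2^j \lesssim 2^m$ up to rapidly decaying tails, except possibly at the frozen stationary frequencies, which must be identified precisely as neighborhoods of the resonant sphere $\{|\xi|=\gamma_1\}$.

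The core argument splits according to the modulation size $\ell$. In the small-modulation regime $\ell \leq -m+\delta m$ one integrates by parts in $\eta$ (using the standard identity that moves derivatives onto the profiles rather than onto the cutoff $\varphi_{\leq\ell}(\Phi)$); this localizes the integral to $|\nabla_\eta\Phi|\lesssim 2^{-m/2+\delta m}$, i.e.\ to a thin tubular neighborhood of the space-time resonant set \eqref{waz3}, which in our case reduces to $\{|\xi|=\gamma_1,\,\eta=\xi/2\}$. The nondegeneracy of $\nabla^2_{\eta\eta}\Phi$ there produces the profile shape $c(\xi)\varphi_{\leq -m}(|\xi|-\gamma_1)$, exactly matching \eqref{waz6}; angular derivatives generated along the way are absorbed using the almost-radiality supplied by $H^{N_1,N_3}_{\Omega}$ control from \eqref{Ama32}. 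In the complementary range $\ell \geq -m+\delta m$ one integrates by parts in $s$, producing the cubic integral \eqref{waz8} with the dangerous denominator $1/\Phi(\xi,\eta)$. This is further split into the moderate case $\ell \geq -m/14$, handled by stationary phase in $(\eta,\sigma)$ on the combined phase $\widetilde{\Phi}$, multilinear $L^\infty\times L^\infty\times L^2$ H\"older estimates, and the low-frequency null structure of item (2); the dangerous clustering of all variables near $\gamma_0$ is saved by the algebraic identity \eqref{waz9}. The very-small-modulation case $\ell \leq -m/14$ is the most delicate: there $1/\Phi$ is genuinely large, but the stationary condition $\nabla_{\eta,\sigma}\widetilde{\Phi}=0$ localizes to the iterated resonance set, where the slow-propagation identity \eqref{waz9.1} of Lemma \ref{cubicphase} forces $\nabla_\xi\widetilde{\Phi}=0$; a further integration by parts in $\xi$ then imposes $2^j \ll 2^m$, paying for the $2^j$ factor in \eqref{waz6}.

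The main obstacle will be exactly this very-small-modulation cubic regime, in which the iterated-resonance geometry encoded by \eqref{waz9}--\eqref{waz9.1}, the restricted nondegeneracy condition \eqref{IntroUpsilon} underlying the sharp $L^2$ bound of Lemma \ref{L2Prop0}, and the pairwise separation of the four special spheres $\{|\xi|=\gamma_0\}$, $\{|\xi|=\gamma_1\}$, $\{|\xi|=\gamma_1/2\}$, $\{|\xi|=2\gamma_0\}$ must be deployed simultaneously while staying within the $\varep_1^2$ budget. Carrying out this synthesis, together with the preparatory analysis of $\partial_s\mathcal{V}$, is precisely what the paper organizes into the reduction to Proposition \ref{bootstrap}, the time-derivative bounds of Lemmas \ref{dtfLem1}--\ref{dtfLem2}, and the case-by-case Duhamel analysis of Lemmas \ref{FSPLem}--\ref{lsmallBound}.
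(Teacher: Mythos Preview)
Your overall strategy is right and matches the paper: reduce to Proposition~\ref{bootstrap}, decompose the Duhamel integral dyadically in time and modulation, treat small modulations by stationary-phase localization to the space-time resonant circle $\{|\xi|=\gamma_1\}$, and treat large modulations by integration by parts in $s$ followed by the cubic/iterated-resonance analysis of Lemmas~\ref{FSPLem}--\ref{lsmallBound}, with Lemmas~\ref{dtfLem1}--\ref{dtfLem2} providing the needed structure of $\partial_s\mathcal{V}$.

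Two misattributions should be corrected. First, the scalar equation feeding the Duhamel formula is \emph{not} the paralinearized equation of Proposition~\ref{proeqU}; that equation has $iT_\Sigma+iT_{V\cdot\zeta}$ on the left and is tailored to the energy estimates. For the dispersive analysis the paper instead derives, in Appendix~\ref{ProofYu} (Lemma~\ref{TaylorNew0}), an equation of the exact form $(\partial_t+i\Lambda)\mathcal{U}=\mathcal{N}_2+\mathcal{N}_3+\mathcal{N}_{\geq 4}$, via a direct Taylor expansion of the Dirichlet--Neumann operator (Lemma~\ref{cvb100}) and explicit verification of the symbol bounds \eqref{Assumptions2}--\eqref{Assumptions3} and the remainder bounds on $\mathcal{N}_{\geq 4}$ in \eqref{bootstrap2}. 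The low-frequency null structure you cite is precisely \eqref{Assumptions2}, but you need this separate derivation to access it. Second, Lemma~\ref{L2Prop0} (the ``main $L^2$ lemma'') plays no role in the dispersive proof; it belongs entirely to the energy argument of Sections~\ref{secEE1}--\ref{L2proof}. The dispersive side relies instead on Lemmas~\ref{tech5}, \ref{RotIBP}, \ref{PhiLocLem}, \ref{LinEstLem}, Schur-type bounds (Lemma~\ref{Shur2Lem}), and the phase-function geometry of Propositions~\ref{spaceres11}, \ref{volume} and Lemma~\ref{cubicphase}.
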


It is easy to see that Theorem \ref{MainTheo} follows from Propositions \ref{LocalEx}, \ref{MainBootstrapEn}, and \ref{MainBootstrapDisp} by a standard continuity argument and \eqref{LinftyBd3.5} (for the $L^\infty$ bound on $\mathcal{U}$ in \eqref{mainconcl1}). The rest of the paper is concerned with the proofs of Propositions \ref{MainBootstrapEn} and \ref{MainBootstrapDisp}.

\subsection{Definitions and notation}\label{secdefs}

We summarize in this subsection some of the main definitions we use in the paper.

\subsubsection{General notation}\label{defznorm} We start by defining several multipliers that allow us to localize in the Fourier space. We fix $\varphi:\mathbb{R}\to[0,1]$ an even smooth 
function supported in $[-8/5,8/5]$ and equal to $1$ in $[-5/4,5/4]$. For simplicity of notation, we also 
let $\varphi:\mathbb{R}^2\to[0,1]$ denote the corresponding radial function on $\mathbb{R}^2$. Let
\begin{equation*}
\begin{split}
&\varphi_k(x):=\varphi(|x|/2^k)-\varphi(|x|/2^{k-1})\,\,\text{ for any }\,\,k\in\mathbb{Z},\qquad\varphi_I:=\sum_{m\in I\cap\mathbb{Z}}\varphi_m\text{ for any }I\subseteq\mathbb{R},\\
&\varphi_{\leq B}:=\varphi_{(-\infty,B]},\quad\varphi_{\geq B}:=\varphi_{[B,\infty)},\quad\varphi_{<B}:=\varphi_{(-\infty,B)},\quad \varphi_{>B}:=\varphi_{(B,\infty)}.
\end{split}
\end{equation*}
For any $a<b\in\mathbb{Z}$ and $j\in[a,b]\cap\mathbb{Z}$ let
\begin{equation}\label{Alx80}
\varphi^{[a,b]}_j:=
\begin{cases}
\varphi_{j}\quad&\text{ if }a<j<b,\\
\varphi_{\leq a}\quad&\text{ if }j=a,\\
\varphi_{\geq b}\quad&\text{ if }j=b.
\end{cases}
\end{equation}

For any $x\in\mathbb{R}$ let $x^{+}=\max(x,0)$ and $x^-:=\min(x,0)$. Let
\begin{equation*}
\mathcal{J}:=\{(k,j)\in\mathbb{Z}\times\mathbb{Z}_+:\,k+j\geq 0\}.
\end{equation*}
For any $(k,j)\in\mathcal{J}$ let
\begin{equation*}
\phii^{(k)}_j(x):=
\begin{cases}
\varphi_{\leq -k}(x)\quad&\text{ if }k+j=0\text{ and }k\leq 0,\\
\varphi_{\leq 0}(x)\quad&\text{ if }j=0\text{ and }k\geq 0,\\
\varphi_j(x)\quad&\text{ if }k+j\geq 1\text{ and }j\geq 1,
\end{cases}
\end{equation*}
and notice that, for any $k\in\mathbb{Z}$ fixed, $\sum_{j\geq-\min(k,0)}\phii^{(k)}_j=1$.

Let $P_k$, $k\in\mathbb{Z}$, denote the Littlewood--Paley projection operators defined by the Fourier multipliers $\xi\to \varphi_k(\xi)$. 
Let $P_{\leq B}$ (respectively $P_{>B}$) denote the operators defined by the Fourier 
multipliers $\xi\to \varphi_{\leq B}(\xi)$ (respectively $\xi\to \varphi_{>B}(\xi)$). For $(k,j)\in\mathcal{J}$ let $Q_{jk}$ denote the operator
\begin{equation}\label{qjk}
(Q_{jk}f)(x):=\phii^{(k)}_j(x)\cdot P_kf(x).
\end{equation}
In view of the uncertainty principle the operators $Q_{jk}$ are relevant only when $2^j2^k\gtrsim 1$, which explains the definitions above. 

We will use two sufficiently large constants $\mathcal{D}\gg\mathcal{D}_1\gg 1$ ($\mathcal{D}_1$ is only used in sections \ref{phacolle}--\ref{upsilon} to prove properties of the phase functions). For $k,k_1,k_2\in\mathbb{Z}$ let
\begin{equation}\label{Rset}
\mathcal{D}_{k,k_1,k_2}:=\{(\xi,\eta)\in(\mathbb{R}^2)^2:\,|\xi|\in[2^{k-4},2^{k+4}],\,|\eta|\in[2^{k_2-4},2^{k_2+4}],\,|\xi-\eta|\in[2^{k_1-4},2^{k_1+4}]\}.
\end{equation}

Let $\lambda(r)=\sqrt{|r|+|r|^3}$, $\Lambda(\xi)=\sqrt{|\xi|+|\xi|^3}=\lambda(|\xi|)$, $\Lambda:\mathbb{R}^2\to [0,\infty)$. Let
\begin{equation}\label{notation}
\mathcal{U}_+:=\mathcal{U},\quad \mathcal{U}_-:=\overline{\mathcal{U}},\quad \mathcal{V}(t)=\mathcal{V}_+(t):=e^{it\Lambda}\mathcal{U}(t),\quad \mathcal{V}_{-}(t):=e^{-it\Lambda}\mathcal{U}_{-}(t).
\end{equation} 
Let $\Lambda_+=\Lambda$ and $\Lambda_{-}:=-\Lambda$. For $\sigma,\mu,\nu\in \{+,-\}$, we define the associated phase functions
\begin{equation}\label{phasedef}
\begin{split}
&\Phi_{\sigma\mu\nu}(\xi,\eta):=\Lambda_{\sigma}(\xi)-\Lambda_{\mu}(\xi-\eta)-\Lambda_{\nu}(\eta),\\
&\widetilde{\Phi}_{\sigma\mu\nu\beta}(\xi,\eta,\sigma):=\Lambda_{\sigma}(\xi)-\Lambda_{\mu}(\xi-\eta)-\Lambda_{\nu}(\eta-\sigma)-\Lambda_{\beta}(\sigma).
\end{split}
\end{equation}

\subsubsection{The spaces $O_{m,p}$} We will need several spaces of functions, in order to properly measure linear, quadratic, cubic, and quartic and higher order terms.
In addition, we also need to track the Sobolev smoothness and angular derivatives. Assume that $N_2=40\geq N_3+10$ and $N_0$ (the maximum number of Sobolev derivatives) and $N_1$ (the maximum number of angular derivatives) and $N_3$ (additional Sobolev regularity) are as before.

\begin{definition}\label{DefOHierarchy}
Assume $T\geq 1$ and let $p \in[-N_3,10]$. For $m\geq 1$ we define $O_{m,p}$ as the space of functions $f\in C([0,T]:L^2)$ satisfying
\begin{align}\label{O_1}
\begin{split}
\|f\|_{O_{m,p}}:=\sup_{t\in[0,T]}(1+t)^{(m-1)(5/6-20\delta^2)-\delta^2}\big[&\| f(t) \|_{H^{N_0+p}}+\|f(t) \|_{H_{\Omega}^{N_1,N_3+p}}\\
& + {(1+t)}^{5/6-2\delta^2} \|f(t) \|_{\widetilde{W}_\Omega^{N_1/2,N_2+p}}\big]<\infty,
\end{split}
\end{align}
where, with $P_k$ denoting standard Littlewood-Paley projection operators,
\begin{equation*}
\|g\|_{\widetilde{W}^N}:=\sum_{k\in\mathbb{Z}}2^{Nk^+}{\|P_kg\|}_{L^\infty},\qquad {\|g\|}_{\widetilde{W}^{N',N}_{\Omega}} := \sum_{j\leq N'}\|\Omega^jg\|_{\widetilde{W}^N}.
\end{equation*}
The spaces $\widetilde{W}^N$ are used in this paper as substitutes of the standard $L^\infty$ based Sobolev spaces,
which have the advantage of being closed under the action of singular integrals.
\end{definition}

Note that the parameter $p$ in $O_{m,p}$ corresponds to a gain at high frequencies and does not affect the low frequencies. We observe that, see Lemma \ref{lemmaaux1},
\begin{equation}\label{Alu7}
\begin{split}
O_{m,p}\subseteq O_{n,p}\,\text{ if }\,1\leq n\leq m,\qquad O_{m,p}O_{n,p}\subseteq O_{m+n,p}\,\text{ if }\,1\leq m,n.
\end{split}
\end{equation}
Moreover, by our assumptions \eqref{Ama32} and Lemma \ref{LinEstLem}, the main variables satisfy
\begin{equation}\label{bin1}
\|(1-\Delta)^{1/2}h\|_{O_{1,0}}+\|\,|\nabla|^{1/2}\phi\|_{O_{1,0}}\lesssim \varep_1.
\end{equation}

The spaces $O_{m,p}$ are used mostly in the energy estimates in section \ref{Eqs} and in the (elliptic) analysis of the Dirichlet--Neumann operator in appendix \ref{DNOpe}. However, they are not precise enough for the dispersive analysis of our evolution equation. For this we need the more precise $Z$-norm defined below, which is better adapted to our equation.  

\subsubsection{The $Z$ norm} Let $\gamma_0:=\sqrt{\frac{2\sqrt{3}-3}{3}}$ denote the radius of the sphere of slow decay and $\gamma_1:=\sqrt{2}$ denote the radius of the space-time resonant sphere. For $n\in\mathbb{Z}$, $I\subseteq\mathbb{R}$, and $\gamma\in (0,\infty)$ we define
\begin{equation}\label{aop}
\begin{split}
&\widehat{A_{n,\gamma}f}(\xi):=\varphi_{-n}(2^{100}||\xi|-\gamma|)\cdot\widehat{f}(\xi),\\
&A_{I,\gamma}:=\sum_{n\in I}A_{n,\gamma},\quad A_{\leq B,\gamma}:=A_{(-\infty,B],\gamma},\quad A_{\geq B,\gamma}:=A_{[B,\infty),\gamma}.
\end{split}
\end{equation}
Given an integer $j\geq 0$ we define the operators $A^{(j)}_{n,\gamma}$, $n\in\{0,\ldots,j+1\}$, $\gamma\geq 2^{-50}$, by
\begin{equation}\label{aop2}
\qquad A^{(j)}_{j+1,\gamma}:=\sum _{n'\geq j+1}A_{n',\gamma},\qquad A^{(j)}_{0,\gamma}:=\sum _{n'\leq 0}A_{n',\gamma},\qquad A^{(j)}_{n,\gamma}:=A_{n,\gamma}\,\,\text{ if }\,\,1\leq n\leq j.
\end{equation}
These operators localize to thin anuli of width $2^{-n}$ around the circle of radius $\gamma$. Most of the times, 
for us $\gamma=\gamma_0$ or $\gamma=\gamma_1$. We are now ready to define the main $Z$ norm.

\begin{definition}\label{MainZDef}
Assume that $\delta$, $N_0,N_1,N_4$ are as in Theorem \ref{MainTheo}. We define
\begin{equation}\label{sec5}
Z_1:=\{f\in L^2(\mathbb{R}^2):\,\|f\|_{Z_1}:=\sup_{(k,j)\in\mathcal{J}}\|Q_{jk}f\|_{B_{j}}<\infty\},
\end{equation}
where
\begin{equation}\label{znorm2}
\|g\|_{B_j}:=2^{(1-50\delta)j}\sup_{0\leq n\leq j+1}2^{-(1/2-49\delta)n}\|A^{(j)}_{n,\gamma_1}g\|_{L^{2}}.
\end{equation}
Then we define, with $D^\alpha:=\partial_1^{\alpha^1}\partial_2^{\alpha^2}$, $\alpha=(\alpha^1,\alpha^2)$,
\begin{equation}\label{znorm}
Z:=\big\{f\in L^2(\mathbb{R}^2):\|f\|_{Z}:=\sup_{2m+|\alpha|\leq N_1+N_4,\,m\leq N_1/2+20}\|D^{\alpha}\Omega^mf\|_{Z_1}<\infty\big\}.
\end{equation}
\end{definition}

We remark that the $Z$ norm is used to estimate  the {\it{linear profile}} of the solution, which is $\mathcal{V}(t):=e^{it\Lambda}\mathcal{U}(t)$, not the solution itself.

\subsubsection{Paradifferential calculus} We need some elements of paradifferential calculus in order to be able to describe the Dirichlet--Neumann operator $G(h)\phi$ in \eqref{WW0}. Our paralinearization relies on the {\it{Weyl quantization}}. More precisely, given a symbol $a=a(x,\zeta)$, and a function $f \in L^2$, we define the paradifferential operator $T_a f$
according to
\begin{align}
\label{Tsigmaf}
\mathcal{F} \big( T_{a}f \big) (\xi) = \frac{1}{4\pi^2}\int_{\R^2}\chi\Big(\frac{\vert\xi-\eta\vert}{\vert\xi+\eta\vert}\Big)
  \widetilde{a}(\xi-\eta,(\xi+\eta)/2) \widehat{f}(\eta)d\eta,
\end{align}
where $\widetilde{a}$ denotes the Fourier transform of $a$ in the first coordinate and
$\chi=\varphi_{\leq -20}$.
In Appendix \ref{SecParaOp} we prove several important lemmas related to the paradifferential calculus.

\section{The ``improved good variable'' and strongly semilinear structures}\label{Eqs}

\subsection{Renormalization}

In this section we assume $(h,\phi):\mathbb{R}^2\times[0,T]\to\mathbb{R}\times\mathbb{R}$ is a solution of \eqref{WW0} satisfying the hypothesis of Proposition \ref{MainBootstrapEn}; in particular, see \eqref{bin1},
\begin{equation}\label{bin2}
\|\langle \nabla\rangle h\|_{O_{1,0}}+\|\,|\nabla|^{1/2}\phi\|_{O_{1,0}}\lesssim \varep_1.
\end{equation}
Our goal in this section is to write the system \eqref{WW0} as a scalar equation for a suitably constructed complex-valued function (the ``improved good variable''). The main result is the following:

\begin{proposition}\label{proeqU}
Assume that \eqref{bin2} holds and let $\lambda_{DN}$ be the symbol of the 
Dirichlet-Neumann operator defined in \eqref{deflambda}, let $\Lambda:=\sqrt{g|\nabla|+\sigma|\nabla|^3}$, and let
\begin{align}
\label{defell}
\begin{split}
\ell(x,\zeta) & := L_{ij}(x) \zeta_i \zeta_j - \Lambda^2 h,  
  \qquad L_{ij} := \frac{\sigma }{\sqrt{1+|\nabla h|^2}} \Big(\delta_{ij} - \frac{\partial_i h \partial_j h}{1 + |\nabla h|^2} \Big),
\end{split}
\end{align}
be the mean curvature operator coming from the surface tension.
Define the symbol 
\begin{align}
\label{defSigma}
\Sigma & := \sqrt{\lambda_{DN}(g+\ell)} 
\end{align}
and the complex-valued unknown
\begin{align}
\label{defscalarunk}
\begin{split}
&U :=  T_{\sqrt{g+\ell}}h + iT_{\Sigma}T_{1/\sqrt{g+\ell}}\omega+iT_{m^\prime}\omega,\qquad
m^\prime := \frac{i}{2}\frac{\div V}{\sqrt{g+\ell}}\in\varep_1\mathcal{M}^{-1,1}_{N_3-2},
\end{split}
\end{align}
where $B,V$ and (the ``good variable'') $\omega=\phi-T_Bh$ are defined in \eqref{DefBV}. Then 
\begin{align}
\label{Uhorel} 
U = \sqrt{g + \sigma|\nabla|^2} h + i |\nabla|^{1/2} \omega + \varep_1^2O_{2,0},
\end{align}
and $U$ satisfies the equation
\begin{align}
\label{eqU}
& (\partial_t + i T_{\Sigma} + iT_{V\cdot\zeta})U = \mathcal{N}_U + \mathcal{Q}_S + \mathcal{C}_U,
\end{align}
where 

\setlength{\leftmargini}{2em}
\begin{itemize}

\item  The quadratic term $\mathcal{N}_U$ has the special structure
\begin{equation}
\label{eqUN}
\mathcal{N}_U := T_{\gamma}(c_1U+c_2\overline{U})
\end{equation}
for some constants $c_1,c_2 \in \mathbb{C}$, where
\begin{equation}\label{DefGamma}
\gamma(x,\zeta) := \frac{\zeta_i\zeta_j}{\vert\zeta\vert^2}\vert\nabla\vert^{-1/2}\partial_i\partial_j(\Im U)(x).
\end{equation}

\item the quadratic terms $\mathcal{Q}_S$ have a gain of one derivative, i.e. they are of the form
\begin{align}
\label{eqUquad}
& \mathcal{Q}_S = A_{++}(U,U) + A_{+-}(U,\bar{U}) + A_{--}(\bar{U},\bar{U})\in \varep_1^2O_{2,1},
\end{align}
with symbols $a_{\eps_1\eps_2}$ satisfying, for all $k,k_1,k_2 \in \mathbb{Z}$, and $(\eps_1\eps_2) \in \{ (++), (+-), (--)\}$,
\begin{align}
\label{symbolseqU}
\begin{split}
{\| a_{\eps_1\eps_2}^{k,k_1,k_2} \|}_{S^\infty_\Omega} & \lesssim 2^{-\max(k_1,k_2,0)}(1+2^{3\min(k_1,k_2)});
\end{split}
\end{align}

\item $\mathcal{C}_U$ is a cubic term, $\mathcal{C}_U\in\varep_1^3O_{3,0}$.
\end{itemize}
\end{proposition}

Let us comment on the structure of the main equation \eqref{eqU}. In the left-hand side we have the ``quasilinear'' part $(\partial_t + i T_{\Sigma} + iT_{V\cdot\zeta})U$. In the right-hand side we have three types of terms: 

(1) a quadratic term $\mathcal{N}_U$ with special structure;

(2) a strongly semilinear quadratic term $\mathcal{Q}_S$, given by symbols of order -1; 

(3) a semilinear cubic term $\mathcal{C}_U\in \varep_1^3O_{3,0}$, whose contribution is easy to estimate.

The key point is the special structure of the quadratic terms, which allows us to obtain favorable energy estimates in Proposition \ref{ProBulk}. This special structure is due to the definition of the variable $U$, in particular the choice of the symbol $m'$ in \eqref{defscalarunk}. We observe that
\begin{align*}
\widetilde{\gamma}(\eta,\zeta) = -\frac{\zeta_i\zeta_j}{|\zeta|^2} \frac{\eta_i \eta_j}{|\eta|^{1/2}} \widehat{\Im U}(\eta)
\end{align*}
and we remark that the angle $\zeta\cdot\eta$ in this expression gives us the strongly semilinear structure we will use later (see also the factor $\mathfrak{d}$ in \eqref{bin3.7}). For comparison, the use of the standard ``good unknown'' of Alinhac leads to generic quadratic terms that do not lose derivatives. This would suffice to prove local regularity of the system, but would not be suitable for global analysis.

This proposition is the starting point of our energy analysis. Its proof is technically involved, as it requires the material in 
appendices \ref{SecParaOp} and \ref{DNOpe}. One can start by understanding the definition \ref{DefSym} of the decorated spaces of symbols
$\mathcal{M}^{l,m}_r$, the simple properties \eqref{OimpliesS}--\eqref{auxcomm2}, and the statement of Proposition \ref{DNmainpro} (the proof is not needed). 
The spaces of symbols $\mathcal{M}^{l,m}_r$ are analogous to the spaces of functions $\mathcal{O}_{m,p}$; for symbols, however, the order $l$ is important 
(for example a symbol of order $2$ counts as two derivatives), but its exact differentiability (measured by the parameter $r$) is less important. 

In Proposition \ref{proeqU} we keep the parameters $g$ and $\sigma$ due to their physical significance.

\begin{remark}\label{RmkQuasilinearTerms}
(i) The symbols defined in this proposition can be estimated in terms of the decorated norms introduced in Definition \ref{DefSym}. More precisely, using the hypothesis \eqref{bin2}, the basic bounds \eqref{OimpliesS} and \eqref{SymbMult}, and the definition \eqref{deflambda}, it is easy to verify that
\begin{equation}\label{bin3}
\begin{split}
&(g+\ell)=\frac{(g+\sigma|\zeta|^2)}{\sqrt{1+|\nabla h|^2}}\Big(1-\frac{\sigma(\zeta\cdot\nabla h)^2}{(g+\sigma|\zeta|^2)}-\frac{\Lambda^2h}{(g+\sigma|\zeta|^2)}+\varep_1^4\mathcal{M}_{N_3-2}^{0,4}+\varep_1^2\mathcal{M}_{N_3-2}^{-2,2}\Big),\\
&\lambda_{DN}=|\zeta|\Big(1+\frac{|\zeta|^2|\nabla h|^2-(\zeta\cdot\nabla h)^2}{2|\zeta|^2}+\frac{|\zeta|^2\Delta h-\zeta_j\zeta_k\partial_j\partial_kh}{2|\zeta|^3}\varphi_{\geq 0}(\zeta)+\varep_1^4\mathcal{M}_{N_3-2}^{0,4}+\varep_1^3\mathcal{M}_{N_3-2}^{-1,3}\Big),
\end{split}
\end{equation}
uniformly for every $t\in[0,T]$. Thus, with $\Lambda=\sqrt{g|\nabla|+\sigma|\nabla|^3}$, we derive an expansion for $\Sigma$,
\begin{equation}\label{ExpSigma}
\begin{split}
&\Sigma=\Lambda+\Sigma_1+\Sigma_{\geq 2},\\
&\Sigma_1:=\frac{1}{4}\frac{\Lambda(\zeta)}{\vert\zeta\vert}\left[\Delta h-\frac{\zeta_i\zeta_j}{\vert\zeta\vert^2}\partial_{ij}h\right]\varphi_{\ge0}(\zeta)-\frac{1}{2}\frac{\vert\zeta\vert}{\Lambda(\zeta)}\Lambda^2h\in\varep_1\mathcal{M}^{1/2,1}_{N_3-2},\qquad \Sigma_{\ge 2}\in\varep_1^2\mathcal{M}^{3/2,2}_{N_3-2}.
\end{split}
\end{equation}
The formulas are slightly simpler if we disregard quadratic terms, i.e.  
\begin{equation}\label{SymApprox0}
\begin{split}
\lambda_{DN}^p &=|\zeta|^p(1+p\lambda_1^{(0)}(x,\zeta)/\vert\zeta\vert
  + \varep_1^2\mathcal{M}^{0,2}_{N_3-2}),\\
(g+\ell)^p& = (g+\sigma\vert\zeta\vert^2)^p (1 -  p\Lambda^2h/(g+\sigma\vert\zeta\vert^2)+\varep_1^2\mathcal{M}^{0,2}_{N_3-2}),\\
	\Sigma&=\Lambda(1+\Sigma_1(x,\zeta)/\Lambda+\varep_1^2\mathcal{M}^{0,2}_{N_3-2}),
\end{split}
\end{equation}
for $p\in[-2,2]$, where $\lambda_1^{(0)}(x,\zeta)=\frac{|\zeta|^2\Delta h-\zeta_j\zeta_k\partial_j\partial_kh}{2|\zeta|^2}\varphi_{\geq 0}(\zeta)$ as in Remark \ref{RmkLambda}. 

In addition, the identity $\partial_th=G(h)\phi=|\nabla|\omega+\varep_1^2\mathcal{O}_{2,-1/2}$ shows that
\begin{equation}
\label{SymApprox2}
\begin{split}
\partial_t\sqrt{g+\ell}&=(g+\sigma|\zeta|^2)^{-1/2}[\Delta(g-\sigma\Delta)\omega/2]+\varep_1^2\mathcal{M}^{1,2}_{N_3-4}
  \in \varep_1\mathcal{M}^{-1,1}_{N_3-4}+\varep_1^2\mathcal{M}^{1,2}_{N_3-4},\\
\partial_t\sqrt{\lambda_{DN}} & = \frac{1}{2\sqrt{\vert\zeta\vert}}\partial_t\lambda_1^{(0)}+\varep_1^2\mathcal{M}^{1/2,2}_{N_3-4}
  \in \varep_1\mathcal{M}^{-1/2,1}_{N_3-4}+\varep_1^2\mathcal{M}^{1/2,2}_{N_3-4},\\
\partial_t\Sigma &=\partial_t\Sigma_1+\varep_1^2\mathcal{M}^{3/2,2}_{N_3-4}\in\varep_1\mathcal{M}^{1/2,1}_{N_3-4}+\varep_1^2\mathcal{M}^{3/2,2}_{N_3-4}.
\end{split}
\end{equation}

(ii) It follows from Proposition \ref{DNmainpro} that $V\in\varep_1\mathcal{O}_{1,-1/2}$. Therefore $m'\in\varep_1\mathcal{M}^{-1,1}_{N_3-2}$ and the identity \eqref{Uhorel} follows using also Lemma \ref{lemmaaux2}. Moreover, using Proposition \ref{DNmainpro} again,
\begin{equation}\label{ExpV}
\begin{split}
V=V_1+V_2,\qquad V_1:=\vert\nabla\vert^{-1/2}\nabla\Im U,\qquad V_2\in\varep_1^2\mathcal{O}_{2,-1/2},\\
m'=m'_1+\varep_1^2\mathcal{M}^{-1,2}_{N_3-2},\qquad m'_1(x,\zeta):=-\frac{i}{2}\frac{|\nabla|^{3/2}\Im U(x)}{\sqrt{g+\sigma|\zeta|^2}}
\end{split}
\end{equation}
\end{remark}

\subsection{Symmetrization and special quadratic structure}\label{secprosym}

In this subsection we prove Proposition \ref{proeqU}. We first write \eqref{WW0} as a system for $h$ and $\omega$, and then symmetrize it.
We start by combining Proposition \ref{DNmainpro} on the Dirichlet-Neumann operator with a paralinearization of the equation
for $\partial_t \phi$, to obtain the following:

\begin{lemma}\label{bin9}[Paralinearization of the system]
With the notation of Proposition \ref{DNmainpro} and Proposition \ref{proeqU}, we can rewrite the system \eqref{WW0} as
\begin{equation}\label{WWpara}
\left\{
\begin{array}{l}
\partial_t h = T_{\lambda_{DN}} \omega - \div( T_V h) + G_2 + \varep_1^3O_{3,1},
\\
\partial_t \omega = - g h  - T_\ell h - T_V \nabla\omega + \Omega_2 + \varep_1^3O_{3,1},
\end{array}
\right.
\end{equation}
where $\ell$ is given in \eqref{defell} and
\begin{align}
\label{d_tomegaquadratic}
& \Omega_2 := \frac{1}{2} \mathcal{H}(|\nabla|\omega,|\nabla|\omega) - \frac{1}{2} \mathcal{H}(\nabla\omega,\nabla\omega)  \in\varep_1^2 O_{2,2}.
\end{align}
\end{lemma}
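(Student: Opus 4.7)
\emph{Proof plan for Lemma \ref{bin9}.} The first equation is essentially a direct citation of Proposition \ref{DNmainpro}: that proposition expresses the Dirichlet--Neumann operator in the form $G(h)\phi = T_{\lambda_{DN}}\omega - \div(T_V h) + G_2 + \varep_1^3 O_{3,1}$, where the good unknown $\omega = \phi - T_B h$ absorbs the most singular paradifferential contribution of $h$. Substituting this into $\partial_t h = G(h)\phi$ yields the first line of \eqref{WWpara} immediately. The real content of the lemma is the second equation.

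To handle the $\phi$-equation I would first differentiate $\omega = \phi - T_B h$ in time to get $\partial_t\omega = \partial_t\phi - T_{\partial_t B}h - T_B\partial_t h$, and then paralinearize each nonlinearity on the right-hand side of \eqref{WW0} separately. The gravity term $-gh$ needs no work. For the mean-curvature term, differentiating $h\mapsto \sigma\div[\nabla h/\sqrt{1+|\nabla h|^2}]$ and applying Bony's decomposition to the factor $L_{ij}(\nabla h)\partial_i\partial_j h$ gives $\sigma\div[\nabla h/\sqrt{1+|\nabla h|^2}] = -T_{L_{ij}\zeta_i\zeta_j}h + R_1$ with $R_1$ a tame quadratic remainder. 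For $-\tfrac12|\nabla\phi|^2$, Bony's formula gives $-T_{\nabla\phi}\cdot\nabla\phi - \tfrac12\mathcal{H}(\nabla\phi,\nabla\phi)$. For the last term in \eqref{WW0}, I would replace $G(h)\phi$ by $|\nabla|\omega$ modulo $\varep_1^2 O_{2,-1/2}$ (Proposition \ref{DNmainpro} again) and expand the square via Bony, producing $T_{|\nabla|\omega}|\nabla|\omega + \tfrac12 \mathcal{H}(|\nabla|\omega,|\nabla|\omega)$ plus terms in $\varep_1^3 O_{3,1}$; the cross term $\nabla h\cdot\nabla\phi$ and the factor $(1+|\nabla h|^2)^{-1}$ contribute only to this cubic error, since both carry an additional $\nabla h$.

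The proof is then completed by identifying three cancellations. First, since $B = |\nabla|\omega + \varep_1^2O_{2,-1/2}$ (Remark \ref{RmkQuasilinearTerms}(ii)) and $\partial_t h = |\nabla|\omega + \varep_1^2 O_{2,-1/2}$, the term $-T_B\partial_t h$ kills the paradifferential contribution $T_{|\nabla|\omega}|\nabla|\omega$ coming from the last nonlinearity in \eqref{WW0}, modulo $\varep_1^3 O_{3,1}$. Second, since $V = \nabla\phi + \varep_1^2 O_{2,-1/2}$, the paradifferential piece $-T_{\nabla\phi}\nabla\phi$ produced by $-\tfrac12|\nabla\phi|^2$ agrees with $-T_V\nabla\omega$ up to a cubic error. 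Third, plugging the leading identity $\partial_t\phi = -gh + \sigma\Delta h + \text{quadratic}$ into $B = |\nabla|\omega + \cdots$ gives $\partial_t B = -\Lambda^2 h + \text{quadratic}$, so $-T_{\partial_t B}h = T_{\Lambda^2 h}h + \varep_1^3 O_{3,1}$; combined with $-T_{L_{ij}\zeta_i\zeta_j}h$ this produces exactly $-T_\ell h$ with $\ell$ as in \eqref{defell}. The two Bony remainders that survive assemble into the stated $\Omega_2$, which lies in $\varep_1^2 O_{2,2}$ by the two-derivative gain of $\mathcal{H}$ in the low-high and high-low regimes applied to the factors $|\nabla|\omega$ and $\nabla\omega$.

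The main obstacle is the bookkeeping: every time a product is split into paraproducts one must quantify the commutator remainders, every replacement of $\phi$ by $\omega + T_B h$ must be tracked, and the various remainder terms produced by Proposition \ref{DNmainpro}, by Bony decompositions, and by the leading-order identities for $B$, $V$, and $\partial_t B$ must all be shown to belong to the class $\varep_1^3 O_{3,1}$. This would be carried out mechanically using the symbol calculus of Appendix \ref{SecParaOp}, the decorated norms $\mathcal{M}^{l,m}_r$, and the algebra property $O_{m,p}\cdot O_{n,p}\subseteq O_{m+n,p}$ from \eqref{Alu7}.
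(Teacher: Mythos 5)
The first equation and the three cancellations you identify are the right targets, and the treatment of $-T_{\partial_tB}h$ (via $\partial_tB = -\Lambda^2 h + O_2$) is fine. The gap lies in how you handle the algebraic nonlinearity $-\tfrac12|\nabla\phi|^2 + (G(h)\phi+\nabla h\cdot\nabla\phi)^2/(2(1+|\nabla h|^2))$. You propose to replace $G(h)\phi$ by $|\nabla|\omega$ modulo $\varep_1^2O_{2,-1/2}$ and then expand the square. The cross terms this produces, e.g.\ $|\nabla|\omega\cdot(G(h)\phi-|\nabla|\omega)$ and, after your $-T_B\partial_t h$ cancellation, $T_{|\nabla|\omega}O_{2,-1/2}$ and $T_{O_{2,-1/2}}|\nabla|\omega$, land only in $\varep_1^3O_{3,-1/2}$ by Lemmas \ref{lemmaaux1} and \ref{lemmaaux2}. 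The lemma claims $\varep_1^3O_{3,1}$, which is three halves of a derivative stronger; this matters, since in Proposition \ref{prosym} the order-one operator $T_{\sqrt{g+\ell}}$ is applied to this error and must still give $\mathcal{O}_{3,0}$. Your bookkeeping paragraph acknowledges the task but does not supply the mechanism by which this derivative is regained, and with the decomposition you chose it cannot be: a low-high paraproduct $T_{|\nabla|\omega}(\cdot)$ inherits exactly the regularity of its high-frequency input, here $O_{2,-1/2}$.

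The paper avoids the problem by never expanding $G(h)\phi$ inside the algebraic nonlinearity. Using $V=\nabla\phi-B\nabla h$ and $B(1+|\nabla h|^2)=G(h)\phi+\nabla h\cdot\nabla\phi$ it rewrites those two terms exactly as $\tfrac12(B^2-2BV\cdot\nabla h-|V|^2)$, and uses the exact identity $\partial_th=G(h)\phi=B-V\cdot\nabla h$. After combining with $-T_{\partial_tB}h-T_B\partial_th$ and $T_V\nabla\omega=T_VV+T_V(B\nabla h)-T_V\nabla T_Bh$, the surviving quadratic difference takes the pure Bony-remainder form $\tfrac12\mathcal{H}(B,B)-\tfrac12\mathcal{H}(V,V)$, and every cubic term (the paper's $II$) carries either an $\mathcal{H}$, a $T_aT_b-T_{ab}$ commutator, or an explicit factor of $h$ or $\nabla h$ carrying the extra derivative of $\langle\nabla\rangle h\in O_{1,0}$. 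Those three structures are exactly what produce the required $O_{3,1}$ gain (via Lemma \ref{PropHHSym}(i) and Lemma \ref{lemmaaux3}), and that structure is destroyed the moment you replace $G(h)\phi$ by $|\nabla|\omega$ before the paralinearization in terms of $B$ and $V$ has been carried out.
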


\begin{proof}
First, we see directly from \eqref{WW0} and Proposition \ref{DNmainpro} that, for any $t\in[0,T]$,
\begin{equation}
\label{LinearBV}
\begin{split}
&G(h)\phi,\,B,\,V,\,\partial_th\in\varep_1\mathcal{O}_{1,-1/2},\qquad\partial_t\phi\in\varep_1\mathcal{O}_{1,-1},\\
&B =|\nabla|\omega+\varep_1^2\mathcal{O}_{2,-1/2},\qquad V =\nabla\omega+\varep_1^2\mathcal{O}_{2,-1/2}.
\end{split}
\end{equation}
The first equation in \eqref{WWpara} comes directly from Propostion \ref{DNmainpro}.
To obtain the second equation, we use Lemma \ref{PropHHSym} (ii) with $F_l(x) = x_l / \sqrt{1+|x|^2}$ to see that
\begin{equation*}
\begin{split}
F_l(\nabla h)& = T_{\partial_kF_l(\nabla h)} \partial_kh +  \varep_1^3\mathcal{O}_{3,3},\qquad\hbox{hence}\quad  \sigma \div \Big[ \dfrac{\nabla h}{ \sqrt{1+|\nabla h|^2} } \Big] = -T_{L_{jk}\zeta_j\zeta_k}h + \varep_1^3\mathcal{O}_{3,1}.
 \end{split}
\end{equation*}
Next we paralinearize the other nonlinear terms in the second equation in \eqref{WW0}. Recall the definition of $V$, $B$ in \eqref{DefBV}. We first write
\begin{align*}
- \frac{1}{2} {|\nabla \phi|}^2 + \dfrac{{\left( G(h)\phi + \nabla h \cdot \nabla \phi \right)}^2}{2(1+{|\nabla h|}^2)}&= 
- \frac{|V + B \nabla h|^2}{2}  + \frac{(1+ |\nabla h|^2) B^2}{2}   = \frac{B^2-2 B V \cdot \nabla h - | V |^2}{2}.
\end{align*}
Using \eqref{WW0}, we calculate $\partial_t h = G(h)\phi =B - V \cdot \nabla h$, and
\begin{align*}
\partial_t \omega & = \partial_t \phi - T_{\partial_t B} h - T_B \partial_t h
\\
& = -g h -  T_{L_{jk}\zeta_j\zeta_k} h + \frac{1}{2}\big( B^2 - 2 B V \cdot \nabla h - | V |^2 \big) - T_{\partial_t B} h - T_B B + T_B (V \cdot \nabla h)+\varep_1^3\mathcal{O}_{3,1}.
\end{align*}
Then, since $V = \nabla \phi - B \nabla h$, we have
\begin{align*}
T_V \nabla \omega = T_V \nabla\phi - T_V (\nabla T_B h) = T_V V + T_V (B \nabla h) - T_V (\nabla T_B h) ,
\end{align*}
and we can write
\begin{align*}
\partial_t \omega & = -g h - T_{L_{jk}\zeta_j\zeta_k+\partial_tB} h  - T_V \nabla \omega + I + II ,
\\
 I &:= \frac{1}{2} B^2 - T_B B - \frac{1}{2} \vert V\vert^2 + T_V V= \frac{1}{2}\mathcal{H}(B,B) - \frac{1}{2}\mathcal{H}(V,V) =  \Omega_2+\varep_1^3\mathcal{O}_{3,1},
\\
II &:= -B V \cdot \nabla h + T_B (V\cdot\nabla h) + T_V (B\nabla h) - T_V (\nabla T_Bh)+\varep_1^3\mathcal{O}_{3,1}.
\end{align*}
Using \eqref{LinearBV}, \eqref{DefBV}, \eqref{WW0}, and Corollary \ref{CorFixedPoint} (ii) we easily see that 
\begin{align*}
L_{jk}\zeta_j\zeta_k+\partial_tB=L_{jk}\zeta_j\zeta_k+\vert\nabla\vert\partial_t\phi+\varep_1^2\mathcal{O}_{2,-2}= \ell+\varep_1^2\mathcal{O}_{2,-2}.
\end{align*}
Moreover we can verify that $II$ is an acceptable cubic remainder term:
\begin{align*}
II & = - T_{V \cdot \nabla h} B + \mathcal{H}(B,V\cdot\nabla h) + T_V (B\nabla h) - T_V T_B \nabla h - T_V T_{\nabla B} h+\varep_1^3\mathcal{O}_{3,1}
\\
& = - T_{V \cdot \nabla h} B + T_V T_{\nabla h} B + T_V \mathcal{H}(B,\nabla h)   - T_V T_{\nabla B} h +\varep_1^3\mathcal{O}_{3,1}\\
& = \varep_1^3\mathcal{O}_{3,1},
\end{align*}
and the desired conclusion follows.
\end{proof}

Since our purpose will be to identify quadratic terms as in \eqref{eqUquad}-\eqref{symbolseqU}, we need a more precise notion of strongly semilinear quadratic errors.

\begin{definition}
\label{defSS}
Given $t\in[0,T]$ we define $\varep_1^2\mathcal{O}^\ast_{2,1}$ to be the set of finite linear combinations of terms of the form $S[T_1,T_2]$ where $ T_1,T_2\in\{U(t),\overline{U}(t)\}$, and $S$ satisfies
\begin{equation}\label{QuadSSEqHPsi}
\begin{split}
& \mathcal{F}(S[f,g])(\xi):=\frac{1}{4\pi^2}\int_{\mathbb{R}^2}s(\xi,\eta)\widehat{f}(\xi-\eta)\widehat{g}(\eta)d\eta,
\\
& \Vert s^{k, k_1, k_2} \Vert_{S^\infty_\Omega} \lesssim 
  2^{-\max(k_1,k_2,0)}(1+2^{3\min(k_1,k_2)}).
\end{split}
\end{equation}
These correspond precisely to the acceptable quadratic error terms according to \eqref{symbolseqU}.
\end{definition}

We remark that if $S$ is defined by a symbol as in \eqref{QuadSSEqHPsi} and $p\in[-5,5]$ then
\begin{equation}\label{bin6}
S[\mathcal{O}_{m,p},\mathcal{O}_{n,p}]\subseteq \mathcal{O}_{m+n,p+1}.
\end{equation}
This follows by an argument similar to the argument used in Lemma \ref{lemmaaux1}. As a consequence, given the assumptions \eqref{bin2} and with $U$ defined as in \eqref{defscalarunk}, we have that $\mathcal{O}^\ast_{2,1}\subseteq \mathcal{O}_{2,1}$.

\begin{proof}[Proof of Proposition \ref{proeqU}] {\bf{Step 1.}} To diagonalize the principal part of the system \eqref{WWpara} we define the symmetrizing variables $(H,\Psi)$ by
\begin{equation}\label{NewUnknowns}
\begin{split}
H & := T_{\sqrt{g+\ell}}h, \qquad \Psi := T_{\Sigma}T_{1/\sqrt{g+\ell}}\omega+T_{m^\prime}\omega,
\end{split}
\end{equation}
where $m'$ is as in \eqref{defscalarunk}. Using \eqref{SymApprox0} and Lemma \ref{lemmaaux2}, we see that
\begin{align}
 \label{unkrelations}
\begin{split}
&H = \Re(U)+\varep_1^2\mathcal{O}_{2,0}, \qquad \sqrt{g+\sigma\vert\nabla\vert^2} h=\Re(U)+\varep_1^2\mathcal{O}_{2,0},\\
&\Psi =\Im(U)+ \varep_1^2\mathcal{O}_{2,0},\,\qquad |\nabla|^{1/2}\omega=\Im(U)+ \varep_1^2\mathcal{O}_{2,0}.
\end{split}
\end{align}
As a consequence, if $T_1,T_2\in\{U,\overline{U},H,\Psi,(g-\sigma\Delta)^{1/2}h,\vert\nabla\vert^{1/2}\omega\}$, 
and $S$ is as in \eqref{QuadSSEqHPsi}, then
\begin{equation}\label{Acceptable}
S[T_1,T_2]\in\varep_1^2\mathcal{O}^\ast_{2,1}+\varep_1^3\mathcal{O}_{3,0}.
\end{equation}

We will show that
\begin{equation}\label{SymSys}
\left\{
\begin{array}{l}
\partial_t H - T_{\Sigma} \Psi+iT_{V\cdot\zeta}H = -T_\gamma H -\dfrac{1}{2}T_{\sqrt{g+\ell} \, \mathrm{div} V}h -T_{ m^\prime\Sigma}\omega + \varep_1^2\mathcal{O}^\ast_{2,1}+ \varep_1^3\mathcal{O}_{3,0},
\\
\\
\partial_t \Psi + T_{\Sigma } H + iT_{V\cdot\zeta}\Psi 
  = -\dfrac{1}{2}T_\gamma\Psi- T_{m^\prime(g+\ell)}h +\dfrac{1}{2}T_{\sqrt{\lambda_{DN}} \, \div V}\omega+ \varep_1^2\mathcal{O}^\ast_{2,1} + \varep_1^3\mathcal{O}_{3,0}.
\end{array}
\right.
\end{equation}

{\bf{Step 2.}} We examine now the first equation in \eqref{SymSys}. The first equation in \eqref{WWpara} and the identity $\div T_Vh=(1/2)T_{\div V}h+iT_{V\cdot\ze}h$ show that
\begin{equation}\label{EqdtH}
\begin{split}
\partial_tH-&T_\Sigma\Psi+iT_{V\cdot\zeta}H+T_\gamma H
 + \frac{1}{2}T_{\sqrt{g+\ell} \, \div V}h+T_{ m^\prime\Sigma}\omega
\\
&= (T_{\sqrt{g+\ell}}T_{\lambda_{DN}}-T_{\Sigma}T_{\Sigma}T_{1/\sqrt{g+\ell}})\omega-(T_\Sigma T_{m^\prime}-T_{ m^\prime\Sigma})\omega
\\
& +i(T_{V\cdot\zeta}H-T_{\sqrt{g+\ell}}T_{V\cdot\zeta}h-iT_\gamma T_{\sqrt{g+\ell}}h)\\
& +T_{\partial_t\sqrt{g+\ell}}h-\frac{1}{2}(T_{\sqrt{g+\ell}}T_{\div V}-T_{\sqrt{g+\ell} \, \div V})h+T_{\sqrt{g+\ell}}G_2+\varep_1^3T_{\sqrt{g+\ell}}\mathcal{O}_{3,1}.
\end{split}
\end{equation}
We will treat each line separately. For the first line, we notice that the contribution of low frequencies $P_{\leq-9}\omega$ is acceptable. For the high frequencies we use Proposition \ref{PropCompSym} to write
\begin{align}
\nonumber
\big( T_{\sqrt{g+\ell}} &T_{\lambda_{DN}} - T_{\Sigma}T_{\Sigma}  T_{1/\sqrt{g+\ell}} \big) P_{\geq -8}\omega
\\
\label{prosym5}
& = \Big( T_{ \lambda_{DN} \sqrt{g+\ell}} + \frac{i}{2} T_{ \{ \sqrt{g+\ell}, \lambda_{DN} \} }
  -\big( T_{\Sigma^2/\sqrt{g+\ell}} + \frac{i}{2} T_{ \{ \Sigma^2, 1/\sqrt{g+\ell} \} } \big)
  \Big) P_{\geq -8}\omega 
\\
\label{prosym6}
& +[E(\sqrt{g+\ell},\lambda_{DN})-E(\Sigma,\Sigma)T_{1/\sqrt{g+\ell}}-E(\Sigma^2,1/\sqrt{g+\ell})]P_{\geq -8}\omega.
\end{align}
Since
\begin{equation*}
\begin{split}
\lambda_{DN}\sqrt{g+\ell}=\Sigma^2/\sqrt{g+\ell},\qquad\{\sqrt{g+\ell},\lambda_{DN}\}=\{\Sigma^2,1/\sqrt{g+\ell}\}
\end{split}
\end{equation*}
we observe that the expression in \eqref{prosym5} vanishes. Using \eqref{SymApprox0} and Lemma \ref{lemmaaux3}, we see that, up to acceptable cubic terms, we can rewrite the second line of \eqref{EqdtH} as
\begin{equation*}
\begin{split}
&\Big[ E(\sqrt{g+\sigma\vert\zeta\vert^2},\lambda_1^{(0)}) + E(-\frac{\Lambda^2 h}{2\sqrt{g+\sigma\vert\zeta\vert^2}},\vert\zeta\vert) 
  - (E(\Lambda,\Sigma_1)+E(\Sigma_1,\Lambda))(g-\sigma\Delta)^{-1/2}
\\
& - E(\Lambda^2, \frac{\Lambda^2 h}{2(g+\sigma\vert\zeta\vert^2)^{3/2}}) - 2E(\Lambda\Sigma_1,\frac{1}{\sqrt{g+\sigma\vert\zeta\vert^2}}) 
  - \frac{i}{2}T_{\{\Lambda,m^\prime_1\}}-E(\Lambda,m^\prime_1) \Big]P_{\geq -8}\omega + \varep_1^3\mathcal{O}_{3,0}.
\end{split}
\end{equation*}
Using \eqref{ExEab} these terms are easily seen to be acceptable $\varep_1^2\mathcal{O}^\ast_{2,1}$ quadratic terms. 

To control the terms in the second line of the right-hand side of \eqref{EqdtH}, we observe that
\begin{equation}\label{Errors2}
\begin{split}
&T_{V\cdot\zeta}T_{\sqrt{g+\ell}}h - T_{\sqrt{g+\ell}}T_{V\cdot\zeta}h-iT_\gamma T_{\sqrt{g+\ell}}
  =i(T_{\{V\cdot\zeta,\sqrt{g+\ell}\}}-T_{\gamma\sqrt{g+\ell}})h\\
&+E(V\cdot\zeta,\sqrt{g+\ell})h-E(\sqrt{g+\ell},V\cdot\zeta)h+\frac{1}{2}T_{\{\gamma,\sqrt{g+\ell}\}}h-iE(\gamma,\sqrt{g+\ell})h.
\end{split}
\end{equation}
Using \eqref{SymApprox0} and \eqref{ExpV}, we notice that
\begin{equation*}
{\{V\cdot\zeta,\sqrt{g+\ell}\}}=\frac{\zeta_j\partial_kV_j(x)\cdot \sigma\zeta_k}{\sqrt{g+\sigma|\zeta|^2}}+\varep_1^2\mathcal{M}^{1,2}_{N_3-2}=\frac{\sigma\zeta_j\zeta_k\cdot \partial_j\partial_k|\nabla|^{-1/2}\Im(U)(x)}{\sqrt{g+\sigma|\zeta|^2}}+\varep_1^2\mathcal{M}^{1,2}_{N_3-2}.
\end{equation*}
Using the definition \eqref{DefGamma} it follows that $T_{\{V\cdot\zeta,\sqrt{g+\ell}\}-\gamma\sqrt{g+\ell}}h\in\varep_1^2\mathcal{O}^\ast_{2,1}+ \varep_1^3\mathcal{O}_{3,0}$ as desired. The terms in the second line of \eqref{Errors2} are also acceptable contributions, as one can see easily by extracting the quadratic parts and  using  \eqref{ExEab}.

Finally, for the third line, using \eqref{SymApprox2}, \eqref{ExpV}, and Lemmas \ref{lemmaaux2} and \ref{lemmaaux3}, we observe that
\begin{equation*}
\begin{split}
T_{\partial_t\sqrt{g+\ell}}h & = T_{\frac{1}{2}\frac{\Delta(g-\sigma\Delta)\omega}{\sqrt{g+\sigma\vert\zeta\vert^2}}}h + \varep_1^3\mathcal{O}_{3,0},
\\
\big(T_{\sqrt{g+\ell}}T_{\div V}-T_{\div V \cdot \sqrt{g+\ell}} \big)h & = \big(iT_{\{\sqrt{g+\sigma\vert\zeta\vert^2}, \div V_1\}}
  + E(\sqrt{g+\sigma\vert\zeta\vert^2},\div V_1) \big)h + \varep_1^3\mathcal{O}_{3,0},
\\
T_{\sqrt{g+\ell}}G_2 +\varep_1^3T_{\sqrt{g+\ell}}\mathcal{O}_{3,1}& = T_{\sqrt{g+\sigma\vert\zeta\vert^2}}G_2+\varep_1^3\mathcal{O}_{3,0}.
\end{split}
\end{equation*}
Using \eqref{ExEab}, the bounds for $G_2$ in \eqref{DNquadratic}-\eqref{DNquadraticsym}, 
and collecting all the estimates above, we obtain the identity in the first line in \eqref{SymSys}.

{\bf{Step 3.}} To prove the second identity in \eqref{SymSys} we first use \eqref{NewUnknowns} and \eqref{WWpara} to compute
\begin{equation}\label{EqdtPsi}
\begin{split}
\partial_t\Psi + T_{\Sigma}H + &iT_{V\cdot\zeta}\Psi+\frac{1}{2}T_\gamma\Psi+T_{m^\prime(g+\ell)}h-\frac{1}{2}
T_{\sqrt{\lambda_{DN}}\mathrm{div}V}\omega
\\
&= (T_{\Sigma}T_{\sqrt{g+\ell}}-T_{\Sigma}T_{1/\sqrt{g+\ell}}T_{g+\ell})h+(T_{m^\prime(g+\ell)}-T_{m^\prime}T_{g+\ell})h
\\
& + i(T_{V\cdot\zeta}\Psi-\frac{i}{2}T_\gamma\Psi-(T_\Sigma T_{1/\sqrt{g+\ell}} + T_{m^\prime})T_{V\cdot\zeta}\omega)
\\
& + \frac{1}{2}(T_{\Sigma}T_{1/\sqrt{g+\ell}}T_{\div V}-T_{\sqrt{\lambda_{DN}}\, \div V })\omega + \frac{1}{2}T_{m^\prime}T_{\div V}\omega
\\
& + [\partial_t,T_{\Sigma}T_{1/\sqrt{g+\ell}} + T_{m^\prime}]\omega+(T_{\Sigma}T_{1/\sqrt{g+\ell}} + T_{m^\prime})(\Omega_2+\varep_1^3\mathcal{O}_{3,1}).
\end{split}
\end{equation}
Again, we verify that all lines after the equality sign give acceptable remainders. 

For the terms in the first line, using Proposition \ref{PropCompSym}, \eqref{SymApprox0}, and Lemma \ref{lemmaaux3},
\begin{align*}
\big(T_{\Sigma}  T_{\sqrt{g+\ell}} &-T_{\Sigma}T_{1/\sqrt{g+\ell}} T_{g+\ell} \big)h=-T_\Sigma E(1/\sqrt{g+\ell},g+\ell)h
\\
& = \Lambda\big[E(\frac{\Lambda^2h}{2(g+\sigma\vert\zeta\vert^2)^{3/2}},g+\sigma\vert\zeta\vert^2)-E(1/\sqrt{g+\sigma\vert\zeta\vert^2},\Lambda^2h)\big]h + \varep_1^3\mathcal{O}_{3,0}.
\end{align*}
Using also \eqref{ExEab}, this gives acceptable contributions. In addition,
\begin{equation*}
\begin{split}
(T_{m^\prime}T_{g+\ell}-T_{m^\prime(g+\ell)})h&=\frac{i}{2}T_{\{m^\prime,g+\ell\}}h+E(m^\prime,g+\ell)h=i\sigma T_{\zeta\cdot\nabla_x m^\prime}h+E(m^\prime,\sigma\vert\zeta\vert^2)h+\varep_1^3\mathcal{O}_{3,0}.
\end{split}
\end{equation*}
This gives acceptable contributions, in view of \eqref{NewUnknowns} and \eqref{ExEab}. 

For the terms in the second line of the right-hand side of \eqref{EqdtPsi} we observe that
\begin{equation*}
\begin{split}
T_{V\cdot\zeta}\Psi-\frac{i}{2}&T_\gamma\Psi-(T_\Sigma T_{1/\sqrt{g+\ell}}+T_{m^\prime})T_{V\cdot\zeta}\omega
\\
&=[T_{V\cdot\zeta}T_\Sigma T_{1/\sqrt{g+\ell}}-T_\Sigma T_{1/\sqrt{g+\ell}}T_{V\cdot\zeta}]\omega-\frac{i}{2}T_{\gamma}T_\Sigma T_{1/\sqrt{g+\ell}}\omega+\varep_1^3\mathcal{O}_{3,0}\\
&=[T_{V_1\cdot\zeta}T_{|\zeta|^{1/2}}-T_{|\zeta|^{1/2}}T_{V_1\cdot\zeta}]\omega-\frac{i}{2}T_{\gamma}T_{|\zeta|^{1/2}}\omega+\varep_1^3\mathcal{O}_{3,0}.
\end{split}
\end{equation*}
Using the definitions \eqref{ExpV} and \eqref{DefGamma}, we notice that for $p\in[0,2]$
\begin{equation}\label{gamprop}
\{V_1\cdot\zeta,|\zeta|^{p}\}=\gamma\cdot p|\zeta|^{p}\qquad\text{ on }\qquad\R^2\times\R^2.
\end{equation}
Thus the terms in the second line of the right-hand side of \eqref{EqdtPsi} are acceptable $\varep_1^2\mathcal{O}^\ast_{2,1} + \varep_1^3\mathcal{O}_{3,0}$ contributions.

It is easy to see, using Lemma \ref{lemmaaux3} and the definitions, that the terms the third line in the right-hand side of \eqref{EqdtPsi} are acceptable. Finally, for the last line in \eqref{EqdtPsi}, we observe that
\begin{equation*}
\begin{split}
[\partial_t,T_\Sigma T_{1/\sqrt{g+\ell}}&+T_{m'}]\omega=T_{\partial_t\Sigma}T_{1/\sqrt{g+\ell}}\omega 
  + T_{\Sigma}T_{\partial_t(1/\sqrt{g+\ell})}\omega +T_{\partial_tm'}\omega\\
	&=T_{\partial_t\Sigma_1}T_{(g+\sigma|\zeta|^2)^{-1/2}}\omega 
  -\Lambda T_{\frac{\Delta(g-\sigma\Delta)\omega}{2(g+\sigma|\zeta|^2)^{3/2}}}\omega +\frac{i}{2}T_{\partial_t(\mathrm{div}V)(g+\sigma|\zeta|^2)^{-1/2}}\omega+\varep_1^3\mathcal{O}_{3,0},
\end{split}
\end{equation*}
where we used \eqref{SymApprox0} and \eqref{SymApprox2}. Since $\partial_th=|\nabla|\omega+\varep_1^2\mathcal{O}_{2,-1/2}$ and $\partial_tV=-\nabla(g+\sigma|\nabla|^2)h+\varep_1^2\mathcal{O}_{2,-2}$ (see Lemma \ref{bin9} and Proposition \ref{DNmainpro}), it follows that the terms in the formula above are acceptable. Finally, using the relations in Lemma \ref{bin9},
\begin{equation*}
\begin{split}
(T_{\Sigma}T_{1/\sqrt{g+\ell}} + T_{m^\prime})(\Omega_2)=\varep_1^3\mathcal{O}_{3,0}+\varep_1^2\mathcal{O}^\ast_{2,1},\qquad (T_{\Sigma}T_{1/\sqrt{g+\ell}} + T_{m^\prime})(\varep_1^3\mathcal{O}_{3,1})&=\varep_1^3\mathcal{O}_{3,0}.
\end{split}
\end{equation*}
Therefore, all the terms in the right-hand side of \eqref{EqdtPsi} are acceptable, which completes the proof of \eqref{SymSys}.

{\bf{Step 4.}} Starting from the system \eqref{SymSys} we now want to write a scalar equation for the complex unknown $U = H + i\Psi$ defined in \eqref{defscalarunk}. Using \eqref{SymSys}, we readily see that
\begin{align*}
\begin{split}
& \partial_t U + i T_{\Sigma} U +iT_{V\cdot\zeta}U= Q_U+ \mathcal{N}_U +\varep_1^2\mathcal{O}^\ast_{2,1}+\varep_1^3\mathcal{O}_{3,0},
\\
&Q_U :=(-\frac{1}{2}T_{\sqrt{g+\ell}\, \div V}-iT_{m^\prime (g+\ell)})h + (-T_{m^\prime\Sigma}+\frac{i}{2}T_{\sqrt{\lambda_{DN}}\, \div V})\omega = 0,
\\
&\mathcal{N}_U :=-T_\gamma H-\frac{i}{2}T_\gamma \Psi=-\frac{1}{4} T_\gamma\big(3U +\overline{U} \big)+\varep_1^3\mathcal{O}_{3,0},
\end{split}
\end{align*}
where $Q_U$ vanishes in view of our choice of $m^\prime$, and $\mathcal{N}_U$ has the special structure as claimed.
\end{proof}

\subsection{High order derivatives}\label{secproeqW}

To derive higher order Sobolev and weighted estimates for $U$, and hence for $h$ and $|\nabla|^{1/2}\omega$,
we need to apply (a suitable notion of) derivatives to the equation \eqref{eqU}.
We will then consider quantities of the form 
\begin{align}
 \label{defW_n}
W_n := (T_\Sigma)^n U, \quad n\in[0,2N_0/3],\qquad Y_{m,p}:=\Omega^p(T_\Sigma)^m U,\quad p\in[0,N_1],\,m\in[0,2N_3/3],
\end{align}
for $U$ as in \eqref{defscalarunk} and $\Sigma$ as in \eqref{defSigma}. We have the following consequence of Proposition \ref{proeqU}:

\begin{proposition}\label{proeqW}
With the notation above and $\gamma$ as in \eqref{DefGamma}, we have
\begin{align}
\label{eqW}
\partial_t W_n + i T_\Sigma W_n+iT_{V\cdot\zeta}W_n =  T_\gamma (c_n W_n + d_n\overline{W_n} ) + \mathcal{B}_{W_n} + \mathcal{C}_{W_n} ,
\end{align}
and
\begin{align}
\label{eqUR}
\partial_t Y_{m,p} + i T_\Sigma Y_{m,p} + iT_{V\cdot\zeta}Y_{m,p} = T_\gamma(c_{m}Y_{m,p} + d_{m}\overline{Y_{m,p}}) + \mathcal{B}_{Y_{m,p}} + \mathcal{C}_{Y_{m,p}},
\end{align}
for some complex numbers $c_n,d_n$. The cubic terms $\mathcal{C}_{W_n}$ and $\mathcal{C}_{Y_{m,p}}$ satisfy the bounds
\begin{align}\label{eqWrem}
\| \mathcal{C}_{W_n} \|_{L^2} +\|\mathcal{C}_{Y_{m,p}}\|_{L^2}\lesssim \e_1^3 {(1+t)}^{-3/2}.
\end{align} 

The quadratic strongly semilinear terms $\mathcal{B}_{W_n}$ have the form
\begin{align}
\label{eqWSsemi}
\mathcal{B}_{W_n} = \sum_{\iota_1\iota_2\in\{+,-\}}F_{\iota_1\iota_2}^{n}[U_{\iota_1},U_{\iota_2}] ,
\end{align}
where $U_+:=U$, $U_-=\overline{U}$, and the symbols $f=f_{\iota_1\iota_2}^n$ of the bilinear operators $F_{\iota_1\iota_2}^{n}$ satisfy
\begin{align}
\label{eqWsymbols}
{\| f^{k, k_1, k_2} \|}_{S^\infty} 
  \lesssim 2^{(3n/2-1)\max(k_1,k_2,0)}(1+2^{3\min(k_1,k_2)}).
\end{align}
The quadratic strongly semilinear terms $\mathcal{B}_{Y_{m,p}}$ have the form
\begin{align}
\label{eqWSsemi2}
\mathcal{B}_{Y_{m,p}} = \sum_{\iota_1\iota_2\in\{+,-\}}\big\{G_{\iota_1\iota_2}^{m,p}[U_{\iota_1},\Omega^pU_{\iota_2}]+\sum_{p_1+p_2\leq p,\max(p_1,p_2)\leq p-1}H_{\iota_1\iota_2}^{m,p,p_1,p_2}[\Omega^{p_1}U_{\iota_1},\Omega^{p_2}U_{\iota_2}]\big\},
\end{align}
where the symbols $g=g_{\iota_1\iota_2}^{m,p}$ and $h=h_{\iota_1\iota_2}^{m,p,p_1,p_2}$ of the operators $G_{\iota_1\iota_2}^{m,p}$ and $H_{\iota_1\iota_2}^{m,p,p_1,p_2}$ satisfy
\begin{align}
\label{eqWsymbols2}
\begin{split}
&{\| g^{k, k_1, k_2} \|}_{S^\infty} 
  \lesssim 2^{(3m/2-1)\max(k_1,k_2,0)}(1+2^{3\min(k_1,k_2)}),\\
&{\| h^{k, k_1, k_2} \|}_{S^\infty} 
  \lesssim 2^{(3m/2+1)\max(k_1,k_2,0)}(1+2^{\min(k_1,k_2)}).
	\end{split}
\end{align}
\end{proposition}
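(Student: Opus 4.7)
Both equations are obtained by differentiating \eqref{eqU} with $T_\Sigma^n$, respectively $\Omega^p T_\Sigma^m$. For \eqref{eqW}, apply $T_\Sigma^n$ to \eqref{eqU}; since $T_\Sigma^n$ commutes with $T_\Sigma$ trivially, we get
\begin{align*}
\partial_t W_n + i T_\Sigma W_n + i T_{V\cdot\zeta} W_n
= -[\partial_t, T_\Sigma^n] U - i[T_\Sigma^n, T_{V\cdot\zeta}] U + T_\Sigma^n \mathcal{N}_U + T_\Sigma^n \mathcal{Q}_S + T_\Sigma^n \mathcal{C}_U.
\end{align*}
The plan is to extract the $T_\gamma W_n$ and $T_\gamma \overline{W_n}$ contributions, which come from the leading order of $[T_\Sigma^n, T_{V\cdot\zeta}]U$ and $T_\Sigma^n \mathcal{N}_U$, and to show that every other term is either strongly semilinear quadratic in the sense of Definition \ref{defSS} or cubic with the right decay.

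\textbf{The key commutator and the $T_\gamma$ terms.} The telescopic identity
\begin{align*}
[T_\Sigma^n, T_{V\cdot\zeta}] = \sum_{k=0}^{n-1} T_\Sigma^k [T_\Sigma, T_{V\cdot\zeta}] T_\Sigma^{n-1-k},
\end{align*}
combined with the Weyl composition rules from Proposition \ref{PropCompSym} (used to slide the $T_\Sigma^k$ factors past the inner commutator at top symbolic order) and with \eqref{ExplicitCommutator}, gives
\begin{align*}
-i[T_\Sigma^n, T_{V\cdot\zeta}] U = -\tfrac{3n}{2} T_\gamma W_n + \mathcal{B}'_n + \varepsilon_1^3 O_{3,0},
\end{align*}
where $\mathcal{B}'_n$ is strongly semilinear with symbols of order $3n/2-1$. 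The realness of $\Sigma$ together with its evenness in $\zeta$ implies, via the defining formula \eqref{Tsigmaf}, that $\overline{T_\Sigma f} = T_\Sigma \bar f$, hence $T_\Sigma^n \overline{U} = \overline{W_n}$. Using this, and commuting $T_\Sigma^n$ inside $\mathcal{N}_U$ modulo lower-order residues, we obtain
\begin{align*}
T_\Sigma^n \mathcal{N}_U = -\tfrac{3ic_1}{2} T_\gamma W_n - \tfrac{3ic_2}{2} T_\gamma \overline{W_n} + \mathcal{B}''_n + \varepsilon_1^3 O_{3,0}.
\end{align*}
Aggregating, the complex constants $c_n := -\tfrac{3n}{2} - \tfrac{3ic_1}{2}$ and $d_n := -\tfrac{3ic_2}{2}$ produce the desired $T_\gamma(c_n W_n + d_n \overline{W_n})$ piece.

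\textbf{Auxiliary terms.} Every remaining contribution falls into $\mathcal{B}_{W_n}$ or $\mathcal{C}_{W_n}$. The commutator $[\partial_t, T_\Sigma^n] U = \sum_k T_\Sigma^k T_{\partial_t \Sigma} T_\Sigma^{n-1-k} U$ is strongly semilinear because \eqref{SymApprox2} gives $\partial_t\Sigma \in \varepsilon_1 \mathcal{M}^{1/2,1} + \varepsilon_1^2 \mathcal{M}^{3/2,2}$: the linear piece produces bilinear symbols of order $3n/2 - 1$ matching \eqref{eqWsymbols}, while the quadratic piece contributes cubic remainders. Similarly, $T_\Sigma^n \mathcal{Q}_S$ is strongly semilinear: the order-$(-1)$ estimate \eqref{symbolseqU} for $\mathcal{Q}_S$ composed with the order-$3n/2$ operator $T_\Sigma^n$ produces bilinear symbols of order $3n/2 - 1$, again matching \eqref{eqWsymbols}. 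Finally $T_\Sigma^n \mathcal{C}_U$ satisfies \eqref{eqWrem} by \eqref{cubiceqU} and the $L^2$-boundedness of $T_\Sigma^n$ at the cost of $3n/2 \leq N_0$ derivatives, which is permitted by the hypothesis $n \leq 2N_0/3$.

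\textbf{The rotational variable and the main obstacle.} For \eqref{eqUR}, apply $\Omega^p$ to \eqref{eqW} at level $n=m$. Since $\Omega$ commutes with $\partial_t$, and $[\Omega, T_a] = T_{\Omega a}$ at leading order with $\Omega$ acting on both $x$ and $\zeta$, a Leibniz expansion gives $[\Omega^p, T_a] = \sum_{p_1+p_2=p,\,p_1 \geq 1}\binom{p}{p_1} T_{\Omega^{p_1}a}\Omega^{p_2}$ modulo lower-order Weyl corrections. When $p_1 = 0$ the outer factor produces the quasilinear structure $T_\gamma Y_{m,p}$; when $p_1 = p$, all rotational derivatives fall on the coefficient (hence on a single copy of $U$ hidden inside $\Sigma$, $V$, or $\gamma$), yielding the $G$-type bilinears $G^{m,p}_{\iota_1\iota_2}[U, \Omega^p U]$ whose symbols inherit the same order $3m/2-1$ bound as in \eqref{eqWsymbols}; and when $1 \leq p_1 \leq p-1$, both arguments carry at most $p-1$ rotations and we obtain the $H$-type bilinears of \eqref{eqWSsemi2}. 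The looser symbol bound in \eqref{eqWsymbols2} reflects the extra derivative loss from $\Omega^{p_1}$ acting on the linear part of $\Sigma$ or $V$ and the weaker low-frequency gain available when the $\mathcal{Q}_S$ cancellation of \eqref{symbolseqU} is not fully propagated across the Leibniz expansion. The principal technical obstacle throughout is purely bookkeeping: verifying that every lower-order residue from the telescopic commutator expansions and from the Weyl composition corrections falls into exactly one of the three admissible buckets, with the identity \eqref{ExplicitCommutator} and the realness/evenness of $\Sigma$ being the two algebraic facts that force the leading residues to collapse into the clean $T_\gamma(c_n W_n + d_n \overline{W_n})$ form.
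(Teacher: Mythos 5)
Your proposal is correct and follows essentially the same route as the paper. The paper proves the $W_n$ equation by induction on $n$ (applying $T_\Sigma$ once per step, see \eqref{InducWp} and the discussion after it), whereas you apply $T_\Sigma^n$ at once and expand $[T_\Sigma^n,T_{V\cdot\zeta}]$ telescopically; these are the same computation written differently, and both rely on \eqref{ExplicitCommutator} to pull out $T_\gamma$ and on the realness/evenness of $\Sigma$ in $\zeta$ to get $T_\Sigma^n\overline{U}=\overline{W_n}$. For the $Y_{m,p}$ equation, both proofs apply $\Omega^p$ to the $W_m$ equation and use the commutator formula $[\Omega^p,T_\sigma]W=\sum_{p'<p}c_{p,p'}T_{\Omega_{x,\zeta}^{p-p'}\sigma}\Omega^{p'}W$ (the paper's \eqref{bin2.6}), then sort the resulting pieces. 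Two small slips: the sign should be $+[\partial_t,T_\Sigma^n]U$, not $-[\partial_t,T_\Sigma^n]U$ (from $T_\Sigma^n\partial_t=\partial_t T_\Sigma^n-[\partial_t,T_\Sigma^n]$), and the identity $\Omega(T_af)=T_a(\Omega f)+T_{\Omega_{x,\zeta}a}f$ is exact rather than holding only at leading order (it is \eqref{Alu3}). Neither affects the structure of the argument. You also gloss over the careful order-counting the paper does via the extended spaces $\mathcal{O}'_{m,p}$ and \eqref{Sigma-Lambda} to justify replacing $W_m$ by $\Lambda^m U_\pm$ at the right Sobolev cost, but this is bookkeeping consistent with your outline.
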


We remark that we have slightly worse information on the quadratic terms $\mathcal{B}_{Y_{m,p}}$ than on the quadratic terms $\mathcal{B}_{W_n}$. This is due mainly to the commutator of the operators $\Omega^p$ and $T_{V\cdot\zeta}$, which leads to the additional terms in \eqref{eqWSsemi2}. These terms can still be regarded as strongly semilinear because they do not contain the maximum number of $\Omega$ derivatives (they do contain, however, $2$ extra Sobolev derivatives, but this is acceptable due to our choice of $N_0$ and $N_1$).

\begin{proof}[Proof of Proposition \ref{proeqW}] In this proof we need to expand the definition of our main spaces $\mathcal{O}_{m,p}$ to exponents $p<-N_3$. More precisely, we define, for any $t\in[0,T]$,
\begin{equation}\label{bin2.4}
\begin{split}
&\|f\|_{\mathcal{O}'_{m,p}}:=\|f\|_{\mathcal{O}_{m,p}}\quad\text{ if }p\geq -N_3,\\
&\|f\|_{\mathcal{O}'_{m,p}}:=\langle t\rangle^{(m-1)(5/6-20\delta^2)-\delta^2}\big[\| f\|_{H^{N_0+p}}+\langle t\rangle^{5/6-2\delta^2}\|f\|_{\widetilde{W}^{N_2+p}}\big]\quad\text{ if }p<-N_3,
\end{split}
\end{equation}
compare with \eqref{fw1}. As in Lemmas \ref{lemmaaux2} and \ref{lemmaaux3}, we have the basic imbeddings
\begin{equation}\label{bin2.5}
T_a\mathcal{O}'_{m,p}\subseteq \mathcal{O}'_{m+m_1,p-l_1},\quad (T_aT_b-T_{ab})\mathcal{O}'_{m,p}\subseteq \mathcal{O}'_{m+m_1+m_2,p-l_1-l_2+1},
\end{equation}
if $a\in\mathcal{M}^{l_1,m_1}_{20}$ and $b\in\mathcal{M}^{l_2,m_2}_{20}$. In particular, recalling that, see \eqref{ExpSigma},
\begin{align}
\label{proeqW1}
\begin{split}
\Sigma - \Lambda \in \varep_1\mathcal{M}^{3/2,1}_{N_3-2}\qquad 
  \Sigma - \Lambda-\Sigma_1 \in \varep_1^2\mathcal{M}^{3/2,2}_{N_3-2},
\end{split}
\end{align}
it follows from \eqref{bin2.5} that, for any $n\in[0,2N_0/3]$,
\begin{equation}\label{Sigma-Lambda}
\begin{split}
T_\Sigma^nU\in\varep_1\mathcal{O}'_{1,-3n/2},\qquad T_\Sigma^nU-\Lambda^nU=\sum_{l=0}^{n-1}\Lambda^{n-1-l}(T_{\Sigma-\Lambda})T_\Sigma^lU\in\varep_1^2\mathcal{O}'_{2,-3n/2}.
\end{split}
\end{equation}

{\bf{Step 1.}} For $n\in[0,2N_0/3]$, we prove first that the function $W_n=(T_\Sigma)^nU$ satisfies
\begin{equation}\label{InducWp}
\begin{split}
(\partial_t+iT_\Sigma +iT_{V\cdot\zeta})W_{n} & = T_{\gamma}(c_{n}W_{n} + d_{n}\overline{W_{n}}) + \mathcal{N}_{S,n} + \varep_1^3\mathcal{O}'_{3,-3n/2},\\
\mathcal{N}_{S,n}&=\sum_{\iota_1,\iota_2\in\{+,-\}}B_{\iota_1\iota_2}^{n}[U_{\iota_1},U_{\iota_2}]\in\varep_1^2\mathcal{O}'_{2,-3n/2+1},\\
\Vert (b_{\iota_1\iota_2}^{n})^{k,k_1,k_2}\Vert_{S^\infty_\Omega}
&\lesssim (1+2^{3\min(k_1,k_2)})\cdot (1+2^{\max(k_1,k_2)})^{3n/2-1}.
\end{split}
\end{equation}
Indeed, the case $n=0$ follows from Proposition \ref{proeqU}. Assuming that this is true for some $n<2N_0/3$ and applying $T_\Sigma$, we find that
\begin{equation*}
\begin{split}
(\partial_t+iT_\Sigma +iT_{V\cdot\zeta})W_{n+1}&=T_{\gamma}(c_{n}W_{n+1} + d_{n}\overline{W_{n+1}})+i[T_{V\cdot\zeta},T_{\Sigma}]W_{n}+ [\partial_t,T_{\Sigma}]W_n\\
&+[T_\Sigma,T_\gamma](c_nW_n+d_n\overline{W_n})+T_\Sigma\mathcal{N}_{S,n}+\varep_1^3T_\Sigma \mathcal{O}'_{3,-3n/2}.
\end{split}
\end{equation*}
Using \eqref{bin2.5}--\eqref{Sigma-Lambda} and \eqref{SymApprox2} it follows that
\begin{equation*}
[\partial_t,T_\Sigma]W_n=T_{\partial_t\Sigma_1}\Lambda^nU+\varep_1^3\mathcal{O}'_{3,-3(n+1)/2},\qquad T_\Sigma\mathcal{N}_{S,n}=\Lambda\mathcal{N}_{S,n}+\varep_1^3\mathcal{O}'_{3,-3(n+1)/2},
\end{equation*}
and, using also \eqref{gamprop},
\begin{equation*}
\begin{split}
&[T_\Sigma,T_\gamma](c_nW_n+d_n\overline{W_n})=[T_\Lambda,T_\gamma](c_n\Lambda^nU+d_n\Lambda^n\overline{U})+\varep_1^3\mathcal{O}'_{3,-3(n+1)/2},\\
&[T_{V\cdot\zeta},T_{\Sigma}]W_{n}=[T_{V_1\cdot\zeta},T_{\Lambda}]W_{n}+\varep_1^3\mathcal{O}'_{3,-3(n+1)/2}=\frac{3i}{2}T_\gamma W_{n+1}+\mathcal{N}'(\Im U,\Lambda^nU)+\varep_1^3\mathcal{O}'_{3,-3(n+1)/2},
\end{split}
\end{equation*}
where $\mathcal{N}'(\Im U,\Lambda^nU)$ is an acceptable strongly semilinear quadratic term as in \eqref{InducWp}. Since $\partial_th=|\nabla|\omega+\varep_1^2\mathcal{O}_{2,-1/2}$, and recalling the formulas \eqref{ExpSigma} and \eqref{unkrelations}, it is easy to see that all the remaining quadratic terms are of the strongly semilinear type described in \eqref{InducWp}. This completes the induction step.

{\bf{Step 2.}} We can now prove the proposition. The claims for $W_n$ follow directly from \eqref{InducWp}. It remains to prove the claims for the functions $Y_{m,p}$. Assume $m\in[0,2N_3/3]$ is fixed. We start from the identity \eqref{InducWp} with $n=m$, and apply the rotation vector field $\Omega$. Clearly
\begin{equation*}
\begin{split}
(\partial_t+iT_\Sigma +iT_{V\cdot\zeta})Y_{m,p}&=T_{\gamma}(c_{m}Y_{m,p} + d_{m}\overline{Y_{m,p}}) + \Omega^p\mathcal{N}_{S,m} + \varep_1^3\Omega^p\mathcal{O}_{3,-3m/2}\\
&-i[\Omega^p,T_{\Sigma}]W_m-i[\Omega^p,T_{V\cdot\zeta}]W_m+[\Omega^p,T_{\gamma}](c_{m}W_m + d_{m}\overline{W_m}).
\end{split}
\end{equation*}
The terms in the first line of the right-hand side are clearly acceptable. It remains to show that the commutators in the second line can also be written as strongly semilinear quadratic terms and cubic terms. Indeed, for $\sigma\in\{\Sigma,V\cdot\zeta,\gamma\}$ and $W\in\{W_m,\overline{W_m}\}$,
\begin{equation}\label{bin2.6}
[\Omega^p,T_{\sigma}]W=\sum_{p'=0}^{p-1}c_{p,p'}T_{\Omega_{x,\zeta}^{p-p'}\sigma}\Omega^{p'}W,
\end{equation}
as a consequence of \eqref{Alu3}. In view of \eqref{Sigma-Lambda},
\begin{equation}\label{bin2.7}
\begin{split}
&\|\Omega^{N_1}W_m\|_{L^2}+\|\langle\nabla\rangle^{N_0-N_3}W_m\|_{L^2}\lesssim \varep_1\langle t\rangle^{\delta^2},\\
&\|\Omega^{N_1}(W_m-\Lambda^mU)\|_{L^2}+\|\langle\nabla\rangle^{N_0-N_3}(W_m-\Lambda^m)U\|_{L^2}\lesssim \varep_1^2\langle t\rangle^{21\delta^2-5/6}.
\end{split}
\end{equation}
and, for $q\in[0,N_1/2]$
\begin{equation}\label{bin2.8}
\|\Omega^{q}W_m\|_{\widetilde{W}^3}\lesssim \varep_1\langle t\rangle^{3\delta^2-5/6},
\qquad \|\Omega^{q}(W_m-\Lambda^mU)\|_{\widetilde{W}^3}\lesssim \varep_1^2\langle t\rangle^{23\delta^2-5/3}.
\end{equation}
By interpolation, and using the fact that $N_0-N_3\geq 3N_1/2$, it follows from \eqref{bin2.7} that
\begin{equation}\label{bin2.9}
\|\Omega^{q}\langle\nabla\rangle^{3/2}W_m\|_{L^2}\lesssim \varep_1\langle t\rangle^{\delta^2},
\qquad\|\Omega^{q}\langle\nabla\rangle^{3/2}(W_m-\Lambda^mU)\|_{L^2}\lesssim \varep_1^2\langle t\rangle^{21\delta^2-5/6}
\end{equation}
for $q\in[0,N_1-1]$. Moreover, for $\sigma\in\{\Sigma,V\cdot\zeta,\gamma\}$ and $q\in[1,N_1]$, we have
\begin{equation}\label{bin2.10}
\|\langle\zeta\rangle^{-3/2}\Omega_{x,\zeta}^{q}\sigma\|_{\mathcal{M}_{20,2}}\lesssim \varep_1\langle t\rangle^{2\delta^2},
\qquad\|\langle\zeta\rangle^{-3/2}\Omega_{x,\zeta}^{q}(\sigma-\sigma_1)\|_{\mathcal{M}_{20,2}}\lesssim \varep_1^2\langle t\rangle^{22\delta^2-5/6},
\end{equation}
while for $q\in[1,N_1/2]$ we also have
\begin{equation}\label{bin2.11}
\|\langle\zeta\rangle^{-3/2}\Omega_{x,\zeta}^{q}\sigma\|_{\mathcal{M}_{20,\infty}}\lesssim \varep_1\langle t\rangle^{4\delta^2-5/6},
\qquad\|\langle\zeta\rangle^{-3/2}\Omega_{x,\zeta}^{q}(\sigma-\sigma_1)\|_{\mathcal{M}_{20,\infty}}\lesssim \varep_1^2\langle t\rangle^{24\delta^2-5/3}.
\end{equation}
See \eqref{SymNorm1} for the definition of the norms ${\mathcal{M}_{20,q}}$. In these estimates $\sigma_1$ denotes the linear part of $\sigma$, i.e. $\sigma_1\in\{\Sigma_1,V_1\cdot\zeta,\gamma\}$. Therefore, using Lemma \ref{lemmaaux2} and \eqref{bin2.8}--\eqref{bin2.11},
\begin{equation*}
T_{\Omega_{x,\zeta}^{p-p'}\sigma}\Omega^{p'}W=T_{\Omega_{x,\zeta}^{p-p'}\sigma}\Omega^{p'}\Lambda^mU_{\pm}+\varep_1^3\langle t\rangle^{-8/5}L^2=T_{\Omega_{x,\zeta}^{p-p'}\sigma_1}\Omega^{p'}\Lambda^mU_{\pm}+\varep_1^3\langle t\rangle^{-8/5}L^2,
\end{equation*} 
for $p'\in[0,p-1]$ and $W\in\{W_m,\overline{W_m}\}$. Notice that $T_{\Omega_{x,\zeta}^{p_1}\sigma_1}\Omega^{p_2}\Lambda^mU_{\pm}$ can be written as $H^{m,p,p_1,p_2}_{\iota_1\iota_2}[\Omega^{p_1}U_{\iota_1},\Omega^{p_2}U_{\iota_2}]$, with symbols as in \eqref{eqWsymbols2}, up to acceptable cubic terms (the loss of $1$ high derivative comes from the case $\sigma_1=V_1\cdot\zeta$). The conclusion of the proposition follows. 
\end{proof}

\section{Energy estimates, I: setup and the main $L^2$ lemma}\label{secEE1}

In this section we set up the proof of Proposition \ref{MainBootstrapEn} and collect some of the main ingredients needed in the proof. From now on we set $g=1$ and $\sigma=1$. With $W_n$ and $Y_{m,p}$ as in \eqref{defW_n}, we define our main energy functional
\begin{equation}\label{bin3.1}
\mathcal{E}_{tot}:=\frac{1}{2}\sum_{0\leq n \leq 2N_0/3}\Vert W_n\Vert_{L^2}^2+\frac{1}{2}\sum_{0 \leq m\leq 2N_3/3}\sum_{0 \leq p \leq N_1}\Vert Y_{m,p}\Vert_{L^2}^2.
\end{equation}

We start with a proposition:

\begin{proposition}\label{ProBulk}
Assume that \eqref{bin2} holds. Then 
\begin{equation}\label{bin2.1}
\Vert \mathcal{U}(t)\Vert_{H^{N_0}\cap H^{N_1,N_3}_\Omega}^2 \lesssim \mathcal{E}_{tot}(t)+\varep_1^3, \qquad \mathcal{E}_{tot}(t)\lesssim \|\mathcal{U}(t)\Vert_{H^{N_0}\cap H^{N_1,N_3}_\Omega}^2 +\varep_1^3,
\end{equation}
where $\mathcal{U}(t)=\langle\nabla\rangle h(t)+i|\nabla|^{1/2}\phi(t)$ as in Proposition \ref{MainBootstrapEn}. Moreover
\begin{equation}\label{bin2.15}
\frac{d}{dt}\mathcal{E}_{tot}=\mathcal{B}_{0} + \mathcal{B}_1 + \mathcal{B}_E, \qquad\vert \mathcal{B}_E(t)\vert\lesssim \varep_1^3(1+t)^{-4/3}.
\end{equation}
The (bulk) terms $\mathcal{B}_0$ and $\mathcal{B}_1$ are finite sums of the form
\begin{equation}\label{bin3.5}
\mathcal{B}_l(t):= \sum_{G\in\mathcal{G},\,W,W'\in\mathcal{W}_l}\iint_{\mathbb{R}^2\times\mathbb{R}^2}\mu_l(\xi,\eta)\widehat{G}(\xi-\eta)
  \widehat{W}(\eta)\widehat{W'}(-\xi)\,d\xi d\eta,
\end{equation}
where $U$ and $\Sigma$ are defined as in Proposition \ref{proeqU}, $U_+:=U$, $U_-:=\overline{U}$, and
\begin{equation}\label{bin3.6}
\begin{split}
\mathcal{G}&:=\{\Omega^a\langle\nabla\rangle^bU_{\pm}:\, a\leq N_1/2\text{ and }b\leq N_3+2\},\\
\mathcal{W}_0&:=\{\Omega^aT_\Sigma^mU_{\pm}:\,\text{ either }(a=0\text{ and }m\leq 2N_0/3)\,\,\text{ or }\,\,(a\leq N_1\text{ and }m\leq 2N_3/3)\},\\
\mathcal{W}_1&:=\mathcal{W}_0\cup \{(1-\Delta)\Omega^aT_\Sigma^mU_{\pm}:\,a\leq N_1-1\text{ and }m\leq 2N_3/3)\}.
\end{split}
\end{equation}
The symbols $\mu_l=\mu_{l;(G,W,W')}$, $l\in\{0,1\}$, satisfy
\begin{equation}\label{bin3.7}
\begin{split}
&\mu_0(\xi,\eta)=c|\xi-\eta|^{3/2}\mathfrak{d}(\xi,\eta),\qquad \mathfrak{d}(\xi,\eta)
  := \chi\Big(\frac{|\xi-\eta|}{|\xi+\eta|}\Big)\Big(\frac{\xi-\eta}{\vert\xi-\eta\vert}\cdot\frac{\xi+\eta}{\vert\xi+\eta\vert}\Big)^2,\,c\in\mathbb{C},\\
&\Vert \mu_1^{k,k_1,k_2}\Vert_{S^\infty} \lesssim 2^{-\max(k_1,k_2,0)}2^{3k_1^+},
\end{split}
\end{equation}
for any $k,k_1,k_2\in\mathbb{Z}$, see definitions \eqref{defclassA}--\eqref{mloc}.
\end{proposition}

Notice that the a priori energy estimates we prove here are stronger than 
standard energy estimates. The terms $\mathcal{B}_0,\mathcal{B}_1$ are strongly semilinear terms, in the sense that they either gain one derivative or contain
the depletion factor $\mathfrak{d}$  (which effectively gains one derivative when the modulation is small, compare with \eqref{corre}).

\begin{proof}[Proof of Proposition \ref{ProBulk}]
The bound \eqref{bin2.1} follows from \eqref{Uhorel} and \eqref{Sigma-Lambda},
\begin{equation*}
\Vert \langle\nabla\rangle h(t)\Vert_{H^{N_0}\cap H^{N_1,N_3}_\Omega}^2 + \Vert\vert\nabla\vert^{1/2}\phi(t)\Vert_{H^{N_0}\cap H^{N_1,N_3}_\Omega}^2\lesssim \Vert  U(t)\Vert_{H^{N_0}\cap H^{N_1,N_3}_\Omega}^2+\varep_1^3\lesssim \mathcal{E}_{tot}(t)+\varep_1^3.
\end{equation*}

To prove the remaining claims we start from \eqref{eqW} and \eqref{eqUR}. For the terms $W_n$ we have
\begin{equation}\label{bin3.3}
\frac{d}{dt}\frac{1}{2}\Vert W_n\Vert_{L^2}^2=\Re\langle T_\gamma (c_n W_n + d_n\overline{W_n} ),W_n\rangle+\Re \langle\mathcal{B}_{W_n},W_n\rangle + \Re\langle \mathcal{C}_{W_n},W_n\rangle,
\end{equation}
since, as a consequence of Lemma \ref{PropSym} (ii), 
\begin{equation*}
\Re\langle i T_\Sigma W_n+iT_{V\cdot\zeta}W_n,W_n\rangle=0.
\end{equation*}
Clearly, $|\langle \mathcal{C}_{W_n},W_n\rangle|\lesssim \varep_1^3\langle t\rangle^{-3/2+2\delta^2}$, so the last term can be placed in $\mathcal{B}_E(t)$. Moreover, using \eqref{DefGamma} and the definitions, $\langle T_\gamma (c_n W_n + d_n\overline{W_n} ),W_n\rangle$ can be written in the Fourier space as part of the term $\mathcal{B}_0(t)$ in \eqref{bin3.5}. 

Finally, $\langle\mathcal{B}_{W_n},W_n\rangle $ can be written in the Fourier space as part of the term $\mathcal{B}_1(t)$ in \eqref{bin3.5} plus acceptable errors. Indeed, given a symbol $f$ as in \eqref{eqWsymbols}, one can write 
\begin{equation*}
f(\xi,\eta)=\mu_1(\xi,\eta)\cdot [(1+\Lambda(\xi-\eta)^n)+(1+\Lambda(\eta)^n)],\qquad \mu_1(\xi,\eta):=\frac{f(\xi,\eta)}{2+\Lambda(\xi-\eta)^n+\Lambda(\eta)^n}.
\end{equation*}
The symbol $\mu_1$ satisfies the required estimate in \eqref{bin3.7}. The factors $1+\Lambda(\xi-\eta)^n$ and $1+\Lambda(\eta)^n$ can be combined with 
the functions $\widehat{U_{\iota_1}}(\xi-\eta)$ and $\widehat{U_{\iota_2}}(\eta)$ respectively. Recalling that 
$\Lambda^nU-W_n\in\varep_1^2\mathcal{O}'_{2,-3n/2}$, see \eqref{Sigma-Lambda}, the desired representation \eqref{bin3.5} follows, up to acceptable errors.

The analysis of the terms $Y_{m,p}$ is similar, using \eqref{eqWSsemi2}-\eqref{eqWsymbols2}. This completes the proof.
\end{proof}

In view of \eqref{bin2.1}, to prove Proposition \ref{MainBootstrapEn} it suffices to prove that $|\mathcal{E}_{tot}(t)-\mathcal{E}_{tot}(0)|\lesssim \varep_1^3\langle t\rangle^{2\delta^2}$ for any $t\in[0,T]$. In view of \eqref{bin2.15} it suffices to prove that, for $l\in\{0,1\}$,
\begin{equation*}
\Big|\int_0^t\mathcal{B}_l(s)\,ds\Big|\lesssim \varep_1^3(1+t)^{2\delta^2},
\end{equation*}
for any $t\in[0,T]$. Given $t\in[0,T]$, we fix a suitable decomposition of the function $\mathbf{1}_{[0,t]}$, i.e. we fix functions $q_0,\ldots,q_{L+1}:\mathbb{R}\to[0,1]$, $|L-\log_2(2+t)|\leq 2$, with the properties
\begin{equation}\label{nh2}
\begin{split}
&\mathrm{supp}\,q_0\subseteq [0,2], \quad \mathrm{supp}\,q_{L+1}\subseteq [t-2,t],\quad\mathrm{supp}\,q_m\subseteq [2^{m-1},2^{m+1}]\text{ for } m\in\{1,\ldots,L\},\\
&\sum_{m=0}^{L+1}q_m(s)=\mathbf{1}_{[0,t]}(s),\qquad q_m\in C^1(\mathbb{R})\text{ and }\int_0^t|q'_m(s)|\,ds\lesssim 1\text{ for }m\in \{1,\ldots,L\}.
\end{split}
\end{equation}
It remains to prove that for $l\in\{0,1\}$ and $m\in\{0,\ldots,L+1\}$,
\begin{equation}\label{njk5}
\Big|\int_{\mathbb{R}}\mathcal{B}_l(s)q_m(s)\,ds\Big|\lesssim \varep_1^32^{2\delta^2m}.
\end{equation}

In order to be able to use the hypothesis $\|\mathcal{V}(s)\|_Z\leq\varep_1$ (see \eqref{Ama32}) we need to modify slightly the functions $G$ that appear in the terms $\mathcal{B}_l$. More precisely, we define
\begin{equation}\label{njk8}
\mathcal{G}':=\{\Omega^a\langle\nabla\rangle^b\mathcal{U}_{\iota}:\, \iota\in\{+,-\},\,a\leq N_1/2\text{ and }b\leq N_3+2\},
\end{equation} 
where $\mathcal{U}=\langle\nabla\rangle h+i|\nabla|^{1/2}\phi$, $\mathcal{U}_+=\mathcal{U}$ and $\mathcal{U}_-=\overline{\mathcal{U}}$. Then we define the modified bilinear terms
\begin{equation}\label{njk9}
\mathcal{B}'_l(t):=\sum_{G\in\mathcal{G}',\,W,W'\in\mathcal{W}_l}\iint_{\mathbb{R}^2\times\mathbb{R}^2}\mu_l(\xi,\eta)\widehat{G}(\xi-\eta,t)\widehat{W}(\eta,t)\widehat{W'}(-\xi,t)\,d\xi d\eta,
\end{equation}
where the sets $\mathcal{W}_0,\mathcal{W}_1$ are as in \eqref{bin3.6}, and the symbols $\mu_0$ and $\mu_1$ are as in \eqref{bin3.7}. In view of \eqref{Uhorel}, $U(t)-\mathcal{U}(t)\in\varep_1^2\mathcal{O}_{2,0}$. Therefore, simple estimates as in the proof of Lemma \ref{lemmaaux1} show that
\begin{equation*}
|\mathcal{B}_l(s)|\lesssim\varep_1^3(1+s)^{-4/5},\qquad |\mathcal{B}_l(s)-\mathcal{B}'_l(s)|\lesssim\varep_1^3(1+s)^{-8/5}.
\end{equation*}
As a result of these reductions, for Proposition \ref{MainBootstrapEn} it suffices to prove the following:

\begin{proposition}\label{EEMainProp}
Assume that $(h,\phi)$ is a solution of the system \eqref{WW0} with $g=1$, $\sigma=1$ on $[0,T]$, and let $\mathcal{U}=\langle\nabla\rangle h+i|\nabla|^{1/2}\phi$, $\mathcal{V}(t)=e^{it\Lambda}\mathcal{U}(t)$. Assume that 
\begin{align}
\label{EEMainPropapriori}
\langle t\rangle^{-\delta^2}\|\mathcal{U}(t)\|_{H^{N_0}\cap H^{N_1,N_3}_\Omega}+\|\mathcal{V}(t)\|_{Z}\leq\varep_1,
\end{align}
for any $t\in[0,T]$, see \eqref{Ama32}. Then, for any $m\in[\mathcal{D}^2,L]$ and $l\in\{0,1\}$, 
\begin{equation}\label{EEMainPropbound}
\Big| \int_{\R}\iint_{\mathbb{R}^2\times\mathbb{R}^2} q_m(s) \mu_l(\xi,\eta)\widehat{G}(\xi-\eta,s)\widehat{W}(\eta,s)\widehat{W'}(-\xi,s)\,d\xi d\eta ds\Big| \lesssim \varep_1^32^{2\delta^2m},
\end{equation}
where $G\in\mathcal{G}'$ (see \eqref{njk8}), and $W,W'\in\mathcal{W}':=\mathcal{W}_1$ (see \eqref{bin3.6}), and $q_m$ are as in \eqref{nh2}. The symbols 
$\mu_0,\mu_1$ satisfy the bounds (compare with \eqref{bin3.7}) 
\begin{equation}\label{njk14}
\begin{split}
&\mu_0(\xi,\eta)=|\xi-\eta|^{3/2}\mathfrak{d}(\xi,\eta),\qquad \mathfrak{d}(\xi,\eta)
  := \chi\Big(\frac{|\xi-\eta|}{|\xi+\eta|}\Big)\Big(\frac{\xi-\eta}{\vert\xi-\eta\vert}\cdot\frac{\xi+\eta}{\vert\xi+\eta\vert}\Big)^2,\\
&\Vert \mu_1^{k,k_1,k_2}\Vert_{S^\infty} \lesssim 2^{-\max(k_1,k_2,0)}2^{3k_1^+}.
\end{split}
\end{equation}
\end{proposition}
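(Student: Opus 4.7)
The plan is to reduce \eqref{EEMainPropbound} to a finite family of dyadically localized pieces and dispatch each either by multilinear H\"older-type estimates or by the main $L^2$ lemma combined with integration by parts in $s$. First I would perform a Littlewood--Paley decomposition in the three frequencies $2^{k_1}\approx |\xi-\eta|$, $2^{k_2}\approx |\eta|$, $2^k\approx |\xi|$; on the support of $\mu_l$ at most two of these differ significantly, so only $O(1)$ configurations arise for each $(k,k_1,k_2)$ triple. The factor $2^{-\max(k_1,k_2,0)}$ in $\mu_1$ (or $|\xi-\eta|^{3/2}\mathfrak{d}$ in $\mu_0$), combined with the high Sobolev/$\Omega$-regularity of $W,W'\in\mathcal{W}'$ encoded by \eqref{EEMainPropapriori} and the low order of $G\in\mathcal{G}'$, absorbs all pieces where $\max(k_1,k_2)$ is large or $\min(k,k_1,k_2)$ is very negative. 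The hard core is the regime $k, k_1, k_2 \in [-\mathcal{D},\mathcal{D}]$, and in particular the sub-case $|\xi-\eta|\approx \gamma_0$ where the linear dispersive decay is only $s^{-5/6}$.

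For this core case I pass to the profiles $v=e^{is\Lambda_\mu}W$, $v'=e^{-is\Lambda_\sigma}W'$, $g=e^{is\Lambda_\nu}G$ (for the appropriate signs), producing the oscillatory factor $e^{is\Phi_{\sigma\mu\nu}(\xi,\eta)}$. Inserting the modulation partition of unity $1=\sum_{p}\varphi_p(\Phi(\xi,\eta))$ reduces matters to bounding
\[
I_{k,m,p}:=\int q_m(s)\iint e^{is\Phi(\xi,\eta)}\mu_l(\xi,\eta)\varphi_p(\Phi(\xi,\eta))\widehat{g}(\xi-\eta,s)\widehat{v}(\eta,s)\widehat{v'}(-\xi,s)\,d\xi d\eta ds.
\]
The critical threshold is $p^\ast\approx -2m/3-k/2+\epsilon m$, dictated by the $TT^\ast$ calculation behind \eqref{qaz9}.

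For $p\le p^\ast$, no gain from time-oscillation is available; I invoke the main $L^2$ bound \eqref{qaz9} on the Fourier integral operator $T_{k,m,p}$ (Lemma \ref{L2Prop0}), whose proof rests on the restricted nondegeneracy condition \eqref{IntroUpsilon}. Pairing this with the $Z$-norm hypothesis on $\mathcal{V}$, which yields sharp $L^2$ bounds on the thin anuli $A_{n,\gamma_0}g$ appearing in the decomposition of $\widehat g$ near the slow-decay sphere, and with the $H^{N_0}\cap H^{N_1,N_3}_\Omega$ bound on $v'$ closes this range; the exponent $3/2$ in \eqref{qaz9} is what beats Schur's test and produces summable pieces. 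For $p>p^\ast$ I integrate by parts in $s$ via $e^{is\Phi}=(i\Phi)^{-1}\partial_s e^{is\Phi}$. When $\partial_s$ hits $q_m$ or the modulation cutoff the bound is routine via the same $L^2$ lemma with the better weight $1/\Phi$; when it hits $v$ or $v'$ I substitute the equations \eqref{eqW}--\eqref{eqUR} from Proposition \ref{proeqW}. The resulting trilinear integral splits in two: if $p\le 0$ the algebraic correlation \eqref{corre}, namely that $\mathfrak{d}(\xi,\eta)$ and the analogous $\mu_1$-bound decay like $2^{-k}$ near the resonant set, restores the derivative lost by $\partial_s v$, and reapplication of the $L^2$ lemma suffices; if $p\ge 0$ the denominator $\Phi$ is itself favorable and one iterates the symmetrization already built into the definition of $\mathcal{E}_{tot}$, using Lemma \ref{EELemmaMain} to control the resulting symmetric cubic expression without derivative loss.

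The main obstacle is the regime where all three frequencies are $\approx 1$ with $|\xi-\eta|\approx\gamma_0$, the modulation $p$ sits exactly at the critical threshold $p^\ast$, and the $s^{-5/6}$ decay forbids any brute-force $L^\infty$ argument. Here the only successful line is the combination of (i) the sharp $3/2$-exponent in Lemma \ref{L2Prop0}, which depends crucially on the nondegeneracy of $\Upsilon$ verified in Lemmas \ref{Geomgamma0}--\ref{lemmaD2}, (ii) the algebraic cancellation $|\mu_l|\lesssim 2^{-k}$ on $\{|\Phi|\lesssim 1\}$ for $\mu_0$ via $\mathfrak d$ and for $\mu_1$ via \eqref{njk14}, and (iii) the almost-radiality of the profiles forced by the $\Omega$-weighted control in \eqref{EEMainPropapriori}, which effectively lowers the dimension of the stationary set. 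Summing the resulting dyadic bounds, each piece is $\lesssim \varep_1^3 2^{-\alpha m}$ off the critical locus and $\lesssim \varep_1^3 2^{\epsilon m}$ on it, over the $O(m^4)$ relevant $(k,k_1,k_2,p)$ tuples; absorbing the polynomial and $2^{\delta^2 m}$ losses coming from \eqref{qaz9} and \eqref{EEMainPropapriori} into the slack $2^{2\delta^2 m}$ on the right of \eqref{EEMainPropbound} completes the proof.
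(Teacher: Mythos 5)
Your proposal tracks the paper's argument closely in its main structure: Littlewood--Paley decomposition, reduction to the bad-frequency regime near the slow-decay sphere, modulation decomposition at a threshold $\underline{p}$, the main $L^2$ lemma (Lemma \ref{L2Prop0}) for sub-threshold modulations, integration by parts in $s$ for super-threshold modulations, the depletion factor \eqref{corre} to recover the derivative when the modulation is moderate, and the symmetrization argument to avoid derivative loss when the frequencies are very large. All of this is what the paper does in Section \ref{secEE2}. A couple of minor slips: your threshold should read $p^\ast\approx k/2-2m/3$ (the paper uses $\underline{p}=\lfloor k/2-7m/9\rfloor$), not $-k/2-2m/3$; the modulation cutoff $\varphi_p(\Phi)$ is independent of $s$ so $\partial_s$ cannot hit it; and the bad frequencies for $G$ include the space-time resonant sphere $\gamma_1$ as well as $\gamma_0$ (both are handled by Lemma \ref{L2Prop0}, with the $\gamma_1$ case in Lemma \ref{L2Lem2}, where $\hat\Upsilon$ is nondegenerate and the argument is easier).

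There is, however, one genuine gap. When you integrate by parts in $s$, the derivative can hit $q_m$, $\widehat{v}$, $\widehat{v'}$, \emph{or} $\widehat{g}$, and you only address the first three. The term $\mathcal{J}_{ba,0}^{l,p}$ where $\partial_s$ hits the low-frequency profile $g$ (i.e., $(\partial_s+i\Lambda_\mu)G$) is not routine. The naive decay $\|e^{-is\Lambda}(\partial_s+i\Lambda)G\|_{L^\infty}\lesssim 2^{-5m/3+\delta m}$ gives, after the $2^{-p}$ from IBP and the $2^m$ from the time integral, a bound of order $2^{-p}(2^{-p}+1)2^{-2m/3+\delta m}$; this suffices for $p\gtrsim -m/4$ but \emph{fails} on the range $\underline{p}+1\le p\le -m/4$, where $2^{-p}$ is as large as $2^{7m/9}$. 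That range is nonempty in the core regime. The paper's resolution (Lemma \ref{EELemmaSemi}) is to use the refined decomposition of $\partial_t\mathcal{V}$ in Lemma \ref{dtfLem3}, writing $\chi'_{ba}\cdot\mathcal{F}\{(\partial_s+i\Lambda_\mu)G\}=g_d+g_\infty+g_2$ with a ``dispersive'' piece $g_d$ satisfying $\sup_{|\rho|\leq 2^{7m/9+}}\|\mathcal{F}^{-1}\{e^{i\rho\Lambda}g_d\}\|_{L^\infty}\lesssim 2^{-16m/9-}$ and $L^2$/$L^\infty$ remainders controlled via Schur's test and the volume bounds in Proposition \ref{volume}. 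That decomposition in turn relies on stationary-phase analysis of the Duhamel integral for $\partial_t\mathcal V$ near the bad frequencies, because the naive $s^{-5/3}$ pointwise decay of $\partial_s G$ is not uniform under $e^{i\rho\Lambda}$ for $\rho$ as large as $2^{-p}$. Your proposal would need this extra input to close the semilinear case.
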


The proof of this proposition will be done in several steps. We remark that both the symbols $\mu_0$ and $\mu_1$ introduce certain {\it{strongly semilinear}} structures. The symbols $\mu_0$ contain the depletion factor $\mathfrak{d}$, which counts essentially as a gain of one high derivative in resonant situations. The symbols $\mu_1$ clearly contain a gain of one high derivative.  

We will need to further subdivide the expression in \eqref{EEMainPropbound} into the contributions of ``good frequencies'' with optimal decay and the ``bad frequencies'' with slower decay. Let
\begin{align}\label{njk20}
\chi_{ba}(x) &:= \varphi(2^{\mathcal{D}}(\vert x\vert-\gamma_0)) + \varphi(2^{\mathcal{D}}(\vert x\vert-\gamma_1)),
  \qquad \chi_{go}(x) := 1-\chi_{ba}(x),
\end{align}
where $\gamma_0=\sqrt{\frac{2\sqrt{3}-3}{3}}$ is the radius of the sphere of degenerate frequencies, and $\gamma_1=\sqrt{2}$ is the radius of the sphere of space-time resonances. 
We then define for $l\in\{0,1\}$ and $Y\in\{go,ba\}$,
\begin{align}\label{DecBbasic}
\mathcal{A}^l_{Y}[F;H_1,H_2] := \iint_{\R^2 \times \R^2}\mu_l(\xi,\eta)
  \chi_Y(\xi-\eta)\widehat{F}(\xi-\eta)\widehat{H_1}(\eta)\widehat{H_2}(-\xi)\,d\xi d\eta.
\end{align}

In the proof of \eqref{EEMainPropbound} we will need to distinguish between functions $G$ and $W$ that originate from $U=U_+$ and functions that originate from $\overline{U}=U_-$. For this we define, for $\iota\in\{+,-\}$,
\begin{equation}\label{njk21}
\mathcal{G}'_\iota:=\{\Omega^a\langle\nabla\rangle^b\mathcal{U}_{\iota}:\,a\leq N_1/2\text{ and }b\leq N_3+2\},
\end{equation}
and
\begin{equation}\label{njk22}
\begin{split}
\mathcal{W}'_\iota:=\{\langle\nabla\rangle^c&\Omega^aT_{\Sigma}^mU_{\iota}:\,\text{ either }(a=c=0\text{ and }m\leq 2N_0/3)\\
&\text{ or }(c\in\{0,2\},\,c/2+a\leq N_1,\text{ and }m\leq 2N_3/3)\}.
\end{split}
\end{equation}

\subsection{Some lemmas}\label{secAuxenergy} In this subsection we collect some lemmas that are used often in the proofs in the next section. We will often use the Schur's test:

\begin{lemma}[Schur's lemma]\label{ShurLem}
Consider the operator $T$ given by
\begin{equation*}
Tf(\xi)=\int_{\mathbb{R}^2}K(\xi,\eta)f(\eta)d\eta.
\end{equation*}
Assume that
\begin{equation*}
\sup_\xi\int_{\mathbb{R}^2}\vert K(\xi,\eta)\vert d\eta\le K_1,\qquad \sup_\eta\int_{\mathbb{R}^2}\vert K(\xi,\eta)\vert d\xi\le K_2.
\end{equation*}
Then
\begin{equation*}
\Vert Tf\Vert_{L^2}\lesssim \sqrt{K_1K_2}\Vert f\Vert_{L^2}.
\end{equation*}
\end{lemma}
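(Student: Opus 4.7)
The plan is to prove this by the classical Cauchy--Schwarz / Fubini argument, which is completely routine but worth recording in the structure dictated by the two hypotheses. The main idea is to split the kernel as $|K(\xi,\eta)| = |K(\xi,\eta)|^{1/2} \cdot |K(\xi,\eta)|^{1/2}$ so that the Cauchy--Schwarz inequality distributes one factor against the constant function $1$ (to be controlled by $K_1$) and the other against $|f(\eta)|$ (to be controlled, after integration in $\xi$, by $K_2$).

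Concretely, I would first write, for each fixed $\xi$,
\begin{equation*}
|Tf(\xi)|^2 \leq \Bigl(\int_{\mathbb{R}^2} |K(\xi,\eta)|\,d\eta\Bigr)\Bigl(\int_{\mathbb{R}^2} |K(\xi,\eta)||f(\eta)|^2\,d\eta\Bigr) \leq K_1 \int_{\mathbb{R}^2} |K(\xi,\eta)||f(\eta)|^2\,d\eta,
\end{equation*}
using the hypothesis on $\sup_\xi \int |K(\xi,\eta)|\,d\eta$. Next I would integrate in $\xi$ and apply Fubini's theorem to swap the order of integration, which gives
\begin{equation*}
\|Tf\|_{L^2}^2 \leq K_1 \int_{\mathbb{R}^2} |f(\eta)|^2 \Bigl(\int_{\mathbb{R}^2} |K(\xi,\eta)|\,d\xi\Bigr)d\eta \leq K_1 K_2 \|f\|_{L^2}^2,
\end{equation*}
using the second hypothesis. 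Taking square roots yields the claimed bound.

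There is no genuine obstacle here: the proof is essentially a two-line Cauchy--Schwarz argument, and the only care required is to keep $f$ together with one copy of $|K|^{1/2}$ so that Fubini converts the $\xi$-integral of $|K|$ into the constant $K_2$. Measurability and the applicability of Fubini are automatic under the implicit assumption that $K$ is measurable and the stated suprema are finite, so the statement as written needs no further hypothesis.
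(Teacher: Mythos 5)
Your proof is correct and is the standard Cauchy--Schwarz/Fubini argument for Schur's test; the paper states this lemma without proof (treating it as classical), and your argument is exactly the one implicitly being invoked.
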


We will also use a lemma about functions in $\mathcal{G}'_+$ and $\mathcal{W}'_+$.

\begin{lemma}\label{GWprop}
(i) Assume $G\in\mathcal{G}'_+$, see \eqref{njk21}. Then
\begin{equation}\label{njk30}
\sup_{|\alpha|+2a\leq 30}\|D^\alpha\Omega^a[e^{it\Lambda}G(t)]\|_{Z_1}\lesssim\varep_1,\qquad \|G(t)\|_{H^{N_1-2}\cap H_\Omega^{N_1/2-1,0}}\lesssim\varep_1\langle t\rangle^{\delta^2},
\end{equation}
for any $t\in[0,T]$. Moreover, $G$ satisfies the equation
\begin{equation}\label{njk31}
(\partial_t+i\Lambda)G=\mathcal{N}_G,\qquad \|\mathcal{N}_G(t)\|_{H^{N_1-4}\cap H_\Omega^{N_1/2-2,0}}\lesssim\varep_1^2\langle t\rangle^{-5/6+\delta}.
\end{equation}

(ii) Assume $W\in\mathcal{W}'_+$, see \eqref{njk22}. Then
\begin{equation}\label{njk33}
\|W(t)\|_{L^2}\lesssim\varep_1\langle t\rangle^{\delta^2},
\end{equation}
for any $t\in[0,T]$. Moreover, $W$ satisfies the equation
\begin{equation}\label{njk34}
(\partial_t+i\Lambda)W=\mathcal{Q}_W+\mathcal{E}_W,
\end{equation}
where, with $\Sigma_{\geq 2}:=\Sigma-\Lambda-\Sigma_1\in\varep_1^2\mathcal{M}^{3/2,2}_{N_3-2}$ as in \eqref{ExpSigma},
\begin{equation}\label{njk34.1}
\mathcal{Q}_W=-iT_{\Sigma_{\geq 2}}W-iT_{V\cdot\zeta}W,\qquad\|\langle\nabla\rangle^{-1/2}\mathcal{E}_W\|_{L^2}\lesssim \varep_1^2\langle t\rangle^{-5/6+\delta}.
\end{equation}
Using Lemma \ref{PropSym} we see that for all $k\in\mathbb{Z}$ and $t\in[0,T]$
\begin{align}\label{BoundsQuasiTerms}
\begin{split}
\| (P_kT_{V\cdot\zeta}W)(t)\Vert_{L^2}&\lesssim \varep_12^{k^+} \langle t\rangle^{-5/6+\delta}\|P_{[k-2,k+2]}W(t)\|_{L^2},\\
\| (P_{k}T_{\Sigma_{\geq 2}}W)(t)\Vert_{L^2} &\lesssim \varep_1^22^{3k^+/2}\langle t\rangle^{-5/3+\delta}\|P_{[k-2,k+2]}W(t)\|_{L^2}.
\end{split}
\end{align}
\end{lemma}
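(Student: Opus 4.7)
\medskip

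The plan is to reduce every claim to either the energy bootstrap hypothesis \eqref{EEMainPropapriori}, the structural equation \eqref{eqU} from Proposition \ref{proeqU} (and its iterated version \eqref{eqW}--\eqref{eqWsymbols2} from Proposition \ref{proeqW}), or the paradifferential calculus already developed.

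For part (i), I would start by writing $G=\Omega^{a_0}\langle\nabla\rangle^{b_0}\mathcal{U}$ with $a_0\le N_1/2$, $b_0\le N_3+2$. Since $\Omega$ and $\langle\nabla\rangle^{b_0}$ commute with $e^{it\Lambda}$, we have $D^\alpha\Omega^a e^{it\Lambda}G=\langle\nabla\rangle^{b_0}D^\alpha\Omega^{a+a_0}\mathcal{V}$. On each frequency-localized piece $P_k$ the factor $\langle\nabla\rangle^{b_0}$ contributes at most $2^{b_0 k_+}$, which I absorb by distributing an extra $|\alpha'|$ pure derivatives onto $\mathcal{V}$: choosing $|\alpha'|=|\alpha|+b_0\le 30+N_3+2$ and $m=a+a_0\le N_1/2+15$, one checks $2m+|\alpha'|\le N_1+N_4$, so the $Z_1$ bound on $D^{\alpha'}\Omega^m\mathcal{V}$ is dominated by $\|\mathcal{V}\|_Z\le\varep_1$. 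The Sobolev bound is a straightforward interpolation between the two norms in $H^{N_0}\cap H^{N_1,N_3}_\Omega$: the exponents $(a_0,N_1-2+b_0)$ and $(a_0+N_1/2-1,0)$ both lie in the convex hull of $(0,N_0)$ and $(N_1,N_3)$ once one plugs in $N_0=4170$, $N_1=2070$, $N_3=30$, $b_0\le 32$. Finally, for the equation, I use \eqref{Uhorel} to write $\mathcal{U}=U+\varep_1^2 O_{2,0}$ and differentiate; then $(\partial_t+i\Lambda)G=\Omega^{a_0}\langle\nabla\rangle^{b_0}(\partial_t+i\Lambda)\mathcal{U}$, and \eqref{eqU} identifies $(\partial_t+i\Lambda)\mathcal{U}$ as $-i(T_\Sigma-\Lambda)U-iT_{V\cdot\zeta}U+\mathcal{N}_U+\mathcal{Q}_S+\mathcal{C}_U$ plus the time derivative of an $O_{2,0}$ term. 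Each piece is quadratic or cubic; applying $\Omega^{a_0}\langle\nabla\rangle^{b_0}$, using Lemma \ref{PropSym} together with the $L^\infty$ decay $\|V\|_{L^\infty},\|\Sigma-\Lambda\|_{\mathcal{M}^{3/2,\infty}}\lesssim\varep_1\langle t\rangle^{-5/6+\delta}$, and the $\mathcal{O}_{2,1}$ smoothing of $\mathcal{Q}_S$, gives the required $\langle t\rangle^{-5/6+\delta}$ decay in $H^{N_1-4}\cap H^{N_1/2-2,0}_\Omega$ (we have a plentiful margin in the number of derivatives).

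For part (ii), the $L^2$ bound on $W=\langle\nabla\rangle^c\Omega^a T_\Sigma^m U$ follows from \eqref{Sigma-Lambda}, which gives $T_\Sigma^m U=\Lambda^m U+\varep_1^2\mathcal{O}'_{2,-3m/2}$, together with the index checks $c+m\le N_3$ and $a+c/2\le N_1$ in the second case of \eqref{njk22}, and $m\le 2N_0/3<N_0$ in the first case; both are controlled by $\|\mathcal{U}\|_{H^{N_0}\cap H^{N_1,N_3}_\Omega}\le\varep_1\langle t\rangle^{\delta^2}$. For the evolution equation I invoke Proposition \ref{proeqW}: \eqref{eqW} (resp.\ \eqref{eqUR}) gives $(\partial_t+iT_\Sigma+iT_{V\cdot\zeta})W=T_\gamma(c_nW+d_n\overline W)+\mathcal{B}_W+\mathcal{C}_W$. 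Rewriting $iT_\Sigma=i\Lambda+iT_{\Sigma_1}+iT_{\Sigma_{\ge 2}}$ via \eqref{ExpSigma}, the $iT_{\Sigma_{\ge 2}}$ and $iT_{V\cdot\zeta}$ terms become exactly $\mathcal{Q}_W$, and everything else is relegated to $\mathcal{E}_W=-iT_{\Sigma_1}W+T_\gamma(\cdots)+\mathcal{B}_W+\mathcal{C}_W$. Bounding $\|\langle\nabla\rangle^{-1/2}\mathcal{E}_W\|_{L^2}$ uses Lemma \ref{PropSym}: since $\Sigma_1,\gamma\in\varep_1\mathcal{M}^{\leq 1/2,1}$ with coefficients satisfying $\varep_1\langle t\rangle^{-5/6+\delta^2}$ $L^\infty$ decay, the factor $\langle\nabla\rangle^{-1/2}$ exactly compensates for the order and the $L^\infty$ decay delivers $\varep_1^2\langle t\rangle^{-5/6+\delta}$; the symbol bounds \eqref{eqWsymbols}, \eqref{eqWsymbols2} and the $L^\infty\to L^\infty$ dispersive bound on $\mathcal{V}$ handle $\mathcal{B}_W$ analogously, and $\mathcal{C}_W$ is already acceptable by \eqref{eqWrem}. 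The two bounds in \eqref{BoundsQuasiTerms} are immediate consequences of Lemma \ref{PropSym} once one uses $\|V\|_{L^\infty}\lesssim\varep_1\langle t\rangle^{-5/6+\delta}$ and $\|\Sigma_{\ge 2}\cdot\langle\zeta\rangle^{-3/2}\|_{L^\infty}\lesssim\varep_1^2\langle t\rangle^{-5/3+\delta}$.

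The main technical hurdle is the first estimate in \eqref{njk30}: the $Z_1$ norm is not a Sobolev norm, so the $\langle\nabla\rangle^{b_0}$ factor in $G$ is not dominated for free and one must carefully verify that the exchange for extra $D^{\alpha'}$ derivatives on $\mathcal{V}$ stays within the $Z$-norm index constraints $2m+|\alpha|\le N_1+N_4$ and $m\le N_1/2+20$. Once this bookkeeping is done, the remaining estimates are straightforward consequences of the results already established in Sections \ref{Eqs}--\ref{secEE1} and the appendix on paradifferential calculus.
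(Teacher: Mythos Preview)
Your proposal is correct and follows essentially the same approach as the paper's own (very terse) proof, which simply points to Definition \ref{MainZDef} and interpolation for \eqref{njk30}, to \eqref{defscalarunk}--\eqref{eqU} and the fact that $(\partial_t+i\Lambda)\mathcal{U}\in\varep_1^2\mathcal{O}_{2,-2}$ for \eqref{njk31}, to \eqref{Sigma-Lambda} for \eqref{njk33}, to Proposition \ref{proeqW} for \eqref{njk34}--\eqref{njk34.1}, and to \eqref{LqBdTa} and \eqref{CommFreqP'} for \eqref{BoundsQuasiTerms}. Your index bookkeeping for the $Z_1$ bound is right (the key inequality $30+N_1+N_3+2\le N_1+N_4$ holds since $N_3+32=62\le 70=N_4$), and your identification of $\mathcal{E}_W=-iT_{\Sigma_1}W+T_\gamma(\cdots)+\mathcal{B}_W+\mathcal{C}_W$ as the ``everything losing at most $1/2$ derivative'' remainder matches the paper's one-line justification.
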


\begin{proof} The claims in \eqref{njk30} follow from Definition \ref{MainZDef}, the assumptions \eqref{EEMainPropapriori}, and interpolation 
(recall that $N_0-N_3=2N_1$). 
The identities \eqref{njk31} follow from \eqref{defscalarunk}--\eqref{eqU}, since $(\partial_t+i\Lambda)\mathcal{U}\in\varep_1^2\mathcal{O}_{2,-2}$. 
The inequalities \eqref{njk33} follow from \eqref{Sigma-Lambda}. The identities \eqref{njk34}--\eqref{njk34.1} follow from Proposition \ref{proeqW}, 
since all quadratic terms that lose up to $1/2$ derivatives can be placed into $\mathcal{E}_W$. Finally, the bounds \eqref{BoundsQuasiTerms} 
follow from \eqref{LqBdTa} and \eqref{CommFreqP'}. 
\end{proof}

Next we summarize some properties of the linear profiles of the functions in $\mathcal{G}'_+$.

\begin{lemma}\label{LinEstLemNew}
Assume $G\in\mathcal{G}'_+$ as before and let $f=e^{it\Lambda}G$. Recall the operators $Q_{jk}$ and $A_{n,\gamma}, A_{n,\gamma}^{(j)}$ 
defined in \eqref{qjk}--\eqref{aop2}. For $(k,j)\in\mathcal{J}$ and $n\in\{0,\ldots,j+1\}$ let 
\begin{equation*}
f_{j,k}:=P_{[k-2,k+2]}Q_{jk}f,\qquad f_{j,k,n}:=A_{n,\gamma_1}^{(j)}f_{j,k}.
\end{equation*}
Then, if $m \geq 0$, for all $t \in  [2^m-1, 2^{m+1}]$ we have
\begin{align}
\label{Gbound0}
\begin{split}
& \sup_{|\alpha|+2a\leq 30}\|D^\alpha\Omega^a f(t)\|_{Z_1}\lesssim\varep_1,\qquad \|f(t)\|_{H^{N_1-2}\cap H_\Omega^{N_1/2-1,0}}\lesssim\varep_1 2^{\delta^2m},\\
& {\| P_k \partial_t f(t) \|}_{L^2} \lesssim \e_1^2 2^{-8k^+}2^{-5m/6+\delta m},\qquad {\| P_k e^{-it\Lambda} \partial_t f(t) \|}_{L^\infty} \lesssim \e_1^2 2^{-5m/3+\delta m}.
\end{split}
\end{align}
Also, the following $L^\infty$ bounds hold, for any $k\in\mathbb{Z}$ and $s\in\mathbb{R}$ with $|s-t|\leq 2^{m-\delta m}$,
\begin{align}
\label{Gdecay}
\begin{split}
& {\| e^{-is\Lambda} A_{\leq 2\D, \gamma_0} P_k f(t)\|}_{L^\infty} \lesssim \e_1 \min(2^{k/2}, 2^{-4k}) 2^{-m} 2^{52\delta m},
\\
& {\| e^{-is\Lambda} A_{\geq 2\D+1, \gamma_0} P_k f(t) \|}_{L^\infty} \lesssim \e_1 2^{-5m/6 + 3\delta^2m}.
\end{split}
\end{align}
Moreover, we have
\begin{equation}\label{Gbound1}
\begin{split}
&{\| e^{-is\Lambda} f_{j,k}(t) \|}_{L^\infty} \lesssim \varep_1\min(2^k,2^{-4k}) 2^{-j + 50\delta j},\\
&{\| e^{-is\Lambda} f_{j,k}(t) \|}_{L^\infty} \lesssim \varep_1\min(2^{3k/2},2^{-4k}) 2^{-m+ 50\delta j}\quad\text{ if }\quad |k|\geq 10.
\end{split}
\end{equation}
Away from the bad frequencies, we have the stronger bound
\begin{align}
\label{Gdecaygood}
\begin{split}
& {\| e^{-is\Lambda} A_{\leq 2\D, \gamma_0} A_{\leq 2\D, \gamma_1}f_{j,k}(t) \|}_{L^\infty} \lesssim \e_1 2^{-m} \min(2^k, 2^{-4k}) 2^{-j/4},
\end{split}
\end{align}
provided that $j \leq (1-\delta^2)m + |k|/2$ and $|k| + D \leq m/2$.

Finally, for all $n \in \{0,1,\dots, j\}$ we have
\begin{align}
\label{Gbound2}
\begin{split}
& \big\| \widehat{f_{j,k,n}} \big\|_{L^\infty} \lesssim \e_1 2^{2\delta^2m}2^{-4k^+}2^{3 \delta n} \cdot 2^{-(1/2-55\delta)(j-n)},
\\
& \big\| \sup_{\theta \in \mathbb{S}^1} | \widehat{f_{j,k,n}}(r\theta) | \big\|_{L^2(rdr)}
\lesssim \e_1 2^{2\delta^2m}2^{-4k^+}2^{n/2} 2^{-j+55\delta j}.
\end{split}
\end{align}
\end{lemma}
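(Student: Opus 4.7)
The plan is to treat each of the five estimates in turn, reducing them to a combination of (a) the $Z_1$ and $H^{N_1-2}\cap H^{N_1/2-1,0}_\Omega$ control coming from Lemma \ref{GWprop}(i), (b) the bound $(\partial_t+i\Lambda)G=\mathcal{N}_G$ with $\mathcal{N}_G$ controlled by \eqref{njk31}, and (c) standard dispersive / stationary phase analysis for the linear propagator $e^{-is\Lambda}$ with the specific dispersion $\lambda(r)=\sqrt{r+r^3}$ (whose only degeneracy is at $|\xi|=\gamma_0$, where $\lambda''=0$ but $\lambda'''\neq 0$).

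I would first dispatch \eqref{Gbound0}. The first two bounds are immediate from \eqref{njk30}. For the time derivative, note that $\partial_tf=e^{it\Lambda}(\partial_t+i\Lambda)G=e^{it\Lambda}\mathcal{N}_G$, so the $L^2$ bound is a Sobolev interpolation from \eqref{njk31} (the factor $2^{-8k_+}$ is absorbed by $N_1-4\gg 8$), while the $L^\infty$ bound follows by a Bernstein-type argument combined with Lemma \ref{LinEstLem} applied to $\mathcal{N}_G$, using angular regularity to dominate the $\widetilde{W}^{N_2}_\Omega$ norm by the Sobolev norms in \eqref{njk31}.

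Next I would prove \eqref{Gdecay}, \eqref{Gbound1}, and \eqref{Gdecaygood}, which are the $L^\infty$ dispersive bounds. The starting point is the standard stationary phase estimate
\[
\| e^{-is\Lambda}P_k g\|_{L^\infty}\lesssim 2^{-m}\min(2^k,2^{-4k})\|g\|_{\text{suitable norm}}
\]
away from $\gamma_0$, and $\|e^{-is\Lambda}A_{n,\gamma_0}P_kg\|_{L^\infty}\lesssim 2^{-5m/6}2^{n/6}\|g\|$ near $\gamma_0$ (by splitting the $\gamma_0$ neighborhood according to $n$ and using the third-order stationary phase at the degenerate frequency). These come from Lemma \ref{LinEstLem}. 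To get \eqref{Gdecay}, I apply these estimates to $f$ and choose the threshold $2^{2\D}$ to optimize between the $\gamma_0$ and non-$\gamma_0$ regions; the $52\delta m$ loss is used to absorb angular losses. The atomic version \eqref{Gbound1} is obtained by writing $f_{j,k}=P_{[k-2,k+2]}\phii^{(k)}_j(x)P_k f$ and, on the Fourier side, exploiting the spatial localization $\phii_j^{(k)}$ (which gives $\|\widehat{f_{j,k}}\|_{L^\infty}\lesssim 2^j\|Q_{jk}f\|_{L^2}$, together with the $Z_1$ bound $\|Q_{jk}f\|_{L^2}\lesssim \varep_1 2^{-j+50\delta j}\cdot 2^{(j+k)/2}$ through the $A_{n,\gamma_1}^{(j)}$ decomposition and \eqref{znorm2}) and interpolating against the dispersive bound. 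The sharper \eqref{Gdecaygood} uses that when both $\gamma_0$ and $\gamma_1$ shells are excised, one has the full $2^{-m}\min(2^k,2^{-4k})$ decay rate, and the factor $2^{-j/4}$ then comes from integrating by parts in $\xi$ the stationary phase (the restriction $j\leq(1-\delta^2)m+|k|/2$ ensures we are in the regime where such spatial localization does not wipe out the decay).

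Finally, \eqref{Gbound2} is proved by unpacking the definition of the $Z_1$ norm. From \eqref{znorm2} we have $\|f_{j,k,n}\|_{L^2}\lesssim \varep_1 2^{2\delta^2 m}2^{-4k_+}2^{(1/2-49\delta)n}2^{-(1-50\delta)j}$. The first bound in \eqref{Gbound2} then follows from $\|\widehat{f_{j,k,n}}\|_{L^\infty}\lesssim 2^j\|f_{j,k,n}\|_{L^2}\cdot(\text{volume factor})$, more precisely $\|\widehat{f_{j,k,n}}\|_{L^\infty}\lesssim (2^{k}2^{-n})^{1/2}\cdot 2^j\|Q_{jk}f\|_{L^2}$ after adjusting for the annular width $2^{-n}$ of the support, and simplification using $k+j\geq 0$. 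The second bound is the more delicate one, and I expect it to be the main obstacle: one must control the radial function $r\mapsto \sup_\theta|\widehat{f_{j,k,n}}(r\theta)|$ in $L^2(rdr)$. The idea is to use the angular Sobolev embedding $\|g(\theta)\|_{L^\infty(\mathbb{S}^1)}\lesssim \|g\|_{L^2(\mathbb{S}^1)}^{1/2}\|\partial_\theta g\|_{L^2(\mathbb{S}^1)}^{1/2}$ on each sphere of radius $r$, which converts the sup into $L^2_\theta$ norms of $\widehat{f_{j,k,n}}$ and $\widehat{\Omega f_{j,k,n}}$; then Plancherel in $\theta$ reduces things to the $L^2$ norms of $f_{j,k,n}$ and $\Omega f_{j,k,n}$ (where the annular width $2^{-n}$ provides the Jacobian factor $2^{n/2}$), with the latter controlled using the $Z$ norm of $\Omega f$ (which is bounded by $\varep_1$ thanks to \eqref{njk30} and the $\sup_{|\alpha|+2a\leq 30}$ in the $Z$ norm definition). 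Putting the pieces together yields the claimed bound, and the factor $2^{-j+55\delta j}$ rather than $2^{-j+50\delta j}$ absorbs the extra angular derivative loss.
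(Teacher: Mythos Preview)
Your overall strategy matches the paper's, which simply observes that this lemma is a direct corollary: the first line of \eqref{Gbound0} comes from \eqref{njk30}, the $\partial_t f$ bounds from \eqref{Brc2}, the dispersive bounds \eqref{Gdecay}--\eqref{Gbound1} and \eqref{Gbound2} from Lemma~\ref{LinEstLem}, and \eqref{Gdecaygood} from \eqref{NewLinfty}. You are in effect re-sketching those auxiliary results, and you correctly identify the angular Sobolev embedding as the key input for the second line of \eqref{Gbound2} (your one-derivative version using $\|\Omega f\|_{Z_1}\lesssim\varep_1$ from \eqref{njk30} works here and is slightly simpler than the paper's $N$-derivative interpolation \eqref{Alx101.1}, which only uses $H^N_\Omega$ control on $f$).

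There is, however, a genuine gap in your treatment of the first line of \eqref{Gbound2}. The bound you write, $\|\widehat{f_{j,k,n}}\|_{L^\infty}\lesssim (2^{k}2^{-n})^{1/2}\cdot 2^j\|Q_{jk}f\|_{L^2}$, is not correct: an annular volume factor $(2^k2^{-n})^{1/2}$ has no way to enter an $L^\infty$ bound on $\widehat{f_{j,k,n}}$ (it would enter an $L^1$ bound via Cauchy--Schwarz, not an $L^\infty$ one). The crude Hausdorff--Young estimate $\|\widehat{f_{j,k,n}}\|_{L^\infty}\le\|f_{j,k,n}\|_{L^1}\lesssim 2^j\|f_{j,k,n}\|_{L^2}$ is off by a full factor $2^{j/2}$. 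The correct route---and the one in the proof of \eqref{FLinftybd}---is a one-dimensional \emph{radial} Sobolev embedding: the physical localization at scale $2^j$ gives $\|\partial_r\widehat{f_{j,k,n}}(\cdot\,\theta)\|_{L^2(dr)}\lesssim 2^j\|\widehat{f_{j,k,n}}(\cdot\,\theta)\|_{L^2(dr)}$, hence
\[
\|\widehat{f_{j,k,n}}\|_{L^\infty}\lesssim 2^{j/2}2^{-k/2}\big\|\sup_\theta|\widehat{f_{j,k,n}}(r\theta)|\big\|_{L^2(rdr)}.
\]
In other words the first line of \eqref{Gbound2} is derived \emph{from} the second line, not independently. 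The same angular-sup bound (plus Cauchy--Schwarz in $r$ over the radial support of width $\min(2^k,2^{-n})$) is what yields the $L^1$ Fourier bound $\|\widehat{f_{j,k,n}}\|_{L^1}\lesssim 2^k2^{-j+50\delta j}2^{-49\delta n}$ underlying \eqref{Gbound1}; your ``interpolating against the dispersive bound'' is too vague to capture this mechanism.
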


\begin{proof} The estimates in the first line of \eqref{Gbound0} follow from \eqref{njk30}. The estimates \eqref{Gdecay}, \eqref{Gbound1}, 
and \eqref{Gbound2} then follow from Lemma \ref{LinEstLem}, while the estimate \eqref{Gdecaygood} follows from \eqref{NewLinfty}. 
Finally, the estimate on $\partial_t f$ in \eqref{Gbound0} follows from the bound \eqref{Brc2}.
\end{proof}

We prove now a lemma that is useful when estimating multilinear expression containing a localization in the modulation $\Phi$.

\begin{lemma}\label{EstimAKP}
Assume that $k,k_1,k_2\in\mathbb{Z}$, $m \geq \D$, $\overline{k}:=\max(k,k_1,k_2)$, $|k|\leq m/2$, $p \geq -m$. Assume that $\underline{\mu}_0$ and $\underline{\mu}_1$ are symbols supported in the set $\mathcal{D}_{k_2,k,k_1}$ and satisfying
\begin{equation}\label{njk4}
\begin{split}
&\underline{\mu}_0(\xi,\eta) = \mu_0(\xi,\eta)n(\xi,\eta),\qquad\underline{\mu}_1(\xi,\eta) = \mu_1(\xi,\eta)n(\xi,\eta),\qquad \|n\|_{S^\infty}\lesssim 1,\\
&\mu_0(\xi,\eta)=|\xi-\eta|^{3/2}\mathfrak{d}(\xi,\eta),\qquad \|\mu_1(\xi,\eta)\|_{S^\infty}\lesssim 2^{3k^+-\overline{k}^+},
\end{split}
\end{equation}
compare with \eqref{njk14}. For $l\in\{0,1\}$ and $\Phi=\Phi_{\sigma\mu\nu}$ as in \eqref{ph1} let
\begin{equation*}
\mathcal{I}^l_{p}[F;H_1,H_2] = \iint_{(\R^2)^2} \underline{\mu}_l(\xi,\eta) \psi_p(\Phi(\xi,\eta))
  \widehat{P_kF}(\xi-\eta)\widehat{P_{k_1}H_1}(\eta) \widehat{P_{k_2}H_2}(-\xi) d\xi d\eta,
\end{equation*}
where $\psi\in C^\infty_0(-1,1)$ and $\psi_p(x):=\psi(2^{-p}x)$. Then
\begin{equation}\label{SimpleBdAKP2}
\begin{split}
\big| \mathcal{I}^0_{p}[F;H_1,H_2] \big| & \lesssim 2^{3k/2}\min(1,2^{-\overline{k}^+}2^{\max(2p,3k^+)}2^{-2k}) N(P_kF) \cdot\| P_{k_1}H_1 \|_{L^2} \| P_{k_2}H_2 \|_{L^2},
\\
\big| \mathcal{I}^1_{p}[F;H_1,H_2] \big| & \lesssim 2^{3k^+-\overline{k}^+} N(P_kF) \cdot \| P_{k_1}H_1 \|_{L^2} \| P_{k_2}H_2 \|_{L^2},
\end{split}
\end{equation}
where
\begin{equation}\label{No}
N(P_kF):=\sup_{|\rho|\leq 2^{-p} 2^{\delta m}}
  \| e^{i\rho\Lambda}P_kF \|_{L^\infty} + 2^{-10m} \| P_kF \|_{L^2}.
\end{equation}
In particular, if $2^k\approx 1$ then 
\begin{equation}\label{SimpleBdAKP}
\begin{split}
\big| \mathcal{I}^0_{p}[F;H_1,H_2] \big| & \lesssim \min(1,2^{2p^+-\overline{k}^+}) N(P_kF) \cdot\| P_{k_1}H_1 \|_{L^2} \| P_{k_2}H_2 \|_{L^2},
\\
\big| \mathcal{I}^1_{p}[F;H_1,H_2] \big| & \lesssim 2^{-\overline{k}^+} N(P_kF) \cdot \| P_{k_1}H_1 \|_{L^2} \| P_{k_2}H_2 \|_{L^2}.
\end{split}
\end{equation}
\end{lemma}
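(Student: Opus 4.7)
The plan is to convert the cutoff $\psi_p(\Phi)$ into a Fourier integral, reduce to a weighted $L^\infty\cdot L^2\cdot L^2$ trilinear estimate whose weight is $\|\underline{\mu}_l\|_{S^\infty(\mathcal{D}_{k_2,k,k_1})}$, and then bound $\underline{\mu}_0$ using the correlation between the depletion factor $\mathfrak{d}$ and the modulation $\Phi$.

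First I would write $\psi_p(\Phi(\xi,\eta))=\int_{\mathbb{R}}\widehat{\psi_p}(\rho)\,e^{i\rho\Phi(\xi,\eta)}d\rho$, with $\widehat{\psi_p}(\rho)=2^p\widehat{\psi}(2^p\rho)$ of rapid decay for $|\rho|\gg 2^{-p}$. Since $\Phi=\Lambda_\sigma(\xi)-\Lambda_\mu(\xi-\eta)-\Lambda_\nu(\eta)$ is a sum of one-variable functions, $e^{i\rho\Phi}$ factors into three $e^{\pm i\rho\Lambda}$'s which can be absorbed into the three Fourier transforms as modulations, preserving the $L^2$ norms of $P_{k_1}H_1$ and $P_{k_2}H_2$. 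The standard trilinear $S^\infty$ bound, placing the $P_kF$-factor in $L^\infty$, then yields
\[
|\mathcal{I}^l_p|\lesssim \|\underline{\mu}_l\|_{S^\infty(\mathcal{D}_{k_2,k,k_1})}\|P_{k_1}H_1\|_{L^2}\|P_{k_2}H_2\|_{L^2}\int_{\mathbb{R}}|\widehat{\psi_p}(\rho)|\|e^{i\rho\Lambda}P_kF\|_{L^\infty}d\rho.
\]
Splitting the $\rho$-integral at $|\rho|=2^{-p+\delta m}$, and handling the tail via rapid decay of $\widehat{\psi}$ together with Bernstein $\|e^{i\rho\Lambda}P_kF\|_{L^\infty}\lesssim 2^k\|P_kF\|_{L^2}$ (acceptable since $|k|\leq m/2$), upgrades the last factor to $N(P_kF)$ as defined in \eqref{No}.

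It then remains to estimate the $S^\infty$ norms. For $\underline{\mu}_1=\mu_1 n$, the $S^\infty$ algebra property and the hypothesis on $\mu_1$ immediately yield $\|\underline{\mu}_1\|_{S^\infty}\lesssim 2^{3k^+-\overline{k}^+}$, which gives the second inequality of \eqref{SimpleBdAKP2}. For $\underline{\mu}_0=|\xi-\eta|^{3/2}\mathfrak{d}\,n$, the trivial estimate $|\mathfrak{d}|\leq 1$ produces the $2^{3k/2}$-prefactor and the ``$1$'' in the $\min$. For the refined term I would absorb the cutoff $\psi_p(\Phi)$ into the symbol (an $O(1)$ cost in $S^\infty$ via the same Fourier trick applied to $\psi_p$) and establish the pointwise bound
\[
|\mathfrak{d}(\xi,\eta)|\lesssim 2^{-\overline{k}^+-2k}\,2^{\max(2p,\,3k^+)}\quad\text{on}\ \{|\Phi|\lesssim 2^p\}\cap\mathcal{D}_{k_2,k,k_1}.
\]
The proof uses the identity $\mathfrak{d}=\chi\cdot(|\xi|^2-|\eta|^2)^2/(|\xi-\eta|^2|\xi+\eta|^2)$, the fact that $\chi$ forces $|\xi|\approx|\eta|\approx 2^{\overline{k}}$ and $|\xi+\eta|\approx 2^{\overline{k}}$, and the mean-value bound $||\xi|-|\eta||\lesssim(|\Phi|+\lambda(2^k))/\lambda'(2^{\overline{k}})$ coming from $\Phi=\pm\lambda'(2^{\overline{k}})(|\xi|-|\eta|)\mp\lambda(2^k)+$ lower order.

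The main obstacle is this last correlation estimate, which is the quantitative form of \eqref{corre}: small modulation forces the depletion factor to be small. The bookkeeping needs to track the two regimes $\lambda(r)\sim r^{1/2},\ \lambda'(r)\sim r^{-1/2}$ for $r\ll 1$ versus $\lambda(r)\sim r^{3/2},\ \lambda'(r)\sim r^{1/2}$ for $r\gtrsim 1$, and produces the exponent $\max(2p,\,3k^+)$ after squaring and noting $\lambda(2^k)^2\sim 2^{k+2k^+}=2^{\max(k,3k)}$. Combining with $|\xi-\eta|^{3/2}\sim 2^{3k/2}$ and with the trivial bound completes \eqref{SimpleBdAKP2}; the special case \eqref{SimpleBdAKP} is then the specialization to $k^+=0$.
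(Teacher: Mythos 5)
Your treatment of the $l=1$ case and the trivial $l=0$ bound follows the paper exactly: the Fourier representation of $\psi_p(\Phi)$, absorption of the oscillatory factors into the profiles, the multilinear $S^\infty$ estimate of Lemma~\ref{touse}(i), and the tail estimate that passes to $N(P_kF)$ are precisely the ingredients used.

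The gap is in the refined $l=0$ bound. You correctly identify the pointwise estimate $\mathfrak{d}(\xi,\eta)\lesssim 2^{-\overline{k}-2k}2^{\max(2p,3k^+)}$ on $\{|\Phi|\lesssim 2^p\}\cap\mathcal{D}_{k_2,k,k_1}$ (this is \eqref{njk4.7}), but you do not explain how to convert it into a bound on the trilinear form, and the plan as written cannot work. A pointwise bound on a symbol does not control its $S^\infty$ norm; and once you have applied the Fourier trick to $\psi_p(\Phi)$, the restriction $\{|\Phi|\lesssim 2^p\}$ disappears from the symbol, so the pointwise estimate can no longer even be invoked --- what remains is the bare $\underline{\mu}_0$, whose $S^\infty$ norm is only $\lesssim 2^{3k/2}$, with no gain. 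The paper's actual argument requires a further decomposition: it localizes $H_1,H_2$ to $2^{k_+}$-boxes via operators $Q_v$, pinning $\eta$ near a fixed center $v_1$; then the constraint coming from $|\Phi|\lesssim 2^p$ together with \eqref{njk4.7} is encoded by inserting, for free, a \emph{genuine} $S^\infty$ cutoff $\varphi_{\leq\D}\big(2^{-X}(\xi-\eta)\cdot v_1\big)$ with $2^X\approx(2^p+2^{3k^+/2})2^{\overline{k}/2}$ --- a restriction on a linear function of $\xi-\eta$, in contrast to the nonlinear $\Phi$. Only then is the Fourier trick applied to $\psi_p(\Phi)$, and the resulting multiplier $m_{v_1}$ is bounded in $S^\infty$ by expanding $(\xi-\eta)\cdot(\xi+\eta)=2(\xi-\eta)\cdot v_1+|\xi-\eta|^2+2(\xi-\eta)\cdot(\eta-v_1)$ and estimating each contribution using the new cutoff and the $Q_v$-localization. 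This frequency-box decomposition and the conversion of the $\Phi$-cutoff into a linear-direction cutoff are the ideas missing from your proposal; without them the pointwise smallness of $\mathfrak{d}$ is not usable.
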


\begin{proof} The proof when $l=1$ is easy. We start from the formula
\begin{equation}\label{njk4.5}
\psi_p(\Phi(\xi,\eta)) = C\int_{R} \widehat{\psi}(s)e^{is2^{-p}\Phi(\xi,\eta)}\,ds.
\end{equation}
Therefore
\begin{equation*}
\mathcal{I}^1_{p}[F;H_1,H_2]= C\int_{\mathbb{R}}\widehat{\psi}(s)\iint_{(\R^2)^2} 
  e^{is2^{-p}\Phi(\xi,\eta)}\underline{\mu}_1(\xi,\eta)\widehat{P_{k}F}(\xi-\eta)\widehat{P_{k_1}H_1}(\eta) \widehat{P_kH_2}(-\xi) d\xi d\eta.
\end{equation*}
Using Lemma \ref{touse} (i) and \eqref{njk4}, it follows that
\begin{equation*}
|\mathcal{I}^1_{p}[F;H_1,H_2]|\lesssim \int_{\mathbb{R}}|\widehat{\psi}(s)|2^{3k^+-\overline{k}^+}\|e^{-is2^{-p}\Lambda_\mu}P_{k}F\|_{L^\infty}\|P_{k_1}H_1\|_{L^2}\|P_{k_2}H_2\|_{L^2}\,ds.
\end{equation*}
The bound for $l=1$ in \eqref{SimpleBdAKP} follows.

In the case $l=0$, the desired bound follows in the same way unless
\begin{equation}\label{njk4.6}
\overline{k}^++2k\geq \max(2p,3k^+)+\D.
\end{equation}
On the other hand, if \eqref{njk4.6} holds then we need to take advantage of the depletion factor $\mathfrak{d}$. The main point is that if \eqref{njk4.6} holds and
\begin{equation}\label{njk4.7}
\text{ if }\quad|\Phi(\xi,\eta)|\lesssim 2^{p}\quad\text{ then }\quad \mathfrak{d}(\xi,\eta)\lesssim \frac{2^{-\overline{k}}(2^{2p}+2^{3k^+})}{2^{2k}}.
\end{equation}
Indeed, if \eqref{njk4.6} holds then $\overline{k}\geq \D$ and $p\leq 3\overline{k}/2-\D/4$, and we estimate
\begin{equation*}
\mathfrak{d}(\xi,\eta)\lesssim \Big(\frac{|\xi|-|\eta|}{|\xi-\eta|}\Big)^2\lesssim \Big(\frac{2^{-\overline{k}/2}|\lambda(|\xi|)-\lambda(|\eta|)|}{2^{k}}\Big)^2\lesssim \frac{2^{-\overline{k}}(|\Phi(\xi,\eta)|+\lambda(|\xi-\eta|))^2}{2^{2k}}
\end{equation*}
in the support of the function $\mathfrak{d}$, which gives \eqref{njk4.7}. 

To continue the proof, we fix a function $\theta\in C^\infty_0(\mathbb{R}^2)$ supported in the ball of radius $2^{k^++1}$ with the property that $\sum_{v\in(2^{k^+}\mathbb{Z})^2}\theta(x-v)=1$ for any $x\in\mathbb{R}^2$. For any $v\in(2^{k^+}\mathbb{Z})^2$, consider the operator $Q_v$ defined by
\begin{equation*}
\widehat{Q_v f}(\xi)=\theta(\xi-v)\widehat{f}(\xi).
\end{equation*}
In view of the localization in $(\xi-\eta)$, we have
\begin{equation}\label{njk1}
\mathcal{I}^0_{p}[F;H_1,H_2]=\sum_{|v_1+v_2|\lesssim 2^{k^+}}\mathcal{I}^0_{p;v_1,v_2},\qquad \mathcal{I}^0_{p;v_1,v_2}:=\mathcal{I}^0_{p}[F;Q_{v_1}H_1,Q_{v_2}H_2].
\end{equation}
Moreover, using \eqref{njk4.7} we can insert a factor of $\varphi_{\leq\D}(2^{-X}(\xi-\eta)\cdot v_1)$ in the integral defining $\mathcal{I}^l_{p}[F;Q_{v_1}H_1,Q_{v_2}H_2]$ without changing the integral, where $2^X\approx (2^{p}+2^{3k^+/2})2^{\overline{k}/2}$. Let
\begin{equation*}
m_{v_1}(\xi,\eta):=\underline{\mu}_0(\xi,\eta) \cdot\varphi_{[k_2-2,k_2+2]}(\xi)\varphi_{[k-2,k+2]}(\xi-\eta)\varphi_{\leq k^++2}(\eta-v_1)\varphi_{\leq\D}(2^{-X}(\xi-\eta)\cdot v_1).
\end{equation*}

We will show below that for any $v_1\in\mathbb{R}^2$ with $|v_1|\approx 2^{\overline{k}}$
\begin{equation}\label{njk2}
\|\mathcal{F}^{-1}(m_{v_1})\|_{L^1(\mathbb{R}^2\times\mathbb{R}^2)}\lesssim 2^{3k/2}\cdot 2^{2X}2^{-2k}2^{-2\overline{k}}.
\end{equation}
Assuming this, the desired bound follows as in the case $l=1$ treated earlier. To prove \eqref{njk2} we recall that $\|\mathcal{F}^{-1}(ab)\|_{L^1}\lesssim \|\mathcal{F}^{-1}(a)\|_{L^1}\|\mathcal{F}^{-1}(b)\|_{L^1}$. Then we write
\begin{equation*}
(\xi-\eta)\cdot(\xi+\eta)=2(\xi-\eta)\cdot v_1+|\xi-\eta|^2+2(\xi-\eta)\cdot (\eta-v_1).
\end{equation*} 
The bound \eqref{njk2} follows by analyzing the contributions of the 3 terms in this formula.
\end{proof}

Our next lemma concerns a linear $L^2$ estimate on certain localized Fourier integral operators.  

\begin{lemma}\label{L2Prop0}
Assume that $k\ge -100$, $m\geq \D^2$,
\begin{equation}\label{AssL2Prop}
-(1-\delta)m\le p-k/2\le -\delta m,\qquad  2^{m-2}\le \vert s\vert\le 2^{m+2}.
\end{equation}
Given $\chi\in C^\infty_0(\mathbb{R})$ supported in $[-1,1]$, introduce the operator $L_{p,k}$ defined by
\begin{equation}\label{LMainDef}
L_{p,k}f(\xi):=\varphi_{\ge-100}(\xi)\int_{\mathbb{R}^2}e^{is\Phi(\xi,\eta)}\chi(2^{-p}\Phi(\xi,\eta))\varphi_k(\eta)a(\xi,\eta)f(\eta)d\eta,
\end{equation}
where, for some $\mu,\nu\in\{+,-\}$,
\begin{equation}\label{AssL2Prop1}
\begin{split}
&\Phi(\xi,\eta)=\Lambda(\xi)-\Lambda_\mu(\xi-\eta)-\Lambda_\nu(\eta),\qquad a(\xi,\eta)=A(\xi,\eta)\chi_{ba}(\xi-\eta)\widehat{g}(\xi-\eta),\\
&\Vert D^{\alpha}A\Vert_{L^\infty_{x,y}}\lesssim_{|\alpha|} 2^{\vert\alpha\vert m/3},\qquad \Vert g\Vert_{Z_1\cap H^{N_1/3,0}_{\Omega}}\lesssim 1.
\end{split}
\end{equation}
Then
\begin{equation*}
\Vert L_{p,k}f\Vert_{L^2}\lesssim 2^{30\delta m}(2^{(3/2)(p-k/2)}+2^{p-k/2-m/3})\Vert f\Vert_{L^2}.
\end{equation*}
\end{lemma}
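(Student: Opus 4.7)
My plan is to use a $TT^\ast$ argument. The Schwartz kernel of $L_{p,k}L_{p,k}^\ast$ is
\begin{equation*}
K(\xi,\xi') = \int_{\R^2} e^{is[\Phi(\xi,\eta)-\Phi(\xi',\eta)]}\chi(2^{-p}\Phi(\xi,\eta))\chi(2^{-p}\Phi(\xi',\eta))\,\varphi_k(\eta)^2\,a(\xi,\eta)\overline{a(\xi',\eta)}\,d\eta,
\end{equation*}
and by Schur's test (Lemma \ref{ShurLem}) the desired $L^2$ bound follows from
\begin{equation*}
\sup_\xi \int_{\R^2}|K(\xi,\xi')|\,d\xi' \lesssim 2^{60\delta m}\bigl(2^{3(p-k/2)}+2^{2(p-k/2)-2m/3}\bigr).
\end{equation*}

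For fixed $\xi$, I would introduce coordinates $(\alpha,\beta)$ adapted to the level sets of $\Phi(\xi,\cdot)$, with $\alpha=\Phi(\xi,\eta)$ and $\beta$ arclength along $\{\Phi(\xi,\cdot)=\alpha\}$. By Proposition \ref{spaceres11} the resonant curve $\{\eta:\Phi(\xi,\eta)=0\}\cap\mathrm{supp}\,a(\xi,\cdot)$ is smooth of bounded length and $|\nabla_\eta\Phi|$ is bounded below on its $2^p$-thickening, so the Jacobian is harmless and the effective $\eta$-support has measure $\lesssim 2^{p-k/2}$; in particular $|K(\xi,\xi')|\lesssim 2^{p-k/2+O(\delta m)}$ trivially. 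The crucial step is to localize $K(\xi,\cdot)$ to a small set in $\xi'$ via integration by parts in $\beta$. Since $\partial_\beta\Phi(\xi,\cdot)\equiv 0$ by construction, the phase derivative is
\begin{equation*}
\partial_\beta\bigl[\Phi(\xi,\eta)-\Phi(\xi',\eta)\bigr] = |\nabla_\eta\Phi|^{-1}\,\nabla_\eta^\perp\Phi(\xi,\eta)\cdot\nabla^2_{\xi\eta}\Phi(\xi,\eta)(\xi-\xi')+O(|\xi-\xi'|^2),
\end{equation*}
which on the resonant set equals the restricted mixed Hessian $\Upsilon(\xi,\eta)$ of \eqref{IntroUpsilon}, up to a harmless factor, times the projection of $\xi-\xi'$ onto $\nabla_\xi^\perp\Phi$. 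Lemmas \ref{Geomgamma0}--\ref{lemmaD2} ensure $\Upsilon\neq 0$ on the resonant curve except at finitely many points where it vanishes to order at most one. Each IBP in $\beta$ then gains $(s|\xi-\xi'|_\tau)^{-1}$ for a directional projection $|\xi-\xi'|_\tau$, at a loss of $2^{m/3}$ per derivative on $A$ (by $\|D^\alpha A\|_\infty\lesssim 2^{|\alpha|m/3}$) and $2^{O(\delta m)}$ per derivative on $\widehat g$ (paid for through the angular regularity built into $\|g\|_{H^{N_1/3,0}_\Omega}$). After $\sim 1/\delta$ iterations, $K$ is negligible outside $|\xi-\xi'|_\tau\lesssim 2^{-m+k/2+O(\delta m)}$; combined with the direct support restriction $|\xi-\xi'|_\nu\lesssim 2^{p-k/2+O(\delta m)}$ in the complementary direction (forced by the two $\chi$-cutoffs), this localizes $\xi'$ to a parallelogram of area $\approx 2^{2(p-k/2)+O(\delta m)}$, and integrating the pointwise bound yields the main term $2^{3(p-k/2)}$.

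The main obstacle is the degeneracy of $\Upsilon$ at its zeros on the resonant set. My strategy there is to dyadically split the curve into pieces on which $|\Upsilon|\sim 2^{-j}$ and balance two competing estimates: IBP now yields the weaker gain $(2^{-j}s|\xi-\xi'|_\tau)^{-1}$ per step, while the piece itself has length $\lesssim 2^{-j}$ (thanks to the order-one vanishing from Lemma \ref{lemmaD2}); optimizing in $j$ produces the secondary term $2^{p-k/2-m/3}$, in which the $1/3$ exponent is precisely dictated by the linear vanishing of $\Upsilon$. A higher-order vanishing would destroy this exponent, which is why the quantitative content of Lemma \ref{lemmaD2} is essential. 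The overall $2^{60\delta m}$ loss accumulates from the finitely many IBPs performed together with the smoothness losses above.
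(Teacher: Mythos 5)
Your $TT^\ast$/Schur framework and the identification of the tangential phase derivative with $\Upsilon$ are exactly the paper's strategy, but there is a genuine gap in how you pay for differentiating $\widehat g$. You claim the loss is only $2^{O(\delta m)}$ per derivative, ``paid for through the angular regularity built into $\|g\|_{H^{N_1/3,0}_\Omega}$.'' This is not true, because your integration-by-parts direction $\partial_\beta$ (the tangent $V_2$ to the level set $\{\Phi(\xi,\cdot)=\text{const}\}$) is essentially \emph{radial}, not angular, at the point $\xi-\eta$ where $\widehat g$ is evaluated. Indeed, on the resonant set $\Phi\approx 0$ with $|\xi|,|\eta|\approx 2^k\gg 1$ and $|\xi-\eta|\approx\gamma_0$ (or $\gamma_1$), one has $(\xi-\eta)\cdot\eta\approx 0$ (see \eqref{Orthoxx-y}), so $\xi-\eta\parallel\eta^\perp$; at the same time $\nabla_\eta\Phi=\mu\nabla\Lambda(\xi-\eta)-\nu\nabla\Lambda(\eta)$ is dominated by the $O(2^{k/2})$ term $\nabla\Lambda(\eta)\parallel\eta$, so $V_2\approx\eta^\perp/|\eta|\parallel(\xi-\eta)/|\xi-\eta|$. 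Hence $V_2\cdot\nabla$ hits $\widehat g$ in the radial direction, which neither the $Z_1$ norm nor the $H^{N_1/3,0}_\Omega$ norm controls.

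Because of this, the true cost per derivative on $\widehat g$ is $2^{j}$ for a space atom $g_j=Q_{j,\cdot}g$, which can be as large as $2^{O(m)}$, and your iteration no longer closes. The paper addresses this with an additional structural step that your proposal omits: it decomposes $g=\sum_j g_j$, proves a pure Schur bound (no $TT^\ast$) for each piece using the atom estimates $\|\widehat{g_j}\|_{L^\infty}\lesssim 2^{-j(1/2-55\delta)}$ and $\|\sup_\theta|\widehat{g_j}(r\theta)|\|_{L^2(rdr)}\lesssim 2^{-j(1-55\delta)}$ together with the sublevel-set bounds of Lemma \ref{Geomgamma0} (iii), and shows this already gives the claimed estimate unless $j\lesssim 4m/9$ and $q\gtrsim\max((p-k/2)/2,-m/3)$ (the analogue of \eqref{L2Lem1Reduced1}); only in that restricted range does it run the $TT^\ast$ argument, where the $2^j$ losses are now affordable. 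Without this preliminary reduction your argument fails for large $j$. (The rest of your sketch — the parallelogram localization, the dyadic split in $|\Upsilon|$, and the optimization producing the $2^{p-k/2-m/3}$ term from the linear vanishing in Lemma \ref{lemmaD2} — is faithful to the paper's proof.)
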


\begin{remark}\label{L2Rem}
(i) Lemma \ref{L2Prop0}, which is proved in section \ref{L2proof} below, plays a central role in the proof of Proposition \ref{EEMainProp}. A key role in its
proof is played by the ``curvature'' component
\begin{align}
\label{defY}
\begin{split}
&\Upsilon(\xi,\eta) := (\nabla^2_{\xi,\eta}\Phi)(\xi,\eta) \big[(\nabla^\perp_\xi\Phi)(\xi,\eta),(\nabla^\perp_\eta\Phi)(\xi,\eta)\big],
\end{split}
\end{align}
and in particular by its non-degeneracy close to the bad frequencies $\gamma_0$ and $\gamma_1$, and to the resonant hypersurface $\Phi(\xi,\eta) = 0$.
The properties of $\Upsilon$ that we are going to use are described in section \ref{upsilon}, and in particular in Lemmas \ref{Geomgamma0}, \ref{lemmaD1},
 and \ref{lemmaD2}.

(ii) We can insert $S^\infty$ symbols and bounded factors that depend only on $\xi$ or only on $\eta$ in the integral in \eqref{LMainDef}, without changing the conclusion. We will often use this lemma in the form
\begin{equation}\label{L2Use}
\begin{split}
\Big|\iint_{\mathbb{R}^2\times\mathbb{R}^2}e^{is\Lambda(\xi-\eta)}&\chi(2^{-p}\Phi(\xi,\eta))\mu(\xi,\eta)a(\xi,\eta)\widehat{P_{k_1}F_1}(\eta)\widehat{P_{k}F_2}(-\xi)\,d\xi d\eta\Big|\\
&\lesssim 2^{30\delta m}(2^{(3/2)(p-k/2)}+2^{p-k/2-m/3})\Vert P_{k_1}F_1\Vert_{L^2}\|P_kF_2\|_{L^2},
\end{split}
\end{equation}
provided that $k,k_1\geq -80$, \eqref{AssL2Prop} and \eqref{AssL2Prop1} hold, and $\|\mu\|_{S^\infty}\lesssim 1$. This follows by writing
\begin{equation*}
\mu(\xi,\eta)=\iint_{\mathbb{R}^2\times\mathbb{R}^2}P(x,y)e^{-ix\cdot\xi}e^{-iy\cdot\eta}\,d\xi d\eta,
\end{equation*}
with $\|P\|_{L^1(\mathbb{R}^2\times\mathbb{R}^2)}\lesssim 1$, and then combining the oscillatory factors with the functions $F_1,F_2$.
\end{remark}

\section{Energy estimates, II: proof of Proposition \ref{EEMainProp}}\label{secEE2}

In this section we prove Proposition \ref{EEMainProp}, thus completing the proof of Proposition \ref{MainBootstrapEn}. Recall the 
definitions \eqref{njk20}-\eqref{njk22}.  For $G\in\mathcal{G}'$ and $W_1,W_2\in\mathcal{W}'$ let
\begin{equation}\label{bnm1}
\mathcal{A}^l_{Y,m}[G,W_1,W_2]:=\int_{\mathbb{R}}q_m(s)\iint_{\mathbb{R}^2\times\mathbb{R}^2}\mu_l(\xi,\eta)\chi_Y(\xi-\eta)\widehat{G}(\xi-\eta,s)\widehat{W_1}(\eta,s)\widehat{W_2}(-\xi,s)\,d\xi d\eta ds,
\end{equation}  
where $l\in\{0,1\}$, $m\in[\D^2,L]$, $Y\in\{go,ba\}$, and the symbols $\mu_l$ are as in \eqref{njk14}. The conclusion of Proposition \ref{EEMainProp} is equivalent to the uniform bound
\begin{equation}\label{bnm2}
|\mathcal{A}^l_{Y,m}[G,W_1,W_2]|\lesssim \varep_1^32^{2\delta^2m}.
\end{equation}

In proving this bound we further decompose the functions $W_1$ and $W_2$ dyadically and consider several cases. We remark that the most difficult case (which is treated in Lemma \ref{EELemmaMain}) is when the ``bad'' frequencies of $G$ interact with the high frequencies of the functions $W_1$ and $W_2$.

\subsection{The main interactions}\label{secMainbulk} We prove the following lemma.

\begin{lemma}\label{EELemmaMain}
For $l\in\{0,1\}$, $m\in[\D^2,L]$, $G\in\mathcal{G}'$, and $W_1,W_2\in\mathcal{W}'$ we have
\begin{equation}
\label{EELemmaMainbound}
\sum_{\min(k_1,k_2)\geq -40}|\mathcal{A}^l_{ba,m}[G,P_{k_1}W_1,P_{k_2}W_2]|\lesssim \varep_1^3.
\end{equation}
\end{lemma}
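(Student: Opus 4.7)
The plan is to carry out the quasilinear I--method outlined in Steps~3--4 of subsection~\ref{SimModEnergy}, implemented via a dyadic decomposition in the modulation~$\Phi$ and the $L^2$ bound of Lemma~\ref{L2Prop0}. Begin with elementary reductions. The factor $\chi_{ba}(\xi-\eta)$ forces $|\xi-\eta|\approx 1$, so the frequency support constraint yields $|k_1-k_2|\le 10$, and I may assume $k_1\approx k_2=:k$. For $k$ very large, the crude symbol bound $\|\mu_l^{k,k_1,k_2}\|_{S^\infty}\lesssim 2^{3k^+}$ combined with the extra Sobolev regularity encoded in $\mathcal{W}'$ (namely the factor $(1-\Delta)$ in \eqref{njk22}) and the $\widetilde W$--control on $G$ absorbs such tails, so I reduce to $-40\le k\le m$. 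Fix the $\pm$ signs of $G,W_1,W_2$ so that a unique resonance phase $\Phi=\Phi_{\sigma\mu\nu}$ is determined as in \eqref{phasedef}, and dyadically decompose the integral by inserting $\varphi_p(\Phi(\xi,\eta))$; the relevant range is $p\in[-10m,\overline{k}+C]$, since outside this range the contribution is negligible by repeated integration by parts in $s$ (for $p$ very small) or by trivial support considerations (for $p$ very large).

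For the small--modulation regime $p\le -2m/3-k/2+\D\delta m$ I would invoke Lemma~\ref{L2Prop0} in the form \eqref{L2Use}. Writing $\widehat G(\xi-\eta,s)=e^{-is\Lambda_\mu(\xi-\eta)}\widehat g(\xi-\eta,s)$ with $g=e^{is\Lambda_\mu}G$ the linear profile of $G$ (controlled in $Z_1\cap H^{N_1/3,0}_\Omega$ thanks to Lemma~\ref{GWprop}), and absorbing the symbol $\mu_l\cdot\chi_{ba}$ into the amplitude $a$ of Lemma~\ref{L2Prop0}, the operator bound combined with $\|W_j\|_{L^2}\lesssim\varep_1 2^{\delta^2 m}$ from \eqref{njk33} and the time length $2^m$ yields a contribution of order $\varep_1^3\,2^m\,2^{30\delta m}\bigl(2^{(3/2)(p-k/2)}+2^{p-k/2-m/3}\bigr)$, which is $\lesssim\varep_1^3\,2^{-\delta m/4}$ at the threshold and is thus acceptable after summation over $p,k$.

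For the large--modulation regime $p\ge -2m/3-k/2+\D\delta m$, I would integrate by parts in time using $e^{is\Phi}=(i\Phi)^{-1}\partial_s e^{is\Phi}$, with the factor $\varphi_p(\Phi)/\Phi$ of size $2^{-p}$. The boundary terms and the $q_m'$ contributions are controlled directly via the $L^2$ bounds of Lemmas~\ref{GWprop} and~\ref{EstimAKP}. When $\partial_s$ falls on $G$ I would apply \eqref{njk31}: the $-i\Lambda G$ portion merely shifts the phase (reducing to a bilinear expression treated by the same procedure), while the quadratic piece $\mathcal{N}_G$ has the extra decay $\langle s\rangle^{-5/6+\delta}$ in $L^2$, which together with the factor $2^{-p}$ and crude $L^2$ bounds on $W_1,W_2$ gives an acceptable cubic estimate. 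When $\partial_s$ falls on $W_j$ I would apply \eqref{njk34}; the remainder $\mathcal{E}_{W_j}$ is easy since $\|\langle\nabla\rangle^{-1/2}\mathcal{E}_{W_j}\|_{L^2}\lesssim\varep_1^2\langle s\rangle^{-5/6+\delta}$.

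The main obstacle is the quasilinear contribution $\mathcal{Q}_{W_j}=-iT_{\Sigma_{\ge 2}}W_j-iT_{V\cdot\zeta}W_j$, which loses up to one full derivative at high frequency; in the intermediate modulation range $p\sim -m/3$ the integration--by--parts gain $2^{-p}$ by itself is insufficient to absorb this loss. The resolution is precisely the strongly semilinear structure of the symbols $\mu_l$ recorded in \eqref{njk14}. For $l=1$ the factor $2^{-\overline{k}^+}$ directly offsets the derivative loss. For $l=0$ the depletion factor $\mathfrak{d}$ in $\mu_0=|\xi-\eta|^{3/2}\mathfrak{d}$ delivers, via the algebraic correlation \eqref{corre} in its quantitative form \eqref{njk4.7}, the bound $\mathfrak{d}\lesssim 2^{-\overline{k}}(2^{2p}+2^{3k^+})/2^{2k}$ on the support of $\varphi_p(\Phi)$. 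Plugging this improved bound into a second application of Lemma~\ref{L2Prop0} on the cubic expression produced by $\mathcal{Q}_{W_j}$ then compensates exactly for the derivative loss and closes the estimate, yielding \eqref{EELemmaMainbound}.
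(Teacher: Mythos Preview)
Your overall architecture---split in modulation, use Lemma~\ref{L2Prop0} for small modulation, integrate by parts in time for large modulation, and invoke the depletion factor $\mathfrak d$ against the derivative loss from $\mathcal{Q}_{W_j}$---matches the paper. But two steps have genuine gaps.

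\textbf{The quasilinear term at very high frequency.} Your reduction ``$-40\le k\le m$'' is not justified: the functions $W\in\mathcal{W}'$ already sit at the top of the Sobolev scale, so $\|P_kW\|_{L^2}$ carries no decay in $k$, and the extra $(1-\Delta)$ in $\mathcal{W}_1$ is a \emph{loss}, not a gain. For $k\gtrsim m/4$ your proposed mechanism breaks down: Lemma~\ref{L2Prop0} requires $p-k/2\le -\delta m$, which fails near the top of the modulation range $p\approx k/2$ (forced when $\sigma=\nu=+$), and the depletion bound $\mathfrak d\lesssim 2^{2p_+-\overline k}$ degenerates to $\mathfrak d\lesssim 1$ there, so nothing offsets the $2^{k}$ loss from $T_{V\cdot\zeta}W$. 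The paper closes this regime by a \emph{symmetrization} argument (Lemma~\ref{EElargemod}): it groups $\mathcal{J}^{l,p}_{ba,1}+\mathcal{J}^{l,p}_{ba,2}$, expands $T_{V\cdot\zeta}$ acting on the pair $(W_1,W_2)$, and exploits the antisymmetry as in local energy estimates to recover the lost derivative. This ingredient is essential and is missing from your proposal.

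\textbf{The semilinear term $(\partial_s+i\Lambda_\mu)G$ at small modulation.} The estimate $\|\mathcal N_G\|_{L^2}\lesssim \langle s\rangle^{-5/6+\delta}$ together with $2^{-p}$ is not enough when $p$ is as small as $\underline p\approx k/2-7m/9$: any Schur/H\"older bound produces at best $2^{m/6-p/2}$, which diverges. The paper (Lemma~\ref{EELemmaSemi}) needs the structural decomposition of Lemma~\ref{dtfLem3}, splitting $(\partial_s+i\Lambda_\mu)G$ into a piece $g_d$ with improved dispersive decay stable under shifts $e^{i\rho\Lambda}$ with $|\rho|\le 2^{7m/9+4\delta m}$ (so Lemma~\ref{EstimAKP} applies), a piece $g_\infty$ small in $L^\infty_\xi$, and a piece $g_2$ small in $L^2$, the last two being estimated via Schur's test with the sublevel-set volume bounds of Proposition~\ref{volume}. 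Your ``crude $L^2$ bounds'' do not capture this.
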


The rest of the subsection is concerned with the proof of this lemma. We need to further decompose our operators based on the size of the modulation. 
Assuming that $\overline{W_2}\in\mathcal{W}'_{\sigma}$, $W_1\in\mathcal{W}'_\nu$, $G\in\mathcal{G}'_\mu$, $\sigma,\nu,\mu\in\{+,-\}$, see \eqref{njk21}--\eqref{njk22}, we define the associated phase
\begin{equation}\label{bnm3}
\Phi(\xi,\eta)=\Phi_{\sigma\mu\nu}(\xi,\eta)=\Lambda_\sigma(\xi)-\Lambda_\mu(\xi-\eta)-\Lambda_{\nu}(\eta).
\end{equation}

Notice that in proving \eqref{EELemmaMainbound} we may assume that $\sigma=+$ (otherwise take complex conjugates) and that the sum is over $|k_1-k_2|\leq 50$ (due to localization in $\xi-\eta$). 

Some care is needed to properly sum the dyadic pieces in $k_1$ and $k_2$. For this we use frequency envelopes. More precisely, for $k\geq -30$ let
\begin{equation}\label{bnm3.1}
\begin{split}
&\rho_k(s):=\sum_{i\in\{1,2\}}\|P_{[k-40,k+40]}W_i(s)\|_{L^2}+2^{5m/6-\delta m}2^{-k/2}\sum_{i\in\{1,2\}}\|P_{[k-40,k+40]}\mathcal{E}_{W_i}(s)\|_{L^2},\\
&\rho_{k,m}^2:=\int_{\mathbb{R}}\rho_k(s)^2[2^{-m}q_m(s)+|q'_m(s)|]\,ds,
\end{split}
\end{equation}
where $\mathcal{E}_{W_{1,2}}$ are the ``semilinear'' nonlinearities defined in \eqref{njk34}. In view of \eqref{njk33} and \eqref{njk34.1}, 
\begin{equation}\label{bnm3.2}
\sum_{k\geq -30}\rho_{k,m}^2\lesssim \varep_1^22^{2\delta^2m}.
\end{equation}

Given $k\geq -30$, let $\underline{p}=\lfloor k/2-7m/9\rfloor$ (the largest integer $\leq k/2-7m/9$). We define 
\begin{equation}\label{bnm4}
\mathcal{A}^{l,p}_{ba}[F,H_1,H_2]:=\iint_{\mathbb{R}^2\times\mathbb{R}^2}\mu_l(\xi,\eta)\varphi_p^{[\underline{p},\infty)}(\Phi(\xi,\eta))\chi_{ba}(\xi-\eta)\widehat{F}(\xi-\eta)\widehat{H_1}(\eta)\widehat{H_2}(-\xi)\,d\xi d\eta,
\end{equation}
where $p\in[\underline{p},\infty)$ (here $\varphi_p^{[\underline{p},\infty)}=\varphi_p$ if $p\geq \underline{p}+1$ and $\varphi_p^{[\underline{p},\infty)}=\varphi_{\leq \underline{p}}$ if $p=\underline{p}$). Assuming that $|k_1-k|\leq 30$, $|k_2-k|\leq 30$, let
\begin{equation}\label{bnm5}
\mathcal{A}^{l,p}_{ba,m}[G,P_{k_1}W_1,P_{k_2}W_2]:=\int_{\mathbb{R}}q_m(s)\mathcal{A}^{l,p}_{ba}[G(s),P_{k_1}W_1(s),P_{k_2}W_2(s)]\,ds.
\end{equation} 
This gives a decomposition $\mathcal{A}^{l}_{ba,m}=\sum_{p\geq \underline{p}}\mathcal{A}^{l,p}_{ba,m}$ as a sum of operators localized in modulation. Notice that the sum is either over $p\in[\underline{p},k/2+\D]$ (if $\nu=+$ or if $\nu=-$ and $k\leq \D/2$) or over $|p-3k/2|\leq \D$ (if $\nu=-$ and $k\geq \D/2$). For \eqref{EELemmaMainbound} it remains to prove that
\begin{equation}\label{bnm5.1}
\big|\mathcal{A}^{l,p}_{ba,m}[G,P_{k_1}W_1,P_{k_2}W_2]\big|\lesssim \varep_12^{-\delta m}\rho_{k,m}^2,
\end{equation} 
for any $k\geq -30$, $p\geq \underline{p}$, and $k_1,k_2\in\mathbb{Z}$ satisfying $|k_1-k|\leq 30$, $|k_2-k|\leq 30$.

Using Lemma \ref{EstimAKP} (see \eqref{SimpleBdAKP}), we have
\begin{equation*}
|\mathcal{A}^{l,p}_{ba}[G(s),P_{k_1}W_1(s),P_{k_2}W_2(s)]|\lesssim \varep_12^{2p^+-k}2^{-5m/6+\delta m}\|P_{k_1}W_1(s)\|_{L^2}\|P_{k_2}W_2(s)\|_{L^2},
\end{equation*}
for any $p\geq \underline{p}$, due to the $L^\infty$ bound in \eqref{Gdecay}. The desired bound \eqref{bnm5.1} follows if $2p^+-k\leq -m/5+\D$. Also, using Lemma \ref{L2Prop0}, we have
\begin{equation*}
|\mathcal{A}^{l,\underline{p}}_{ba}[G,P_{k_1}W_1,P_{k_2}W_2](s)|\lesssim \varep_12^{-m-\delta m}\|P_{k_1}W_1(s)\|_{L^2}\|P_{k_2}W_2(s)\|_{L^2},
\end{equation*}
using \eqref{L2Use}, since $2^{\underline{p}-k/2}\lesssim 2^{-7m/9}$ and $\Vert e^{is\Lambda_{\mu}}G(s)\Vert_{Z_1\cap H^{N_1/3,0}_{\Omega}}\lesssim \varep_12^{\delta m}$ (see \eqref{njk30}). Therefore \eqref{bnm5.1} follows if $p=\underline{p}$. It remains to prove \eqref{bnm5.1} when
\begin{equation}\label{bnm8}
p\geq \underline{p}+1\quad\text{ and }\quad k\in[-30,2p^++m/5],\,|k_1-k|\leq 30,\,|k_2-k|\leq 30.
\end{equation}

In the remaining range in \eqref{bnm8} we integrate by parts in $s$. We define
\begin{equation}\label{DefTerms}
\widetilde{\mathcal{A}}^{l,p}_{ba}[F,H_1,H_2] :=  \iint_{\R^2\times \R^2} \mu_l(\xi,\eta) \widetilde{\varphi}_p(\Phi(\xi,\eta)) \chi_{ba}(\xi-\eta)
  \widehat{F}(\xi-\eta) \widehat{H_1}(\eta) \widehat{H_2}(-\xi)\,d\xi d\eta,
\end{equation}
where $\widetilde{\varphi}_p(x):=2^px^{-1}\varphi_p(x)$. This is similar to the definition in \eqref{bnm4}, but with $\varphi_p$ replaced by $\widetilde{\varphi}_p$. Then we let $W_{k_1}:=P_{k_1}W_1$, $W_{k_2}:=P_{k_2}W_2$ and write
\begin{equation}\label{IBPtEE}
\begin{split}
0 &= \int_{\R}\frac{d}{ds}\Big\{q_m(s)\widetilde{\mathcal{A}}^{l,p}_{ba} [G(s),W_{k_1}(s),W_{k_2}(s)]\Big\}\,ds\\
&= \int_{\R} q'_m(s) \widetilde{\mathcal{A}}^{l,p}_{ba} [G(s),W_{k_1}(s),W_{k_2}(s)]\,ds
+\mathcal{J}_0^{l,p}(k_1,k_2)+\mathcal{J}_1^{l,p}(k_1,k_2)+\mathcal{J}_2^{l,p}(k_1,k_2)\\
&+i2^p\int_{\R}q_m(s)\mathcal{A}^{l,p}_{ba} [G(s),W_{k_1}(s),W_{k_2}(s)]\,ds,
\end{split}
\end{equation}
where
\begin{equation}\label{bnm9}
\begin{split}
&\mathcal{J}_{ba,0}^{l,p}(k_1,k_2):=\int_{\R} q_m(s) \widetilde{\mathcal{A}}^{l,p}_{ba} [(\partial_s+i\Lambda_\mu)G(s),W_{k_1}(s),W_{k_2}(s)]\,ds, \\
&\mathcal{J}_{ba,1}^{l,p}(k_1,k_2):=\int_{\R} q_m(s) \widetilde{\mathcal{A}}^{l,p}_{ba} [G(s),(\partial_s+i\Lambda_\nu)W_{k_1}(s),W_{k_2}(s)]\,ds, \\
&\mathcal{J}_{ba,2}^{l,p}(k_1,k_2):=\int_{\R} q_m(s) \widetilde{\mathcal{A}}^{l,p}_{ba} [G(s),W_{k_1}(s),(\partial_s+i\Lambda_{-\sigma})W_{k_2}(s)]\,ds.
\end{split}
\end{equation}
The integral in the last line of \eqref{IBPtEE} is the one we have to estimate. Notice that
\begin{equation*}
2^{-p}\big|\widetilde{\mathcal{A}}^{l,p}_{ba} [G(s),W_{k_1}(s),W_{k_2}(s)]\big|\lesssim 2^{-p}2^{-5m/6+\delta m}\|W_{k_1}(s)\|_{L^2}\|W_{k_2}(s)\|_{L^2}
\end{equation*}
as a consequence of Lemma \ref{EstimAKP} and \eqref{Gdecay}. It remains to prove that if \eqref{bnm8} holds then
\begin{equation}\label{bnm10}
2^{-p}\big|\mathcal{J}_{ba,0}^{l,p}(k_1,k_2)+\mathcal{J}_{ba,1}^{l,p}(k_1,k_2)+\mathcal{J}_{ba,2}^{l,p}(k_1,k_2)\big|\lesssim \varep_12^{-\delta m}\rho_{k,m}^2.
\end{equation}

This bound will be proved in several steps, in Lemmas \ref{EEmediummod}, \ref{EElargemod}, and \ref{EELemmaSemi} below. 

\subsubsection{Quasilinear terms}\label{Qterms}
We consider first the quasilinear terms appearing in \eqref{bnm10}, which are those where $(\partial_t+i\Lambda)$ hits the high frequency inputs $W_{k_1}$ and $W_{k_2}$. We start with the case when the frequencies $k_1,k_2$ are not too large relative to $p^+$.

\begin{lemma}\label{EEmediummod}
Assume that \eqref{bnm8} holds and, in addition, $k\leq 2p^+/3+m/4$. Then
\begin{equation}\label{bnm11}
2^{-p}\big[\big|\mathcal{J}_{ba,1}^{l,p}(k_1,k_2)\big|+\big|\mathcal{J}_{ba,2}^{l,p}(k_1,k_2)\big|\big]\lesssim \varep_12^{-\delta m}\rho_{k,m}^2.
\end{equation}
\end{lemma}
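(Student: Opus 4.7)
The plan is to begin by decomposing $(\partial_s + i\Lambda_\nu)W_{k_1}$ (and analogously $(\partial_s + i\Lambda_{-\sigma})W_{k_2}$ inside $\mathcal{J}_{ba,2}$) via Lemma \ref{GWprop} as $\mathcal{Q}_{W,k_1} + \mathcal{E}_{W,k_1}$, where $\mathcal{Q}_{W,k_1} = -iP_{k_1}T_{V\cdot\zeta}W_1 - iP_{k_1}T_{\Sigma_{\geq 2}}W_1$. The $T_{\Sigma_{\geq 2}}$ contribution is cubic and trivially absorbed thanks to the extra factor $\varepsilon_1\langle s\rangle^{-5/6}$ in \eqref{BoundsQuasiTerms}; the main difficulty is the one-derivative-loss estimate $\|T_{V\cdot\zeta}W_{k_1}\|_{L^2} \lesssim \varepsilon_1 2^{k^+}2^{-5m/6+\delta m}\rho_k$. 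The semilinear remainder satisfies $\|\mathcal{E}_{W,k_1}\|_{L^2} \lesssim 2^{k/2}2^{-5m/6+\delta m}\rho_k$ directly from the definition \eqref{bnm3.1}.

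For modulations $p$ not too small, we apply Lemma \ref{EstimAKP} to the trilinear form $\widetilde{\mathcal{A}}^{l,p}_{ba}[G, H_1, W_{k_2}]$ with $H_1 \in \{\mathcal{E}_{W,k_1}, T_{V\cdot\zeta}W_{k_1}\}$, obtaining $\lesssim C_l\, N(G)\, \|H_1\|_{L^2}\|W_{k_2}\|_{L^2}$, with $N(G) \lesssim \varepsilon_1 2^{-5m/6+\delta m}$ coming from the bad-frequency $L^\infty$ decay \eqref{Gdecay}. The key derivative gains are $C_1 \lesssim 2^{-k^+}$ directly from the $\mu_1$ symbol bound, and for $l = 0$, $C_0 \lesssim \min(1, 2^{-k^+ + 2p_+})$ via the depletion factor $\mathfrak{d}$ (cf.\ the algebraic correlation \eqref{njk4.7}); either compensates the $2^{k^+}$ loss from $T_{V\cdot\zeta}$. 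Integrating using $\int q_m \rho_k^2\,ds \lesssim 2^m\rho_{k,m}^2$ and dividing by $2^p$ yields a bound of the form $\lesssim \varepsilon_1^2 2^{-p + 2p_+ - 2m/3 + 2\delta m}\rho_{k,m}^2$, which meets the target $\varepsilon_1 2^{-\delta m}\rho_{k,m}^2$ as long as $p$ is not too far below $-2m/3$.

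For the complementary small-modulation window near $\underline{p}$, we invoke Lemma \ref{L2Prop0} in the form \eqref{L2Use}. Writing $\widehat{G}(\xi-\eta) = e^{-is\Lambda_\mu(\xi-\eta)}\widehat{g}(\xi-\eta)$ with $g = e^{is\Lambda_\mu}G$ (and using $\|g\|_{Z_1 \cap H^{N_1/3, 0}_\Omega} \lesssim \varepsilon_1 2^{\delta m}$ from Lemma \ref{LinEstLemNew}), the bilinear integrand fits the template of Lemma \ref{L2Prop0} with $a = A\,\chi_{ba}\,\widehat{g}$; the cutoff $\widetilde{\varphi}_p(\Phi)$ is absorbed into $\chi(2^{-p}\Phi)$ since $\widetilde{\varphi}_p$ has the correct scaling. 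The resulting improved bound $\lesssim 2^{30\delta m}(2^{3(p-k/2)/2} + 2^{p-k/2-m/3})\|H_1\|_{L^2}\|W_{k_2}\|_{L^2}$, combined with the $L^2$ estimates on $H_1$, closes the estimate throughout $p - k/2 \in [-(1-\delta)m, -\delta m]$. The hypothesis $k \leq 2p_+/3 + m/4$ is calibrated precisely so that these two regimes jointly cover the full $p$-range of \eqref{bnm8}.

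\textbf{Main obstacle.} The delicate point is balancing the derivative loss $2^{k^+}$ from $T_{V\cdot\zeta}W$ against the large modulation factor $2^{-p}$ generated by the integration by parts in time. At moderate $p$ this is achieved by either the $2^{-k^+}$ gain in $\mu_1$ or the depletion factor $\mathfrak{d}$ in $\mu_0$; at small $p$ we must exploit the sharper bound of Lemma \ref{L2Prop0}, which in turn relies on the restricted nondegeneracy of $\Upsilon$ (cf.\ Remark \ref{L2Rem}). The complementary regime $k > 2p_+/3 + m/4$ escapes this calibration and is handled separately in Lemma \ref{EElargemod}.
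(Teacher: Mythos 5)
Your overall structure matches the paper's proof: split $(\partial_s+i\Lambda_\nu)W_{k_1}$ via Lemma \ref{GWprop}, then treat moderate modulations with Lemma \ref{EstimAKP} plus the pointwise bound \eqref{Gdecay}, and very small modulations with the main $L^2$ lemma (Lemma \ref{L2Prop0}) via \eqref{L2Use}. The moderate-$p$ analysis is essentially correct.

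However, there is a genuine gap in the small-modulation regime. The bound you state from Lemma \ref{L2Prop0},
\[
\big|\widetilde{\mathcal{A}}^{l,p}_{ba}[G,H_1,W_{k_2}]\big|\lesssim 2^{30\delta m}\big(2^{(3/2)(p-k/2)}+2^{p-k/2-m/3}\big)\|H_1\|_{L^2}\|W_{k_2}\|_{L^2},
\]
is missing a factor of $2^{-k}$. This factor is crucial because $\|H_1\|_{L^2}=\|T_{V\cdot\zeta}W_{k_1}\|_{L^2}$ costs you a factor $2^{k^+}$. Without the $2^{-k}$ gain the first term does not close: multiplying your bound by $2^{-p}\cdot 2^m\cdot \varep_1 2^{k}2^{-5m/6+\delta m}$ and using $p\leq -m/2+k/2$, $k\leq m/5$ gives an exponent $p/2+k/4+m/6+O(\delta m)\leq -m/12+k/2+O(\delta m)\leq m/60+O(\delta m)$, which is \emph{positive}. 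With the $2^{-k}$ factor the exponent becomes $p/2-3k/4+m/6+O(\delta m)\leq -m/12-k/2+O(\delta m)\lesssim -m/12+O(1)$ (using $k\geq -30$), which is good.

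The missing $2^{-k}$ comes from the symbols $\mu_l$ themselves, and extracting it in the $L^2$-lemma regime is precisely the delicate point. For $l=1$ it is built into the $S^\infty$ bound \eqref{njk14}, so one simply normalizes $\mu=\mu_1 2^{k}$ in \eqref{L2Use}. For $l=0$ the symbol $\mu_0=|\xi-\eta|^{3/2}\mathfrak{d}$ is not a clean $S^\infty$ symbol, and the gain has to be fed into the \emph{amplitude} $A$ in \eqref{AssL2Prop1}: one takes $A(\xi,\eta):=2^{k}\mathfrak{d}(\xi,\eta)\varphi_{\leq 4}(\Phi(\xi,\eta))\varphi_{[k_2-2,k_2+2]}(\xi)\varphi_{[-10,10]}(\xi-\eta)$, using the correlation \eqref{njk4.7} (i.e., $\mathfrak{d}\lesssim 2^{-k}$ on the set $|\Phi|\lesssim 1$) to check $\|A\|_{L^\infty}\lesssim 1$, and then the derivative bounds $\|D^\alpha A\|_{L^\infty}\lesssim 2^{|\alpha|m/3}$ hold precisely because $k\leq m/5$. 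Your concluding paragraph presents the depletion factor and the $L^2$ lemma as alternatives used in disjoint regimes; in fact at small $p$ you need both simultaneously, and making this explicit is what the ``$a=A\chi_{ba}\widehat{g}$'' bookkeeping in Lemma \ref{L2Prop0} is there for.
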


\begin{proof}
It suffices to bound the contributions of $\big|\mathcal{J}_{ba,1}^{l,p}(k_1,k_2)\big|$ in \eqref{bnm11}, since the contributions of $\big|\mathcal{J}_{ba,2}^{l,p}(k_1,k_2)\big|$ are similar. We estimate, for $s\in [2^{m-1},2^{m+1}]$,
\begin{equation}\label{bnm12}
\|(\partial_s+i\Lambda_\nu)W_{k_1}(s)\|_{L^2}\lesssim \varep_12^{-5m/6+\delta m}(2^{k_1}+2^{3k_1/2}2^{-5m/6})\rho_k(s),
\end{equation}
using \eqref{njk34}--\eqref{BoundsQuasiTerms}. As before, we use Lemma \ref{EstimAKP} and the pointwise bound \eqref{Gdecay} to estimate
\begin{equation}\label{bnm13}
\begin{split}
\big|\widetilde{\mathcal{A}}^{l,p}_{ba}& [G(s),(\partial_s+i\Lambda_\nu)W_{k_1}(s),W_{k_2}(s)]\big|\\
&\lesssim \min(1,2^{2p^+-k})\varep_12^{-5m/6+\delta m}\|(\partial_s+i\Lambda_\nu)W_{k_1}(s)\|_{L^2}\|W_{k_2}(s)\|_{L^2}.
\end{split}
\end{equation}
The bounds \eqref{bnm12}--\eqref{bnm13} suffice to prove \eqref{bnm11} when $p\geq 0$ or when $-m/2+k/2\leq p\leq 0$. 

It remains to prove \eqref{bnm11} when
\begin{equation}\label{bnm13.5}
\underline{p}+1\leq p \leq -m/2 + k/2, \qquad k \leq m/5.
\end{equation}
For this we would like to apply Lemma \ref{L2Prop0}. We claim that, for $s\in [2^{m-1},2^{m+1}]$,
\begin{equation}\label{bnm14}
\begin{split}
\big|\widetilde{\mathcal{A}}^{l,p}_{ba}& [G(s),(\partial_s+i\Lambda_\nu)W_{k_1}(s),W_{k_2}(s)]\big|\\
&\lesssim 2^{-k}\varep_12^{31\delta m}(2^{(3/2)(p-k/2)}+2^{p-k/2-m/3})\|(\partial_s+i\Lambda_\nu)W_{k_1}(s)\|_{L^2}\|W_{k_2}(s)\|_{L^2}.
\end{split}
\end{equation}
Assuming this and using also \eqref{bnm12}, it follows that
\begin{equation*}
\begin{split}
2^{-p}\big|\mathcal{J}_{ba,1}^{l,p}(k_1,k_2)\big|&\lesssim 2^{-p}2^m\cdot \varep_1\rho_{k,m}^22^{-5m/6+40\delta m}(2^{(3/2)(p-k/2)}+2^{p-k/2-m/3})\\
&\lesssim \varep_1\rho_{k,m}^22^{m/6+40\delta m}(2^{p/2-3k/4}+2^{-k/2-m/3}),
\end{split}
\end{equation*}
and the desired conclusion follows using also \eqref{bnm13.5}. 

On the other hand, to prove the bound \eqref{bnm14}, we use \eqref{L2Use}. Clearly, with $g=e^{is\Lambda_\mu} G$, we have $\|g\|_{Z_1\cap H_{\Omega}^{N_1/3,0}}\lesssim \varep_12^{\delta^2m}$, see \eqref{Gbound0}. The factor $2^{-k}$ in the right-hand side of \eqref{bnm14} is due to the symbols $\mu_0$ and $\mu_1$. This is clear for the symbols $\mu_1$, which already contain a factor of $2^{-k}$ (see \eqref{njk14}). For the symbols $\mu_0$, we notice that we can take 
\begin{equation*}
A(\xi,\eta):=2^{k}\mathfrak{d}(\xi,\eta)\varphi_{\leq 4}(\Phi(\xi,\eta))\varphi_{[k_2-2,k_2+2]}(\xi)\varphi_{[-10,10]}(\xi-\eta).
\end{equation*}
This satisfies the bounds required in \eqref{AssL2Prop1}, since $k\leq m/5$. This completes the proof.
\end{proof}

We now look at the remaining cases for the quasilinear terms and prove the following:

\begin{lemma}\label{EElargemod}
Assume that \eqref{bnm8} holds and, in addition, 
\begin{equation}\label{bnm14.5}
p\geq 0,\qquad k\in[2p/3+m/4,2p+m/5].
\end{equation} 
Then
\begin{equation}\label{EElargemodest}
2^{-p}\big|\mathcal{J}_{ba,1}^{l,p}(k_1,k_2)+\mathcal{J}_{ba,2}^{l,p}(k_1,k_2)\big|\lesssim \varep_12^{-\delta m}\rho_{k,m}^2.
\end{equation}
\end{lemma}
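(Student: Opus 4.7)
My plan is to substitute the evolution equation \eqref{njk34} (and its complex conjugate when needed) for $(\partial_s+i\Lambda_\nu)W_{k_1}$ and $(\partial_s+i\Lambda_{-\sigma})W_{k_2}$ into the definitions \eqref{bnm9}, thereby decomposing $\mathcal{J}_{ba,1}^{l,p}+\mathcal{J}_{ba,2}^{l,p}$ into three families of trilinear expressions, corresponding to the semilinear remainders $\mathcal{E}_{W_{k_j}}$, the higher-order quasilinear pieces $T_{\Sigma_{\geq 2}}W_{k_j}$, and the first-order quasilinear pieces $T_{V\cdot\zeta}W_{k_j}$ from \eqref{njk34.1}. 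Each family will be controlled by Lemma \ref{EstimAKP} applied with $F=G$: the bad localization $\chi_{ba}(\xi-\eta)$ forces $|\xi-\eta|\sim 1$, so the pointwise bound \eqref{Gdecay} combined with \eqref{No} gives $N(P_kG)\lesssim \varep_1\,2^{-5m/6+\delta m}$ for the relevant $k$ of size $O(1)$, while the other two slots are controlled in $L^2$ either directly by $\rho_k(s)$ or through \eqref{BoundsQuasiTerms}.

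The two easy families are as follows. The $\mathcal{E}_{W_{k_j}}$ contributions are immediately acceptable: the definition \eqref{bnm3.1} of $\rho_k$ already incorporates the weight $2^{5m/6-\delta m}2^{-k/2}$ needed to absorb $\|\mathcal{E}_{W_{k_j}}\|_{L^2}$, so Lemma \ref{EstimAKP} plus the $2^{-p}\cdot 2^m$ prefactor implicit in \eqref{IBPtEE} closes the estimate. The $T_{\Sigma_{\geq 2}}W_{k_j}$ contributions lose $3/2$ derivatives but carry an extra factor $\varep_1$ and decay at the faster rate $\langle s\rangle^{-5/3+\delta}$ by \eqref{BoundsQuasiTerms}; their $2^{3k/2}$ loss is dominated by the gain $\min(1,2^{2p-k})$ (for $l=0$) or $2^{-k}$ (for $l=1$) supplied by Lemma \ref{EstimAKP}, after which the $2^{-p}\cdot 2^m$ prefactor and the phase support restrictions ($p\leq k/2+\mathcal{D}$ when $\nu=+$, or $|p-3k/2|\leq\mathcal{D}$ when $\nu=-$) give a bound of order $\varep_1^3\cdot 2^{-m+C\delta m}\rho_{k,m}^2$.

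The principal difficulty is the $T_{V\cdot\zeta}W_{k_j}$ family, which loses a full derivative with no $\varep_1$ smallness. For $l=1$ the factor $2^{-k}$ carried by $\mu_1$ in \eqref{njk14} exactly cancels the loss, leaving a bound of order $\varep_1^3\cdot 2^{-5m/3+C\delta m}\rho_{k,m}^2$ that closes as in the $\mathcal{E}$ case. For $l=0$ the key ingredient is the strongly semilinear depletion structure of $\mu_0$: on the modulation support $|\Phi(\xi,\eta)|\lesssim 2^p$, with $|\xi|\approx|\eta|\approx 2^k\gg 1$ and $|\xi-\eta|\lesssim 1$, the expansion $\Lambda(\xi)-\Lambda(\eta)\sim 2^{k/2}(|\xi|-|\eta|)$ yields $|\xi|-|\eta|\sim 2^{-k/2}(|\Phi|+\Lambda(\xi-\eta))\lesssim 2^{-k/2}(2^p+1)$, and hence $\mathfrak{d}(\xi,\eta)\lesssim 2^{2p-k}$, which is exactly the content of the $\min(1,2^{2p-k})$ bound in Lemma \ref{EstimAKP}. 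Matching this gain against the $2^k$ loss from $T_{V\cdot\zeta}$, the $2^{-p}\cdot 2^m$ prefactor, and $N(G)\cdot\|V\|_{L^\infty}\lesssim \varep_1^2\,2^{-5m/3+2\delta m}$, one arrives at a bound of order $\varep_1^3\cdot 2^{p-2m/3+C\delta m}\rho_{k,m}^2$.

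The main obstacle I expect is the endpoint subregime in which $p$ approaches its upper value $k/2+\mathcal{D}$ and $k$ approaches $4m/3$: here the factor $2^{p-2m/3}$ only narrowly decays, and the lower bound $k\geq 2p/3+m/4$ from \eqref{bnm14.5} has to be used sharply to trade out the required $2^{-\delta m}$ factor. If a narrow residual range of $(k,p)$ still resists the direct depletion argument, the remedy is to symmetrize $\mathcal{J}_{ba,1}^{0,p}+\mathcal{J}_{ba,2}^{0,p}$ before estimating: since $V$ is real-valued, $iT_{V\cdot\zeta}$ is self-adjoint modulo a bounded operator (by the standard paradifferential calculus, cf.\ the symmetrization used in the proof of Proposition \ref{ProBulk}), so the derivative-losing principal parts of the two integrands cancel and the residue becomes a commutator of strongly semilinear character, controlled by the $l=1$-type argument above.
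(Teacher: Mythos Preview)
Your proposal follows the same overall route as the paper: decompose $(\partial_s+i\Lambda_\nu)W_{k_1}$ and $(\partial_s+i\Lambda_{-\sigma})W_{k_2}$ via \eqref{njk34}--\eqref{njk34.1} into the semilinear remainders $\mathcal{E}_{W}$ and the quasilinear pieces $T_{\Sigma_{\geq 2}}W$, $T_{V\cdot\zeta}W$; treat $\mathcal{E}_{W}$ directly using the built-in weight in $\rho_k$; treat the quasilinear pieces for moderate $k$ by the depletion factor in Lemma~\ref{EstimAKP}; and symmetrize in the remaining high-frequency regime.

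Two substantive corrections are needed. First, the hypothesis \eqref{bnm14.5} forces $\nu=\sigma=+$: if $\nu=-$ and $k$ is large, then $|\Phi|\approx 2\Lambda(|\eta|)\approx 2^{3k/2}$, so $|p-3k/2|\leq\mathcal D$, contradicting $k\geq 2p/3+m/4$. Only the case $p\leq k/2+\mathcal D$ arises, and your $\nu=-$ discussion is vacuous. Second, and more importantly, the symmetrization is not a fallback for a ``narrow residual range'' but is essential for all $k$ larger than roughly $4m/3$ (the paper uses the threshold $6m/5$). The lower bound $k\geq 2p/3+m/4$ does not help as you suggest: once $k\geq 3m/8$ the binding upper bound is $p\leq k/2+\mathcal D$, and your bound $2^{p-2m/3}$ becomes $2^{k/2-2m/3}$, which diverges as $k\to\infty$. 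Similarly, your treatment of $T_{\Sigma_{\geq 2}}$ gives a net factor $2^{k-3m/2}$ after depletion, which also diverges for $k\gtrsim 3m/2$; the paper therefore symmetrizes both symbols $\sigma\in\{V\cdot\zeta,\Sigma_{\geq 2}\}$ together in the regime $k\geq 6m/5$.

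Your sketch of the symmetrization is correct in spirit. The paper implements it concretely: using the Weyl quantization and the fact that $W_1\in\mathcal{W}'_+$, $W_2\in\mathcal{W}'_-$, the sum $\widetilde{\mathcal{A}}^{0,p}_{ba}[G,P_{k_1}\mathcal{Q}_{W_1},W_{k_2}]+\widetilde{\mathcal{A}}^{0,p}_{ba}[G,W_{k_1},P_{k_2}\mathcal{Q}_{W_2}]$ is written as an integral against a difference kernel $(\delta M)(\xi,\eta,\alpha)$ of two translates of a common multiplier $M$. Taylor-expanding $(\delta M)$ to first order recovers one derivative; the gradient terms are then reassembled into expressions of the form $T_{\nabla_\zeta\sigma}W_1$ (which is of order $1/2$, not $3/2$) and estimated again by Lemma~\ref{EstimAKP}, exactly as you anticipate in your final sentence.
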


\begin{proof}
The main issue here is to deal with the case of large frequencies, relative to the time variable, and avoid the loss of derivatives coming from the terms $(\partial_t\pm i\Lambda)W_{1,2}$. For this we use ideas related to the local existence theory, such as symmetrization.
Notice that in Lemma \ref{EElargemod} we estimate the absolute value of the sum $\mathcal{J}_{ba,1}^{l,p}+\mathcal{J}_{ba,2}^{l,p}$, and not each term separately.

Notice first that we may assume that $\nu=+=\sigma$, since otherwise $\mathcal{J}_{ba,n}^{l,p}(k_1,k_2)=0$, $n\in\{1,2\}$, when $k\geq 2p/3+m/4$. In particular $2^p\lesssim 2^{k/2}$. We deal first with the semilinear part of the nonlinearity, which is $\mathcal{E}_{W_1}$ in equation \eqref{njk34}. Using Lemma \ref{EstimAKP} and the definition \eqref{bnm3.1},
\begin{equation*}
\begin{split}
\big|\widetilde{\mathcal{A}}^{l,p}_{ba} [G(s),P_{k_1}\mathcal{E}_{W_1}(s),W_{k_2}(s)]\big|&\lesssim \varep_12^{-5m/6+\delta m}\|P_{k_1}\mathcal{E}_{W_1}(s)\|_{L^2}\|W_{k_2}(s)\|_{L^2}\\
&\lesssim \varep_12^{-5m/3+2\delta m}2^{k/2}\rho_k(s)^2.
\end{split}
\end{equation*}
Therefore
\begin{equation*}
2^{-p}\int_{\mathbb{R}}q_m(s)\big|\widetilde{\mathcal{A}}^{l,p}_{ba} [G(s),P_{k_1}\mathcal{E}_{W_1}(s),W_{k_2}(s)]\big|\,ds\lesssim \varep_12^{-m/4}\rho_{k,m}^2.
\end{equation*}

It remains to bound the contributions of $\mathcal{Q}_{W_1}$ and $\mathcal{Q}_{W_2}$. Using again Lemma \ref{EstimAKP}, we can easily prove the estimate 
when $k \leq 6m/5$ or when $l=1$. It remains to show that
\begin{equation}\label{bnm20}
\begin{split}
2^{-p}\int_{\mathbb{R}}q_m(s)\big|\widetilde{\mathcal{A}}^{0,p}_{ba} [G(s),P_{k_1}\mathcal{Q}_{W_1}(s),W_{k_2}(s)]+\widetilde{\mathcal{A}}^{0,p}_{ba} [G(s),W_{k_1}(s),P_{k_2}\mathcal{Q}_{W_2}(s)]\big|\,ds\\
\lesssim \varep_12^{-\delta m}\rho_{k,m}^2,
\end{split}
\end{equation}
provided that
\begin{equation}\label{bnm21}
\nu=\sigma=+,\qquad k\in[2p-\D,2p+m/5],\qquad k\geq 6m/5.
\end{equation}

In this case we consider the full expression and apply a symmetrization procedure to recover the loss of derivatives. 
Since $W_1\in\mathcal{W}'_+$ and $W_2\in\mathcal{W}'_-$, recall from \eqref{njk34.1} that
\begin{equation*}
\begin{split}
\mathcal{Q}_{W_1} = -iT_{\Sigma_{\geq 2}} W_1 - iT_{V\cdot\zeta} W_1,\qquad \mathcal{Q}_{W_2} = i\overline{T_{\Sigma_{\geq 2}} \overline{W_2}}
+i\overline{T_{V\cdot\zeta} \overline{W_2}}.
\end{split}
\end{equation*}
Therefore, using the definition \eqref{DefTerms},
\begin{equation*}
\begin{split}
 \widetilde{\mathcal{A}}^{0,p}_{ba} &[G,P_{k_1}\mathcal{Q}_{W_1},W_{k_2}]=\sum_{\sigma\in\{\Sigma_{\geq 2},V\cdot\zeta\}}\iint_{\R^2\times \R^2} \mu_0(\xi,\eta)\\ 
&\times\widetilde{\varphi}_p(\Phi(\xi,\eta)) \chi_{ba}(\xi-\eta)
  \widehat{G}(\xi-\eta)\cdot\varphi_{k_1}(\eta)(-i)\widehat{T_\sigma W_1}(\eta)\cdot\varphi_{k_2}(\xi)\widehat{W_2}(-\xi)\,d\xi d\eta,
\end{split}
\end{equation*}
and
\begin{equation*}
\begin{split}
 \widetilde{\mathcal{A}}^{0,p}_{ba} &[G,W_{k_1},P_{k_2}\mathcal{Q}_{W_2}]=\sum_{\sigma\in\{\Sigma_{\geq 2},V\cdot\zeta\}}\iint_{\R^2\times \R^2} \mu_0(\xi,\eta)\\ 
&\times\widetilde{\varphi}_p(\Phi(\xi,\eta)) \chi_{ba}(\xi-\eta)
  \widehat{G}(\xi-\eta)\cdot\varphi_{k_1}(\eta)\widehat{W_1}(\eta)\cdot\varphi_{k_2}(\xi)i\widehat{\overline{T_\sigma\overline{W_2}}}(-\xi)\,d\xi d\eta.
\end{split}
\end{equation*}
We use the definition \eqref{Tsigmaf} and make suitable changes of variables to write
\begin{equation*}
\begin{split}
&\widetilde{\mathcal{A}}^{0,p}_{ba} [G,P_{k_1}\mathcal{Q}_{W_1},W_{k_2}]+\widetilde{\mathcal{A}}^{0,p}_{ba} [G,W_{k_1},P_{k_2}\mathcal{Q}_{W_2}]=\\
&=\sum_{\sigma\in\{\Sigma_{\geq 2},V\cdot\zeta\}}\frac{-i}{4\pi^2}\iiint_{(\R^2)^3} \widehat{W_1}(\eta)\widehat{W_2}(-\xi)\widehat{G_{ba}}(\xi-\eta-\alpha)
(\delta M)(\xi,\eta,\alpha)\,d\xi d\eta d\alpha,
\end{split}
\end{equation*}
where $\widehat{G_{ba}}:=\chi_{ba}\cdot\widehat{G}$ and
\begin{equation*}
\begin{split}
(\delta M)(\xi,\eta,\alpha)&=\mu_0(\xi,\eta+\alpha)\widetilde{\varphi}_p(\Phi(\xi,\eta+\alpha)) 
  \widetilde{\sigma}(\alpha,\frac{2\eta+\alpha}{2})\chi(\frac{|\alpha|}{|2\eta+\alpha|})\varphi_{k_1}(\eta+\alpha)\varphi_{k_2}(\xi)\\
&-\mu_0(\xi-\alpha,\eta)\widetilde{\varphi}_p(\Phi(\xi-\alpha,\eta)) 
  \widetilde{\sigma}(\alpha,\frac{2\xi-\alpha}{2})\chi(\frac{|\alpha|}{|2\xi-\alpha|})\varphi_{k_1}(\eta)\varphi_{k_2}(\xi-\alpha).
\end{split}
\end{equation*}
For \eqref{bnm20} it suffices to prove that for any $s\in[2^{m-1},2^{m+1}]$ and $\sigma\in\{\Sigma_{\geq 2},V\cdot\zeta\}$
\begin{equation}\label{bnm25}
\begin{split}
2^{-p}\Big|\iiint_{(\R^2)^3} \widehat{W_1}(\eta,s)\widehat{W_2}(-\xi,s)\widehat{G_{ba}}(\xi-\eta-\alpha,s)
(\delta M)(\xi,\eta,\alpha,s)\,d\xi d\eta d\alpha\Big|\\
\lesssim \varep_1\rho_k(s)^22^{-m-\delta m}.
\end{split}
\end{equation}

Let
\begin{align}
 \label{formulaM}
\begin{split}
M(\xi,\eta,\alpha;\theta_1,\theta_2) &:=
  \mu_0(\xi-\theta_1,\eta+\alpha-\theta_1) \widetilde{\varphi}_p(\Phi(\xi-\theta_1,\eta+\alpha-\theta_1))\\
&\times\varphi_{k_2}(\xi-\theta_1)\varphi_{k_1}(\eta+\alpha-\theta_1)
\widetilde{\sigma}(\alpha,\eta+\frac{\alpha}{2}+\theta_2)\chi(\frac{|\alpha|}{|2\eta+\alpha+2\theta_2|}),
\end{split}
\end{align}
therefore
\begin{align*}
&(\delta M)(\xi,\eta,\alpha) = M(\xi,\eta,\alpha;0,0) - M(\xi,\eta,\alpha;\alpha,\xi-\eta-\alpha)\\
& = \varphi_{\leq k-\D}(\alpha)[\alpha\cdot\nabla_{\theta_1} M(\xi,\eta,\alpha;0,0) + (\xi-\eta-\alpha)\cdot\nabla_{\theta_2} M(\xi,\eta,\alpha;0,0)]+(eM)(\xi,\eta,\alpha).
\end{align*}
Using the formula for $\mu_0$ in \eqref{njk14} and recalling that $\sigma\in\varep_1\mathcal{M}^{3/2,1}_{N_3-2}$ (see Definition \ref{DefSym}), 
it follows that, in the support of the integral,
\begin{align*}
\big|(eM)(\xi,\eta,\alpha) \big|\lesssim (1+|\alpha|^2)P(\alpha)2^{-2k} 2^{3k/2},\qquad \|(1+|\alpha|)^{8}P\|_{L^2}\lesssim 2^{\delta m}.
\end{align*}
The contribution of $(eM)$ in \eqref{bnm25} can then be estimated by $2^{-p}2^{\delta m}2^{-k/2}\varep_1\rho_k(s)^2$ which suffices due to the 
assumptions \eqref{bnm21}.

We are thus left with estimating the integrals
\begin{align*}
I& :=\iiint_{(\R^2)^3} \widehat{G_{ba}}(\xi-\eta-\alpha) \varphi_{\leq k-\D}(\alpha)\big[ (\xi-\eta-\alpha)\cdot\nabla_{\theta_2}M(\xi,\eta,\alpha;0,0) \big]
  \widehat{W_1}(\eta)\widehat{W_2}(-\xi) \, d\alpha d\eta d\xi,\\
II & :=\iiint_{(\R^2)^3} \widehat{G_{ba}}(\xi-\eta-\alpha) \varphi_{\leq k-\D}(\alpha)\big[ \alpha\cdot\nabla_{\theta_1} M(\xi,\eta,\alpha;0,0) \big]
  \widehat{W_1}(\eta) \widehat{W_2}(-\xi) \, d\alpha d\eta d\xi.
\end{align*}

If $|\alpha|\ll 2^k$ we have
\begin{equation*}
\begin{split}
(\xi-\eta-\alpha)\cdot\nabla_{\theta_2}M(\xi,\eta,\alpha;0,0)&=\mu_0(\xi,\eta+\alpha) \widetilde{\varphi}_p(\Phi(\xi,\eta+\alpha))\varphi_{k_2}(\xi)\varphi_{k_1}(\eta+\alpha)\\
&\times(\xi-\eta-\alpha)\cdot(\nabla_{\zeta}\widetilde{\sigma})(\alpha,\eta+\frac{\alpha}{2}).
\end{split}
\end{equation*}
We make the change of variable $\alpha=\beta-\eta$ to rewrite
\begin{equation*}
\begin{split}
I=&c\iiint_{(\R^2)^3} \widehat{G_{ba}}(\xi-\beta) \mu_0(\xi,\beta) \widetilde{\varphi}_p(\Phi(\xi,\beta))(\xi-\beta)\cdot \mathcal{F}\{P_{k_1}T_{P_{\leq k-\D}\nabla_{\zeta}\sigma}W_1\}(\beta)\widehat{P_{k_2}W_2}(-\xi) \, d\beta d\xi.
	\end{split}
\end{equation*}
Then we use Lemma \ref{EstimAKP}, \eqref{Gdecay}, and \eqref{LqBdTa} (recall $\sigma\in \varep_1\mathcal{M}^{3/2,1}_{N_3-2}$) to estimate
\begin{equation*}
\begin{split}
2^{-p}|I(s)|&\lesssim 2^{-p}2^{2p-k}\varep_12^{-5m/6+\delta m}\|P_{k_1}T_{P_{\leq k-\D}\nabla_{\zeta}\sigma}W_1(s)\|_{L^2}\|P_{k_2}W_2(s)\|_{L^2}\\
&\lesssim \varep_12^{-3m/2}2^{p-k/2}\rho_k(s)^2.
\end{split}
\end{equation*}
This is better than the desired bound \eqref{bnm25}. One can estimate $2^{-p}|II(s)|$ in a similar way, using the flexibility in Lemma \ref{EstimAKP} due to the fact that the symbol $\underline{\mu}_0$ is allowed to contain additional $S^\infty$ symbols. This completes the proof of the bound \eqref{bnm25} and the lemma.
\end{proof}

\subsubsection{Semilinear terms}\label{secSemiterms}

The only term in \eqref{IBPtEE} that remains to be estimated is $\mathcal{J}^{l,p}_0(k_1,k_2)$. This is a semilinear term, since the $\partial_t$ derivative hits the low-frequency component, for which we will show the following:

\begin{lemma}\label{EELemmaSemi}
Assume that \eqref{bnm8} holds. Then
\begin{equation}
\label{EELemmaSemiconc}
2^{-p}|\mathcal{J}^{l,p}_{ba,0}(k_1,k_2)| \lesssim \e_12^{-\delta m}\rho_{k,m}^2.
\end{equation}
\end{lemma}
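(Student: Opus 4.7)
The key feature distinguishing $\mathcal{J}^{l,p}_{ba,0}$ from the quasilinear terms $\mathcal{J}^{l,p}_{ba,1}, \mathcal{J}^{l,p}_{ba,2}$ is that here the time derivative falls on the low-frequency input $G$, producing $\mathcal{N}_G=(\partial_s+i\Lambda_\mu)G$. By Lemma \ref{GWprop}, this quantity is genuinely quadratic in $\mathcal{U}$ and enjoys the favorable Sobolev decay
\[
\|\mathcal{N}_G(s)\|_{H^{N_1-4}\cap H^{N_1/2-2,0}_\Omega}\lesssim \varepsilon_1^2\langle s\rangle^{-5/6+\delta},
\]
with no loss of derivatives. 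My plan is to combine two bounds: an $L^\infty$-based estimate from Lemma \ref{EstimAKP} (good for moderate-to-large $p$), and an estimate that unfolds the bilinear structure of $\mathcal{N}_G$ in order to invoke Lemma \ref{L2Prop0} in the small-modulation regime.

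First, since $\mathcal{N}_G$ is frequency-localized to $|\zeta|\approx 1$ by the cutoff $\chi_{ba}$, Bernstein's inequality yields
\[
N(P_0\mathcal{N}_G(s))\lesssim \|P_0\mathcal{N}_G(s)\|_{H^2}\lesssim \varepsilon_1^2 2^{-5m/6+\delta m}
\]
uniformly for the relevant range of $\rho$. Feeding this into Lemma \ref{EstimAKP} with $F=\mathcal{N}_G$, $H_i=W_{k_i}$, and using $\|W_{k_i}(s)\|_{L^2}\lesssim \rho_k(s)$, I obtain
\[
2^{-p}|\mathcal{J}^{l,p}_{ba,0}(k_1,k_2)|\lesssim C_l\cdot \varepsilon_1^2\cdot 2^{m/6-p+2\delta m}\rho_{k,m}^2,
\]
with $C_0=\min(1,2^{2p^+-k^+})$ and $C_1=2^{-k^+}$. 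This is already below $\varepsilon_1 2^{-\delta m}\rho_{k,m}^2$ whenever $C_l\varepsilon_1\lesssim 2^{p-m/6-3\delta m}$, which handles the range where $p$ is not too close to $\underline{p}$.

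In the remaining small-modulation regime (where $p$ is within $O(\delta m)$ of $\underline{p}=\lfloor k/2-7m/9\rfloor$), I will exploit the quadratic structure of $\mathcal{N}_G$. Since $\Omega^a$ and $\langle\nabla\rangle^b$ commute with $\Lambda$, we can write $\mathcal{N}_G=\Omega^a\langle\nabla\rangle^b(\partial_s+i\Lambda)\mathcal{U}_\iota$, and $(\partial_s+i\Lambda)\mathcal{U}_\iota$ is itself a finite sum of bilinear terms $B_i[\mathcal{U}_{\iota_1},\mathcal{U}_{\iota_2}]$ (from the equation \eqref{eqU} together with the change of variables \eqref{Uhorel}, absorbing $\varepsilon_1^3\mathcal{O}_{3,0}$ into acceptable cubic errors). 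The integral $\widetilde{\mathcal{A}}^{l,p}_{ba}[\mathcal{N}_G,W_{k_1},W_{k_2}]$ then becomes a 4-linear functional in $(\mathcal{U}_{\iota_1},\mathcal{U}_{\iota_2},W_{k_1},W_{k_2})$. After decomposing $\mathcal{U}_{\iota_1},\mathcal{U}_{\iota_2}$ dyadically in frequency, one of these two factors is placed in $L^\infty$ (using Lemma \ref{LinEstLemNew}, gaining a factor $\lesssim \varepsilon_1 2^{-5m/6+C\delta m}$ from \eqref{Gdecay}--\eqref{Gbound1}), while the other plays the role of the low-frequency function $g$ in Lemma \ref{L2Prop0} (with $\|e^{is\Lambda}\mathcal{U}\|_{Z_1\cap H^{N_1/3,0}_\Omega}\lesssim \varepsilon_1$ by hypothesis). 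Combined with the $2^{-k^+}$ gain from $\mu_l$ (for $l=1$, directly; for $l=0$, from the depletion factor $\mathfrak{d}$ concentrated near $\Phi=0$), the application of \eqref{L2Use} yields
\[
|\widetilde{\mathcal{A}}^{l,p}_{ba}[\mathcal{N}_G,W_{k_1},W_{k_2}](s)|\lesssim \varepsilon_1^2\cdot 2^{-k^+-5m/6+C\delta m}\bigl(2^{(3/2)(p-k/2)}+2^{p-k/2-m/3}\bigr)\rho_k(s)^2,
\]
and a direct computation using $p\geq k/2-7m/9$, $k\geq -30$ (so that $2^{m/6}\cdot 2^{(3/2)(p-k/2)}\lesssim 2^{-2m/9}$ and $2^{m/6}\cdot 2^{-m/3}\lesssim 2^{-m/6}$) confirms the target bound $\varepsilon_1 2^{-\delta m}\rho_{k,m}^2$ after time integration and multiplication by $2^{-p}$.

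The main obstacle will be the small-modulation regime of Step 2: the crude $L^\infty$ bound on $\mathcal{N}_G$ loses a factor of order $2^{m/6}$ relative to the target, and recovering it forces us to open up the bilinear structure rather than treat $\mathcal{N}_G$ as a black box. The delicate cases are those in which both $\mathcal{U}$-factors in $\mathcal{N}_G$ sit near the slowly-decaying sphere $\{|\xi|=\gamma_0\}$ or near the space-time resonant sphere $\{|\xi|=\gamma_1/2\}$; here the separation of the special spheres listed in Section \ref{Introsecideas}(4) together with the nondegeneracy of $\Upsilon$ (used in Lemma \ref{L2Prop0}) are essential. Verifying that the residual $\mathcal{U}$-factor, after the $L^\infty$ extraction and possible multiplication by fixed bounded symbols, still obeys $\|\cdot\|_{Z_1\cap H^{N_1/3,0}_\Omega}\lesssim 1$ (up to $\varepsilon_1$) is the main bookkeeping needed.
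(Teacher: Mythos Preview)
Your Step 1 is suboptimal: using Bernstein from $L^2$ gives only $N(P_0\mathcal{N}_G)\lesssim \varepsilon_1^2 2^{-5m/6+\delta m}$, whereas the $L^\infty$ bound in \eqref{Gbound0} together with \eqref{bnm8.2} gives $N(P_0\mathcal{N}_G)\lesssim (2^{-p}+1)\varepsilon_1^2 2^{-5m/3+2\delta m}$. With your version Step 1 closes only for $p\gtrsim m/6$; with the sharper bound it closes down to $p\geq -m/4$. This is easily repaired, but it means your Step 2 must cover a much wider range than ``$p$ within $O(\delta m)$ of $\underline{p}$''.

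The real gap is in Step 2. Lemma \ref{L2Prop0} (in the form \eqref{L2Use}) requires the low-frequency input to appear exactly as $e^{-is\Lambda_\mu(\xi-\eta)}\widehat{g}(\xi-\eta)$ with $\|g\|_{Z_1\cap H_\Omega^{N_1/3,0}}\lesssim 1$, so that the full phase $e^{is\Phi(\xi,\eta)}$ is reconstituted. For $\mathcal{N}_G$ this would force $g=e^{is\Lambda_\mu}\mathcal{N}_G=\partial_s(e^{is\Lambda_\mu}G)$, the time derivative of the profile, for which no $Z_1$ bound is available. Your unfolding does not cure this: after writing $\widehat{\mathcal{N}_G}(\zeta)=\int b(\zeta,\theta)\widehat{\mathcal{U}_{\iota_1}}(\zeta-\theta)\widehat{\mathcal{U}_{\iota_2}}(\theta)\,d\theta$ and extracting one factor in $L^\infty$, the remaining factor sits at $\xi-\eta-\theta$ (not $\xi-\eta$) and carries the phase $e^{-is\Lambda_{\iota_1}(\xi-\eta-\theta)}$ (not $e^{-is\Lambda_\mu(\xi-\eta)}$). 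No change of variables simultaneously preserves the modulation cutoff $\widetilde{\varphi}_p(\Phi(\xi,\eta))$ and brings the phase into the form required by \eqref{L2Use}. This is not bookkeeping; it is precisely the obstruction the paper flags when it says ``some components of $(\partial_s+i\Lambda_\mu)G(s)$ undergo oscillations which are not linear.''

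The paper does \emph{not} use Lemma \ref{L2Prop0} for this term. For $p\leq -m/4$ it invokes the refined decomposition of Lemma \ref{dtfLem3}, which writes $\chi'_{ba}\cdot\widehat{\mathcal{N}_G}=g_d+g_\infty+g_2$: a piece $g_d$ with improved propagated $L^\infty$ decay (handled by Lemma \ref{EstimAKP}), a Fourier-$L^\infty$ piece $g_\infty$ of size $\lesssim 2^{-m-4\delta m}$, and an $L^2$-small piece $g_2$. The latter two are estimated by Schur's lemma combined with the sublevel-set bound of Proposition \ref{volume}(i). The substantial content of Lemma \ref{dtfLem3}---a stationary-phase analysis of the bilinear structure of $\partial_t\mathcal{V}$ near the space-resonant set---is exactly what substitutes for the $Z_1$ control you are missing.
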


\begin{proof} Assume first that $p \geq -m/4$. Using integration by parts we can see that, for $\rho\in\mathbb{R}$,
\begin{align}\label{bnm8.2}
\big\| \mathcal{F}^{-1} \big\{ e^{i\rho\Lambda(\xi)} \varphi_{[-20,20]}(\xi) \big\} \big\|_{L^1_x} \lesssim 1+|\rho|.
\end{align}
Combining this and the bounds in the second line of \eqref{Gbound0} we get
\begin{equation*}
\sup_{|\rho|\leq 2^{-p+\delta m}}\|e^{i\rho\Lambda}[(\partial_s+i\Lambda_\mu)P_{[-10,10]}G(s)]\|_{L^\infty}\lesssim (2^{-p}+1)2^{-5m/3+2\delta m}.
\end{equation*}
Using this in combination with Lemma \ref{EstimAKP} we get
\begin{equation}\label{bnm31}
\big| \widetilde{\mathcal{A}}^{l,p}_{ba}[(\partial_s+i\Lambda_\mu)G(s),W_{k_1}(s),W_{k_2}(s)] \big|
  \lesssim (2^{-p}+1) 2^{-5m/3+2\delta m}\rho_k(s)^2,
\end{equation}
which leads to an acceptable contribution.

Assume now that
\begin{equation*}
\underline{p}+1 \leq p \leq -m/4
\end{equation*}
Even though there is no loss of derivatives here, the information that we have so far is not sufficient to obtain the bound in this range. The main reason is that some components of $(\partial_s+i\Lambda_\mu)G(s)$ undergo oscillations which are not linear.
To deal with this term we are going to use the following decomposition of $(\partial_s+i\Lambda_\mu)G$, which follows from Lemma \ref{dtfLem3},
\begin{equation}\label{DecpartialtfEE}
\chi'_{ba}(\xi)\cdot\mathcal{F}\{(\partial_s+i\Lambda_\mu)G(s)\}(\xi) = g_d(\xi) + g_\infty(\xi) + g_2(\xi)
\end{equation}
for any $s\in [2^{m-1},2^{m+1}]$, where $\chi'_{ba}(x)=\varphi_{\leq 4}(2^{\D}(|x|-\gamma_0))+\varphi_{\leq 4}(2^{\D}(|x|-\gamma_1))$ and
\begin{equation}\label{bnm30}
\begin{split}
&\| g_2 \|_{L^2} \lesssim \e_1^2 2^{-3m/2+20\delta m},
  \qquad \| g_\infty\|_{L^\infty} \lesssim \e_1^22^{-m-4\delta m },\\
&\sup_{|\rho|\leq 2^{7m/9+4\delta m}}\|\mathcal{F}^{-1}\{e^{i\rho\Lambda}g_d\}\|_{L^\infty}\lesssim \varep_1^22^{-16m/9-4\delta m}.
\end{split}
\end{equation}

Clearly, the contribution of $g_d$ can be estimated as in \eqref{bnm31}, using Lemma \ref{EstimAKP}. On the other hand, we estimate the 
contributions of $g_2$ and $g_\infty$ in the Fourier space, using Schur's lemma. For this we need to use the volume bound 
in Proposition \ref{volume} (i). We have
\begin{equation*}
\sup_{\xi}\|\widetilde{\varphi}_p(\Phi(\xi,\eta))\chi_{ba}(\xi-\eta)g_\infty(\xi-\eta) \|_{L^1_{\eta}}
\lesssim 2^{(1-\delta)p} \|g_\infty \|_{L^\infty}\lesssim 2^{(1-\delta)p}2^{-(1+4\delta)m} \e_1^2,
\end{equation*}
and also a similar bound for the $\xi$ integral (keeping $\eta$ fixed). Therefore, using Schur's lemma
\begin{equation*}
\big| \widetilde{\mathcal{A}}^{l,p}_{ba}[\mathcal{F}^{-1}g_{\infty}(s),W_{k_1}(s),W_{k_2}(s)] \big|
  \lesssim 2^{(1-\delta)p}2^{-(1+4\delta)m} \e_1^2\rho_k(s)^2, 
\end{equation*}
and the corresponding contribution is bounded as claimed in \eqref{EELemmaSemiconc}. The contribution of $g_2$ can be bounded in a similar way, 
using Schur's lemma and the Cauchy--Schwarz inequality. This completes the proof of the lemma.
\end{proof}

\subsection{The other interactions}\label{secother}
In this subsection we show how to bound all the remaining contributions to the energy increment in \eqref{bnm1}. We remark that we do not use the 
main $L^2$ lemma in the estimates in this subsection.

\subsubsection{Small frequencies}\label{secsmallfreq}

We consider now the small frequencies and prove the following:
\begin{lemma}\label{BNV1}
For $l\in\{0,1\}$, $m\in[\D^2,L]$, $G\in\mathcal{G}'$, and $W_1,W_2\in\mathcal{W}'$ we have
\begin{equation}\label{bnm40}
\sum_{\min(k_1,k_2)\leq -40}|\mathcal{A}^l_{ba,m}[G,P_{k_1}W_1,P_{k_2}W_2]|\lesssim \varep_1^3.
\end{equation}
\end{lemma}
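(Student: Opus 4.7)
The bad-frequency cutoff $\chi_{ba}(\xi-\eta)$ localizes $|\xi-\eta|$ to a neighborhood of $\{\gamma_0,\gamma_1\}$, both of size $O(1)$, so the constraint $\min(k_1,k_2)\leq-40$ together with frequency balance (WLOG $k_1\leq-40$) forces $k_2\in[\log_2\gamma_0-3,\log_2\gamma_1+3]$, a bounded range. Thus it suffices to bound $\sum_{k_1\leq-40}|\mathcal{A}^l_{ba,m}|$ with $k_2$ ranging over $O(1)$ values. The case $l=0$ is in fact trivial: the depletion factor $\mathfrak{d}$ contains the paradifferential cutoff $\chi(|\xi-\eta|/|\xi+\eta|)=\varphi_{\leq-20}(|\xi-\eta|/|\xi+\eta|)$ inherited from \eqref{Tsigmaf}, which vanishes unless $|\xi-\eta|\leq 2^{-19}|\xi+\eta|$. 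In our regime $|\xi+\eta|\approx|\xi|\approx 1\approx|\xi-\eta|$, so this ratio is $\approx 1$ and $\mu_0\equiv 0$ on the support of the integrand.

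For $l=1$, the symbol bound $\|\mu_1^{k,k_1,k_2}\|_{S^\infty}\lesssim 1$ from \eqref{njk14} and a standard H\"older trilinear estimate with $G$ in $L^\infty$, $W_1,W_2$ in $L^2$ gives
\begin{equation*}
|\mathcal{A}^1_{ba,m}|\lesssim 2^m\|\chi_{ba}G\|_{L^\infty_{s,x}}\|P_{k_1}W_1\|_{L^\infty_sL^2_x}\|P_{k_2}W_2\|_{L^\infty_sL^2_x}.
\end{equation*}
Lemma \ref{LinEstLemNew} provides the pointwise decay $\|\chi_{ba}G\|_{L^\infty}\lesssim\varepsilon_1 2^{-5m/6+C\delta m}$ in the worst case. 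Summability in $k_1$ will come from the structure of $W_1\in\mathcal{W}'$: if $W_1$ carries at least one factor of $T_\Sigma$, then since $\Lambda(\xi)\approx|\xi|^{1/2}$ for $|\xi|\ll 1$ (see \eqref{ExpSigma}) we gain $\|P_{k_1}W_1\|_{L^2}\lesssim\varepsilon_1 2^{k_1/2}\langle t\rangle^{\delta^2}$; if $W_1$ has no $T_\Sigma$ factor, it is of the form $\Omega^a U_\nu$ or $\langle\nabla\rangle^2\Omega^a U_\nu$ with $a\leq N_1$, and I will invoke the $Z$-norm bound $\|\widehat{P_{k_1}\Omega^a\mathcal{V}}\|_{L^\infty}\lesssim\varepsilon_1 2^{C\delta m}$ followed by Plancherel in a ball of radius $2^{k_1}$ to derive $\|P_{k_1}W_1\|_{L^2}\lesssim\varepsilon_1 2^{k_1}2^{C\delta m}$ (interpolating with the uniform $L^2$ bound for $a$ outside the $Z$-norm range $a\leq N_1/2+20$).

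The main obstacle is that $2^m\cdot 2^{-5m/6}=2^{m/6}$, so $k_1$-summability alone is insufficient to achieve $\varepsilon_1^3$ uniformly in $m$. To recover the missing $2^{-m/6}$ I will split $\chi_{ba}$ into $\gamma_1$- and $\gamma_0$-parts. The $\gamma_1$-part enjoys the non-degenerate linear decay $\|\chi_{ba,\gamma_1}G\|_{L^\infty}\lesssim\varepsilon_1 2^{-m+C\delta m}$ directly, giving an $O(2^{C\delta m})$ bound per $k_1$. The $\gamma_0$-part will be further split via the annuli $A^{(j)}_{n,\gamma_0}$ of \eqref{aop}--\eqref{aop2}: the innermost annulus near $\gamma_0$ enjoys the improved $\varepsilon_1 2^{-m+C\delta m}$ $L^\infty$ bound from \eqref{Gdecay}, while outer annuli (near the edge of $\chi_{ba}$) lose in $L^\infty$ decay but gain from their small $L^2$-measure via Bernstein, so that each contributes at worst $\varepsilon_1^3 2^{C\delta^2 m}$ after summing in $k_1$. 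Collecting these estimates yields the bound \eqref{bnm40}.
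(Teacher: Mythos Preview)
Your reduction to $l=1$ is correct and matches the paper, and your observation that for $l=0$ the paradifferential cutoff $\chi$ in $\mathfrak{d}$ forces the term to vanish (since $|\xi-\eta|/|\xi+\eta|\approx 1$ in this regime) is the right reason.

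However, for $l=1$ there is a genuine gap: your direct $L^\infty\times L^2\times L^2$ estimate cannot close. Even taking the best possible linear decay for $G$ near $\gamma_1$ (or away from $\gamma_0$), namely $\|A_{\leq 2\D,\gamma_0}e^{-is\Lambda}f\|_{L^\infty}\lesssim\varepsilon_1 2^{-m+52\delta m}$ from \eqref{Gdecay}, you get $2^m\cdot\varepsilon_1 2^{-m+52\delta m}\cdot\varepsilon_1^2 2^{2\delta^2m}\approx\varepsilon_1^3 2^{52\delta m}$, which is far larger than the $\varepsilon_1^3$ required. Near $\gamma_0$ you lose $2^{m/6}$. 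Your proposed annular decomposition does not help: the pointwise decay $|t|^{-5/6}$ at annulus $||\xi|-\gamma_0|\approx 2^{-m/3}$ is genuinely sharp (see \eqref{LinftyBd2.5}), and there is no thin-annulus gain available, since you have placed $G$ in $L^\infty$ and cannot put the high-order $W_i\in\mathcal{W}'$ in $L^\infty$. Also, your $k_1$-summability argument for $W_1=\Omega^a U_\pm$ with $N_1/2+20<a\leq N_1$ via interpolation does not produce any usable $2^{ck_1}$ gain.

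The missing idea, and the core of the paper's proof, is a \emph{lower bound on the modulation}: when $\underline{k}=\min(k_1,k_2)\leq -40$ and $|\xi-\eta|$ is near $\gamma_0$ or $\gamma_1$, one checks (as in \eqref{phaselb}) that $|\Phi_{\sigma\mu\nu}(\xi,\eta)|\approx 2^{\underline{k}/2}$. This lets you integrate by parts in $s$ (normal forms) exactly as in \eqref{IBPtEE}--\eqref{bnm9}, gaining a factor $2^{-\underline{k}/2}\leq 2^{m/2+O(\delta m)}$ at the cost of either $q_m'$ (killing the $2^m$ from the time integral) or a factor $(\partial_s+i\Lambda)$ on one of $G,W_1,W_2$ (gaining $2^{-5m/6}$). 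The resulting four terms are then estimated via Lemma~\ref{EstimAKP} and the bounds \eqref{Gdecay}, \eqref{bnm12}, \eqref{Gbound0}; each comes out $\lesssim\varepsilon_1^3 2^{-cm}$. Without this normal-form step the estimate simply does not close.
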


\begin{proof} Let $\underline{k} := \min \{ k_1,k_2 \}$. Notice that we may assume that $\underline{k}\leq -40$, $\max(k_1,k_2)\in[-10,10]$, and $l=1$. We can easily estimate
\begin{align*}
|\mathcal{A}^1_{ba,m}[G,P_{k_1}W_1,P_{k_2}W_2]|\lesssim \sup_{s\in[2^{m-1},2^{m+1}]}2^m2^{\underline{k}} {\| G(s)\|}_{L^2} {\| P_{k_1} W_1 (s)\|}_{L^2} {\| P_{k_2}W_2 (s)\|}_{L^2}.
\end{align*}
In view of \eqref{njk30} and \eqref{njk33}, this suffices to estimate the sum corresponding to $ \underline{k}\leq -m-3\delta m$. Therefore, it suffices to show that if $ -(1+3\delta)m \leq \bar{k} \leq -40$ then
\begin{align}
\label{EEsmallfreqbound}
|\mathcal{A}^1_{ba,m}[G,P_{k_1}W_1,P_{k_2}W_2]|\lesssim \varep_1^32^{-\delta m}.
\end{align}

As in the proof of Lemma \ref{EELemmaMain}, assume that $W_2\in\mathcal{W}'_{-\sigma}$, $W_1\in\mathcal{W}'_\nu$, $G\in\mathcal{G}'_\mu$, 
$\sigma,\nu,\mu\in\{+,-\}$, and define the associated phase $\Phi=\Phi_{\sigma\mu\nu}$ as in \eqref{bnm3}. The important observation is that
\begin{align}
\label{phaselb}
 |\Phi(\xi,\eta)| \approx 2^{\underline{k}/2}
\end{align}
in the support of the integral. We define $\mathcal{A}^{1,p}_{ba}$ and $\mathcal{A}^{1,p}_{ba,m}$ as in \eqref{bnm4}--\eqref{bnm5}, by introducing the the cutoff
function $\varphi_p(\Phi(\xi,\eta)$. In view of \eqref{phaselb} we may assume that $|p-\underline{k}/2|\lesssim 1$. Then we integrate by parts as 
in \eqref{IBPtEE} and similarly obtain
\begin{equation}\label{bnm50}
\begin{split}
|\mathcal{A}^{1,p}_{ba,m}[G,P_{k_1}W_1,P_{k_2}W_2]|&\lesssim 2^{-p}\Big|\int_{\R} q'_m(s) \widetilde{\mathcal{A}}^{1,p}_{ba} [G(s),W_{k_1}(s),W_{k_2}(s)]\,ds\Big|\\
&+2^{-p}|\mathcal{J}_{ba,0}^{1,p}(k_1,k_2)|+2^{-p}|\mathcal{J}_{ba,1}^{1,p}(k_1,k_2)|+2^{-p}|\mathcal{J}_{ba,2}^{1,p}(k_1,k_2)|,
\end{split}
\end{equation}
see \eqref{DefTerms} and \eqref{bnm9} for definitions. 

We apply Lemma \ref{EstimAKP} (see \eqref{SimpleBdAKP}) to control the terms in the right-hand side of \eqref{bnm50}. Using \eqref{njk33} and \eqref{Gdecay} (recall that 
$2^{-p}\leq 2^{-\underline{k}/2+\delta m}\leq 2^{m/2+3\delta m}$), the first term is dominated by
\begin{equation*}
 C\varep_1^32^{-p}2^{\delta m}2^{-5m/6+\delta m}\lesssim \varep_1^32^{-m/4}.
\end{equation*}
Similarly,
\begin{equation*}
 2^{-p}|\mathcal{J}_{ba,1}^{1,p}(k_1,k_2)|+2^{-p}|\mathcal{J}_{ba,2}^{1,p}(k_1,k_2)|\lesssim \varep_1^32^m2^{-p}2^{-5m/6+\delta m}2^{-5m/6+2\delta m}
\lesssim \varep_1^32^{-m/10}.
\end{equation*}
For $|\mathcal{J}_{ba,0}^{1,p}(k_1,k_2)|$ we estimate first, using also \eqref{bnm8.2} and \eqref{Gbound0}
\begin{equation*}
 2^{-p}|\mathcal{J}_{ba,0}^{1,p}(k_1,k_2)|\lesssim \varep_1^32^m2^{-p}(2^{-p}2^{-5m/3+\delta m})2^{\delta m}\lesssim \varep_1^32^{-2p}2^{-2m/3+2\delta m}.
\end{equation*}
We can also estimate directly in the Fourier space 
(placing the factor at low frequency in $L^1$ and the other two factors in $L^2$),
\begin{equation*}
 2^{-p}|\mathcal{J}_{ba,0}^{1,p}(k_1,k_2)|\lesssim \varep_1^32^m2^{-p}2^{\underline{k}}2^{-5m/6+3\delta m}\lesssim 
\varep_1^32^{p}2^{m/6+3\delta m}.
\end{equation*}
These last two bounds show that $2^{-p}|\mathcal{J}_{ba,0}^{1,p}(k_1,k_2)|\lesssim \varep_1^32^{-m/10}$. The desired conclusion \eqref{EEsmallfreqbound} follows using
\eqref{bnm50}.
\end{proof}

\subsubsection{The ``good'' frequencies}\label{secgoodfreq}
We estimate now the contribution of the terms in \eqref{bnm1}, corresponding to the cutoff $\chi_{go}$.
One should keep in mind that these terms are similar, but easier than the ones we have already estimated. 
We often use the sharp decay in \eqref{Gdecaygood} to bound the contribution of small modulations. 

We may assume that $\overline{W_2}\in\mathcal{W}'_\sigma$, $W_1\in\mathcal{W}'_\nu$, and $G\in\mathcal{G}'_+$. For \eqref{bnm2} it suffices to prove that
\begin{equation}\label{bnm60}
 \sum_{k,k_1,k_2\in\mathbb{Z}}|\mathcal{A}^l_{go,m}[P_kG,P_{k_1}W_1,P_{k_2}W_2]|\lesssim\varep_1^32^{2\delta^2m}.
\end{equation}
Recalling the assumptions \eqref{njk14} on the symbols $\mu_l$, we have the simple bound 
\begin{equation*}
\begin{split}
|\mathcal{A}^l_{go,m}&[P_kG,P_{k_1}W_1,P_{k_2}W_2]|\lesssim 2^m2^{\min(k,k_1,k_2)}2^{2k^+}\sup_{s\in I_m}\|P_kG(s)\|_{L^2}\|P_{k_1}W_1(s)\|_{L^2}\|P_{k_2}W_2(s)\|_{L^2}. 
\end{split}
\end{equation*}
Using now \eqref{njk30} and \eqref{njk33}, it follows that the sum over $k\geq 2\delta m$ or $k\leq -m-\delta m$ in \eqref{bnm60} is dominated as claimed. 
Using also the $L^\infty$ bounds \eqref{Gbound1} and Lemma \ref{touse}, we have
\begin{equation*}
\begin{split}
|\mathcal{A}^l_{go,m}[P_kG,P_{k_1}W_1,P_{k_2}W_2]|&\lesssim 2^m2^{2k^+}\sup_{s\in I_m}\|P_kG(s)\|_{L^\infty}\|P_{k_1}W_1(s)\|_{L^2}\|P_{k_2}W_2(s)\|_{L^2}\\
&\lesssim 2^m2^{2k^+}\sup_{s\in I_m}\varep_12^{k-m+50\delta m}\|P_{k_1}W_1(s)\|_{L^2}\|P_{k_2}W_2(s)\|_{L^2}
\end{split} 
\end{equation*}
if $|k|\geq 10$. This suffices to control the part of the sum over $k\leq -52\delta m$. Moreover 
\begin{equation*}
\sum_{\min(k_1,k_2)\leq -\D-|k|}|\mathcal{A}^l_{go,m}[P_kG,P_{k_1}W_1,P_{k_2}W_2]|\lesssim \varep_1^32^{-\delta m}
\end{equation*}
if $k\in[-52\delta m,2\delta m]$. This follows as in the proof of Lemma \ref{BNV1}, once we notice that $\Phi(\xi,\eta)\approx 2^{\min(k_1,k_2)/2}$ in 
the support of the integral, so we can integrate by parts in $s$. After these reductions, for \eqref{bnm60} it suffices to 
prove that, for any $k\in[-52\delta m,2\delta m]$,
\begin{equation}\label{bnm61}
 \sum_{k_1,k_2\in[-\D-|k|,\infty)}|\mathcal{A}^l_{go,m}[P_kG,P_{k_1}W_1,P_{k_2}W_2]|\lesssim\varep_1^32^{2\delta^2m}2^{-\delta|k|}.
\end{equation}

To prove \eqref{bnm61} we further decompose in modulation. Let $\overline{k}:=\max(k,k_1,k_2)$ and 
$\underline{p}:=\lfloor \overline{k}^+/2-110\delta m\rfloor$. We define, as in \eqref{bnm4}--\eqref{bnm5},
\begin{equation}\label{bnm72}
\mathcal{A}^{l,p}_{go}[F,H_1,H_2]:=\iint_{\mathbb{R}^2\times\mathbb{R}^2}\mu_l(\xi,\eta)
\varphi_p^{[\underline{p},\infty)}(\Phi(\xi,\eta))\chi_{go}(\xi-\eta)\widehat{F}(\xi-\eta)\widehat{H_1}(\eta)\widehat{H_2}(-\xi)\,d\xi d\eta,
\end{equation}
and
\begin{equation}\label{bnm73}
\mathcal{A}^{l,p}_{go,m}[P_kG,P_{k_1}W_1,P_{k_2}W_2]:=\int_{\mathbb{R}}q_m(s)\mathcal{A}^{l,p}_{go}[P_kG(s),P_{k_1}W_1(s),P_{k_2}W_2(s)]\,ds.
\end{equation}

For $p\geq \underline{p}+1$ we integrate by parts in $s$. As in \eqref{DefTerms} and \eqref{bnm9} let
\begin{equation}\label{bnm80}
\widetilde{\mathcal{A}}^{l,p}_{go}[F,H_1,H_2] :=  \iint_{\R^2\times \R^2} \mu_l(\xi,\eta) \widetilde{\varphi}_p(\Phi(\xi,\eta)) \chi_{go}(\xi-\eta)
  \widehat{F}(\xi-\eta) \widehat{H_1}(\eta) \widehat{H_2}(-\xi)\,d\xi d\eta,
\end{equation}
where $\widetilde{\varphi}_p(x):=2^px^{-1}\varphi_p(x)$. Let $W_{k_1}=P_{k_1}W_1$, $W_{k_2}=P_{k_2}W_2$, and
\begin{equation*}
\begin{split}
&\mathcal{J}_{go,0}^{l,p}(k_1,k_2):=\int_{\R} q_m(s) \widetilde{\mathcal{A}}^{l,p}_{go} [P_k(\partial_s+i\Lambda_\mu)G(s),W_{k_1}(s),W_{k_2}(s)]\,ds, \\
&\mathcal{J}_{go,1}^{l,p}(k_1,k_2):=\int_{\R} q_m(s) \widetilde{\mathcal{A}}^{l,p}_{go} [P_kG(s),(\partial_s+i\Lambda_\nu)W_{k_1}(s),W_{k_2}(s)]\,ds, \\
&\mathcal{J}_{go,2}^{l,p}(k_1,k_2):=\int_{\R} q_m(s) \widetilde{\mathcal{A}}^{l,p}_{go} [P_kG(s),W_{k_1}(s),(\partial_s+i\Lambda_{-\sigma})W_{k_2}(s)]\,ds.
\end{split}
\end{equation*}
As in \eqref{IBPtEE}, we have 
\begin{equation}\label{bnm85}
\begin{split}
\big|\mathcal{A}^{l,p}_{go,m}&[P_kG,P_{k_1}W_1,P_{k_2}W_2]\big|\lesssim 2^{-p}\Big|\int_{\R} q'_m(s) \widetilde{\mathcal{A}}^{l,p}_{go} [P_kG(s),W_{k_1}(s),W_{k_2}(s)]\,ds\Big|\\
&+2^{-p}\big|\mathcal{J}_{go,0}^{l,p}(k_1,k_2)+\mathcal{J}_{go,1}^{l,p}(k_1,k_2)+\mathcal{J}_{go,2}^{l,p}(k_1,k_2)\big|.
\end{split}
\end{equation}

Using Lemma \ref{EstimAKP}, \eqref{njk33}, and \eqref{Gdecay}, it is easy to see that
\begin{equation}\label{bnm86}
 \sum_{k_1,k_2\in[-\D-|k|,\infty)}\sum_{p\geq\underline{p}+1}2^{-p}\Big|\int_{\R} q'_m(s) \widetilde{\mathcal{A}}^{l,p}_{go} [P_kG(s),W_{k_1}(s),W_{k_2}(s)]\,ds\Big|\lesssim\varep_1^32^{-\delta m}.
\end{equation}
Using also \eqref{bnm8.2} and \eqref{Gbound0}, as in the first part of the proof of Lemma \ref{EELemmaSemi}, we have
\begin{equation}\label{bnm87}
 \sum_{k_1,k_2\in[-\D-|k|,\infty)}\sum_{p\geq\underline{p}+1}2^{-p}\big|\mathcal{J}_{go,0}^{l,p}(k_1,k_2)\big|\lesssim\varep_1^32^{-\delta m}.
\end{equation}
Using Lemma \ref{EstimAKP}, \eqref{Gdecay}, and \eqref{bnm12}, it follows that
\begin{equation}\label{bnm87.1}
 \sum_{k_1,k_2\in[-\D-|k|,6m/5]}\sum_{p\geq\underline{p}+1}2^{-p}\big[\big|\mathcal{J}_{go,1}^{l,p}(k_1,k_2)\big|+\big|\mathcal{J}_{go,2}^{l,p}(k_1,k_2)\big|\big]\lesssim\varep_1^32^{-\delta m}.
\end{equation}
Finally, a symmetrization argument as in the proof of Lemma \ref{EElargemod} shows that
\begin{equation}\label{bnm87.2}
 \sum_{k_1,k_2\in[6m/5-10,\infty)}\sum_{p\geq\underline{p}+1}2^{-p}\big|\mathcal{J}_{go,1}^{l,p}(k_1,k_2)+\mathcal{J}_{go,2}^{l,p}(k_1,k_2)\big|\lesssim\varep_1^32^{-\delta m}.
\end{equation}

In view of \eqref{bnm85}--\eqref{bnm87.2}, to complete the proof of \eqref{bnm61} it remains to bound the contribution of small modulations. In the case of ``bad'' frequencies, this is done using the main $L^2$ lemma. Here we need a different argument.

\begin{lemma}\label{BNV2}
Assume that $k\in[-52\delta m,2\delta m]$ and 
$\underline{p}=\lfloor \overline{k}^+/2-110\delta m\rfloor$. Then
\begin{equation}\label{bnm71}
\sum_{\min(k_1,k_2)\geq -\D-|k|}|\mathcal{A}^{l,\underline{p}}_{go,m}[P_kG,P_{k_1}W_1,P_{k_2}W_2]|\lesssim\varep_1^32^{2\delta^2m}2^{-\delta|k|}.
\end{equation}
\end{lemma}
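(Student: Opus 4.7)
The plan is to follow the pattern established for the bad-frequency case (Lemma \ref{EELemmaMain}), but exploit the sharper pointwise decay \eqref{Gdecaygood} available when the frequency $\xi-\eta$ is away from the degenerate circles $|\xi-\eta|=\gamma_0,\gamma_1$. First, I will apply Lemma \ref{EstimAKP} to obtain
\[
|\mathcal{A}^{l,\underline{p}}_{go,m}[P_kG, P_{k_1}W_1, P_{k_2}W_2]| \lesssim 2^m \cdot C_l \cdot \sup_{s \in I_m} N(\chi_{go} P_k G(s)) \cdot \prod_{i=1,2}\sup_s \|P_{k_i}W_i(s)\|_{L^2},
\]
where $C_0 \lesssim 2^{3k/2}\min(1,2^{-\overline{k}^++\max(2\underline{p},3k^+)-2k})$ and $C_1 \lesssim 2^{3k^+-\overline{k}^+}$ come from the symbol bounds in \eqref{njk14}, and each $W_i$ factor contributes $\lesssim \e_1 2^{\delta^2 m}$ by \eqref{njk33}.

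To control $N(\chi_{go}P_kG(s))$, I decompose $P_kG = \sum_j f_{j,k}$ and observe that the cutoff $\chi_{go}$ is supported inside the region where $A_{\leq 2\D,\gamma_0}A_{\leq 2\D,\gamma_1}$ equals $1$, so the enhanced bound \eqref{Gdecaygood} applies. For $j \leq J^* := (1-\delta^2)m + |k|/2$, this gives the sharp pointwise bound $\|e^{i\rho\Lambda}\chi_{go}f_{j,k}\|_{L^\infty} \lesssim \e_1 2^{-m}\min(2^k,2^{-4k})2^{-j/4}$ uniformly in $|\rho| \leq 2^{-\underline{p}+\delta m}$; the $2^{-j/4}$ factor renders the sum over this range convergent, yielding $\sum_{j \leq J^*} N(\chi_{go}f_{j,k}) \lesssim \e_1 2^{-m}\min(2^k,2^{-4k})$. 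For the tail $j > J^*$ (where \eqref{Gdecaygood} no longer applies), I switch to an $L^2$-based estimate analogous to Lemma \ref{L2Prop0}: the weighted Fourier bounds \eqref{Gbound2} on $\widehat{f_{j,k,n}}$ combined with Schur's lemma and the sublevel-set volume bounds of Proposition \ref{volume} give a contribution of strictly lower order; the argument is simpler than that of Lemma \ref{L2Prop0} because in the good region $\Phi$ has stronger nondegeneracy (we are away from both $\gamma_0$ and $\gamma_1$).

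Plugging these estimates back, the total becomes
$\lesssim 2^m \cdot C_l \cdot \e_1 2^{-m}\min(2^k,2^{-4k}) \cdot \e_1^2 2^{2\delta^2 m} \lesssim \e_1^3 C_l \min(2^k,2^{-4k}) 2^{2\delta^2 m}$. For $k \leq 0$, $C_l \min(2^k,2^{-4k}) \lesssim 2^{3k/2} \cdot 2^k = 2^{5k/2}$ (from $C_0$), giving decay $\lesssim 2^{-5|k|/2}$; for $k \geq 0$, $C_0 \cdot 2^{-4k} = 2^{-5k/2}$ and $C_1 \cdot 2^{-4k} = 2^{-k-\overline{k}^+}$, both providing the needed $2^{-\delta|k|}$. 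Summing over $(k_1,k_2)$ (using high-frequency Sobolev decay of $W_i \in \mathcal{W}'$ to control $\overline{k}^+$ large) yields \eqref{bnm71}. The main technical obstacle is the large-$j$ tail: the naive bound using \eqref{Gbound1} would introduce a $2^{O(\delta m)}$ loss from the $50\delta$-weights in the $Z_1$ norm, so the proof must use the $L^2$ machinery to absorb the tail with only a $2^{2\delta^2 m}$ loss.
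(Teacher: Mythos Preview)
Your approach is essentially the paper's: decompose $G$ as $\sum_j g_{j,k}$ with $g_{j,k}:=A_{\le 2\D,\gamma_0}A_{\le 2\D,\gamma_1}f_{j,k}$, apply Lemma~\ref{EstimAKP} together with the sharp decay \eqref{Gdecaygood} for small $j$, and treat the tail by a Schur test using the volume bounds of Proposition~\ref{volume}. The paper slices the tail more finely --- it returns to Lemma~\ref{EstimAKP} with \eqref{Gbound1} for $j\ge m+60\delta m$ (where $2^{-j+50\delta j}\le 2^{-m-10\delta m}$ already suffices) and reserves Schur for the narrow window $j\in[m-\delta m,\,m+60\delta m]$ --- but a single Schur argument for all $j>J^\ast$ works as well.

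Your explanation of \emph{why} the tail is tractable is misleading, however. The argument is nothing like Lemma~\ref{L2Prop0}, and nondegeneracy of $\Phi$ plays no role. What makes a plain Schur test sufficient is the choice $\underline{p}\approx \overline{k}^{+}/2-110\delta m$: by Cauchy--Schwarz and Proposition~\ref{volume}~(i) one gets
\[
\sup_\xi\int |\mu_l(\xi,\eta)|\,\varphi_{\le\underline{p}}(\Phi(\xi,\eta))\,|\widehat{g_{j,k}}(\xi-\eta)|\,d\eta
\lesssim \varep_1\,2^{(\underline{p}-\overline{k}^{+}/2)/2+\delta m}\,2^{-j+50\delta j},
\]
and for $j$ near $m$ this is $\lesssim \varep_1\,2^{-m-4\delta m}$; the factor $2^m$ from time integration then leaves $2^{-4\delta m}$. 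In the bad-frequency case this route fails because the dispersive decay near $\gamma_0$ is only $2^{-5m/6}$ for every $j$, so the small-$j$ range cannot be disposed of by $L^\infty$ bounds and one is forced into the $TT^\ast$ machinery. Finally, the reference to \eqref{Gbound2} and the $n$-decomposition is superfluous here: $\chi_{go}$ keeps $\xi-\eta$ away from $\gamma_1$, so only $n=0$ occurs and the plain $Z_1$ bound $\|g_{j,k}\|_{L^2}\lesssim 2^{-j(1-50\delta)}$ is all that is needed.
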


\begin{proof} We need to further decompose the function $G$. Recall that $G\in\mathcal{G}'_+$ and let, for $(k,j)\in\mathcal{J}$
\begin{equation}\label{bnm62}
 f(s)=e^{is\Lambda}G(s),\quad f_{j,k}=P_{[k-2,k+2]}Q_{jk}f,\quad g_{j,k}:=A_{\leq 2\D,\gamma_0}A_{\leq 2\D,\gamma_1}f_{j,k}.
\end{equation}
Compare with Lemma \ref{LinEstLem}. The functions $g_{j,k}$ are supported away from the bad frequencies $\gamma_0$ and $\gamma_1$ and 
$\sum_jg_{j,k}(s)=e^{is\Lambda}G(s)$ away from these frequencies. This induces a decomposition
\begin{equation*}
 \mathcal{A}^{l,\underline{p}}_{go,m}[P_kG,P_{k_1}W_1,P_{k_2}W_2]=\sum_{j\geq \max(-k,0)}\mathcal{A}^{l,\underline{p}}_{go,m}[e^{-is\Lambda}g_{j,k},P_{k_1}W_1,P_{k_2}W_2].
\end{equation*}

Notice that for $j\leq m-\delta m$ we have the stronger estimate \eqref{Gdecaygood} on $\|e^{-is\Lambda}g_{j,k}\|_{L^\infty}$. Therefore, using Lemma \ref{EstimAKP}, if $j\leq m-\delta m$ then
\begin{equation*}
\begin{split}
|\mathcal{A}^{l,\underline{p}}_{go,m}[e^{-is\Lambda}g_{j,k},P_{k_1}W_1,P_{k_2}W_2]|\lesssim \varep_12^k2^{-2k^+}2^{-j/4}\sup_{s\in I_m}\|P_{k_1}W_1(s)\|_{L^2}\|P_{k_2}W_2(s)\|_{L^2}.
\end{split} 
\end{equation*}
Therefore\footnote{This is the only place in the proof of the bound \eqref{bnm2} where one needs the $2^{2\delta^2m}$ factor in the right-hand side.}
\begin{equation*}
 \sum_{j\leq m-\delta m}\sum_{\min(k_1,k_2)\geq -\D-|k|}|\mathcal{A}^{l,\underline{p}}_{go,m}[e^{-is\Lambda}g_{j,k},P_{k_1}W_1,P_{k_2}W_2]|\lesssim\varep_1^32^{2\delta^2m}2^{-\delta|k|}.
\end{equation*}
Similarly, if $j\geq m+60\delta m$ then we also have a stronger bound on $\|e^{-is\Lambda}g_{j,k}\|_{L^\infty}$ in the first line of \eqref{Gbound1}, and the corresponding contributions are controlled in the same way.

It remains to show, for any $j\in[m-\delta m,m+60\delta m]$,
\begin{equation}\label{bnm88}
\sum_{\min(k_1,k_2)\geq -\D-|k|}|\mathcal{A}^{l,\underline{p}}_{go,m}[e^{-is\Lambda}g_{j,k},P_{k_1}W_1,P_{k_2}W_2]|\lesssim\varep_1^32^{-\delta m}.
\end{equation} 
For this we use Schur's test. Since $\min(k,k_1,k_2)\geq -53\delta m$ it follows from Proposition \ref{volume} (i)  and the 
bound $\|\widehat{g_{j,k}}\|_{L^2}\lesssim \varep_12^{-8k^+}2^{-j+50\delta j}$ that
\begin{equation*}
\int_{\mathbb{R}^2}|\mu_l(\xi,\eta)|\varphi_{\leq\underline{p}}(\Phi(\xi,\eta))|\widehat{g_{j,k}}(\xi-\eta)|
\varphi_{[k_1-2,k_1+2]}(\eta)\,d\eta\lesssim \varep_12^{(\underline{p}-\overline{k}^+/2)/2+\delta m}2^{-j+50\delta j}
\end{equation*}
for any $\xi\in\mathbb{R}^2$ fixed with $|\xi|\in [2^{k_2-4},2^{k_2+4}]$. 
The integral in $\xi$ (for $\eta$ fixed) can be estimated in the same way. Given the choice of $\underline{p}$, the desired bound \eqref{bnm88} follows using Schur's lemma.
\end{proof}

\section{Energy estimates, III: proof of the main $L^2$ lemma}\label{L2proof}

In this section we prove Lemma \ref{L2Prop0}. We divide the proof into several cases. Let
\begin{equation*}
\chi_{\gamma_l}(x):=\varphi(2^{\D}(|x|-\gamma_l)),\qquad l\in\{0,1\}.
\end{equation*}
We start the most difficult case when $|\xi-\eta|$ is close to $\gamma_0$ and $2^k \gg 1$.
In this case $\hat{\Upsilon}$ can vanish up to order $1$ (so we can have $2^q \ll 1$ in the notation of the Lemma \ref{L2Lem1} below). 

\begin{lemma}\label{L2Lem1}
The conclusion of Lemma \ref{L2Prop0} holds if $k\geq 3\D_1/2$ and $\widehat{g}$ is supported in the set $\{||\xi|-\gamma_0|\leq 2^{-100}\}$.
\end{lemma}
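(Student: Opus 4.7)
The strategy is a $TT^\ast$ argument combined with a careful integration-by-parts along the level sets of $\Phi(\cdot,\eta)$, exploiting the restricted nondegeneracy condition \eqref{defY} quantified in Lemma \ref{Geomgamma0}. Let $T = L_{p,k}$. The operator $TT^\ast$ has kernel
\begin{equation*}
K(\eta,\eta') = \int_{\mathbb{R}^2} e^{is[\Phi(\xi,\eta)-\Phi(\xi,\eta')]}\,\chi(2^{-p}\Phi(\xi,\eta))\chi(2^{-p}\Phi(\xi,\eta'))\,A(\xi,\eta,\eta')\,d\xi,
\end{equation*}
where $A$ collects the amplitudes $a(\xi,\eta)\overline{a(\xi,\eta')}|\varphi_{\geq-100}(\xi)|^2\varphi_k(\eta)\varphi_k(\eta')$. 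By Schur's lemma (Lemma \ref{ShurLem}), it suffices to show
\begin{equation*}
\sup_{\eta}\int_{\mathbb{R}^2}|K(\eta,\eta')|\,d\eta' \lesssim 2^{60\delta m}\bigl(2^{3(p-k/2)}+2^{2(p-k/2-m/3)}\bigr).
\end{equation*}

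The first step is a change of coordinates in $\xi$ adapted to the level set $\{\Phi(\cdot,\eta)=0\}$. Since $|\xi-\eta|$ is bounded (support of $\widehat g$ near $\gamma_0$) while $|\eta|\sim 2^k\gg 1$, one has $|\nabla_\xi\Phi(\xi,\eta)|\sim 2^{k/2}$ and $|\nabla_\eta\Phi(\xi,\eta)|\sim 2^{k/2}$, while the mixed Hessian $\nabla_\xi\nabla_\eta\Phi$ is $O(1)$. Write $\xi = \xi_0+u_1 e_T+u_2 e_N$ with $e_T=\nabla_\xi^\perp\Phi/|\nabla_\xi^\perp\Phi|$ and $e_N=\nabla_\xi\Phi/|\nabla_\xi\Phi|$ at a reference point on $\{\Phi(\cdot,\eta)=0\}$. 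The two modulation cutoffs then restrict $u_2$ to an interval of length $\sim 2^{p-k/2}$.

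The decisive step is integration by parts in the tangential variable $u_1$. A direct computation using $e_T\cdot\nabla_\xi\Phi(\xi,\eta)=0$ and Taylor expanding $\nabla_\xi\Phi(\xi,\eta')$ around $\eta$ yields
\begin{equation*}
e_T\cdot\nabla_\xi[\Phi(\xi,\eta)-\Phi(\xi,\eta')] = -\frac{\Upsilon(\xi,\eta)}{|\nabla_\xi^\perp\Phi||\nabla_\eta^\perp\Phi|}\alpha - \frac{e_T\cdot(\nabla_\xi\nabla_\eta\Phi)\nabla_\eta\Phi}{|\nabla_\eta\Phi|}\beta + O(|\eta-\eta'|^2),
\end{equation*}
where $\eta'-\eta = \alpha\,\nabla_\eta^\perp\Phi/|\nabla_\eta^\perp\Phi|+\beta\,\nabla_\eta\Phi/|\nabla_\eta\Phi|$. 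The modulation constraint forces $|\beta|\lesssim 2^{p-k/2}$, so in the regime $|\alpha|\gg |\beta|$ the tangential derivative of the phase is essentially $s\,\Upsilon\,\alpha/(|\nabla_\xi^\perp\Phi||\nabla_\eta^\perp\Phi|)\sim s\,2^{q-k}\alpha\cdot 2^k = s\,2^q\alpha$, where $2^q := |\Upsilon(\xi,\eta)|/(|\nabla_\xi^\perp\Phi||\nabla_\eta^\perp\Phi|)$.

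Next, decompose the $\xi$-integration dyadically in $|\Upsilon(\xi,\eta)|$, writing $2^q$ for its normalized size; by Lemma \ref{Geomgamma0} the set where $|\Upsilon|\lesssim 2^{q}$ is contained in a union of $O(1)$ strips of width $\lesssim 2^q$ in $\xi$, since $\Upsilon$ vanishes to order at most one on $\{\Phi=0\}$ in the region $||\xi-\eta|-\gamma_0|\ll 1$. Thus each dyadic piece has $\xi$-volume $\lesssim 2^{p-k/2}\cdot\min(1,2^q)$. Combined with the integration-by-parts gain $(1+s\,2^q|\alpha|)^{-N}$ in the regime where $|\alpha|\gtrsim |\beta|\sim 2^{p-k/2}$, the $\eta'$-integral over the tangential direction splits into two regimes:
\begin{itemize}
\item If $s\,2^q\cdot 2^{p-k/2}\gtrsim 1$, integration by parts confines $\alpha$ to an interval of length $(s\,2^q)^{-1}$; together with the $\beta$ constraint of length $2^{p-k/2}$, the $\eta'$-volume contribution is $\lesssim 2^{p-k/2}(s\,2^q)^{-1}$.
\item If $s\,2^q\cdot 2^{p-k/2}\lesssim 1$, one only gets the trivial constraint $|\alpha|\lesssim 2^{p-k/2}$ (from $|\eta-\eta'|$ staying in the relevant support), giving the square $(2^{p-k/2})^2$.
\end{itemize}

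Summing the two regimes over the dyadic decomposition $2^q$ and integrating the $\xi$-volumes, the worst case occurs at the threshold $2^{q_\ast}$ where $s\,2^{q_\ast}\,2^{p-k/2}\sim 1$, i.e.\ $2^{q_\ast}\sim 2^{k/2-p-m}$. Because $\Upsilon$ may vanish to order one, the contribution from $2^q\lesssim 2^{q_\ast}$ produces the bound $(2^{p-k/2})^2\cdot 2^{q_\ast}\sim 2^{2(p-k/2)-m+k/2-p}=2^{p-k/2-m}$, which after taking the square root yields the second term $2^{p-k/2-m/3}$ (the $m/3$ rather than $m/2$ is what absorbs the size of the neighborhood where nondegeneracy fails). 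The regime $2^q\gtrsim 1$, where $\Upsilon$ is of maximal size, gives the optimal $TT^\ast$ bound $(2^{p-k/2})^3$, whose square root is $2^{(3/2)(p-k/2)}$. The small losses $2^{\varepsilon m}$ come from the smooth cutoffs and from the symbol bounds in \eqref{AssL2Prop1} together with a routine Fourier expansion to decouple $a(\xi,\eta)$ into a superposition of products, as in Remark \ref{L2Rem}(ii).

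The main obstacle is the bookkeeping near the curve $\{\Upsilon=0\}$ inside $\{\Phi=0\}$: precisely because $\Upsilon$ vanishes there, a naive tangential integration by parts fails, and one must both (a) use the geometric information from Lemma \ref{Geomgamma0} to control the measure of the degenerate strip, and (b) carefully match the dyadic $q$-decomposition with the $|\alpha|$ versus $|\beta|$ split to avoid losing derivatives. Once these pieces are assembled, Schur's inequality applied to the resulting kernel bound produces the claimed $L^2\to L^2$ estimate.
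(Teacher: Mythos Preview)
Your high-level strategy---a $TT^\ast$ argument with integration by parts along the tangential direction to the level sets of $\Phi$, using $\Upsilon$ to measure the nondegeneracy---matches the paper's. But the execution is missing three ingredients that are not mere bookkeeping, and without them the argument does not close.

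First, you never decompose $g$ in physical space. The amplitude contains the factor $\widehat g(\xi-\eta)$, and the hypothesis $\|g\|_{Z_1\cap H_\Omega^{N_1/3,0}}\lesssim 1$ gives no pointwise control on $D^\alpha\widehat g$. The paper writes $g=\sum_{j\ge 0}g_j$ with $g_j$ localized at scale $2^j$, so that $|D^\alpha\widehat{g_j}|\lesssim 2^{|\alpha|j}$, and uses the decay $\|\widehat{g_j}\|_{L^\infty}\lesssim 2^{-j(1/2-55\delta)}$ to dispose of large $j$ by Schur's test alone (this is the reduction \eqref{L2Lem1Reduced1}). Your ``routine Fourier expansion to decouple $a$'' does not substitute for this; when the tangential derivative hits $\widehat g(\xi-\eta)\overline{\widehat g(\xi-\eta')}$ during integration by parts, you have no bound.

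Second, you are missing the a priori diameter bound. The paper localizes not just dyadically in $\hat\Upsilon\sim 2^q$ but further in windows $\psi(\kappa^{-1}2^{-q}\hat\Upsilon-r)$ with $\kappa\ll 1$; then the rectangle estimates of Lemma~\ref{Geomgamma0}\,(iii) force $|x-\xi|\lesssim\kappa 2^q$ on each piece (this is \eqref{RestricDiam}). Without this, your first-order Taylor expansion of the tangential phase derivative is not valid: the $O(|\eta-\eta'|^2)$ remainder can dominate the linear term $2^q\alpha$, and your modulation constraint $|\beta|\lesssim 2^{p-k/2}$ has an uncontrolled correction $2^{-k/2}|\eta-\eta'|^2$ (compare \eqref{Restricw1}).

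Third, your numerology is wrong. The square root of $(2^{p-k/2})^2\cdot 2^{q_\ast}\sim 2^{p-k/2-m}$ is $2^{(p-k/2-m)/2}$, not $2^{p-k/2-m/3}$. The genuine source of the $m/3$ is the hypothesis $\|D^\alpha A\|_{L^\infty}\lesssim 2^{|\alpha|m/3}$: each integration by parts that lands on $A$ costs $2^{m/3}$, so the phase gain $2^{m+q}|\omega|$ must exceed $2^{m/3}$, giving the threshold $|\omega|\gtrsim 2^{-2m/3-q}$. This is the third term in the paper's $L$ in \eqref{C3Ass}; the other two come from the modulation cutoff ($2^{p-k/2-q}$) and from $\widehat{g_j}$ ($2^{j-q-m}$). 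Plugging $L$ into the volume estimate $\int_{|\omega|\le L}|K|\,d\omega\lesssim 2^{-2j/3}\,2^{2(p-k/2)}\,2^q L$ then gives exactly the two terms $2^{3(p-k/2)}$ and $2^{2(p-k/2)-2m/3}$ needed for Schur.
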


\begin{proof} We will often use the results in Lemma \ref{Geomgamma0} below. We may assume that $\sigma=\nu=+$ in the definition of $\Phi$, since otherwise the operator is trivial. We may also assume that $\mu=+$, in view of the formula \eqref{ph11.1}. 

In view of Lemma \ref{Geomgamma0} (ii) we may assume that either $(\xi-\eta)\cdot\xi^\perp\approx 2^k$ or $-(\xi-\eta)\cdot\xi^\perp\approx 2^k$ in the support of the integral, due to the factor $\chi(2^{-p}\Phi(\xi,\eta))$. Thus we may define
\begin{equation}\label{njk40}
a^\pm(\xi,\eta)=a(\xi,\eta)\mathbf{1}_{\pm}((\xi-\eta)\cdot\xi^{\perp}),
\end{equation}
and decompose the operator $L_{p,k}=L_{p,k}^++L_{p,k}^-$ accordingly. The two operators can be treated in similar ways, so we will concentrate on the operator $L_{p,k}^+$.

To apply the main $TT^\ast$ argument we need to first decompose the operators $L_{p,k}$. For $\kappa:=2^{-\D^{3/2}}$ (a small parameter) and $\psi\in C^\infty_0(-2,2)$ satisfying $\sum_{v\in\mathbb{Z}}\psi(.+v)\equiv 1$, we write
\begin{equation}\label{njk50}
\begin{split}
L_{p,k}^+&=\sum_{q,r\in\mathbb{Z}}\sum_{j\geq 0} L_{p,k,q}^{r,j},\\
L_{p,k,q}^{r,j}f(x)&:=\int_{\mathbb{R}^2}e^{is\Phi(x,y)}\chi(2^{-p}\Phi(x,y))\varphi_q(\hat{\Upsilon}(x,y))\psi(\kappa^{-1}2^{-q}\hat{\Upsilon}(x,y)-r)\varphi_k(y)a^+_j(x,y)f(y)dy,\\
 a^+_j(x,y)&:=A(x,y)\chi_{\gamma_0}(x-y)\mathbf{1}_+((x-y)\cdot x^{\perp})\widehat{g_j}(x-y),\qquad g_j:=A_{\geq 0,\gamma_0}[\varphi_j^{[0,\infty)}\cdot g].
\end{split}
\end{equation}
In other words, we insert the decompositions
\begin{equation*}
g=\sum_{j\geq 0}g_j,\qquad 1=\sum_{q,r\in\mathbb{Z}}\varphi_q(\hat{\Upsilon}(x,y))\psi(\kappa^{-1}2^{-q}\hat{\Upsilon}(x,y)-r)
\end{equation*} 
in the formula \eqref{LMainDef} defining the operators $L_{p,k}$. The parameters $j$ and $r$ play a somewhat minor role in the proof (one can focus on the main case $j=0$) 
but the parameter $q$ is important. Notice that $q\leq-\D/2$, in view of \eqref{ModUp}. 
The hypothesis $\Vert g\Vert_{Z_1\cap H^{N_1/3,0}_{\Omega}}\lesssim 1$ and Lemma \ref{LinEstLem} (i) show that
\begin{equation}\label{njk41}
\|\widehat{g_j}\|_{L^\infty}\lesssim 2^{-j(1/2-55\delta)},\qquad \|\sup_{\theta\in\mathbb{S}^1}|\widehat{g_j}(r\theta)|\|_{L^2(rdr)}\lesssim 2^{-j(1-55\delta)}.
\end{equation}

Note that, for fixed $x$ (respectively $y$) the support of integration is included in $\mathcal{S}^{1,-}_{p,q,r}(x)$ (respectively $\mathcal{S}^{2,-}_{p,q,r}(y)$), see \eqref{DefRect1}--\eqref{DefRect2}. We can use this to estimate the Schur norm of the kernel. It follows from \eqref{VolumeSpq2} and the first bound in \eqref{njk41} that
\begin{align}\label{SchurL2Lem}
\sup_{x}\int_{\mathbb{R}^2}\vert \chi(2^{-p}\Phi(x,y))\varphi_q(\hat{\Upsilon}(x,y))\varphi_k(y)a_j^+(x,y)\vert dy\lesssim \Vert a_j^+ \Vert_{L^\infty}\vert \mathcal{S}^{1,-}_{p,q,r}(x)\vert\lesssim 2^{q+p-k/2}2^{-j/3}.
\end{align}
A similar estimate holds for the $x$ integral (keeping $y$ fixed). Moreover, using \eqref{VolumeSpq} and the second bound in \eqref{njk41} to estimate the left-hand side of \eqref{SchurL2Lem} by $C2^{-j+55\delta j}2^{p-k/2}$. In view of Schur's lemma, we have
\begin{equation*}
\|L^{r,j}_{p,k,q}\|_{L^2\to L^2}\lesssim \min(2^{q+p-k/2}2^{-j/3},2^{-j+55\delta j}2^{p-k/2}).
\end{equation*}
These bounds suffice to control the contribution of the operators $L^{r,j}_{p,k,q}$, unless
\begin{equation}\label{L2Lem1Reduced1}
q \geq \D + \max \Big\{\frac{1}{2}(p-\frac{k}{2}),-\frac{m}{3} \Big\}
  \quad \text{ and } \quad 0 \leq j \leq \min \Big\{\frac{4m}{9},-\frac{2}{3}(p-\frac{k}{2})\Big\}.
\end{equation}

Therefore, in the rest of the proof we may assume that \eqref{L2Lem1Reduced1} holds, so $\kappa 2^q\gg 2^{p-\frac{k}{2}}$. We use the $TT^\ast$ argument and Schur's test. It suffices to show that
\begin{align}\label{SuffKernel}
\sup_x\int_{\mathbb{R}^2} & \vert K(x,\xi) \vert \,d\xi+\sup_{\xi}\int_{\mathbb{R}^2}\vert K(x,\xi) \vert \,dx
  \lesssim 2^{6\delta^2 m} \big(2^{3(p-\frac{k}{2})} + 2^{2(p-\frac{k}{2})} 2^{-\frac{2}{3}m} \big)
\end{align}
for $p,k,q,r,j$ fixed (satisfying \eqref{AssL2Prop} and \eqref{L2Lem1Reduced1}), where
\begin{equation}\label{SuffKernel2}
\begin{split}
K(x,\xi)&:=\int_{\mathbb{R}^2}e^{is\Theta(x,\xi,y)}\chi(2^{-p}\Phi(x,y))\chi(2^{-p}\Phi(\xi,y))\psi_{q,r}(x,\xi,y)a_j^+(x,y)\overline{a_j^+(\xi,y)}dy,\\
\Theta(x,\xi,y)&:=\Phi(x,y)-\Phi(\xi,y)=\Lambda(x)-\Lambda(\xi)-\Lambda(x-y)+\Lambda(\xi-y),\\
\psi_{q,r}(x,\xi,y) & := \varphi_q(\hat{\Upsilon}(x,y))\varphi_q(\hat{\Upsilon}(\xi,y))
  \psi(\kappa^{-1}2^{-q}\hat{\Upsilon}(x,y)-r)\psi(\kappa^{-1}2^{-q}\hat{\Upsilon}(\xi,y)-r)\varphi_k(y)^2.
	\end{split}
\end{equation}

Since $K(x,\xi)=\overline{K(\xi,x)}$, it suffices to prove the bound on the first term in the left-hand side of \eqref{SuffKernel}. The main idea of the proof is to show that $K$ is essentially supported in the set where $\omega:=x-\xi$ is small. Note first that, in view of \eqref{VolumeSpq},  we may assume that
\begin{equation}\label{RestricDiam}
\vert\omega\vert=\vert x-\xi\vert\lesssim\kappa 2^q\ll 1.
\end{equation}

{\bf{Step 1:}} We will show in {\bf{Step 2}} below that if
\begin{equation}\label{C3Ass}
\text{ if }\quad\vert\omega\vert\ge L:=2^{2\delta^2m}\big[2^{p-k/2}2^{-q}+2^{j-q-m}+2^{-2m/3-q}\big]\quad\text{ then }\quad\vert K(x,\xi)\vert\lesssim 2^{-4m}.
\end{equation}

Assuming this, we show now how prove the bound on the first term in \eqref{SuffKernel}. Notice that $L\ll 1$, in view of \eqref{AssL2Prop} and \eqref{L2Lem1Reduced1}. We decompose, for fixed $x$,
\begin{equation*}
\int_{\mathbb{R}^2}\vert K(x,\xi)\vert \,d\xi\lesssim \int_{\{\vert\omega\vert\le L\}}\vert K(x,x-\omega)\vert \,d\omega+\int_{\{\vert\omega\vert\ge L\}}\vert K(x,x-\omega)\vert \,d\omega.
\end{equation*}
Combining \eqref{RestricDiam} and \eqref{C3Ass}, we obtain a suitable bound for the second integral. We now turn to the first integral, which we bound using Fubini and the formula \eqref{SuffKernel2} by
\begin{equation}\label{njk42}
C\Vert a_j^+\Vert_{L^\infty}\int_{\mathbb{R}^2}\vert a_j^+(x,y)\vert\chi(2^{-p}\Phi(x,y))\varphi_{q}(\hat{\Upsilon}(x,y))\varphi_k(y)^2\Big(\int_{\{\vert\omega\vert\le L\}}|\chi(2^{-p}\Phi(x-\omega,y))|\,d\omega\Big)dy.
\end{equation}

We observe that, for fixed $x,y$ satisfying $|\vert x-y\vert-\gamma_0|\ll 1$, $\vert x\vert\approx 2^k\gg 1$, we have
\begin{equation}\label{njk43}
\int_{\{\vert\omega\vert\le L\}}|\chi(2^{-p}\Phi(x-\omega,y))|\,d\omega\lesssim 2^{p-k/2}L.
\end{equation}
Indeed, it follows from \eqref{Deftheta1} that if $z=(x-y-\omega)=(\rho\cos\theta,\rho\sin\theta)$, $|\omega|\leq L$, and $|\Phi(y+z,y)|\leq 2^{p}$, then $|\rho-|x-y||\lesssim L$ and $\theta$ belongs to a union of two intervals of length $\lesssim 2^{p-k/2}$. The desired bound \eqref{njk43} follows.

Using also \eqref{SchurL2Lem} and $\|a_j\|_{L^\infty}\lesssim 2^{-j/3}$, it follows that the expression in \eqref{njk42} is bounded by $C2^{2(p-k/2)} 2^{-2j/3}  2^q L$. The desired bound \eqref{SuffKernel} follows, using also the restrictions \eqref{L2Lem1Reduced1}.

{\bf{Step 2:}} We prove now \eqref{C3Ass}. We define orthonormal frames $(e_1,e_2)$ and $(V_1,V_2)$,
\begin{equation}\label{Frames}
\begin{split}
&e_1:=\frac{\nabla_x\Phi(x,y)}{\vert\nabla_x\Phi(x,y)\vert},\quad e_2=e_1^\perp,\qquad V_1:=\frac{\nabla_y\Phi(x,y)}{\vert\nabla_y\Phi(x,y)\vert},\quad V_2=V_1^\perp,\\
&\omega=x-\xi=\omega_1e_1+\omega_2e_2.
\end{split}
\end{equation}
Note that $\omega_1$, $\omega_2$ are functions of $(x,y,\xi)$. We first make a useful observation: if $\vert \Theta(x,\xi,y)\vert\lesssim 2^{p}$, and $|\omega|\ll 1$ then
\begin{equation}\label{Restricw1}
\vert\omega_1\vert\lesssim 2^{-k/2}\left(2^{p}+|\omega|^2\right).
\end{equation}
This follows from a simple Taylor expansion, since
\begin{equation*}
|\Phi(x,y)-\Phi(\xi,y)-\omega\cdot\nabla_x\Phi(x,y)|\lesssim |\omega|^2.
\end{equation*}

We turn now to the proof of \eqref{C3Ass}. Assuming that $x,\xi$ are fixed with $|x-\xi|\geq L$ and using \eqref{Restricw1}, we see that, on the support of integration, $|\omega_2|\approx \vert\omega\vert$ and
\begin{equation}\label{CompPhaseDer}
\begin{split}
V_2\cdot\nabla_y\Theta(x,\xi,y)&=V_2\cdot\nabla_y\left\{-\Lambda(x-y)+\Lambda(\xi-y)\right\}\\
&=V_2\cdot\nabla^2_{x,y}\Phi(x,y)\cdot (x-\xi)+O(\vert \omega\vert^2)\\
&=\omega_2\hat{\Upsilon}(x,y)+O(\vert\omega_1\vert+\vert\omega\vert^2).
\end{split}
\end{equation}
Using \eqref{L2Lem1Reduced1}, \eqref{C3Ass}, \eqref{Restricw1} and \eqref{RestricDiam} (this is where we need $\kappa\ll1$), we obtain that
\begin{equation*}
\vert V_2\cdot\nabla_y\Theta(x,\xi,y)\vert\approx 2^q\vert\omega_2\vert\approx 2^q\vert\omega\vert
\end{equation*}
in the support of the integral. Using that
\begin{equation*}
e^{is\Theta}=\frac{-i}{sV_2\cdot\nabla_y\Theta}V_2\cdot\nabla_y e^{is\Theta},\qquad\vert D^\alpha_y\Theta\vert\lesssim\vert\omega\vert,
\end{equation*}
and letting $\Theta_{(1)}:=V_2\cdot\nabla_y\Theta$, after integration by parts we have
\begin{equation*}
\begin{split}
K(x,\xi)&=i\int_{\mathbb{R}^2}e^{is\Theta}\partial_l\Big\{V_2^l\frac{1}{s\Theta_{(1)}}\chi(2^{-p}\Phi(x,y))\chi(2^{-p}\Phi(\xi,y))\psi_{q,r}(x,\xi,y)a^+_j(x,y)\overline{a^+_j(\xi,y)}\Big\}dy.
\end{split}
\end{equation*}
We observe that
\begin{equation*}
V_2^l\partial_l[\chi(2^{-p}\Phi(x,y))\chi(2^{-p}\Phi(\xi,y))]=-2^{-p}\Theta_{(1)}\cdot[\chi(2^{-p}\Phi(x,y))\chi'(2^{-p}\Phi(\xi,y))].
\end{equation*}
This identity is the main reason for choosing $V_2$ as in \eqref{Frames}, and this justifies the definition of the function $\Upsilon$ (intuitively, we can only integrate by parts in $y$ along the level sets of the function $\Phi$, due to the very large $2^{-p}$ factor). Moreover
\begin{equation*}
\|D^\alpha_y\psi_{q,r}(x,\xi,y)|\lesssim 2^{-q|\alpha|},\qquad |D_y^\alpha a_j^+(v,y)|\lesssim_\alpha 2^{\vert\alpha\vert j}+2^{|\alpha|m/3},\,v\in\{x,\xi\},
\end{equation*}
in the support of the integral defining $K(x,\xi)$. We integrate by parts many times in $y$ as above. At every step we gain a factor of $2^m2^q|\omega|$ and lose a factor of $2^{-p}2^q|\omega|+2^{-q}+2^j+2^{m/3}$. The desired bound in \eqref{C3Ass} follows. This completes the proof.
\end{proof}

We consider now the (easier) case when $\vert\xi-\eta\vert$ is close to $\gamma_1$ and $k$ is large.

\begin{lemma}\label{L2Lem2}
The conclusion of Lemma \ref{L2Prop0} holds if $k\geq 3\D_1/2$ and $\widehat{g}$ is supported in the set $\{||\xi|-\gamma_1|\leq 2^{-100}\}$.
\end{lemma}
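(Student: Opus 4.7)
The plan is to follow the $TT^\ast$ framework developed in Lemma \ref{L2Lem1} but to exploit the fact that, near the space-time resonant sphere $\{|\xi-\eta|=\gamma_1\}$ and away from the slow-decay sphere $\{|\xi-\eta|=\gamma_0\}$, the curvature function $\hat{\Upsilon}$ defined in \eqref{defY} is uniformly non-degenerate. I would first reduce to $\mu=\nu=+$ as in the proof of Lemma \ref{L2Lem1} (the case $\nu=-$ is trivial and $\mu=-$ is handled by symmetry), and localize the sign of $(\xi-\eta)\cdot\xi^\perp$. Then I would invoke the nondegeneracy statement near $\gamma_1$ (a consequence of Lemma \ref{lemmaD2} or the associated computations in section \ref{phacolle}), which gives $|\hat{\Upsilon}(x,y)|\gtrsim 1$ on the support of the amplitude. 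This removes the $q$-decomposition that was essential in Lemma \ref{L2Lem1}: only a single scale $2^q\approx 1$ survives.

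Next, decompose $g=\sum_{j\ge 0}g_j$ as in \eqref{njk50}, with $g_j:=A_{\ge 0,\gamma_1}[\varphi_j^{[0,\infty)}\cdot g]$, so that the $Z_1$ and $H^{N_1/3,0}_\Omega$ bounds on $g$ yield, as in \eqref{njk41},
\begin{equation*}
\|\widehat{g_j}\|_{L^\infty}\lesssim 2^{-(1/2-55\delta)j},\qquad \bigl\|\sup_{\theta\in\mathbb{S}^1}|\widehat{g_j}(r\theta)|\bigr\|_{L^2(r\,dr)}\lesssim 2^{-(1-55\delta)j}.
\end{equation*}
Writing $L_{p,k}=\sum_j L_{p,k}^j$ with $a_j(x,y):=A(x,y)\chi_{\gamma_1}(x-y)\mathbf{1}_{\pm}((x-y)\cdot x^\perp)\widehat{g_j}(x-y)$, the Schur test together with the volume bound from Proposition \ref{volume} (with $q\approx 0$) yields
\begin{equation*}
\|L_{p,k}^j\|_{L^2\to L^2}\lesssim \min\bigl(2^{p-k/2}2^{-j/3},\ 2^{p-k/2}2^{-(1-55\delta)j}\bigr),
\end{equation*}
which is acceptable except in the reduced range $p-k/2\ge \D-m/3$ and $0\le j\le \min(4m/9,-\tfrac{2}{3}(p-k/2))$.

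In that reduced range I would run the $TT^\ast$ argument as in Step~2 of Lemma \ref{L2Lem1}. The kernel
\begin{equation*}
K(x,\xi)=\int e^{is\Theta(x,\xi,y)}\chi(2^{-p}\Phi(x,y))\chi(2^{-p}\Phi(\xi,y))\varphi_k(y)^2 a_j(x,y)\overline{a_j(\xi,y)}\,dy
\end{equation*}
is analyzed in the frame $\omega=x-\xi=\omega_1 e_1+\omega_2 e_2$, $e_1=\nabla_x\Phi/|\nabla_x\Phi|$. Taylor expansion gives the same bound $|\omega_1|\lesssim 2^{-k/2}(2^p+|\omega|^2)$ as in \eqref{Restricw1}, and differentiating along $V_2=(\nabla_y\Phi)^\perp/|\nabla_y\Phi|$ yields
\begin{equation*}
V_2\cdot\nabla_y\Theta(x,\xi,y)=\omega_2\,\hat{\Upsilon}(x,y)+O(|\omega_1|+|\omega|^2)\approx |\omega|,
\end{equation*}
where the last estimate uses $|\hat{\Upsilon}|\gtrsim 1$. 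Repeated integration by parts in $y$ along $V_2$ (which preserves the level sets of $\Phi$ and so does not disturb the cutoffs $\chi(2^{-p}\Phi(\cdot,y))$) shows that $|K(x,\xi)|\lesssim 2^{-4m}$ whenever $|\omega|\ge L:=2^{2\delta^2 m}[2^{p-k/2}+2^{j-m}+2^{-2m/3}]$.

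For $|\omega|\le L$, Fubini together with the one-dimensional estimate $\int_{|\omega|\le L}|\chi(2^{-p}\Phi(x-\omega,y))|\,d\omega\lesssim 2^{p-k/2}L$ (derived as in \eqref{njk43}, using $k\geq 3\D_1/2$) and the volume bound $\int \chi(2^{-p}\Phi(x,y))\varphi_k(y)^2\,dy\lesssim 2^{p-k/2}$ produces
\begin{equation*}
\int_{\mathbb{R}^2}|K(x,\xi)|\,d\xi\lesssim 2^{-2j/3}\cdot 2^{2(p-k/2)}\cdot L\lesssim 2^{6\delta^2 m}\bigl(2^{3(p-k/2)}+2^{2(p-k/2)}2^{-2m/3}\bigr),
\end{equation*}
since in the reduced range $2^{j-m}\lesssim 2^{p-k/2}$. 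Since $K$ is Hermitian, the symmetric estimate holds for the $dx$ integral, and Schur's lemma gives the claimed bound after summing over $j$. The main (and only) technical input that needs care is the uniform lower bound $|\hat{\Upsilon}|\gtrsim 1$ near $\gamma_1$; this replaces the delicate $q$-analysis of Lemma \ref{L2Lem1} and is verified directly from the explicit formula for $\Lambda$, since the degeneracies of $\Upsilon$ are concentrated near $\gamma_0$.
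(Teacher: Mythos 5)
Two genuine gaps undermine your argument, and both are precisely the ``new difficulties'' the paper's proof flags as distinguishing the $\gamma_1$ case from the $\gamma_0$ case.

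First, your bound $\|\widehat{g_j}\|_{L^\infty}\lesssim 2^{-(1/2-55\delta)j}$ is wrong near $\gamma_1$. The $Z_1$ norm in Definition \ref{MainZDef} carries the weight $2^{-(1/2-49\delta)n}\|A^{(j)}_{n,\gamma_1}g\|_{L^2}$, which allows the profile to concentrate on thin annuli around $\gamma_1$; passing to an $L^\infty$ bound via Lemma \ref{LinEstLem}(i) (see \eqref{FLinftybd}) loses the factor $2^{(\delta+1/(2N))n}$, and after summing $n$ up to $j$ one only gets $\|\widehat{g_j}\|_{L^\infty}\lesssim 2^{6\delta j}$ — slow growth, not decay. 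This wipes out your first Schur term $2^{p-k/2}2^{-j/3}$, invalidates the reduced range you transcribe from \eqref{L2Lem1Reduced1}, and breaks the crucial factor $\|a_j\|_{L^\infty}^2\lesssim 2^{-2j/3}$ in your final Fubini estimate. The paper instead derives the Schur bound \eqref{SchurL2Lem2} using only the $L^2(r\,dr)$ radial bound together with a dyadic decomposition in $||x-y|-\gamma_1|\approx 2^{-n}$ and Cauchy--Schwarz, and the subsequent $TT^\ast$ argument is calibrated so that the weaker (non-decaying) $\|a_j\|_{L^\infty}$ can be tolerated.

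Second, your $TT^\ast$ step quietly relies on an a priori smallness bound on $|\omega|=|x-\xi|$: the Taylor estimate $V_2\cdot\nabla_y\Theta=\omega_2\hat{\Upsilon}+O(|\omega_1|+|\omega|^2)\approx|\omega|$ only holds when $|\omega|\ll 1$, so that the error $O(|\omega|^2)$ is dominated by $\omega_2\hat{\Upsilon}$. In the $\gamma_0$ case this diameter bound came from \eqref{VolumeSpq}, i.e., from the geometry established in Lemma \ref{Geomgamma0}(iii); as the paper explicitly notes, those conclusions do \emph{not} apply near $\gamma_1$, since the sublevel sets of $\hat{\Upsilon}$ no longer have the same rectangle structure. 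The paper's fix is to multiply the operator by the cutoff $\varphi_{\leq-\D}(x-x_0)$ and prove the $L^2$ bound for $L_{p,k}^{x_0,j}$ uniformly in $x_0$, which manufactures the diameter bound $|\omega|\leq 2^{-\D+10}$ by hand. Your proposal omits this localization entirely, so your integration-by-parts claim has no support for $|\omega|\gtrsim 1$ and the kernel bound $|K(x,\xi)|\lesssim 2^{-4m}$ is unjustified there. (A minor additional discrepancy: the paper uses $g_j:=A_{\geq 4,\gamma_1}[\varphi_j^{[0,\infty)}g]$, not $A_{\geq 0,\gamma_1}$, but this does not affect the substance.)
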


\begin{proof} Using \eqref{ModUp}, we see that on the support of integration we have $\vert\hat{\Upsilon}(\xi,\eta)\vert\approx 1$. The proof is similar to the proof of Lemma \ref{L2Lem1} in the case $2^q\approx 1$. The new difficulties come from the less favorable decay in $j$ close to $\gamma_1$ and from the fact that the conclusions in Lemma \ref{Geomgamma0} (iii) do not apply. We define $a_j^{\pm}$ as in \eqref{njk50} (with $\gamma_1$ replacing $\gamma_0$ and $g_j:=A_{\geq 4,\gamma_1}[\varphi_j^{[0,\infty)}\cdot g]$), and
\begin{equation}\label{njk51.1}
L_{p,k}^{x_0,j}f(x):=\varphi_{\leq -\D}(x-x_0)\int_{\mathbb{R}^2}e^{is\Phi(x,y)}\chi(2^{-p}\Phi(x,y))\varphi_k(y)a^+_j(x,y)f(y)dy,
\end{equation}
for any $x_0\in\mathbb{R}^2$. We have 
\begin{equation}\label{njk51}
\|\widehat{g_j}\|_{L^\infty}\lesssim 2^{6\delta j},\qquad \|\sup_{\theta\in\mathbb{S}^1}|\widehat{A_{n,\gamma_1}g_j}(r\theta)|\|_{L^2(rdr)}\lesssim 2^{(1/2-49\delta)n-j(1-55\delta)},
\end{equation}
for $n\geq 1$, as a consequence of Lemma \ref{LinEstLem} (i). Notice that these bounds are slightly weaker than the bounds in \eqref{njk41}. However, we can still estimate (compare with \eqref{SchurL2Lem}) 
\begin{align}\label{SchurL2Lem2}
\sup_x\int_{\R^2}\vert \chi(2^{-p}\Phi(x,y))\varphi_k(y) a_j^+(x,y)\vert dy\lesssim 2^{p-k/2}\cdot 2^{-(1-55\delta)j}.
\end{align}
Indeed, we use only the second bound in \eqref{njk51}, decompose the integral as a sum of integrals over the dyadic sets $||x-y|-\gamma_1|\approx 2^{-n}$, $n\geq 1$, and use \eqref{Deftheta1} and the Cauchy--Schwarz in each dyadic set. As a consequence of \eqref{SchurL2Lem2}, it remains to consider the sum over $j\leq 4m/9$.

We can then proceed as in the proof of Lemma \ref{L2Lem1}. Using the $TT^\ast$ argument for the operators $L_{p,k}^{x_0,j}$ and Schur's lemma, it suffices to prove bounds similar to those in \eqref{SuffKernel}. Let $\omega=x-\xi$, and notice that $|\omega|\leq 2^{-\D+10}$. This replaces the diameter bound \eqref{RestricDiam} and is the main reason for adding the localization factors $\varphi_{\leq -\D}(x-x_0)$ in \eqref{njk51.1}. The main claim is that
\begin{equation}\label{njk52}
\text{ if }\quad|\omega| \geq L:=2^{2\delta^2m}(2^{p-k/2}+2^{j-m}+2^{-2m/3})\quad\text{ then }\quad |K(x,\xi)|\lesssim 2^{-4m}.
\end{equation}
The same argument as in {\bf{Step 1}} in the proof of Lemma \ref{L2Lem1} shows that this claim suffices. Moreover, this claim can be proved using integration by parts, as in {\bf{Step 2}} in the proof of Lemma \ref{L2Lem1}. The conclusion of the lemma follows. 
\end{proof}

Finally, we now consider the case of low frequencies.

\begin{lemma}\label{L2Lem3}
The conclusion of Lemma \ref{L2Prop0} holds if $k\in[-100,7\D_1/4]$. 
\end{lemma}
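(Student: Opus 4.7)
The plan is to follow the template established in the proofs of Lemmas \ref{L2Lem1} and \ref{L2Lem2}, namely a $TT^\ast$ argument combined with dyadic decompositions of $g$ and of the curvature function $\hat\Upsilon$, but with substantial simplifications coming from the fact that $k$ is bounded (so $2^k \approx 1$ up to a constant depending on $\D_1$). In this low-frequency regime, many factors like $2^{k^+}$, $2^{-k/2}$, etc., are harmless, which removes the necessity of tracking high-frequency derivative losses; on the other hand, the geometry of the resonant hypersurface $\{\Phi = 0\}$ is more delicate because $|\xi|$ can sit close to special values ($\gamma_1$, $\gamma_1/2$, $2\gamma_0$, $\gamma_0$) where the non-degeneracy function $\Upsilon$ itself can vanish.

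First I would split according to whether $\widehat g$ is localized near $\gamma_0$ or near $\gamma_1$ (the two components of $\chi_{ba}$), and further decompose $g = \sum_j g_j$ as in \eqref{njk50} using the spatial localizations $\varphi_j^{[0,\infty)}$, together with the analogues of \eqref{njk41}--\eqref{njk51}. Appealing to Proposition \ref{volume} (i) and Lemma \ref{LinEstLem}, the straight Schur estimate already yields the conclusion outside a dyadic range $0 \leq j \leq 4m/9$, so only finitely many $j$ survive (up to logarithmic loss absorbed in $2^{30\delta m}$). Next, at the $TT^\ast$ level, I would write the composition kernel
\[
K(x,\xi) = \int_{\R^2} e^{is\Theta(x,\xi,y)} \chi(2^{-p}\Phi(x,y))\chi(2^{-p}\Phi(\xi,y))\varphi_k(y) a_j(x,y)\overline{a_j(\xi,y)}\,dy,
\]
with $\Theta(x,\xi,y) = \Phi(x,y) - \Phi(\xi,y)$, and aim to prove, in analogy with \eqref{C3Ass} and \eqref{njk52}, the pointwise bound $|K(x,\xi)| \lesssim 2^{-4m}$ as soon as $|x - \xi| \geq L := 2^{2\delta^2 m}(2^{p-k/2-q} + 2^{j-q-m} + 2^{-2m/3-q})$, where $2^q$ measures the size of $\hat\Upsilon$ on the support.

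To prove this, I would introduce the further localization $\varphi_q(\hat\Upsilon(\xi,\eta))$ and repeat the integration-by-parts along the vector field $V_2 = (\nabla_\eta \Phi)^\perp/|\nabla_\eta \Phi|$ exactly as in Step 2 of Lemma \ref{L2Lem1}. The key input is that $V_2 \cdot \nabla_y \Theta(x,\xi,y) \approx \omega_2 \hat\Upsilon(x,y) + O(|\omega_1|+|\omega|^2)$ together with the Taylor identity \eqref{Restricw1}, which forces $|\omega_1| \lesssim 2^{p} + |\omega|^2$ on the support; both remain valid in the bounded-frequency regime. The Schur summation in Step 1 of Lemma \ref{L2Lem1} then yields the advertised estimate, with all ``$2^{-k}$''-type factors benign here since $k = O(\D_1)$. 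The only arithmetic change is that the prefactor $2^q$ from Lemma \ref{L2Lem1} is replaced by a constant whenever $|\Upsilon| \gtrsim 1$, and by the genuine dyadic $2^q$ in the (measure-zero in $\xi$) neighborhoods where $\Upsilon$ degenerates.

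The main obstacle is handling the $\xi$-slices where $\Upsilon$ vanishes on the relevant portion of $\{\Phi = 0\}$. For these I would rely on the quantitative restricted non-degeneracy provided by Lemmas \ref{lemmaD1}--\ref{lemmaD2}: the loci $\{\Upsilon = 0\}\cap\{\Phi = 0\}$ are lower-dimensional and $\Upsilon$ vanishes at most to first order, so the volume bounds from Proposition \ref{volume} give a gain of $2^{q/2}$ in the Schur norm of the kernel (analogous to \eqref{SchurL2Lem}), which is exactly enough to sum in $q$ down to the threshold $2^q \approx 2^{(p-k/2)/2} + 2^{-m/3}$. Above that threshold, the integration by parts argument described above closes the estimate. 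Combining the degenerate and non-degenerate subcases, summing in $j$ and $q$, and absorbing the resulting logarithms into $2^{30\delta m}$, yields the bound claimed in Lemma \ref{L2Prop0}.
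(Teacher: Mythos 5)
Your high-level template ($TT^\ast$, dyadic decomposition of $\hat\Upsilon$ with parameter $q$, integration by parts along $V_2 = (\nabla_y\Phi)^\perp/|\nabla_y\Phi|$) matches the paper's in outline, and you correctly identify $|\xi-\eta| \approx \gamma_1$ as the harder component of $\chi_{ba}$ and the non-degeneracy Lemmas~\ref{lemmaD1}--\ref{lemmaD2} as the inputs. But there is a genuine gap: you have no analogue of the dyadic decomposition in the \emph{angular} variable $|(x-y)\cdot x^\perp|\approx 2^l$, which is introduced precisely for this lemma (see \eqref{njk70}--\eqref{njk72}) and is what makes the low-frequency geometry tractable. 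In the high-frequency regime of Lemma~\ref{L2Lem1}, the vectors $\xi-\eta$ and $\xi$ are forced to be nearly perpendicular with $|(x-y)\cdot x^\perp| \approx 2^k$, so there is effectively a single angular scale; at bounded frequencies this breaks down, and the sublevel set $\{|\Phi|\leq 2^p\}$ develops a genuine angular degeneracy when $z\cdot x^\perp$ is small. The paper's Schur estimate \eqref{njk95} interpolates the two competing bounds $|\mathcal{S}^1_{p,q,l}(x)| \lesssim 2^{p+l}$ (from the two-dimensional volume) and the angular cross-section $\lesssim 2^{p-l}$ (together with the radial $L^2$ estimate on $\widehat{g_j}$), and it is the geometric mean in $l$ that yields the decisive gain $2^p2^{-j/2+31\delta j}$, allowing the reduction to $j \leq \min(2m/3,-p)-\D$ and $l \geq \max(p/2,-m/3)+\D$ before the $TT^\ast$ argument kicks in. Invoking Proposition~\ref{volume}~(i) directly, as you do, would only recover the coarse bound $\lesssim 2^p 2^{6\delta j}$ without the $j$-gain, and the estimate would not close.

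A second, smaller, gap is in your treatment of the degenerate-$\Upsilon$ region. The paper's Case~1 ($q\leq -\D_1$) rests crucially on \eqref{kn3} of Lemma~\ref{lemmaD2}, which shows that when $|\Upsilon|$ is small \emph{and} $|\Phi|$ is small, both $2^k$ and the angle $2^l$ are forced to be $\approx 1$, after which the gradient bounds \eqref{njk73} hold and the argument reduces to the pattern of Lemma~\ref{L2Lem1}. You gesture at "relying on the quantitative restricted non-degeneracy" and claim the degenerate set is "measure-zero in $\xi$," but the working mechanism is this scale-pinning conclusion, not a measure-zero argument, and it is not evident how your version would extract the needed gain of $2^{q/2}$ you invoke without the $l$-decomposition in hand.
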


\begin{proof} For small frequencies, the harder case is when $|\xi-\eta|$ is close to $\gamma_1$, since the conclusions of Lemma \ref{lemmaD2} are weaker than the conclusions of Lemma \ref{lemmaD1}, and the decay in $j$ is less favorable. So we will concentrate on this case.

We need to first decompose our operator. For $j\geq 0$ and $l\in\mathbb{Z}$ we define
\begin{equation}\label{njk70}
 a^{\pm}_{j,l}(x,y):=A(x,y)\chi_{\gamma_1}(x-y)\varphi_l^{\pm}((x-y)\cdot x^{\perp})\widehat{g_j}(x-y),\qquad g_j:=A_{\geq 4,\gamma_1}[\varphi_j^{[0,\infty)}\cdot P_{[-8,8]}g],
\end{equation}
where $\varphi_l^{\pm}(v):=\mathbf{1}_{\pm}(v)\varphi_l(v)$. This is similar to \eqref{njk50}, but with the additional dyadic decomposition in terms of the angle $|(x-y)\cdot x^{\perp}|\approx 2^l$. Then we decompose, as in \eqref{njk50},
\begin{equation}\label{njk71}
L_{p,k}=\sum_{q,r\in\mathbb{Z}}\sum_{j\geq 0} \sum_{l\in\mathbb{Z}}\sum_{\iota\in\pm}L_{p,k,q}^{r,j,l,\iota},
\end{equation} 
where, with $\kappa=2^{-\D^{3/2}}$ and $\psi\in C^\infty_0(-2,2)$ satisfying $\sum_{v\in\mathbb{Z}}\psi(.-v)\equiv 1$ as before,
\begin{equation}\label{njk72}
\begin{split}
L_{p,k,q}^{r,j,l,\iota}f(x):=\varphi_{\geq -100}(x)\int_{\mathbb{R}^2}&e^{is\Phi(x,y)}\chi(2^{-p}\Phi(x,y))\\
&\times\varphi_q(\Upsilon(x,y))\psi(\kappa^{-1}2^{-q}{\Upsilon}(x,y)-r)\varphi_k(y)a^\iota_{j,l}(x,y)f(y)dy.
\end{split}
\end{equation} 
 
We consider two main cases, depending on the size of $q$.

{\bf{Case 1.}} $q\leq -\D_1$. As a consequence of \eqref{kn3}, the operators $L_{p,k,q}^{r,j,l,\iota}$ are nontrivial only if $2^k\approx 1$ and $2^l\approx 1$. Using also \eqref{kn2} it follows that
\begin{equation}\label{njk73}
\begin{split}
&|\nabla_x\Phi|\approx 1,\qquad|\nabla_x\Upsilon\cdot\nabla_x^\perp\Phi|\approx 1,\\
&|\nabla_y\Phi|\approx 1,\qquad|\nabla_y\Upsilon\cdot\nabla_y^\perp\Phi|\approx 1,
\end{split}
\end{equation}
in the support of the integrals defining the operators $L_{p,k,q}^{r,j,l,\iota}$. 

{\bf{Step 1.}} The proof proceeds as in Lemma \ref{L2Lem1}. For simplicity, we assume that $\iota=+$. Let
\begin{equation}\label{njk74}
\begin{split}
\mathcal{S}^1_{p,q,r,l}(x):=\{z:&||z|-\gamma_1|\leq 2^{-\D+1},\,|\Phi(x,x-z)|\leq 2^{p+1},\,|{\Upsilon}(x,x-z)|\leq 2^{q+2},\\
&|{\Upsilon}(x,x-z)-r\kappa 2^q|\leq 10\kappa 2^q,\,z\cdot x^\perp\in[2^{l-2},2^{l+2}]\}.
\end{split}
\end{equation}
Recall that, if $z=(\rho\cos\theta,\rho\sin\theta)$ and $x=(|x|\cos\alpha,|x|\sin\alpha)$ then
\begin{equation}\label{njk75}
\Phi(x,x-z)=\lambda(|x|)-\mu\lambda(\rho)-\nu\lambda\big(\sqrt{|x|^2+\rho^2-2\rho|x|\cos(\theta-\alpha)}\big).
\end{equation}
It follows from \eqref{njk73} and the change of variables argument in the proof of Lemma \ref{Geomgamma0} (iii) that
\begin{equation}\label{njk76}
|\mathcal{S}^1_{p,q,r,l}(x)|\lesssim 2^{p+q},\qquad\mathrm{diam}(\mathcal{S}^1_{p,q,r,l}(x))\lesssim 2^p+\kappa 2^q
\end{equation}
if $|x|\approx 1$ and $2^l\approx 1$. Moreover, using \eqref{njk75}, for any $x$ and $\rho$,
\begin{equation}\label{njk77}
|\{\theta:z=(\rho\cos\theta,\rho\sin\theta)\in \mathcal{S}^1_{p,q,r,l}(x)\}|\lesssim 2^p.
\end{equation}
Therefore, using \eqref{njk51} and these last two bounds, if $|x|\approx 1$ then
\begin{equation}\label{njk78}
\int_{\mathbb{R}^2}|\chi(2^{-p}\Phi(x,y))\varphi_q({\Upsilon}(x,y))\varphi_k(y)a^+_{j,l}(x,y)|dy\lesssim \min(2^{p+q}2^{6\delta j},2^p2^{-j+55\delta j}).
\end{equation}
One can prove a similar bound for the $x$ integral, keeping $y$ fixed. In view of Schur's lemma, it remains the bound the contribution of the terms for which
\begin{equation}\label{L2Lem3Reduced1}
q \geq \D + \max \Big\{\frac{p}{2},-\frac{m}{3}\Big\} \quad \hbox{ and } \quad 0\leq j\leq \min\Big\{\frac{4m}{9},-\frac{2p}{3}\Big\}.
\end{equation}

{\bf{Step 2.}} Assuming \eqref{L2Lem3Reduced1}, we use the $TT^\ast$ argument and Schur's test. It suffices to show that
\begin{align}\label{SuffKernel6}
\sup_x\int_{\mathbb{R}^2} & \vert K(x,\xi) \vert \,d\xi+\sup_{\xi}\int_{\mathbb{R}^2}\vert K(x,\xi) \vert \,dx
  \lesssim 2^{6\delta m} \big(2^{3p} + 2^{2p-2m/3} \big)
\end{align}
for $p,k,q,r,j,l$ fixed satisfying \eqref{L2Lem3Reduced1}, where
\begin{equation}\label{njk80}
\begin{split}
K(x,\xi):=\varphi_{\geq -100}&(x)\varphi_{\geq -100}(\xi)\int_{\mathbb{R}^2}e^{is\Theta(x,\xi,y)}\\
&\times\chi(2^{-p}\Phi(x,y))\chi(2^{-p}\Phi(\xi,y))\psi_{q,r}(x,\xi,y)a_{j,l}^+(x,y)\overline{a_{j,l}^+(\xi,y)}dy,
	\end{split}
\end{equation}
and, as in \eqref{SuffKernel2},
\begin{equation*}
\begin{split}
\Theta(x,\xi,y)&:=\Phi(x,y)-\Phi(\xi,y)=\Lambda(x)-\Lambda(\xi)-\Lambda_\mu(x-y)+\Lambda_\mu(\xi-y),\\
\psi_{q,r}(x,\xi,y) & := \varphi_q({\Upsilon}(x,y))\varphi_q({\Upsilon}(\xi,y))
  \psi(\kappa^{-1}2^{-q}{\Upsilon}(x,y)-r)\psi(\kappa^{-1}2^{-q}{\Upsilon}(\xi,y)-r)\varphi_k(y)^2.
	\end{split}
\end{equation*}

Let $\omega:=x-\xi$. As in the proof of Lemma \ref{L2Lem1} the main claim is that 
\begin{equation}\label{C3AssLF}
\text{ if }\quad \vert\omega\vert\geq L:=2^{2\delta^2 m}(2^{p-q}+2^{j-q-m}+2^{-q-2m/3})\quad\text{ then }\quad |K(x,\xi)|\lesssim 2^{-4m}.
\end{equation}
The same argument as in {\bf{Step 1}} in the proof of Lemma \ref{L2Lem1}, using \eqref{njk78}, shows that this claim suffices. Moreover, this claim can be proved using integration by parts, as in {\bf{Step 2}} in the proof of Lemma \ref{L2Lem1}. The desired bound \eqref{SuffKernel6} follows.

{\bf{Case 2.}} $q\geq -\D_1$. There is one new issue in this case, namely when the angular parameter $2^l$ is very small and bounds like \eqref{njk77} fail. As in the proof of Lemma \ref{L2Lem2}, we also need to modify the main decomposition \eqref{njk71}. Let
\begin{equation}\label{njk90}
L_{p,k,q}^{x_0,j,l}f(x):=\varphi_{\leq-\D}(x-x_0)\int_{\mathbb{R}^2}e^{is\Phi(x,y)}\chi(2^{-p}\Phi(x,y))\varphi_q(\Upsilon(x,y))\varphi_k(y)a^+_{j,l}(x,y)f(y)dy.
\end{equation} 
Here $x_0\in\mathbb{R}^2$, $|x_0|\geq 2^{-110}$, and the localization factor on $x-x_0$ leads to a good upper bound on $|x-\xi|$ in the $TT^\ast$ argument below. It remains to prove that if $q\geq -\D_1$ then
\begin{equation}\label{njk91}
\|L_{p,k,q}^{x_0,j,l}\|_{L^2\to L^2}\lesssim 2^{\delta^2l}2^{-\delta^2j}2^{30\delta m}(2^{(3/2)p}+2^{p-m/3}).
\end{equation}

{\bf{Step 1.}} We start with a Schur bound. For $x\in\mathbb{R}^2$ with $|x|\in[2^{-120},2^{\D_1+10}]$ let
\begin{equation}\label{njk91.1}
\begin{split}
\mathcal{S}^1_{p,q,l}(x):=\{z:&||z|-\gamma_1|\leq 2^{-\D+1},\,|\Phi(x,x-z)|\leq 2^{p+1},\\
&\,|{\Upsilon}(x,x-z)|\in [2^{q-2},2^{q+2}],\,z\cdot x^\perp\in[2^{l-2},2^{l+2}]\}.
\end{split}
\end{equation}
The condition $|\Upsilon(x,x-z)|\geq 2^{-\D_1-4}$ shows that $|\nabla_z[\Phi(x,x-z)]|\in[2^{-4\D_1},2^{\D_1}]$ for $z\in \mathcal{S}^1_{p,q,l}(x)$. The formula \eqref{njk75} shows that
\begin{equation}\label{njk93}
|\{\theta:z=(\rho\cos\theta,\rho\sin\theta)\in\mathcal{S}^1_{p,q,l}(x)\}|\lesssim 2^{p-l}.
\end{equation}
Moreover, we claim that for any $x$,
\begin{equation}\label{njk94}
|\mathcal{S}^1_{p,q,l}(x)|\lesssim 2^{p+l}.
\end{equation}
Indeed, this follows from \eqref{njk93} if $l\geq -\D$. On the other hand, if $l\leq -\D$ then $\partial_\theta[\Phi(x,x-z)]\leq 2^{-\D/2}$ (due to \eqref{njk75}), so $\partial_\rho[\Phi(x,x-z)]\geq 2^{-5\D_1}$ (due to the inequality $|\nabla_z[\Phi(x,x-z)]|\in[2^{-4\D_1},2^{\D_1}]$). Recalling also \eqref{njk51}, it follows from these last two bounds that
\begin{equation}\label{njk95}
\int_{\mathbb{R}^2}|\chi(2^{-p}\Phi(x,y))\varphi_q(\Upsilon(x,y))\varphi_k(y)a^+_{j,l}(x,y)|dy\lesssim \min(2^{6\delta j}2^{p+l},2^{-j+55\delta j}2^{p-l}),
\end{equation}
if $|x|\in[2^{-120},2^{\D_1+10}]$. In particular, the integral is also bounded by $C2^p2^{-j/2+31\delta j}$. The integral in $x$, keeping $y$ fixed, can be estimated in a similar way. The desired bound \eqref{njk91} follows unless
\begin{equation}\label{njk96}
j\leq \min(2m/3,-p)-\D,\qquad l\geq \max (p/2,-m/3)+\D.
\end{equation}

{\bf{Step 2.}} Assuming \eqref{njk96}, we use the $TT^\ast$ argument and Schur's test. 
It suffices to show that
\begin{align}\label{njk97}
\sup_x\int_{\mathbb{R}^2} \vert K(x,\xi) \vert \,d\xi
  \lesssim 2^{55\delta m} \big(2^{3p} + 2^{2p-2m/3} \big)
\end{align}
for $p,k,q,x_0,j,l$ fixed, where $\Theta(x,\xi,y)=\Phi(x,y)-\Phi(\xi,y)$ and
\begin{equation}\label{njk98}
\begin{split}
K(x,\xi):=\varphi_{\leq -\D}&(x-x_0)\varphi_{\leq -\D}(\xi-x_0)\int_{\mathbb{R}^2}e^{is\Theta(x,\xi,y)}\chi(2^{-p}\Phi(x,y))\chi(2^{-p}\Phi(\xi,y))\\
&\times\varphi_q({\Upsilon}(x,y))\varphi_q({\Upsilon}(\xi,y))
  \varphi_k(y)^2a_{j,l}^+(x,y)\overline{a_{j,l}^+(\xi,y)}dy.
	\end{split}
\end{equation}

Let $\omega=x-\xi$. As before, the main claim is that 
\begin{equation}\label{njk99}
\text{ if }\quad \vert\omega\vert\geq L:=2^{2\delta^2 m}(2^{p}+2^{j-m}+2^{-2m/3})\quad\text{ then }\quad |K(x,\xi)|\lesssim 2^{-4m}.
\end{equation}

To see that this claim suffices, we use an argument similar to the one in {\bf{Step 1}} in the proof of Lemma \ref{L2Lem1}. Indeed, up to acceptable errors, the left-hand side of \eqref{njk97} is bounded by
\begin{equation}\label{njk100}
\begin{split}
C\Vert a_{j,l}^+\Vert_{L^\infty}\sup_{|x-x_0|\leq 2^{-\D+2}}\int_{\mathbb{R}^2}\vert a_{j,l}^+(x,y)\vert&\chi(2^{-p}\Phi(x,y))\varphi_{q}({\Upsilon}(x,y))\\
&\times\Big(\int_{\{\vert\omega\vert\le L\}}|\chi(2^{-p}\Phi(x-\omega,y))|\,d\omega\Big)dy.
\end{split}
\end{equation}
Notice that if $|{\Upsilon}(x,y)|\geq 2^{-\D_1-2}$ then $|(\nabla_x\Phi)(x,y)|\geq 2^{-4\D_1}$, thus $|(\nabla_w\Phi)(x-w,y)|\geq 2^{-4\D_1-1}$ if $|\omega|\leq L\leq 2^{-\D}$. Therefore, the integral in $\omega$ in the expression above is bounded by $C2^pL$. Using also \eqref{njk95}, the expression in \eqref{njk100} is bounded by
\begin{equation*}
C2^{6\delta j}2^pL\cdot 2^p2^{-j/2+32\delta j}\lesssim 2^{\delta m}2^{3p}+2^{40\delta m}2^{2p+j/2-m}+2^{\delta m}2^{2p-2m/3}
\end{equation*}
The desired bound \eqref{njk97} follows using also that $j\leq 2m/3$, see \eqref{njk96}.

The claim \eqref{njk99} follows by the same integration by parts argument as in {\bf{Step 2}} in the proof of Lemma \ref{L2Lem1}, once we recall that $|(\nabla_x\Phi)(x,y)|\geq 2^{-4\D_1}$ and $|(\nabla_y\Phi)(x,y)|\geq 2^{-4\D_1}$ in the support of the integral, while $|\omega|\leq 2^{-\D+4}$. This completes the proof of the lemma.
\end{proof}

\section{Dispersive analysis, I: setup and the main proposition}\label{NotationsF}

\subsection{The Duhamel formula and the main proposition}\label{Duhamel}

In this section we start the proof of Proposition \ref{MainBootstrapDisp}. With $\mathcal{U}=\langle\nabla\rangle h+i|\nabla|^{1/2}\phi$, 
assume that $\mathcal{U}$ is a solution of the equation
\begin{equation}\label{system8}
(\partial_{t}+i\Lambda)\mathcal{U}=\mathcal{N}_2+\mathcal{N}_3+\mathcal{N}_{\geq 4},
\end{equation}
on some time interval $[0,T]$, $T\geq 1$, where $\mathcal{N}_2$ is a quadratic nonlinearity in $\mathcal{U},\overline{\mathcal{U}}$, $\mathcal{N}_3$ 
is a cubic nonlinearity, and $\mathcal{N}_{\geq 4}$ is a higher order nonlinearity. Such an equation will be verified below, see subsection \ref{ProofYu}, 
starting from the main system \eqref{WW0} and using the expansion of the Dirichlet--Neumann operator in section \ref{DNLinear}. The nonlinearity $\mathcal{N}_2$ is of the form
\begin{equation}\label{system9}
\mathcal{N}_2=\sum_{\mu,\nu\in\{+,-\}}\mathcal{N}_{\mu\nu}(\mathcal{U}_{\mu},\mathcal{U}_{\nu}),\qquad 
\big(\mathcal{F}\mathcal{N_{\mu\nu}}(f,g)\big)(\xi)=\int_{\mathbb{R}^{2}}\mathfrak{m}_{\mu\nu}(\xi,\eta)\widehat{f}(\xi-\eta)\widehat{g}(\eta)\,d\eta,
\end{equation}
where $\mathcal{U}_+=\mathcal{U}$ and $\mathcal{U}_-=\overline{\mathcal{U}}$. The cubic nonlinearity is of the form
\begin{equation}\label{system9.1}
\begin{split}
&\mathcal{N}_3=\sum_{\mu,\nu,\beta\in\{+,-\}}\mathcal{N}_{\mu\nu\beta}(\mathcal{U}_{\mu},\mathcal{U}_{\nu},\mathcal{U}_\beta),\\
&\left(\mathcal{F}\mathcal{N_{\mu\nu\beta}}(f,g,h)\right)(\xi)=\int_{\mathbb{R}^{2}\times\mathbb{R}^2}\mathfrak{n}_{\mu\nu\beta}(\xi,\eta,\sigma)\widehat{f}(\xi-\eta)\widehat{g}(\eta-\sigma)\widehat{h}(\sigma)\,d\eta d\sigma.
\end{split}
\end{equation}
The multipliers $\mathfrak{m}_{\mu\nu}$ and $\mathfrak{n}_{\mu\nu\beta}$ satisfy suitable symbol-type estimates. 
We define the profiles $\mathcal{V}_{\sigma}(t)=e^{it\Lambda_{\sigma}}\mathcal{U}_{\sigma}(t)$, $\sigma\in\{+,-\}$, as in \eqref{Ama30}. 
The Duhamel formula is
\begin{equation}\label{duhamelDER}
(\partial_t\widehat{\mathcal{V}})(\xi,s)=e^{is\Lambda(\xi)}\widehat{\mathcal{N}_{2}}(\xi,s)+e^{is\Lambda(\xi)}\widehat{\mathcal{N}_{3}}(\xi,s)+e^{is\Lambda(\xi)}\widehat{\mathcal{N}_{\geq 4}}(\xi,s),
\end{equation}
or, in integral form,
\begin{equation}\label{duhamel}
\widehat{\mathcal{V}}(\xi,t)=\widehat{\mathcal{V}}(\xi,0)+\widehat{W_2}(\xi,t)+\widehat{W_3}(\xi,t)+\int_{0}^{t}e^{is\Lambda(\xi)}\widehat{\mathcal{N}_{\geq 4}}(\xi,s)\,ds,
\end{equation}
where, with the definitions in \eqref{phasedef},
\begin{equation}\label{W2}
\widehat{W_2}(\xi,t):=\sum_{\mu,\nu\in\{+.-\}}\int_{0}^{t}\int_{\mathbb{R}^2}e^{is\Phi_{+\mu\nu}(\xi,\eta)}\mathfrak{m}_{\mu\nu}(\xi,\eta)
\widehat{\mathcal{V}_{\mu}}(\xi-\eta,s)\widehat{\mathcal{V}_{\nu}}(\eta,s)\,d\eta ds,
\end{equation}
\begin{equation}\label{W3}
\begin{split}
\widehat{W_3}(\xi,t):=\sum_{\mu,\nu,\beta\in\{+.-\}}\int_{0}^{t}\int_{\mathbb{R}^2\times\mathbb{R}^2}&e^{is\widetilde{\Phi}_{+\mu\nu\beta}(\xi,\eta,\sigma)}
\mathfrak{n}_{\mu\nu\beta}(\xi,\eta,\sigma)\\
&\times\widehat{\mathcal{V}_{\mu}}(\xi-\eta,s)\widehat{\mathcal{V}_{\nu}}(\eta-\sigma,s)\widehat{\mathcal{V}_{\beta}}(\sigma,s)\,d\eta d\sigma ds.
\end{split}
\end{equation}

The vector-field $\Omega$ acts on the quadratic part of the nonlinearity according to the identity
\begin{equation*}
\Omega_\xi\widehat{\mathcal{N}_2}(\xi,s)=\sum_{\mu,\nu\in\{+,-\}}\int_{\mathbb{R}^2}(\Omega_\xi+\Omega_\eta)\big[\mathfrak{m}_{\mu\nu}(\xi,\eta)
\widehat{\mathcal{U}_{\mu}}(\xi-\eta,s)\widehat{\mathcal{U}_{\nu}}(\eta,s)\big]\,d\eta.
\end{equation*}
A similar formula holds for $\Omega_\xi\widehat{\mathcal{N}_3}(\xi,s)$. Therefore, for $1\leq a\leq N_1$, letting $\mathfrak{m}_{\mu\nu}^{b}:=(\Omega_\xi+\Omega_\eta)^{b}\mathfrak{m}_{\mu\nu}$ and $\mathfrak{n}_{\mu\nu\beta}^{b}:=(\Omega_\xi+\Omega_\eta+\Omega_\sigma)^{b}\mathfrak{n}_{\mu\nu\beta}$ we have
\begin{equation}\label{DuhamelDER2}
\Omega^a_\xi(\partial_t\widehat{\mathcal{V}})(\xi,s)=e^{is\Lambda(\xi)}\Omega^a_\xi\widehat{\mathcal{N}_{2}}(\xi,s)+e^{is\Lambda(\xi)}\Omega^a_\xi\widehat{\mathcal{N}_{3}}(\xi,s)+e^{is\Lambda(\xi)}\Omega^a_\xi\widehat{\mathcal{N}_{\geq 4}}(\xi,s),
\end{equation}
where
\begin{equation}\label{OmegaAN2}
\begin{split}
e^{is\Lambda(\xi)}\Omega^a_\xi\widehat{\mathcal{N}_{2}}(\xi,s)=\sum_{\mu,\nu\in\{+,-\}}&\sum_{a_1+a_2+b=a}\int_{\mathbb{R}^2}e^{is\Phi_{+\mu\nu}(\xi,\eta)}\mathfrak{m}_{\mu\nu}^{b}(\xi,\eta)\\
&\times(\Omega^{a_1}\widehat{\mathcal{V}_{\mu}})(\xi-\eta,s)(\Omega^{a_2}\widehat{\mathcal{V}_{\nu}})(\eta,s)\,d\eta
\end{split}
\end{equation}
and
\begin{equation}\label{OmegaAN3}
\begin{split}
e^{is\Lambda(\xi)}\Omega^a_\xi\widehat{\mathcal{N}_{3}}(\xi,s)=\sum_{\mu,\nu,\beta\in\{+,-\}}&\sum_{a_1+a_2+a_3+b=a}\int_{\mathbb{R}^2\times\mathbb{R}^2}e^{is\widetilde{\Phi}_{+\mu\nu\beta}(\xi,\eta,\sigma)}\mathfrak{n}_{\mu\nu\beta}^{b}(\xi,\eta,\sigma)\\
&\times(\Omega^{a_1}\widehat{\mathcal{V}_{\mu}})(\xi-\eta,s)(\Omega^{a_2}\widehat{\mathcal{V}_{\nu}})(\eta-\sigma,s)(\Omega^{a_3}\widehat{\mathcal{V}_{\beta}})(\sigma,s)\,d\eta d\sigma.
\end{split}
\end{equation}

To state our main proposition we need to make suitable assumptions on the nonlinearities $\mathcal{N}_2$, $\mathcal{N}_3$, and $\mathcal{N}_{\geq 4}$. 
Recall the class of symbols $S^\infty$ defined in \eqref{defclassA}. 

$\bullet$ Concerning the multipliers defining $\mathcal{N}_2$, we assume that $(\Omega_\xi+\Omega_\eta)m(\xi,\eta)\equiv 0$ and
\begin{equation}\label{Assumptions2}
\begin{split}
\|m^{k,k_1,k_2}\|_{S^\infty}&\lesssim 2^k2^{\min(k_1,k_2)/2},\\
\|D_\eta^\alpha m^{k,k_1,k_2}\|_{L^\infty}&\lesssim_{|\alpha|} 2^{(|\alpha|+3/2)\max(|k_1|,|k_2|)},\\
\|D_\xi^\alpha m^{k,k_1,k_2}\|_{L^\infty}&\lesssim_{|\alpha|} 2^{(|\alpha|+3/2)\max(|k|,|k_1|,|k_2|)},
\end{split}
\end{equation}
for any $k,k_1,k_2\in\mathbb{Z}$ and $m\in\{\mathfrak{m}_{\mu\nu}:\,\mu,\nu\in\{+,-\}\}$, where
\begin{equation*} 
m^{k,k_1,k_2}(\xi,\eta):=m(\xi,\eta)\cdot \varphi_k(\xi)\varphi_{k_1}(\xi-\eta)\varphi_{k_2}(\eta).
\end{equation*}

$\bullet$ Concerning the multipliers defining $\mathcal{N}_3$, we assume that $(\Omega_\xi+\Omega_\eta+\Omega_\sigma)n(\xi,\eta,\sigma)\equiv 0$ and
\begin{equation}\label{Assumptions3}
\begin{split}
\|n^{k,k_1,k_2,k_3}\|_{S^\infty}&\lesssim 2^{\min(k,k_1,k_2,k_3)/2}2^{3\max(k,k_1,k_2,k_3,0)},\\
\|D_{\eta,\sigma}^\alpha n^{k,k_1,k_2,k_3;l}\|_{L^\infty}&\lesssim_{|\alpha|} 2^{|\alpha|\max(|k_1|,|k_2|,|k_3|,|l|)}2^{(7/2)\max(|k_1|,|k_2|,|k_3|)},\\
\|D_\xi^\alpha n^{k,k_1,k_2,k_3}\|_{L^\infty}&\lesssim_{|\alpha|} 2^{(|\alpha|+7/2)\max(|k|,|k_1|,|k_2|,|k_3|)},
\end{split}
\end{equation}
for any $k,k_1,k_2,k_3,l\in\mathbb{Z}$ and $n\in\{\mathfrak{n}_{\mu\nu\beta}:\,\mu,\nu\in\{+,-\}\}$, where
\begin{equation*} 
\begin{split}
&n^{k,k_1,k_2,k_3}(\xi,\eta,\sigma):=n(\xi,\eta,\sigma)\cdot \varphi_k(\xi)\varphi_{k_1}(\xi-\eta)\varphi_{k_2}(\eta-\sigma)\varphi_{k_3}(\sigma),\\
&n^{k,k_1,k_2,k_3;l}(\xi,\eta,\sigma):=n(\xi,\eta,\sigma)\cdot \varphi_k(\xi)\varphi_{k_1}(\xi-\eta)\varphi_{k_2}(\eta-\sigma)\varphi_{k_3}(\sigma)\varphi_{l}(\eta).
\end{split}
\end{equation*}

Our main result is the following:

\begin{proposition}\label{bootstrap} Assume that $\mathcal{U}$ is a solution of the equation
\begin{equation}\label{MainEq}
(\partial_{t}+i\Lambda)\mathcal{U}=\mathcal{N}_2+\mathcal{N}_3+\mathcal{N}_{\geq 4},
\end{equation}
on some time interval $[0,T]$, $T\geq 1$, with initial data $\mathcal{U}_0$. Define, as before, 
$\mathcal{V}(t)=e^{it\Lambda}\mathcal{U}(t)$ and $\mathcal{V}_0=\mathcal{U}_0$. With $\delta$ as in Definition \ref{MainZDef}, assume that
\begin{equation}\label{bootstrap1}
\|\mathcal{U}_0\|_{H^{N_{0}}\cap H^{N_1,N_3}_\Omega}+\|\mathcal{V}_0\|_{Z}\leq\varep_{0}\ll 1
\end{equation} 
and 
\begin{equation}\label{bootstrap2}
\begin{split}
&(1+t)^{-\delta^2}\|\mathcal{U}(t)\|_{H^{N_{0}}\cap H^{N_1,N_3}_\Omega}+\|\mathcal{V}(t)\|_{Z}\leq \varep_{1}\ll 1,\\
&(1+t)^2\|\mathcal{N}_{\geq 4}(t)\|_{H^{N_0-N_3}\cap H_{\Omega}^{N_1,0}}+(1+t)^{1+\delta^2}\|e^{it\Lambda}\mathcal{N}_{\geq 4}(t)\|_{Z}\leq \varep_{1}^2,
\end{split}
\end{equation} 
for all $t\in[0,T]$. Moreover, assume that the nonlinearities $\mathcal{N}_2$ and $\mathcal{N}_3$ satisfy \eqref{system9}--\eqref{system9.1} and \eqref{Assumptions2}--\eqref{Assumptions3}. Then, for any $t\in[0,T]$
\begin{equation}\label{bootstrap3}
\|\mathcal{V}(t)\|_{Z}\lesssim \varep_0+\varep_1^2.
\end{equation} 
\end{proposition}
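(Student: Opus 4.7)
The plan is to start from the Duhamel formula \eqref{duhamel}, which expresses $\mathcal{V}(t)$ as the initial data $\mathcal{V}_0$ plus the quadratic contribution $W_2$, the cubic contribution $W_3$, and the quartic-and-higher contribution. The term with $\mathcal{V}_0$ is controlled directly by hypothesis \eqref{bootstrap1}, while the quartic remainder is integrated in time using the second line of \eqref{bootstrap2}, losing at most an $s^{-1-\delta^2}$ factor which is absolutely integrable. So the heart of the matter is to establish $\|W_2(t)\|_Z + \|W_3(t)\|_Z \lesssim \varep_1^2$ uniformly in $t\in[0,T]$, together with the analogous bounds for $D^\alpha\Omega^m$ applied to them, using formulas \eqref{OmegaAN2}--\eqref{OmegaAN3}. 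After dyadic time decomposition $\mathbf{1}_{[0,t]} = \sum_{m} q_m(s)$ and frequency decomposition of the profiles into atoms $Q_{jk}f_\mu$ (with $f_\mu = \mathcal{V}_\mu$), it suffices to prove the bound for each fixed $m$, $k$, $k_1$, $k_2$, $j_1$, $j_2$, gaining a small power $2^{-\delta m}$ per piece.

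For the quadratic integral $W_2$, the first step is to decompose into modulation shells $\varphi_p(\Phi_{+\mu\nu}(\xi,\eta))$. When the modulation is comparable to or larger than $2^{-m+\delta m}$, one integrates by parts in $s$, producing a cubic boundary term (absorbed into the $W_3$ analysis) plus a term where $\partial_s$ hits one of the profiles; for the latter we invoke a precise description of $\partial_s\widehat{\mathcal{V}}$ (via the equation \eqref{duhamelDER}, after an $L^\infty \cdot L^2$-type split analogous to Lemma \ref{dtfLem3}) to re-bound the result by bilinear analysis. When the modulation is smaller, one integrates by parts in $\eta$ using $\nabla_\eta\Phi_{+\mu\nu}$ along level sets of $\Phi$; repeated integration localizes the output to a small neighborhood of the space-time resonant set $\{|\xi|=\gamma_1,\,\eta=\xi/2\}$, which is precisely the set the $Z_1$-norm \eqref{znorm2} is designed to absorb. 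The logarithmic growth in $|\widehat{\mathcal{V}}|$ along $\{|\xi|=\gamma_1\}$ must be paid by the $2^{(1-50\delta)j}$ factor in the $B_j$ norm, using that $j\leq (1+\delta)m$ (coming from approximate finite speed of propagation, proved by integration by parts in $\xi$), and by exploiting the nondegeneracy of the Hessian $\nabla_\eta^2\Phi$ at the resonant points to apply stationary phase.

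For the cubic integral $W_3$, the approach is similar but one analyzes the combined phase $\widetilde{\Phi}_{+\mu\nu\beta}(\xi,\eta,\sigma)$ in \eqref{phasedef}. Here the crucial structural input is the slow propagation of iterated resonances, namely the implications \eqref{waz9} and \eqref{waz9.1}: whenever $\nabla_{\eta,\sigma}\widetilde{\Phi}=0$ and either $\widetilde{\Phi}=0$ or the two sub-phases $\Phi_{+\mu\nu}(\xi,\eta),\,\Phi_{+\nu\beta}(\eta,\sigma)$ are both small, one also has $\nabla_\xi\widetilde{\Phi}=0$. This guarantees that the stationary contributions of $W_3$ remain spatially localized (at scale $\ll 2^m$), which is what powers the $2^j$ factor in the $B_j$ norm via another integration by parts in $\xi$. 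Separately, in the regime where $\widetilde{\Phi}$ is not too small we can do $L^\infty\cdot L^\infty\cdot L^2$ estimates using the decay bounds \eqref{Gdecay}--\eqref{Gdecaygood}, with the worst case being when all three frequencies concentrate near $\gamma_0$ (the sphere of slow $t^{-5/6}$ decay), which is handled by the first property \eqref{waz9}.

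The main obstacle I expect is the interplay between (i) the small-modulation regime near the space-time resonant sphere $|\xi|=\gamma_1$ in $W_2$, where one cannot integrate by parts in $s$ and the output of the bilinear operator genuinely concentrates on a thin annulus, and (ii) the small-modulation cubic regime in $W_3$ where $\Phi(\xi,\eta)$ appears in the denominator after the single $s$-integration by parts has been performed, forcing one to restrict to neighborhoods of space-time resonances and to rely critically on \eqref{waz9.1}. In both regimes the margin between the available decay and the required control is only $2^{\delta m}$ or so, so the proofs require carefully tracking the symbol bounds \eqref{Assumptions2}--\eqref{Assumptions3} (notably the null factor $2^{\min(k_1,k_2)/2}$ at low frequency) together with the separation of the four spheres $|\xi|\in\{\gamma_0,\gamma_1,\gamma_1/2,2\gamma_0\}$ and the almost-radial structure of the profile provided by the weighted energy bound in \eqref{bootstrap2}.
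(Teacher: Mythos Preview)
Your proposal is correct and follows essentially the same approach as the paper: the Duhamel decomposition into $\mathcal{V}_0 + W_2 + W_3 + \text{quartic}$, the dyadic time/modulation decomposition of $W_2$ with integration by parts in $s$ for large modulation (producing the cubic terms $\mathcal{B}_{m,l}[\,\cdot\,,\partial_s f^\nu]$ that are re-expanded via the precise description of $\partial_t\mathcal{V}$) and stationary-phase localization near $\{|\xi|=\gamma_1,\,\eta=\xi/2\}$ for small modulation, the approximate finite speed of propagation via $\xi$-integration by parts, and the slow-propagation-of-iterated-resonances property for $W_3$. You have correctly identified both the main obstacles (the small-modulation quadratic regime and the cubic regime with $\Phi(\xi,\eta)$ in the denominator) and the structural inputs that resolve them (the design of the $B_j$ norm, the sphere separation, the null factor $2^{\min(k_1,k_2)/2}$, and the almost-radial profile bounds).
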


We will show in appendix \ref{ProofYu} below how to use this proposition and a suitable expansion of the Dirichlet--Neumann operator to complete the proof of the main Proposition \ref{MainBootstrapDisp}.

\subsection{Some lemmas}\label{lemmas}

In this subsection we collect several important lemmas which are used often in the proofs in the next two sections. 
Let $\Phi=\Phi_{\sigma\mu\nu}$ as in \eqref{phasedef}.

\subsubsection{Integration by parts} In this subsection we state two lemmas that are used in the paper in integration by parts arguments. 
We start with an oscillatory integral estimate. See \cite[Lemma 5.4]{IP2} for the proof of (i), and the proof of (ii) is similar.

\begin{lemma}\label{tech5} (i) Assume that $0<\eps\leq 1/\eps\leq K$, $N\geq 1$ is an integer, and $f,g\in C^N(\mathbb{R}^2)$. Then
\begin{equation}\label{ln1}
\Big|\int_{\mathbb{R}^2}e^{iKf}g\,dx\Big|\lesssim_N (K\eps)^{-N}\big[\sum_{|\alpha|\leq N}\eps^{|\alpha|}\|D^\alpha_xg\|_{L^1}\big],
\end{equation}
provided that $f$ is real-valued, 
\begin{equation}\label{ln2}
|\nabla_x f|\geq \mathbf{1}_{{\mathrm{supp}}\,g},\quad\text{ and }\quad\|D_x^\alpha f \cdot\mathbf{1}_{{\mathrm{supp}}\,g}\|_{L^\infty}\lesssim_N\eps^{1-|\alpha|},\,2\leq |\alpha|\leq N+1.
\end{equation}

(ii) Similarly, if $0<\rho\leq 1/\rho\leq K$ then
\begin{equation}\label{ln3}
\Big|\int_{\mathbb{R}^2}e^{iKf}g\,dx\Big|\lesssim_N (K\rho)^{-N}\big[\sum_{m\leq N}\rho^{m}\|\Omega^m g\|_{L^1}\big],
\end{equation}
provided that $f$ is real-valued, 
\begin{equation}\label{ln4}
|\Omega f|\geq \mathbf{1}_{{\mathrm{supp}}\,g},\quad\text{ and }\quad\|\Omega^m f \cdot\mathbf{1}_{{\mathrm{supp}}\,g}\|_{L^\infty}\lesssim_N\rho^{1-m},\,2\leq m\leq N+1.
\end{equation}
\end{lemma}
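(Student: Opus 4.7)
\medskip

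The plan is to prove both parts by a standard iterated integration by parts against the oscillation $e^{iKf}$, with careful bookkeeping of the $\varepsilon$ powers. For part (i), on the support of $g$ the hypothesis $|\nabla_x f|\ge 1$ makes the first-order operator
\[
L := \frac{1}{iK}\,\frac{\nabla f}{|\nabla f|^2}\cdot\nabla
\]
well-defined there, and it satisfies $L(e^{iKf})=e^{iKf}$. Its formal adjoint is
\[
L^{\ast} g = -\frac{1}{iK}\,\mathrm{div}\!\left(\frac{\nabla f}{|\nabla f|^2}\,g\right),
\]
so that, since $g$ is compactly supported,
\[
\int_{\mathbb{R}^2} e^{iKf} g\,dx = \int_{\mathbb{R}^2} e^{iKf}\,(L^{\ast})^N g\,dx.
\]
Taking absolute values, part (i) reduces to the pointwise/Leibniz estimate
\[
\|(L^{\ast})^N g\|_{L^1} \;\lesssim_N\; K^{-N}\sum_{|\alpha|\le N}\varepsilon^{|\alpha|-N}\,\|D^\alpha g\|_{L^1}.
\]

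To prove this estimate I would expand $(L^{\ast})^N g$ by Leibniz as a sum of terms of the form $K^{-N}\, P(f)\, D^\alpha g$, where $|\alpha|\le N$ and $P(f)$ is a polynomial expression in $1/|\nabla f|^2$, $\nabla f$, and derivatives $D^\beta f$ with $|\beta|\ge 2$ coming from repeated differentiation of the coefficient $\nabla f/|\nabla f|^2$. A simple induction on $N$ shows that each such term carries exactly $N-|\alpha|$ extra derivatives distributed among the $D^\beta f$ factors (with $|\beta|\ge 2$) and the denominators $1/|\nabla f|^2$. Using $|\nabla f|\ge 1$ on $\mathrm{supp}\,g$ and the hypothesis $\|D^\beta f\cdot\mathbf 1_{\mathrm{supp}\,g}\|_{L^\infty}\lesssim \varepsilon^{1-|\beta|}$ for $2\le |\beta|\le N$, one checks that the total $\varepsilon$-weight of such a coefficient is $\varepsilon^{-(N-|\alpha|)}$ (each second derivative contributes $\varepsilon^{-1}$; higher derivatives contribute more but are compensated by the corresponding decrease in number of factors; this counting is the routine part). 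Multiplying by $\|D^\alpha g\|_{L^1}$ and summing over $\alpha$ gives the claimed bound, and dividing one power of $\varepsilon^{-N}$ into the prefactor yields $(K\varepsilon)^{-N}\sum_{|\alpha|\le N}\varepsilon^{|\alpha|}\|D^\alpha g\|_{L^1}$.

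For part (ii) the argument is formally identical with $\nabla_x$ replaced by the single vector field $\Omega$: on the support of $g$ the hypothesis $|\Omega f|\ge 1$ lets us define $L=\frac{1}{iK\,\Omega f}\,\Omega$, so $L(e^{iKf})=e^{iKf}$ and $L^{\ast} g = -\Omega\!\left(\tfrac{g}{iK\,\Omega f}\right)$ (here $\Omega$ is skew-adjoint on $\mathbb{R}^2$, since it is a rotation vector field and hence divergence-free). Iterating $N$ times and expanding by Leibniz produces terms $K^{-N}\,Q(f)\,\Omega^m g$ with $m\le N$, where $Q(f)$ is a polynomial in $1/\Omega f$ and in $\Omega^p f$ with $p\ge 2$. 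The hypothesis $\|\Omega^p f\cdot \mathbf 1_{\mathrm{supp}\,g}\|_{L^\infty}\lesssim \rho^{1-p}$ for $2\le p\le N$ then yields, by the same counting as before, that each such coefficient is $\lesssim \rho^{m-N}$, which produces \eqref{ln3}.

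The only non-routine point is the Leibniz bookkeeping in part (i): one must verify by induction that after $n$ applications of $L^{\ast}$ every term has exactly $n$ factors of $1/K$ and a total "derivative deficit" (i.e.\ derivatives of $f$ of order $\ge 2$, weighted by $|\beta|-1$, minus powers of $\varepsilon$) equal to $n-|\alpha|$, where $|\alpha|$ is the order of the derivative falling on $g$. Once this invariant is in place the two estimates follow immediately, and the passage from $\nabla_x$ to $\Omega$ requires only that $\Omega$ is a smooth, divergence-free first-order operator, so all the integration by parts identities go through verbatim.
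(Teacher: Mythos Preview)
Your proposal is correct and is exactly the standard iterated integration-by-parts argument for non-stationary phase; the paper does not give its own proof of this lemma but refers to \cite[Lemma~5.4]{IP2}, where the same method is carried out. One cosmetic remark: you invoke compact support of $g$ to justify the integration by parts, whereas the hypotheses only guarantee $D^\alpha g\in L^1$; this is handled by a routine cutoff/density argument and does not affect the substance.
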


We will need another result about integration by parts using the vector-field $\Omega$. This lemma is more subtle. It is needed many times in the 
next two sections to localize and then estimate bilinear expressions. The point is to be able to take advantage of the fact that our profiles are ``almost radial'' 
(due to the bootstrap assumption involving many copies of $\Omega$), and prove that for such functions one has better localization 
properties than for general functions.  

\begin{lemma}\label{RotIBP}
Assume that $N\geq 100$, $m\geq 0$, $p,k,k_1,k_2\in\mathbb{Z}$, and
\begin{equation}\label{hypo0}
2^{-k_1}\leq 2^{2m/5},\qquad 2^{\max(k,k_1,k_2)}\le U\leq U^2\le 2^{m/10},\qquad U^2+2^{3|k_1|/2}\leq 2^{p+m/2}.
\end{equation}
For some $A\geq \max(1,2^{-k_1})$ assume that
\begin{equation}\label{hypo}
\begin{split}
\sup_{0\le a\le 100}\big[\Vert\Omega^a g\Vert_{L^2}+\Vert \Omega^a f\,\Vert_{L^2}\big]+\sup_{|\alpha|\leq N}A^{-|\alpha|}\|D^\alpha f\|_{L^2}&\le 1,\\
\sup_{\xi,\eta}\sup_{\vert\alpha\vert\le N}(2^{-m/2}\vert\eta\vert)^{\vert\alpha\vert}\vert D^\alpha_\eta m(\xi,\eta)\vert&\le 1.
\end{split}
\end{equation}
Fix $\xi\in\mathbb{R}^2$ and let, for $t\in[2^{m}-1,2^{m+1}]$,
\begin{equation*}
I_p(f,g):=\int_{\mathbb{R}^2}e^{it\Phi(\xi,\eta)}m(\xi,\eta)\varphi_p(\Omega_\eta\Phi(\xi,\eta))\varphi_k(\xi)\varphi_{k_1}(\xi-\eta)\varphi_{k_2}(\eta)f(\xi-\eta)g(\eta)d\eta.
\end{equation*}
If $2^{p}\leq U2^{|k_1|/2+100}$ and $A\leq 2^mU^{-2}$ then
\begin{equation}\label{OmIBP}
\vert I_p(f,g)\vert\lesssim_N  (2^{p+m})^{-N} U^{2N}\big[2^{m/2}+A2^p\big]^N+2^{-10m}.
\end{equation}
In addition, assuming that $(1+\delta/4)\nu\geq -m$, the same bound holds when $I_p$ is replaced by
\begin{equation*}
\widetilde{I_p}(f,g):=\int_{\mathbb{R}^2}e^{it\Phi(\xi,\eta)}\varphi_\nu(\Phi(\xi,\eta))m(\xi,\eta)\varphi_p(\Omega_\eta\Phi(\xi,\eta))\varphi_k(\xi)\varphi_{k_1}(\xi-\eta)\varphi_{k_2}(\eta)f(\xi-\eta)g(\eta)d\eta.
\end{equation*}
\end{lemma}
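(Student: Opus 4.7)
The plan is to integrate by parts $N$ times in $\eta$ using the vector field $\Omega_\eta$, exploiting the localization $|\Omega_\eta \Phi| \approx 2^p$ forced by the cutoff $\varphi_p(\Omega_\eta \Phi)$. Since $\Omega_\eta$ is divergence-free on $\mathbb{R}^2$, its formal adjoint is $-\Omega_\eta$ with no boundary contribution, so a single IBP produces the identity $I_p = -\int e^{it\Phi}\,\Omega_\eta\bigl(M/(it\,\Omega_\eta\Phi)\bigr)\,d\eta$, where $M$ is the full amplitude; this yields a clean factor $(t\cdot 2^p)^{-1} = 2^{-m-p}$ per step. Before iterating, I would absorb the angular cutoffs $\varphi_{k_1}(\xi-\eta)$ and $\varphi_{k_2}(\eta)$ into $f$ and $g$ respectively (their action, being essentially Littlewood--Paley projections, preserves the $\Omega^a$- and $D^\alpha$-hypotheses up to absolute constants), so that $\Omega_\eta$ never lands on these radial cutoffs and creates awkward $2^{-2k_1}$ factors; the remaining cutoff $\varphi_k(\xi)$ is annihilated by $\Omega_\eta$ since $\xi$ is fixed.

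After $N$ iterations, $I_p$ becomes an integral of $e^{it\Phi}$ against a finite sum of products of the form $\Omega_\eta^{a_0}(\text{phase multiplier}) \cdot \Omega_\eta^{a_1} m \cdot \Omega_\eta^{a_2}[f(\xi-\eta)] \cdot \Omega_\eta^{a_3} g(\eta)$ with $\sum a_i = N$, each accompanied by the gain $(2^{m+p})^{-N}$. Using $\Phi = \Lambda_\sigma(\xi) - \Lambda_\mu(\xi-\eta) - \Lambda_\nu(\eta)$ together with the fact that $\Omega_\eta$ annihilates the radial function $\Lambda_\nu(\eta)$, one computes $\Omega_\eta \Phi = -(\eta \times \xi)\Lambda'_\mu(|\xi-\eta|)/|\xi-\eta|$ and, by induction, $|\Omega_\eta^j \Phi| \lesssim_j U^{j-1}\,|\Omega_\eta \Phi|$ for $j \geq 1$ (using $|\eta|, |\xi|, |\xi-\eta| \leq U$). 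This gives $|\Omega_\eta^{a_0}(1/(\Omega_\eta\Phi))|_{L^\infty} \lesssim (U^2/2^p)^{a_0}\cdot 2^{-p}$ on the support, which is the source of the $U^{2N}$ factor. The hypothesis on $m$ gives $|\Omega_\eta^{a_1} m|_{L^\infty} \lesssim (2^{m/2})^{a_1}$, accounting for the first alternative in the bracket. When $\Omega_\eta$ lands on $f(\xi-\eta)$, the crucial identity is
\[
\Omega_\eta\bigl[f(\xi-\eta)\bigr] = (\Omega f)(\xi-\eta) + \xi_2 (\partial_1 f)(\xi-\eta) - \xi_1 (\partial_2 f)(\xi-\eta);
\]
the first term has $L^2$-norm $\leq 1$, while the second term contributes $|\xi|\cdot\|D f\|_{L^2}\leq U\cdot A$ in $L^2$, which paired with the additional $2^p$ coming from one $\Omega_\eta$-hit on $\varphi_p(\Omega_\eta\Phi)$ furnishes the $A\cdot 2^p$ alternative (the extra $U$ is absorbed into $U^{2N}$). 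Finally, $\Omega_\eta$ landing on $g$ returns $(\Omega g)(\eta)$, controlled by $\|\Omega^{a_3} g\|_{L^2}\leq 1$ for $a_3 \leq 100$, which suffices since the implicit constant in $\lesssim_N$ handles only $N\lesssim 100$ iterations.

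Combining all contributions via Cauchy--Schwarz, placing the phase-, cutoff-, and $m$-factors in $L^\infty$ and the $f$- and $g$-derived factors in $L^2$, yields after $N$ iterations the main term in \eqref{OmIBP}. The additive $2^{-10m}$ absorbs the transition errors from the smooth cutoff $\varphi_p$ (where the gain $|\Omega_\eta \Phi|^{-1}$ is not quantitatively sharp): on this narrow set the trivial bound $|I_p| \leq \|m\|_{L^\infty}\|f\|_{L^1}\|g\|_{L^1}$ combined with the constraint on $(k,k_1,k_2)$ gives the stated remainder once $N$ is chosen sufficiently large. The tilde variant $\widetilde{I}_p$ is handled identically, except that some $\Omega_\eta$-derivatives may land on the extra factor $\varphi_\nu(\Phi)$, producing $\varphi'_\nu(\Phi)\,\Omega_\eta\Phi/2^\nu\sim 2^{p-\nu}$ per hit; combined with the gain $2^{-m-p}$ per IBP this is $2^{-m-\nu}$, and the hypothesis $(1+\delta/4)\nu \geq -m$ implies $m+\nu\geq \delta m/(4+\delta) > 0$, so such hits contribute a factor strictly less than one and are absorbed without changing the bound.

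The main obstacle is the combinatorial bookkeeping of the phase-derivative hierarchy and, in particular, establishing the inductive estimate $|\Omega_\eta^j\Phi|\lesssim U^{j-1}|\Omega_\eta\Phi|$ in a form robust under iteration; a more naive bound would produce a power of $U$ much larger than $U^{2N}$ and would be too crude to close the estimate in the delicate regime $U^2 + 2^{3|k_1|/2}\leq 2^{p+m/2}$. A secondary issue is ensuring that the absorption of $\varphi_{k_1}(\xi-\eta)$ into $f$ genuinely preserves both the Sobolev-type hypothesis $A^{-|\alpha|}\|D^\alpha f\|_{L^2}\leq 1$ and the weighted-vector-field hypothesis $\|\Omega^a f\|_{L^2}\leq 1$ simultaneously, with constants depending only mildly on $A$, $U$, and $k_1$.
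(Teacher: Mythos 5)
Your overall strategy (integrate by parts repeatedly in $\Omega_\eta$, gaining $(t\,\Omega_\eta\Phi)^{-1}\approx 2^{-m-p}$ each time) is the right one, but there are two substantive gaps that prevent the argument from closing, and one incorrect side claim.

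\textbf{Missing angular truncation.} Since the hypothesis only controls $\|\Omega^a g\|_{L^2}$ and $\|\Omega^a f\|_{L^2}$ for $a\le 100$, you cannot iterate the IBP $N$ times for $N>100$ with the raw functions. The paper's first step decomposes $f=\mathcal{R}_{\le m/10}f+[I-\mathcal{R}_{\le m/10}]f$ and likewise for $g$, where $\mathcal{R}_{\le L}$ truncates to angular Fourier modes $|n|\le L$. After this truncation one has $\|\Omega^a g_1\|_{L^2}\lesssim_a 2^{am/10}$ for \emph{all} $a$, at the cost of a tail which is $\lesssim 2^{-10m}$ by the $a\le 100$ bounds — this is the actual origin of the $2^{-10m}$ remainder. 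Your attribution of the remainder to ``transition errors from the smooth cutoff $\varphi_p$'' is not correct; on the support of $\varphi_p(\Omega_\eta\Phi)$ the gain $|\Omega_\eta\Phi|^{-1}\approx 2^{-p}$ is uniform, so there is no such error.

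\textbf{The identity for $\Omega_\eta[f(\xi-\eta)]$ is too crude.} Your identity $\Omega_\eta[f(\xi-\eta)] = (\Omega f)(\xi-\eta) + \xi_2(\partial_1 f)(\xi-\eta) - \xi_1(\partial_2 f)(\xi-\eta)$ is correct, but the coefficients on the Cartesian derivatives are of size $|\xi|\approx 2^k\lesssim U$. Together with $\|Df\|_{L^2}\le A$ this yields a factor $\sim UA$ per IBP step. This is \emph{not} controlled by the claimed budget $U^2[2^{m/2}+A2^p]$. Concretely, take $A$ near its maximum $2^mU^{-2}$ and $2^p$ near its minimum $U^2 2^{-m/2}$ (allowed by \eqref{hypo0}): then $UA\approx 2^m/U$ while $U^2[2^{m/2}+A2^p]\approx U^2 2^{m/2}$, and $2^m/U\le U^2 2^{m/2}$ would require $U^3\ge 2^{m/2}$, which fails since $U^2\le 2^{m/10}$. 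The paper avoids this by fixing $\xi=(s,0)$ and observing that on the support of $\varphi_{\le p+2}(\Omega_\eta\Phi)$ the coordinate $\eta_2$ is forced to be small, $|\eta_2|\lesssim 2^{p+k_1-|k_1|/2-k}$; one then uses the alternative identity $-\Omega_\eta f_1(\xi-\eta)=\frac{\eta_1}{s-\eta_1}(\Omega f_1)(\xi-\eta)-\frac{s\eta_2}{s-\eta_1}(\partial_1 f_1)(\xi-\eta)$, whose second coefficient is $\lesssim 2^{p-|k_1|/2}$, and iterates this, tracking the structure carefully. Your bookkeeping that the $A2^p$ term comes from ``pairing'' a hit on $f$ with a hit on $\varphi_p$ is also structurally off: each $\Omega_\eta$ in a single IBP step hits exactly one factor.

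\textbf{The inductive phase estimate is false.} The claim $|\Omega_\eta^j\Phi|\lesssim U^{j-1}|\Omega_\eta\Phi|$ does not hold: computing directly, $\Omega_\eta^2\Phi$ contains a term proportional to $\frac{\lambda_\mu'(|\xi-\eta|)}{|\xi-\eta|}\,\xi\cdot\eta$, and $|\xi\cdot\eta|$ can be $\approx U^2$ while $|\Omega_\eta\Phi|\approx 2^p$ is much smaller. The correct estimate (paper's \eqref{OmegaBounds}) is not of the form ``$\lesssim U^{j-1}|\Omega_\eta\Phi|$''; on the support of $\varphi_p(\Omega_\eta\Phi)$ one instead obtains $|\Omega_\eta^{a+1}\Phi/\Omega_\eta\Phi|\lesssim U^{2a}2^{am/2}$, which uses the constraints in \eqref{hypo0} in an essential way.
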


A slightly simpler version of this integration by parts lemma was used recently in \cite{DIP}. The main interest of this lemma is that we have essentially no assumption on $g$ and very mild assumptions on $f$.

\begin{proof}[Proof of Lemma \ref{RotIBP}] We decompose first $f=\mathcal{R}_{\leq m/10}f+[I-\mathcal{R}_{\leq m/10}]f$, $g=\mathcal{R}_{\leq m/10}g+[I-\mathcal{R}_{\leq m/10}]g$, where the operators $\mathcal{R}_{\leq L}$ are defined in polar coordinates by
\begin{equation}\label{RadOp}
(\mathcal{R}_{\leq L}h)(r\cos\theta,r\sin\theta):=\sum_{n\in\mathbb{Z}}\varphi_{\leq L}(n)h_n(r)e^{in\theta}\quad\text { if }\quad h(r\cos\theta,r\sin\theta):=\sum_{n\in\mathbb{Z}}h_n(r)e^{in\theta}.
\end{equation}
Since $\Omega$ corresponds to $d/d\theta$ in polar coordinates, using \eqref{hypo} we have,
\begin{equation*}
\big\|[I-\mathcal{R}_{\leq m/10}]f\big\|_{L^2}+\big\|[I-\mathcal{R}_{\leq m/10}]g\big\|_{L^2}\lesssim 2^{-10m}.
\end{equation*}
Therefore, using the H\"{o}lder inequality,
\begin{equation*}
\vert I_p\big([I-\mathcal{R}_{\leq m/10}]f,g\big)\vert+\vert I_p\big(\mathcal{R}_{\leq m/10}f,[I-\mathcal{R}_{\leq m/10}]g\big)\vert\lesssim 2^{-10m}.
\end{equation*}

It remains to prove a similar inequality for $I_p:=I_p\big(f_1,g_1\big)$, where $f_1:=\varphi_{[k_1-2,k_1+2]}\cdot \mathcal{R}_{\leq m/10}f$, $g_1:=\varphi_{[k_2-2,k_2+2]}\cdot \mathcal{R}_{\leq m/10}g$. It follows from \eqref{hypo} and the definitions that
\begin{equation}\label{RadOp2}
\|\Omega^ag_1\|_{L^2}\lesssim_a 2^{am/10},\qquad \|\Omega^a D^\alpha f_1\|_{L^2}\lesssim_a 2^{am/10}A^{|\alpha|},
\end{equation}
for any $a\geq 0$ and $|\alpha|\leq N$. Integration by parts gives
\begin{equation*}
I_p=c\varphi_k(\xi)\int_{\mathbb{R}^2}e^{it\Phi(\xi,\eta)}\Omega_\eta\left\{\frac{m(\xi,\eta)\varphi_{k_1}(\xi-\eta)\varphi_{k_2}(\eta)}{t\Omega_\eta\Phi(\xi,\eta)}\varphi_p(\Omega_\eta\Phi(\xi,\eta))f_1(\xi-\eta)g_1(\eta)\right\}d\eta.
\end{equation*}
Iterating $N$ times, we obtain an integrand made of a linear combination of terms like
\begin{equation*}
\begin{split}
e^{it\Phi(\xi,\eta)}&\varphi_k(\xi)\left(\frac{1}{t\Omega_\eta\Phi(\xi,\eta)}\right)^N\times\Omega^{a_1}_\eta\left\{m(\xi,\eta)\varphi_{k_1}(\xi-\eta)\varphi_{k_2}(\eta)\right\}\\
&\times\Omega_\eta^{a_2}f_1(\xi-\eta)\cdot\Omega_\eta^{a_3}g_1(\eta)\cdot\Omega_\eta^{a_4}\varphi_p(\Omega_\eta\Phi(\xi,\eta))\cdot \frac{\Omega^{a_5+1}_\eta\Phi}{\Omega_\eta\Phi}\dots\frac{\Omega_\eta^{a_q+1}\Phi}{\Omega_\eta\Phi},
\end{split}
\end{equation*}
where $\sum a_i=N$. The desired bound follows from the pointwise bounds
\begin{equation}\label{BdsIBP}
\begin{split}
\left\vert\Omega^{a}_\eta\left\{m(\xi,\eta)\varphi_{k_1}(\xi-\eta)\varphi_{k_2}(\eta)\right\}\right\vert&\lesssim 2^{am/2},\\
\left\vert \Omega^{a}_\eta\varphi_p(\Omega_\eta\Phi(\xi,\eta))\right\vert+ \left\vert\frac{\Omega^{a+1}_\eta\Phi}{\Omega_\eta\Phi}\right\vert&\lesssim U^{2a}2^{am/2},\\
\end{split}
\end{equation}
which hold in the support of the integral, and the $L^2$ bounds
\begin{equation}\label{l2ibp}
\begin{split}
\Vert \Omega^{a}_\eta g_1(\eta)\Vert_{L^2}&\lesssim 2^{am/4},\\
\Vert \Omega^a_\eta f_1(\xi-\eta)\varphi_{k}(\xi)\varphi_{[k_2-2,k_2+2]}(\eta)\varphi_{\leq p+2}(\Omega_\eta\Phi(\xi,\eta))\Vert_{L^2_\eta}&\lesssim U^{2a}\big[2^{m/2}+A2^p\big]^a.
\end{split}
\end{equation}

The first bound in \eqref{BdsIBP} is direct (see \eqref{hypo0}). For the second bound we notice that
\begin{equation}\label{OmegaBounds}
\begin{split}
&\Omega_\eta(\xi\cdot\eta^\perp)=-\xi\cdot\eta,\qquad\Omega_\eta(\xi\cdot\eta)=\xi\cdot\eta^\perp,\qquad\Omega_\eta\Phi(\xi,\eta)=\frac{\lambda^\prime_\mu(\vert\xi-\eta\vert)}{\vert\xi-\eta\vert}(\xi\cdot\eta^\perp),\\
&\vert\Omega_\eta^a\Phi(\xi,\eta)\vert\lesssim \lambda(|\xi-\eta|)\big[|\xi-\eta|^{-2a}|\xi\cdot\eta^\perp|^a+|\xi-\eta|^{-a}U^a\big].
\end{split}
\end{equation}
Since $\lambda'(|\xi-\eta|)\approx 2^{|k_1|/2}$, in the support of the integral, we have $|\xi-\eta|^{-2}|\xi\cdot\eta^\perp|\approx 2^{p}2^{-k_1-|k_1|/2}$. 
The second bound in \eqref{BdsIBP} follows once we recall the assumptions in \eqref{hypo0}.

We turn now to the proof of \eqref{l2ibp}. The first bound follows from the construction of $g_1$. For the second bound, if $2^p\gtrsim 2^{|k_1|/2+\min(k,k_2)}$ then we have the simple bound
\begin{equation*}
\Vert \Omega^a_\eta f_1(\xi-\eta)\varphi_{k}(\xi)\varphi_{[k_2-2,k_2+2]}(\eta)\Vert_{L^2_\eta}\lesssim [A2^{\min(k,k_2)}+2^{m/10}]^a,
\end{equation*}
which suffices. On the other hand, if $2^p\ll 2^{|k_1|/2+\min(k,k_2)}$ then we may assume that $\xi=(s,0)$, $s\approx 2^k$. The identities \eqref{OmegaBounds} show that $\varphi_{\leq p+2}(\Omega_\eta\Phi(\xi,\eta))\neq 0$ only if $|\xi\cdot\eta^\perp|\leq 2^{p+20}2^{k_1-|k_1|/2}$, which gives $|\eta_2|\leq 2^{p+30}2^{k_1-|k_1|/2}2^{-k}$. Therefore $|\eta_2|\ll 2^{k_1}$, so we may assume that $|\eta_1-s|\approx 2^{k_1}$.

We write now
\begin{equation*}
-\Omega_\eta f_1(\xi-\eta)=(\eta_1\partial_2f_1-\eta_2\partial_1f_1)(\xi-\eta)=\frac{\eta_1}{s-\eta_1}(\Omega f_1)(\xi-\eta)-\frac{s\eta_2}{s-\eta_1}(\partial_1f_1)(\xi-\eta).
\end{equation*}
By iterating this identity we see that $\Omega^a_\eta f_1(\xi-\eta)$ can be written as a sum of terms of the form
\begin{equation*}
P(s,\eta)\cdot\Big(\frac{1}{s-\eta_1}\Big)^{c+d+e}\Big(\frac{s\eta_2}{s-\eta_1}\Big)^{|b|-d}(D^b\Omega^cf_1)(\xi-\eta),
\end{equation*}
where $|b|+c+d+e\leq a$, $|b|,c,d,e\in\mathbb{Z}_+$, $|b|\geq d$, and $P(s,\eta)$ is a polynomial of degree at most $a$ in $s$ and at most $a$ in $(\eta_1,\eta_2)$. The second bound in \eqref{l2ibp} follows using the bounds on $f_1$ in \eqref{RadOp2} and the bounds proved earlier, $|s\eta_2|\lesssim 2^p2^{k_1-|k_1|/2}$, $|\eta_1-s|\approx 2^{k_1}$.

The last claim follows using the formula \eqref{loca2}, as in Lemma \ref{PhiLocLem} below.
\end{proof}

\subsubsection{Localization in modulation} Our lemma in this subsection shows that localization with respect to the phase is often a bounded operation:

\begin{lemma}\label{PhiLocLem}
Let $s\in[2^{m}-1,2^{m+1}]$, $m\geq 0$, and $-p\le m-2\delta^2m$. Let $\Phi=\Phi_{\sigma\mu\nu}$ as in \eqref{phasedef} and assume that $1/2=1/q+1/r$ and $\chi$ is a Schwartz function. Then, if $\|m\|_{S^\infty}\leq 1$,
\begin{equation}\label{loca}
\begin{split}
\Big\Vert \varphi_{\leq 10m}(\xi)&\int_{\mathbb{R}^2}e^{is\Phi(\xi,\eta)}m(\xi,\eta)\chi(2^{-p}\Phi(\xi,\eta))\widehat{f}(\xi-\eta)\widehat{g}(\eta)d\eta\Big\Vert_{L^2_\xi}\\
&\lesssim\sup_{|\rho|\leq 2^{-p+\delta^2m}}\Vert e^{-i(s+\rho)\Lambda_\mu}f\Vert_{L^q}\Vert e^{-i(s+\rho)\Lambda_\nu}g\Vert_{L^r}+2^{-10m}\Vert f\Vert_{L^2}\Vert g\Vert_{L^2},
\end{split}
\end{equation}
where the constant in the inequality only depends on the function $\chi$.
\end{lemma}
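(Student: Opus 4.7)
\textbf{Proof plan for Lemma \ref{PhiLocLem}.} The strategy is a Fourier representation of the modulation cutoff combined with the standard Plancherel/Coifman–Meyer identification of bilinear multipliers. Concretely, write
\begin{equation*}
\chi(2^{-p}\Phi(\xi,\eta))=\int_{\mathbb{R}}\widetilde{\chi}(\rho)\,e^{i\rho 2^{-p}\Phi(\xi,\eta)}\,d\rho,
\end{equation*}
where $\widetilde{\chi}\in \mathcal{S}(\mathbb{R})$, and insert this into the $\eta$-integral. Commuting the $\rho$ and $\eta$ integrations, the inner object becomes, for each fixed $\tau=s+2^{-p}\rho$, the bilinear expression
\begin{equation*}
T_\tau[f,g](\xi):=\int_{\mathbb{R}^2}e^{i\tau\Phi(\xi,\eta)}m(\xi,\eta)\widehat{f}(\xi-\eta)\widehat{g}(\eta)\,d\eta.
\end{equation*}

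Factoring $e^{i\tau\Phi}=e^{i\tau\Lambda_\sigma(\xi)}e^{-i\tau\Lambda_\mu(\xi-\eta)}e^{-i\tau\Lambda_\nu(\eta)}$ and using Plancherel in $\xi$ (the $\xi$-only phase $e^{i\tau\Lambda_\sigma(\xi)}$ is an $L^2$ isometry), the $L^2_\xi$ norm of $T_\tau[f,g]$ equals the $L^2$ norm of the physical-space bilinear operator with symbol $m(\xi,\eta)$ applied to the pair $(e^{-i\tau\Lambda_\mu}f,\,e^{-i\tau\Lambda_\nu}g)$. The standard Coifman–Meyer type bound for symbols with $\|m\|_{S^\infty}\le 1$ yields
\begin{equation*}
\|T_\tau[f,g]\|_{L^2_\xi}\lesssim \|e^{-i\tau\Lambda_\mu}f\|_{L^q}\|e^{-i\tau\Lambda_\nu}g\|_{L^r},\qquad \tfrac{1}{2}=\tfrac{1}{q}+\tfrac{1}{r}.
\end{equation*}

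With this in hand the lemma follows from a simple dyadic split of the $\rho$-integral at the threshold $|\rho|=2^{\delta^2 m}$. On the region $|\rho|\le 2^{\delta^2 m}$, the parameter $\tau=s+2^{-p}\rho$ satisfies $|\tau-s|\le 2^{-p+\delta^2 m}$, so this contribution is controlled by the supremum on the right-hand side of \eqref{loca} times $\|\widetilde{\chi}\|_{L^1}$. On the region $|\rho|>2^{\delta^2 m}$, we exploit the Schwartz decay $|\widetilde{\chi}(\rho)|\lesssim_N \langle\rho\rangle^{-N}$ together with the crude bound
\begin{equation*}
\Big|\int e^{i\tau\Phi}m(\xi,\eta)\widehat{f}(\xi-\eta)\widehat{g}(\eta)\,d\eta\Big|\lesssim \|f\|_{L^2}\|g\|_{L^2}
\end{equation*}
(obtained by Cauchy--Schwarz in $\eta$ and $\|m\|_{L^\infty}\lesssim 1$), followed by integration in $\xi$ on the set $\{|\xi|\lesssim 2^{10m}\}$ provided by the cutoff $\varphi_{\le 10m}(\xi)$; this incurs a harmless factor $2^{10m}$ which is defeated by choosing $N\gtrsim 1/\delta^2$ large, producing the error $2^{-10m}\|f\|_{L^2}\|g\|_{L^2}$.

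The only nontrivial ingredient is the Coifman--Meyer-style boundedness of $M$ on $L^q\times L^r\to L^2$, which follows from decomposing the symbol $m(\xi,\eta)$ on a Littlewood–Paley partition in $(\xi-\eta,\eta)$ and controlling each piece by $\|m^{k,k_1,k_2}\|_{S^\infty}$ via the definition of $S^\infty$ (writing $m$ as the Fourier transform of an $L^1$ density and using Hölder); no new ideas are needed, and no loss in $p$ arises because the phase manipulation has exchanged the localization $\chi(2^{-p}\Phi)$ for a translation in the time variable of the linear evolutions. I expect no significant obstacle beyond bookkeeping the Schwartz tail and verifying the Coifman–Meyer bound in the form stated.
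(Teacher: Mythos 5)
Your proof is correct and follows essentially the same route as the paper: Fourier-expand $\chi(2^{-p}\Phi)$ in a dual variable $\rho$, recognize the inner integral as a bilinear multiplier with $S^\infty$ symbol acting on the time-translated evolutions $e^{-i(s+2^{-p}\rho)\Lambda_\mu}f$, $e^{-i(s+2^{-p}\rho)\Lambda_\nu}g$ (the bound you call Coifman–Meyer is exactly \eqref{mk6} from Lemma \ref{touse}(ii)), split at $|\rho|=2^{\delta^2 m}$, and absorb the tail using the rapid decay of $\widehat\chi$ together with the frequency cutoff $\varphi_{\leq 10m}$. One minor remark: your closing paragraph invokes a Littlewood–Paley decomposition of the symbol to justify the $L^q\times L^r\to L^2$ bound, which is superfluous — $\|m\|_{S^\infty}=\|\mathcal F^{-1}m\|_{L^1}\leq 1$ already gives the bound directly as a superposition of translates with an $L^1$ weight — but this does not affect the correctness of the argument.
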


\begin{proof}
We may assume that $m\geq 10$ and use the Fourier transform to write
\begin{equation}\label{loca2}
\chi(2^{-p}\Phi(\xi,\eta))=c\int_{\mathbb{R}}e^{i\rho 2^{-p}\Phi(\xi,\eta)}\widehat{\chi}(\rho)d\rho.
\end{equation}
The left-hand side of \eqref{loca} is dominated by
\begin{equation*}
C\int_{\mathbb{R}}|\widehat{\chi}(\rho)|\Big\Vert\varphi_{\leq 10m}(\xi)\int_{\mathbb{R}^2}e^{i(s+2^{-p}\rho)\Phi(\xi,\eta)}m(\xi,\eta)\widehat{f}(\xi-\eta)\widehat{g}(\eta)d\eta\Big\Vert_{L^2_\xi}\,d\rho.
\end{equation*}
Using \eqref{mk6}, the contribution of the integral over $|\rho|\leq 2^{\delta^2m}$ is dominated by the first term in the right-hand side of \eqref{loca}. The contribution of the integral over $|\rho|\geq 2^{\delta^2m}$ is arbitrarily small and is dominated by the second term in the right-hand side of \eqref{loca}.
\end{proof}

\subsubsection{Linear estimates} We note first the straightforward estimates,
\begin{equation}\label{Straightforward}
\Vert P_kf\Vert_{L^2}\lesssim\min\{2^{(1-50\delta)k},2^{-Nk}\}\Vert f\Vert_{Z_1\cap H^{N}}, 
\end{equation}
for $N\geq 0$. We prove now several linear estimates for functions in $Z_1\cap H^N_\Omega$. As in Lemma \ref{RotIBP}, it is important to take advantage of 
the fact that our functions are ``almost radial''. The bounds we prove here are much stronger than the bounds one would normally expect for 
general functions with the same localization properties, and this is important in the next two sections.   

\begin{lemma}\label{LinEstLem}
Assume that $N\geq 10$ and
\begin{equation}\label{Zs}
\|f\|_{Z_1}+\sup_{k\in\mathbb{Z},\,a\leq N}\|\Omega^a P_kf\|_{L^2}\leq 1.
\end{equation}
Let $\delta':=50\delta+1/(2N)$. For any $(k,j)\in\mathcal{J}$ and $n\in\{0,\ldots,j+1\}$ let (recall the notation \eqref{Alx80})
\begin{equation}\label{Alx100}
f_{j,k}:=P_{[k-2,k+2]}Q_{jk}f,\qquad \widehat{f_{j,k,n}}(\xi):=\varphi_{-n}^{[-j-1,0]}(2^{100}(|\xi|-\gamma_1))\widehat{f_{j,k}}(\xi).
\end{equation}
For any $\xi_0\in\mathbb{R}^2\setminus\{0\}$ and $\kappa,\rho\in[0,\infty)$ let $\mathcal{R}(\xi_0;\kappa,\rho)$ denote the rectangle
\begin{equation}\label{Alx100.1}
\mathcal{R}(\xi_0;\kappa,\rho):=\{\xi\in\mathbb{R}^2:\big|(\xi-\xi_0)\cdot \xi_0/|\xi_0|\big|\leq\rho,\,\big|(\xi-\xi_0)\cdot \xi_0^\perp/|\xi_0|\big|\leq\kappa\}.
\end{equation}
(i) Then, for any $(k,j)\in\mathcal{J}$, $n\in [0,j+1]$, and $\kappa,\rho\in(0,\infty)$ satisfying $\kappa+\rho\leq 2^{k-10}$
\begin{equation}\label{RadL2}
\begin{split}
\big\Vert \sup_{\theta\in\mathbb{S}^1}|\widehat{f_{j,k,n}}(r\theta)|\,\big\Vert_{L^2(rdr)}&\lesssim 2^{(1/2-49\delta)n-(1-\delta')j},
\end{split}
\end{equation}
\begin{equation}\label{FL1bd}
\int_{\mathbb{R}^2}|\widehat{f_{j,k,n}}(\xi)|\mathbf{1}_{\mathcal{R}(\xi_0;\kappa,\rho)}(\xi)\,d\xi\lesssim \kappa 2^{-j+\delta'j}2^{-49\delta n}\min(1,2^n\rho 2^{-k})^{1/2},
\end{equation}
\begin{equation}\label{FLinftybd}
\|\widehat{f_{j,k,n}}\|_{L^\infty}\lesssim 
\begin{cases}
2^{(\delta+(1/2N)) n}2^{-(1/2-\delta')(j-n)}\,\,&\text{ if }|k|\leq 10,\\
2^{-\delta'k}2^{-(1/2-\delta')(j+k)}\,\,&\text{ if }|k|\geq 10,
\end{cases}
\end{equation}
and
\begin{equation}\label{FLinftybdDER}
\|D^\beta\widehat{f_{j,k,n}}\|_{L^\infty}\lesssim_{|\beta|}
\begin{cases}
2^{|\beta|j}2^{(\delta+1/(2N)) n}2^{-(1/2-\delta')(j-n)}\,\,&\text{ if }|k|\leq 10,\\
2^{|\beta|j}2^{-\delta' k}2^{-(1/2-\delta')(j+k)}\,\,&\text{ if }|k|\geq 10.
\end{cases}
\end{equation}

(ii) (Dispersive bounds) If $m\geq 0$ and $|t|\in[2^{m}-1,2^{m+1}]$ then
\begin{equation}\label{LinftyBd}\big\| e^{-it\Lambda}f_{j,k,n}\big\|_{L^\infty}\lesssim\big\| \widehat{f_{j,k,n}}\big\|_{L^1}\lesssim 2^k2^{-j+50\delta j}2^{-49\delta n},
\end{equation}
\begin{equation}\big\| e^{-it\Lambda}f_{j,k,0}\big\|_{L^\infty}\lesssim 2^{3k/2}2^{-m+50\delta j},\quad\text{ if }\,\,|k|\geq 10.\label{LinftyBd1.1}
\end{equation}
Recall the operators $A_{n,\gamma_0}$ defined in \eqref{aop}. If $j\leq (1-\delta^2)m+|k|/2$ and $|k|+\D\leq m/2$ then we have the more precise bounds
\begin{equation}\label{LinftyBd2}
\big\| e^{-it\Lambda}A_{\leq 0,\gamma_0}f_{j,k,n}\big\|_{L^\infty}\lesssim 
\begin{cases}
2^{-m+2\delta^2m}2^{-(j-n)(1/2-\delta')}2^{n(\delta+1/(2N))}&\text{ if }\,\,n\geq 1,\\
2^{-m+2\delta^2m}2^k2^{-(1/2-\delta')j}&\text{ if }\,\,n=0.
\end{cases}
\end{equation}
Moreover, for $l\geq 1$,
\begin{equation}\label{LinftyBd2.5}
\big\| e^{-it\Lambda}A_{l,\gamma_0}f_{j,k,0}\big\|_{L^\infty}\lesssim 
\begin{cases}
2^{-m+2\delta^2m}2^{\delta'j}2^{m/2-j/2-l/2-\max(j,l)/2}\,\,&\text{ if }\,\,2l+\max(j,l)\geq m,\\
2^{-m+2\delta^2m}2^{\delta'j}2^{(l-j)/2}\,\,&\text{ if }\,\,2l+\max(j,l)\leq m.
\end{cases}
\end{equation}
In particular, if $j\leq (1-\delta^2)m+|k|/2$ and $|k|+\D\leq m/2$ then
\begin{equation}\label{LinftyBd3}
\begin{split}
&\big\| e^{-it\Lambda}A_{\leq 0,\gamma_0}f_{j,k}\big\|_{L^\infty}\lesssim 2^{-m+2\delta^2m}2^k2^{j(\delta+1/(2N))},\\
&\sum_{l\geq 1}\big\| e^{-it\Lambda}A_{l,\gamma_0}f_{j,k}\big\|_{L^\infty}\lesssim 2^{-m+2\delta^2m}2^{\delta'j}2^{(m-3j)/6}.
\end{split}
\end{equation}
For all $k\in\mathbb{Z}$ we have the bound
\begin{equation}\label{LinftyBd3.5}
\begin{split}
&\big\| e^{-it\Lambda}A_{\leq 0,\gamma_0}P_kf\big\|_{L^\infty}\lesssim (2^{k/2}+2^{2k})2^{-m}\big[2^{51\delta m}+2^{m(2\delta+1/(2N))}\big],\\
&\big\| e^{-it\Lambda}A_{\geq 1,\gamma_0}P_kf\big\|_{L^\infty}\lesssim 2^{-5m/6+2\delta^2m}.
\end{split}
\end{equation}
\end{lemma}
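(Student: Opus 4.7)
The plan is to exploit the almost-radial structure built into the hypothesis \eqref{Zs} (control of many copies of $\Omega$), combined with the thin annular localization near the special frequency $\gamma_1$ provided by the operators $A^{(j)}_{n,\gamma_1}$ in Definition \ref{MainZDef}. From the definition of the $Z_1$ norm, one has the baseline bound $\|\widehat{f_{j,k,n}}\|_{L^2}\lesssim 2^{(1/2-49\delta)n}2^{-(1-50\delta)j}$, and all the estimates in part (i) will be extracted from this together with the angular smoothness.

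For part (i), I would prove \eqref{RadL2} by decomposing $\widehat{f_{j,k,n}}$ into angular modes using the operator $\mathcal{R}_{\leq L}$ from \eqref{RadOp} with $L\approx N$: the contribution of modes of frequency $\geq L$ is negligible because of $\sum_{a\leq N}\|\Omega^a P_kf\|_{L^2}\le 1$, while for the remaining modes one applies Sobolev embedding on $\mathbb{S}^1$, paying a loss $2^{L/2}\lesssim 2^{j/(2N)}$ which is absorbed into $\delta'$. The rectangle bound \eqref{FL1bd} then follows from Cauchy--Schwarz, using that the intersection of $\mathcal{R}(\xi_0;\kappa,\rho)$ with the annulus $\{||\xi|-\gamma_1|\approx 2^{-n}\}$ has area bounded by $\kappa\cdot\min(\rho,2^{-n})$, and exploiting \eqref{RadL2} in the radial direction. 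The pointwise bounds \eqref{FLinftybd}--\eqref{FLinftybdDER} follow from the same angular decomposition together with the trivial observation that spatial localization to $|x|\lesssim 2^j$ costs at most $2^{j|\beta|}$ per derivative.

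For part (ii), I would write the dispersive propagator as the oscillatory integral
\[
(e^{-it\Lambda}f_{j,k,n})(x)=\frac{1}{4\pi^2}\int_{\mathbb{R}^2}e^{i(x\cdot\xi-t\Lambda(\xi))}\widehat{f_{j,k,n}}(\xi)\,d\xi.
\]
The basic bound \eqref{LinftyBd} comes from $\|\widehat{f_{j,k,n}}\|_{L^1}$, controlled by $\|\widehat{f_{j,k,n}}\|_{L^\infty}$ times the area $2^{2k-n}$ of its support (and summed suitably). The bound \eqref{LinftyBd1.1} for $|k|\ge 10$ uses non-stationary phase in the radial direction together with Lemma \ref{tech5}, since $\lambda'(r)$ is bounded away from zero once $r$ is away from $\gamma_0$. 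For the refined estimates \eqref{LinftyBd2}--\eqref{LinftyBd2.5} near $\gamma_0$, I would first integrate by parts in the angular variable using $\Omega_\xi$ (as in Lemma \ref{tech5} (ii)), which restricts the effective support to a thin sector of angular width $\approx 2^{j-m}$ around the radial stationary point $\xi/|\xi|=\pm x/|x|$; this step uses crucially the control of $\Omega^a f$ and produces the factor $2^{\delta' j}$. What remains is a one-dimensional radial oscillatory integral with phase $\lambda(r)$, which has a degenerate critical point at $r=\gamma_0$ where $\lambda''=0$ but $\lambda'''\neq 0$, i.e., an Airy-type singularity.

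The main obstacle is this degenerate stationary phase analysis in the radial direction. On $A_{\leq 0,\gamma_0}$, the stationary point lies in the support, and standard Airy-type bounds combined with the width of the annulus give the decay $2^{-m+2\delta^2m}$ in \eqref{LinftyBd2}. For $A_{l,\gamma_0}$ with $l\geq 1$, the stationary point is at distance $2^{-l}$ from the support, so one must balance three regimes: (a) apply non-stationary phase in $r$ when $2^{-l}$ exceeds the natural radial scale $(t\lambda''(r))^{-1/2}$ evaluated near $\gamma_0$, giving the decay $2^{m/2-j/2-l/2-\max(j,l)/2}$ of the first branch of \eqref{LinftyBd2.5}; (b) use the annular width $2^{-j}$ when the support is thinner than the Airy scale, giving the second branch $2^{(l-j)/2}$; and (c) interpolate between the two in the transition region $2l+\max(j,l)\approx m$. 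I would carry this out by an explicit dyadic splitting of the radial variable around $\gamma_0$ into pieces of size $2^{-l'}$ for $l'\geq l$, summing the resulting contributions. Finally, \eqref{LinftyBd3} and \eqref{LinftyBd3.5} are obtained by summing \eqref{LinftyBd2}--\eqref{LinftyBd2.5} in $n$ and $j$ respectively, using $\|f\|_{Z_1}\leq 1$ to close; the sum saturates at $j=m$ with the expected $2^{-5m/6}$ rate.
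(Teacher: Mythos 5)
Your plan for part (i) matches the paper's: interpolate the $Z_1$-norm $L^2$ bound on $f_{j,k,n}$ against the $\Omega^N P_k f$ control through the angular projection $\mathcal{R}_{\leq L}$ (the paper packages this as the interpolation inequality $\|\sup_\theta|h(r\theta)|\|_{L^2(rdr)}\lesssim L^{1/2}\|h\|_{L^2}+L^{1/2-N}\|\Omega^N h\|_{L^2}$), and then deduce \eqref{FL1bd}--\eqref{FLinftybdDER} from Cauchy--Schwarz in the radial direction together with the $2^j$ physical localization. The high-level strategy for part (ii) — Hausdorff--Young for \eqref{LinftyBd}, dispersion away from $\gamma_0$ for \eqref{LinftyBd1.1}, and a stationary-phase argument that localizes the Fourier integral to a small rectangle and then invokes \eqref{FL1bd}/\eqref{FLinftybd} — is also the one the paper uses.

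That said, a few of your intermediate claims are wrong in ways that would not reproduce the stated exponents. For \eqref{LinftyBd} you need $\|\widehat{f_{j,k,n}}\|_{L^1}\lesssim|\mathrm{supp}|^{1/2}\|f_{j,k,n}\|_{L^2}$ by Cauchy--Schwarz; your $\|\widehat{f_{j,k,n}}\|_{L^\infty}\times\mathrm{area}$ route only gives $2^{-(1/2-\delta')j}$, not $2^{-(1-50\delta)j}$. More importantly, the angular width of the effective support after integrating by parts in $\Omega_\xi$ is $\approx 2^{-m/2+\delta^2 m}$ (dictated by the transverse curvature of $\Lambda$ at time scale $2^m$), \emph{not} $2^{j-m}$; the scale $2^{j-m}$ is the \emph{radial} width $\rho$, and only once $j\gtrsim m/2$ (it comes from $\kappa_r\approx 2^{\delta^2 m}(2^{(m+|k|/2-k)/2}+2^j)$ divided by $|\nabla^2_\xi\Psi|$). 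The factor $2^{\delta' j}$ does not come from the angular integration-by-parts step itself but from applying \eqref{FL1bd} or \eqref{FLinftybd} to the resulting $\kappa\times\rho$ rectangle. With your widths as stated, the estimates \eqref{LinftyBd2}--\eqref{LinftyBd2.5} would be off by a factor depending on the sign of $j-m/2$, so the proof as sketched would not close. Also, \eqref{LinftyBd1.1} is a \emph{stationary}-phase dispersion estimate for the radial kernel (the stationary point is present but $\lambda''$ is bounded below when $|k|\geq 10$), not a non-stationary-phase bound.
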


\begin{proof} (i) The hypothesis gives 
\begin{equation}\label{Alx101}
\Vert f_{j,k,n}\Vert_{L^2}\lesssim 2^{(1/2-49\delta)n-(1-50\delta)j},\qquad \big\Vert \Omega^{N}f_{j,k,n}\big\Vert_{L^2}\lesssim \Vert \Omega^{N}P_kf\Vert_{L^2}\lesssim 1.
\end{equation} 
The bounds \eqref{RadL2} follow using the general interpolation inequality
\begin{equation}\label{Alx101.1}
\begin{split}
\big\Vert \sup_{\theta\in\mathbb{S}^1}|h(r\theta)|\,\big\Vert_{L^2(rdr)}
&\lesssim L^{1/2}\Vert h\Vert_{L^2}+L^{1/2-N}\Vert \Omega^{N}h\Vert_{L^2},
\end{split}
\end{equation}
for any $h\in L^2(\mathbb{R}^2)$ and $L\geq 1$, which follows easily using the operators $\mathcal{R}_{\leq L}$ defined in \eqref{RadOp}.

Inequality \eqref{FL1bd} follows from \eqref{RadL2}. Indeed, the left-hand side is dominated by
\begin{equation*}
C(\kappa 2^{-k})\sup_{\theta\in\mathbb{S}^1}\int_{\mathbb{R}}|\widehat{f_{j,k,n}}(r\theta)|\mathbf{1}_{\mathcal{R}(\xi_0;\kappa,\rho)}(r\theta)\,rdr\lesssim \sup_{\theta\in\mathbb{S}^1}\big\Vert \widehat{f_{j,k,n}}(r\theta)\,\big\Vert_{L^2(rdr)}(\kappa 2^{-k})[2^k\min(\rho,2^{k-n})]^{1/2},
\end{equation*}
which gives the desired result.

We now consider \eqref{FLinftybd}. For any $\theta\in\mathbb{S}^1$ fixed we have
\begin{equation*}
\begin{split}
\|\widehat{f_{j,k,n}}(r\theta)\|_{L^\infty}&\lesssim 2^{j/2}\|\widehat{f_{j,k,n}}(r\theta)\|_{L^2(dr)}+2^{-j/2}\|(\partial_r\widehat{f_{j,k,n}})(r\theta)\|_{L^2(dr)}\\
&\lesssim 2^{j/2}2^{-k/2}\|\widehat{f_{j,k,n}}(r\theta)\|_{L^2(rdr)},
\end{split}
\end{equation*} 
using the support property of $Q_{jk}f$ in the physical space. The desired bound follows using \eqref{RadL2} and the observation that $\widehat{f_{j,k,n}}=0$ unless $n=0$ or $k\in[-10,10]$. The bound \eqref{FLinftybdDER} follows also since differentiation in the Fourier space corresponds essentially to multiplication by factors of $2^j$, due to space localization.

(ii) The bound \eqref{LinftyBd} follows directly from Hausdorff-Young and \eqref{Alx101}. To prove \eqref{LinftyBd1.1}, if $|k|\geq 10$ then the standard dispersion estimate 
\begin{equation}\label{DiEs}
\Big|\int_{\mathbb{R}^2}e^{-it\lambda(|\xi|)}\varphi_k(\xi)e^{ix\cdot\xi}\,d\xi\Big|\lesssim 2^{2k}(1+|t|2^{k+|k|/2})^{-1}
\end{equation}
gives
\begin{equation}\label{LinEstLemA1}
\Vert e^{-it\Lambda}f_{j,k,n}\Vert_{L^\infty}\lesssim \frac{2^{2k}}{1+|t|2^{k/2}}\Vert f_{j,k,n}\Vert_{L^1}\lesssim  \frac{2^{2k}}{1+|t|2^{k/2}}2^{50\delta j}.
\end{equation}
The bound \eqref{LinftyBd1.1} follows (in the case $m\leq 10$ and $k\geq 0$ one can use \eqref{LinftyBd}).

We prove now \eqref{LinftyBd2}. The operator $A_{\leq 0,\gamma_0}$ is important here, because the function $\lambda$ has an inflection point at $\gamma_0$, see \eqref{ph3}. Using Lemma \ref{tech5} (i)  and the observation that $|(\nabla\Lambda)(\xi)|\approx 2^{|k|/2}$ if $|\xi|\approx 2^k$, it is easy to see that
\begin{equation*}
\big|\big(e^{-it\Lambda}A_{\leq 0,\gamma_0}f_{j,k,n}\big)(x)\big|\lesssim 2^{-10m} \qquad \text{ unless }|x|\approx 2^{m+|k|/2}.
\end{equation*}
Also, letting $f'_{j,k,n}:=\mathcal{R}_{\leq m/5}f_{j,k,n}$, see \eqref{RadOp}, we have $\|f_{j,k,n}-f'_{j,k,n}\|_{L^2}\lesssim 2^{-m(N/5)}$ therefore
\begin{equation}\label{Alx90.3}
\big\|e^{-it\Lambda}A_{\leq 0,\gamma_0}(f_{j,k,n}-f'_{j,k,n})\big\|_{L^\infty}\lesssim \big\|\widehat{f_{j,k,n}}-\widehat{f'_{j,k,n}}\big\|_{L^1}\lesssim 2^{-2m}2^k.
\end{equation}
On the other hand, if $|x|\approx 2^{m+|k|/2}$ then, using again Lemma \ref{tech5} and \eqref{FLinftybdDER},
\begin{equation}\label{Alx90.4}
\begin{split}
\big(e^{-it\Lambda}A_{\leq 0,\gamma_0}f'_{j,k,n}\big)(x)=C\int_{\mathbb{R}^2}&e^{i\Psi(\xi)}\varphi(\kappa_r^{-1}\nabla_\xi\Psi)\varphi(\kappa_\theta^{-1}\Omega_\xi\Psi)\\
&\times\widehat{f'_{j,k,n}}(\xi)\varphi_{\geq -100}(|\xi|-\gamma_0)d\xi+O(2^{-10m}),
\end{split}
\end{equation}
where
\begin{equation}\label{Alx90.5}
\Psi:=-t\Lambda(\xi)+x\cdot\xi,\quad\kappa_r:=2^{\delta^2m}\big(2^{(m+|k|/2-k)/2}+2^{j}\big),\quad\kappa_\theta:=2^{\delta^2m}2^{(m+k+|k|/2)/2}.
\end{equation}

We notice that the support of the integral in \eqref{Alx90.4} is contained in a $\kappa\times\rho$ rectangle in the direction of the vector $x$, where $\rho\lesssim \frac{\kappa_r}{2^{m+|k|/2-k}}$ and $\kappa \lesssim \frac{\kappa_\theta}{2^{m+|k|/2}}$, $\kappa\lesssim\rho$. This is because the function $\lambda''$ does not vanish in the support of the integral, so $\lambda''(|\xi|)\approx 2^{|k|/2-k}$. Therefore we can estimate the contribution of the integral in \eqref{Alx90.4} using either \eqref{FL1bd} or \eqref{FLinftybd}. More precisely, if $j\leq (m+|k|/2-k)/2$ then we use \eqref{FLinftybd} while if $j\geq (m+|k|/2-k)/2$ then we use \eqref{FL1bd} (and estimate $\min(1,2^n\rho 2^{-k})\leq 2^n\rho 2^{-k}$); in both cases the desired estimate follows.

We prove now \eqref{LinftyBd2.5}. We may assume that $|k|\leq 10$ and $m\geq \D$. As before, we may assume that $|x|\approx 2^m$ and replace $f_{j,k,0}$ with $f'_{j,k,0}$. As in \eqref{Alx90.4}, we have
\begin{equation}\label{Alx90.6}
\begin{split}
\big(e^{-it\Lambda}A_{l,\gamma_0}f_{j,k,0}\big)(x)=C\int_{\mathbb{R}^2}&e^{i\Psi(\xi)}\varphi(2^{-m/2-\delta^2m}\Omega_\xi\Psi)\\
&\times\widehat{f'_{j,k,0}}(\xi)\varphi_{-l-100}(|\xi|-\gamma_0)d\xi+O(2^{-2m}),
\end{split}
\end{equation}
where $\Psi$ is as in \eqref{Alx90.5}. The support of the integral above is contained in a $\kappa\times\rho$ rectangle in the direction of the vector $x$, where $\rho\lesssim 2^{-l}$ and $\kappa \lesssim 2^{-m/2+\delta^2m}$. Since $|\widehat{f'_{j,k,0}}(\xi)|\lesssim 2^{-j/2+\delta'j}$ in this rectangle (see \eqref{FLinftybd}), the bound in the first line of \eqref{LinftyBd2.5} follows if $l\geq j$. On the other hand, if $l\leq j$ then we use \eqref{FL1bd} to show that the absolute value of the integral in \eqref{Alx90.6} is dominated by $C2^{-j+\delta'j}\kappa\rho^{1/2}$, which gives again the bound in the first line of \eqref{LinftyBd2.5}.

It remains to prove the stronger bound in the second line of \eqref{LinftyBd2.5} in the case $2l+\max(j,l)\leq m$. We notice that $\lambda''(|\xi|)\approx 2^{-l}$ in the support of the integral. Assume that $x=(x_1,0)$, $x_1\approx 2^m$, and notice that we can insert an additional cutoff function of the form
\begin{equation*} 
\varphi[\kappa_r^{-1}(x_1-t\lambda'(|\xi_1|)\,\mathrm{sgn}\,(\xi_1))]\qquad\text{where}\qquad\kappa_r:=2^{\delta^2m}(2^{(m-l)/2}+2^j+2^l),
\end{equation*}
in the integral in \eqref{Alx90.6}, at the expense of an acceptable error. This can be verified using Lemma \ref{tech5} (i). The support of the integral is then contained in a $\kappa\times\rho$ rectangle in the direction of the vector $x$, where $\rho\lesssim \kappa_r2^{-m}2^l$ and $\kappa \lesssim 2^{-m/2+\delta^2m}$. The desired estimate then follows as before, using the $L^\infty$ bound \eqref{FLinftybd} if $2j\leq m-l$ and the integral bound \eqref{FL1bd} if $2j\geq m-l$.

The bounds in \eqref{LinftyBd3} follow from \eqref{LinftyBd2} and \eqref{LinftyBd2.5} by summation over $n$ and $l$ respectively. Finally, the bounds in \eqref{LinftyBd3.5} follow by summation (use \eqref{LinftyBd} if $j\geq (1-\delta^2)m$ or $m\leq 4\D$, use \eqref{LinftyBd1.1} if $j\leq (1-\delta^2)m$ and $|k|\geq 10$, and use \eqref{LinftyBd3} if $j\leq (1-\delta^2)m$ and $|k|\leq 10$).
\end{proof}

\begin{remark} We notice that we also have the bound (with no loss of $2^{2\delta^2m}$)
\begin{equation}\label{NewLinfty}
\big\| e^{-it\Lambda}A_{\leq 0,\gamma_0}f_{j,k,0}\big\|_{L^\infty}\lesssim 2^{-m}2^k2^{-(1/2-\delta'-\delta)j},
\end{equation}
provided that $j\leq (1-\delta^2)m+|k|/2$ and $|k|+\D\leq m/2$. Indeed, this follows from \eqref{LinftyBd2} if $j\geq m/10$. On the other hand, if $j\leq m/10$ then we can decompose (compare with \eqref{Alx90.4}),
\begin{equation*}
\big(e^{-it\Lambda}A_{\leq 0,\gamma_0}f_{j,k,0}\big)(x)=\sum_{p\geq 0}C\int_{\mathbb{R}^2}e^{i\Psi(\xi)}\varphi_p^{[0,\infty)}(\kappa^{-1}\nabla_\xi\Psi)\widehat{f_{j,k,0}}(\xi)\varphi_{\geq -100}(|\xi|-\gamma_0)d\xi,
\end{equation*}
where $\kappa:=2^{(m+|k|/2-k)/2}$. The contribution of $p=0$ is estimated as before, using \eqref{FLinftybd}, while for $p\geq 1$ we can first integrate by parts at most three times and then estimate the integral in the same way.
\end{remark}

\section{Dispersive analysis, II: the function $\partial_t\mathcal{V}$}\label{partialt}

In this section we prove several lemmas describing the function $\partial_t \mathcal{V}$. These lemmas rely on the Duhamel formula \eqref{DuhamelDER2},
\begin{equation}\label{mj1}
\Omega^a_\xi(\partial_t\widehat{\mathcal{V}})(\xi,s)=e^{is\Lambda(\xi)}\Omega^a_\xi\widehat{\mathcal{N}_{2}}(\xi,s)+e^{is\Lambda(\xi)}\Omega^a_\xi\widehat{\mathcal{N}_{3}}(\xi,s)+e^{is\Lambda(\xi)}\Omega^a_\xi\widehat{\mathcal{N}_{\geq 4}}(\xi,s),
\end{equation}
where
\begin{equation}\label{mj1.1}
\begin{split}
e^{is\Lambda(\xi)}\Omega^a_\xi\widehat{\mathcal{N}_{2}}(\xi,s)=\sum_{\mu,\nu\in\{+,-\}}&\sum_{a_1+a_2=a}\int_{\mathbb{R}^2}e^{is\Phi_{+\mu\nu}(\xi,\eta)}\mathfrak{m}_{\mu\nu}(\xi,\eta)(\Omega^{a_1}\widehat{\mathcal{V}_{\mu}})(\xi-\eta,s)(\Omega^{a_2}\widehat{\mathcal{V}_{\nu}})(\eta,s)\,d\eta
\end{split}
\end{equation}
and
\begin{equation}\label{mj1.2}
\begin{split}
e^{is\Lambda(\xi)}\Omega^a_\xi\widehat{\mathcal{N}_{3}}(\xi,s)=\sum_{\mu,\nu,\beta\in\{+,-\}}&\sum_{a_1+a_2+a_3=a}\int_{\mathbb{R}^2\times\mathbb{R}^2}e^{is\widetilde{\Phi}_{+\mu\nu\beta}(\xi,\eta,\sigma)}\mathfrak{n}_{\mu\nu\beta}(\xi,\eta,\sigma)\\
&\times(\Omega^{a_1}\widehat{\mathcal{V}_{\mu}})(\xi-\eta,s)(\Omega^{a_2}\widehat{\mathcal{V}_{\nu}})(\eta-\sigma,s)(\Omega^{a_3}\widehat{\mathcal{V}_{\beta}})(\sigma,s)\,d\eta d\sigma.
\end{split}
\end{equation}
Recall also the assumptions on the nonlinearity $\mathcal{N}_{\geq 4}$ and the profile $\mathcal{V}$ (see \eqref{bootstrap2}),
\begin{equation}\label{mj2}
\begin{split}
&\|\mathcal{V}(t)\|_{H^{N_0}\cap H_\Omega^{N_1,N_3}}\leq \varep_1(1+t)^{\delta^2},\qquad \|\mathcal{V}(t)\|_{Z}\leq \varep_1,\\
&\|\mathcal{N}_{\geq 4}(t)\|_{H^{N_0-N_3}\cap H_{\Omega}^{N_1}}\lesssim \varep_{1}^2(1+t)^{-2},
\end{split}
\end{equation}
and the symbol-type bounds \eqref{Assumptions2} on the multipliers $\mathfrak{m}_{\mu\nu}$. Given $\Phi=\Phi_{\sigma\mu\nu}$ as in \eqref{phasedef} let
\begin{equation}\label{Alx200}
\begin{split}
&\Xi=\Xi_{\mu\nu}(\xi,\eta):=(\nabla_\eta\Phi_{\sigma\mu\nu})(\xi,\eta)=(\nabla\Lambda_\mu)(\xi-\eta)-(\nabla\Lambda_\nu)(\eta),\qquad  \Xi:\mathbb{R}^2\times\mathbb{R}^2\to\mathbb{R}^2,\\
&\Theta=\Theta_\mu(\xi,\eta):=(\Omega_\eta\Phi_{\sigma\mu\nu})(\xi,\eta)=\frac{\lambda'_\mu(|\xi-\eta|)}{|\xi-\eta|}(\xi\cdot\eta^\perp),\qquad \Theta:\mathbb{R}^2\times\mathbb{R}^2\to\mathbb{R}.
\end{split}
\end{equation}

In this section we prove three lemmas describing the function $\partial_t\mathcal{V}$.

\begin{lemma}\label{dtfLem1}

(i) Assume \eqref{mj1}--\eqref{mj2}, $m\geq 0$, $s\in [2^{m}-1,2^{m+1}]$, $k\in\mathbb{Z}$, $\sigma\in\{+,-\}$. Then
\begin{equation}\label{Brc1}
\big\|(\partial_t\mathcal{V}_{\sigma})(s)\big\|_{H^{N_0-N_3}\cap H^{N_1}_{\Omega}}\lesssim \varep_1^22^{-5m/6+6\delta^2 m},
\end{equation}
\begin{equation}\label{Brc2}
\sup_{a\leq N_1/2+20,\,2a+|\alpha|\leq N_1+N_4}\|e^{-is\Lambda_\sigma}P_kD^\alpha\Omega^a(\partial_t\mathcal{V}_{\sigma})(s)\|_{L^{\infty}}\lesssim\varep_1^22^{-5m/3+6\delta^2 m}.
\end{equation}

(ii) In addition, if $a\leq N_1/2+20$ and $2a+|\alpha|\leq N_1+N_4$, then we may decompose
\begin{equation}\label{Brc4}
P_{k}D^\alpha\Omega^{a}(\partial_{t}\mathcal{V}_{\sigma})=\varepsilon_{1}^{2}\sum_{a_1+a_2=a,\,\alpha_1+\alpha_2=\alpha,\,\mu,\nu\in \{+,-\}}\sum_{[(k_1,j_1),(k_2,j_2)]\in X_{m,k}}A^{a_1,\alpha_1;a_2,\alpha_2}_{k;k_1,j_1;k_2,j_2}+\varepsilon_{1}^{2}P_kE_\sigma^{a,\alpha},
\end{equation}
where
\begin{equation}\label{vd2}
\|P_kE_\sigma^{a,\alpha}(s)\|_{L^2}\lesssim 2^{-3m/2+5\delta m}.
\end{equation}
Moreover, with $\mathfrak{m}_{+\mu\nu}(\xi,\eta):=\mathfrak{m}_{\mu\nu}(\xi,\eta)$, $\mathfrak{m}_{-\mu\nu}(\xi,\eta):=\overline{\mathfrak{m}_{(-\mu)(-\nu)}(-\xi,-\eta)}$, we have
\begin{equation}\label{Brc4.1}
\mathcal{F}\{A^{a_1,\alpha_1;a_2,\alpha_2}_{k;k_1,j_1;k_2,j_2}\}(\xi,s):=\int_{\mathbb{R}^2}e^{is\Phi(\xi,\eta)}\mathfrak{m}_{\sigma\mu\nu}(\xi,\eta)\varphi_k(\xi)\widehat{f^\mu_{j_1,k_1}}(\xi-\eta,s)\widehat{f^\nu_{j_2,k_2}}(\eta,s)d\eta,
\end{equation}
where
\begin{equation*}
f^\mu_{j_1,k_1}=\varep_1^{-1}P_{[k_1-2,k_1+2]}Q_{j_1k_1}D^{\alpha_1}\Omega^{a_1}\mathcal{V}_\mu,\qquad f^\nu_{j_2,k_2}=\varep_1^{-1}P_{[k_2-2,k_2+2]}Q_{j_2k_2}D^{\alpha_2}\Omega^{a_2}\mathcal{V}_\nu.
\end{equation*}
Let $N'_0=N_1-N_4=1/\delta$. The sets $X_{m,k}$ and the functions $A^{a_1,\alpha_1;a_2,\alpha_2}_{k;k_1,j_1;k_2,j_2}$ have the following properties:

(1) $X_{m,k}=\emptyset$ unless $m\geq \D^2$, $k\in[-3m/4,m/N'_0]$ and
\begin{equation}\label{vd4}
\begin{split}
X_{m,k}\subseteq\big\{[(k_1,j_1),(k_2,j_2)]\in \mathcal{J}\times\mathcal{J}: k_1,k_2\in[-3m/4,m/N'_0],\, \max(j_1,j_2)\leq 2m\big\}.
\end{split}
\end{equation}

(2) If $[(k_1,j_1),(k_2,j_2)]\in X_{m,k}$ and $\min(k_{1},k_{2})\leq -2m/N'_{0}$, then 
\begin{equation}\label{vd5}
\max(j_{1},j_{2})\leq (1-\delta^{2})m-|k|,\quad \max(|k_{1}-k|,|k_{2}-k|)\leq 100,\quad \mu=\nu,
\end{equation}
and
\begin{equation}\label{Dbound3}
\big\|A^{a_1,\alpha_1;a_2,\alpha_2}_{k;k_1,j_1;k_2,j_2}(s)\big\|_{L^{2}}\lesssim 2^{2k}2^{-m+6\delta^{2}m}.
\end{equation}

(3) If $[(k_1,j_1),(k_2,j_2)]\in X_{m,k}$, $\min(k_{1},k_{2})\geq -5m/N'_{0}$, and $k\leq \min(k_1,k_2)-200$, then
\begin{equation}\label{vd6}
\max(j_{1},j_{2})\leq (1-\delta^{2})m-|k|,\quad\max(|k_{1}|,|k_{2}|)\leq 10,\quad \mu=-\nu,
\end{equation}
and
\begin{equation}\label{Dbound4.0}
\big\|A^{a_1,\alpha_1;a_2,\alpha_2}_{k;k_1,j_1;k_2,j_2}(s)\big\|_{L^{2}}\lesssim 2^{k}2^{-m+4\delta m}.
\end{equation}

(4) If $[(k_1,j_1),(k_2,j_2)]\in X_{m,k}$ and $\min(k,k_{1},k_{2})\geq -6m/N'_0$ then 
\begin{equation}\label{vd6.6}
\text{ either }\quad j_1\leq 5m/6\quad\text{ or }\quad |k_1|\leq 10,
\end{equation}
\begin{equation}\label{vd6.65}
\text{ either }\quad j_2\leq 5m/6\quad\text{ or }\quad |k_2|\leq 10,
\end{equation}
and
\begin{equation}\label{vd6.7}
\min(j_{1},j_{2})\leq (1-\delta^{2})m.
\end{equation}
Moreover,
\begin{equation}\label{vd6.9}
\big\|A^{a_1,\alpha_1;a_2,\alpha_2}_{k;k_1,j_1;k_2,j_2}(s)\big\|_{L^{2}}\lesssim 2^{k}2^{-m+4\delta m},
\end{equation}
and
\begin{equation}\label{vd6.8}
\text{ if }\,\,\max(j_{1},j_{2})\geq (1-\delta^{2})m-|k|\,\,\text{ then }\,\,\big\|A^{a_1,\alpha_1;a_2,\alpha_2}_{k;k_1,j_1;k_2,j_2}(s)\big\|_{L^{2}}\lesssim 2^{-4m/3+4\delta m}.
\end{equation}

(iii) As a consequence of \eqref{vd2}, \eqref{Dbound3}, \eqref{Dbound4.0}, \eqref{vd6.9}, if $a\leq N_1/2+20$, and $2a+|\alpha|\leq N_1+N_4$ then we have the $L^2$ bound
\begin{equation}\label{vd7}
\big\|P_{k}D^\alpha\Omega^{a}(\partial_{t}\mathcal{V}_{\sigma})\big\|_{L^2}\lesssim \varep_1^2\big[2^{k}2^{-m+5\delta m}+2^{-3m/2+5\delta m}\big].
\end{equation}
\end{lemma}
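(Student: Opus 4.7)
My plan is to start from the Duhamel identity \eqref{mj1} and split $\partial_t\mathcal{V}_\sigma$ into three contributions: the quartic-and-higher piece $e^{is\Lambda_\sigma}\mathcal{N}_{\geq 4}$, the cubic piece $e^{is\Lambda_\sigma}\mathcal{N}_3$, and the main quadratic piece $e^{is\Lambda_\sigma}\mathcal{N}_2$. The quartic term is absorbed into the error $E_\sigma^{a,\alpha}$ directly using the hypothesis $\|\mathcal{N}_{\geq 4}(s)\|_{H^{N_0-N_3}\cap H^{N_1}_\Omega}\lesssim \varepsilon_1^2 2^{-2m}$ from \eqref{mj2}; likewise the $L^\infty$ bound \eqref{Brc2} restricted to this piece is far better than needed after a Bernstein step (the high-frequency tail is killed by $N_0-N_3 \gg N_1+N_4+2$).

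For the cubic piece $\mathcal{N}_3$ of \eqref{mj1.2}, I will put all of it into $E_\sigma^{a,\alpha}$. Using the symbol bounds \eqref{Assumptions3}, Lemma \ref{touse}, and the linear dispersive estimate \eqref{LinftyBd3.5}, two of the three factors $e^{-is\Lambda_\bullet}\Omega^{a_i}\mathcal{V}_\bullet$ (the ones whose $Z$-norm-controlled output gives the best decay after Bernstein) can be placed in $L^\infty$ with bound $\lesssim \varepsilon_1 2^{-5m/6+C\delta^2 m}$, while the third stays in $L^2$ with bound $\lesssim \varepsilon_1 2^{\delta^2 m}$; combined with the $L^1$ cost of the symbol and the $s$-independent unitary $e^{is\Lambda_\sigma}$, this yields $\lesssim \varepsilon_1^3 2^{-5m/3+C\delta^2 m}$, comfortably inside \eqref{vd2}. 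The same pair of $L^\infty$ bounds immediately delivers the $L^\infty$ statement \eqref{Brc2} for the cubic piece and, after taking $L^2$ instead, gives \eqref{Brc1}.

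The remaining, main, work is the decomposition \eqref{Brc4}--\eqref{Brc4.1} of the quadratic term. I insert $1 = \sum_{k_1} P_{k_1}\cdot \sum_{k_2}P_{k_2}$ followed by $\sum_{j_i\geq -\min(k_i,0)} Q_{j_ik_i}$ inside each factor of \eqref{mj1.1}, distribute $\Omega^a$ and $D^\alpha$ via Leibniz, and relabel to match \eqref{Brc4.1}. The set $X_{m,k}$ is defined to exclude quadruples $(k_1,j_1,k_2,j_2)$ whose contribution can be absorbed into $E_\sigma^{a,\alpha}$: very high frequencies $\max(k_1,k_2)>m/N_0'$ are controlled by the Sobolev bound in \eqref{mj2} (the loss $2^{-N_3\max(k_1,k_2)}$ beats $2^{-3m/2}$ because $N_3 m/N_0' \gg m$); very high spatial localization $\max(j_1,j_2)>2m$ is killed by using the $Z_1$-derived bound $2^{j_i}\|Q_{j_ik_i}f\|_{L^2}\lesssim \varepsilon_1$; and for low output frequency $k<-3m/4$ (or symmetric low input frequencies below $-3m/4$) one uses the null-type factor $|\mathfrak{m}_{\mu\nu}|\lesssim 2^k\min(|\xi-\eta|,|\eta|)^{1/2}$ from \eqref{Assumptions2} together with $\|\mathcal{V}\|_{L^2}\lesssim \varepsilon_1 2^{\delta^2 m}$. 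The frequency-restriction constraints \eqref{vd5} and \eqref{vd6} come from the standard observation that if $k\ll \min(k_1,k_2)$ then cancellation forces $\mu=-\nu$ and $|k_1|,|k_2|=O(1)$, and similarly when one input frequency is much smaller than the others. In each of the remaining regimes the $L^2$ bounds \eqref{Dbound3}, \eqref{Dbound4.0}, \eqref{vd6.9} follow from Lemma \ref{touse} with one factor placed in $L^\infty$ via the linear dispersive estimate \eqref{LinftyBd3.5} (giving the $2^{-m}$ decay) and the other in $L^2$ via \eqref{mj2} or \eqref{Straightforward}, combined with the symbol bound \eqref{Assumptions2}. The constraints \eqref{vd6.6}--\eqref{vd6.7} that at most one profile can have both $j_i\geq 5m/6$ and $|k_i|\geq 10$, and that $\min(j_1,j_2)\leq (1-\delta^2)m$, come from splitting cases with the $Z$-controlled bound $\|f_{j,k}\|_{L^2}\lesssim 2^{-(1-50\delta)j}$ from \eqref{znorm2}: when violated, the geometric sum in $j$ shows the contribution is already of error size.

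The main obstacle, as in all such decompositions, is the refined bound \eqref{vd6.8}: when $\max(j_1,j_2)\geq (1-\delta^2)m-|k|$ we need the stronger gain $2^{-4m/3+4\delta m}$, which the naive $L^\infty\times L^2$ argument does not give since the large-$j$ factor has very poor dispersive decay. Here I will integrate by parts in $\eta$ via Lemma \ref{RotIBP} against the phase $\Phi_{\sigma\mu\nu}$: the factor with large $j_i$, being concentrated in physical space at radius $\approx 2^{j_i}$, can be treated as ``almost radial'' after applying the $\mathcal{R}_{\leq m/10}$ smoothing, so each integration by parts in $\Omega_\eta$ costs at most $2^{-m/2+C\delta m}\cdot (2^{j_i}+A)$, and the sublevel-set bound $|\{|\Omega_\eta \Phi|\lesssim 2^p\}|$ combined with the condition $j_i\geq (1-\delta^2)m$ produces the required extra $2^{-m/3}$ (one could alternatively integrate by parts in $s$ after a phase localization via Lemma \ref{PhiLocLem}, trading one derivative for $2^{-m}$ on the other profile, which also gives \eqref{vd6.8}). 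Once \eqref{vd6.8}--\eqref{vd6.9} and the other $L^2$ bounds are in hand, summing over the (logarithmically many) admissible quadruples in $X_{m,k}$ gives \eqref{vd7}, and from there \eqref{Brc1} follows by the standard $\ell^2$ summation over $k$, while \eqref{Brc2} follows from the two-factor dispersive estimate already used in the cubic term.
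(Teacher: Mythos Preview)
Your overall architecture matches the paper: absorb $\mathcal{N}_{\geq 4}$ and $\mathcal{N}_3$ into the error by direct $L^2\times L^\infty\times L^\infty$ estimates, then dyadically decompose $\mathcal{N}_2$ and define $X_{m,k}$ by exclusion. The treatment of part (i) and property (1) is essentially correct.

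There is, however, a concrete error that undermines several of your later steps. You write that ``the $Z$-controlled bound $\|f_{j,k}\|_{L^2}\lesssim 2^{-(1-50\delta)j}$ from \eqref{znorm2}'' holds. It does not: the $B_j$ norm in \eqref{znorm2} carries the weight $2^{-(1/2-49\delta)n}$ on the $A^{(j)}_{n,\gamma_1}$ pieces, so one only has $\|A^{(j)}_{n,\gamma_1}f_{j,k}\|_{L^2}\lesssim 2^{-(1-50\delta)j}2^{(1/2-49\delta)n}$, and summing over $n\leq j+1$ yields only $\|f_{j,k}\|_{L^2}\lesssim 2^{-(1/2-\delta)j}$. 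The stronger bound $2^{-(1-50\delta)j}$ is available only for the piece $A_{\leq 0,\gamma_1}f_{j,k}$ (equivalently, when $|k|>10$). This matters for \eqref{vd6.7}: if both $j_1,j_2>(1-\delta^2)m$ and both inputs are near $\gamma_1$, the naive $L^2\times L^\infty$ argument loses roughly $2^{50\delta m}$ too much. The paper's fix is precisely the $A_{n,\gamma_1}$ decomposition (see the computation \eqref{tla60.6}): away from $\gamma_1$ the good $L^2$ bound holds; near $\gamma_1$ one is automatically away from $\gamma_0$, so the dispersive bound improves to $2^{-m}$ (via \eqref{LinftyBd2}), and when both $j$'s are large a direct Schur's-lemma estimate exploiting the thinness of the $\gamma_1$-annuli closes the argument. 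You do not mention this structure.

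A second gap is in \eqref{Dbound4.0} and \eqref{vd6.9}: these require $\|A\|_{L^2}\lesssim 2^k2^{-m+4\delta m}$, but \eqref{LinftyBd3.5} gives only $2^{-5m/6}$ decay on the $A_{\geq 1,\gamma_0}$ piece, not $2^{-m}$. The paper obtains the full $2^{-m}$ by a further dyadic decomposition $A_{p,\gamma_0}$ together with the sharper bound \eqref{LinftyBd2.5} (see the proof of \eqref{vd23}); a single application of \eqref{LinftyBd3.5} is not enough. Finally, your route to \eqref{vd6.8} via Lemma \ref{RotIBP} is not what the paper does (it simply reuses the $A_{n,\gamma_1}$-based $L^2\times L^\infty$/Schur argument) and, as written, your sketch does not explain how angular localization recovers the missing factor; the alternative you mention of integrating by parts in $s$ is not available here, since $A^{a_1,\alpha_1;a_2,\alpha_2}_{k;k_1,j_1;k_2,j_2}(s)$ is defined at a fixed time with no $s$-integral.
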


\begin{proof} (i) We consider first the quadratic part of the nonlinearity. Let $I^{\sigma\mu\nu}$ denote the bilinear operator defined by
\begin{equation}\label{Isigmamunu}
\begin{split}
&\mathcal{F}\left\{I^{\sigma\mu\nu}[f,g]\right\}(\xi):=\int_{\mathbb{R}^2}e^{is\Phi_{\sigma\mu\nu}(\xi,\eta)}m(\xi,\eta)\widehat{f}(\xi-\eta)\widehat{g}(\eta)d\eta,\\
&\|m^{k,k_1,k_2}\|_{S^\infty}\leq 2^k2^{\min(k_1,k_2)/2},\qquad \|D_\eta^\alpha m^{k,k_1,k_2}\|_{L^\infty}\lesssim_{|\alpha|} 2^{(|\alpha|+3/2)\max(|k_1|,|k_2|)},
\end{split}
\end{equation}
where, for simplicity of notation, $m=\mathfrak{m}_{\sigma\mu\nu}$. For simplicity, we often write $\Phi$, $\Xi$, and $\Theta$ instead of $\Phi_{\sigma\mu\nu}$, $\Xi_{\mu\nu}$, and $\Theta_\mu$ in the rest of this proof.

We define the operators $P_k^+$ for $k\in\mathbb{Z}_+$ by $P_{k}^+:=P_k$ for $k\geq 1$ and $P_0^+:=P_{\leq 0}$. In view of Lemma \ref{touse} (ii), \eqref{mj2}, and \eqref{LinftyBd3.5}, for any $k\geq 0$ we have
\begin{equation}\label{Brc20}
\begin{split}
\|P_{k}^+I^{\sigma\mu\nu}[\mathcal{V}_\mu,\mathcal{V}_\nu](s)\|_{H^{N_0-N_3}}&\lesssim 2^{(N_0-N_3)k}\sum_{0\leq k_1\leq k_2,\, k_2\geq k-10}2^k2^{k_1/2}\|P_{k_2}^+\mathcal{V}(s)\|_{L^2}\|e^{-is\Lambda}P_{k_1}^+\mathcal{V}(s)\|_{L^\infty}\\
&\lesssim \varep_1^22^{-k}2^{-5m/6+6\delta^2m},
\end{split}
\end{equation}
which is consistent with \eqref{Brc1}. Similarly,
\begin{equation}\label{Brc20.1}
\|P_{k}^+I^{\sigma\mu\nu}[\Omega^{a_2}\mathcal{V}_\mu,\Omega^{a_3}\mathcal{V}_\nu](s)\|_{L^2}\lesssim 2^{-k}\varep_1^22^{-5m/6+6\delta^2m},\qquad a_2+a_3\leq N_1
\end{equation}
by placing the factor with less than $N_1/2$ $\Omega$-derivatives in $L^\infty$, and the other factor in $L^2$. Finally, using $L^\infty$ estimates on both factors,
\begin{equation}\label{Brc20.2}
\|e^{-is\Lambda_\sigma}P_{k}^+I^{\sigma\mu\nu}[D^{\alpha_2}\Omega^{a_2}\mathcal{V}_\mu,D^{\alpha_3}\Omega^{a_3}\mathcal{V}_\nu](s)\|_{L^\infty}\lesssim 
\begin{cases}
\varep_1^22^{-5m/3+6\delta^2m}&\text{ if }k\leq 20,\\
\varep_1^22^{4k}2^{-11m/6+52\delta m}&\text{ if }k\geq 20,
\end{cases}
\end{equation}
provided that $a_2+a_3=a$ and $\alpha_2+\alpha_3=\alpha$ (see also \eqref{AssonFmunu} below). The conclusions in part (i) follow for the quadratic components.

The conclusions for the cubic components follow by the same argument, using the assumption \eqref{Assumptions3} instead of \eqref{Assumptions2}, and the formula \eqref{mj1.2}. The contributions of the higher order nonlinearity $\mathcal{N}_\geq 4$ are estimated using directly the bootstrap hypothesis \eqref{mj2}.

(ii) We assume that $s$ is fixed and, for simplicity, drop it from the notation. In view of \eqref{mj2} and using interpolation, the functions $f^{\mu}:=\varep_1^{-1}D^{\alpha_2}\Omega^{a_2}\mathcal{V}_\mu$ and $f^{\nu}:=\varep_1^{-1}D^{\alpha_3}\Omega^{a_3}\mathcal{V}_\nu$ satisfy
\begin{equation}\label{AssonFmunu}
\begin{split}
&\Vert f^\mu\Vert_{H^{N'_0}\cap Z_1\cap H^{N'_1}_\Omega}+ \Vert f^\nu\Vert_{H^{N'_0}\cap Z_1\cap H^{N'_1}_\Omega}\lesssim 2^{\delta^2m}.
\end{split}
\end{equation}
where, compare with the notation in Theorem \ref{MainTheo},
\begin{equation}\label{vd1}
N'_1:=(N_1-N_4)/2=1/(2\delta),\qquad N'_0:=(N_0-N_3)/2-N_4=1/\delta.
\end{equation}
In particular, the dispersive bounds \eqref{LinftyBd}--\eqref{LinftyBd3.5} hold with $N=N'_1=1/(2\delta)$.

The contributions of the higher order nonlinearities $\mathcal{N}_3$ and $\mathcal{N}_{\geq 4}$ can all be estimated as part of the error term $P_kE_\sigma^{a,\alpha}$, so we focus on the quadratic nonlinearity $\mathcal{N}_2$. Notice that $$A^{a_1,\alpha_1;a_2,\alpha_2}_{k;k_1,j_1;k_2,j_2}=P_{k}I^{\sigma\mu\nu}(f_{j_{1},k_{1}}^{\mu},f_{j_{2},k_{2}}^{\nu}).$$

{\bf{Proof of property (1).}} In view of Lemma \ref{touse} and \eqref{LinftyBd3}, we have the general bound
\begin{equation*}
\big\|A^{a_1,\alpha_1;a_2,\alpha_2}_{k;k_1,j_1;k_2,j_2}\big\|_{L^{2}}\lesssim 2^{k+\min(k_1,k_2)/2}\cdot 2^{-5m/6+5\delta^2m}\min\big[2^{-(1/2-\delta)\max(j_1,j_2)},2^{-N'_0\max(k_1,k_2)}\big].
\end{equation*}
This bound suffices to prove the claims in (1). Indeed, if $k\geq m/N'_0$ or if $k\leq -3m/4+\D^2$ then the sum of all the terms can be bounded as in \eqref{vd2}. Similarly, if $k\in [-3m/4+\D^2,m/N'_0]$ then the sums of the $L^2$ norms corresponding to $\max(k_1,k_2)\geq m/N'_0$, or $\max(j_1,j_2)\geq 2m$, or $\min(k_1,k_2)\leq -3m/4+\D^2$ are all bounded by $2^{-3m/2}$ as desired.

{\bf{Proof of property (2).}} Assume now that $\min(k_{1},k_{2})\leq -2m/N'_{0}$ and $j_2=\max(j_1,j_2)\geq(1-\delta^{2})m-|k|$. Then, using the $L^2\times L^\infty$ estimate as before
\begin{equation*}
\big\|P_{k}I^{\sigma\mu\nu}[f_{j_{1},k_{1}}^{\mu},A_{\leq 0,\gamma_1}f_{j_{2},k_{2}}^{\nu}]\big\|_{L^2}\lesssim 2^{k+\min(k_1,k_2)/2}2^{-5m/6+5\delta^2m}2^{-j_2(1-50\delta)}\lesssim 2^{-3m/2}.
\end{equation*}
Moreover, we notice that if $A_{\geq 1,\gamma_1}f_{j_{2},k_{2}}^{\nu}$ is nontrivial then $|k_2|\leq 10$ and $k_1\leq -2m/N'_0$, therefore 
\begin{equation*}
\big\|P_{k}I^{\sigma\mu\nu}[f_{j_{1},k_{1}}^{\mu},A_{\geq 1,\gamma_1}f_{j_{2},k_{2}}^{\nu}]\big\|_{L^2}\lesssim 2^{k+k_1/2}2^{-m+5\delta^2m}2^{-j_2(1/2-\delta)}\lesssim 2^{-3m/2+3\delta m},
\end{equation*}
if $j_1\leq (1-\delta^2)m$, using \eqref{LinftyBd2} if $k_1\geq -m/2$ and \eqref{LinftyBd1.1} if $k_1\leq -m/2$. On the other hand, if $j_1\geq (1-\delta^2)m$ then we use again the $L^2\times L^\infty$ estimate (placing $f_{j_{1},k_{1}}^{\mu}$ in $L^2$) to conclude that
\begin{equation*}
\big\|P_{k}I^{\sigma\mu\nu}[f_{j_{1},k_{1}}^{\mu},A_{\geq 1,\gamma_1}f_{j_{2},k_{2}}^{\nu}]\big\|_{L^2}\lesssim 2^{k+k_1/2}2^{-j_1+50\delta j_1}2^{-m+52\delta m}\lesssim 2^{-3m/2}.
\end{equation*}
The last three bounds show that
\begin{equation}\label{vd9}
\big\|A^{a_1,\alpha_1;a_2,\alpha_2}_{k;k_1,j_1;k_2,j_2}\big\|_{L^2}\lesssim 2^{-3m/2+3\delta m}\quad\text{ if }\quad \max(j_1,j_2)\geq(1-\delta^{2})m-|k|.
\end{equation}

Assume now that 
\begin{equation*}
k_1=\min(k_1,k_2)\leq -2m/N'_0\quad\text{ and }\quad\max(j_1,j_2)\leq(1-\delta^{2})m-|k|.
\end{equation*}
If $k_2\geq k_1+20$ then $|\nabla_\eta\Phi(\xi,\eta)|\gtrsim 2^{|k_1|/2}$, so $\big\|A^{a_1,\alpha_1;a_2,\alpha_2}_{k;k_1,j_1;k_2,j_2}\big\|_{L^2}\lesssim 2^{-3m}$ in view of Lemma \ref{tech5} (i). On the other hand, if $k,k_2\leq k_1+30$ then, using again the $L^2\times L^\infty$ argument as before,
\begin{equation}\label{vd12}
\big\|P_{k}I^{\sigma\mu\nu}[f_{j_{1},k_{1}}^{\mu},f_{j_{2},k_{2}}^{\nu}]\big\|_{L^2}\lesssim 2^{k+k_1}2^{-m+5\delta^2 m}.
\end{equation}
The $L^2$ bound in \eqref{vd2} follows if $k+k_1\leq -m/2$. On the other hand, if $k+k_1\geq -m/2$ and
\begin{equation*}
\max(|k_{1}-k|,|k_{2}-k|)\geq 100\quad\text{or}\quad\mu=-\nu
\end{equation*}
then $|\nabla_\eta\Phi(\xi,\eta)|\gtrsim 2^{k-\max(k_1,k_2)}$ in the support of the integral, in view of \eqref{zc2.1}. Therefore $\big\|A^{a_1,\alpha_1;a_2,\alpha_2}_{k;k_1,j_1;k_2,j_2}\big\|_{L^2}\lesssim 2^{-3m}$ in view of Lemma \ref{tech5} (i). The inequalities in \eqref{vd5} follow. The bound \eqref{Dbound3} then follows from \eqref{vd12}. 

{\bf{Proof of property (3).}} Assume first that
\begin{equation}\label{vd20}
\min(k_{1},k_{2})\geq -5m/N'_{0},\quad k\leq \min(k_1,k_2)-200,\quad \max(j_{1},j_{2})\geq (1-\delta^{2})m-|k|-|k_2|.
\end{equation}
We may assume that $j_2\geq j_1$. Using the $L^2\times L^\infty$ estimate and Lemma \ref{LinEstLem} (ii) as before
\begin{equation*}
\big\|P_{k}I^{\sigma\mu\nu}[f_{j_{1},k_{1}}^{\mu},A_{n_2,\gamma_1}^{(j_2)}f_{j_{2},k_{2}}^{\nu}]\big\|_{L^2}\lesssim 2^{k+k_1/2}2^{-5m/6+5\delta^2m}2^{-j_2(1-50\delta)}\lesssim 2^{-3m/2}
\end{equation*}
if $n_2\leq\D$. On the other hand, if $n_2\in[\D,j_2]$ then
\begin{equation*}
P_{k}I^{\sigma\mu\nu}[f_{j_{1},k_{1}}^{\mu},A_{n_2,\gamma_1}^{(j_2)}f_{j_{2},k_{2}}^{\nu}]=P_{k}I^{\sigma\mu\nu}[A_{\geq 1,\gamma_1}f_{j_{1},k_{1}}^{\mu},A_{n_2,\gamma_1}^{(j_2)}f_{j_{2},k_{2}}^{\nu}].
\end{equation*}
If $j_1\leq (1-\delta^2)m$ then we estimate
\begin{equation*}
\big\|P_{k}I^{\sigma\mu\nu}[A_{\geq 1,\gamma_1}f_{j_{1},k_{1}}^{\mu},A_{n_2,\gamma_1}^{(j_2)}f_{j_{2},k_{2}}^{\nu}]\big\|_{L^2}\lesssim 2^{k}2^{-m+5\delta^2m+2\delta m}2^{-j_2(1/2-\delta)}\lesssim 2^{-3m/2+3\delta m+8\delta^2m}.
\end{equation*}
Finally, if $j_2\geq j_1\geq (1-\delta^2)m$ then we use Schur's lemma in the Fourier space and estimate
\begin{equation}\label{tla60.6}
\begin{split}
\big\|P_{k}I^{\sigma\mu\nu}[A_{n_1,\gamma_1}^{(j_1)}&f_{j_{1},k_{1}}^{\mu},A_{n_2,\gamma_1}^{(j_2)}f_{j_{2},k_{2}}^{\nu}]\big\|_{L^2}\lesssim 2^{k}2^{-\max(n_1,n_2)/2}\big\|A_{n_1,\gamma_1}^{(j_1)}f_{j_{1},k_{1}}^{\mu}\big\|_{L^2}\big\|A_{n_2,\gamma_1}^{(j_2)}f_{j_2,k_2}^{\nu}\big\|_{L^2}\\
&\lesssim 2^{k}2^{2\delta^2m}2^{-\max(n_1,n_2)/2}2^{-j_1(1-50\delta)}2^{(1/2-49\delta)n_1}\cdot 2^{-j_2(1-50\delta)}2^{(1/2-49\delta)n_2}\\
&\lesssim 2^{2\delta^2m}2^{\min(n_1,n_2)/2}2^{-j_1(1-50\delta)}2^{-49\delta(n_1+n_2)}2^{-j_2(1-50\delta)}\\
&\lesssim 2^{2\delta^2m}2^{-(2-2\delta^2)(1-50\delta)m}2^{(1/2-98\delta)m}
\end{split}
\end{equation}
for any $n_1\in[1,j_1+1]$, $n_2\in[1,j_2+1]$. Therefore, if \eqref{vd20} holds then
\begin{equation}\label{vd20.5}
\big\|A^{a_1,\alpha_1;a_2,\alpha_2}_{k;k_1,j_1;k_2,j_2}\big\|_{L^2}\lesssim 2^{-3m/2+4\delta m}.
\end{equation}

Assume now that
\begin{equation}\label{vd21}
\min(k_{1},k_{2})\geq -5m/N'_{0},\quad k\leq \min(k_1,k_2)-200,\quad \max(j_{1},j_{2})\leq (1-\delta^{2})m-|k|-|k_2|.
\end{equation}
If, in addition, $\max(|k_1|,|k_2|)\geq 11$ or $\mu=\nu$ then $|\nabla_\eta\Phi(\xi,\eta)|\gtrsim 2^{k-k_2}$ in the support of the integral. Indeed, this is a consequence of \eqref{zc2.1} if $k\leq -100$ and it follows easily from the formula \eqref{try2} if $k\geq -100$. Therefore, $\big\|A^{a_1,\alpha_1;a_2,\alpha_2}_{k;k_1,j_1;k_2,j_2}\big\|_{L^2}\lesssim 2^{-3m}$, using Lemma \ref{tech5} (i). As a consequence, the functions $A^{a_1,\alpha_1;a_2,\alpha_2}_{k;k_1,j_1;k_2,j_2}$ can be absorbed into the error term $P_kE^{a,\alpha}_\sigma$ unless all the inequalities in \eqref{vd6} hold.

Assume now that \eqref{vd6} holds and we are looking to prove \eqref{Dbound4.0}. It suffices to prove that
\begin{equation}\label{vd23}
\big\|P_{k}I^{\sigma\mu\nu}[A_{\geq 1,\gamma_0}f_{j_{1},k_{1}}^{\mu},A_{\geq 1,\gamma_0}f_{j_{2},k_{2}}^{\nu}]\big\|_{L^2}\lesssim 2^k2^{-m+4\delta m},
\end{equation}
after using \eqref{LinftyBd2} and the $L^2\times L^\infty$ argument. We may assume that $\max(j_1,j_2)\leq m/3$; otherwise \eqref{vd23} follows from the $L^2\times L^\infty$ estimate. Using \eqref{FLinftybd} and the more precise bound \eqref{LinftyBd2.5},
\begin{equation*}
\|A_{p,\gamma_0}h\|_{L^2}\lesssim 2^{\delta^2m}2^{-p/2},\qquad \|e^{-it\Lambda}A_{p,\gamma_0}h\|_{L^\infty}\lesssim 2^{-m+3\delta^2m}\min\big(2^{p/2},2^{m/2-p}\big),
\end{equation*}
where $h\in \{f_{j_1,k_1},g_{j_2,k_2}\}$, $p\geq 1$. Therefore, using Lemma \ref{touse},
\begin{equation*}
\big\|P_{k}I^{\sigma\mu\nu}[A_{p_1,\gamma_0}f_{j_{1},k_{1}}^{\mu},A_{p_2,\gamma_0}f_{j_{2},k_{2}}^{\nu}]\big\|_{L^2}\lesssim 2^k 2^{-m+5\delta^2m}2^{-\max(p_1,p_2)/2}2^{\min(p_1,p_2)/2}.
\end{equation*}
The desired bound \eqref{vd23} follows, using also the simple estimate 
\begin{equation*}
\big\|P_{k}I^{\sigma\mu\nu}[A_{p_1,\gamma_0}f_{j_{1},k_{1}}^{\mu},A_{p_2,\gamma_0}f_{j_{2},k_{2}}^{\nu}]\big\|_{L^2}\lesssim 2^k2^{2\delta^2m}2^{-(p_1+p_2)/2}.
\end{equation*}
This completes the proof of \eqref{Dbound4.0}.

{\bf{Proof of property (4).}} The same argument as in the proof of \eqref{vd20.5}, using just $L^2\times L^\infty$ estimates shows that $\|A^{a_1,\alpha_1;a_2,\alpha_2}_{k;k_1,j_1;k_2,j_2}\|_{L^2}\lesssim 2^{-3m/2+4\delta m}$ if either \eqref{vd6.6} or \eqref{vd6.7} do not hold. The bounds \eqref{vd6.8} follow in the same way. The same argument as in the proof of \eqref{vd23}, together with $L^2\times L^\infty$ estimates using \eqref{LinftyBd3} and \eqref{LinftyBd}, gives \eqref{vd6.9}.
\end{proof}

In our second lemma we give a more precise description of the basic functions $A^{a_1,\alpha_1;a_2,\alpha_2}_{k;k_1,j_1;k_2,j_2}(s)$ in the case $\min(k,k_{1},k_{2})\geq -6m/N'_0$.

\begin{lemma}\label{dtfLem2}
Assume $[(k_1,j_1),(k_2,j_2)]\in X_{m,k}$ and $k,k_{1},k_{2}\in [-6m/N'_0,m/N'_0]$ (as in Lemma \ref{dtfLem1} (ii) (4)), and recall the functions $A^{a_1,\alpha_1;a_2,\alpha_2}_{k;k_1,j_1;k_2,j_2}(s)$ defined in \eqref{Brc4.1}. 

(i) We can decompose 
\begin{equation}\label{Ddecomposition}
A^{a_1,\alpha_1;a_2,\alpha_2}_{k;k_1,j_1;k_2,j_2}=\sum_{i=1}^{3}A^{a_1,\alpha_1;a_2,\alpha_2;[i]}_{k;k_1,j_1;k_2,j_2}=\sum_{i=1}^{3}G^{[i]},
\end{equation}
\begin{equation}\label{Brc4.2}
\mathcal{F}A^{a_1,\alpha_1;a_2,\alpha_2;[i]}_{k;k_1,j_1;k_2,j_2}(\xi,s):=\int_{\mathbb{R}^2}e^{is\Phi(\xi,\eta)}\mathfrak{m}_{\sigma\mu\nu}(\xi,\eta)\varphi_k(\xi)\chi^{[i]}(\xi,\eta)\widehat{f^\mu_{j_1,k_1}}(\xi-\eta,s)\widehat{f^\nu_{j_2,k_2}}(\eta,s)d\eta,
\end{equation} 
where $\chi^{[i]}$ are defined as 
\begin{equation*}
\begin{split}
\chi^{[1]}(\xi,\eta)&=\varphi(2^{10\delta m}\Phi(\xi,\eta))\varphi(2^{30\delta m}\nabla_{\eta}\Phi(\xi,\eta))\mathbf{1}_{[0,5m/6]}(\max(j_1,j_2)),\\
\chi^{[2]}(\xi,\eta)&=\varphi_{\geq 1}(2^{10\delta m}\Phi(\xi,\eta))\varphi(2^{20\delta m}\Omega_\eta\Phi(\xi,\eta)),\\
\chi^{[3]}&=1-\chi^{[1]}-\chi^{[2]}.
\end{split}
\end{equation*}
The functions $A^{a_1,\alpha_1;a_2,\alpha_2;[1]}_{k;k_1,j_1;k_2,j_2}(s)$ are nontrivial only when $\max(|k|,|k_1|,|k_2|)\leq 10$. Moreover
\begin{equation}\label{Dbound5.1}
\big\|G^{[1]}(s)\big\|_{L^{2}}\lesssim 2^{-m+4\delta m}2^{-(1-50\delta)\max(j_1,j_2)},
\end{equation}
\begin{equation}\label{Dbound5.2}
\big\|G^{[2]}(s)\big\|_{L^{2}}\lesssim 2^k2^{-m+4\delta m},\qquad\big\|G^{[3]}(s)\big\|_{L^{2}}\lesssim 2^{-3m/2+4\delta m}.
\end{equation}

(ii) We have
\begin{equation}\label{vd70}
\big\|\mathcal{F}\{A_{\leq \D,2\gamma_0}A^{a_1,\alpha_1;a_2,\alpha_2}_{k;k_1,j_1;k_2,j_2}\}(s)\big\|_{L^\infty}\lesssim (2^{-k}+2^{3k})2^{-m+14\delta m}.
\end{equation}
As a consequence, if $k\geq -6m/N'_0+\D$ then we can decompose
\begin{equation}\label{vd70.1}
\begin{split}
&A_{\leq \D-10,2\gamma_0}\partial_t f^{\sigma}_{j,k}=h_2+h_\infty,\\
&\|h_2(s)\|_{L^2}\lesssim 2^{-3m/2+5\delta m},\qquad \|\widehat{h_\infty}(s)\|_{L^\infty}\lesssim (2^{-k}+2^{3k})2^{-m+15\delta m}.
\end{split}
\end{equation}

(iii) If $j_1,j_2\leq m/2+\delta m$ then we can write
\begin{equation}\label{Decdtf}
\begin{split}
&\widehat{G^{[1]}}(\xi,s)=e^{is[\Lambda_\sigma(\xi)-2\Lambda_\sigma(\xi/2)]}g^{[1]}(\xi,s)\varphi(2^{3\delta m}(|\xi|-\gamma_1))+h^{[1]}(\xi,s),\\
&\|D^\alpha_\xi g^{[1]}(s)\|_{L^\infty}\lesssim _{\alpha}2^{-m+4\delta m}2^{|\alpha|(m/2+4\delta m)},\quad \|\partial_sg^{[1]}(s)\|_{L^\infty}\lesssim 2^{-2m+18\delta m},\\
&\|h^{[1]}(s)\|_{L^\infty}\lesssim 2^{-4m}.
\end{split}
\end{equation}
\end{lemma}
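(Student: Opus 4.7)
My plan is to treat the three decomposition pieces of part (i) separately, since each corresponds to a qualitatively different region of phase space. For $G^{[2]}$, the bound $\|G^{[2]}\|_{L^2}\lesssim 2^k 2^{-m+4\delta m}$ already follows from the general estimate \eqref{vd6.9} in Lemma \ref{dtfLem1}(ii)(4), so nothing new is required. For $G^{[3]}$, which is supported either where $|\Phi|\lesssim 2^{-10\delta m}$ but $|\nabla_\eta\Phi|\gtrsim 2^{-30\delta m}$, or where $|\Phi|\gtrsim 2^{-10\delta m}$ but $|\Omega_\eta\Phi|\gtrsim 2^{-20\delta m}$, or in the tail $\max(j_1,j_2)\geq 5m/6$, I would use three mechanisms: in the first case, repeated non-stationary integration by parts in $\eta$ via Lemma \ref{tech5}(i) gains arbitrary powers of $2^{-m}$; in the second case, since the phase has size $\gtrsim 2^{-10\delta m}$ and $\Omega_\eta\Phi$ is small (forcing the integrand to behave almost radially in $\eta$), I apply Lemma \ref{RotIBP} with the modulation-localization option (using $\varphi_\nu(\Phi)$) and gain $(2^p\cdot 2^m)^{-N}$ factors; in the tail case, the standard $L^2\times L^\infty$ argument using \eqref{LinftyBd} and the $Z_1$-decay $\|f^\mu_{j_1,k_1}\|_{L^2}\lesssim 2^{-(1-50\delta)j_1}$ on the smaller profile yields the $2^{-3m/2+4\delta m}$ bound.

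For $G^{[1]}$, which carries the stationary-phase mass, the cutoff $\chi^{[1]}$ forces $(\xi,\eta)$ into a small neighborhood of the space-time resonant configuration $\{\nabla_\eta\Phi=0,\ \Phi=0\}$, so by Proposition \ref{spaceres11} we may assume $\sigma=\mu=\nu$, $\eta$ is near $\xi/2$, and $|\xi|$ is near $\gamma_1$. Here I would pass to the Fourier side and invoke Schur's lemma: the $\eta$-integration is restricted by $|\nabla_\eta\Phi|\lesssim 2^{-30\delta m}$ to a ball of radius $\lesssim 2^{-30\delta m}$ (the Hessian at the critical point is non-degenerate, by non-vanishing of the space-time resonance Hessian), while the $\xi$-integration is controlled by the volume bound from Proposition \ref{volume}. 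This, together with Lemma \ref{LinEstLem} controlling the $L^\infty$ norm of $\widehat{f^\mu_{j_1,k_1}}$ and the $L^2$ norm of $\widehat{f^\nu_{j_2,k_2}}$ (or vice-versa), yields \eqref{Dbound5.1}. The condition $\max(j_1,j_2)\leq 5m/6$ inside $\chi^{[1]}$ is what allows the IBP/localization mechanism to be insensitive to spatial concentration and forces the decay to come entirely from the $Z_1$ norm on the profile with larger $j$.

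For part (ii), the key point is that the localization $A_{\leq\mathcal{D},2\gamma_0}$ puts $\xi$ near $2\gamma_0$, which is separated from the sphere $\gamma_1$ of space-time resonances; combined with $|\xi-\eta|$ and $|\eta|$ near $\gamma_0$ (the only relevant case, since otherwise the profiles decay) this gives either $|\Phi|\gtrsim 1$ or $|\nabla_\eta\Phi|\gtrsim 1$ on the whole support. I would split into dyadic pieces in modulation as in \eqref{Ddecomposition}: the small-modulation piece $G^{[1]}$ is absent in this region (so it goes into $h_2$ as an error), while the contribution of $G^{[2]}$ gives an $L^\infty_\xi$ bound by the usual Schur estimate on the Fourier kernel — freezing $\xi$ forces $\eta$ to live in a set of measure $\lesssim 2^{-m+14\delta m}$ (using the volume bounds of Proposition \ref{volume} together with the $L^\infty_\xi$ control on the profiles from \eqref{FLinftybd}), whence the $(2^{-k}+2^{3k})2^{-m+14\delta m}$ bound. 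The decomposition \eqref{vd70.1} then follows by declaring $h_\infty$ to be the part with $L^\infty$ control on its Fourier transform and $h_2$ to be the $L^2$ remainder coming from the tails $\max(j_i)\geq 2m$, from the frequencies outside $[-6m/N'_0,m/N'_0]$, from $G^{[3]}$, and from the cubic and higher nonlinearities.

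Finally, for part (iii), I would apply stationary phase in $\eta$ to each $\mathcal{F}G^{[1]}(\xi,\cdot)$. The stationary point is $\eta_\ast(\xi)=\xi/2$ with critical value $\Phi(\xi,\xi/2)=\Lambda_\sigma(\xi)-2\Lambda_\sigma(\xi/2)$; since this critical value vanishes exactly when $|\xi|=\gamma_1=\sqrt{2}$ and the modulation cutoff $\varphi(2^{10\delta m}\Phi)$ forces $|\Phi|\lesssim 2^{-10\delta m}$, the output is supported in $|\ |\xi|-\gamma_1|\lesssim 2^{-3\delta m}$, explaining the factor $\varphi(2^{3\delta m}(|\xi|-\gamma_1))$. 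The hypothesis $j_1,j_2\leq m/2+\delta m$ is precisely what guarantees that the profiles $\widehat{f^{\mu,\nu}_{j_i,k_i}}$ and their $\xi$-derivatives are controlled by \eqref{FLinftybdDER} with loss at most $2^{|\alpha|(m/2+\delta m)}$, yielding the symbol-type bound on $g^{[1]}$. The remainder $h^{[1]}$ collects higher-order terms in the asymptotic expansion, which produce arbitrarily fast decay $2^{-4m}$, and the derivative in time $\partial_s g^{[1]}$ is obtained from \eqref{Brc2} applied to $\partial_t f^{\mu,\nu}$ with a loss of an extra $2^{-m+\delta m}$ factor.

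The main obstacle I anticipate is the careful bookkeeping for $G^{[3]}$: the integration-by-parts in $\Omega_\eta$ in the large-modulation, small-angular-derivative region must be done via Lemma \ref{RotIBP} with parameters matched to the cutoffs $2^{-10\delta m}$ on $\Phi$ and $2^{-20\delta m}$ on $\Omega_\eta\Phi$, checking that the hypothesis $U^2+2^{3|k_1|/2}\leq 2^{p+m/2}$ holds on the relevant support so that the gain $(2^{p+m})^{-N}$ actually overwhelms the polynomial growth coming from derivatives on the symbol and profiles.
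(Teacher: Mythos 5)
There are two genuine gaps in your proposal, both concerning where the decisive $2^{-m}$ factor actually comes from, and one factual error about the direction of the $A_{\leq\mathcal{D},2\gamma_0}$ localization.

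For $G^{[1]}$ in part (i), your Schur-plus-volume argument cannot produce the $2^{-m}$ time decay by itself. The cutoff $\varphi(2^{30\delta m}\nabla_\eta\Phi)$ restricts $\eta$ to a ball of radius only $\sim 2^{-30\delta m}$, so Schur gives at best a factor $\sim 2^{-60\delta m}$, which is far weaker than $2^{-m}$ when $j_1,j_2$ are small. To close this via a Fourier-side argument you would need to first push the $\nabla_\eta\Phi$-localization all the way down to the stationary-phase scale $\sim 2^{-m/2+\delta m}$ by nonstationary integration by parts (incurring negligible errors only), and your proposal does not mention this step. What the paper does instead is observe, via Proposition~\ref{spaceres11}(iii), that $\chi^{[1]}$ forces $|\eta|,|\xi-\eta|$ to be near $\gamma_1/2$, inserts $A_{\geq 1,\gamma_1/2}$ on both inputs, removes the $\nabla_\eta\Phi$ cutoff by Lemma~\ref{tech5}(i), and then runs the $L^2\times L^\infty$ argument with Lemma~\ref{PhiLocLem}. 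The crucial point is that $\gamma_1/2\neq\gamma_0$, so $\|e^{-is\Lambda}A_{\geq 1,\gamma_1/2}f^\mu_{j_1,k_1}\|_{L^\infty}\lesssim 2^{-m+O(\delta m)}$ by the good-frequency dispersive estimate; the $2^{-m}$ factor is a physical-space decay estimate, not a Fourier-side volume bound. This mechanism is entirely absent from your proposal.

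For part (ii), you have the operator backwards: $A_{\leq\mathcal{D},2\gamma_0}$ projects onto $||\xi|-2\gamma_0|\gtrsim 2^{-\mathcal{D}-100}$, i.e.\ \emph{away} from $2\gamma_0$, not near it (recall $A_{n,\gamma}$ localizes to $||\xi|-\gamma|\approx 2^{-n-100}$, and $A_{\leq\mathcal{D},\gamma}$ sums over $n\leq\mathcal{D}$). Your reasoning --- ``$\xi$ near $2\gamma_0$ implies $\xi$ away from $\gamma_1$, hence no space-time resonance'' --- is therefore not what the cutoff is doing, nor is it the relevant mechanism. The sphere $\{|\xi|=2\gamma_0\}$ is special because there the two space-resonance branches $p_{++1}(|\xi|)=|\xi|/2$ and $p_{++2}(|\xi|)$ coalesce, so the localization in $\eta$ given by $|\nabla_\eta\Phi|\lesssim\kappa_r$ degenerates from a ball of radius $\sim\kappa_r$ to a set controlled only by the weaker bound $\eps_2/(\eps_2^{2/3}+||\xi|-2\gamma_0|)$ in \eqref{zc2}. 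The paper's estimate \eqref{vd70} works precisely because the cutoff \emph{excludes} this degenerate neighborhood, allowing Proposition~\ref{spaceres11}(ii) to deliver a clean $O(\kappa_\theta\times\kappa_r)$ box for $\eta$; you must get the direction right or the whole $L^\infty_\xi$ bound fails near $|\xi|=2\gamma_0$.

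Minor but worth noting: in your treatment of $G^{[3]}$ your verbal description of case (b) contradicts your own setup (you list the region as $|\Omega_\eta\Phi|\gtrsim 2^{-20\delta m}$, then a few lines later describe it as ``$\Omega_\eta\Phi$ is small''), and your dismissal of the tail case $\max(j_1,j_2)\geq 5m/6$ as ``the standard $L^2\times L^\infty$ argument'' undersells it: the paper splits into $A_{\leq 0,\gamma_1}$ and $A_{\geq 1,\gamma_1}$ pieces, removes the angular cutoff via Lemma~\ref{RotIBP} before applying Lemma~\ref{PhiLocLem}, and handles the extreme regime $j_2\geq(1-\delta^2)m$ by a separate argument; none of this is automatic. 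Your treatment of $G^{[2]}$ (triangle inequality from \eqref{vd6.9} and the other two bounds) and your stationary-phase reading of part (iii) are both essentially the paper's approach.
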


\begin{proof}
(i) To prove the bounds \eqref{Dbound5.1}--\eqref{Dbound5.2} we decompose
\begin{equation}\label{vd30}
A^{a_1,\alpha_1;a_2,\alpha_2}_{k;k_1,j_1;k_2,j_2}=\sum_{i=1}^{5}A_i,\qquad A_i:=P_kI_i[f^\mu_{j_1,k_1},f^\nu_{j_2,k_2}],
\end{equation}
\begin{equation}\label{vd31}
\mathcal{F}\{I_i[f,g]\}(\xi):=\int_{\mathbb{R}^2}e^{is\Phi(\xi,\eta)}m(\xi,\eta)\chi_i(\xi,\eta)\widehat{f}(\xi-\eta)\widehat{g}(\eta)d\eta,
\end{equation} 
where $m=\mathfrak{m}^{a_1}_{\sigma\mu\nu}$ and $\chi_i$ are defined as 
\begin{equation}\label{vd31.5}
\begin{split}
\chi_1(\xi,\eta)&:=\varphi_{\geq 1}(2^{20\delta m}\Theta(\xi,\eta)),\\
\chi_2(\xi,\eta)&:=\varphi_{\geq 1}(2^{10\delta m}\Phi(\xi,\eta))\varphi(2^{20\delta m}\Theta(\xi,\eta)),\\
\chi_3(\xi,\eta)&:=\varphi(2^{10\delta m}\Phi(\xi,\eta))\varphi(2^{20\delta m}\Theta(\xi,\eta))\mathbf{1}_{(5m/6,\infty)}(\max(j_1,j_2)),\\
\chi_4(\xi,\eta)&:=\varphi(2^{10\delta m}\Phi(\xi,\eta))\varphi(2^{20\delta m}\Theta(\xi,\eta))\varphi_{\geq 1}(2^{30\delta m}\Xi(\xi,\eta))\mathbf{1}_{[0,5m/6]}(\max(j_1,j_2)),\\
\chi_5(\xi,\eta)&:=\varphi(2^{10\delta m}\Phi(\xi,\eta))\varphi(2^{20\delta m}\Theta(\xi,\eta))\varphi(2^{30\delta m}\Xi(\xi,\eta))\mathbf{1}_{[0,5m/6]}(\max(j_1,j_2)).
\end{split}
\end{equation}

Notice that $A_2=G^{[2]}$, $A_5=G^{[1]}$, and $A_1+A_3+A_4=G^{[3]}$. We will show first that
\begin{equation}\label{vd32}
\|A_1\|_{L^2}+\|A_3\|_{L^2}+\|A_4\|_{L^2}\lesssim 2^{-3m/2+4\delta m}.
\end{equation}

It follows from Lemma \ref{RotIBP} and \eqref{vd6.6}--\eqref{vd6.7} that $\|A_1\|_{L^2}\lesssim 2^{-2m}$, as desired. Also, $\|A_4\|_{L^2}\lesssim 2^{-4m}$, as a consequence of Lemma \ref{tech5} (i).  It remains to prove that
\begin{equation}\label{vd40}
\|A_3\|_{L^2}\lesssim 2^{-3m/2+4\delta m}.
\end{equation}
Assume that $j_2> 5m/6$ (the proof of \eqref{vd40} when $j_1> 5m/6$ is similar). We may assume that $|k_2|\leq 10$ (see \eqref{vd6.65}), and then $|k|,|k_1|\in[0,100]$ (due to the restrictions $|\Phi(\xi,\eta)|\lesssim 2^{-10\delta m}$ and $|\Theta(\xi,\eta)|\lesssim 2^{-20\delta m}$, see also \eqref{try5.5}). We show first that
\begin{equation}\label{vd41}
\big\|P_kI_3[f^\mu_{j_1,k_1},A_{\leq 0,\gamma_1}f^\nu_{j_2,k_2}]\big\|_{L^2}\lesssim 2^{-3m/2+4\delta m}.
\end{equation}
Indeed, we notice that, as a consequence of the $L^2\times L^\infty$ argument,
\begin{equation*}
\big\|P_{k}I^{\sigma\mu\nu}[f_{j_{1},k_{1}}^{\mu},A_{\leq 0,\gamma_1}f_{j_{2},k_{2}}^{\nu}]\big\|_{L^2}\lesssim 2^{-3m/2},
\end{equation*}
where $I^{\sigma\mu\nu}$ is defined as in \eqref{Isigmamunu}. Let $I^{||}$ be defined by
\begin{equation}\label{vd36}
\mathcal{F}\{I^{||}[f,g]\}(\xi):=\int_{\mathbb{R}^2}e^{is\Phi(\xi,\eta)}m(\xi,\eta)\varphi(2^{20\delta m}\Theta(\xi,\eta))\widehat{f}(\xi-\eta)\widehat{g}(\eta)d\eta.
\end{equation}
Using Lemma \ref{RotIBP} and \eqref{vd6.7}, it follows that
\begin{equation*}
\big\|P_{k}I^{||}[f_{j_{1},k_{1}}^{\mu},A_{\leq 0,\gamma_1}f_{j_{2},k_{2}}^{\nu}]\big\|_{L^2}\lesssim 2^{-3m/2}.
\end{equation*}
The same averaging argument as in the proof of Lemma \ref{PhiLocLem} gives \eqref{vd41}.

We show now that
\begin{equation}\label{vd42}
\big\|P_kI_3[f^\mu_{j_1,k_1},A_{\geq 1,\gamma_1}f^\nu_{j_2,k_2}]\big\|_{L^2}\lesssim 2^{-3m/2+4\delta m}.
\end{equation}
Recall that $|k_2|\leq 10$ and $|k|,|k_1|\in[0,100]$. It follows that $|\nabla_\eta\Phi(\xi,\eta)|\geq 2^{-\D}$ in the support of the integral (otherwise $|\eta|$ would be close to $\gamma_1/2$, as a consequence of Proposition \ref{spaceres11} (iii), which is not the case). The bound \eqref{vd42} (in fact rapid decay) follows using Lemma \ref{tech5} (i) unless
\begin{equation}\label{vd43}
j_2\geq (1-\delta^2)m.
\end{equation}

Finally, assume that \eqref{vd43} holds. Notice that $P_kI_3[A_{\geq 1,\gamma_0}f^\mu_{j_1,k_1},A_{\geq 1,\gamma_1}f^\nu_{j_2,k_2}]\equiv 0$. This is due to the fact that $|\lambda(\gamma_1)\pm\lambda(\gamma_0)\pm\lambda(\gamma_1\pm\gamma_0)|\gtrsim 1$, see Lemma \ref{LambdaBasic} (iv). Moreover, 
\begin{equation*}
\big\|P_{k}I^{\sigma\mu\nu}[A_{\leq 0,\gamma_0}f_{j_{1},k_{1}}^{\mu},A_{\geq 1,\gamma_1}f_{j_{2},k_{2}}^{\nu}]\big\|_{L^2}\lesssim 2^{-3m/2+3\delta m+6\delta^2m}
\end{equation*}
as a consequence of the $L^2\times L^\infty$ argument and the bound \eqref{LinftyBd3}. Therefore, using Lemma \ref{RotIBP},
\begin{equation*}
\big\|P_{k}I^{||}[A_{\leq 0,\gamma_0}f_{j_{1},k_{1}}^{\mu},A_{\geq 1,\gamma_1}f_{j_{2},k_{2}}^{\nu}]\big\|_{L^2}\lesssim 2^{-3m/2+3\delta m+6\delta^2m}.
\end{equation*}
The same averaging argument as in the proof of Lemma \ref{PhiLocLem} shows that
\begin{equation*}
\big\|P_{k}I_3[A_{\leq 0,\gamma_0}f_{j_{1},k_{1}}^{\mu},A_{\geq 1,\gamma_1}f_{j_{2},k_{2}}^{\nu}]\big\|_{L^2}\lesssim 2^{-3m/2+3\delta m+6\delta^2m},
\end{equation*}
and the desired bound \eqref{vd42} follows in this case as well. This completes the proof of \eqref{vd40}.

We prove now the bounds \eqref{Dbound5.1}. We notice that $|\eta|$ and $|\xi-\eta|$ are close to $\gamma_1/2$ in the support of the integral, due to Proposition \ref{spaceres11} (iii), so
\begin{equation*}
\widehat{G^{[1]}}(\xi)=\int_{\mathbb{R}^2}e^{is\Phi(\xi,\eta)}m(\xi,\eta)\varphi_k(\xi)\chi^{[1]}(\xi,\eta)\mathcal{F}\{A_{\geq 1,\gamma_1/2}f^\mu_{j_1,k_1}\}(\xi-\eta)\mathcal{F}\{A_{\geq 1,\gamma_1/2}f^\nu_{j_2,k_2}\}(\eta)d\eta.
\end{equation*}
Then we notice that the factor $\varphi(2^{30\delta m}\nabla_{\eta}\Phi(\xi,\eta))$ can be removed at the expense of negligible errors (due to Lemma \ref{tech5} (i)). The bound follows using the $L^2\times L^\infty$ argument and Lemma \ref{PhiLocLem}.

The bound on $G^{[2]}(s)$ in \eqref{Dbound5.2} follows using \eqref{vd6.9}, \eqref{Dbound5.1}, and \eqref{vd32}.

(ii) The plan is to localize suitably, in the Fourier space both in the radial and the angular directions, and use \eqref{FL1bd} or \eqref{FLinftybd}. More precisely, let
\begin{equation}\label{vd71}
B_{\kappa_\theta,\kappa_r}(\xi):=\int_{\mathbb{R}^2}e^{is\Phi(\xi,\eta)}m(\xi,\eta)\varphi_k(\xi)\varphi(\kappa_r^{-1}\Xi(\xi,\eta))\varphi(\kappa_\theta^{-1}\Theta(\xi,\eta))\widehat{f^\mu_{j_1,k_1}}(\xi-\eta)\widehat{f^\nu_{j_2,k_2}}(\eta)d\eta,
\end{equation}
where $\kappa_\theta$ and $\kappa_r$ are to be fixed. 

Let $\overline{j}:=\max(j_1,j_2)$. If 
\begin{equation*}
\min(k_1,k_2)\geq -2m/N'_0,\qquad\overline{j}\leq m/2
\end{equation*} 
then we set $\kappa_r=2^{2\delta m-m/2}$ (we do not localize in the angular variable in this case). Notice that $|\mathcal{F}\{A^{a_1,\alpha_1;a_2,\alpha_2}_{k;k_1,j_1;k_2,j_2}\}(\xi)-B_{\kappa_\theta,\kappa_r}(\xi)|\lesssim 2^{-4m}$ in view of Lemma \ref{tech5} (i). If $||\xi|-2\gamma_0|\geq 2^{-2\D}$ then we use Proposition \ref{spaceres11} (ii) and conclude that the integration in $\eta$ is over a ball of radius $\lesssim 2^{|k|}\kappa_r$. Therefore 
\begin{equation}\label{vd72}
|B_{\kappa_\theta,\kappa_r}(\xi)|\lesssim 2^{k+\min(k_1,k_2)/2}(2^{|k|}\kappa_r)^2\|\widehat{f^\mu_{j_1,k_1}}\|_{L^\infty}\|\widehat{f^\nu_{j_2,k_2}}\|_{L^\infty}\lesssim (2^{-k}+2^{3k})2^{-m+10\delta m}.
\end{equation} 

If 
\begin{equation*}
\min(k_1,k_2)\geq -2m/N'_0,\qquad\overline{j}\in[m/2,m-10\delta m]
\end{equation*} 
then we set $\kappa_r=2^{2\delta m+\overline{j}-m}$, $\kappa_\theta=2^{3\delta m-m/2}$. Notice that $|\mathcal{F}\{A^{a_1,\alpha_1;a_2,\alpha_2}_{k;k_1,j_1;k_2,j_2}\}(\xi)-B_{\kappa_\theta,\kappa_r}(\xi)|\lesssim 2^{-2m}$ in view of Lemma \ref{tech5} (i) and Lemma \ref{RotIBP}. If $||\xi|-2\gamma_0|\geq 2^{-2\D}$ then we use Proposition \ref{spaceres11} (ii) (notice that the hypothesis \eqref{zc1} holds in our case) to conclude that the integration in $\eta$ in the integral defining $B_{\kappa_\theta,\kappa_r}(\xi)$ is over a $O(\kappa\times\rho)$ rectangle in the direction of the vector $\xi$, where $\kappa:=2^{|k|}2^{\delta m}\kappa_\theta$, $\rho:=2^{|k|}\kappa_r$. Then we use \eqref{FL1bd} for the function corresponding to the larger $j$ and \eqref{FLinftybd} to the other function to estimate
\begin{equation}\label{vd73}
|B_{\kappa_\theta,\kappa_r}(\xi)|\lesssim 2^{k}\kappa 2^{-\overline{j}+51\delta\overline{j}}\rho^{49\delta}2^{2\delta\overline{j}}2^{2\delta m}\lesssim (2^{-k}+2^{3k})2^{-m+10\delta m}.
\end{equation}

If 
\begin{equation*}
\min(k_1,k_2)\geq -2m/N'_0,\qquad\overline{j}\geq m-10\delta m
\end{equation*}
then we have two subcases: if $\min(j_1,j_2)\leq m-10\delta m$ then we still localize in the angular direction (with $\kappa_\theta=2^{3\delta m-m/2}$ as before) and do not localize in the radial direction. The same argument as above, with $\rho\lesssim 2^{2\delta m}$, gives the same pointwise bound \eqref{vd73}. On the other hand, if $\min(j_1,j_2)\geq m-10\delta m$ then the desired conclusion follows by H\"{o}lder's inequality. The bound \eqref{vd70} follows if $\min(k_1,k_2)\geq -2m/N'_0$.

On the other hand, if $\min(k_1,k_2)\leq -2m/N'_0$ then $2^k\approx 2^{k_1}\approx 2^{k_2}$ (due to \eqref{vd5}) and the bound \eqref{vd70} can be proved in a similar way. The decomposition \eqref{vd70.1} is a consequence of \eqref{vd70} and the $L^2$ bounds \eqref{vd2}.

(iii) We prove now the decomposition \eqref{Decdtf}. With $\kappa:=2^{-m/2+\delta m+\delta^2m}$ we define
\begin{equation}\label{vd60}
\begin{split}
&g^{[1]}(\xi,s):=\int_{\mathbb{R}^2}e^{is\Phi'(\xi,\eta)}m(\xi,\eta)\varphi_k(\xi)\chi^{[1]}(\xi,\eta)\widehat{f^\mu_{j_1,k_1}}(\xi-\eta,s)\widehat{f^\nu_{j_2,k_2}}(\eta,s)\varphi(\kappa^{-1}\Xi(\xi,\eta))d\eta,\\
&h^{[1]}(\xi,s):=\int_{\mathbb{R}^2}e^{is\Phi(\xi,\eta)}m(\xi,\eta)\varphi_k(\xi)\chi^{[1]}(\xi,\eta)\widehat{f^\mu_{j_1,k_1}}(\xi-\eta,s)\widehat{f^\nu_{j_2,k_2}}(\eta,s)\varphi_{\geq 1}(\kappa^{-1}\Xi(\xi,\eta))d\eta,
\end{split}
\end{equation}
where $\Phi'(\xi,\eta)=\Phi_{\sigma\mu\nu}(\xi,\eta)-\Lambda_{\sigma}(\xi)+2\Lambda_\sigma(\xi/2)$. In view of Proposition \ref{spaceres11} (iii) and the definition of $\chi^{[1]}$, the function $G^{[1]}$ is nontrivial only when $\mu=\nu=\sigma$, and it is supported in the set $\{||\xi|-\gamma_1|\lesssim 2^{-10\delta m}\}$. The conclusion $\|h^{[1]}(s)\|_{L^\infty}\lesssim 2^{-4m}$ in \eqref{Decdtf} follows from Lemma \ref{tech5} (i) and the assumption $j_1,j_2\leq m/2+\delta m$.

To prove the bounds on $g^{[1]}$ we notice that $\Phi'(\xi,\eta)=2\Lambda_\sigma(\xi/2)-\Lambda_\sigma(\xi-\eta)-\Lambda_\sigma(\eta)$ and $|\eta-\xi/2|\lesssim \kappa$ (due to \eqref{try1.2}). Therefore $|\Phi'(\xi,\eta)|\lesssim \kappa^2$, $|(\nabla_\xi\Phi')(\xi,\eta)| \lesssim \kappa$, and $|(D^\alpha_\xi\Phi')(\xi,\eta)|\lesssim_{|\alpha|}1$ in the support of the integral. The bounds on $\|\D^\alpha_{\xi}g^{[1]}(s)\|_{L^\infty}$ in \eqref {Decdtf} follow using $L^\infty$ bounds on $\widehat{f^\mu_{j_1,k_1}}(s)$ and $\widehat{f^\nu_{j_2,k_2}}(s)$. The bounds on $\|\partial_sg^{[1]}(s)\|_{L^\infty}$ follow in the same way, using also the decomposition \eqref{vd70.1} when the $s$-derivative hits either $\widehat{f^\mu_{j_1,k_1}}(s)$ or $\widehat{f^\nu_{j_2,k_2}}(s)$ (the contribution of the $L^2$ component is estimated using H\"{o}lder's inequality). This completes the proof.
\end{proof}

Our last lemma concerning $\partial_t\mathcal{V}$ is a refinement of Lemma \ref{dtfLem2} (ii). It is only used in the proof of the decomposition \eqref{DecpartialtfEE}--\eqref{bnm30} in Lemma \ref{EELemmaSemi}.

\begin{lemma}\label{dtfLem3}
For $s\in[2^m-1,2^{m+1}]$ and $k\in[-10,10]$ we can decompose 
\begin{equation}\label{poi1}
\mathcal{F}\{P_{k}A_{\leq \D,2\gamma_0}(D^\alpha \Omega^a\partial_t\mathcal{V}_\sigma)(s)\}(\xi) = g_d(\xi) + g_\infty(\xi) + g_2(\xi)
\end{equation}
provided that $a\leq N_1/2+20$ and $2a+|\alpha|\leq N_1+N_4$, where
\begin{equation}\label{poi2}
\begin{split}
&\| g_2 \|_{L^2} \lesssim \e_1^2 2^{-3m/2+20\delta m},
  \qquad \| g_\infty\|_{L^\infty} \lesssim \e_1^22^{-m-4\delta m },\\
&\sup_{|\rho|\leq 2^{7m/9+4\delta m}}\|\mathcal{F}^{-1}\{e^{-i(s+\rho)\Lambda_\sigma}g_d\}\|_{L^\infty}\lesssim \varep_1^22^{-16m/9-4\delta m}.
\end{split}
\end{equation} 
\end{lemma}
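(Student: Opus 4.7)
The plan is to combine the decompositions of $\partial_t\mathcal{V}_\sigma$ from Lemmas \ref{dtfLem1} and \ref{dtfLem2}, and to exploit the localization $A_{\leq \D, 2\gamma_0}$ to eliminate the leading (space-time-resonant) contribution, which is concentrated near $|\xi|=\gamma_1\neq 2\gamma_0$. First, write
$$P_k D^\alpha \Omega^a (\partial_t \mathcal{V}_\sigma)(s) = \varepsilon_1^2 \sum_{[(k_1,j_1),(k_2,j_2)]\in X_{m,k}} A^{a_1,\alpha_1;a_2,\alpha_2}_{k;k_1,j_1;k_2,j_2}(s) + \varepsilon_1^2 P_k E_\sigma^{a,\alpha}(s)$$
via Lemma \ref{dtfLem1} (ii). The error $E_\sigma^{a,\alpha}$ has $\|P_kE_\sigma^{a,\alpha}\|_{L^2}\lesssim 2^{-3m/2+5\delta m}$ by \eqref{vd2}, which is absorbed into $g_2$. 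For $k\in[-10,10]$ and $m$ large, Cases (2) and (3) of Lemma \ref{dtfLem1} are vacuous: Case (2) would require $\min(k_1,k_2)\le -2m/N_0'$ together with $\max(|k_1-k|,|k_2-k|)\le 100$, which is impossible for $m$ large; similarly the condition $k\le\min(k_1,k_2)-200$ in Case (3) contradicts $\max(|k_1|,|k_2|)\le 10$. Thus only Case (4) remains, with $\min(k,k_1,k_2)\ge -6m/N_0'$.

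Next, for each surviving $A^{a_1,\alpha_1;a_2,\alpha_2}_{k;k_1,j_1;k_2,j_2}$, invoke the splitting $A^{\cdots}=G^{[1]}+G^{[2]}+G^{[3]}$ of Lemma \ref{dtfLem2} (i). The piece $G^{[1]}$ carries the space-time-resonant output: its support in $\xi$ is a neighborhood of the resonant sphere $\{|\xi|=\gamma_1\}$ (by the analysis behind Proposition \ref{spaceres11} (iii), since $\chi^{[1]}$ forces both $|\Phi|\lesssim 2^{-10\delta m}$ and $|\nabla_\eta\Phi|\lesssim 2^{-30\delta m}$). Because $\gamma_1\neq 2\gamma_0$ and the annulus defining $A_{\leq \D, 2\gamma_0}$ has fixed small width, one has $A_{\leq \D,2\gamma_0} G^{[1]}\equiv 0$ for $m$ large. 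The piece $G^{[3]}$ already obeys $\|G^{[3]}\|_{L^2}\lesssim 2^{-3m/2+4\delta m}$ by \eqref{Dbound5.2}, so $A_{\leq \D,2\gamma_0}G^{[3]}$ goes into $g_2$.

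The main work concerns $G^{[2]}$, for which Lemma \ref{dtfLem2} (ii) gives the Fourier-$L^\infty$ bound $\|\mathcal{F}\{A_{\leq \D,2\gamma_0}G^{[2]}\}\|_{L^\infty}\lesssim 2^{-m+14\delta m}$ when $k\in[-10,10]$; this is stronger than the basic $L^2$ bound but not yet of the form required. On the support of $\chi^{[2]}$ we have $|\Phi|\geq 2^{-10\delta m}$ and $|\Omega_\eta\Phi|\leq 2^{-20\delta m}$, so only the radial component of $\Xi(\xi,\eta)=\nabla_\eta\Phi(\xi,\eta)$ is unconstrained. We split $G^{[2]}$ by introducing an additional cutoff on $|\Xi(\xi,\eta)|$ at a threshold $2^{-\alpha m}$ to be optimized. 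The thin-tube piece $|\Xi|\lesssim 2^{-\alpha m}$ has small $\eta$-measure (by Proposition \ref{volume}) and is estimated directly in the Fourier-$L^\infty$ topology using the $L^\infty$ control of the profiles $\widehat{f^\mu_{j_1,k_1}},\widehat{f^\nu_{j_2,k_2}}$ from Lemma \ref{LinEstLem} (i): this yields a bound on $\|\widehat{g_\infty}\|_{L^\infty}\lesssim 2^{-m-4\delta m}$ once the threshold is chosen appropriately. The complementary piece (with $|\Xi|\gtrsim 2^{-\alpha m}$) forms $g_d$; here one integrates by parts in $\eta$ using a vector field along the direction of $\Xi$ (compatibly with the cutoff $\chi^{[2]}$, in the spirit of Lemma \ref{RotIBP}) to convert oscillation in the $s$-phase into amplitude decay, and then uses the identity $e^{-i(s+\rho)\Lambda_\sigma}\widehat{G^{[2]}}(\xi,s)=e^{-i\rho\Lambda_\sigma(\xi)}\int e^{-is[\Lambda_\mu(\xi-\eta)+\Lambda_\nu(\eta)]}(\cdots)d\eta$ together with stationary phase in $\xi$ (viewing $|\rho|\le 2^{7m/9+4\delta m}$) and the dispersive bound \eqref{LinftyBd3.5} on one of the profiles, exactly as in the linear estimates of Lemma \ref{LinEstLem} (ii).

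The principal obstacle is the last paragraph: pinning down the sharp decay rate $2^{-16m/9-4\delta m}$ requires balancing the threshold $2^{-\alpha m}$ against the $2^{-m}$ dispersion and the slower $2^{-5m/6}$ decay at the frequency $\gamma_0$ (which is generically present, since $|\xi|\approx 2\gamma_0$ forces $|\xi-\eta|,|\eta|$ near $\gamma_0$). The choice $\alpha$ must be such that the $g_\infty$ volume estimate and the $g_d$ stationary-phase estimate match at rate $2^{-16m/9}$; tracking the separation conditions between the spheres $|\eta|=\gamma_0$, $|\eta|=\gamma_1/2$, and $|\eta|=\gamma_1$ (as in the discussion of the four special spheres in the introduction) is what makes the bookkeeping delicate, but no genuinely new ingredient beyond Lemmas \ref{dtfLem1}, \ref{dtfLem2}, \ref{LinEstLem}, \ref{RotIBP}, and Proposition \ref{volume} is required.
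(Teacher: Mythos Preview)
There is a fundamental misreading of the operator $A_{\leq \D,2\gamma_0}$. By the definition \eqref{aop}, $A_{n,\gamma}$ localizes to $||\xi|-\gamma|\approx 2^{-n-100}$, so $A_{\leq \D,2\gamma_0}=\sum_{n\leq \D}A_{n,2\gamma_0}$ is supported where $||\xi|-2\gamma_0|\gtrsim 2^{-\D-100}$, i.e.\ \emph{away} from $2\gamma_0$, not in a thin annulus near it. Consequently your claim that $A_{\leq \D,2\gamma_0}G^{[1]}\equiv 0$ is false: $G^{[1]}$ is supported near $|\xi|=\gamma_1$, which lies in the support of $A_{\leq \D,2\gamma_0}$. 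The piece $G^{[1]}$ must therefore be handled, and your subsequent heuristic for $G^{[2]}$ (``$|\xi|\approx 2\gamma_0$ forces $|\xi-\eta|,|\eta|$ near $\gamma_0$'') is based on the same misreading.

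The paper does not use the $G^{[1]}+G^{[2]}+G^{[3]}$ splitting at all. Instead it splits by $\overline{j}=\max(j_1,j_2)$. For $\overline{j}\geq m/2+20\delta m$ the argument behind Lemma~\ref{dtfLem2}(ii) already gives the decomposition into $g_2$ and $g_\infty$ (no $g_d$ needed). For $\overline{j}\leq m/2+20\delta m$ one localizes $\eta$ to a $\kappa_r$-neighborhood of the stationary points $p_\ell(\xi)\in P_{\mu\nu}(\xi)$ (Proposition~\ref{spaceres11}), which lets one write $B_{\kappa_r}(\xi)=\sum_\ell e^{is\Lambda_\sigma(\xi)}e^{-is\Gamma_\ell(|\xi|)}H_\ell(\xi)$ with $\Gamma_\ell(r)=\Lambda_\mu(\xi-p_\ell(\xi))+\Lambda_\nu(p_\ell(\xi))$ and $H_\ell$ obeying the amplitude bounds \eqref{poi9}. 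The factor $e^{is\Lambda_\sigma(\xi)}$ cancels against $e^{-is\Lambda_\sigma}$, and the $g_d$ bound reduces to stationary phase for $\int e^{i[x\cdot\xi-\rho\Lambda_\sigma(\xi)-s\Gamma_\ell(|\xi|)]}H_\ell(\xi)\,d\xi$. The crucial point---and the true role of $A_{\leq \D,2\gamma_0}$---is the verification of \eqref{poi10}: $|\Gamma'_\ell(r)|\approx 1$ and $|\Gamma''_\ell(r)|\approx 1$ when $|r-2\gamma_0|\gtrsim 2^{-3\D/2}$. For the branch $\ell=1$ (where $p_1(\xi)=\xi/2$) one has $\Gamma_1(r)=2\lambda(r/2)$, so $\Gamma_1''(r)=\tfrac12\lambda''(r/2)$ vanishes precisely at $r=2\gamma_0$; excluding a neighborhood of $2\gamma_0$ is exactly what makes the radial Hessian nondegenerate. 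Your proposal misses this mechanism entirely.
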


\begin{proof} Starting from Lemma \ref{dtfLem1} (ii), we notice that the error term $E^{a,\alpha}_{\sigma}$ can be placed in the $L^2$ 
component $g_2$ (due to \eqref{vd2}). It remains to decompose the functions $A^{a_1,\alpha_1;a_2,\alpha_2}_{k;k_1,j_1;k_2,j_2}$. We may assume that 
we are in case (4), $k_1,k_2\in[-2m/N'_0,m/N'_0]$. We define the functions $B_{\kappa_\theta,\kappa_r}$ as in \eqref{vd71}. We notice that the argument in Lemma \ref{dtfLem2} (ii) already gives the desired conclusion if $\overline{j}=\max(j_1,j_2)\geq m/2+20\delta m$ (without having to use the function $g_d$). 

It remains to decompose the functions $A_{\leq \D,2\gamma_0}A^{a_1,\alpha_1;a_2,\alpha_2}_{k;k_1,j_1;k_2,j_2}(s)$ when
\begin{equation}\label{poi3}
\overline{j}=\max(j_1,j_2)\leq m/2+20\delta m. 
\end{equation}
As in \eqref{vd71} let
\begin{equation}\label{poi4}
B_{\kappa_r}(\xi):=\int_{\mathbb{R}^2}e^{is\Phi(\xi,\eta)}m(\xi,\eta)\varphi_k(\xi)\varphi(\kappa_r^{-1}\Xi(\xi,\eta))\widehat{f^\mu_{j_1,k_1}}(\xi-\eta)\widehat{f^\nu_{j_2,k_2}}(\eta)d\eta,
\end{equation}
where $\kappa_r:=2^{30\delta m-m/2}$ (we do not need angular localization here). In view of Lemma \ref{tech5} (i), $|\mathcal{F}A^{a_1,\alpha_1;a_2,\alpha_2}_{k;k_1,j_1;k_2,j_2}(\xi)-B_{\kappa_r}(\xi)|\lesssim 2^{-4m}$. It remains to prove that
\begin{equation}\label{poi5}
\big\|\mathcal{F}^{-1}\big\{e^{-i(s+\rho) \Lambda_\sigma(\xi)}\varphi_{\geq -\D}(2^{100}||\xi|-2\gamma_0|)B_{\kappa_r}(\xi)\big\}\big\|_{L^\infty}\lesssim 2^{-16m/9-5\delta m}
\end{equation}
for any $k,j_1,k_1,j_2,k_2,\rho$ fixed, $|\rho|\leq 2^{7m/9+4\delta m}$.

In proving \eqref{poi5}, we may assume that $m\geq \D^2$. The condition $|\Xi(\xi,\eta)|\leq 2\kappa_r$ shows that the variable $\eta$ is localized to a small ball. More precisely, using Lemma \ref{spaceres11}, we have
\begin{equation}\label{poi6}
|\eta-p(\xi)|\lesssim \kappa_r,\qquad \text{ for some }\qquad p(\xi)\in P_{\mu\nu}(\xi), 
\end{equation}
provided that $||\xi|-2\gamma_0|\gtrsim 1$. The sets $P_{\mu\nu}(\xi)$ are defined in \eqref{try1} and contain two or three points. We parametrize these points by $p_{\ell}(\xi)=q_{\ell}(|\xi|)\xi/|\xi|$, where $q_1(r)=r/2, q_2(r)=p_{++2}(r), q_3(r)=r-p_{++2}(r)$ if $\mu=\nu$, or $q_1(r)=p_{+-1}(r), q_2(r)=r-p_{+-1}(r)$ if $\mu=-\nu$. Then we rewrite
\begin{equation}\label{poi7}
B_{\kappa_r}(\xi)=\sum_{\ell}e^{is\Lambda_\sigma(\xi)}e^{-is[\Lambda_\mu(\xi- p_\ell(\xi))+\Lambda_\nu(p_\ell(\xi))]}H_{\ell}(\xi)
\end{equation}
where
\begin{equation}\label{poi8}
\begin{split}
H_{\ell}(\xi):=\int_{\mathbb{R}^2}&e^{is[\Phi(\xi,\eta)-\Phi(\xi,p_\ell(\xi)]}m(\xi,\eta)\varphi_k(\xi)\varphi(\kappa_r^{-1}\Xi(\xi,\eta))\\
&\widehat{f^\mu_{j_1,k_1}}(\xi-\eta)\widehat{f^\nu_{j_2,k_2}}(\eta)\varphi(2^{m/2-31\delta m}(\eta-p_{\ell}(\xi))d\eta.
\end{split}
\end{equation} 
Clearly, $|\Phi(\xi,\eta)-\Phi(\xi,p_\ell(\xi)|\lesssim |\eta-p_\ell(\xi)|^2$, $|\nabla_\xi[\Phi(\xi,\eta)-\Phi(\xi,p_\ell(\xi)]|\lesssim |\eta-p_\ell(\xi)|$. Therefore
\begin{equation}\label{poi9}
|D^\beta H_\ell(\xi)|\lesssim_\beta 2^{-m+70\delta m}2^{|\beta|(m/2+35\delta m)}\qquad\text{ if }\quad ||\xi|-2\gamma_0|\gtrsim 1.
\end{equation} 

We can now prove \eqref{poi5}. Notice that the factor $e^{is\Lambda_\sigma(\xi)}$ simplifies and that the remaining phase $\xi\to \Lambda_\mu(\xi- p_\ell(\xi))+\Lambda_\nu(p_\ell(\xi))$ is radial. Let $\Gamma_l=\Gamma_{l;\mu\nu}$ be defined such that $\Gamma_l(|\xi|)=\Lambda_\mu(\xi- p_\ell(\xi))+\Lambda_\nu(p_\ell(\xi))$. Standard stationary phase estimates, using also \eqref{poi9}, show that \eqref{poi5} holds provided that
\begin{equation}\label{poi10}
|\Gamma'_{\ell}(r)|\approx 1\,\,\text{ and }\,\,|\Gamma''_{\ell}(r)|\approx 1\qquad\text{ if }\qquad r\in[2^{-20},2^{20}],\,|r-2\gamma_0|\geq 2^{-3\D/2}.
\end{equation}

To prove \eqref{poi10}, assume first that $\mu=\nu$. If $\ell=1$ then $p_\ell(\xi)=\xi/2$ and the desired conclusion is clear. If $\ell\in\{2,3\}$ then $\pm\Gamma_\ell(r)=\lambda(r-p_{++2}(r))+\lambda (p_{++2}(r))$. In view of Proposition \ref{spaceres11} (i), $r-2\gamma_0\geq 2^{-2\D}$, $p_{++2}(r)\in(0,\gamma_0-2^{-2\D}]$, and $\lambda'(r-p_{++2}(r))=\lambda' (p_{++2}(r))$. Therefore
\begin{equation*}
|\Gamma'_{\ell}(r)|=\lambda'(r-p_{++2}(r)),\qquad |\Gamma''_{\ell}(r)|=|\lambda''(r-p_{++2}(r))(1-p'_{++2}(r))|.
\end{equation*}
The desired conclusions in \eqref{poi10} follow since $|1-p'_{++2}(r)|\approx 1$ in the domain of $r$ (due to the identity $\lambda''(r-p_{++2}(r))(1-p'_{++2}(r))=\lambda''(p_{++2}(r))p'_{++2}(r)$). 

The proof of \eqref{poi10} in the case $\mu=-\nu$ is similar. This completes the proof of the lemma.
\end{proof}

\section{Dispersive analysis, III: proof of Proposition \ref{bootstrap}}\label{Sec:Z1Norm} 

\subsection{Quadratic interactions}\label{QuadInt} In this section we prove Proposition \ref{bootstrap}. We start with the quadratic component in the Duhamel formula \eqref{duhamel} and show how to control its $Z$ norm.

\begin{proposition}\label{bootstrapIMP3} With the hypothesis in Proposition \ref{bootstrap}, for any $t\in[0,T]$ we have 
\begin{equation}\label{bootstrapIMP3.1}
\sup_{0\le a\leq N_1/2+20,\,2a+|\alpha|\leq N_1+N_4}\|D^\alpha\Omega^a W_2(t)\|_{Z_1}\lesssim \varepsilon_1^{2}.
\end{equation} 
\end{proposition}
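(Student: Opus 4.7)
The plan is to decompose $D^\alpha\Omega^a W_2$ dyadically in time, then at each time scale decompose the two profile inputs into the atoms $Q_{j_i k_i}$ and further localize in modulation, estimating each resulting piece against the atomic $B_j$ norm at space-frequency scale $(k,j)$.

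First, using the identity $(\Omega_\xi+\Omega_\eta)\mathfrak{m}_{\mu\nu}\equiv 0$ provided by \eqref{Assumptions2} together with the fact that $D^\alpha$ and $\Omega$ commute with $e^{it\Lambda}$, I distribute the vector fields onto the two factors as in \eqref{OmegaAN2}. This reduces the problem to bounding bilinear expressions of the form
\[
\int_0^t q_m(s)\!\!\int_{\mathbb{R}^2} e^{is\Phi_{+\mu\nu}(\xi,\eta)}\mathfrak{m}^b_{\mu\nu}(\xi,\eta)\widehat{\mathcal{V}^{(1)}_\mu}(\xi-\eta,s)\widehat{\mathcal{V}^{(2)}_\nu}(\eta,s)\,d\eta\,ds,
\]
where each $\mathcal{V}^{(i)}=\Omega^{a_i}D^{\alpha_i}\mathcal{V}$ obeys the bootstrap bounds \eqref{AssonFmunu} and the modified symbols $\mathfrak{m}^b_{\mu\nu}$ still satisfy \eqref{Assumptions2}. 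For each such piece I fix $(k,j)\in\mathcal{J}$, decompose the two factors into the atoms $f^{(i)}_{j_i,k_i}$, and apply $Q_{jk}$ and $A^{(j)}_{n,\gamma_1}$; one must then show that the resulting output is bounded by $\varepsilon_1^2\, 2^{-(1-50\delta)j}2^{(1/2-49\delta)n}$, summably over $(m,j_1,k_1,j_2,k_2)$.

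Second, I split each dyadic piece by the size of the modulation $|\Phi|\sim 2^p$. In the large-modulation regime $2^p\gtrsim 2^{-m+\delta m}$ I integrate by parts in $s$, producing a denominator $\Phi^{-1}$ and three types of terms, according to whether $\partial_s$ hits $q_m$ or one of the two profiles. The first is handled by Lemma~\ref{EstimAKP} together with the linear dispersive bounds of Lemma~\ref{LinEstLem}; the other two, which contain $\partial_s\mathcal{V}$, are controlled using the detailed decompositions of Lemmas~\ref{dtfLem1}--\ref{dtfLem2}, whose gain of $\langle s\rangle^{-5/6+\delta}$ and explicit oscillatory/$L^\infty$ structure absorb the loss $2^{-p}$. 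Approximate finite speed of propagation via Lemma~\ref{tech5}(i) (integration by parts in $\xi$) confines the output to $|x|\lesssim 2^m$, which accounts for the $2^{(1-50\delta)j}$ weight in $B_j$ when $j\leq m$; when $j\geq m$ one uses instead the spatial localization already present in the input atoms. In the small-modulation regime $2^p\lesssim 2^{-m+\delta m}$ I use Lemma~\ref{PhiLocLem} to reduce to an oscillatory integral without the modulation cutoff, then integrate by parts in $\eta$ along level sets of $\Phi$ via Lemma~\ref{RotIBP}, exploiting the near-radiality of the profiles to restrict to a thin neighborhood of $\nabla_\eta\Phi=0$. By Proposition~\ref{spaceres11}(iii) this space-time resonant set is exactly $\{|\xi|=\gamma_1,\,\eta=\xi/2\}$, so the contribution is supported on a $2^{-m}$-thick annulus around $|\xi|=\gamma_1$, with the correct $L^2$ size on each $A^{(j)}_{n,\gamma_1}$.

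The hard part will be the resonant contribution, i.e.\ the small-modulation case with $|\xi|$ close to $\gamma_1$ and both input frequencies close to $\gamma_1/2$ (case (4) of Lemma~\ref{dtfLem1}(ii)). Here $\widehat{W_2}(\xi,t)$ genuinely grows logarithmically in $t$ on the sphere $\{|\xi|=\gamma_1\}$, in accordance with Remark~\ref{MainRemark}(ii), so the bound is saturated and no $2^{\delta m}$ slack is available. The argument must use the sharp stationary phase extraction of Lemma~\ref{dtfLem2}(iii) --- which represents the integrand as $e^{is(\Lambda(\xi)-2\Lambda(\xi/2))}g^{[1]}(\xi,s)+O(2^{-4m})$ with $g^{[1]}$ of size $2^{-m+O(\delta)m}$ and controllable radial derivatives --- integrate in $s$ over the dyadic window, and measure the result against the annular weights $2^{-(1/2-49\delta)n}$ built into $B_j$. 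Summing the $n$-, $j_i$-, and $m$-pieces with the aid of the Schur-type bounds from Lemma~\ref{dtfLem2}(ii) and the weighted $Z$-norm assumption on $\mathcal{V}$ then yields the required $\varepsilon_1^2$ bound and closes the proposition.
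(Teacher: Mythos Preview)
Your outline is correct and mirrors the paper's strategy: the proof of Proposition~\ref{bootstrapIMP3} is a reduction to Proposition~\ref{ZNormProp}, which is then established through Lemmas~\ref{ZNormEstSimpleLem1} and \ref{FSPLem}--\ref{lsmallBound} via exactly the time-dyadic/modulation/atomic decompositions and mechanisms you describe (integration by parts in $s$ for $l>l_-$, finite-speed localization in $\xi$ for large $j$, and the space-time-resonance analysis near $|\xi|=\gamma_1$ using Lemmas~\ref{dtfLem1}--\ref{dtfLem2} for the $\partial_s\mathcal{V}$ pieces, together with Lemma~\ref{cubicphase} for the trilinear output localization). Your citation of Lemma~\ref{EstimAKP} should be Lemma~\ref{PhiLocLem}/Lemma~\ref{touse} (the former belongs to the energy argument), but this is cosmetic.
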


The rest of this section is concerned with the proof of this proposition. Notice first that
\begin{equation}\label{ni2}
\begin{split}
\Omega^a_{\xi}\widehat{W_2}(\xi,t)=\sum_{\mu,\nu\in\{+,-\}}\sum_{a_1+a_2=a}\int_{0}^{t}\int_{\mathbb{R}^2}&e^{is\Phi_{+\mu\nu}(\xi,\eta)}\mathfrak{m}_{\mu\nu}(\xi,\eta)(\Omega^{a_1}\widehat{\mathcal{V}_{\mu}})(\xi-\eta,s)(\Omega^{a_2}\widehat{\mathcal{V}_{\nu}})(\eta,s)\,d\eta ds.
\end{split}
\end{equation}
 Given $t\in[0,T]$, we fix a suitable decomposition 
of the function $\mathbf{1}_{[0,t]}$, i.e. we fix functions $q_0,\ldots,q_{L+1}:\mathbb{R}\to[0,1]$, $|L-\log_2(2+t)|\leq 2$ as in \eqref{nh2}.
For $\mu,\nu\in\{+,-\}$, and $m\in[0,L+1]$ we define the operator $T^{\mu\nu}_{m}$ by
\begin{equation}\label{nh3}
\mathcal{F}\big\{T^{\mu\nu}_{m}[f,g]\big\}(\xi):=\int_{\mathbb{R}}q_m(s)\int_{\mathbb{R}^2}e^{is\Phi_{+\mu\nu}(\xi,\eta)}\mathfrak{m}_{\mu\nu}(\xi,\eta)\widehat{f}(\xi-\eta,s)\widehat{g}(\eta,s)d\eta ds.
\end{equation}

In view of Definition \ref{MainZDef}, Proposition \ref{bootstrapIMP3} follows from Proposition \ref{ZNormProp} below:

\begin{proposition}\label{ZNormProp}
Assume that $t\in[0,T]$ is fixed and define the operators $T^{\mu\nu}_{m}$ as above. If $a_1+a_2=a$, $\alpha_1+\alpha_2=\alpha$, $\mu,\nu\in\{+,-\}$, $m\in[0,L+1]$, and $(k,j)\in\mathcal{J}$, then 
\begin{equation}\label{nh4}
\sum_{k_1,k_2\in\mathbb{Z}}\big\| Q_{jk}T^{\mu\nu}_{m}[P_{k_1}D^{\alpha_1}\Omega^{a_1}\mathcal{V}_\mu,P_{k_2}D^{\alpha_2}\Omega^{a_2}\mathcal{V}_\nu]\big\|_{B_j}\lesssim 2^{-\delta^2m}\varepsilon_1^2.
\end{equation}
\end{proposition}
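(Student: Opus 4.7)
\textbf{Proof plan for Proposition \ref{ZNormProp}.} The strategy is to perform a carefully organized case analysis in $(k_1,k_2)$ and the modulation size $|\Phi_{+\mu\nu}(\xi,\eta)|$, and to reduce every case either to an $L^2$-lemma of the type proved in Section \ref{L2proof} or to an integration-by-parts in $s$ combined with the approximate finite speed of propagation. First I would dyadically localize in $(k_1,k_2)$ and exploit the ``null'' factor $2^k 2^{\min(k_1,k_2)/2}$ in \eqref{Assumptions2}, together with the high-Sobolev and $Z$-bounds \eqref{bootstrap2}, to reduce \eqref{nh4} to the range $-\mathcal{D} m \leq k,k_1,k_2 \leq \delta m$; outside this range the trivial $L^2\times L^\infty$ bound, using \eqref{LinftyBd3.5} for the dispersive factor and \eqref{Straightforward} for the $L^2$ factor, already gains a factor $2^{-\delta^2 m}$.

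In the reduced range I would partition via $1=\sum_p \varphi_p^{[\underline{p},\infty)}(\Phi_{+\mu\nu}(\xi,\eta))$ for some $\underline{p}\approx -(1-\delta^2)m$, handling small and large modulations separately. For $p=\underline{p}$ (resonant or near-resonant region), the critical tool is Lemma \ref{RotIBP}, applied with $f=\mathcal{V}_\mu$ and $g=\mathcal{V}_\nu$: using the many-$\Omega$-derivative control from \eqref{bootstrap2} the profiles are effectively radial, so integration by parts along $\Omega_\eta$-level sets localizes the $\eta$-integral to a thin neighborhood of $\{\Omega_\eta\Phi=0\}\cap\{|\Phi|\lesssim 2^{\underline{p}}\}$. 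Combined with Proposition \ref{spaceres11} this forces $\eta\approx \xi/2$ and $|\xi|\approx \gamma_1$, i.e.\ the space-time resonant sphere \eqref{waz5}. A stationary-phase type computation as in the Fourier-side analysis leading to \eqref{Decdtf} shows the output is essentially $c(\xi)\varphi_{\leq -m+\delta m}(|\xi|-\gamma_1)$ times a smooth factor, which matches the $B_j$ norm with room to spare because the $A^{(j)}_{n,\gamma_1}$ localization and the weight $2^{-(1/2-49\delta)n}$ in \eqref{znorm2} absorb the angular loss, yielding the $2^{-\delta^2 m}\varepsilon_1^2$ factor.

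For $p>\underline{p}$ (nonresonant modulation) I would integrate by parts in $s$, as in the derivation preceding \eqref{waz8}. Writing $q_m e^{is\Phi}=\partial_s(q_m e^{is\Phi}/(i\Phi))-q_m' e^{is\Phi}/(i\Phi)$ and using Lemma \ref{PhiLocLem} to justify the division by $\Phi$ in $L^2$, the boundary terms are handled by the small-modulation argument (applied at the endpoints of the $q_m$ window), the $q_m'$ term gains $2^{-m}$, and the main new contribution is the cubic expression in which $\partial_s$ hits one of the profiles. Here I substitute the detailed decomposition of $\partial_s\mathcal{V}$ from Lemmas \ref{dtfLem1}--\ref{dtfLem2}: the $L^2$ error pieces $E_\sigma^{a,\alpha}$ and the high-modulation pieces $G^{[2]},G^{[3]}$ are treated by straightforward Schur/$L^2\times L^\infty$ estimates using Lemma \ref{LinEstLem}, and only the stationary-phase piece $G^{[1]}$ (whose phase reduces via Lemma \ref{dtfLem2} (iii) to $\Lambda_\sigma(\xi)-2\Lambda_\sigma(\xi/2)$) contributes at the threshold.

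The main obstacle, and the place where the entire architecture of the $Z_1$ norm pays off, is this last iterated-resonance contribution. I would handle it using the algebraic identities \eqref{waz9} and \eqref{waz9.1} from Lemma \ref{cubicphase}: at stationary points of the combined phase $\widetilde{\Phi}$ in $(\eta,\sigma)$ the $\xi$-gradient of $\widetilde{\Phi}$ also vanishes, so an integration by parts in $\xi$ via Lemma \ref{tech5} forces the physical-space support of $Q_{jk}$ of the output to lie in $|x|\lesssim 2^{m-\delta m}$, allowing the factor $2^{(1-50\delta)j}$ in the definition of $B_j$ to be absorbed. Summing the resulting geometric series over $p\in[\underline{p},\mathcal{O}(1)]$, and over the remaining dyadic parameters, yields \eqref{nh4}. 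Packaging the cubic-phase analysis robustly enough to cover all frequency configurations (in particular the interactions near $\gamma_0$, $\gamma_1/2$, $\gamma_1$, and $2\gamma_0$ simultaneously) is where the bulk of the technical work lies, and is the reason the actual proof is spread over Lemmas \ref{FSPLem}--\ref{lsmallBound} rather than written as a single argument.
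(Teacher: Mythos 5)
Your plan faithfully reproduces the paper's own architecture: the initial frequency reduction (Lemma~\ref{ZNormEstSimpleLem1}), the modulation decomposition into $T_{m,l}$, integration by parts in $s$ yielding the $\mathcal{A}_{m,l}$ and $\mathcal{B}_{m,l}$ terms, substitution of the $\partial_s\mathcal{V}$ decomposition from Lemmas~\ref{dtfLem1}--\ref{dtfLem2}, stationary-phase analysis near the space-time resonant sphere with the $A^{(j)}_{n,\gamma_1}$ weight absorbing the angular loss, and the cubic-phase identities of Lemma~\ref{cubicphase} together with approximate finite speed of propagation for the iterated-resonance terms. This is precisely how the paper proceeds through Lemmas~\ref{FSPLem}--\ref{lsmallBound}; the only caveat is that you somewhat compress the split between the regimes $l\geq -m/14$ and $l_-<l\leq -m/14$ (Lemmas~\ref{lLargeBBound} vs.\ \ref{lsmallBound}), which in the paper requires genuinely different handling of the denominator $\Phi^{-1}$ after the $s$-integration by parts, but you correctly flag the two algebraic inputs \eqref{waz9} and \eqref{waz9.1} that drive those two cases.
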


Assume that $a_1$, $a_2$, $b$, $\alpha_1$, $\alpha_2$, $\mu$, $\nu$ are fixed and let, for simplicity of notation,
\begin{equation}\label{ni3}
f^\mu:=\varepsilon_1^{-1}D^{\alpha_1}\Omega^{a_1}\mathcal{V}_{\mu},\quad f^\nu:=\varepsilon_1^{-1}D^{\alpha_2}\Omega^{a_2}\mathcal{V}_{\nu},\quad \Phi:=\Phi_{+\mu\nu},\quad  m_0:=\mathfrak{m}_{\mu\nu},\quad T_m:=T^{\mu\nu}_{m}.
\end{equation}
The bootstrap assumption \eqref{bootstrap2} gives, for any $s\in[0,t]$,
\begin{equation}\label{ni3.11}
\Vert f^\mu(s)\Vert_{H^{N'_0}\cap Z_1\cap H^{N'_1}_\Omega}+\Vert f^\nu(s)\Vert_{H^{N'_0}\cap Z_1\cap H^{N'_1}_\Omega}\lesssim (1+s)^{\delta^2}.
\end{equation}
We recall also the symbol-type bounds, which hold for any $k,k_1,k_2\in\mathbb{Z}$, $|\alpha|\geq 0$, 
\begin{equation}\label{ni3.111}
\begin{split}
\|m_0^{k,k_1,k_2}\|_{S^\infty}&\lesssim 2^k2^{\min(k_1,k_2)/2},\\
\|D_\eta^\alpha m_0^{k,k_1,k_2}\|_{L^\infty}&\lesssim_{|\alpha|} 2^{(|\alpha|+3/2)\max(|k_1|,|k_2|)},\\
\|D_\xi^\alpha m_0^{k,k_1,k_2}\|_{L^\infty}&\lesssim_{|\alpha|} 2^{(|\alpha|+3/2)\max(|k_1|,|k_2|,|k|)},
\end{split}
\end{equation}
where $m_0^{k,k_1,k_2}(\xi,\eta)=m_0(\xi,\eta)\cdot\varphi_k(\xi)\varphi_{k_1}(\xi-\eta)\varphi_{k_2}(\eta)$.

We consider first a few simple cases before moving to the main analysis in the next subsections. Recall (see \eqref{LinftyBd3.5}) that, for any $k\in\mathbb{Z}$, $m\in\{0,\ldots,L+1\}$, and $s\in I_m:=\mathrm{supp}\,q_m$,
\begin{equation}\label{DirectBds1}
\begin{split}
\Vert P_kf^\mu(s)\Vert_{L^2}+\Vert P_kf^\nu(s)\Vert_{L^2}&\lesssim 2^{\delta^2m}\min\{2^{(1-50\delta)k},2^{-N'_0k}\},\\
\Vert P_ke^{-is\Lambda_\mu}f^\mu(s)\Vert_{L^\infty}+\Vert P_ke^{-is\Lambda_\nu}f^\nu(s)\Vert_{L^\infty}&\lesssim 2^{3\delta^2m}\min\{2^{(2-50\delta)k},2^{-5m/6}\}.
\end{split}
\end{equation}

\begin{lemma}\label{ZNormEstSimpleLem1}
Assume that $f^\mu,f^\nu$ are as in \eqref{ni3} and let $(k,j)\in\mathcal{J}$. Then
\begin{equation}\label{Alx21}
\sum_{\max\{k_1,k_2\}\ge 1.01(j+m)/N'_0-\D^2}\Vert Q_{jk}T_m[P_{k_1}f^\mu,P_{k_2}f^\nu]\Vert_{B_j}\lesssim 2^{-\delta^2m},
\end{equation}
\begin{equation}\label{Alx22}
\sum_{\min\{k_1,k_2\}\le -(j+m)/2+\D^2}\Vert Q_{jk}T_m[P_{k_1}f^\mu,P_{k_2}f^\nu]\Vert_{B_j}\lesssim 2^{-\delta^2m},
\end{equation}
\begin{equation}\label{Alx23}
\text{ if }2k\leq -j-m+49\delta j-\delta m\text{ then }\,\,\sum_{k_1,k_2\in\mathbb{Z}}\Vert Q_{jk}T_m[P_{k_1}f^\mu,P_{k_2}f^\nu]\Vert_{B_j}\lesssim 2^{-\delta^2m},
\end{equation}
\begin{equation}\label{Alx24}
\text{ if }j\geq 2.1m\text{ then }\,\,\sum_{-j\le k_1,k_2\le 2j/N'_0}\Vert Q_{jk}T_m[P_{k_1}f^\mu,P_{k_2}f^\nu]\Vert_{B_j}\lesssim 2^{-\delta^2m}.
\end{equation}
\end{lemma}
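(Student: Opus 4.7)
In all four estimates I will reduce to an $L^2$ bound on the bilinear time integral via the trivial inequality
\[
\|Q_{jk}h\|_{B_j}\leq 2^{(1-50\delta)j}\|P_kh\|_{L^2},
\]
immediate from $\|A^{(j)}_{n,\gamma_1}\|_{L^2\to L^2}\leq 1$ and the fact that the weight $2^{-(1/2-49\delta)n}$ in \eqref{znorm2} is maximized at $n=0$. The $L^2$ norm of $T_m[P_{k_1}f^\mu,P_{k_2}f^\nu]$ will then be estimated via an $L^2\times L^\infty$ Hölder argument (Lemma~\ref{touse}) using the symbol bound $\|m_0^{k,k_1,k_2}\|_{S^\infty}\lesssim 2^{k+\min(k_1,k_2)/2}$ from \eqref{ni3.111}, the Sobolev and dispersive estimates \eqref{DirectBds1}, and the $s$-integration cost $2^m$ from \eqref{nh3}.

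\textbf{The three frequency-budget cases \eqref{Alx21}--\eqref{Alx23}.} In each of these the $2^{-\delta^2 m}$ gain will come from an explicit smallness in the frequency hierarchy, and it suffices to consider $j\leq 2.1m$ since the complementary range is absorbed by case \eqref{Alx24}. For \eqref{Alx21}, assuming WLOG $k_2\geq k_1$, I will place $P_{k_2}f^\nu$ in $L^2$ to exploit the high-Sobolev decay $\|P_{k_2}f^\nu\|_{L^2}\lesssim 2^{\delta^2 m}2^{-N_0'k_2}$ while putting $e^{-is\Lambda_\mu}P_{k_1}f^\mu$ in $L^\infty$; the choice $N_0'=1/\delta$ converts the constraint $k_2\geq 1.01(j+m)/N_0'-\D^2$ into a geometric gain easily beating $2^{(1-50\delta)j+m}$. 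For \eqref{Alx22}, the key gain will be the product of the symbol factor $2^{k_1/2}$ with the low-frequency $L^2$ control $\|P_{k_1}f^\mu\|_{L^2}\lesssim 2^{\delta^2 m}2^{(1-50\delta)k_1}$, producing an exponent bounded by $(1-50\delta)j+m/6+(3/2-50\delta)k_1+O(\delta^2 m)$, which is $\leq-\delta^2 m$ once $k_1\leq-(j+m)/2+\D^2$ and $j\leq 2.1m$. For \eqref{Alx23}, the explicit $2^k$ factor in the symbol is decisive: frequency localization forces $|k_1-k_2|\leq 5$, and the same $L^2\times L^\infty$ template combined with the $2^{-5m/6}$ dispersive decay closes the sum once $2k\leq-j-m+49\delta j-\delta m$ is inserted.

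\textbf{The hard case \eqref{Alx24}.} When $j\geq 2.1m$, the prefactor $2^{(1-50\delta)j}$ dwarfs any polynomial-in-$2^m$ bound obtainable from $L^p$ estimates, so the gain must come from the physical-space localization of $Q_{jk}$ to $|x|\approx 2^j$, far outside the effective propagation region $|x|\lesssim 2^m$ of the bilinear time integral. Writing $\varphi^{(k)}_j=\mathcal{F}^{-1}\psi_j$ with $\psi_j$ of scale $2^{-j}$, I plan to integrate by parts $N\gtrsim 1/\delta$ times in $\zeta$ in
\[
\widehat{Q_{jk}T_m[\cdot,\cdot]}(\xi)=\varphi_k(\xi)\int q_m(s)\!\!\int\!\!\int\psi_j(\xi-\zeta)e^{is\Phi(\zeta,\eta)}m_0(\zeta,\eta)\widehat{P_{k_1}f^\mu}(\zeta-\eta,s)\widehat{P_{k_2}f^\nu}(\eta,s)\,d\eta\,d\zeta\,ds.
\]
Each iteration will cost at most $s|\nabla_\zeta\Phi|\lesssim 2^{m+O(\delta j)}$ from the exponential (using $|k|\leq 2j/N_0'=2\delta j$ in the effective range) or $O(2^{O(\delta m)})$ from differentiating $m_0$ or the inputs, while gaining the full factor $2^{-j}$ from $\psi_j$; since $j\geq 2.1m$ this yields a net gain of at least $2^{-(1.1-O(\delta))m}$ per step, and $N$ iterations will annihilate both $2^{(1-50\delta)j+m}$ and the sum over $k_1,k_2$ (which is restricted to finitely many relevant dyadic scales by the high-Sobolev decay). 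The main technical challenge will be controlling the $\zeta$-derivatives of the inputs $\widehat{P_{k_i}f^{\mu,\nu}}$ uniformly, for which the Sobolev, $H_\Omega^{N_1'}$, and $Z$-norm assumptions collected in \eqref{ni3.11} will be essential.
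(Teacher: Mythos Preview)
Your approach to \eqref{Alx21} and \eqref{Alx22} is essentially correct and parallels the paper (for \eqref{Alx22} the paper uses a Fourier-$L^1\times L^2$ bound rather than your $L^2\times L^\infty$, but either closes once the sum over $\max(k_1,k_2)$ is handled via the $H^{N_0'}$ decay). There are, however, genuine gaps in \eqref{Alx23} and \eqref{Alx24}.

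\textbf{\eqref{Alx23}.} Your bookkeeping drops the factor $2^k$ coming from the symbol; but even after restoring it, a single $2^k$ from the $L^2\times L^\infty$ template is not enough. With $k_1\approx k_2\approx 0$ and $j$ near $2.1m$, the exponent is $(1-50\delta)j + m/6 + k$, and inserting the constraint $k\leq(-j-m+49\delta j-\delta m)/2$ leaves roughly $j/2 - m/3\approx 0.7m>0$. The paper gets a \emph{second} factor of $2^k$ by applying Bernstein on the output, $\|P_kH\|_{L^2}\lesssim 2^k\|H\|_{L^1}$, and then an $L^2\times L^2\to L^1$ bilinear bound. The exponent becomes $(1-50\delta)j + m + 2k + \min(k_1,k_2)/2 + 2\delta^2 m$, and substituting the hypothesis on $2k$ yields $-\delta j-\delta m+\min(k_1,k_2)/2+O(\delta^2 m)$, which sums.

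\textbf{\eqref{Alx24}.} The heuristic is right, but your claim that differentiating the inputs $\widehat{P_{k_i}f^{\mu,\nu}}$ costs only $O(2^{O(\delta m)})$ is not justified: a Fourier derivative is a spatial moment, and nothing in \eqref{ni3.11} bounds $\|x f^\mu\|_{L^2}$ uniformly---the $Z_1$ norm controls only the \emph{localized} pieces $Q_{j_1k_1}f^\mu$. The missing step is to decompose $f^\mu=\sum_{j_1}f^\mu_{j_1,k_1}$, $f^\nu=\sum_{j_2}f^\nu_{j_2,k_2}$ first. When $\min(j_1,j_2)\geq 99j/100-\mathcal{D}$, a direct estimate (no integration by parts) using $\|\widehat{f^\mu_{j_1,k_1}}\|_{L^1}\lesssim 2^{k_1}2^{-(1-\delta')j_1}$ and $\|f^\nu_{j_2,k_2}\|_{L^2}\lesssim 2^{-(1/2-\delta)j_2}$ already closes. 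When (say) $j_1\leq 99j/100-\mathcal{D}$, one writes $Q_{jk}T_m[f^\mu_{j_1,k_1},f^\nu_{j_2,k_2}](x)$ in physical space, so that on the support of $\widetilde{\varphi}_j^{(k)}$ the total phase has $|\nabla_\xi[s\Phi(\xi,\eta)+x\cdot\xi]|\approx|x|\approx 2^j$; Lemma~\ref{tech5} then applies because the derivative cost from $\widehat{f^\mu_{j_1,k_1}}$ is now only $2^{j_1}\leq 2^{0.99j}$, strictly below the phase-gradient scale $2^j$. This decomposition and the attendant case split are the essential ingredients your proposal is missing.
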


\begin{proof} Using \eqref{DirectBds1}, the left-hand side of \eqref{Alx21} is dominated by
\begin{equation*}
\begin{split}
C\sum_{\max\{k_1,k_2\}\ge 1.01(m+j)/N'_0-\D^2}2^{j+m}2^{2k^+}2^{\min(k_1,k_2)/2}\sup_{s\in I_m}\Vert P_{k_1}f^\mu(s)\Vert_{L^2}\Vert P_{k_2}f^\nu(s)\Vert_{L^2}\lesssim 2^{-\delta m},
\end{split}
\end{equation*}
which is acceptable. Similarly, if $k_1\leq k_2$ and $k_1\leq \D^2$ then
\begin{equation*}
\begin{split}
2^j\Vert P_kT_m[P_{k_1}f^\mu,P_{k_2}f^\nu]\Vert_{L^2}&\lesssim 2^{j+m}2^{k+k_1/2}\sup_{s\in I_m}\Vert \widehat{P_{k_1}f^\mu}(s)\Vert_{L^1}\Vert P_{k_2}f^\nu(s)\Vert_{L^2}\\
&\lesssim 2^{j+m}2^{(5/2-50\delta)k_1}2^{-(N'_0-1)\max(k_2,0)},
\end{split}
\end{equation*}
and the bound \eqref{Alx22} follows by summation over $\min\{k_1,k_2\}\le -(j+m)/2+2\D^2$.

To prove \eqref{Alx23} we may assume that
\begin{equation}\label{Alx25}
2k\leq -j-m+49\delta j-\delta m,\qquad-(j+m)/2\le k_1,k_2\le 1.01(j+m)/N'_0.
\end{equation}
Then
\begin{equation*}
\begin{split}
\Vert Q_{jk}T_m&[P_{k_1}f^\mu,P_{k_2}f^\nu]\Vert_{B_j}\lesssim 2^{j(1-50\delta)}\Vert P_kT_m[P_{k_1}f^\mu,P_{k_2}f^\nu]\Vert_{L^2}\\
&\lesssim 2^{j(1-50\delta)}2^m 2^{k+\min(k_1,k_2)/2}2^{k}\sup_{s\in I_m}\Vert P_{k_1}f^\mu(s)\Vert_{L^2}\Vert P_{k_2}f^\nu(s)\Vert_{L^2}\\
&\lesssim 2^{-\delta(j+m)/3}.
\end{split}
\end{equation*}
Summing in $k_1,k_2$ as in \eqref{Alx25}, we obtain an acceptable contribution.

Finally, to prove \eqref{Alx24} we may assume that
\begin{equation*}
j\ge 2.1m,\qquad j+k\ge j/10+\D,\qquad -j\le k_1,k_2\le 2j/N'_0,
\end{equation*}
and define 
\begin{equation}\label{Alx25.5}
f^\mu_{j_1,k_1}:=P_{[k_1-2,k_1+2]}Q_{j_1k_1}f^\mu,\qquad f^\nu_{j_2,k_2}:=P_{[k_2-2,k_2+2]}Q_{j_2k_2}f^\nu.
\end{equation}
If $\min\{j_1,j_2\}\ge 99j/100-\D$ then, using also \eqref{FL1bd},
\begin{equation*}
\begin{split}
\Vert P_kT_m[f^\mu_{j_1,k_1},f^\nu_{j_2,k_2}]\Vert_{L^2}&\lesssim 2^m2^{k+\min(k_1,k_2)/2}\sup_{s\in I_m}\Vert \widehat{f^\mu_{j_1,k_1}}(s)\Vert_{L^1}\Vert f^\nu_{j_2,k_2}(s)\Vert_{L^2}\\
&\lesssim 2^m2^{k+3k_1/2}2^{-(1-\delta')j_1-(1/2-\delta)j_2}2^{4\delta^2m},
\end{split}
\end{equation*}
and therefore
\begin{equation*}
\begin{split}
\sum_{-j\le k_1,k_2\le 2j/N'_0}\sum_{\min\{j_1,j_2\}\ge 99j/100-\D}\Vert Q_{jk}T_m[f^\mu_{j_1,k_1},f^\nu_{j_2,k_2}]\Vert_{B_j}\lesssim 2^{-\delta m}.
\end{split}
\end{equation*}
On the other hand, if $j_1\le 99j/100-\mathcal{D}$ then we rewrite
\begin{equation}\label{Alx26.4}
\begin{split}
&Q_{jk}T_{m}[f^\mu_{j_1,k_1},f^\nu_{j_2,k_2}](x)=C\phii_j^{(k)}(x)\\
&\times\int_{\mathbb{R}}q_m(s)\int_{\mathbb{R}^2}\left[\int_{\mathbb{R}^2}e^{i\left[s\Phi(\xi,\eta)+x\cdot \xi\right]}\varphi_k(\xi)m_0(\xi,\eta)\widehat{f^\mu_{j_1,k_1}}(\xi-\eta,s)d\xi\right] \widehat{f^\nu_{j_2,k_2}}(\eta,s)d\eta ds.
\end{split}
\end{equation}
In the support of integration, we have the lower bound $\left\vert\nabla_\xi\left[s\Phi(\xi,\eta)+x\cdot\xi\right]\right\vert\approx\vert x\vert\approx 2^j$. Integration by parts in $\xi$ using Lemma \ref{tech5} gives
\begin{equation}\label{Alx26.5}
\left\vert Q_{jk}T_{m}[f^\mu_{j_1,k_1},f^\nu_{j_2,k_2}](x)\right\vert\lesssim 2^{-10j}
\end{equation}
which gives an acceptable contribution. This finishes the proof.
\end{proof}

\subsection{The main decomposition}\label{MainDecomp}

We may assume that
\begin{equation}\label{Ass2}
k_1,k_2\in\Big[-\frac{j+m}{2},\frac{1.01(j+m)}{N'_0}\Big],\quad k\geq \frac{-j-m+49\delta j-\delta m}{2},\quad j\le 2.1m,\quad m\geq \D^2/8.
\end{equation}
Recall the definition \eqref{Alx80}. We fix $l_-:=\lfloor-(1-\delta/2) m\rfloor$, and decompose
\begin{equation}\label{TML}
\begin{split}
T_m[f,g]&=\sum_{l_-\leq l}T_{m,l}[f,g],\\
\widehat{T_{m,l}[f,g]}(\xi)&:=\int_{\mathbb{R}}q_m(s)\int_{\mathbb{R}^2}e^{is\Phi(\xi,\eta)}\varphi_l^{[l_-,m]}(\Phi(\xi,\eta))m_0(\xi,\eta)\widehat{f}(\xi-\eta,s)\widehat{g}(\eta,s)d\eta ds.
\end{split}
\end{equation}
Assuming \eqref{Ass2}, we notice that $T_{m,l}[P_{k_1}f^\mu,P_{k_2}f^\nu]\equiv 0$ if $l\geq 10m/N'_0$. When $l>l_-$, we may integrate by parts in time to rewrite $T_{m,l}[P_{k_1}f^\mu,P_{k_2}f^\nu]$,
\begin{equation}\label{Alx41}
\begin{split}
&T_{m,l}[P_{k_1}f^\mu,P_{k_2}f^\nu]=i\mathcal{A}_{m,l}[P_{k_1}f^\mu,P_{k_2}f^\nu]+i\mathcal{B}_{m,l}[P_{k_1}\partial_sf^\mu,P_{k_2}f^\nu]+i\mathcal{B}_{m,l}[P_{k_1}f^\mu,P_{k_2}\partial_sf^\nu],\\
&\widehat{\mathcal{A}_{m,l}[f,g]}(\xi):=\int_{\mathbb{R}}q_m^\prime(s)\int_{\mathbb{R}^2}e^{is\Phi(\xi,\eta)}2^{-l}\widetilde{\varphi}_{l}(\Phi(\xi,\eta))m_0(\xi,\eta)\widehat{f}(\xi-\eta,s)\widehat{g}(\eta,s)\,d\eta ds,\\
&\widehat{\mathcal{B}_{m,l}[f,g]}(\xi):=\int_{\mathbb{R}}q_m(s)\int_{\mathbb{R}^2}e^{is\Phi(\xi,\eta)}2^{-l}\widetilde{\varphi}_{l}(\Phi(\xi,\eta))m_0(\xi,\eta)\widehat{f}(\xi-\eta,s)\widehat{g}(\eta,s)\,d\eta ds,
\end{split}
\end{equation}
where $\widetilde{\varphi}_l(x):=2^lx^{-1}\varphi_{l}(x)$. For $s$ fixed let $\mathcal{I}_l$ denote the bilinear operator defined by
\begin{equation}\label{il}
\widehat{\mathcal{I}_l[f,g]}(\xi):=\int_{\mathbb{R}^2}e^{is\Phi(\xi,\eta)}2^{-l}\widetilde{\varphi}_{l}(\Phi(\xi,\eta))m_0(\xi,\eta)\widehat{f}(\xi-\eta)\widehat{g}(\eta)\,d\eta.
\end{equation}

It is easy to see that Proposition \ref{ZNormProp} follows from Lemma \ref{ZNormEstSimpleLem1} and Lemmas \ref{FSPLem}--\ref{lsmallBound} below.

\begin{lemma}\label{FSPLem}
Assume that \eqref{Ass2} holds and, in addition, 
\begin{equation}\label{case1}
j\geq m+2\D+\max(|k|,|k_1|,|k_2|)/2.
\end{equation}
Then, for $l_-\le l\le  10m/N'_0$,
\begin{equation*}
2^{(1-50\delta)j}\Vert Q_{jk}T_{m,l}[P_{k_1}f^\mu,P_{k_2}f^\nu]\Vert_{L^2}\lesssim 2^{-2\delta^2m}.
\end{equation*}
\end{lemma}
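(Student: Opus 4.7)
My overall plan is to exploit the hypothesis \eqref{case1}, which forces the spatial scale $|x|\sim 2^j$ to lie well outside the bilinear propagation cone $|x|\lesssim 2^m\cdot 2^{\max(|k|,|k_1|,|k_2|)/2}$, and thereby obtain rapid decay of $Q_{jk}T_{m,l}[P_{k_1}f^\mu,P_{k_2}f^\nu](x)$ by repeated integration by parts in $\xi$. This mirrors the reasoning already used in Lemma \ref{ZNormEstSimpleLem1} for the range $j\geq 2.1m$ (see \eqref{Alx26.4}--\eqref{Alx26.5}). Writing, via Fourier inversion,
\[
Q_{jk}T_{m,l}[P_{k_1}f^\mu,P_{k_2}f^\nu](x)=C\phii_j^{(k)}(x)\int q_m(s)\iint e^{i\Psi}\varphi_k(\xi)\varphi_l^{[l_-,m]}(\Phi)\,m_0\,\widehat{P_{k_1}f^\mu}(\xi-\eta,s)\widehat{P_{k_2}f^\nu}(\eta,s)\,d\eta\,d\xi\,ds,
\]
with phase $\Psi(s,x,\xi,\eta):=s\Phi(\xi,\eta)+x\cdot\xi$, I would observe that on the support of $\phii_j^{(k)}(x)\varphi_k(\xi)\varphi_{k_1}(\xi-\eta)\varphi_{k_2}(\eta)$ the group-velocity bound $|\nabla_\xi\Phi|\lesssim 2^{\max(|k|,|k_1|)/2+\D_1}$, combined with \eqref{case1} and the choice $\D\gg\D_1$, yields
\[
|\nabla_\xi\Psi|\geq|x|-s|\nabla_\xi\Phi|\gtrsim 2^j
\]
throughout the integrand.

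Next I would decompose the inputs atomically, $P_{k_i}f^{\mu/\nu}=\sum_{j_i}f^{\mu/\nu}_{j_i,k_i}$ with $f^{\mu/\nu}_{j_i,k_i}:=P_{[k_i-2,k_i+2]}Q_{j_ik_i}f^{\mu/\nu}$, and split into two regimes. In the first regime, $\max(j_1,j_2)\geq j-\D$, I would use a direct $L^2\times L^1$ estimate
\[
\|Q_{jk}T_{m,l}[f^\mu_{j_1,k_1},f^\nu_{j_2,k_2}]\|_{L^2}\lesssim 2^m 2^{k+\min(k_1,k_2)/2}\|f^\mu_{j_1,k_1}\|_{L^2}\|\widehat{f^\nu_{j_2,k_2}}\|_{L^1},
\]
together with the $Z_1$-bound $\|f^{\mu/\nu}_{j_i,k_i}\|_{L^2}\lesssim 2^{-(1-50\delta)j_i+\delta^2m}$ from \eqref{ni3.11} and the Fourier $L^1$ bound \eqref{LinftyBd}; since one of the $j_i$ exceeds $j-\D\geq m+\D$ and $k_+\lesssim m/N'_0$, the combined power collapses to $\lesssim 2^{-(1-50\delta)j}2^{-\delta m/4}$ after summation in $(j_1,j_2,l)$, comfortably within target. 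In the complementary regime $\max(j_1,j_2)\leq j-\D$, I would apply Lemma \ref{tech5}(i) to the $\xi$-integration with $K=2^j$, iterating $N=\lceil 4/\delta^2\rceil$ times. Each integration by parts contributes a factor $2^{-j}$ together with a derivative of the amplitude, yielding one of: (i) a factor $2^{j_i}\leq 2^{j-\D}$ from differentiating $\widehat{f^{\mu/\nu}_{j_i,k_i}}$, hence ratio $\leq 2^{-\D}$; (ii) a factor $\lesssim 2^{-l+|k_1|/2+\D_1}$ from differentiating $\varphi_l^{[l_-,m]}(\Phi)$, combining with $2^{-j}$ into $\lesssim 2^{(1-\delta/2)m+\max(|k|,|k_1|)/2+\D_1-j}\leq 2^{-\delta m/2-2\D+\D_1}\leq 2^{-\D}$ by \eqref{case1} and $l\geq l_-$; or (iii) $O(2^{-j})$ from $m_0$ and second derivatives of $\Phi$. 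Iterating $N$ times gives the pointwise bound $|Q_{jk}T_{m,l}[f^\mu_{j_1,k_1},f^\nu_{j_2,k_2}](x)|\lesssim 2^{-10j}$, hence $L^2$ norm $\lesssim 2^{-9j}$; summing in $(j_1,j_2,l)$, whose range is polynomial in $m$ by \eqref{Ass2}, retains the rapid decay.

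The crux, and the main obstacle to this plan, is the interaction between the modulation cutoff $\varphi^{[l_-,m]}_l(\Phi)$---whose derivatives in $\xi$ carry the potentially large factor $2^{-l}\lesssim 2^{(1-\delta/2)m}$---and the per-step decay $2^{-j}$ from integration by parts. The $2\D$-margin built into \eqref{case1}, together with the freedom to choose $\D\gg\D_1$, is precisely what makes each integration by parts a genuine contraction of size $\leq 2^{-\D/2}$, uniformly in $l\in[l_-,10m/N'_0]$; without this margin the argument would fail exactly in the most relevant range $l\approx l_-$. Once this bookkeeping is in place, the rest of the proof is routine.
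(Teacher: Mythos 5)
The proposal has a genuine gap in the case split and in the estimates for the non--integration-by-parts regime.

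Your decomposition separates $\max(j_1,j_2)\leq j-\mathcal{D}$ (where you integrate by parts in $\xi$) from $\max(j_1,j_2)\geq j-\mathcal{D}$ (where you use a direct $L^2\times L^1$ bound). The second regime contains configurations such as $j_1=0$, $j_2\geq j-\mathcal{D}$. For these, the direct estimate
\[
\|Q_{jk}T_{m,l}[f^\mu_{j_1,k_1},f^\nu_{j_2,k_2}]\|_{L^2}\lesssim 2^m 2^{k+\min(k_1,k_2)/2}\sup_s\|\widehat{f^\mu_{j_1,k_1}}(s)\|_{L^1}\|f^\nu_{j_2,k_2}(s)\|_{L^2}
\]
(after placing the small-$j_i$ atom in $L^2$ and the large-$j_i$ atom in $\mathcal{F}L^1$) only produces a single factor $2^{-(1-50\delta)j_2}\approx 2^{-(1-50\delta)j}$; the other atom contributes nothing. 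Multiplying back by $2^{(1-50\delta)j}$ leaves a surplus $2^m\cdot 2^{O(\delta^2 m)}\cdot C_{\text{freq}}$, which is not $\lesssim 2^{-2\delta^2 m}$. Your claim that ``the combined power collapses'' implicitly assumes both $j_i$ are large, which $\max(j_1,j_2)\geq j-\mathcal{D}$ does not ensure. The correct threshold variable is $\min(j_1,j_2)$: one should integrate by parts in $\xi$ whenever $\min(j_1,j_2)\leq j-\delta m$ (choosing the formula so that the derivative hits the atom with smaller physical support), as the paper does, since the derivative of that atom's Fourier transform costs only $2^{j_i}$ and the $\eta$-integral is harmless.

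Even after correcting the case split, the complementary regime $\min(j_1,j_2)\geq j-\delta m$ is not closed by an $L^2\times L^1$ estimate alone: $2^{(1-50\delta)j}\cdot 2^{-(1-50\delta)j_1}\cdot 2^{-(1-50\delta)j_2}\leq 2^{-(1-50\delta)(j-2\delta m)}$ combined with $2^m$ gives a residual factor $\approx 2^{52\delta m}$, which overwhelms the target $2^{-2\delta^2 m}$. One must exploit the modulation cutoff $\varphi_l^{[l_-,m]}(\Phi)$ rather than drop it. The paper does this in three sub-cases: when $\min(k,k_1,k_2)\leq -m/2$ (low frequencies supply the missing smallness); when $\min(k,k_1,k_2)\geq -m/2$ and $l\leq\min(k,k_1,k_2,0)/2-m/5$ (Lemma \ref{Shur2Lem} combined with the angular-Radon bounds \eqref{RadL2} convert the small modulation into a factor $2^{l/2}$); and the remaining case, where one integrates by parts in time via \eqref{Alx41}, handles $\mathcal{A}_{m,l}$ by the same Schur bound, and treats the $\mathcal{B}_{m,l}[f^\mu,\partial_s f^\nu]$ term by decomposing $\partial_s f^\nu$ as in \eqref{Brc4} and then re-expanding it as a quadratic expression to integrate by parts in $\xi$ again (cf.\ \eqref{New01}--\eqref{New02}). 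None of this machinery appears in your sketch.

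Finally, even in the integration-by-parts regime your bookkeeping is off: with only $j_i\leq j-\mathcal{D}$ the per-step gain from differentiating $\widehat{f^{\mu}_{j_1,k_1}}$ is merely $2^{-\mathcal{D}}$ (a fixed constant), so iterating $N$ times gives $2^{-N\mathcal{D}}$ rather than $2^{-10j}$; for $m\gg\mathcal{D}/\delta^2$ this does not give rapid decay. Using the threshold $j-\delta m$ makes each step a contraction by $2^{-\delta m}$, so that finitely many iterations beat any power of $2^m$. Your observation in item (ii) --- that the modulation cutoff only costs $2^{-l}\cdot 2^{\max|k_i|/2}$ per derivative and that the margin $2\mathcal{D}$ in \eqref{case1} absorbs this --- is correct and is the only part of the argument that genuinely works as written.
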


Notice that the assumptions \eqref{Ass2} and $j\leq m+2\D+\max(|k|,|k_1|,|k_2|)/2$ show that 
\begin{equation}\label{ki7}
k,k_1,k_2\in[-4m/3-2\D,3.2 m/N'_0],\qquad m\geq\D^2/8.
\end{equation}

\begin{lemma}\label{KsmallLem}
Assume that \eqref{ki7} holds and, in addition, 
\begin{equation}\label{case1.5}
j\leq m+2\D+\max(|k|,|k_1|,|k_2|)/2,\qquad\min(k,k_1,k_2)\leq -3.5m/N'_0.
\end{equation}
Then, for $l_-\le l\leq 10m/N'_0$,
\begin{equation*}
2^{(1-50\delta)j}\Vert Q_{jk}T_{m,l}[P_{k_1}f^\mu,P_{k_2}f^\nu]\Vert_{L^2}\lesssim 2^{-2\delta^2m}.
\end{equation*}
\end{lemma}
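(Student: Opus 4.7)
The proof will split into two scenarios according to whether $\underline{k}:=\min(k,k_1,k_2)$ is attained at an input frequency or at the output frequency. The common thread is that the null factor from \eqref{Assumptions2}, namely $\Vert\mathfrak{m}_{\mu\nu}^{k,k_1,k_2}\Vert_{S^\infty}\lesssim 2^{k}2^{\min(k_1,k_2)/2}$, combined with $\underline{k}\le -3.5m/N'_0=-3.5\delta m$ (using $N'_0=1/\delta$), provides the main source of smallness. The weight $2^{(1-50\delta)j}$ is controlled via $j\le m+2\mathcal{D}+\bar k/2$ with $\bar k:=\max(|k|,|k_1|,|k_2|)$, and the target $2^{-2\delta^2 m}$ is summable in all parameters.

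Scenario A: $\min(k_1,k_2)\le -3.5\delta m$; say $k_1\le k_2$. Here $|\xi-\eta|\approx 2^{k_1}$ is very small, so $|(\nabla\Lambda_\mu)(\xi-\eta)|\approx 2^{-k_1/2}$ is very large. When $k_2\ge k_1+\mathcal{D}$, this term cannot be cancelled by $(\nabla\Lambda_\nu)(\eta)$, and $|\nabla_\eta\Phi|\gtrsim 2^{-k_1/2}\gtrsim 2^{1.75\delta m}$ throughout the support of integration. I would then rewrite the cutoff $\varphi_l^{[l_-,m]}(\Phi)$ through the Fourier representation \eqref{loca2} (at the cost of only a $2^{\delta^2 m}$ factor) and iterate Lemma \ref{tech5}(i) in $\eta$: each pass gains a factor of order $2^m\cdot 2^{-k_1/2}\ge 2^{(1+1.75\delta)m}$, while the loss from differentiating the profiles is controlled by their $H^{N'_1}_\Omega\cap H^{N'_0}$ bounds and by $\|\widehat{f^\mu_{j_1,k_1}}\|_{L^1}$-type estimates from Lemma \ref{LinEstLem}. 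A bounded number of iterations then produces decay $\ll 2^{-2\delta^2 m}$ with enormous margin. In the borderline regime $|k_1-k_2|\le \mathcal{D}$, integration by parts in $\eta$ can fail, but both inputs are now small in $L^2$: the Hölder estimate
\[
\|T_{m,l}[P_{k_1}f^\mu,P_{k_2}f^\nu]\|_{L^2}\lesssim 2^m\cdot 2^{k}2^{k_1/2}\cdot \|P_{k_1}f^\mu\|_{L^\infty}\|P_{k_2}f^\nu\|_{L^2}
\]
together with the Bernstein bound $\|P_{k_1}f^\mu\|_{L^\infty}\lesssim 2^{(2-50\delta)k_1}2^{\delta^2 m}$ (preferred over the dispersive one since $k_1$ is moderately negative) and $\bar k\le |k_1|+\mathcal{D}$ turns the extra weight $2^{(1-50\delta)\bar k/2}$ into a harmless $2^{-(1-50\delta)k_1/2}$; the net power $2^{k_1(2-\mathrm{O}(\delta))}\lesssim 2^{-7\delta m+\mathrm{O}(\delta^2 m)}$ is much smaller than $2^{-2\delta^2 m}$.

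Scenario B: $k\le -3.5\delta m$ with $\min(k_1,k_2)>-3.5\delta m$. Fourier-support forces $|k_1-k_2|\le 3$ and confines the output to a disk of area $\sim 2^{2k}$. For sign combinations where $|\Phi|\gtrsim 2^{k/2}$ holds automatically on the support (e.g.\ $(\mu,\nu)=(+,-)$, where $\Phi\approx\Lambda(\xi)$), the modulation parameter satisfies $l\ge k/2-\mathcal{D}$; integration by parts in $s$ via the identity \eqref{Alx41} gains a full $2^{-l}\lesssim 2^{-k/2}$ per step, and the resulting $\mathcal{A}_{m,l}$ and $\mathcal{B}_{m,l}[\partial_s\cdot,\cdot]$ terms are controlled using Lemma \ref{dtfLem1}. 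For the remaining sign where $\Phi$ may vanish, I would combine: (i) the null factor $2^{k}$ in the multiplier, (ii) the output-volume gain $\|Q_{jk}T_{m,l}\|_{L^2}\le 2^{k}\|\widehat{P_kT_{m,l}}\|_{L^\infty}$ via Plancherel, and (iii) the Cauchy--Schwarz bound $\|\widehat{T_{m,l}}\|_{L^\infty}\lesssim 2^m\cdot 2^{k}2^{k_1/2}\|P_{k_1}f^\mu\|_{L^2}\|P_{k_2}f^\nu\|_{L^2}$; the surplus factor $2^{2k}2^{(1-50\delta)|k|/2}= 2^{k(3/2+25\delta)}\lesssim 2^{-5.25\delta m}$ arising after multiplying by $2^{(1-50\delta)j}$ provides the required slack.

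The genuinely hard step is the degenerate borderline of Scenario A where $|k_1-k_2|\le \mathcal{D}$ and both are at the threshold $\approx -3.5\delta m$: here the stationary-phase mechanism in $\eta$ breaks down and one has to rely entirely on the algebraic identity $|\mathfrak{m}|\lesssim 2^k 2^{\min(k_1,k_2)/2}$ plus the $Z_1\cap H^{N'_0}$ smallness of both factors. The coefficient $3.5/N'_0$ in the hypothesis \eqref{case1.5} is calibrated precisely so that, after paying the $O(\delta^2 m)$ losses inherited from \eqref{DirectBds1} and the weight $2^{(1-50\delta)\bar k/2}$, a net gain $2^{-\Omega(\delta m)}\ll 2^{-2\delta^2 m}$ survives; any weaker choice would close the margin.
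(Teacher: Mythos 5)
Your proposed route has a genuine gap, and it appears already at the bookkeeping level.  In the borderline regime of your Scenario~A (both input frequencies small, $|k_1-k_2|\le\mathcal{D}$), you estimate $T_{m,l}$ directly by H\"older and conclude that the ``net power $2^{k_1(2-O(\delta))}\lesssim 2^{-7\delta m}$ is much smaller than $2^{-2\delta^2m}$.''  But your own estimate carries a factor $2^m$ from the $ds$-integral, and the weight contributes $2^{(1-50\delta)j}\lesssim 2^{(1-50\delta)m}2^{(1-50\delta)\bar k/2}$.  Together these give $2^{(2-50\delta)m}$, which overwhelms $2^{-7\delta m}$: the display
\[
2^{(1-50\delta)j}\,\big\|Q_{jk}T_{m,l}[P_{k_1}f^\mu,P_{k_2}f^\nu]\big\|_{L^2}
\;\lesssim\; 2^{(2-50\delta)m}\cdot 2^{(3-O(\delta))k_1}\cdot 2^{O(\delta^2m)}
\]
does not close, even if one replaces the Bernstein bound by the dispersive $2^{-5m/6}$ (that only brings the leading exponent down to $7/6-O(\delta)$, still strictly positive).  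The same arithmetic problem appears in your Scenario~B ``other sign'' case: you track the surplus $2^{(3/2+25\delta)k}$ carefully but never absorb the $2^{(2-50\delta)m}$ coming from the $q_m$-integral and the $j$-weight.  A direct bilinear estimate on $T_{m,l}$ simply cannot yield an $m$-decaying bound; the gap is of order $2^{m/6}$ or worse.

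The missing ingredient is the combination of two facts that the paper uses at the very start: (i) by \eqref{try5.5}, whenever $\underline{k}:=\min(k,k_1,k_2)\le -3.5m/N_0'$ one automatically has $|\Phi(\xi,\eta)|\gtrsim 2^{\underline{k}/2}$ \emph{for every sign triple} $(\sigma,\mu,\nu)$ --- not only for $(\mu,\nu)=(+,-)$ as you assert --- so the modulation is constrained to $l\ge \underline{k}/2-\mathcal{D}$ and the low-modulation pieces vanish identically; and (ii) one must then integrate by parts in $s$ via \eqref{Alx41}, trading the factor $2^m$ from $\int q_m$ for a factor $1$ from $\int|q_m'|$ (in the $\mathcal{A}_{m,l}$ term) or for the $\partial_s$-decay from Lemma~\ref{dtfLem1} (in the $\mathcal{B}_{m,l}$ terms), while the multiplier acquires $2^{-l}\lesssim 2^{-\underline{k}/2+\mathcal{D}}$.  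This exchange is what produces the needed $m$-decay.  Even after that, the case $\underline{k}=k$ with $k_1,k_2\approx 0$ still requires the refined dispersive bound $\|e^{-i(s+\rho)\Lambda}A_{\le 0,\gamma_0}f_{j_1,k_1}\|_{L^\infty}\lesssim 2^{-m+3\delta m}$ (the full $2^{-m}$ decay, not just $2^{-5m/6}$) together with the $\partial_s$-decomposition from Lemma~\ref{dtfLem1}(ii) to handle the near-$\gamma_0$ piece $A_{\ge 1,\gamma_0}f$; your invocation of ``controlled using Lemma~\ref{dtfLem1}'' glosses over exactly this case split.  Finally, the case $\underline{k}=k_2$ (the minimum attained at the input being differentiated, $\mathcal{B}_{m,l}[f^\mu,\partial_s P_{k_2}f^\nu]$) is handled in the paper by a separate lemma (Lemma~\ref{lLargeBBound2}); your two-scenario taxonomy has no slot for it.
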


\begin{lemma}\label{StronglyResLem}
Assume that \eqref{ki7} holds and, in addition, 
\begin{equation}\label{case2}
j\leq m+2\D+\max(|k|,|k_1|,|k_2|)/2,\qquad\min(k,k_1,k_2)\geq -3.5m/N'_0.
\end{equation}
Then, for $l_-<l\leq 10m/N'_0$
\begin{equation*}
\Vert Q_{jk}T_{m,l_-}[P_{k_1}f^\mu,P_{k_2}f^\nu]\Vert_{B_j}+\Vert Q_{jk}\mathcal{A}_{m,l}[P_{k_1}f^\mu,P_{k_2}f^\nu]\Vert_{B_j}\lesssim 2^{-2\delta^2m}.
\end{equation*}
\end{lemma}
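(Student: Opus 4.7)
The strategy is to treat the two terms separately, exploiting that in the regime \eqref{case2} all frequencies lie in $[-3.5m/N_0', 3.5m/N_0']$ (essentially bounded, since $N_0'=1/\delta$) and the spatial scale obeys $j\lesssim m$. The relevant geometric information on the phase $\Phi$ is encoded by Propositions \ref{spaceres11} and \ref{volume}, and the profiles obey the sharp linear bounds of Lemma \ref{LinEstLem}.

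For the boundary term $Q_{jk}\mathcal{A}_{m,l}$, the time-cutoff derivative satisfies $\int|q'_m(s)|\,ds\lesssim 1$, so it suffices to prove a uniform single-time bound on $\|Q_{jk}\mathcal{I}_l[P_{k_1}f^\mu(s),P_{k_2}f^\nu(s)]\|_{B_j}$, where $\mathcal{I}_l$ is as in \eqref{il}. First remove the factor $\widetilde{\varphi}_l(\Phi)/\Phi$ via Lemma \ref{PhiLocLem}, which reduces matters to unconstrained bilinear integrals at times $s+\rho$ with $|\rho|\leq 2^{-l+\delta^2 m}$. Then, for each annular piece $A^{(j)}_{n,\gamma_1}$ arising in the $B_j$-norm, decompose the inputs into atoms $f^{\cdot}_{j_i,k_i}$ and apply a Hölder $L^2\times L^\infty$ estimate: one factor is controlled in $L^2$ via \eqref{ni3.11} and the other in $L^\infty$ via the sharp dispersive bounds \eqref{LinftyBd2}--\eqref{LinftyBd3.5}. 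The required gain $2^{-2\delta^2 m}$ follows by combining the $2^{-l}$ factor built into $\mathcal{A}_{m,l}$ with the dispersive decay $2^{-5m/6+O(\delta m)}$, using the constraint $l\leq 10m/N_0'$ and the near-boundedness of all frequencies.

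The smallest-modulation contribution $Q_{jk}T_{m,l_-}$ is harder, because $2^{-l_-}\approx 2^{(1-\delta/2)m}$ saturates the uncertainty principle on time scale $2^m$ and no further integration by parts in $s$ is possible. I would split the inputs dyadically into atoms $f^{\cdot}_{j_i,k_i}$ and distinguish three cases. (a) If $\max(j_1,j_2)\geq (1-\delta^2)m+\D+\max|k_i|$, integrate by parts in $\xi$ inside the Fourier integral defining $Q_{jk}$, as in \eqref{Alx26.4}--\eqref{Alx26.5}, using $|\nabla_\xi[s\Phi+x\cdot\xi]|\approx 2^j$ on the support of the cutoffs, to obtain rapid decay. (b) If $\max(j_1,j_2)\leq (1-\delta^2)m$ and the angular stationary quantity $\Omega_\eta\Phi$ is bounded below, apply Lemma \ref{RotIBP} to integrate by parts in the angular variable and shrink the effective integration domain. (c) The remaining piece, where $|\Phi|\lesssim 2^{l_-}$, $|\nabla_\eta\Phi|$ and $|\Omega_\eta\Phi|$ are all simultaneously small, is confined by Proposition \ref{spaceres11}(iii) to a neighborhood of the space-time resonant set $\{|\xi|=\gamma_1,\eta\approx\xi/2\}$; there, a Schur's-lemma argument combined with the volume bounds of Proposition \ref{volume} and the radial/angular $L^2$ information \eqref{RadL2}--\eqref{FL1bd} on the atoms yields the bound, with the $B_j$-weight $2^{-(1/2-49\delta)n}$ matched against the angular width of $A^{(j)}_{n,\gamma_1}$.

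The main obstacle is case (c), where the interaction is genuinely doubly resonant and the required decay must be extracted simultaneously from the bilinear volume near the space-time resonance and from the angular sharpness of the output around $|\xi|=\gamma_1$. The precise anisotropic weight $2^{-(1/2-49\delta)n}$ in Definition \ref{MainZDef} has been engineered exactly for this situation --- consistently with the fact (Remark \ref{MainRemark}(ii)) that $|\widehat{\mathcal{V}}|$ may grow logarithmically on the resonant sphere --- so the proof ultimately amounts to verifying that the $B_j$-norm balance between the volume factors from Proposition \ref{volume} and the radial-profile bounds of Lemma \ref{LinEstLem} closes with a factor $2^{-2\delta^2 m}$ to spare.
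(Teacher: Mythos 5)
The paper does not treat the two terms separately as you do; it reduces both $Q_{jk}T_{m,l_-}$ and $Q_{jk}\mathcal{A}_{m,l}$ to a single \emph{instantaneous} estimate, \eqref{NRCCLNewBd}:
\begin{equation*}
2^{\delta m/2}\,2^{-l}\,\big\Vert Q_{jk} I[P_{k_1}f,P_{k_2}g] \big\Vert_{B_j}\lesssim 2^{-5\delta^2 m},
\end{equation*}
where $I$ carries the single cutoff $\chi_l(\Phi)$, valid uniformly for $l$ in the range \eqref{Ass3}. The extra factor $2^{\delta m/2}$ is precisely what absorbs the $2^m$ from the time integral against the small modulation $2^{l_-}\approx 2^{-(1-\delta/2)m}$ in the $T_{m,l_-}$ piece. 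The four-case analysis (Cases 1--4 in the paper) that follows resembles your (a)/(b)/(c) split in outline, with the crux being the neighborhood of the space-time resonant set $\{|\xi|\approx\gamma_1,\,\eta\approx\xi/2\}$; there the paper proves a pointwise, $|\xi|$-weighted bound \eqref{Alx81} on the near-resonant piece $\mathcal{R}_1$, not a Schur argument, and then converts it to a $B_j$ bound via \eqref{Alx67.5}. (Your ``angular width of $A^{(j)}_{n,\gamma_1}$'' should be \emph{radial} width --- those operators carve thin annuli around $|\xi|=\gamma_1$.)

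The genuine gap is your treatment of $\mathcal{A}_{m,l}$. You claim this term is controllable by a plain $L^2\times L^\infty$ H\"older estimate after removing the modulation cutoff with Lemma \ref{PhiLocLem}, so that ``the $2^{-l}$ factor built into $\mathcal{A}_{m,l}$'' combined with the dispersive decay $2^{-5m/6+O(\delta m)}$ gives the gain. This fails badly for negative $l$: the operator $\mathcal{A}_{m,l}$ appears for all $l\in(l_-,10m/N_0']$, and when $l$ is close to $l_-\approx -(1-\delta/2)m$ the factor $2^{-l}$ is a \emph{large} factor of size up to $2^{(1-\delta/2)m}$, not a small one. The naive bound then reads
\begin{equation*}
\Vert Q_{jk}\mathcal{A}_{m,l}\Vert_{B_j}\lesssim 2^{(1-50\delta)j}\cdot 2^{-l}\cdot 2^{-5m/6+O(\delta^2 m)}\gtrsim 2^{m}\cdot 2^{m}\cdot 2^{-5m/6}\gg 1,
\end{equation*}
which is nowhere near $2^{-2\delta^2 m}$. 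Moreover, by passing to the unconstrained bilinear operator via Lemma \ref{PhiLocLem} you throw away the restriction $|\Phi(\xi,\eta)|\lesssim 2^l$, which is exactly what forces $\big||\xi|-\gamma_1\big|\lesssim 2^l+\kappa_r^2$ on the resonant piece (cf.\ \eqref{EstimPsi}) and so is the \emph{only} source of the radial localization that the $B_j$ weight $2^{-(1/2-49\delta)n}$ can profit from. Removing the constraint first and only afterward decomposing with $A^{(j)}_{n,\gamma_1}$ gains nothing. In short, for $l$ close to $l_-$ the term $\mathcal{A}_{m,l}$ is exactly as hard as $T_{m,l_-}$ and needs the same $\mathcal{R}_1/\mathcal{R}_2/\mathcal{NR}$ decomposition and weighted pointwise estimate near the space-time resonance that you (correctly) anticipate for case (c); you cannot dispose of it with a bare $L^2\times L^\infty$ argument.
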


\begin{lemma}\label{lLargeBBound}
Assume that \eqref{ki7} holds and, in addition, 
\begin{equation}\label{case3}
j\leq m+2\D+\max(|k|,|k_1|,|k_2|)/2,\qquad \min(k,k_1,k_2)\geq -3.5m/N'_0, \qquad l\geq -m/14.
\end{equation}
Then
\begin{equation*}
2^{(1-50\delta)j}\Vert Q_{jk}\mathcal{B}_{m,l}[P_{k_1}f^\mu,P_{k_2}\partial_sf^\nu]\Vert_{L^2}\lesssim 2^{-2\delta^2m}.
\end{equation*}
\end{lemma}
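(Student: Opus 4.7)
The approach exploits the decomposition of $\partial_s f^\nu$ developed in Lemmas \ref{dtfLem1} and \ref{dtfLem2}, which allows us to treat $\mathcal{B}_{m,l}[P_{k_1}f^\mu,P_{k_2}\partial_s f^\nu]$ as a sum of cubic-type expressions plus remainders. The key inputs are the improved $L^2$ bound $\Vert P_{k_2}\partial_sf^\nu\Vert_{L^2}\lesssim \varep_1^2[2^{k_2}2^{-m+5\delta m}+2^{-3m/2+5\delta m}]$ from \eqref{vd7}, the $L^\infty$ bound $\Vert e^{-is\Lambda_\nu}P_{k_2}\partial_sf^\nu\Vert_{L^\infty}\lesssim \varep_1^22^{-5m/3+6\delta^2m}$ from \eqref{Brc2}, and the localized bilinear estimate of Lemma \ref{PhiLocLem} combined with the dispersive bounds \eqref{LinftyBd3.5} applied to $f^\mu$. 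The hypothesis $l\geq -m/14$ is critical: it bounds the cost $2^{-l}\leq 2^{m/14}$ arising from the integration by parts in $s$.

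First I would handle the error term $P_{k_2}E_\nu^{a,\alpha}$ from \eqref{vd2} together with the pieces $G^{[2]}$ and $G^{[3]}$ of the decomposition \eqref{Ddecomposition}, for which $L^2$ bounds of order $2^{-3m/2+O(\delta m)}$ (for $G^{[3]}$ and $E$) and $2^{k_2-m+O(\delta m)}$ (for $G^{[2]}$) are available. Applying Lemma \ref{PhiLocLem} with $(q,r)=(\infty,2)$ gives
\begin{equation*}
\Vert\mathcal{B}_{m,l}[P_{k_1}f^\mu,\,\cdot\,]\Vert_{L^2}\lesssim 2^m\cdot 2^{-l}\cdot \sup_{|\rho|\leq 2^{-l+\delta^2m}}\Vert e^{-i(s+\rho)\Lambda_\mu}P_{k_1}f^\mu\Vert_{L^\infty}\cdot \Vert\,\cdot\,\Vert_{L^2}+\text{neg},
\end{equation*}
and combined with $j\leq m+O(1)$, $l\geq -m/14$, and the dispersive bound $\Vert e^{-is\Lambda}P_{k_1}f^\mu\Vert_{L^\infty}\lesssim 2^{-5m/6+O(\delta m)}$, this is easily acceptable for $G^{[3]}$ and $E$; for $G^{[2]}$ one also exploits the additional gain $2^{k_2}$ when $k_2$ is small, together with finer frequency decompositions.

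The delicate part is the $G^{[1]}$ contribution, which by \eqref{Decdtf} carries the oscillatory structure $e^{is[\Lambda_\sigma(\eta)-2\Lambda_\sigma(\eta/2)]}g^{[1]}(\eta)$ supported in a thin shell $||\eta|-\gamma_1|\lesssim 2^{-3\delta m}$. Substituting into $\mathcal{B}_{m,l}$ produces a cubic integral whose total phase is $\Phi(\xi,\eta)+\Lambda_\nu(\eta)-2\Lambda_\nu(\eta/2)$. Here I would use the spatial localization $\phii_j^{(k)}$ built into $Q_{jk}$ and integrate by parts in $\xi$: by the slow propagation of iterated resonances property (Lemma \ref{cubicphase}, the principle illustrated in \eqref{waz9.1}), at simultaneous stationary points of the total phase in $\eta$ the $\xi$-gradient of the total phase also vanishes, so the Fourier integral operator in $\xi$ has natural velocity $\ll 1$. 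This forces the output of $Q_{jk}\mathcal{B}_{m,l}$ to be essentially concentrated at $|x|\ll 2^m$ up to rapidly decaying errors, supplying the additional spatial decay necessary to absorb the factor $2^{-l}\leq 2^{m/14}$ and the factor $2^{(1-50\delta)j}$ with $j$ close to $m$.

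The main obstacle is precisely this $G^{[1]}$ case: a naive $L^2\times L^\infty$ count loses an amount of order $2^{m/14}$ because the modulation cutoff is tight and both the inner and outer quadratic interactions can simultaneously be near-resonant on the sphere of radius $\gamma_1$. Only by combining the algebraic cubic identity of iterated resonances with the finite-speed-of-propagation gain provided by $Q_{jk}$ and integration by parts in $\xi$ can one recover the full decay $\lesssim 2^{-(1-50\delta)j-2\delta^2 m}$. The remaining bookkeeping consists of a case analysis over the frequency parameters $(k,k_1,k_2)$ using \eqref{ni3.111}, with the finite speed of propagation case handled as in Lemma \ref{FSPLem}, the small-frequency case reduced as in Lemma \ref{KsmallLem}, and the separate estimates concentrated in the main frequency window of \eqref{case2}.
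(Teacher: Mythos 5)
The proposal has a genuine gap in the treatment of the $G^{[2]}$ piece of the decomposition \eqref{Ddecomposition}. You claim that $E_\nu^{a,\alpha}$, $G^{[2]}$, and $G^{[3]}$ can all be dispatched by the $L^2\times L^\infty$ argument via Lemma~\ref{PhiLocLem}. This works for $E_\nu^{a,\alpha}$ and $G^{[3]}$, which carry $L^2$ bounds of order $2^{-3m/2+O(\delta m)}$, but it fails for $G^{[2]}$, whose $L^2$ bound is only of order $2^{-m+O(\delta m)}$. Plugging in, the left-hand side is bounded by
\begin{equation*}
2^{(1-50\delta)j}\cdot 2^m\cdot 2^{-l}\cdot \Vert e^{-is\Lambda_\mu}P_{k_1}f^\mu\Vert_{L^\infty}\cdot \Vert G^{[2]}\Vert_{L^2}
  \lesssim 2^{m}\cdot 2^{m}\cdot 2^{m/14}\cdot 2^{-5m/6}\cdot 2^{-m}\cdot 2^{O(\delta m)} \approx 2^{0.24 m},
\end{equation*}
which is nowhere near $2^{-2\delta^2 m}$. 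The $2^{k_2}$ gain you invoke does not help, since the hypothesis $\min(k,k_1,k_2)\geq -3.5m/N'_0$ only gives $2^{k_2}\gtrsim 2^{-3.5\delta m}$, and any finer frequency decomposition will not recover the $2^{m/4}$-scale deficit. Swapping the $L^2$ and $L^\infty$ roles does not help either.

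The missing idea — and the main mechanism of the paper's proof — is to rewrite $\mathcal{B}_{m,l}[f^\mu_{j_1,k_1},A_{k_2;k_3,j_3;k_4,j_4}^{\cdots}]$ as a trilinear operator $\mathcal{C}_l[f^\mu_{j_1,k_1},f^\beta_{j_3,k_3},f^\gamma_{j_4,k_4}]$ in terms of the combined phase $\widetilde{\Phi}(\xi,\eta,\sigma)$, decompose dyadically in the total modulation $|\widetilde{\Phi}|\approx 2^p$, and \emph{integrate by parts in $s$ once more} for $p\geq -11m/21$. This is the step that absorbs the large inner modulation present in $G^{[2]}$-type contributions, and it is exactly what the hypothesis $l\geq -m/14$ makes affordable (since $2^{-l}$ is controlled and one only pays $2^{-p}\lesssim 2^{11m/21}$). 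Without a second integration by parts in time for the large-$\widetilde{\Phi}$ regime, the argument cannot close. Also note that the paper's actual proof of this lemma does not use the $G^{[1]},G^{[2]},G^{[3]}$ decomposition at all — it works directly with the $A$-terms from Lemma~\ref{dtfLem1}; the $G^{[i]}$ decomposition from Lemma~\ref{dtfLem2} is reserved for Lemma~\ref{lsmallBound}. Finally, your use of \eqref{Decdtf} for $G^{[1]}$ implicitly requires $j_3,j_4\leq m/2+\delta m$, a restriction you would need to handle explicitly; the intermediate range would need a separate treatment.
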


\begin{lemma}\label{lsmallBound}
Assume that \eqref{ki7} holds and, in addition, 
\begin{equation}\label{case3.5}
j\leq m+2\D+\max(|k|,|k_1|,|k_2|)/2,\quad \min(k,k_1,k_2)\geq -3.5m/N'_0, \quad l_-<l\leq -m/14.
\end{equation}
Then
\begin{equation*}
\Vert Q_{jk}T_{m,l}[P_{k_{1}}f^\mu,P_{k_2}f^\nu]\Vert_{B_{j}}\lesssim 2^{-2\delta^2m}.
\end{equation*}
\end{lemma}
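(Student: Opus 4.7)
Since $l > l_-$, the integration-by-parts identity \eqref{Alx41} applies, and Lemma \ref{StronglyResLem} already controls the boundary term $\mathcal{A}_{m,l}[P_{k_1}f^\mu, P_{k_2}f^\nu]$. It therefore suffices to bound the two symmetric contributions $\|Q_{jk}\mathcal{B}_{m,l}[P_{k_1}\partial_sf^\mu,P_{k_2}f^\nu]\|_{B_j}$ and $\|Q_{jk}\mathcal{B}_{m,l}[P_{k_1}f^\mu,P_{k_2}\partial_sf^\nu]\|_{B_j}$; by symmetry I focus on the former. In the regime $l \leq -m/14$ the factor $2^{-l}\widetilde{\varphi}_l(\Phi)$ in $\mathcal{B}_{m,l}$ is too large for a direct $L^2$-estimate along the lines of Lemma \ref{lLargeBBound}, so one is forced to unfold $\partial_sf^\mu$ via its own Duhamel representation and extract the cancellation encoded in the iterated-resonance structure of the resulting cubic integral.

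The first step is to substitute the decomposition from Lemma \ref{dtfLem1}~(ii): $\varepsilon_1^{-1}P_{k_1}\partial_sf^\mu$ is a sum of quadratic building blocks $A^{a',\alpha';a'',\alpha''}_{k_1;k'_1,j'_1;k''_1,j''_1}$ plus a remainder $E^{a_1,\alpha_1}_\mu$ of $L^2$-norm $\lesssim \varepsilon_1 2^{-3m/2+5\delta m}$ by \eqref{vd2}. The contribution of $E^{a_1,\alpha_1}_\mu$ to $\mathcal{B}_{m,l}$ is bounded directly in $L^2$ via Lemma \ref{PhiLocLem} together with the $L^\infty$-bounds \eqref{DirectBds1}, yielding a $B_j$-estimate of order $2^{(1-50\delta)j}\cdot 2^{m}\cdot 2^{-l/2}\cdot 2^{-3m/2+5\delta m}\cdot 2^{-5m/6}$, which is $\ll 2^{-2\delta^2 m}$ in the range \eqref{case3.5}. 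For each quadratic piece the resulting contribution is a trilinear oscillatory integral in $(\xi,\eta,\sigma)$ with combined phase
\begin{equation*}
\widetilde{\Phi}(\xi,\eta,\sigma) = \Lambda(\xi) - \Lambda_\nu(\eta) - \Lambda_{\iota_1}(\xi-\eta-\sigma) - \Lambda_{\iota_2}(\sigma),
\end{equation*}
carrying the outer modulation cutoff $2^{-l}\widetilde{\varphi}_l(\Phi(\xi,\eta))$ with $|\Phi(\xi,\eta)|\approx 2^l\ll 1$.

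The decisive step is localization in the stationary set of $\widetilde{\Phi}$ in the inner variables. Decomposing further dyadically in $|\nabla_\sigma\widetilde{\Phi}|$ and applying Lemma \ref{RotIBP} together with Lemma \ref{tech5}, one reduces matters to a small neighborhood of $\{\nabla_{\eta,\sigma}\widetilde{\Phi}=0\}$. Combined with the restriction $|\Phi(\xi,\eta)|\leq 2^{l+2}$, this is precisely the iterated space-time resonance configuration of \eqref{waz9.1}, so Lemma \ref{cubicphase} applies and yields the crucial estimate $|\nabla_\xi\widetilde{\Phi}(\xi,\eta,\sigma)| \lesssim 2^{-\delta m/4}$ on the relevant set. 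Since $Q_{jk}$ carries the spatial cutoff $\phii^{(k)}_j(x)$ supported essentially where $|x|\approx 2^j$, and the $\xi$-gradient of the full cubic phase is $x + s\nabla_\xi\widetilde{\Phi}$ up to harmless modulation-cutoff contributions, a final integration by parts in $\xi$ via Lemma \ref{tech5} renders the integral negligible unless $|x| \ll 2^{m-\delta m/8}$. Pairing this gain with the $L^2$- and $L^\infty$-bounds for the quadratic building blocks supplied by Lemma \ref{dtfLem2} (in particular \eqref{Dbound5.1}--\eqref{Dbound5.2} and the refined Fourier $L^\infty$-bound \eqref{vd70} near $|\xi|=2\gamma_0$) absorbs the $2^{(1-50\delta)j}$ prefactor of the $B_j$-norm and delivers the required $2^{-2\delta^2 m}$ decay after summation over the angular localizations $A^{(j)}_{n,\gamma_1}$.

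The main obstacle is the casework underlying the application of Lemma \ref{cubicphase}: the nondegeneracy of the iterated space-time resonance can degenerate on lower-dimensional sub-manifolds where one or more of the frequencies $|\eta|$, $|\sigma|$, $|\xi-\eta-\sigma|$, $|\xi-\eta|$ lies near the special values $\gamma_0$, $\gamma_1/2$, $\gamma_1$, $2\gamma_0$. Each such configuration has to be treated separately, pairing the appropriate angular localization $A_{n,\gamma_1}$ or $A_{n,\gamma_0}$ with the refined $L^\infty$-bound \eqref{vd70} on $\mathcal{F}\{A_{\leq\D,2\gamma_0}A^{\cdots}\}$ and with the $L^2$-estimates on the pieces $G^{[1]}$, $G^{[2]}$, $G^{[3]}$ of Lemma \ref{dtfLem2}. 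The most delicate sub-case is when all inner frequencies cluster near $\gamma_1/2$ — the space-time resonant input sphere of the outer phase $\Phi$ — where the compensation relies on the full strength of the slow-propagation identity \eqref{waz9.1} and on the explicit description of the iterated-resonance set supplied by Lemma \ref{cubicphase}.
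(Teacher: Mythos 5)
Your plan essentially transplants the strategy of Lemma~\ref{lLargeBBound} (weakly resonant interactions) into the strongly resonant regime $l\leq -m/14$, but that transplant does not close, for two structural reasons.

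First, the crude $L^\infty\times L^2$ bound you propose for the error term $E^{a_1,\alpha_1}_\mu$ does not suffice. Tracking the factors: the time integral gives $2^m$, the modulation weight gives $2^{-l}$ (which in the worst case $l\searrow l_-$ is as large as $2^{(1-\delta/2)m}$), the $L^\infty$ decay gives $2^{-5m/6}$, the $L^2$ bound on $E$ gives $2^{-3m/2+5\delta m}$, and the $B_j$ prefactor contributes $2^{(1-50\delta)j}\lesssim 2^{(1-50\delta)m}$. The product grows like $2^{2m/3+O(\delta m)}$, nowhere near $2^{-2\delta^2m}$. The paper avoids this by inserting the angular cutoff $\varphi(\kappa_\theta^{-1}\Theta(\xi,\eta))$ (the $T^{\parallel}/T^{\perp}$ split), then further localizing in $|\nabla_\xi\Phi|\approx 2^p$ and $|\nabla_\eta\Phi|\approx 2^q$, and applying Schur's test with the volume bound of Proposition~\ref{volume}~(iii); see Lemma~\ref{Econtr}, in particular \eqref{tla6}. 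The decisive gain missing from your argument is the one extracted in \eqref{tla13}--\eqref{tla14}: when $|\Phi(\xi,\eta)|\lesssim 2^l$ and $|\nabla_\eta\Phi|\lesssim 2^q$ are both small, the output frequency is forced into $\{||\xi|-\gamma_1|\lesssim 2^l+2^{2q}\}$, and the weight $||\xi|-\gamma_1|^{1/2-49\delta}$ built into the $B_j$ norm supplies the extra smallness. You never invoke this, and without it the $2^{-l}$ factor cannot be absorbed.

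Second, your reliance on Lemma~\ref{cubicphase}~(ii) (the slow propagation of iterated resonances, \eqref{waz9.1}) to obtain $|\nabla_\xi\widetilde\Phi|$ small is only legitimate on the part of $\partial_s f$ where both the inner modulation $\Phi_{\nu\beta\gamma}(\eta,\sigma)$ and the outer modulation $\Phi(\xi,\eta)$ are small. The decomposition of Lemma~\ref{dtfLem2}~(i) produces a piece $G^{[2]}$ carrying the cutoff $\varphi_{\geq 1}(2^{10\delta m}\Phi_{\nu\beta\gamma}(\eta,\sigma))$, so there $|\Phi_{\nu\beta\gamma}|\gtrsim 2^{-10\delta m}$ and hence $|\widetilde\Phi|\gtrsim 2^{-10\delta m}$ is bounded \emph{below}, not above; Lemma~\ref{cubicphase}~(ii) does not apply, and integration by parts in $\xi$ gives no useful spatial localization. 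The paper handles this piece by a \emph{second} integration by parts in $s$ (Lemma~\ref{G2contr}, Step~1), exploiting the lower bound on $|\widetilde\Phi|$ to convert the problematic $2^{-l}$ into a manageable $2^{-r}$ with $r\geq -11\delta m$. Your proposal has no substitute for this step, and the spatial-localization argument you describe (which is indeed used for \eqref{hu22} in Lemma~\ref{lLargeBBound}) would not carry the $G^{[2]}$ contribution in the present regime.
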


We prove these lemmas in the following five subsections. Lemma \ref{FSPLem} takes advantage of the approximate finite of propagation. Lemma \ref{KsmallLem} uses the null structure at low frequencies. Lemma \ref{StronglyResLem} controls interactions that lead to the creation of a space-time resonance. Lemmas \ref{lLargeBBound} and \ref{lsmallBound} correspond to interactions that are particularly difficult to control in dimension $2$ and contain the main novelty of our analysis (see also \cite{DIP}). They rely on all the estimates in Lemmas \ref{dtfLem1} and \ref{dtfLem2}, and on the ``slow propagation of iterated resonances'' properties in Lemma \ref{cubicphase}. 

We will use repeatedly the symbol bounds \eqref{ni3.111} and the main assumption \eqref{ni3.11}.

\subsection{Approximate finite speed of propagation} In this subsection we prove Lemma \ref{FSPLem}. 
We define the functions $f^\mu_{j_1,k_1}$ and $f^\nu_{j_2,k_2}$ as before, see \eqref{Alx25.5}, and further decompose 
\begin{equation}\label{Alx70}
f^\mu_{j_1,k_1}=\sum_{n_1=0}^{j_1+1} f^\mu_{j_1,k_1,n_1},\qquad f^\nu_{j_2,k_2}=\sum_{n_2=0}^{j_2+1} f^\nu_{j_2,k_2,n_2}
\end{equation}
as in \eqref{Alx100}. If $\min\{j_1,j_2\}\le j-\delta m$ then the same argument as in the proof of \eqref{Alx24} leads to rapid decay, as in \eqref{Alx26.5}. To bound the sum over $\min\{j_1,j_2\}\ge j-\delta m$ we consider several cases.

{\bf{Case 1.}} Assume first that
\begin{equation}\label{ki1}
\min(k,k_1,k_2)\leq -m/2.
\end{equation}
Then we notice that
\begin{equation*}
\begin{split}
\big\Vert \mathcal{F}\big\{P_kT_{m,l}[f^\mu_{j_1,k_1},f^\nu_{j_2,k_2}]\big\}\big\Vert_{L^\infty}&\lesssim 2^m2^{k+\min(k_1,k_2)/2}\sup_{s\in I_m}\big[\big\Vert\widehat{f^\mu_{j_1,k_1}}(s)\big\Vert_{L^2}\big\Vert\widehat{f^\nu_{j_2,k_2}}(s)\big\Vert_{L^2}\big]\\
&\lesssim 2^m2^{2\delta^2m}2^{k}2^{-(1/2-\delta)(j_1+j_2)}.
\end{split}
\end{equation*}
Therefore, the sum over $j_1,j_2$ with $\min(j_1,j_2)\geq j-\delta m$ is controlled as claimed provided that $k\leq -m/2$. On the other hand, if $k_1=\min(k_1,k_2)\leq -m/2$ then we estimate
\begin{equation}\label{ki1.5}
\begin{split}
\Vert P_kT_{m,l}[f^\mu_{j_1,k_1},f^\nu_{j_2,k_2}]\Vert_{L^2}&\lesssim 2^m2^{k+k_1/2}\sup_{s\in I_m}\big[\big\Vert\widehat{f^\mu_{j_1,k_1}}(s)\big\Vert_{L^1}\big\Vert\widehat{f^\nu_{j_2,k_2}}(s)\big\Vert_{L^2}\big]\\
&\lesssim 2^m2^{2\delta^2m}2^{k+k_1/2}2^{k_1}2^{-(1-50\delta)j_1}2^{-(1/2-\delta)j_2}2^{-4\max(k_2,0)}.
\end{split}
\end{equation}
The sum over $j_1,j_2$ with $\min(j_1,j_2)\geq j-\delta m$ is controlled as claimed in this case as well.

{\bf{Case 2.}} Assume now that
\begin{equation}\label{ki2}
\min(k,k_1,k_2)\geq -m/2,\qquad l\leq \min(k,k_1,k_2,0)/2-m/5.
\end{equation}
We use Lemma \ref{Shur2Lem}: we may assume that $\min(k,k_1,k_2)+\max(k,k_1,k_2)\geq -100$ and estimate
\begin{equation*}
\begin{split}
\big\Vert P_kT_{m,l}&[f^\mu_{j_1,k_1,n_1},f^\nu_{j_2,k_2,n_2}]\big\Vert_{L^2}\lesssim 2^m2^{k+\min(k_1,k_2)/2}2^{5\max(k_1,k_2,0)}2^{l/2-n_1/2-n_2/2}\\
&\sup_{s\in I_m}\big[\big\Vert \sup_\theta|\widehat{f^\mu_{j_1,k_1,n_1}}(r\theta,s)|\big\Vert_{L^2(rdr)}\big\Vert \sup_\theta|\widehat{f^\nu_{j_2,k_2,n_2}}(r\theta,s)|\big\Vert_{L^2(rdr)}\big].
\end{split}
\end{equation*}
Using \eqref{RadL2}, \eqref{ni3.11}, and summing over $n_1,n_2$, we have
\begin{equation*}
2^{(1-50\delta)j}\big\Vert P_kT_{m,l}[f^\mu_{j_1,k_1},f^\nu_{j_2,k_2}]\big\Vert_{L^2}\lesssim 2^{7\max(k_1,k_2,0)}2^m2^{2\delta^2m}2^{(1-50\delta)j}2^{l/2}2^{-(1-\delta')(j_1+j_2)}.
\end{equation*}
The sum over $j_1,j_2$ with $\min(j_1,j_2)\geq j-\delta m$ is controlled as claimed.

{\bf{Case 3.}} Finally, assume that
\begin{equation}\label{Alx73}
\min(k,k_1,k_2)\geq -m/2,\qquad l\geq \min(k,k_1,k_2,0)/2-m/5.
\end{equation}
We use the formula \eqref{Alx41}. The contribution of $\mathcal{A}_{m,l}$ can be estimated as in \eqref{ki1.5}, with $2^m$ replaced by $2^{-l}$, and we focus on the contribution of $\mathcal{B}_{m,l}[P_{k_1}f^\mu,P_{k_2}\partial_{s}f^{\nu}]$. We decompose $\partial_{s}f^{\nu}(s)$, according to \eqref{Brc4}. The contribution of $P_{k_2}E^{a_2,\alpha_2}_\nu$ can be estimated easily,
\begin{equation}\label{vd50}
\begin{split}
\Vert P_k\mathcal{B}_{m,l}&[f^\mu_{j_1,k_1},P_{k_2}E^{a_2,\alpha_2}_\nu]\Vert_{L^2}\lesssim 2^m2^{-l}2^{k+\min(k_1,k_2)/2}\sup_{s\in I_m}\big[\big\Vert\widehat{f^\mu_{j_1,k_1}}(s)\big\Vert_{L^1}\big\Vert P_{k_2}E^{a_2,\alpha_2}_\nu(s)\big\Vert_{L^2}\big]\\
&\lesssim 2^m2^{2\delta^2m}2^{m/5-\min(k,k_1,k_2,0)/2}2^{k+k_2/2}2^{k_1}2^{-(1-51\delta)j_1}2^{-3m/2+5\delta m}\\
&\lesssim 2^{-(1-51\delta)j_1}2^{-m/4},
\end{split}
\end{equation}
and the sum over $j_1\geq j-\delta m$ of $2^{(1-50\delta)j}\Vert P_k\mathcal{B}_{m,l}[f^\mu_{j_1,k_1},P_{k_2}E^{a_2,\alpha_2}_\nu]\Vert_{L^2}$ is suitably bounded. 

We consider now the terms $A_{k_{2};k_{3},j_{3},k_{4},j_{4}}^{a_3,\alpha_3;a_4,\alpha_4}(s)$ in \eqref{Brc4}, $[(k_3,j_3),(k_4,j_4)]\in X_{m,k_2}$, $\alpha_3+\alpha_4=\alpha_2$, $a_3+a_4\leq a_2$. In view of \eqref{vd5}, \eqref{vd6}, and \eqref{vd6.8}, $\Vert A_{k_{2};k_{3},j_{3},k_{4},j_{4}}^{a_3,\alpha_3;a_4,\alpha_4}(s)\Vert_{L^2}\lesssim 2^{-4m/3+4\delta m}$
\begin{equation*}
\text{ if }\,\,\max(j_{3},j_{4})\geq (1-\delta^{2})m-|k_{2}|\,\,\text{ or if }\,\,|k_2|+\D/2\leq\min(|k_3|,|k_4|).
\end{equation*}
The contributions of these terms can be estimated as in \eqref{vd50}. On the other hand, to control the contribution 
of $Q_{jk}\mathcal{B}_{m,l}[f^\mu_{j_1,k_1},A_{k_{2};k_{3},j_{3},k_{4},j_{4}}^{a_3,\alpha_3;a_4,\alpha_4}]$ when 
$\max(j_{3},j_{4})\leq (1-\delta^{2})m-|k_{2}|$ and $|k_2|+\D/2\geq |k_3|$, we simply rewrite this in the form 
\begin{equation}\label{New01}
\begin{split}
c&\phii_j^{(k)}(x)\int_{\mathbb{R}}q_m(s)\int_{\mathbb{R}^2}\widehat{f^\mu_{j_1,k_1}}(\eta,s)\Big[\int_{\mathbb{R}^2\times\mathbb{R}^2}e^{i[x\cdot\xi+s\widetilde{\Phi}'(\xi,\eta,\sigma)]}2^{-l}\widetilde{\varphi}_{l}(\Phi_{\sigma\mu\nu}(\xi,\xi-\eta))\\
&\times\varphi_k(\xi)\varphi_{k_2}(\xi-\eta)\mathfrak{m}_{\mu\nu}(\xi,\xi-\eta)\mathfrak{m}_{\nu\beta\gamma}(\xi-\eta,\sigma)\widehat{f^\beta_{j_3,k_3}}(\xi-\eta-\sigma,s)\widehat{f^\gamma_{j_4,k_4}}(\sigma,s)\,d\xi d\sigma\Big]d\eta ds,
\end{split}
\end{equation}
where $\widetilde{\Phi}'(\xi,\eta,\sigma):=\Lambda(\xi)-\Lambda_\mu(\eta)-\Lambda_{\beta}(\xi-\eta-\sigma)-\Lambda_{\gamma}(\sigma)$. Notice that
\begin{equation}\label{New02}
\big|\nabla_\xi[x\cdot\xi+s\Lambda(\xi)-s\Lambda_\mu(\eta)-s\Lambda_{\beta}(\xi-\eta-\sigma)-s\Lambda_{\gamma}(\sigma)]\big|\approx |x|\approx 2^j.
\end{equation}
We can integrate by parts in $\xi$ using Lemma \ref{tech5} (i) to conclude that these are 
negligible contributions, pointwise bounded by $C2^{-5m}$. This completes the proof of the lemma.

\subsection{The case of small frequencies}\label{Gty1} In this subsection we prove Lemma \ref{KsmallLem}. The main point is that if $\underline{k}:=\min(k,k_1,k_2)\leq -3.5m/N'_0$ then $|\Phi(\xi,\eta)|\gtrsim 2^{\underline{k}/2}$ for any $(\xi,\eta)\in \mathcal{D}_{k,k_1,k_2}$, as a consequence of \eqref{try5.5} and \eqref{ki7}. Therefore the operators $T_{m,l}$ are nontrivial only if 
\begin{equation}\label{Gty2}
l\geq \underline{k}/2-\D.
\end{equation}

{\bf{Step 1.}} We consider first the operators $\mathcal{A}_{m,l}$. Since $l\geq -2m/3-2\D$, it suffices to prove that
\begin{equation}\label{ki7.1}
2^{(1-50\delta)(m-\underline{k}/2)}\big\|P_k\mathcal{I}_l[f^\mu_{j_1,k_1}(s),f^\nu_{j_2,k_2}(s)]\big\|_{L^2}\lesssim 2^{-3\delta^2m},
\end{equation}
for any $s\in I_m$ and $j_1,j_2$, where $\mathcal{I}_l$ are the operators defined in \eqref{il}, and $f^\mu_{j_1,k_1}$ and $f^\nu_{j_2,k_2}$ are as in \eqref{Alx25.5}. We may assume $k_1\leq k_2$ and consider two cases.

{\bf{Case 1.}} If $\underline{k}=k_1$ then we estimate first the left-hand side of \eqref{ki7.1} by
\begin{equation*}
\begin{split}
C2^{(1-50\delta)(m-\underline{k}/2)}\cdot 2^{k+\underline{k}/2}2^{-l}\big[\sup_{s,t\approx 2^m}\big\|e^{-it\Lambda_\mu}f^\mu_{j_1,k_1}(s)\big\|_{L^\infty}\big\|f^\nu_{j_2,k_2}(s)\big\|_{L^2}+2^{-8m}\big]\\
\lesssim 2^{(1-50\delta)(m-\underline{k}/2)}\cdot 2^k2^{6\delta^2m}\big[2^{\underline{k}}2^{-m+50\delta j_1}2^{-4k^+}+2^{-8m}\big],
\end{split}
\end{equation*}
using Lemma \ref{PhiLocLem} and \eqref{LinftyBd1.1}. This suffices to prove \eqref{ki7.1} if $j_1\leq 9m/10$. On the other hand, if $j_1\geq 9m/10$ then we estimate the left-hand side of \eqref{ki7.1} by
\begin{equation*}
\begin{split}
C2^{(1-50\delta)(m-\underline{k}/2)}\cdot 2^{k+\underline{k}/2}2^{-l}\big[\sup_{s\approx 2^m}\big\|f^\mu_{j_1,k_1}(s)\big\|_{L^2}\big\|e^{-it\Lambda_\nu}f^\nu_{j_2,k_2}(s)\big\|_{L^\infty}+2^{-8m}\big]\\
\lesssim 2^{(1-50\delta)(m-\underline{k}/2)}\cdot 2^k2^{6\delta^2m}\big[2^{-(1-50\delta)j_1}2^{-5m/6}2^{-2k^+}+2^{-8m}\big],
\end{split}
\end{equation*}
using Lemma \ref{PhiLocLem} and \eqref{LinftyBd3.5}. This suffices to prove the desired bound \eqref{ki7.1}.

{\bf{Case 2.}} If $\underline{k}=k$ then \eqref{ki7.1} follows using the $L^2\times L^\infty$ estimate, as in Case 1, unless
\begin{equation}\label{ki7.2}
\max(|k_1|,|k_2|)\leq 20,\qquad \max(j_1,j_2)\leq m/3.
\end{equation}
On the other hand, if \eqref{ki7.2} holds then it suffices to prove that, for $|\rho|\leq 2^{m-\D}$,
\begin{equation}\label{ki7.3}
\begin{split}
&2^{(1-50\delta)(m-k/2)}2^{-k/2}\big\|P_kI_0[f^\mu_{j_1,k_1}(s),f^\nu_{j_2,k_2}(s)]\big\|_{L^2}\lesssim 2^{-3\delta^2m},\\
&\widehat{I_0[f,g]}(\xi):=\int_{\mathbb{R}^2}e^{i(s+\rho)\Phi(\xi,\eta)} m_0(\xi,\eta)\widehat{f}(\xi-\eta)\widehat{g}(\eta)\,d\eta.
\end{split}
\end{equation}
Indeed, \eqref{ki7.1} would follow from \eqref{ki7.3} and the inequality $l\geq \underline{k}/2-\D\geq -2m/3-2\D$ (see \eqref{ki7}--\eqref{Gty2}), using the superposition argument in Lemma \ref{PhiLocLem}. On the other hand, the proof of \eqref{ki7.3} is similar to the proof of \eqref{Dbound4.0} in Lemma \ref{dtfLem1}.

{\bf{Step 2.}} We consider now the operators $\mathcal{B}_{m,l}$. In some cases we prove the stronger bound
\begin{equation}\label{kj7.1}
2^{(1-50\delta)(m-\underline{k}/2)}2^m\big\|P_k\mathcal{I}_l[f^\mu_{j_1,k_1}(s),P_{k_2}\partial_sf^\nu(s)]\big\|_{L^2}\lesssim 2^{-3\delta^2m},
\end{equation}
for any $s\in I_m$ and $j_1$. We consider three cases.

{\bf{Case 1.}} If $\underline{k}=k_1$ then we use the bounds
\begin{equation}\label{kj7.2}
\begin{split}
&\|P_{k_2}\partial_sf^\nu(s)\|_{L^2}\lesssim 2^{-m+5\delta m}(2^{k_2}+2^{-m/2}),\\
&\|e^{-is\Lambda_\nu}P_{k_2}\partial_sf^\nu(s)\|_{L^\infty}\lesssim 2^{-5m/3+6\delta^2m},
\end{split} 
\end{equation}
see \eqref{vd7} and \eqref{Brc2}. We also record the bound, which can be verified easily using integration by parts and Plancherel for any $\rho\in\mathbb{R}$ and $k'\in\mathbb{Z}$,
\begin{equation}\label{kj7.45}
\big\|e^{-i\rho\Lambda}P_{k'}\big\|_{L^\infty\to L^\infty}\lesssim \big\|\mathcal{F}^{-1}\big\{e^{-i\rho\Lambda(\xi)}\varphi_{k'}(\xi)\big\}\big\|_{L^1}\lesssim 1+2^{k'/2}2^{k'_+}|\rho|.
\end{equation}

If
\begin{equation}\label{kj7.3}
k_1\geq -m/4,\qquad j_1\leq (1-\delta^2)m
\end{equation}
then we use \eqref{LinftyBd3}, \eqref{kj7.2}, and Lemma \ref{PhiLocLem} to estimate the left-hand side of \eqref{kj7.1} by
\begin{equation*}
\begin{split}
C&2^{k+k_1/2}2^{(1-50\delta)(m-\underline{k}/2)}2^m\big[2^{-l}\sup_{|\rho|\leq 2^{m/2}}\|e^{-i(s+\rho)\Lambda_\mu}f^\mu_{j_1,k_1}(s)\|_{L^\infty}\|P_{k_2}\partial_sf^\nu(s)\|_{L^2}+2^{-8m}\big]\\
&\lesssim 2^{6k^+}2^{k_1/2}2^{-40\delta m}.
\end{split}
\end{equation*}
This suffices to prove \eqref{kj7.1} when \eqref{kj7.3} holds (recall the choice of $\delta,N_0,N_1$ in Definition \ref{MainZDef}). On the other hand, if
\begin{equation}\label{kj7.4}
k_1\geq -m/4,\qquad j_1\geq (1-\delta^2)m
\end{equation}
then we use \eqref{kj7.45}, \eqref{LinftyBd}, \eqref{kj7.2}, and Lemma \ref{PhiLocLem} to estimate the left-hand side of \eqref{kj7.1} by
\begin{equation*}
\begin{split}
C&2^{k+k_1/2}2^{(1-50\delta)(m-\underline{k}/2)}2^m\big[2^{-l}\|f^\mu_{j_1,k_1}(s)\|_{L^2}\sup_{|\rho|\leq 2^{-l+4\delta^2m}}\|e^{-i(s+\rho)\Lambda_\nu}P_{k_2}\partial_sf^\nu(s)\|_{L^\infty}+2^{-8m}\big]\\
&\lesssim 2^{10k^+}2^{-2m/3+10\delta m}2^{-2l}.
\end{split}
\end{equation*}
This suffices to prove \eqref{kj7.1}, provided that \eqref{kj7.4} holds.

Finally, if $k_1\leq -m/4$ then we use the bound 
\begin{equation*}
\sup_{|\rho|\leq 2^{m-\D}}\|e^{-i(s+\rho)\Lambda_\mu}f^\mu_{j_1,k_1}(s)\|_{L^\infty}\lesssim 2^{(3/2-25\delta)k_1}2^{-m+50\delta m}2^{\delta^2m},
\end{equation*}
which follows from \eqref{LinftyBd}--\eqref{LinftyBd1.1}. Then we estimate the left-hand side of \eqref{kj7.1} by
\begin{equation*}
C2^{2k^++k_1/2}2^{(1-50\delta)(m-\underline{k}/2)}2^m\cdot 2^{-l}2^{(3/2-25\delta)k_1}2^{-m+51\delta m}2^{-m+5\delta m}\lesssim 2^{6k^+}2^{10\delta m}2^{k_1}.
\end{equation*}
The desired bound \eqref{kj7.1} follows, provided that $k_1\leq -m/4$.

{\bf{Case 2.}} If $\underline{k}=k$ then \eqref{kj7.1} follows using $L^2\times L^\infty$ estimates, as in Case 1, unless
\begin{equation}\label{kj7.7}
\max(|k_1|,|k_2|)\leq 20.
\end{equation}
Assuming \eqref{kj7.7}, we notice that 
\begin{equation}\label{kj7.8}
\begin{split}
&\sup_{|\rho|\leq 2^{m-\D}}\|e^{-i(s+\rho)\Lambda_\mu}A_{\leq 0,\gamma_0}f^\mu_{j_1,k_1}(s)\|_{L^\infty}\lesssim 2^{-m+3\delta m}\qquad\text{ if }\quad j_1\leq (1-\delta^2)m,\\
&\sup_{|\rho|\leq 2^{m-\D}}\|e^{-i(s+\rho)\Lambda_\mu}A_{\geq 1,\gamma_0}f^\mu_{j_1,k_1}(s)\|_{L^\infty}\lesssim 2^{-m}\qquad\text{ if }\quad m/2\leq j_1\leq (1-\delta^2)m,
\end{split}
\end{equation}
as a consequence of \eqref{LinftyBd3}. Therefore, using the $L^2\times L^\infty$ estimate and \eqref{kj7.2}, as before,
\begin{equation}\label{kj7.9}
2^{(1-50\delta)(m-k/2)}2^m\big\|P_k\mathcal{I}_l[A_{\leq 0,\gamma_0}f^\mu_{j_1,k_1}(s),P_{k_2}\partial_sf^\nu(s)]\big\|_{L^2}\lesssim 2^{-3\delta^2m},
\end{equation}
if $j_1\leq (1-\delta^2)m$, and 
\begin{equation}\label{kj7.10}
2^{(1-50\delta)(m-k/2)}2^m\big\|P_k\mathcal{I}_l[A_{\geq 1,\gamma_0}f^\mu_{j_1,k_1}(s),P_{k_2}\partial_sf^\nu(s)]\big\|_{L^2}\lesssim 2^{-3\delta^2m},
\end{equation}
if $m/2\leq j_1\leq (1-\delta^2)m$. 

On the other hand, if $j_1\geq (1-\delta^2)m$ then we can use the $L^\infty$ bound $\|e^{-is\Lambda_\nu}P_{k_2}\partial_sf^\nu(s)\|_{L^\infty}\lesssim 2^{-5m/3+6\delta^2m}$ in \eqref{kj7.2}, together with the general bound \eqref{kj7.45}. As in \eqref{Alx70} we decompose $f^\mu_{j_1,k_1}=\sum_{n_1=0}^{j_1} f^\mu_{j_1,k_1,n_1}$, and record the bound $\|f^\mu_{j_1,k_1,n_1}(s)\|_{L^2}\lesssim 2^{-j_1+50\delta j_1}2^{n_1/2-49\delta n_1}2^{\delta^2m}$. Let $X:=2^{(1-50\delta)(m-\underline{k}/2)}2^m\big\|P_k\mathcal{I}_l[f^\mu_{j_1,k_1,n_1}(s),P_{k_2}\partial_sf^\nu(s)]\big\|_{L^2}$. Using Lemma \ref{PhiLocLem} it follows that
\begin{equation*}
\begin{split}
X&\lesssim 2^{(1-50\delta)(m-k/2)}2^m \big[2^k2^{-l}\|f^\mu_{j_1,k_1,n_1}(s)\|_{L^2}\sup_{|\rho|\leq 2^{-l+2\delta^2m}}\|e^{-i(s+\rho)\Lambda_\nu}P_{k_2}\partial_sf^\nu(s)\|_{L^\infty}+2^{-8m}\big]\\
&\lesssim 2^{-k/2}2^{-2m/3}2^{n_1/2-49\delta n_1}2^{4\delta m}.
\end{split}
\end{equation*}
Using only $L^2$ bounds, see \eqref{kj7.2}, and Cauchy--Schwarz we also have
\begin{equation*}
\begin{split}
X&\lesssim 2^{(1-50\delta)(m-k/2)}2^m\cdot  2^{2k}2^{-l}\|f^\mu_{j_1,k_1,n_1}(s)\|_{L^2}\|P_{k_2}\partial_sf^\nu(s)\|_{L^2}\lesssim 2^{k}2^{n_1/2-49\delta n_1}2^{6\delta m}.
\end{split}
\end{equation*}
Finally, using \eqref{FL1bd}, we have
\begin{equation*}
\begin{split}
X\lesssim 2^{(1-50\delta)(m-k/2)}2^m\cdot  2^{k}2^{-l}\|\widehat{f^\mu_{j_1,k_1,n_1}}(s)\|_{L^1}\|P_{k_2}\partial_sf^\nu(s)\|_{L^2}\lesssim 2^{-49\delta n_1}2^{7\delta m}.
\end{split}
\end{equation*}
We can combine the last three estimates (using the last one for $n_1\geq m/4$ and the first two for $n_1\leq m/4$) to conclude that if $j_1\geq (1-\delta^2)m$ then 
\begin{equation}\label{kj7.11}
2^{(1-50\delta)(m-k/2)}2^m\big\|P_k\mathcal{I}_l[f^\mu_{j_1,k_1}(s),P_{k_2}\partial_sf^\nu(s)]\big\|_{L^2}\lesssim 2^{-3\delta^2m}.
\end{equation}

In view of \eqref{kj7.9}--\eqref{kj7.11}, it remains to prove that, for $j_1\leq m/2$,
\begin{equation}\label{kj7.15}
2^{(1-50\delta)(m-k/2)}2^m\big\|P_k\mathcal{I}_l[A_{\geq 1,\gamma_0}f^\mu_{j_1,k_1}(s),P_{k_2}\partial_sf^\nu(s)]\big\|_{L^2}\lesssim 2^{-3\delta^2m}.
\end{equation}
To prove \eqref{kj7.15} we decompose $P_{k_2}\partial_sf^\nu(s)$ as in \eqref{Brc4}. The terms that are bounded in $L^2$ by $2^{-4m/3+4\delta m}$ lead to acceptable contributions, using the $L^2\times L^\infty$ argument with Lemma \ref{PhiLocLem} and \eqref{LinftyBd3.5}. It remains to consider the terms  $A_{k_{2};k_{3},j_{3},k_{4},j_{4}}^{a_3,\alpha_3;a_4,\alpha_4}(s)$ when $\max(j_3,j_4)\leq (1-\delta^2)m$ and $k_3,k_4\in [-2m/N'_0,300]$. For these terms, it suffices to prove that
\begin{equation}\label{kj7.16}
\big\|P_k\mathcal{I}_l[A_{\geq 1,\gamma_0}f^\mu_{j_1,k_1}(s), A_{k_{2};k_{3},j_{3},k_{4},j_{4}}^{a_3,\alpha_3;a_4,\alpha_4}(s)]\big\|_{L^2}\lesssim 2^{-4m}.
\end{equation}
Notice that  $A_{k_{2};k_{3},j_{3},k_{4},j_{4}}^{a_3,\alpha_3;a_4,\alpha_4}(s)$ is given by an expression similar to (\ref{Brc4.1}). Therefore
\begin{equation}\label{kj7.165}
\begin{split}
\mathcal{F}\{P_k\mathcal{I}_l[A_{\geq 1,\gamma_0}&f^\mu_{j_1,k_1}(s), A_{k_{2};k_{3},j_{3},k_{4},j_{4}}^{a_3,\alpha_3;a_4,\alpha_4}(s)]\}(\xi)=\int_{\mathbb{R}^2\times\mathbb{R}^2}e^{is\widetilde{\Phi}(\xi,\eta,\sigma)}\widehat{f^\mu_{j_1,k_1}}(\xi-\eta,s)\\
&\times \varphi_{\leq -101}(|\xi-\eta|-\gamma_0)2^{-l}\widetilde{\varphi}_{l}(\Phi_{+\mu\nu}(\xi,\eta))\varphi_k(\xi)\varphi_{k_2}(\eta)\\
&\times\mathfrak{m}_{\mu\nu}(\xi,\eta)\mathfrak{m}_{\nu\beta\gamma}(\eta,\sigma)\widehat{f^\beta_{j_3,k_3}}(\eta-\sigma,s)\widehat{f^\gamma_{j_4,k_4}}(\sigma,s)\,d\sigma d\eta,
\end{split}
\end{equation} 
where $\widetilde{\Phi}(\xi,\eta,\sigma)=\Lambda(\xi)-\Lambda_\mu(\xi-\eta)-\Lambda_{\beta}(\eta-\sigma)-\Lambda_{\gamma}(\sigma)$. The main observation is that either
\begin{equation}\label{kj7.17}
\big|\nabla_\eta\widetilde{\Phi}(\xi,\eta,\sigma)\big|=\big|\nabla\Lambda_\mu(\xi-\eta)-\nabla\Lambda_{\beta}(\eta-\sigma)\big|\gtrsim 1,
\end{equation}
or
\begin{equation}\label{kj7.18}
\big|\nabla_\sigma\widetilde{\Phi}(\xi,\eta,\sigma)\big|=\big|\nabla\Lambda_{\beta}(\eta-\sigma)-\nabla\Lambda_{\gamma}(\sigma)\big|\gtrsim 1,
\end{equation}
in the support of the integral. Indeed, $||\eta|-\gamma_0|\leq 2^{-95}$ in view of the cutoffs on the variables $\xi$ and $\xi-\eta$. If $\big|\nabla_\sigma\widetilde{\Phi}(\xi,\eta,\sigma)\big|\leq 2^{-\D}$ then $\max(|k_3|,|k_4|)\leq 300$ and, using Proposition \ref{spaceres11} (ii) (in particular \eqref{zc2}), it follows that $|\eta-\sigma|$ is close to either $\gamma_0/2$, or $p_{+-1}(\gamma_0)\geq 1.1\gamma_0$, or $p_{+-1}(\gamma_0)-\gamma_0\leq 0.9\gamma_0$. In these cases the lower bound \eqref{kj7.17} follows. The desired bound \eqref{kj7.16} then follows using Lemma \ref{tech5} (i).

{\bf{Case 3.}} If $\underline{k}=k_2$ then we do not prove the stronger estimate \eqref{kj7.1}. In this case the desired bound follows from Lemma \ref {lLargeBBound2} below.

\begin{lemma}\label{lLargeBBound2}
Assume that \eqref{ki7} holds and, in addition, 
\begin{equation}\label{case5}
j\leq m+2\D+\max(|k|,|k_1|,|k_2|)/2,\qquad k_2\leq -2\D,\qquad 2^{-l}\leq 2^{10\delta m}+2^{-k_2/2+\D}.
\end{equation}
Then, for any $j_1$,
\begin{equation}\label{kl7.1}
2^{(1-50\delta)j}\Vert Q_{jk}\mathcal{B}_{m,l}[f^\mu_{j_1,k_1},P_{k_2}\partial_sf^\nu]\Vert_{L^2}\lesssim 2^{-3\delta^2m}.
\end{equation}
\end{lemma}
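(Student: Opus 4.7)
The bound required sits in Case 3 of the proof of Lemma \ref{KsmallLem}, where the minimum frequency $\underline{k}=k_2$ sits on the $\partial_sf^\nu$-side; the main difficulty is that $j$ can be as large as $m + 2\mathcal{D} + |k_2|/2$, so the elementary $L^2\times L^\infty$ or $L^\infty\times L^2$ estimates based on the dispersive bound (\ref{Brc2}) or (\ref{DirectBds1}) fall short after multiplication by the weight $2^{(1-50\delta)j}$ and the time factor $2^m$ coming from $\mathcal{B}_{m,l}$. The plan is to replace these crude estimates by the finer bilinear decomposition of $P_{k_2}\partial_sf^\nu$ supplied by Lemma \ref{dtfLem1}(ii).

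First, write $P_{k_2}\partial_sf^\nu=\varepsilon_1^2\sum A^{a_3,\alpha_3;a_4,\alpha_4}_{k_2;k_3,j_3;k_4,j_4}+\varepsilon_1^2 P_{k_2}E^{a_2,\alpha_2}_\nu$ as in (\ref{Brc4}), where $\|P_{k_2}E^{a_2,\alpha_2}_\nu(s)\|_{L^2}\lesssim 2^{-3m/2+5\delta m}$ by (\ref{vd2}). The contribution of the $L^2$ error is handled directly: by Lemma \ref{PhiLocLem} together with the dispersive bound $\|e^{-is\Lambda_\mu}f^\mu_{j_1,k_1}\|_{L^\infty}\lesssim 2^{-5m/6+3\delta^2 m}$ and the symbol null factor $2^{k_2/2}$ from (\ref{ni3.111}), the $L^2$ norm of $Q_{jk}\mathcal{B}_{m,l}[f^\mu_{j_1,k_1},P_{k_2}E^{a_2,\alpha_2}_\nu]$ is bounded by $C2^m 2^{-l}2^{k+k_2/2}2^{-5m/6+3\delta^2 m}2^{-3m/2+5\delta m}$. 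With $2^{(1-50\delta)j}\lesssim 2^{(1-50\delta)(m+|k_2|/2)}$ and the hypothesis $2^{-l}\lesssim 2^{10\delta m}+2^{-k_2/2+\mathcal{D}}$, this is comfortably smaller than $2^{-3\delta^2 m}$.

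For the quadratic pieces, substitute the bilinear formula (\ref{Brc4.1}) into $\mathcal{B}_{m,l}$, obtaining a trilinear oscillatory integral with combined phase $\widetilde\Phi(\xi,\eta,\sigma)=\Phi_{+\mu\nu}(\xi,\eta)+\Phi_{\nu\beta\gamma}(\eta,\sigma)$ and the auxiliary cutoff $2^{-l}\widetilde\varphi_l(\Phi_{+\mu\nu})$, analogous to (\ref{kj7.165}). Since $|\eta|\approx 2^{k_2}$ with $k_2\leq -2\mathcal{D}$, we have $|\nabla_\eta\Phi_{+\mu\nu}|\approx |\nabla\Lambda_\nu(\eta)|\approx 2^{-k_2/2}$, which is large independently of whether $|\Phi_{+\mu\nu}|$ is small. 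After using (\ref{loca2}) to represent $\widetilde\varphi_l(\Phi_{+\mu\nu})$ as a superposition of pure exponentials, integration by parts in $\eta$ via Lemma \ref{tech5}(i) produces boundary contributions which decay at arbitrary polynomial rate in $m$, provided that $|\nabla_\sigma\widetilde\Phi|=|\nabla\Lambda_\beta(\eta-\sigma)-\nabla\Lambda_\gamma(\sigma)|$ does not simultaneously vanish.

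The main obstacle will be handling the residual regime in which both $\nabla_\eta\widetilde\Phi$ and $\nabla_\sigma\widetilde\Phi$ are small in the relevant portion of frequency space, namely the space--time resonant configurations controlled by Proposition \ref{spaceres11}(ii). In this region the resonance geometry forces $\max(|k_3|,|k_4|)=O(1)$ and constrains $\eta$ to a narrow neighborhood of a finite set of critical points, so that the small parameter $k_2$ cannot range freely. Here we abandon integration by parts and instead perform a direct $L^2$ estimate of the trilinear integral: we combine the sublevel-set volume bounds of Proposition \ref{volume} with the Schur-test bounds on $\widehat{f^\beta_{j_3,k_3}},\widehat{f^\gamma_{j_4,k_4}}$ coming from (\ref{RadL2}) and (\ref{ni3.11}), and finally exploit the null factor $2^{k_2/2}$ in the outer symbol $\mathfrak{m}_{\mu\nu}$ together with the $2^{k_2}$ gain in the $L^2$ bound (\ref{vd7}) for $P_{k_2}\partial_sf^\nu$ to close the estimate under the hypothesis $2^{-l}\leq 2^{10\delta m}+2^{-k_2/2+\mathcal{D}}$.
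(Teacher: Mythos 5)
Your overall strategy of decomposing $P_{k_2}\partial_s f^\nu$ via \eqref{Brc4} into an error term plus a bilinear expansion, and then treating the resulting trilinear integral as in \eqref{kj7.165}, is the same framework the paper uses. However, there are two genuine gaps.

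First, your estimate for the error term $P_{k_2}E^{a_2,\alpha_2}_\nu$ does not close as written. You bound $2^{(1-50\delta)j}\lesssim 2^{(1-50\delta)(m+|k_2|/2)}$ and use $2^{-l}\lesssim 2^{-k_2/2+\D}$, but the resulting product carries a net factor of order $2^{|k_2|(1-50\delta)}$, which is uncontrolled since \eqref{ki7} allows $|k_2|$ as large as $4m/3+2\D$. Tracking exponents gives, up to $O(\delta m)$ corrections, $2^{-m/3}\cdot 2^{|k_2|/2}\cdot 2^{-l}\lesssim 2^{-m/3+|k_2|}$, which grows like $2^{m/3}$ in the worst case, not "comfortably smaller than $2^{-3\delta^2 m}$." The paper avoids this by first running the approximate finite-speed-of-propagation argument (integration by parts in $\xi$ through Lemma \ref{tech5}(i), via the $\rho$-representation in Lemma \ref{PhiLocLem}), which gives rapid decay unless $2^j\lesssim \max\{2^{j_1+\delta m},\,2^{m+\max(|k|,|k_1|)/2+\D}\}$, a bound that does \emph{not} involve $|k_2|$. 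Only after this reduction do the $L^2\times L^\infty$ bounds on the error term close. Your omission of this $j$-reduction is not a stylistic choice but a real hole.

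Second, and more importantly, your treatment of the space-time resonant region is too vague to stand and misses the central observation. You also assert that $|\nabla_\eta\Phi_{+\mu\nu}|\approx 2^{-k_2/2}$ makes integration by parts in $\eta$ effective, but the $\nabla\Lambda_\nu(\eta)$ contributions cancel between $\Phi_{+\mu\nu}$ and $\Phi_{\nu\beta\gamma}$ in the combined phase, so $\nabla_\eta\widetilde\Phi=\nabla\Lambda_\mu(\xi-\eta)-\nabla\Lambda_\beta(\eta-\sigma)$ is only $O(1)$. More seriously, a direct Schur test with the generic dispersive input $\|e^{-is\Lambda}\cdot\|_{L^\infty}\lesssim 2^{-5m/6+O(\delta^2 m)}$ leaves a deficit of order $2^{m/6}$ that cannot be absorbed by the null factor $2^{k_2/2}$ and the $2^{k_2}$ gain in \eqref{vd7} under the hypothesis $2^{-l}\lesssim 2^{10\delta m}+2^{-k_2/2+\D}$; you can verify that when $|k_2|\sim 2\D$ the exponent budget fails by roughly $m/6-O(\delta m)$. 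What actually closes the estimate is the geometric observation \eqref{kl7.15}: after localizing (via integration by parts in $\sigma$) to the region $|\Xi_{\beta\gamma}(\eta,\sigma)|\lesssim \kappa_r=2^{\delta^2 m+k_2/2-m/2}$ with $|\eta|\approx 2^{k_2}$, $\beta=-\gamma$, and $|\sigma|,|\eta-\sigma|$ near $\gamma_0$, the facts $\lambda''(\gamma_0)=0$ and $\lambda'''(\gamma_0)\approx 1$ force $||\sigma|-\gamma_0|\geq 2^{k_2-10}$ and $||\eta-\sigma|-\gamma_0|\geq 2^{k_2-10}$. This then unlocks the sharper decay bound \eqref{LinftyBd2.5}, giving $\|e^{-is\Lambda}g_3\|_{L^\infty}+\|e^{-is\Lambda}g_4\|_{L^\infty}\lesssim 2^{-m+4\delta^2 m}2^{-k_2/2}$, which replaces $2^{-5m/6}$ with $2^{-m}\cdot 2^{-k_2/2}$ and precisely recovers the missing $2^{m/6}$ (since $|k_2|\leq m/3-\delta m$ in the surviving regime). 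Without the lemma \eqref{kl7.15} and the use of \eqref{LinftyBd2.5}, the argument does not close.
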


\begin{proof} We record the bounds
\begin{equation}\label{kl7.2}
\begin{split}
&\|P_{k_2}\partial_sf^\nu(s)\|_{L^2}\lesssim 2^{-m+5\delta m}(2^{k_2}+2^{-m/2}),\\
&\sup_{|\rho|\leq 2^{-l+2\delta^2m}}\|e^{-i(s+\rho)\Lambda_\nu}P_{k_2}\partial_sf^\nu(s)\|_{L^\infty}\lesssim 2^{-5m/3+10\delta^2m}(2^{k_2/2+10\delta m}+1),
\end{split} 
\end{equation}
see \eqref{Brc2}, \eqref{vd7}, and \eqref{kj7.45}. We will prove that for any $s\in\mathcal{I}_m$
\begin{equation}\label{kl7.3}
2^{(1-50\delta)j}2^m\Vert Q_{jk}\mathcal{I}_{l}[f^\mu_{j_1,k_1}(s),P_{k_2}\partial_sf^\nu(s)]\Vert_{L^2}\lesssim 2^{-3\delta^2m}.
\end{equation}

{\bf{Step 1.}} We notice the identity
\begin{equation*}
\begin{split}
Q_{jk}\mathcal{I}_{l}[f^\mu_{j_1,k_1}(s),&P_{k_2}\partial_sf^\nu(s)](x)=C\phii_j^{(k)}(x)\int_{\mathbb{R}^2\times\mathbb{R}^2}e^{i\left[s\Phi(\xi,\eta)+x\cdot \xi\right]}2^{-l}\widetilde{\varphi}_l(\Phi(\xi,\eta))\\
&\times\varphi_k(\xi)m_0(\xi,\eta)\widehat{f^\mu_{j_1,k_1}}(\xi-\eta,s) \widehat{P_{k_2}\partial_sf^\nu}(\eta,s)\,d\xi d\eta.
\end{split}
\end{equation*} 
Therefore $\big\|Q_{jk}\mathcal{I}_{l}[f^\mu_{j_1,k_1}(s),P_{k_2}\partial_sf^\nu(s)]\big\|_{L^2}\lesssim 2^{-4m}$, using integration by parts in $\xi$ and Lemma \ref{tech5} (i), unless
\begin{equation}\label{kl7.4}
2^j\leq\max\big\{2^{j_1+\delta m},2^{m+\max(|k|,|k_1|)/2+\D}\big\}.
\end{equation}
On the other hand, assuming \eqref{kl7.4}, $L^2\times L^\infty$ bounds using Lemma \ref{PhiLocLem}, the bounds \eqref{kl7.2}, and Lemma \ref{LinEstLem} show that \eqref{kl7.3} holds in the following cases:
\begin{equation}\label{kl7.5}
\begin{split}
\text{ either }&k_1\leq -10\quad\text{ and }\quad j_1\leq m-\delta m,\\
\text{ or }&k_1\leq -10\quad\text{ and }\quad j_1\geq m-\delta m,\\
\text{ or }&k_1\geq 10\,\,\,\,\,\quad\text{ and }\quad j_1\leq 2m/3,\\
\text{ or }&k_1\geq 10\,\,\,\,\,\quad\text{ and }\quad j_1\geq 2m/3.
\end{split}
\end{equation}
See the similar estimates in the proof of Lemma \ref{KsmallLem} above, in particular those in (Step 2, Case 1) and (Step 2, Case 2). In each case we estimate $e^{-i(s+\rho)\Lambda_\mu}f^\mu_{j_1,k_1}(s)$ in $L^\infty$ and $e^{-i(s+\rho)\Lambda_\nu}P_{k_2}\partial_sf^\nu(s)$ in $L^2$ when $j_1$ is small, and we estimate $e^{-i(s+\rho)\Lambda_\mu}f^\mu_{j_1,k_1}(s)$ in $L^2$ and $e^{-i(s+\rho)\Lambda_\nu}P_{k_2}\partial_sf^\nu(s)$ in $L^\infty$ when $j_1$ is large. We estimate the contribution of the symbol $m_0$ by $2^{(k+k_1+k_2)/2}$ in all cases.

It remains to prove the desired bound \eqref{kl7.3} when $k,k_1\in[-20,20]$. We can still prove this when $f^\mu_{j_1,k_1}(s)$ is replaced by $A_{\leq 0,\gamma_0}f^\mu_{j_1,k_1}(s)$, or when $j_1\geq m/3-\delta m$, or when $k_2\leq-m/3+\delta m$, using $L^2\times L^\infty$ estimates as before. 

{\bf{Step 2.}} To deal with the remaining cases we use the decomposition \eqref{Brc4}. The contribution of the error component $P_{k_2}E_\nu^{a_2,\alpha_2}$ can also be estimated in the same way when $j_1\leq m/3-\delta m$. After these reductions, we may assume that 
\begin{equation}\label{kl7.9}
\begin{split}
&k,k_1\in[-20,20],\quad j_1\leq m/3-\delta m,\quad j\leq m+2\D,\quad k_2\in[-m/3+\delta m,-2\D],\\
&2^{-l}\lesssim 2^{10\delta m}+2^{-k_2/2}.
\end{split}
\end{equation}
It remains to prove that for any $[(k_3,j_3),(k_4,j_4)]\in X_{m,k_2}$
\begin{equation}\label{kl7.10}
2^{(1-50\delta)j}2^m\big\Vert Q_{jk}\mathcal{I}_{l}[A_{\geq 1,\gamma_0}f^\mu_{j_1,k_1},A_{k_2;k_3,j_3;k_4,j_4}^{a_3,\alpha_3;a_4,\alpha_4}]\big\Vert_{L^2}\lesssim 2^{-4\delta^2m}.
\end{equation}

The $L^2\times L^\infty$ argument still works to prove \eqref{kl7.10} if
\begin{equation}\label{kl7.11}
\big\|A_{k_2;k_3,j_3;k_4,j_4}^{a_3,\alpha_3;a_4,\alpha_4}(s)\big\|_{L^2}\lesssim 2^{-7m/6+25\delta m}.
\end{equation}
We notice that this bound holds if $\max(j_3,j_4)\geq m/3-\delta m$. Indeed, since $k_2\leq -2\D$, we have $$P_{k_2}I^{\nu\beta\gamma}[A_{\geq 1,\gamma_1}f_{j_3,k_3}^\beta(s),A_{\geq 1,\gamma_0}f_{j_4,k_4}^\gamma(s)]\equiv 0,$$ and the bound \eqref{kl7.11} follows by $L^2\times L^\infty$ arguments as in the proof of Lemma \ref{dtfLem1}.

Therefore we may assume that $j_3,j_4\leq m/3-\delta m$. We examine the explicit formula \eqref{kj7.165}. We claim that $|\mathcal{F}\{P_k\mathcal{I}_l[A_{\geq 1,\gamma_0}f^\mu_{j_1,k_1}(s),A^{a_3,\alpha_3;a_4,\alpha_4}_{k_2;k_3,j_3;k_4,j_4}(s)]\}(\xi)|\lesssim 2^{-10m}$ if $|k_3|\geq \D/10$. Indeed, in this case the $\eta$ derivative of the phase $\widetilde{\Phi}$ is $\gtrsim 2^{|k_3|/2}$ in the support of the integral (recall that $|k_1|\leq 20$). Integration by parts in $\eta$, using Lemma \ref{tech5} (i), shows that the resulting integral is negligible, as desired. 

In view of Lemma \ref{dtfLem1} (ii) (3), it remains to prove \eqref{kl7.10} when, in addition to \eqref{kl7.9},
\begin{equation}\label{kl7.13}
k_3,k_4\in[-10,10],\qquad j_3,j_4\leq m/3-\delta m,\qquad\beta=-\gamma.
\end{equation}
We examine again the formula \eqref{kj7.165} and notice that the $(\eta,\sigma)$ derivative of the phase $\widetilde{\Phi}$ is $\gtrsim 1$ unless $||\eta-\sigma|-\gamma_0|\leq 2^{-98}$ and $||\sigma|-\gamma_0|\leq 2^{-98}$. Therefore we may replace $f^\beta_{j_3,k_3}$ with $A_{\geq -5,\gamma_0}f^\beta_{j_3,k_3}$ and $f^\gamma_{j_4,k_4}$ with $A_{\geq -5,\gamma_0}f^\gamma_{j_4,k_4}$, at the expense of negligible errors. Finally, we may assume that $l\geq -\D$ if $\mu=-$, and we may assume that $j\leq m+k_2+\D$ if $\mu=+$ (otherwise the approximate finite speed of propagation argument used in the proof of \eqref{Alx24} and Lemma \ref{FSPLem}, which relies on integration by parts in $\xi$, gives rapid decay). Therefore, in proving \eqref{kl7.10} we may assume that
\begin{equation}\label{kl7.14}
2^{-l}2^{(1-50\delta)j}\lesssim 2^{m-50\delta m}(1+2^{k_2/2+10\delta m}).
\end{equation}

Let $\kappa_r:=2^{\delta^2m}2^{k_2/2-m/2}$. We observe now that if $||\eta-\sigma|-\gamma_0|+||\sigma|-\gamma_0|\leq 2^{-90}$ and $|\Xi_{\beta\gamma}(\eta,\sigma)|=|(\nabla_\sigma\Phi_{\nu\beta\gamma})(\eta,\sigma)|\leq 2\kappa_r$ then
\begin{equation}\label{kl7.15}
||\sigma|-\gamma_0|\geq 2^{k_2-10},\qquad ||\eta-\sigma|-\gamma_0|\geq 2^{k_2-10}.
\end{equation}
Indeed, we may assume that $\sigma=(\sigma_1,0)$, $\eta=(\eta_1,\eta_2)$, $|\sigma_1-\gamma_0|\leq 2^{-90}$, $|\eta|\in [2^{k_2-2},2^{k_2+2}]$. Recalling that $\beta=-\gamma$ and using the formula \eqref{try2}, the condition $|\Xi_{\beta\gamma}(\eta,\sigma)|\leq 2\kappa_r$ gives
\begin{equation*}
\Big|\lambda'(\sigma_1)-\frac{\sigma_1-\eta_1}{|\sigma-\eta|}\lambda'(|\sigma-\eta|)\Big|\leq 2\kappa_r, \qquad \frac{|\eta_2|}{|\sigma-\eta|}\lambda'(|\sigma-\eta|)\leq 2\kappa_r.
\end{equation*}
Since $k_2\in[-m/3+\delta m,-2\D]$ and $\kappa_r=2^{\delta^2m+k_2/2-m/2}$ it follows that $|\eta_2|\leq \kappa_r2^{\D}\leq 2^{k_2-\D}$, $|\eta_1|\in [2^{k_2-3},2^{k_2+3}]$, and $|\lambda'(\sigma_1)-\lambda'(\sigma_1-\eta_1)|\leq 4\kappa_r$. On the other hand, if $|\sigma_1-\gamma_0|\leq 2^{k_2-10}$ and $|\eta_1|\in [2^{k_2-3},2^{k_2+3}]$ then $|\lambda'(\sigma_1)-\lambda'(\sigma_1-\eta_1)|\gtrsim 2^{2k_2}$ (since $\lambda''(\gamma_0)=0$ and $\lambda'''(\gamma_0)\approx 1$), which gives a contradiction. The claims in \eqref{kl7.15} follow.

We examine now the formula \eqref{kj7.165} and recall \eqref{kl7.13} and \eqref{kl7.15}. Using Lemma \ref{tech5} (i) and integration by parts in $\sigma$, we notice that we may insert the factor $\varphi(\kappa_r^{-1}\Xi_{\beta\gamma}(\eta,\sigma))$, at the expense of a negligible error. It remains to prove that
\begin{equation}\label{kl7.20}
2^{(1-50\delta)j}2^m\Vert H\Vert_{L^2}\lesssim 2^{-4\delta^2m},
\end{equation}
where, with $g_1:=A_{\geq 1,\gamma_0}f^\mu_{j_1,k_1}(s)$, $g_3:=A_{[-20,20-k_2],\gamma_0}f^\beta_{j_3,k_3}(s)$, $g_4:=A_{[-20,20-k_2],\gamma_0}f^\gamma_{j_4,k_4}(s)$,
\begin{equation*}
\begin{split}
\widehat{H}(\xi):=\varphi_k(\xi)\int_{\mathbb{R}^2}e^{is[\Lambda(\xi)-\Lambda_\mu(\xi-\eta)-\Lambda_\nu(\eta)]}\widehat{g_1}(\xi-\eta)2^{-l}\widetilde{\varphi}_{l}(\Phi_{+\mu\nu}(\xi,\eta))\mathfrak{m}_{\mu\nu}(\xi,\eta) \widehat{G_2}(\eta)\,d\eta,\\
\widehat{G_2}(\eta):=\varphi_{k_2}(\eta)\int_{\mathbb{R}^2}e^{is[\Lambda_\nu(\eta)-\Lambda_\beta(\eta-\sigma)-\Lambda_\gamma(\sigma)]}\mathfrak{m}_{\nu\beta\gamma}(\eta,\sigma)\varphi(\kappa_r^{-1}\Xi_{\beta\gamma}(\eta,\sigma))\widehat{g_3}(\eta-\sigma)\widehat{g_4}(\sigma)\,d\sigma.
\end{split}
\end{equation*}

We use now the more precise bound \eqref{LinftyBd2.5} to see that
\begin{equation*}
\big\|e^{-is\Lambda_\beta}g_3\big\|_{L^\infty}+\big\|e^{-is\Lambda_\gamma}g_4\big\|_{L^\infty}\lesssim 2^{-m+4\delta^2m}2^{-k_2/2}.
\end{equation*}
This bound is the main reason for proving \eqref{kl7.15}. After removing the factor $\varphi(\kappa_r^{-1}\Xi_{\beta\gamma}(\eta,\sigma))$ at the expense of a small error, and using also \eqref{mk6} and \eqref{kj7.45}, it follows that
\begin{equation*}
\big\|e^{-i(s+\rho)\Lambda_\nu}G_2\big\|_{L^\infty}\lesssim (1+|\rho|2^{k_2/2})2^{k_2}\cdot 2^{-2m+8\delta^2m}2^{-k_2}\lesssim (1+|\rho|2^{k_2/2})2^{-2m+8\delta^2m},
\end{equation*}
for any $\rho\in\mathbb{R}$. We use now the $L^2\times L^\infty$ argument, together with Lemma \ref{PhiLocLem}, to estimate
\begin{equation*}
\|H\|_{L^2}\lesssim 2^{k_2/2}2^{-l}\cdot (1+2^{-l}2^{k_2/2})2^{-2m+12\delta^2m}\lesssim 2^{-2m+12\delta^2m}2^{k_2/2}2^{-l}(1+2^{10\delta m+k_2/2}).
\end{equation*}
The desired bound \eqref{kl7.20} follows using also \eqref{kl7.14}.
\end{proof}  

\subsection{The case of strongly resonant interactions, I}\label{ProofZ2} In this subsection we prove Lemma \ref{StronglyResLem}. This is where we need the localization operators $A^{(j)}_{n,\gamma_1}$ to control the output. It is an instantaneous estimate, in the sense that the time evolution will play no role. Hence, it suffices to show the following: let $\chi\in C^\infty_c(\mathbb{R}^2)$ be supported in $[-1,1]$ and assume that $j, l, s, m$ satisfy
\begin{equation}\label{Ass3}
-m+\delta m/2\le l\le 10 m/N'_0,\qquad 2^{m-4}\le s\le 2^{m+4}.
\end{equation}
Assume that
\begin{equation}\label{Ass3.1}
\Vert f\Vert_{H^{N'_0}\cap H^{N'_1}_\Omega\cap Z_1}+ \Vert g\Vert_{H^{N'_0}\cap H^{N'_1}_\Omega\cap Z_1}\le 1,
\end{equation}
and define, with $\chi_l(x)=\chi(2^{-l}x)$,
\begin{equation*}
\begin{split}
\widehat{I[f,g]}(\xi)&:=\int_{\mathbb{R}^2}e^{is\Phi(\xi,\eta)}\chi_l(\Phi(\xi,\eta))m_0(\xi,\eta)\widehat{f}(\xi-\eta)\widehat{g}(\eta)d\eta.
\end{split}
\end{equation*}
Assume also that $k,k_1,k_2,j,m$ satisfy \eqref{ki7} and \eqref{case2}. Then
\begin{equation}\label{NRCCLNewBd}
2^{\delta m/2}2^{-l}\Vert Q_{jk}I[P_{k_1}f,P_{k_2}g]\Vert_{B_j} \lesssim 2^{-5\delta^2m}.
\end{equation}

To prove \eqref{NRCCLNewBd} we define $f_{j_1,k_1}, g_{j_2,k_2}, f_{j_1,k_1,n_1}, g_{j_2,k_2,n_2}$ as in \eqref{Alx100}, $(k_1,j_1),(k_2,j_2)\in\mathcal{J}$, $n_1\in[0,j_1+1]$, $n_2\in[0,j_2+1]$. We will analyze several cases depending on the relative sizes of the main parameters $m,l,k,j,k_1,j_1,k_2,j_2$. In many cases we will prove the stronger bound
\begin{equation}\label{SuffNRCCLNewBd}
2^{\delta m/2}2^{-l}2^{(1-50\delta)j}\Vert Q_{jk}I[f_{j_1,k_1},g_{j_2,k_2}]\Vert_{L^2}\lesssim 2^{-6\delta^2m}.
\end{equation}
However, in the main case \eqref{BulkCase}, we can only prove the weaker bound
\begin{equation}\label{SuffNRCCLNewBdPrev}
2^{\delta m/2}2^{-l}\Vert Q_{jk}I[f_{j_1,k_1},g_{j_2,k_2}]\Vert_{B_{j}}\lesssim 2^{-6\delta^2m}.
\end{equation}
These bounds clearly suffice to prove \eqref{NRCCLNewBd}.

{\bf Case 1:} We prove first the bound \eqref{SuffNRCCLNewBdPrev} under the assumption
\begin{equation}\label{BulkCase}
\begin{split}
&\max(j_1,j_2)\le 9m/10,\qquad 2l\leq \min (k,k_1,k_2,0)-\D.
\end{split}
\end{equation}
We may assume $j_1\leq j_2$. With 
\begin{equation*}
\kappa_\theta:=2^{-m/2+\delta^2m},\qquad \kappa_r:=2^{\delta^2m}\big(2^{-m/2+3\max(|k|,|k_1|,|k_2|)/4}+2^{j_2-m}\big)
\end{equation*}
we decompose
\begin{equation*}
\begin{split}
&\mathcal{F}I[f_{j_1,k_1},g_{j_2,k_2}]=\mathcal{R}_{1}+\mathcal{R}_2+\mathcal{NR},\\
&\mathcal{R}_{1}(\xi):=\int_{\mathbb{R}^2}e^{is\Phi(\xi,\eta)}\chi_{l}(\Phi(\xi,\eta))m_0(\xi,\eta)\varphi(\kappa_r^{-1}\Xi(\xi,\eta))\varphi(\kappa_\theta^{-1}\Theta(\xi,\eta))\widehat{f_{j_1,k_1}}(\xi-\eta)\widehat{g_{j_2,k_2}}(\eta)d\eta,\\
&\mathcal{R}_{2}(\xi):=\int_{\mathbb{R}^2}e^{is\Phi(\xi,\eta)}\chi_{l}(\Phi(\xi,\eta))m_0(\xi,\eta)\varphi(\kappa_r^{-1}\Xi(\xi,\eta))\varphi_{\geq 1}(\kappa_\theta^{-1}\Theta(\xi,\eta))\widehat{f_{j_1,k_1}}(\xi-\eta)\widehat{g_{j_2,k_2}}(\eta)d\eta,\\
&\mathcal{NR}(\xi):=\int_{\mathbb{R}^2}e^{is\Phi(\xi,\eta)}\chi_{l}(\Phi(\xi,\eta))m_0(\xi,\eta)\varphi_{\geq 1}(\kappa_r^{-1}\Xi(\xi,\eta))\widehat{f_{j_1,k_1}}(\xi-\eta)\widehat{g_{j_2,k_2}}(\eta)d\eta.
\end{split}
\end{equation*}

With $\psi_1:=\varphi_{\leq (1-\delta/4)m}$ and $\psi_2:=\varphi_{>(1-\delta/4)m}$, we rewrite 
\begin{equation*}
\begin{split}
&\mathcal{NR}(\xi)=C2^l[\mathcal{NR}_1(\xi)+\mathcal{NR}_2(\xi)],\\
&\mathcal{NR}_i(\xi):=\int_{\mathbb{R}}\int_{\mathbb{R}^2}e^{i(s+\lambda)\Phi(\xi,\eta)}\widehat{\chi}(2^l\lambda)\psi_i(\lambda)m_0(\xi,\eta)\varphi_{\geq 1}(\kappa_r^{-1}\Xi(\xi,\eta))\widehat{f_{j_1,k_1}}(\xi-\eta)\widehat{g_{j_2,k_2}}(\eta)\,d\eta d\lambda.
\end{split}
\end{equation*}
Since $\widehat{\chi}$ is rapidly decreasing we have $\|\varphi_k\cdot\mathcal{NR}_2\|_{L^\infty}\lesssim 2^{-4m}$, which gives an acceptable contribution. On the other hand, in the support of the integral defining $\mathcal{NR}_1$, we have that $\vert s+\lambda\vert\approx 2^m$ and integration by parts in $\eta$ (using Lemma \ref{tech5} (i)) gives $\|\varphi_k\cdot\mathcal{NR}_1\|_{L^\infty}\lesssim 2^{-4m}$.

The contribution of $\mathcal{R}=\mathcal{R}_1+\mathcal{R}_2$ is only present if we have a space-time resonance. In particular, in view of Proposition \ref{spaceres11} (iii) (notice that the assumption \eqref{try1.1} is satisfied due to \eqref{BulkCase}) we may assume that
\begin{equation}\label{Alx74.6}
-10\le k,k_1,k_2\le 10,\quad\pm(\sigma,\mu,\nu)=(+,+,+),\quad \big||\xi|-\gamma_1\big|+|\eta-\xi/2|\leq 2^{-\D}.
\end{equation}
Notice that, if $\mathcal{R}(\xi)\neq 0$ then
\begin{equation}\label{EstimPsi}
\begin{split}
\big||\xi|-\gamma_1\big|\lesssim \vert \Phi(\xi,\xi/2)\vert\lesssim\vert \Phi(\xi,\eta)\vert+\vert \Phi(\xi,\eta)-\Phi(\xi, \xi/2)\vert\lesssim 2^l+\kappa_r^2.
\end{split}
\end{equation}
Integration by parts using Lemma \ref{RotIBP} shows that $\|\varphi_k\cdot\mathcal{R}_2\|_{L^\infty}\lesssim 2^{-5m/2}$, which gives an acceptable contribution. To bound the contribution of $\mathcal{R}_1$ we will show that
\begin{equation}\label{Alx81}
2^{\delta m/2}2^{-l}\sup_{|\xi|\approx 1} \big\vert \big(1+2^{m}\big||\xi|-\gamma_1\big|\big)\mathcal{R}_1(\xi)\big\vert\lesssim 2^{9\delta m/10},
\end{equation}
which is stronger than the bound we need in \eqref{SuffNRCCLNewBdPrev}. Indeed for $j$ fixed we estimate
\begin{equation}\label{Alx67.5}
\begin{split}
\sup_{0\leq n\leq j}&2^{(1-50\delta)j}2^{-n/2+49\delta n}\big\|A_{n,\gamma_1}^{(j)}Q_{jk}\mathcal{F}^{-1}\mathcal{R}_1\big\|_{L^2}\\
&\lesssim \sup_{0\leq n\leq j}2^{(1-50\delta)j}2^{-n/2+49\delta n}\big\|\varphi_{-n}^{[-j,0]}(2^{100}||\xi|-\gamma_1|)\mathcal{R}_1(\xi)\big\|_{L^2_\xi}\\
&\lesssim \sum_{n\geq 0}2^{(1-50\delta)j}2^{-n/2-(1/2-49\delta)\min(n,j)}\big\|\varphi_{-n}^{(-\infty,0]}(2^{100}||\xi|-\gamma_1|)\mathcal{R}_1(\xi)\big\|_{L^\infty_\xi},
\end{split}
\end{equation} 
and notice that \eqref{SuffNRCCLNewBdPrev} would follow from \eqref{Alx81} and the assumption $j\leq m+3\D$.

Recall from Lemma \ref{LinEstLem} and \eqref{Alx74.6} (we may assume $f_{j_1,k_1}=f_{j_1,k_1,0}$, $g_{j_2,k_2}=g_{j_2,k_2,0}$) that
\begin{equation}\label{NewBdinput}
\begin{split}
2^{(1/2-\delta')j_1}\Vert \widehat{f_{j_1,k_1}}\Vert_{L^\infty}+2^{(1-\delta')j_1}\sup_{\theta\in\mathbb{S}^1}\Vert \widehat{f_{j_1,k_1}}(r\theta)\Vert_{L^2(rdr)}&\lesssim 1,\\
2^{(1/2-\delta')j_2}\Vert \widehat{g_{j_2,k_2}}\Vert_{L^\infty}+2^{(1-\delta')j_2}\sup_{\theta\in\mathbb{S}^1}\Vert \widehat{g_{j_2,k_2}}(r\theta)\Vert_{L^2(rdr)}&\lesssim 1.
\end{split}
\end{equation}
We ignore first the factor $\chi_{l}(\Phi(\xi,\eta))$. In view of Proposition \ref{spaceres11} (ii) the $\eta$ integration in the definition of $\mathcal{R}_1(\xi)$ takes place essentially over a $\kappa_\theta\times\kappa_r$ box in the neighborhood of $\xi/2$. Using \eqref{EstimPsi} and \eqref{NewBdinput}, and estimating $\Vert \widehat{f_{j_1,k_1}}\Vert_{L^\infty}\lesssim 1$, we have, if $j_2\geq m/2$,
\begin{equation*}
\vert (1+2^{m}||\xi|-\gamma_1|)\mathcal{R}_1(\xi)\vert\lesssim 2^{m}(2^l+\kappa_r^2)2^{-j_2+\delta' j_2}\kappa_\theta\kappa_r^{1/2}\lesssim (2^l+\kappa_r^2)2^{-j_2(1/2-\delta')}2^{2\delta^2m}.
\end{equation*}
On the other hand, if $j_2\leq m/2$ we estimate $\Vert \widehat{f_{j_1,k_1}}\Vert_{L^\infty}+\Vert \widehat{f_{j_2,k_2}}\Vert_{L^\infty}\lesssim 1$ and conclude that
\begin{equation*}
\vert (1+2^{m}||\xi|-\gamma_1|\mathcal{R}_1(\xi)\vert\lesssim 2^{m+l}\kappa_{\theta}\kappa_r\lesssim 2^l2^{2\delta^2m}.
\end{equation*}
The desired bound \eqref{Alx81} follows if $\kappa_r^22^{-l}\leq 2^{j_2/4}$.

Assume now that $\kappa_r^2\ge 2^l2^{j_2/4}$ (in particular $j_2\geq 11m/20$). In this case the restriction $|\Phi(\xi,\eta)|\leq 2^{l}$ is stronger and we have to use it. We decompose, with $p_-:=\lfloor\log_2(2^{l/2}\kappa_r^{-1})+\D\rfloor$,
\begin{equation*}
\begin{split}
\mathcal{R}_1(\xi)&=\sum_{p\in[p_-,0]}\mathcal{R}^p_1(\xi),\\
\mathcal{R}_1^p(\xi)&:=\int_{\mathbb{R}^2}e^{is\Phi(\xi,\eta)}\chi_{l}(\Phi(\xi,\eta))m_0(\xi,\eta)\varphi_p^{[p_-,1]}(\kappa_r^{-1}\Xi(\xi,\eta))\varphi(\kappa_\theta^{-1}\Theta(\xi,\eta))\widehat{f_{j_1,k_1}}(\xi-\eta)\widehat{g_{j_2,k_2}}(\eta)d\eta.
\end{split}
\end{equation*}
As in \eqref{EstimPsi}, notice that if $\mathcal{R}_1^p(\xi)\neq 0$ then $||\xi|-\gamma_1|\lesssim 2^{2p}\kappa_r^2$. The term $\mathcal{R}_1^{p_-}(\xi)$ can be bounded as before. Moreover, using the formula \eqref{cas7.1}, it is easy to see that if $\xi=(s,0)$ is fixed then the set of points $\eta$ that satisfy the three restrictions $|\Phi(\xi,\eta)|\lesssim 2^l$, $|\nabla_\eta\Phi(\xi,\eta)|\approx 2^p\kappa_r$, $|\xi\cdot\eta^\perp|\lesssim \kappa_\theta$ is essentially contained in a union of two $\kappa_\theta\times 2^l2^{-p}\kappa_r^{-1}$ boxes. Using \eqref{NewBdinput}, and estimating $\Vert \widehat{f_{j_1,k_1}}\Vert_{L^\infty}\lesssim 1$, we have
\begin{equation*}
\vert (1+2^{m}||\xi|-\gamma_1|)\mathcal{R}^p_1(\xi)\vert\lesssim 2^{m+2p}\kappa_r^22^{-j_2+\delta'j_2}\kappa_\theta(2^l2^{-p}\kappa_r^{-1})^{1/2}\lesssim 2^{3p/2}2^{-m+4\delta^2m}2^{l/2}2^{j_2/2+\delta' j_2}.
\end{equation*}
This suffices to prove \eqref{Alx81} since $2^p\leq 1$, $2^{-l/2}\leq 2^{m/2}$, and $2^{j_2}\leq 2^{9m/10}$, see \eqref{BulkCase}.

{\bf{Case 2.}} We assume now that
\begin{equation}\label{dr1}
2l\geq \min(k,k_1,k_2,0)-\D.
\end{equation}
In this case we prove the stronger bound \eqref{SuffNRCCLNewBd}. We can still use the standard $L^2\times L^\infty$ argument, with Lemma \ref{PhiLocLem} and Lemma \ref{LinEstLem}, to bound the contributions away from $\gamma_0$. For \eqref{SuffNRCCLNewBd} it remains to prove that
\begin{equation}\label{dr1.1}
2^{-l}2^{(1-50\delta)(m+|k|/2)}\Vert P_kI[A_{\geq 1,\gamma_0}f_{j_1,k_1},A_{\geq 1,\gamma_0}g_{j_2,k_2}]\Vert_{L^2}\lesssim 2^{-\delta m}.
\end{equation}

The bound \eqref{dr1.1} follows if $\max(j_1,j_2)\geq m/3$, using the same $L^2\times L^\infty$ argument. On the other hand, if $j_1,j_2\leq m/3$ then we use \eqref{FLinftybd} and the more precise bound \eqref{LinftyBd2.5} to see that
\begin{equation*}
\|A_{p,\gamma_0}h\|_{L^2}\lesssim 2^{-p/2},\qquad \|e^{-it\Lambda}A_{p,\gamma_0}h\|_{L^\infty}\lesssim 2^{-m+2\delta^2m}\min\big(2^{p/2},2^{m/2-p}\big),
\end{equation*}
where $h\in \{f_{j_1,k_1},g_{j_2,k_2}\}$, $p\geq 1$, and $t\approx 2^m$. Therefore, using Lemma \ref{PhiLocLem},
\begin{equation*}
\Vert P_kI[A_{p_1,\gamma_0}f_{j_1,k_1},A_{p_2,\gamma_0}g_{j_2,k_2}]\Vert_{L^2}\lesssim 2^k2^{-\max(p_1,p_2)/2}\cdot 2^{-m+2\delta^2m}2^{\min(p_1,p_2)/2}.
\end{equation*}
The desired bound \eqref{dr1.1} follows, using also the simple estimate 
\begin{equation*}
\Vert P_kI[A_{p_1,\gamma_0}f_{j_1,k_1},A_{p_2,\gamma_0}g_{j_2,k_2}]\Vert_{L^2}\lesssim 2^k2^{-(p_1+p_2)/2}.
\end{equation*}

{\bf Case 3.} Assume now that
\begin{equation*}
\max(j_1,j_2)\geq 9m/10,\qquad j\leq \min(j_1,j_2)+m/4,\qquad 2l\leq \min(k,k_1,k_2,0)-\D.
\end{equation*}
Using Lemma \ref{Shur2Lem} and \eqref{RadL2} we estimate
\begin{equation}\label{Alx74.2}
\begin{split}
&\Vert P_kI[f_{j_1,k_1,n_1},g_{j_2,k_2,n_2}]\Vert_{L^2}\\
&\lesssim 2^{k/2}2^{30\delta m}2^{l/2-n_1/2-n_2/2}\big\Vert\sup_{\theta\in\mathbb{S}^1}|\widehat{f_{j_1,k_1,n_1}}(r\theta)|\big\Vert_{L^2(rdr)}\big\Vert\sup_{\theta\in\mathbb{S}^1}|\widehat{g_{j_2,k_2,n_2}}(r\theta)|\big\Vert_{L^2(rdr)}\\
&\lesssim 2^{k/2}2^{l/2}2^{-j_1+\delta'j_1}2^{-j_2+\delta'j_2}2^{30\delta m},
\end{split}
\end{equation}
and the desired bound \eqref{SuffNRCCLNewBd} follows. 

{\bf Case 4.} Finally, assume that
\begin{equation}\label{Alx74.3}
j_2\ge 9m/10,\qquad j\geq j_1+m/4,\,\,\qquad 2l\leq \min(k,k_1,k_2,0)-\D.
\end{equation}
In particular, $j_1\leq 7m/8$. We decompose, with $\kappa_\theta=2^{-2m/5}$,
\begin{equation}\label{Alx74.35}
\begin{split}
&I[f_{j_1,k_1},g_{j_2,k_2}]=I_{||}[f_{j_1,k_1},g_{j_2,k_2}]+I_{\perp}[f_{j_1,k_1},g_{j_2,k_2}],\qquad\\
&\widehat{I_{||}[f,g]}(\xi)=\int_{\mathbb{R}^2}e^{is\Phi(\xi,\eta)}\chi_{l}(\Phi(\xi,\eta))\varphi(\kappa_\theta^{-1}\Omega_\eta\Phi(\xi,\eta))\widehat{f}(\xi-\eta)\widehat{g}(\eta)d\eta,\\
&\widehat{I_{\perp}[f,g]}(\xi)=\int_{\mathbb{R}^2}e^{is\Phi(\xi,\eta)}\chi_{l}(\Phi(\xi,\eta))(1-\varphi(\kappa_\theta^{-1}\Omega_\eta\Phi(\xi,\eta)))\widehat{f}(\xi-\eta)\widehat{g}(\eta)d\eta.
\end{split}
\end{equation}
Integration by parts using Lemma \ref{RotIBP} shows that $\big\|\mathcal{F}P_kI_\perp[f_{j_1,k_1},g_{j_2,k_2}]\big\|_{L^\infty}\lesssim 2^{-5m/2}$. In addition, using Schur's test and Proposition \ref{volume} (i), (iii),
\begin{equation*}
\begin{split}
\Vert P_kI_{||}[f_{j_1,k_1},g_{j_2,k_2,n_2}]\Vert_{L^2}&\lesssim 2^{80\delta m}2^{l}\kappa_\theta^{1/2}\Vert\widehat{f_{j_1,k_1}}\Vert_{L^\infty}\Vert \widehat{g_{j_2,k_2,n_2}}\Vert_{L^2}\lesssim 2^{95\delta m}2^{l-m/5}2^{-(1-50\delta)j_2}2^{n_2/2},
\end{split}
\end{equation*}
which gives an acceptable contribution if $n_2\leq \D$. 

It remains to estimate the contribution of $I_{||}[f_{j_1,k_1},g_{j_2,k_2,n_2}]$ for $n_2\geq\D$. Since $|\eta|$ is close to $\gamma_1$ and $|\Phi(\xi,\eta)|$ is sufficiently small (see \eqref{Alx74.3}), it follows from \eqref{try5.5} that $\min(k,k_1,k_2)\geq -40$; moreover, the vectors $\xi$ and $\eta$ are almost aligned and $|\Phi(\xi,\eta)|$ is small, so we may also assume that $\max(k,k_1,k_2)\leq 100$. Moreover, $\vert\nabla_\eta\Phi(\xi,\eta)\vert\gtrsim 1$ in the support of integration of $I_{||}[f_{j_1,k_1},g_{j_2,k_2,n_2}]$, in view of Proposition \ref{spaceres11} (iii). Integration by parts in $\eta$ using Lemma \ref{tech5} (i) then gives an acceptable contribution unless $j_2\ge(1-\delta^2)m$. We may also reset $\kappa_\theta=2^{\delta^2m-m/2}$, up to small errors, using Lemma \ref{RotIBP}. 

To summarize, we may assume that
\begin{equation}\label{Alx74.4}
j_2\ge(1-\delta^2)m,\quad j\geq j_1+m/4,\quad k,k_1,k_2\in[-100,100],\quad n_2\geq \D,\quad \kappa_\theta=2^{\delta^2m-m/2}.
\end{equation}
We decompose, with $p_-:=\lfloor l/2\rfloor$,
\begin{equation*}
\begin{split}
I_{||}[f_{j_1,k_1},&g_{j_2,k_2,n_2}]=\sum_{p_-\leq p\le \D}I^{p}_{||}[f_{j_1,k_1},g_{j_2,k_2,n_2}],\\
\widehat{I^{p}_{||}[f,g]}(\xi)&:=\int_{\mathbb{R}^2}e^{is\Phi(\xi,\eta)}\chi_{l}(\Phi(\xi,\eta))\varphi(\kappa_\theta^{-1}\Theta(\xi,\eta))\varphi_{p}^{[p_-,\D]}(\nabla_\xi\Phi(\xi,\eta))\widehat{f}(\xi-\eta)\widehat{g}(\eta)d\eta.
\end{split}
\end{equation*}
It suffices to prove that, for any $p$,
\begin{equation}\label{Alx74.45}
2^{-l}2^{(1-50\delta)j}\big\Vert Q_{jk}I^p_{||}[f_{j_1,k_1},g_{j_2,k_2,n_2}]\big\Vert_{L^2}\lesssim 2^{-\delta m}.
\end{equation}

As a consequence of Proposition \ref{volume} (iii), under our assumptions in \eqref{Alx74.4} and recalling that $\vert\nabla_\eta\Phi(\xi,\eta)\vert\gtrsim 1$ in the support of the integral,
\begin{equation*}
\sup_\xi\int_{\mathbb{R}^2}|\chi_{l}(\Phi(\xi,\eta))|\varphi(\kappa_\theta^{-1}\Theta(\xi,\eta))\varphi_{\leq -\D/2}(|\eta|-\gamma_1)\mathbf{1}_{\D_{k,k_1,k_2}}(\xi,\eta)d\eta\lesssim 2^{\delta^2m}2^{l}\kappa_\theta,
\end{equation*}
and, for any $p\geq p_-$,
\begin{equation*}
\sup_\eta\int_{\mathbb{R}^2}|\chi_{l}(\Phi(\xi,\eta))|\varphi(\kappa_\theta^{-1}\Theta(\xi,\eta))\varphi_{p}(\nabla_\xi\Phi(\xi,\eta))\varphi_{\leq -\D/2}(|\eta|-\gamma_1)\mathbf{1}_{\D_{k,k_1,k_2}}(\xi,\eta)d\xi\lesssim 2^{\delta^2m}2^{l-p}\kappa_\theta.
\end{equation*}
Using Schur's test we can then estimate, for $p\geq p_-$
\begin{equation*}
\begin{split}
\Vert P_kI^{p}_{||}[f_{j_1,k_1},g_{j_2,k_2,n_2}]\Vert_{L^2}&\lesssim 2^{-p/2}2^l2^{-m/2+4\delta^2m}\Vert \widehat{f_{j_1,k_1}}\Vert_{L^\infty}\Vert g_{j_2,k_2,n_2}\Vert_{L^2}\lesssim 2^{-p/2}2^l2^{-m+5\delta m}.
\end{split}
\end{equation*}

The desired bound \eqref{Alx74.4} follows if $j\leq m+p+4\delta m$. On the other hand, if $j\geq m+p+4\delta m$ then we use the approximate finite speed of propagation argument to show that
\begin{equation}\label{Alx74.9}
\Vert Q_{jk}I^{p}_{||}[f_{j_1,k_1},g_{j_2,k_2,n_2}]\Vert_{L^2}\lesssim 2^{-3m}.
\end{equation}
Indeed, we write, as in Lemma \ref{PhiLocLem}, $\chi_{l}(\Phi(\xi,\eta))=c2^l\int_{\mathbb{R}}\widehat{\chi}(2^l\rho) e^{i\rho\Phi(\xi,\eta)}\,d\rho$ and notice that $\big|\nabla_\xi[x\cdot\xi+(s+\rho)\Phi(\xi,\eta)]\big|\approx 2^j$ in the support of the integral, provided that $|x|\approx 2^j$ and $|\rho|\leq 2^m$. Then we recall that $j\geq j_1+m/4$, see \eqref{Alx74.4}, and use Lemma \ref{tech5} (i) to prove \eqref{Alx74.9}. This completes the proof of Lemma \ref{StronglyResLem}.

\subsection{The case of weakly resonant interactions} In this subsection we prove Lemma \ref{lLargeBBound}. We decompose $P_{k_{2}}\partial_{s}f^{\nu}$ as in \eqref{Brc4} and notice that the contribution of the error term can be estimated using the $L^2\times L^\infty$ argument as before.

To estimate the contributions of the terms $A_{k_{2};k_{3},j_{3};k_{4},j_{4}}^{a_3,\alpha_3;a_4,\alpha_4}$ we need more careful analysis of trilinear operators. With $\widetilde{\Phi}(\xi,\eta,\sigma)=\Lambda(\xi)-\Lambda_\mu(\xi-\eta)-\Lambda_{\beta}(\eta-\sigma)-\Lambda_{\gamma}(\sigma)$ and $p\in\mathbb{Z}$ we define the trilinear operators $\mathcal{J}_{l,p}$ by
\begin{equation}\label{hu1}
\begin{split}
\widehat{\mathcal{J}_{l,p}[f,g,h]}(\xi,s):=\int_{\mathbb{R}^2\times\mathbb{R}^2}e^{is\widetilde{\Phi}(\xi,\eta,\sigma)}&\widehat{f}(\xi-\eta)2^{-l}\widetilde{\varphi}_{l}(\Phi_{+\mu\nu}(\xi,\eta))\varphi_p(\widetilde{\Phi}(\xi,\eta,\sigma))\\
&\times\varphi_{k_2}(\eta)\mathfrak{m}_{\mu\nu}(\xi,\eta)\mathfrak{m}_{\nu\beta\gamma}(\eta,\sigma)\widehat{g}(\eta-\sigma)\widehat{h}(\sigma)\,d\sigma d\eta.
\end{split} 
\end{equation}
Let $\mathcal{J}_{l,\leq p}=\sum_{q\leq p}\mathcal{J}_{l,q}$ and $\mathcal{J}_{l}=\sum_{q\in\mathbb{Z}}\mathcal{J}_{l,q}$. Let
\begin{equation}\label{hu1.5}
\mathcal{C}_{l,p}[f,g,h]:=\int_{\mathbb{R}}q_m(s)\mathcal{J}_{l,p}[f,g,h](s)\,ds,\qquad \mathcal{C}_{l,\leq p}:=\sum_{q\leq p}\mathcal{C}_{l,q},\qquad \mathcal{C}_{l}=\sum_{q\in\mathbb{Z}}\mathcal{C}_{l,q}.
\end{equation}

Notice that
\begin{equation}\label{hu2}
\mathcal{B}_{m,l}[f^\mu_{j_1,k_1},A^{a_3,\alpha_3;a_4,\alpha_4}_{k_2;k_3,j_3;k_4,j_4}]=\mathcal{C}_l[f^\mu_{j_1,k_1},f^\beta_{j_3,k_3},f^\gamma_{j_4,k_4}].
\end{equation}
To prove the lemma it suffices to show that
\begin{equation}\label{hu3}
2^{(1-50\delta)j}\big\| Q_{jk}\mathcal{C}_l[f^\mu_{j_1,k_1},f^\beta_{j_3,k_3},f^\gamma_{j_4,k_4}]\big\|_{L^2}\lesssim 2^{-3\delta^2m}
\end{equation}
provided that 
\begin{equation}\label{hu4}
\begin{split}
&k,k_1,k_2\in[-3.5m/N'_0,3.2m/N'_0],\quad j\leq m+2\D+\max(|k|,|k_1|,|k_2|)/2,\\
&l\geq -m/14,\quad m\geq \D^2/8,\quad k_2,k_3,k_4\leq m/N'_0,\quad [(k_3,j_3),(k_4,j_4)]\in X_{m,k_2}.
\end{split}
\end{equation}

The bound \eqref{kj7.45} and the same argument as in the proof of Lemma \ref{PhiLocLem} show that
\begin{equation}\label{hu5}
\begin{split}
\big\|P_k\mathcal{J}_{l,\leq p}[f,g,h](s)\big\|_{L^2}\lesssim &2^{(k+k_1+k_2)/2}2^{(k_2+k_3+k_4)/2}2^{-l}\min\big\{
|f|_\infty |g|_2|h|_\infty, |f|_\infty |g|_\infty |h|_2,\\
&(1+2^{-l+2\delta^2m+3\max(k_2,0)/2})|f|_2|g|_\infty |h|_\infty\big\}+2^{-10m}|f|_2|g|_2|h|_2,
\end{split}
\end{equation}
provided that $s\in I_m$, $2^{-p}+2^{-l}\leq 2^{m-2\delta^2m}$, $f=P_{[k_1-8,k_1+8]}f$, $g=P_{[k_3-8,k_3+8]}g$, $h=P_{[k_4-8,k_4+8]}h$, and, for $F\in\{f,g,h\}$,
\begin{equation}\label{hu6}
|F|_q:=\sup_{|t|\in[2^{m-4},2^{m+4}]}\|e^{it\Lambda}F\|_{L^q}.
\end{equation} 

In particular, the bounds \eqref{hu5} and \eqref{LinftyBd3} show that
\begin{equation*}
2^{(1-50\delta)j}\big\| Q_{jk}\mathcal{C}_l[f^\mu_{j_1,k_1},f^\beta_{j_3,k_3},f^\gamma_{j_4,k_4}]\big\|_{L^2}\lesssim 2^{-\delta m}
\end{equation*}
provided that $\max(j_1,j_3,j_4)\geq 20m/21$. Therefore, it remains to prove \eqref{hu3} when
\begin{equation}\label{hu7}
\max(j_1,j_3,j_4)\leq 20m/21.
\end{equation}

{\bf{Step 1.}} We consider first the contributions of $\mathcal{C}_{l,p}[f^\mu_{j_1,k_1},f^\beta_{j_3,k_3},f^\gamma_{j_4,k_4}]$ for $p\geq -11m/21$. In this case we integrate by parts in $s$ and rewrite
\begin{equation*}
\begin{split}
\mathcal{C}_{l,p}&[f^\mu_{j_1,k_1},f^\beta_{j_3,k_3},f^\gamma_{j_4,k_4}]=i2^{-p}\Big\{\int_{\mathbb{R}}q'_m(s)\widetilde{\mathcal{J}}_{l,p}[f^\mu_{j_1,k_1},f^\beta_{j_3,k_3},f^\gamma_{j_4,k_4}](s)\,ds\\
&+\widetilde{\mathcal{C}}_{l,p}[\partial_sf^\mu_{j_1,k_1},f^\beta_{j_3,k_3},f^\gamma_{j_4,k_4}]+\widetilde{\mathcal{C}}_{l,p}[f^\mu_{j_1,k_1},\partial_sf^\beta_{j_3,k_3},f^\gamma_{j_4,k_4}]+\widetilde{\mathcal{C}}_{l,p}[f^\mu_{j_1,k_1},f^\beta_{j_3,k_3},\partial_sf^\gamma_{j_4,k_4}]\Big\},
\end{split}
\end{equation*}
where the operators $\widetilde{\mathcal{J}}_{l,p}$ and $\widetilde{\mathcal{C}}_{l,p}$ are defined in the same way as the operators $\mathcal{J}_{l,p}$ and $\mathcal{C}_{l,p}$, but with $\varphi_p(\widetilde{\Phi}(\xi,\eta,\sigma))$ replaced by $\widetilde{\varphi}_p(\widetilde{\Phi}(\xi,\eta,\sigma))$, $\widetilde{\varphi}_p(x)=2^px^{-1}\varphi_p(x)$, (see the formula \eqref{hu1}). The operator $\widetilde{\mathcal{J}}_{l,p}$ also satisfies the $L^2$ bound \eqref{hu5}. Recall the $L^2$ bounds \eqref{vd7} on $\partial_s P_{k'}f_\sigma$. Using \eqref{hu5} (with $\partial_s P_{k'}f_\sigma$ always placed in $L^2$, notice that $2^{-2l}\leq 2^{m/7}$), it follows that
\begin{equation*}
\sum_{p\geq -11m/21}2^{(1-50\delta)j}\big\|P_k\mathcal{C}_{l,p}[f^\mu_{j_1,k_1},f^\beta_{j_3,k_3},f^\gamma_{j_4,k_4}]\big\|_{L^2}\lesssim 2^{-3\delta^2m}.
\end{equation*}

{\bf{Step 2.}} For \eqref{hu3} it remains to prove that
\begin{equation}\label{hu20}
2^{(1-50\delta)j}\big\| Q_{jk}\mathcal{C}_{l,\leq -11m/21}[f^\mu_{j_1,k_1},f^\beta_{j_3,k_3},f^\gamma_{j_4,k_4}]\big\|_{L^2}\lesssim 2^{-3\delta^2m}.
\end{equation}
Since $\max(j_1,j_3,j_4)\leq 20 m/21$, see \eqref{hu7}, we have the pointwise approximate identity
\begin{equation}\label{hu20.5}
\begin{split}
P_k&\mathcal{C}_{l,\leq -11m/21}[f^\mu_{j_1,k_1},f^\beta_{j_3,k_3},f^\gamma_{j_4,k_4}]\\
&=P_k\mathcal{C}_{l,\leq -11m/21}[A_{\geq\D_1,\gamma_0}f^\mu_{j_1,k_1},A_{\geq\D_1-10,\gamma_0}f^\beta_{j_3,k_3},A_{\geq\D_1-20,\gamma_0}f^\gamma_{j_4,k_4}]\\
&+P_k\mathcal{C}_{l,\leq -11m/21}[A_{<\D_1,\gamma_0}f^\mu_{j_1,k_1},A_{\leq\D_1+10,\gamma_0}f^\beta_{j_3,k_3},A_{\leq\D_1+20,\gamma_0}f^\gamma_{j_4,k_4}]+O(2^{-4m}),
\end{split}
\end{equation}
where $\D_1$ is the large constant used in section \ref{phacolle}. This is a consequence of Lemma \ref{tech5} (i) and the observation that $|\nabla_{\eta,\sigma}\widetilde{\Phi}(\xi,\eta,\sigma)|\gtrsim 1$ in the other cases. Letting $g_1=A_{\geq\D_1,\gamma_0}f^\mu_{j_1,k_1}$, $g_3=A_{\geq\D_1-10,\gamma_0}f^\beta_{j_3,k_3}$, $g_4= A_{\geq\D_1-20,\gamma_0}f^\gamma_{j_4,k_4}$, $h_1=A_{<\D_1,\gamma_0}f^\mu_{j_1,k_1}$, $h_3=A_{\leq\D_1+10,\gamma_0}f^\beta_{j_3,k_3}$, $h_4=A_{\leq\D_1+20,\gamma_0}f^\gamma_{j_4,k_4}$, it remains to prove that
\begin{equation}\label{hu21}
2^{(1-50\delta)j}\big\| Q_{jk}\mathcal{C}_{l,\leq -11m/21}[g_1,g_3,g_4]\big\|_{L^2}\lesssim 2^{-3\delta^2m}.
\end{equation}
and
\begin{equation}\label{hu22}
2^{(1-50\delta)j}\big\| Q_{jk}\mathcal{C}_{l,\leq -11m/21}[h_1,h_3,h_4]\big\|_{L^2}\lesssim 2^{-3\delta^2m}.
\end{equation}

{\bf{Proof of \eqref{hu21}}.} We use Lemma \ref{cubicphase} (i). If $l\leq -4m/N'_0$ then $|\nabla_{\eta,\sigma}\widetilde{\Phi}(\xi,\eta,\sigma)|\gtrsim 1$ in the support of the integral (due to \eqref{ub2}) and the contribution is negligible (due to Lemma \ref{tech5} (i) and \eqref{hu7}). On the other hand, if
\begin{equation}\label{hu23}
l\geq -4m/N'_0\,\,\,\text{ and }\,\,\,j\leq 2m/3+\max(j_1,j_3,j_4)
\end{equation}
then we apply \eqref{hu5}. The left hand side of \eqref{hu21} is dominated by
\begin{equation*}
C2^{(1-50\delta)j}2^m(1+2^{-2l})2^{-5m/3+8\delta^2m}2^{-\max(j_1,j_3,j_4)(1-50\delta)}\lesssim 2^{-10\delta},
\end{equation*}
as we notice that $\max(k,k_1,k_2,k_3,k_4)\leq 20$. This suffices to prove \eqref{hu21} in this case.

Finally, if 
\begin{equation}\label{hu24}
l\geq -4m/N'_0\,\,\,\text{ and }\,\,\,j\geq 2m/3+\max(j_1,j_3,j_4)
\end{equation}
then $\max(j_1,j_3,j_4)\leq m/3+10\delta m$ and $j\geq 2m/3$. We define the localized trilinear operators 
\begin{equation}\label{hu30}
\begin{split}
\mathcal{F}\{\mathcal{J}_{l,\leq p,\kappa}[f,g,h]\}(\xi,s):=\int_{\mathbb{R}^2\times\mathbb{R}^2}e^{is\widetilde{\Phi}(\xi,\eta,\sigma)}\widehat{f}(\xi-\eta)2^{-l}\widetilde{\varphi}_{l}(\Phi_{+\mu\nu}(\xi,\eta))\varphi_{\leq p}(\widetilde{\Phi}(\xi,\eta,\sigma))\\
\times\varphi(\kappa^{-1}\nabla_{\eta,\sigma}\widetilde{\Phi}(\xi,\eta,\sigma))\varphi_{k_2}(\eta)\mathfrak{m}_{\mu\nu}(\xi,\eta)\mathfrak{m}_{\nu\beta\gamma}(\eta,\sigma)\widehat{g}(\eta-\sigma)\widehat{h}(\sigma)\,d\sigma d\eta,
\end{split} 
\end{equation}
which are similar to the trilinear operators defined in \eqref{hu1} with the additional cutoff factor in  $\nabla_{\eta,\sigma}\widetilde{\Phi}(\xi,\eta,\sigma)$ and $p=-11m/21$. Set $\kappa:=2^{-m/2+\delta^2m}$ and notice that 
\begin{equation*}
\|\mathcal{F}\{\mathcal{J}_{l,\leq -11m/21}[g_1,g_3,g_4]-\mathcal{J}_{l,\leq -11m/21,\kappa}[g_1,g_3,g_4]\}\|_{L^\infty}\lesssim 2^{-6m},
\end{equation*}
as a consequence of Lemma \ref{tech5} (i). Moreover, $|\nabla_\xi\widetilde{\Phi}(\xi,\eta,\sigma)|\lesssim 2^{2p/3}\approx 2^{-22m/63}$ in the support of the integral defining $\mathcal{J}_{l,\leq -11m/21,\kappa}[g_1,g_3,g_4]$, due to Lemma \ref{cubicphase} (i). Therefore, using the approximate finite speed of propagation of argument (integration by parts in $\xi$),
\begin{equation*}
\|Q_{jk}\mathcal{J}_{l,\leq -11m/21,\kappa}[g_1,g_3,g_4]\|_{L^\infty}\lesssim 2^{-6m}.
\end{equation*}
The desired bound \eqref{hu21} follows in this case as well (in fact, one has rapid decay if \eqref{hu24} holds).

{\bf{Proof of \eqref{hu22}}.} The desired estimate follows from \eqref{hu5} and the dispersive bounds \eqref{LinftyBd2}--\eqref{LinftyBd2.5} if $\max(j_1,j_3,j_4)\geq m/3$ or if $j\leq 2m/3$ or if $l\geq -10\delta m$. Assume that
\begin{equation}\label{hu33}
\max(j_1,j_3,j_4)\leq m/3,\qquad j\geq 2m/3,\qquad l\leq -10\delta m.
\end{equation}
As before, we may replace $\mathcal{J}_{l,\leq -11m/21}[h_1,h_3,h_4]$ with $\mathcal{J}_{l,\leq -11m/21,\kappa}[h_1,h_3,h_4]$, at the expense of a small error, where $\kappa=2^{-m/2+20\delta m}$. Moreover, $|\nabla_\xi\widetilde{\Phi}(\xi,\eta,\sigma)|\lesssim \kappa$ in the support of the integral defining $\mathcal{J}_{l,\leq -11m/21,\kappa}[h_1,h_3,h_4]$, due to Lemma \ref{cubicphase} (ii). The approximate finite speed of propagation of argument (integration by parts in $\xi$) then gives rapid decay in the case when \eqref{hu33} holds. This completes the proof.

\subsection{The case of strongly resonant interactions, II} In this subsection we prove Lemma \ref{lsmallBound}. Let $\overline{k}:=\max(k,k_1,k_2,0)$. It suffices to prove the lemma in the case
\begin{equation}\label{tla1}
k,k_1,k_2\in[-\overline{k}-20,\overline{k}],\quad j\leq m+3\D+\overline{k}/2,\quad \overline{k}\leq 7m/(6N'_0),\quad l_-<l\leq -m/14.
\end{equation}
Indeed, we can assume that $k,k_1,k_2\geq -\overline{k}-20$, since otherwise the operator is trivial (due to \eqref{try5.5}). Moreover, if $\max(k_1,k_2)\geq 7m/(6N'_0)-10$ then the $L^2\times L^\infty$ argument (with Lemma \ref{PhiLocLem}) easily gives the desired conclusion due to the assumption \eqref{ni3.11}.

We define (compare with the definition of the operators $T_{m,l}$ in \eqref{TML})
\begin{equation*}
\widehat{T_{m,l}^{\parallel}[f,g]}(\xi)=\int_{\mathbb{R}}q_m(s)\int_{\mathbb{R}^2}e^{is\Phi(\xi,\eta)}\varphi(\kappa_{\theta}^{-1}\Theta(\xi,\eta))\varphi_l(\Phi(\xi,\eta))m_0(\xi,\eta)\widehat{f}(\xi-\eta,s)\widehat{g}(\eta,s)d\eta ds,
\end{equation*} 
where $\kappa_{\theta}:=2^{-m/2+6\overline{k}+\delta^2m}$. Let $T_{m,l}^{\perp}=T_{m,l}-T_{m,l}^{\parallel}$, and define $\mathcal{A}_{m,l}^{\parallel}$ and $\mathcal{B}_{m,l}^{\parallel}$ similarly, by inserting the factor $\varphi(\kappa_{\theta}^{-1}\Theta(\xi,\eta))$ in the integrals in \eqref{Alx41}. We notice that \[T_{m,l}^{\parallel}[P_{k_1}f^\mu,P_{k_2}f^\nu]=i\mathcal{A}_{m,l}^{\parallel}[P_{k_1}f^\mu,P_{k_2}f^\nu]+i\mathcal{B}_{m,l}^{\parallel}[P_{k_1}\partial_sf^\mu,P_{k_2}f^\nu]+i\mathcal{B}_{m,l}^{\parallel}[P_{k_1}f^\mu,P_{k_2}\partial_sf^\nu].\] It remains to prove that for any $j_1,j_2$
\begin{equation}\label{choice01}
2^{(1-50\delta)j}\Vert Q_{jk}T_{m,l}^{\perp}[f_{j_{1},k_{1}}^\mu,f_{j_{2},k_{2}}^\nu]\Vert_{L^2}\lesssim 2^{-3\delta^2m},
\end{equation} 
\begin{equation}\label{choice01.5}
\Vert Q_{jk}\mathcal{A}_{m,l}^{\parallel}[f_{j_{1},k_{1}}^\mu,f_{j_{2},k_{2}}^\nu]\Vert_{B_j}\lesssim 2^{-3\delta^2m},
\end{equation} 
and
\begin{equation}\label{choice02}
\Vert Q_{jk}\mathcal{B}_{m,l}^{\parallel}[f_{j_{1},k_{1}}^\mu,\partial_{s}P_{k_{2}}f^{\nu}]\Vert_{B_j}\lesssim 2^{-3\delta^2m}.
\end{equation} 

{\bf{Proof of \eqref{choice01}.}} We may assume that $\min(j_{1},j_{2})\geq m-2\overline{k}-\delta^2m$, otherwise the conclusion follows from Lemma \ref{RotIBP}. We decompose $f_{j_{1},k_{1}}^\mu=\sum_{n_1=0}^{j_1+1}f_{j_1,k_1,n_1}$, $f_{j_{2},k_2}^\nu=\sum_{n_2=0}^{j_j+1}f_{j_2,k_2,n_2}$ and estimate, using Lemma \ref{Shur2Lem}, and \eqref{RadL2}
\begin{equation*}
\begin{split}
\big\| P_kT_{m,l}^{\perp}&[f_{j_{1},k_{1},n_1},f_{j_{2},k_{2},n_2}]\big\|_{L^2}\\
&\lesssim 2^{2\overline{k}}2^m2^{l/2-n_1/2-n_2/2}\big\Vert\sup_{\theta\in\mathbb{S}^1}|\widehat{f_{j_1,k_1,n_1}}(r\theta)|\big\Vert_{L^2(rdr)}\big\Vert\sup_{\theta\in\mathbb{S}^1}|\widehat{f_{j_2,k_2,n_2}}(r\theta)|\big\Vert_{L^2(rdr)}\\
&\lesssim 2^{2\overline{k}}2^m2^{l/2}2^{6\delta^2m}2^{-j_1+51\delta j_1}2^{-j_2+51\delta j_2}.
\end{split}
\end{equation*}
Therefore, using also \eqref{tla1}, the left-hand side of \eqref{choice01} is dominated by
\begin{equation*}
2^{(1-50\delta)j}\cdot 2^{6\delta^2m}2^{2\overline{k}}2^m2^{l/2}2^{-j_1+51\delta j_1}2^{-j_2+51\delta j_2}\lesssim 2^{8\overline{k}}2^{l/2}2^{54\delta m}.
\end{equation*}
This suffices to prove the desired bound, since $2^{l/2}\lesssim 2^{-m/28}$ and $2^{8\overline{k}}2^{54\delta m}\lesssim 2^{64\delta m}\lesssim 2^{m/30}$.

{\bf{Proof of \eqref{choice01.5}.}} In view of Lemma \ref{StronglyResLem}, it suffices to prove that
\begin{equation*}
2^{(1-50\delta)j}\Vert Q_{jk}\mathcal{A}_{m,l}^{\perp}[f_{j_{1},k_{1}}^\mu,f_{j_{2},k_{2}}^\nu]\Vert_{L^2}\lesssim 2^{-3\delta^2m}.
\end{equation*}
This is similar to the proof of \eqref{choice01} above, using Lemma \ref{Shur2Lem} and \eqref{RadL2}.

{\bf{Proof of \eqref{choice02}.}} This is the more difficult estimate, where we need to use the more precise information in Lemma \ref{dtfLem2}. We may assume $j_1\leq 3m$, since in the case $j_1\geq 3m$ we can simply estimate $\|\widehat{f_{j_{1},k_{1}}^\mu}\|_{L^1}\lesssim 2^{-j_1+51\delta j_1}$ (see \eqref{FL1bd}) and the desired estimate follows easily. We decompose $\partial_{s}P_{k_{2}}f^{\nu}$ as in \eqref{Brc4}, and then we decompose $A_{k_2;k_3,j_3;k_4,j_4}^{a_3,\alpha_3;a_4,\alpha_4}=\sum_{i=1}^3 A_{k_2;k_3,j_3;k_4,j_4}^{a_3,\alpha_3;a_4,\alpha_4;[i]}$ as in \eqref{Ddecomposition}. Notice that since $k_2\geq -3m/(2N'_0)$ (see \eqref{tla1}), it follows from Lemma \ref{dtfLem1} (ii) (2) that $\min(k_2,k_3,k_4)\geq -2m/N'_0$, so Lemma \ref{dtfLem2} applies. It remains to prove that
\begin{equation}\label{tla2}
\Vert Q_{jk}\mathcal{B}_{m,l}^{\parallel}[f_{j_{1},k_{1}}^\mu,P_{k_{2}}E_{\nu}^{a_2,\alpha_2}]\Vert_{B_j}\lesssim 2^{-4\delta^2m},
\end{equation}
and, for any $[(k_3,j_3),(k_4,j_4)]\in X_{m,k_2}$, $i\in\{1,2,3\}$,
\begin{equation}\label{tla3}
\Vert Q_{jk}\mathcal{B}_{m,l}^{\parallel}[f_{j_{1},k_{1}}^\mu,A_{k_2;k_3,j_3;k_4,j_4}^{a_3,\alpha_3;a_4,\alpha_4;[i]}]\Vert_{B_j}\lesssim 2^{-4\delta^2m}.
\end{equation}
These bounds follow from Lemmas \ref{Econtr}, \ref{G1contr}, and \ref{G2contr} below. Recall the definition
\begin{equation}\label{tla30.4}
\widehat{\mathcal{B}_{m,l}^{\parallel}[f,g]}(\xi)=\int_{\mathbb{R}}q_m(s)\int_{\mathbb{R}^2}e^{is\Phi(\xi,\eta)}\varphi(\kappa_{\theta}^{-1}\Theta(\xi,\eta))2^{-l}\widetilde{\varphi}_l(\Phi(\xi,\eta))m_0(\xi,\eta)\widehat{f}(\xi-\eta,s)\widehat{g}(\eta,s)d\eta ds.
\end{equation}

\begin{lemma}\label{Econtr}
Assume that \eqref{tla1} holds and $\kappa_\theta=2^{-m/2+6\overline{k}+\delta^2m}$. Then
\begin{equation}\label{tla20}
\Vert Q_{jk}\mathcal{B}_{m,l}^{\parallel}[f_{j_{1},k_{1}}^\mu,h]\Vert_{B_j}\lesssim 2^{-4\delta^2m},
\end{equation}
provided that, for any $s\in I_m$
\begin{equation}\label{tla21}
h(s)=P_{[k_2-2,k_2+2]}h(s),\qquad \|h(s)\|_{L^2}\lesssim 2^{-3m/2+35\delta m-22\overline{k}}.
\end{equation}
\end{lemma}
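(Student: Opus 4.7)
\medskip

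\noindent\textbf{Plan for Lemma \ref{Econtr}.} The plan is to exploit the very strong $L^2$ smallness $\|h(s)\|_{L^2} \lesssim 2^{-3m/2+35\delta m - 22\overline{k}}$, which is roughly three powers of $2^m$ better than the generic $L^2$ bound on a profile, together with the phase cutoff $\varphi_l(\Phi)$ and the angular cutoff $\varphi(\kappa_\theta^{-1}\Theta)$ with $\kappa_\theta = 2^{-m/2+6\overline{k}+\delta^2 m}$. Since $h$ is controlled only in $L^2$, the strategy is an $L^\infty_x \times L^2_x$ bilinear estimate: put $h$ in $L^2$ and dispersively control $e^{-i(s+\rho)\Lambda_\mu}f^\mu_{j_1,k_1}$ in $L^\infty$ via Lemma \ref{LinEstLem}, converting the modulation cutoff into a superposition of time shifts via Lemma \ref{PhiLocLem}.

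\medskip

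\noindent The proof splits naturally according to the size of $j_1$. When $j_1 \leq m-\delta m$, we apply Lemma \ref{PhiLocLem} and use the dispersive bounds \eqref{LinftyBd3.5} (near $\gamma_0$) or \eqref{LinftyBd1.1} (away from $\gamma_0$), which give $\sup_{|\rho|\leq 2^{-l+\delta^2 m}} \|e^{-i(s+\rho)\Lambda_\mu}f^\mu_{j_1,k_1}\|_{L^\infty} \lesssim 2^{-m+O(\delta m)}$. The extra gain needed to close the estimate comes from the angular cutoff $\varphi(\kappa_\theta^{-1}\Theta)$: for fixed $\xi$ the condition $|\Theta(\xi,\eta)| \lesssim \kappa_\theta$ restricts $\eta$ to a thin angular sector near the direction of $\xi$, so combining with the modulation restriction $|\Phi| \lesssim 2^l$ and the volume bounds from Proposition \ref{volume}(iii) contributes an additional factor $(\kappa_\theta)^{1/2}$ (or analogous via Cauchy--Schwarz in the restricted $\eta$-support). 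When $j_1 \geq m-\delta m$, the profile $f^\mu_{j_1,k_1}$ is too concentrated in physical space for the dispersive bound to suffice, and we instead use $\|f^\mu_{j_1,k_1}\|_{L^2} \lesssim 2^{-(1-50\delta)j_1+\delta^2 m}$ together with the Fourier $L^\infty$ bound \eqref{FLinftybd} on $\widehat{f^\mu_{j_1,k_1}}$, combined with Schur's test and the volume estimate from Proposition \ref{volume}(iii) for the sublevel set $\{|\Phi|\lesssim 2^l, |\Theta|\lesssim \kappa_\theta\}$.

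\medskip

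\noindent To pass from $L^2$ control to the $B_j$ norm, I will use the factor $2^{(1-50\delta)j}$ in \eqref{znorm2} with $j \leq m + 3\D + \overline{k}/2$, matched against the product of gains above. In the regime where $j$ is close to its maximum, we will need to invoke the approximate finite speed of propagation via integration by parts in $\xi$ (Lemma \ref{tech5}(i)), as in the proof of \eqref{Alx26.5}, exploiting that the phase $\xi \mapsto x\cdot\xi + s\Phi(\xi,\eta)$ has gradient of size $\approx 2^j$ on the support of $\phii_j^{(k)}$ while $|s\nabla_\xi \Phi| \lesssim 2^m$; this gives rapid decay as soon as $2^j \gtrsim 2^{m+\D+\overline{k}/2}$, confining attention to the range $j \leq m+O(\overline{k})+\D$. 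For the $A^{(j)}_{n,\gamma_1}$ refinement, we use that the $n$-weight $2^{-(1/2-49\delta)n}$ in \eqref{znorm2} balances against the $2^{-n/2}$ gain obtained from Cauchy--Schwarz on the annular Fourier support, so that the sup over $n$ essentially reduces to estimating the unweighted $L^2$ norm of $Q_{jk}\mathcal{B}^{\parallel}_{m,l}[f^\mu_{j_1,k_1},h]$.

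\medskip

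\noindent\textbf{Main obstacle.} The delicate point is that $2^{-l}$ can be as large as $2^{(1-\delta/2)m}$ when $l$ is near $l_-$, so the prefactor from the modulation cutoff is competitive with the full time integration $2^m$, and the margins are tight. The dispersive gain $2^{-m}$ for $f^\mu_{j_1,k_1}$, the $L^2$ decay $2^{-3m/2+O(\delta m)-22\overline{k}}$ for $h$, and the $\kappa_\theta^{1/2} = 2^{-m/4+3\overline{k}+O(\delta m)}$ gain from the angular cutoff together must precisely offset $2^m \cdot 2^{-l}$ and the $B_j$ normalization $2^{(1-50\delta)(m+\overline{k}/2)}$, with the large factor $2^{-22\overline{k}}$ in the hypothesis \eqref{tla21} providing the decisive room to absorb the growth in $\overline{k}$. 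The dependence on $\overline{k}$ in $\kappa_\theta$, which degrades the angular gain, is the reason the hypothesis on $h$ requires the strong $2^{-22\overline{k}}$ decay rather than a uniform $L^2$ bound.
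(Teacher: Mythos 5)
Your proposal does not take the paper's route, and I believe it has a genuine gap that would prevent the estimate from closing when $l$ is near the bottom of its range. The paper's proof (Lemma \ref{Econtr}) is structured around (a) a further decomposition of $\mathcal{B}^{\parallel}_{m,l}$ according to the dyadic sizes $2^p \approx |\nabla_\xi\Phi(\xi,\eta)|$ and $2^q \approx |\nabla_\eta\Phi(\xi,\eta)|$, giving operators $\mathcal{B}'_{p,q}$; (b) Schur's lemma applied to the kernel $\widehat{f^\mu_{j_1,k_1}}(\xi-\eta)\mathbf{1}_{\{|\Phi|\lesssim 2^l,\ |\Theta|\lesssim\kappa_\theta,\ \ldots\}}$ in tandem with the sublevel-set volume bound \eqref{Alx64.1} from Proposition \ref{volume}(iii); and (c) for the regime where $|\nabla_\eta\Phi|$ is small, exploiting via Proposition \ref{spaceres11}(iii) that the output is then concentrated in a thin annulus $\{||\xi|-\gamma_1|\lesssim 2^l+2^{2q}\}$, which converts into a nontrivial gain in the $B_j$ norm through the weight $2^{-(1/2-49\delta)n}$ in \eqref{znorm2} applied to $A^{(j)}_{n,\gamma_1}$.

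\textbf{Where the proposal breaks down.} The crucial arithmetic the Schur-plus-volume route buys is that the factor $2^{-l}$ coming from $2^{-l}\widetilde\varphi_l(\Phi)$ in \eqref{tla30.4} is \emph{cancelled} by the factor $2^l$ that appears in the volume of the modulation sublevel set $\{|\Phi(\xi,\eta)|\lesssim 2^l\}$ in \eqref{Alx64.1}. Your proposed $L^\infty_x\times L^2_x$ dispersive estimate, implemented via Lemma \ref{PhiLocLem}, leaves the factor $2^{-l}$ intact: the modulation cutoff becomes an average over time shifts $\rho$, and each shifted flow is merely bounded by the dispersive $L^\infty$ estimate, which is insensitive to $l$. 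Since $l$ can be as small as $l_-\approx -(1-\delta/2)m$ under \eqref{tla1}, this loses a factor on the order of $2^{m}$ relative to the paper's argument. Even granting the heuristic angular gain $\kappa_\theta^{1/2}\approx 2^{-m/4}$ you invoke (which is itself not delivered by a straightforward $L^\infty\times L^2$ argument, since $\varphi(\kappa_\theta^{-1}\Theta)$ is an $O(1)$ multiplier and does not shrink the $L^\infty$ factor), you are left short by roughly $2^{m/4}$ at $l=l_-$ once the $B_j$ normalization $2^{(1-50\delta)j}\cdot 2^{\text{(weight)}}$ with $j\lesssim m+\overline{k}/2$ is included.

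\textbf{On the $B_j$ norm.} Your claim that ``the sup over $n$ essentially reduces to estimating the unweighted $L^2$ norm'' is not correct for this lemma. In the regime where $|\nabla_\eta\Phi|$ is small (the case $q\leq -4\delta m$ in the paper's notation), $\xi$ is forced near the space-time resonant circle $\{|\xi|=\gamma_1\}$, and the paper uses the estimate $\Vert Q_{jk}\mathcal{B}'_{p,q}[\cdot,\cdot]\Vert_{B_j}\lesssim 2^{j-50\delta j}(2^l+2^{2q}+2^{-j+2\delta m})^{1/2-49\delta}\Vert H_{p,q}\Vert_{L^2}$, which is strictly better than the crude $2^{(1-50\delta)j}\Vert H_{p,q}\Vert_{L^2}$ by a factor that becomes decisive. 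Your reduction to the unweighted $L^2$ norm throws away exactly the gain that makes the estimate close. Finally, your case split on the size of $j_1$ does not match any dichotomy in the paper's proof; the $(p,q)$ decomposition and the associated space-time resonance structure (Proposition \ref{spaceres11}(iii)) are what drives the argument, not whether $f^\mu_{j_1,k_1}$ is localized at large or small $j_1$.
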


\begin{proof} The lemma is slightly stronger (with a weaker assumption on $h$) than we need to prove \eqref{tla2}, since we intend to apply it in some cases in the proof of \eqref{tla3} as well. We would like to use Schur's lemma and Proposition \ref{volume} (iii). For this we need to further decompose the operator $\mathcal{B}_{m,l}^{\parallel}$. For $p,q\in\mathbb{Z}$ we define the operators $\mathcal{B}'_{p,q}$ by
\begin{equation}\label{tla4}
\begin{split}
\widehat{\mathcal{B}'_{p,q}[f,g]}&(\xi):=\int_{\mathbb{R}}q_m(s)\int_{\mathbb{R}^2}e^{is\Phi(\xi,\eta)}\varphi(\kappa_{\theta}^{-1}\Theta(\xi,\eta))2^{-l}\widetilde{\varphi}_l(\Phi(\xi,\eta))\\
&\times\varphi_p(\nabla_\xi\Phi(\xi,\eta))\varphi_q(\nabla_\eta\Phi(\xi,\eta))m_0(\xi,\eta)\widehat{f}(\xi-\eta,s)\widehat{g}(\eta,s)d\eta ds.
\end{split}
\end{equation}
Let $H_{p,q}:=P_k\mathcal{B}'_{p,q}[f_{j_{1},k_{1}}^\mu,h]$. Using the bounds $\|\widehat{f_{j_{1},k_{1}}^\mu}\|_{L^\infty}\lesssim 2^{2\delta j_1}2^{5\delta^2m}2^{51\delta\overline{k}}\lesssim 2^{7\delta m}$ (see \eqref{FLinftybd}), Proposition \ref{volume} (iii), and \eqref{tla21}, we estimate
\begin{equation}\label{tla6}
\begin{split}
\Vert H_{p,q}\Vert_{L^2}&\lesssim 2^{2\overline{k}}2^m (2^{10\overline{k}}2^l\kappa_\theta 2^{-p_-/2}2^{-q_-/2}2^{\delta^2m})2^{-l}\sup_{s\in I_m}\|\widehat{f_{j_{1},k_{1}}^\mu}(s)\|_{L^\infty} \|h(s)\|_{L^2}\\
&\lesssim 2^{-4\overline{k}}2^{-p_-/2}2^{-q_-/2}2^{-m+43\delta m},
\end{split}
\end{equation}
where $x_-=\min(x,0)$. In particular
\begin{equation}\label{tla7}
\sum_{p\geq -4\delta m,\,q\geq -4\delta m}2^{j-50\delta j}\Vert P_k\mathcal{B}'_{p,q}[f_{j_{1},k_{1}}^\mu,h]\Vert_{L^2}\lesssim 2^{-\delta m}.
\end{equation}

We show now that
\begin{equation}\label{tla8}
\sum_{p\leq -4\delta m,\,q\in\mathbb{Z}}2^{j-50\delta j}\Vert P_k\mathcal{B}'_{p,q}[f_{j_{1},k_{1}}^\mu,h]\Vert_{L^2}\lesssim 2^{-\delta m}.
\end{equation}
For this we notice now that if $p\leq -4\delta m$ then $P_k\mathcal{B}'_{p,q}[f_{j_{1},k_{1}}^\mu,h]$ is nontrivial only when $|\eta|$ is close to $\gamma_1$, and $|\xi|,|\xi-\eta|$ are close to $\gamma_1/2$ (as a consequence of Proposition \ref{spaceres11} (iii)). In particular $2^{\overline{k}}\lesssim 1$, $2^q\approx 1$, and $|\widehat{f_{j_{1},k_{1}}^\mu}(\xi-\eta,s)|\lesssim 2^{2\delta^2m}2^{-j_1/2+51\delta j_1}$ in the support of the integral. Therefore we have the stronger estimate, using also \eqref{Alx64.3} (compare with \eqref{tla6})
\begin{equation}\label{tla10}
\begin{split}
\Vert H_{p,q}\Vert_{L^2}&\lesssim 2^{m-l} 2^l\kappa_\theta \min(2^{-p/2},2^{p/2-l/2})2^{\delta^2m}\sup_{s\in I_m}\|\widehat{f_{j_{1},k_{1}}^\mu}(s)\|_{L^\infty} \|h(s)\|_{L^2}\\
&\lesssim 2^{-j_1/2+51\delta j_1}\min(2^{-p/2},2^{p/2-l/2})2^{-m+36\delta m}.
\end{split}
\end{equation}
The desired bound \eqref{tla8} follows if $j_1\geq j-\delta m$ or if $j\leq 3m/4-5\delta m$, since $\min(2^{-p/2},2^{p/2-l/2})\lesssim 2^{-l/4}\lesssim 2^{m/4}$. On the other hand, if 
\begin{equation*}
j_1\leq j-\delta m\,\,\text{ and }\,\,j\geq 3m/4-5\delta m
\end{equation*}
then the sum over $p\geq (j-m)-10\delta m$ in \eqref{tla8} can also be estimated using \eqref{tla10}. The remaining sum over $p\leq (j-m)-10\delta m$ is negligible using the approximate finite speed of propagation argument (integration by parts in $\xi$). This completes the proof of \eqref{tla8}.

Finally we show that
\begin{equation}\label{tla13}
\sum_{p\in\mathbb{Z},\,q\leq -4\delta m}\Vert Q_{jk}\mathcal{B}'_{p,q}[f_{j_{1},k_{1}}^\mu,h]\Vert_{B_j}\lesssim 2^{-\delta m}.
\end{equation}
As before, we notice now that if $q\leq -4\delta m$ then $P_k\mathcal{B}'_{p,q}[f_{j_{1},k_{1}}^\mu,h]$ is nontrivial only when $|\xi|$ is close to $\gamma_1$, and $|\eta|,|\xi-\eta|$ are close to $\gamma_1/2$ (as a consequence of Proposition \ref{spaceres11} (iii)). In particular $2^{\overline{k}}\lesssim 1$, $2^p\approx 1$ and we have the stronger estimate (compare with \eqref{tla10})
\begin{equation}\label{tla14}
\begin{split}
\Vert H_{p,q}\Vert_{L^2}\lesssim 2^{-j_1/2+51\delta j_1}\min(2^{-q/2},2^{q/2-l/2})2^{-m+36\delta m}\lesssim \frac{2^{q/2}}{2^q+2^{l/2}}2^{-m+36\delta m}.
\end{split}
\end{equation}
Moreover, since $|\Phi(\xi,\eta)|\lesssim 2^l$ and $|\nabla_\eta\Phi(\xi,\eta)|\lesssim 2^q$, the function $\widehat{H_{p,q}}$ is supported in the set $\{||\xi|-\gamma_1|\lesssim 2^l+2^{2q}\}$ (see \eqref{try1.2}). The main observation is that the $B_j$ norm for functions supported in such a set carries an additional small factor. More precisely, after localization to a $2^j$ ball in the physical space, the function $\mathcal{F}\{Q_{jk}\mathcal{B}'_{p,q}[f_{j_{1},k_{1}}^\mu,h]\}(\xi)$ is supported in the set $\{||\xi|-\gamma_1|\lesssim 2^l+2^{2q}+2^{-j+2\delta m}\}$, up to a negligible error. Therefore, using \eqref{tla14},
\begin{equation*}
\begin{split}
\Vert Q_{jk}\mathcal{B}'_{p,q}[f_{j_{1},k_{1}}^\mu,P_{k_{2}}E_{\nu}^{a_3}]\Vert_{B_j}&\lesssim 2^{j-50\delta j}(2^l+2^{2q}+2^{-j+2\delta m})^{1/2-49\delta}\Vert H_{p,q}\Vert_{L^2}\\
&\lesssim 2^{j-50\delta j}2^{-m+36\delta m}(2^{l/2}+2^{q}+2^{-j/2+\delta m})\frac{2^{q/2-100\delta q}}{2^q+2^{l/2}}\\
&\lesssim 2^{q/8}2^{-4\delta m}.
\end{split}
\end{equation*}
The bound \eqref{tla13} follows. The bound \eqref{tla20} follows from \eqref{tla7}, \eqref{tla8}, and \eqref{tla13}.
\end{proof}

\begin{lemma}\label{G1contr}
Assume that \eqref{tla1} holds and $\kappa_\theta=2^{-m/2+6\overline{k}+\delta^2m}$. Then
\begin{equation}\label{tla30}
\Vert Q_{jk}\mathcal{B}_{m,l}^{\parallel}[f_{j_{1},k_{1}}^\mu,A_{k_2;k_3,j_3;k_4,j_4}^{a_3,\alpha_3;a_4,\alpha_4;[1]}]\Vert_{B_j}\lesssim 2^{-4\delta^2m}.
\end{equation}
\end{lemma}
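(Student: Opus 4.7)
My plan is to split the analysis based on the size of $\max(j_3, j_4)$ relative to $m/2 + \delta m$, so as to apply Lemma \ref{Econtr} in one regime and exploit the explicit oscillatory description \eqref{Decdtf} in the other.

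When $\max(j_3, j_4) > m/2 + \delta m$, I apply the $L^2$-bound \eqref{Dbound5.1} from Lemma \ref{dtfLem2}, which yields
\[
\|A^{a_3,\alpha_3;a_4,\alpha_4;[1]}_{k_2;k_3,j_3;k_4,j_4}(s)\|_{L^2} \lesssim 2^{-m+4\delta m}\cdot 2^{-(1-50\delta)(m/2+\delta m)} \lesssim 2^{-3m/2+30\delta m}.
\]
Since the cutoff $\chi^{[1]}$ is supported where $\max(|k|, |k_1|, |k_2|)\leq 10$, we have $\overline{k}=O(1)$, so this bound satisfies the hypothesis \eqref{tla21} of Lemma \ref{Econtr}, and \eqref{tla30} follows directly.

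When $\max(j_3, j_4) \leq m/2 + \delta m$, I substitute the explicit description \eqref{Decdtf}. The $h^{[1]}$ contribution is negligible since $\|h^{[1]}\|_{L^\infty}\lesssim 2^{-4m}$ on a bounded frequency support. For the main term, substituting $\widehat{A^{[1]}}(\eta,s)\approx e^{is[\Lambda_\nu(\eta)-2\Lambda_\nu(\eta/2)]}g^{[1]}(\eta,s)\varphi(2^{3\delta m}(|\eta|-\gamma_1))$ into \eqref{tla30.4} produces an oscillatory integral with the combined phase
\[
\widetilde{\Phi}(\xi,\eta) := \Lambda_+(\xi) - \Lambda_\mu(\xi-\eta) - 2\Lambda_\nu(\eta/2),
\]
still carrying the cutoffs $2^{-l}\widetilde{\varphi}_l(\Phi(\xi,\eta))$ and $\varphi(\kappa_\theta^{-1}\Theta(\xi,\eta))$, multiplied by the smooth amplitude $g^{[1]}$ concentrated near $|\eta|=\gamma_1$ at scale $2^{-3\delta m}$. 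The key point for the $B_j$-norm is a radial concentration of the output: since $\Theta$ small forces $\xi$ almost parallel to $\eta$, and $|\eta|\approx \gamma_1$ is essentially determined, the restriction $|\Phi(\xi,\eta)|\lesssim 2^l$ combined with the identity $\Phi(r\hat\eta, \eta) \approx \Lambda_+(r) - \Lambda_\mu(r-|\eta|) - \Lambda_\nu(|\eta|)$ localizes $|\xi|$ to a neighborhood of $\gamma_1$ of width $\lesssim 2^l + \kappa_\theta^2 + 2^{-3\delta m}$, analogous to the $\mathcal{R}_1$ analysis in the proof of Lemma \ref{StronglyResLem}. This radial concentration absorbs the factor $2^{-(1/2-49\delta)n}$ in the definition \eqref{znorm2} of $\|\cdot\|_{B_j}$, as in the telescoping \eqref{Alx67.5}.

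It then remains to bound the $L^\infty_\xi$-norm of the Fourier transform of the localized output. Combining the volume bounds from Proposition \ref{volume} applied to the set $\{|\Phi|\lesssim 2^l, |\Theta|\lesssim \kappa_\theta\}$ with the smoothness bounds $\|D^\alpha g^{[1]}\|_{L^\infty}\lesssim 2^{-m+4\delta m}2^{|\alpha|(m/2+4\delta m)}$ from \eqref{Decdtf} and the pointwise bound $\|\widehat{f_{j_1,k_1}^\mu}\|_{L^\infty}\lesssim 2^{O(\delta m)}$ from \eqref{FLinftybd}, I expect to obtain a pointwise bound of the form
\[
\big|(1+2^m||\xi|-\gamma_1|)\,\mathcal{F}\{\mathcal{B}_{m,l}^{\parallel}[f^\mu_{j_1,k_1},G^{[1]}]\}(\xi)\big| \lesssim 2^{O(\delta m)}
\]
which (together with the approximate finite speed of propagation in $\xi$, treated by integration by parts as in the proof of Lemma \ref{Econtr}) feeds into the telescoping argument and produces the gain $2^{-4\delta^2 m}$, since $2^l\leq 2^{-m/14}$ provides the decisive additional smallness.

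The main obstacle will be the last $L^2$-estimate: the amplitude $g^{[1]}$ can vary at the scale $2^{m/2}$, which is comparable to the $\Phi$-level-set width when $|l|$ is near $m/2$. To close the estimate one will likely need either to integrate by parts in the $\Omega_\eta$-direction using Lemma \ref{RotIBP} (exploiting the angular localization of $\widehat{g^{[1]}}$), or to dyadically subdivide in $|\nabla_\xi\widetilde\Phi|$ as in Case 4 of Lemma \ref{StronglyResLem}, in order to extract the gain of $2^{-m/14}$ and close the bootstrap.
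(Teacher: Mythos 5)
Your Case A (when $\max(j_3,j_4) > m/2 + \delta m$, invoking \eqref{Dbound5.1} and Lemma \ref{Econtr}) matches the paper's first step essentially exactly, with the threshold placed slightly differently but harmlessly. The problem is with Case B, which you propose to handle in one sweep; the paper instead splits this further into three sub-cases ($j_1 \geq m/2$; $j \leq m/2 + 10\delta m$; and $\max(j_1,j_3,j_4)\leq m/2$ with $j\geq m/2 + 10\delta m$), and the last of these requires an idea your sketch omits.

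In that last and hardest regime, the paper decomposes dyadically in $r$ with $2^r \approx |\eta - 2\xi| \approx |\nabla_\eta\mathfrak{p}(\xi,\eta)|$ — essentially the subdivision you suggest as your second option. The easy ranges are $r \leq -m/2$ and $r\in[-m/2,-m/3]$: there the small support $||\xi|-\gamma_1/2| \lesssim 2^l + 2^r$ and the pointwise bound on $g^{[1]}$ close the estimate, in line with what you describe. But for the intermediate regime $-m/3 \leq r \leq -l - m + \delta^2 m$, a pointwise-in-$\xi$ bound combined with support considerations is no longer enough: the volume of the $\eta$-integration region ($\kappa_\theta \times 2^l$) is too small compared to $2^r$ for the naive estimate, and both of your proposed remedies — $\Omega_\eta$-integration by parts via Lemma \ref{RotIBP}, or further dyadic splitting in $|\nabla_\xi\widetilde\Phi|$ — just reproduce the same scale. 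The step that actually closes this case is an \emph{integration by parts in $s$}: on the $\mathcal{G}_r$ piece the phase $\mathfrak{p}$ has size $|\mathfrak{p}(\xi,\eta)| \approx |\xi - \eta/2|^2 \approx 2^{2r}$, which is large enough relative to $2^{-m}$ to pay off, and the resulting $\partial_s$-hits are controlled by the $\|\partial_s g^{[1]}\|_{L^\infty}\lesssim 2^{-2m+18\delta m}$ bound in \eqref{Decdtf} and the $h_2 + h_\infty$ decomposition \eqref{vd70.1} of $\partial_s f^\mu_{j_1,k_1}$. Without this $s$-integration by parts your bound degenerates to roughly $2^{-2r}2^{-3m/2 + C\delta m}$ against a required $2^{-(m+r)(1-50\delta)}$, which does not close for $r$ near $-m/3$ when $l$ is near $-m/14$. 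This is the genuine gap in the proposal.

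A secondary concern: the pointwise bound you aim for, $|(1 + 2^m||\xi|-\gamma_1|)\cdot \mathcal{F}\{\cdots\}(\xi)| \lesssim 2^{O(\delta m)}$, with a potential $2^{O(\delta m)}$ loss from $\|\widehat{f^\mu_{j_1,k_1}}\|_{L^\infty}$ when $j_1$ is large, would not by itself give the required gain $2^{-4\delta^2 m}$ after the telescoping \eqref{Alx67.5}. The paper sidesteps this by treating $j_1 \geq m/2$ as a separate case (its Case 2), where the roles of $L^2$ and $L^\infty$ are swapped: $\widehat{f^\mu_{j_1,k_1}}$ goes in $L^2$ (where large $j_1$ \emph{helps}) and the better $L^\infty$ bound $\|\widehat{G^{[1]}}\|_{L^\infty}\lesssim 2^{-m+4\delta m}$ from \eqref{Dbound5.1} is used, together with Schur's test after the change of variables $\eta \mapsto \xi - \eta$. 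You should isolate this case rather than trying to push through with a uniform pointwise estimate on $\widehat{f^\mu_{j_1,k_1}}$.
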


\begin{proof} Notice that $A_{k_2;k_3,j_3;k_4,j_4}^{a_3,\alpha_3;a_4,\alpha_4;[1]}$ is supported in the set $||\eta|-\gamma_1|\leq 2^{-\D}$. Using also the conditions $\Phi(\xi,\eta)\lesssim 2^l$ and $\Theta(\xi,\eta)\lesssim \kappa_\theta$, we have
\begin{equation}\label{tla30.5}
||\eta|-\gamma_1|\leq 2^{-\D},\quad |\xi|,|\xi-\eta|\in[2^{-50},2^{50}],\quad \min(||\xi|-\gamma_1|,||\xi-\eta|-\gamma_1|)\geq 2^{-50}
\end{equation}
in the support of the integral defining $\mathcal{F}\{P_k\mathcal{B}_{m,l}^{\parallel}[f_{j_{1},k_{1}}^\mu,G^{[1]}](\xi)\}$, where $G^{[1]}=A_{k_2;k_3,j_3;k_4,j_4}^{a_3,\alpha_3;a_4,\alpha_4;[1]}$.

{\bf{Case 1.}} Assume first that 
\begin{equation}\label{tla31}
\max(j_3,j_4)\geq m/2.
\end{equation}
In this case $\|G^{[1]}\|_{L^2}\lesssim 2^{-3m/2+30\delta m}$ (see \eqref{Dbound5.1}), and the conclusion follows from Lemma \ref{Econtr}.

{\bf{Case 2.}} Assume now that 
\begin{equation}\label{tla32}
\max(j_3,j_4)\leq m/2,\qquad j_1\geq m/2.
\end{equation}
The bound \eqref{tla30} follows again by the same argument as in the proof of \eqref{tla20} above. In this case $\|\widehat{G^{[1]}}(s)\|_{L^\infty}\lesssim 2^{-m+4\delta m}$ (due to \eqref{Decdtf}) and $\|\mathcal{F}\{A_{\leq 0,\gamma_1}f_{j_{1},k_{1}}^\mu\}(s)\|_{L^2}\lesssim 2^{2\delta^2m}2^{-j_1+50\delta j_1}$ (see \eqref{FLinftybd}). We make the change of variables $\eta\to\xi-\eta$, define $\Phi'(\xi,\eta)=\Phi(\xi,\xi-\eta)$ and define the operators $\mathcal{B}''_{p,q}$ as in \eqref{tla4}, by inserting cutoff factors $\varphi_p((\nabla_\xi\Phi')(\xi,\eta))$ and $\varphi_q((\nabla_\eta\Phi')(\xi,\eta))$. In this case we notice that we may assume both $p\geq -\D$ and $q\geq -\D$. Indeed we have $|\Phi'(\xi,\eta)|\leq 2^{-\D}$ and $||\xi-\eta|-\gamma_1|\leq 2^{-\D}$, so $|(\nabla_\xi\Phi')(\xi,\eta)|\gtrsim 1$ and $|(\nabla_\eta\Phi')(\xi,\eta)|\gtrsim 1$ in the support of the integral (in view of Proposition \ref{spaceres11} (iii)). Then we estimate, using \eqref{Alx64.1},
\begin{equation*}
\Vert P_k\mathcal{B}''_{p,q}[A_{\leq 0,\gamma_1}f_{j_{1},k_{1}}^\mu,G^{[1]}]\Vert_{L^2}\lesssim 2^{-j_1+50\delta j_1}2^{-m/2+5\delta m}.
\end{equation*}
The bound \eqref{tla30} follows by summation over $p$ and $q$.

{\bf{Case 3.}} Assume now that 
\begin{equation}\label{tla33}
\max(j_1,j_3,j_4)\leq m/2,\qquad j\leq m/2+10\delta m.
\end{equation}
We use the bounds $\|\widehat{G^{[1]}}(s)\|_{L^\infty}\lesssim 2^{-m+4\delta m}$ (see \eqref{Decdtf}) and $\|\widehat{f_{j_{1},k_{1}}^\mu}(s)\|_{L^\infty}\lesssim 2^{3\delta m}$. Moreover, $|\nabla_\eta\Phi(\xi,\eta)|\gtrsim 1$ in the support of the integral. Therefore, using the first bound in \eqref{Alx64.1},
\begin{equation*}
\begin{split}
\big\|\mathcal{F}\{P_k\mathcal{B}^{\parallel}_{m,l}[f_{j_{1},k_{1}}^\mu,G^{[1]}]\}\big\|_{L^\infty}&\lesssim 2^{m-l}\kappa_\theta 2^l2^{\delta^2m}\sup_{s\in I_m}\|\widehat{G^{[1]}}(s)\|_{L^\infty}\|\widehat{f_{j_{1},k_{1}}^\mu}(s)\|_{L^\infty}\lesssim 2^{-m/2+8\delta m}.
\end{split}
\end{equation*}
The desired bound \eqref{tla30} follows when $j\leq m/2+10\delta m$.

{\bf{Case 4.}} Finally, assume that 
\begin{equation}\label{tla35}
\max(j_1,j_3,j_4)\leq m/2,\qquad j\geq m/2+10\delta m.
\end{equation}
We examine the formula \eqref{tla30.4}, decompose $G^{[1]}$ as in \eqref{Decdtf} and notice that the contribution of the error term is easy to estimate. To estimate the main term, we define the modified phase
\begin{equation}\label{tla36}
\mathfrak{p}(\xi,\eta):=\Phi_{+\mu\nu}(\xi,\eta)+\Lambda_\nu(\eta)-2\Lambda_\nu(\eta/2)=\Lambda(\xi)-\Lambda_\mu(\xi-\eta)-2\Lambda_\nu(\eta/2).
\end{equation}
For $r\in\mathbb{Z}$ we define the functions $\mathcal{G}_r=\mathcal{G}_{r,m,l,j,j_1}$ by
\begin{equation}\label{tla37}
\begin{split}
\widehat{\mathcal{G}_r}(\xi):=\int_{\mathbb{R}}q_m(s)\int_{\mathbb{R}^2}&e^{is\mathfrak{p}(\xi,\eta)}\varphi(\kappa_{\theta}^{-1}\Theta(\xi,\eta))2^{-l}\widetilde{\varphi}_l(\Phi(\xi,\eta))m_0(\xi,\eta)\\
&\times\varphi_r(\nabla_\eta\mathfrak{p}(\xi,\eta))\widehat{f^\mu_{j_1,k_1}}(\xi-\eta,s)g^{[1]}(\eta,s)\varphi(2^{3\delta m}(|\eta|-\gamma_1))d\eta ds.
\end{split}
\end{equation}
Notice that the functions $\mathcal{G}_r$ are negligible for, say, $r\leq -10m$. It suffices to prove that
\begin{equation}\label{tla38}
2^{j-50\delta j}\Vert Q_{jk}\mathcal{G}_r\Vert_{L^2}\lesssim 2^{-5\delta^2m}\qquad\text{ for any }r\in\mathbb{Z}.
\end{equation}

We notice first that $\|P_k\mathcal{G}_r\|_{L^2}\lesssim 2^{-4m}$ if $r\geq \max(\delta^2m-l-m,6\delta m-m/2)$, in view of Lemma \ref{tech5} (i). In particular, we may assume that $r\leq -\D$. In this case, the functions $\mathcal{G}_r$ are nontrivial only when $-\mu=\nu=+$ and $\xi$ is close to $\eta/2$. Therefore $\mathfrak{p}(\xi,\eta)=\Lambda(\xi)+\Lambda(\eta-\xi)-2\Lambda(\eta/2)$, and we have, in the support of the integral defining $\widehat{\mathcal{G}_r}(\xi)$
\begin{equation}\label{tla39}
\begin{split}
&|\nabla_\eta\mathfrak{p}(\xi,\eta)|\approx |\xi-\eta/2|\approx |\nabla_\xi\mathfrak{p}(\xi,\eta)|\approx |\nabla_\xi\Phi(\xi,\eta)|\approx 2^r,\\
&|\mathfrak{p}(\xi,\eta)|\approx |\xi-\eta/2|^2\approx 2^{2r},\\
&||\eta|-\gamma_1|\approx|\Lambda(\eta)-2\Lambda(\eta/2)|\lesssim |\Phi(\xi,\eta)|+|\mathfrak{p}(\xi,\eta)|\lesssim 2^l+2^{2r},\\
&||\xi|-\gamma_1/2|\lesssim 2^l+2^r.
\end{split}
\end{equation}
The finite speed of propagation argument (integration by parts in $\xi$) shows that $\|Q_{jk}\mathcal{G}_r\|_{L^2}\lesssim 2^{-4m}$ if $j\geq 3\delta^2m+\max(m+r,-r)$. To summarize, it remains to prove that
\begin{equation}\label{tla40}
\big(2^{m+r}+2^{-r}\big)^{1-50\delta}\|P_k\mathcal{G}_r\|_{L^2}\lesssim 2^{-\delta m}\qquad\text{ if }\qquad r\leq \max(\delta^2m-l-m,6\delta m-m/2).
\end{equation}

For $\xi$ fixed, the variable $\eta$ satisfies three restrictions: $|\eta\cdot\xi^{\perp}|\lesssim \kappa_\theta$, $|\Phi(\xi,\eta)|\lesssim 2^l$, and $|\eta-2\xi|\lesssim 2^r$. Therefore, using also \eqref{Decdtf}, we have the pointwise bound
\begin{equation}\label{tla41}
\begin{split}
|\widehat{\mathcal{G}_r}(\xi)|&\lesssim 2^{5\delta^2m}2^{m-l}\min(2^r,2^{-m/2})\min(2^r,2^l)\sup_{s\in I_m}\|\widehat{f^\mu_{j_1,k_1}}(s)\|_{L^\infty}\|g^{[1]}(s)\|_{L^\infty}\\
&\lesssim 2^{8\delta m}\min(2^r,2^{-m/2})\min(2^{r-l},1).
\end{split}
\end{equation}
The desired bound \eqref{tla40} follows, using also the support assumption $||\xi|-\gamma_1/2|\lesssim 2^l+2^r$ in \eqref{tla39}, if $r\leq -m/2$ or if $r\in[-m/2,-m/3]$.

It remains to prove \eqref{tla40} when $-m/3\leq r\leq -l-m+\delta^2m$. The main observation in this case is that $|\mathfrak{p}(\xi,\eta)|\approx 2^{2r}$ is large enough to be able to integrate by parts is $s$. It follows that
\begin{equation*}
\begin{split}
|\widehat{\mathcal{G}_r}(\xi)|\lesssim \int_{\mathbb{R}}\int_{\mathbb{R}^2}2^{-2r}\big|&\varphi(\kappa_{\theta}^{-1}\Theta(\xi,\eta))2^{-l}\widetilde{\varphi}_l(\Phi(\xi,\eta))\varphi_r(\nabla_\eta\mathfrak{p}(\xi,\eta))\varphi(2^{3\delta m}(|\eta|-\gamma_1))\big|\\
&\times\big|\partial_s[\widehat{f^\mu_{j_1,k_1}}(\xi-\eta,s)g^{[1]}(\eta,s)q_m(s)]\big|d\eta ds.
\end{split}
\end{equation*}
For $\xi$ fixed, the integral in supported in a $O(\kappa_\theta\times 2^l)$ rectangle centered at $\eta=2\xi$. In this support, we have the bounds, see Lemma \ref{dtfLem2} (ii) and (iii),
\begin{equation*}
\begin{split}
&\|\widehat{f^\mu_{j_1,k_1}}(s)\|_{L^\infty}\lesssim 2^{\delta^2m},\qquad \|g^{[1]}(s)\|_{L^\infty}\lesssim 2^{-m+4\delta m}\qquad \|\partial_sg^{[1]}(s)\|_{L^\infty}\lesssim 2^{-2m+18\delta m},\\
&\partial_s f^{\mu}_{j_1,k_1}=h_2+h_\infty,\quad\|h_2(s)\|_{L^2}\lesssim 2^{-3m/2+5\delta m},\quad \|\widehat{h_\infty}(s)\|_{L^\infty}\lesssim 2^{-m+15\delta m}.
\end{split}
\end{equation*}
The integrals that do not contain the function $h_2$ can all be estimated pointwise, as in \eqref{tla41} by $C2^{-2r}2^{-l}2^{-m+20\delta m}(2^l\kappa_\theta)\lesssim 2^{-2r}2^{-3m/2+21\delta m}$. The integral that contains the function $h_2$ can be estimated pointwise, using H\"{o}lder's inequality, by 
\begin{equation*}
C2^{-2r}2^{-l}2^{-3m/2+10\delta m}(2^l\kappa_\theta)^{1/2}\lesssim 2^{-2r}2^{-l/2}2^{-7m/4+11\delta m}\lesssim 2^{-2r}2^{-5m/4+11\delta m}.
\end{equation*}
Therefore, using also the support assumption $||\xi|-\gamma_1/2|\lesssim 2^r$ in \eqref{tla39}, and recalling that $r\geq -m/3$, $l\leq -m/2$, we have
\begin{equation*}
2^{m+r}\|P_k\mathcal{G}_r\|_{L^2}\lesssim 2^{-r/2}2^{-m/4+11\delta m}.
\end{equation*}
This suffices to prove \eqref{tla40}, which completes the proof of the lemma.
\end{proof}

\begin{lemma}\label{G2contr}
With the same notation as in Lemma \ref{G1contr}, and assuming \eqref{tla1}, we have
\begin{equation}\label{tla60}
\Vert Q_{jk}\mathcal{B}_{m,l}^{\parallel}[f_{j_{1},k_{1}}^\mu,A_{k_2;k_3,j_3;k_4,j_4}^{a_3,\alpha_3;a_4,\alpha_4;[2]}]\Vert_{B_j}\lesssim 2^{-4\delta^2m}.
\end{equation}
\end{lemma}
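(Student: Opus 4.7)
The plan is to treat $\mathcal{B}_{m,l}^{\parallel}[f^\mu_{j_1,k_1}, G^{[2]}]$ as a trilinear operator in the spirit of Lemma~\ref{lLargeBBound}, exploiting a large lower bound on the combined phase that is forced by the localization built into $G^{[2]}$. Concretely, writing $G^{[2]}=A^{a_3,\alpha_3;a_4,\alpha_4;[2]}_{k_2;k_3,j_3;k_4,j_4}$ via \eqref{Brc4.2} and substituting into \eqref{tla30.4} gives
\begin{equation*}
\mathcal{F}\{P_k\mathcal{B}_{m,l}^{\parallel}[f^\mu_{j_1,k_1}, G^{[2]}]\}(\xi) = \int_{\mathbb{R}} q_m(s) \iint_{(\mathbb{R}^2)^2} e^{is\widetilde{\Phi}(\xi,\eta,\sigma)} K(\xi,\eta,\sigma) \widehat{f^\mu_{j_1,k_1}}(\xi-\eta,s)\widehat{f^\beta_{j_3,k_3}}(\eta-\sigma,s)\widehat{f^\gamma_{j_4,k_4}}(\sigma,s)\, d\sigma d\eta ds,
\end{equation*}
where $\widetilde{\Phi}(\xi,\eta,\sigma)=\Phi_{+\mu\nu}(\xi,\eta)+\Phi_{\nu\beta\gamma}(\eta,\sigma)$ and $K$ bundles the symbols together with $\varphi_k(\xi)\varphi_{k_2}(\eta)$, $2^{-l}\widetilde{\varphi}_l(\Phi_{+\mu\nu}(\xi,\eta))$, $\varphi(\kappa_\theta^{-1}\Theta(\xi,\eta))$, and $\chi^{[2]}(\eta,\sigma)$.

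The key observation is that on the support of $K$ we have $|\Phi_{+\mu\nu}(\xi,\eta)|\lesssim 2^{l+2}\le 2^{-m/14+2}$, while the defining cutoff of $G^{[2]}$ forces $|\Phi_{\nu\beta\gamma}(\eta,\sigma)|\gtrsim 2^{-10\delta m-1}$. Since $\delta=1/2000<1/140$, these combine to give the uniform lower bound $|\widetilde{\Phi}(\xi,\eta,\sigma)|\gtrsim 2^{-10\delta m-2}$. Dyadically decomposing $\widetilde{\Phi}$ at scale $2^p$ with $p\ge -10\delta m-\mathcal{D}$ and integrating by parts in $s$ then costs a factor $2^{-p}\lesssim 2^{10\delta m}$ but produces either a boundary-type term with $q_m'$ (harmless because $\int|q_m'|\lesssim 1$) or a bulk term in which $\partial_s$ falls on one of the three profiles. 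This is exactly the mechanism used in Step~1 of the proof of Lemma~\ref{lLargeBBound}.

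To estimate the resulting bulk terms I plan to use $L^2\times L^\infty\times L^\infty$ trilinear H\"older bounds as in \eqref{hu5}, placing the differentiated profile in $L^2$ (using the $\|\partial_s P_{k'}f^\alpha\|_{L^2}\lesssim \varepsilon_1 2^{-5m/6+6\delta^2m}$ bound from \eqref{Brc1}) and the other two profiles in $L^\infty$ via \eqref{LinftyBd3.5}. Two further ingredients enter: first, Lemma~\ref{PhiLocLem} lets us trade the weight $2^{-l}$ inside $\tilde{\varphi}_l(\Phi_{+\mu\nu})$ for a time-shifted $L^\infty$ norm $\|e^{-i(s+\rho)\Lambda_\mu}f^\mu_{j_1,k_1}\|_{L^\infty}$ averaged over $|\rho|\lesssim 2^{-l+\delta^2m}$, exactly as in Step~2 of Lemma~\ref{KsmallLem}; and second, the angular cutoff $\varphi(\kappa_\theta^{-1}\Theta)$ with $\kappa_\theta=2^{-m/2+6\overline{k}+\delta^2m}$ provides an additional $2^{-m/2+O(\delta m)}$ factor through a Schur-type estimate as in the proof of Lemma~\ref{Econtr}. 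Combining these ingredients and using $j\le m+2\mathcal{D}+\overline{k}/2$ with $\overline{k}\le 7m/(6N_0')$ yields the desired bound $2^{-4\delta^2m}$ for the $B_j$-norm after summation in $p$.

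The main obstacle will be the regime where both $l$ is close to $l_-=\lfloor -(1-\delta/2)m\rfloor$ and $|\widetilde{\Phi}|$ is near its lower bound $2^{-10\delta m}$, since then the two weights $2^{-l}$ and $2^{-p}$ together come close to saturating the available decay from $2^m\cdot\|\partial_s f^\alpha\|_{L^2}\cdot(\text{two }L^\infty\text{ factors})$. To close this case one has to isolate the contributions of the $A_{\geq 1,\gamma_0}$ and $A_{\leq 0,\gamma_0}$ pieces of each profile (as in Step~2 of Lemma~\ref{lLargeBBound}), and then invoke Lemma~\ref{cubicphase} to rule out simultaneous proximity to iterated space-time resonances on the support of $K$; the slow propagation property there is what ultimately prevents the apparent loss and produces the extra $2^{-\delta m}$ margin needed to conclude.
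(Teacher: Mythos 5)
Your high-level plan is aligned with the paper for the first half: the key observation that $|\Phi_{\nu\beta\gamma}(\eta,\sigma)|\gtrsim 2^{-10\delta m}$ on the support of $\chi^{[2]}$ while $|\Phi_{+\mu\nu}(\xi,\eta)|\lesssim 2^{l}\leq 2^{-m/14}$ forces $|\widetilde\Phi|\gtrsim 2^{-10\delta m}$ is exactly the paper's starting point, and the integration by parts in $s$ at cost $2^{-r}\lesssim 2^{10\delta m}$ followed by Schur-type bounds using the angular cutoff $\kappa_\theta$ and Proposition~\ref{volume}~(iii) is correct. You also rightly identify Lemma~\ref{PhiLocLem} as the tool that converts the $\widetilde\varphi_l(\Phi_{+\mu\nu})$ weight into a time-shifted $L^\infty$ estimate.

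However, the closing argument you propose for the hard regime is wrong. You suggest invoking Lemma~\ref{cubicphase} (slow propagation of iterated resonances), but that lemma does not apply here for two reasons. First, Lemma~\ref{cubicphase}~(i) is formulated under the localization $\max\big(||\xi-\eta|-\gamma_0|,||\eta-\sigma|-\gamma_0|,||\sigma|-\gamma_0|\big)\leq 2^{-\D_1/2}$, which is not the frequency configuration that survives here: the $\chi^{[2]}$ cutoff does not force all three frequencies near $\gamma_0$. Second, Lemma~\ref{cubicphase}~(ii) requires \emph{both} $|\Phi_{+\mu\nu}(\xi,\eta)|\leq 2^{-2\D_1}$ \emph{and} $|\Phi_{\nu\beta\gamma}(\eta,\sigma)|\leq 2^{-2\D_1}$, and the second condition is precisely what $\chi^{[2]}$ forbids since it enforces $|\Phi_{\nu\beta\gamma}|\gtrsim 2^{-10\delta m}$. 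So neither branch of Lemma~\ref{cubicphase} is available in this lemma; it is the tool for Lemmas~\ref{lLargeBBound} and~\ref{lsmallBound}, not for this one.

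The genuinely delicate step you are missing is the treatment of the term $\widetilde{\mathcal C}^{[2]}_{l,r}[\partial_s f^\mu_{j_1,k_1}, f^\beta_{j_3,k_3}, f^\gamma_{j_4,k_4}]$ produced when $\partial_s$ falls on the \emph{outer} profile $f^\mu_{j_1,k_1}$. After performing the $\sigma$-integration into a function $H_1$ with $\|H_1\|_{L^2}\lesssim 2^{-5m/6+10\delta m}$, the remaining bilinear operator $S[\,\cdot\,, H_1]$ has both inputs controlled only in $L^2$ (or with weak $L^\infty$ bounds), and a naive Schur estimate does not close near space-time resonances. The paper instead decomposes $\partial_s f^\mu_{j_1,k_1}$ via \eqref{vd70.1} into pieces $f_{2\gamma_0}+f_2+f_\infty$ (distinguished by proximity of their frequency support to $2\gamma_0$), performs the parallel decomposition $H_1=H_{1,2\gamma_0}+H_{1,\infty}$, and then uses the crucial vanishing $S[f_{2\gamma_0}, H_{1,2\gamma_0}]\equiv 0$ — which holds because space-time resonances force the relevant frequencies near $\gamma_1$ or $\gamma_1/2$, which is incompatible with simultaneous localization near $2\gamma_0$. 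The off-diagonal pieces $S[f_2,H_{1,\infty}]$, $S[f_{2\gamma_0},H_{1,\infty}]$, $S[f_2,H_{1,2\gamma_0}]$ are then estimated by the Schur-type bounds you envisioned, using Proposition~\ref{volume}~(iii) together with the Fourier-$L^\infty$ bound \eqref{vd70} and the $L^2$ bound \eqref{vd2}. Without this $2\gamma_0$-decomposition and vanishing observation, the estimate near space-time resonances is not closed, and Lemma~\ref{cubicphase} does not substitute for it.
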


\begin{proof} The main observation here is that, since $|\Phi_{+\mu\nu}(\xi,\eta)|\lesssim 2^l$ and $|\Phi_{\nu\beta\gamma}(\eta,\sigma)|\gtrsim 2^{-10\delta m}$, we have $|\widetilde{\Phi}(\xi,\eta,\sigma)|\gtrsim 2^{-10\delta m}$, thus we can integrate by parts in $s$ once more. Before this, however, we notice that we may assume that
\begin{equation}\label{tla60.5}
k_3,k_4\in[-2m/N'_0,m/N'_0],\qquad \min(j_3,j_4)\leq m-4\delta m.
\end{equation}
Indeed, we first use Lemma \ref{dtfLem1} (ii) (2), (3). Moreover, if $\min(j_3,j_4)\geq m-4\delta m$ or $\max(k_3,k_4)\geq m/N'_0$ then we would have $\|A_{k_2;k_3,j_3;k_4,j_4}^{a_3,\alpha_3;a_4,\alpha_4;[2]}\|_{L^2}\lesssim 2^{-3m/2+8\delta m}$ (by the same argument as in the proof of \eqref{tla60.6} or an $L^2\times L^\infty$ estimate), and the desired bound would follow from Lemma \ref{Econtr}.

{\bf{Step 1.}} For $r\in\mathbb{Z}$ we define (compare with \eqref{hu1}) the trilinear operators $\mathcal{J}^{[2]}_{l,r}$ by
\begin{equation}\label{tla61}
\begin{split}
\mathcal{F}\{\mathcal{J}^{[2]}_{l,r}[f,g,h]\}&(\xi,s):=\int_{\mathbb{R}^2\times\mathbb{R}^2}e^{is\widetilde{\Phi}(\xi,\eta,\sigma)}\widehat{f}(\xi-\eta)\varphi(\kappa_{\theta}^{-1}\Theta(\xi,\eta))2^{-l}\widetilde{\varphi}_{l}(\Phi_{+\mu\nu}(\xi,\eta))\\
&\times\varphi_r(\widetilde{\Phi}(\xi,\eta,\sigma))\chi^{[2]}(\eta,\sigma)\varphi_{k_2}(\eta)\mathfrak{m}_{\mu\nu}(\xi,\eta)\mathfrak{m}_{\nu\beta\gamma}(\eta,\sigma)\widehat{g}(\eta-\sigma)\widehat{h}(\sigma)\,d\sigma d\eta.
\end{split} 
\end{equation}
Let
\begin{equation}\label{tla62}
\mathcal{C}^{[2]}_{l,r}[f,g,h]:=\int_{\mathbb{R}}q_m(s)\mathcal{J}^{[2]}_{l,r}[f,g,h](s)\,ds,
\end{equation} 
and notice that
\begin{equation*}
\mathcal{B}_{m,l}^{\parallel}[f_{j_{1},k_{1}}^\mu,A_{k_2;k_3,j_3;k_4,j_4}^{b_1,b_2,b_3,[2]}]=\sum_{r\geq -11\delta m}\mathcal{C}^{[2]}_{l,r}[f_{j_{1},k_{1}}^\mu,f_{j_3,k_3}^\beta,f_{j_4,k_4}^\gamma].
\end{equation*} 

We integrate by parts in $s$ to rewrite
\begin{equation*}
\begin{split}
\mathcal{C}^{[2]}_{l,r}&[f^\mu_{j_1,k_1},f^\beta_{j_3,k_3},f^\gamma_{j_4,k_4}]=i2^{-r}\Big\{\int_{\mathbb{R}}q'_m(s)\widetilde{\mathcal{J}^{[2]}_{l,r}}[f^\mu_{j_1,k_1},f^\beta_{j_3,k_3},f^\gamma_{j_4,k_4}](s)\,ds\\
&+\widetilde{\mathcal{C}^{[2]}_{l,r}}[\partial_sf^\mu_{j_1,k_1},f^\beta_{j_3,k_3},f^\gamma_{j_4,k_4}]+\widetilde{\mathcal{C}^{[2]}_{l,r}}[f^\mu_{j_1,k_1},\partial_sf^\beta_{j_3,k_3},f^\gamma_{j_4,k_4}]+\widetilde{\mathcal{C}^{[2]}_{l,r}}[f^\mu_{j_1,k_1},f^\beta_{j_3,k_3},\partial_sf^\gamma_{j_4,k_4}]\Big\},
\end{split}
\end{equation*}
where the operators $\widetilde{\mathcal{J}^{[2]}_{l,r}}$ and $\widetilde{\mathcal{C}^{[2]}_{l,r}}$ are defined in the same way as the operators $\mathcal{J}^{[2]}_{l,r}$ and $\mathcal{C}^{[2]}_{l,r}$, but with $\varphi_p(\widetilde{\Phi}(\xi,\eta,\sigma))$ replaced by $\widetilde{\varphi}_p(\widetilde{\Phi}(\xi,\eta,\sigma))$, $\widetilde{\varphi}_p(x)=2^px^{-1}\varphi_p(x)$, (see the formula \eqref{tla61}). It suffices to prove that for any $s\in I_m$ and $r\geq -11\delta m$,
\begin{equation}\label{tla64}
2^{j-50\delta j}\Vert Q_{jk}\widetilde{\mathcal{J}^{[2]}_{l,r}}[f,g,h]\Vert_{L^2}\lesssim 2^{-12\delta m},
\end{equation}
where $[f,g,h]=[f^\mu_{j_1,k_1},f^\beta_{j_3,k_3},f^\gamma_{j_4,k_4}](s)$ or $[f,g,h]=[2^m\partial_sf^\mu_{j_1,k_1},f^\beta_{j_3,k_3},f^\gamma_{j_4,k_4}](s)$ or $[f,g,h]=[f^\mu_{j_1,k_1},2^m\partial_sf^\beta_{j_3,k_3},f^\gamma_{j_4,k_4}](s)$ or $[f,g,h]=[f^\mu_{j_1,k_1},f^\beta_{j_3,k_3},2^m\partial_sf^\gamma_{j_4,k_4}](s)$.

{\bf{Step 2.}} As in the proof of Lemma \ref{PhiLocLem}, the function $\widetilde{\varphi}_r(\widetilde{\Phi}(\xi,\eta,\sigma))$ can be incorporated with the phase $e^{is\widetilde{\Phi}(\xi,\eta,\sigma)}$, using the formula \eqref{loca2} and the fact that $2^{-r}\leq 2^{11\delta m}$. Then we integrate the variable $\sigma$ and denote by $H_1$, $H_2$, and $H_3$ the resulting functions,
\begin{equation*}
\begin{split}
&H_1:=I^{[2]}[f^\beta_{j_3,k_3}(s),f^\gamma_{j_4,k_4}(s)],\,\,\,H_2:=I^{[2]}[\partial_sf^\beta_{j_3,k_3}(s),f^\gamma_{j_4,k_4}(s)],\,\,\,H_3:=I^{[2]}[f^\beta_{j_3,k_3}(s),\partial_sf^\gamma_{j_4,k_4}(s)],\\
&\mathcal{F}\{I^{[2]}[g,h]\}(\eta):=\int_{\mathbb{R}^2}e^{i(s+\lambda)\Phi_{\nu\beta\gamma}(\eta,\sigma)}\chi^{[2]}(\eta,\sigma)\varphi_{k_2}(\eta)\mathfrak{m}_{\nu\beta\gamma}(\eta,\sigma)\widehat{g}(\eta-\sigma)\widehat{h}(\sigma)\,d\sigma.
\end{split}
\end{equation*}
We claim that
\begin{equation}\label{tla65}
\|H_1\|_{L^2}+2^m\|H_2\|_{L^2}+2^m\|H_3\|_{L^2}\lesssim 2^{-5m/6+10\delta m}.
\end{equation}
Notice that the bound on $H_1$ is already proved (in a stronger form) in the proof of \eqref{Dbound5.2}. The bounds on $H_2$ and $H_3$ follow in the same way from the $L^2\times L^\infty$ argument: indeed, we have $\|\partial_sf^\beta_{j_3,k_3}(s)\|_{L^2}+\|\partial_sf^\gamma_{j_4,k_4}(s)\|_{L^2}\lesssim 2^{-m+7\delta m}$ (due to \eqref{vd7}). Then we notice that we can remove the factor $\varphi(2^{20\delta m}\Theta_\beta(\eta,\sigma))$ from the multiplier $\chi^{[2]}(\eta,\sigma)$, at the expenses of a small error (due to Lemma \ref{RotIBP} and \eqref{tla60.5}). The desired bounds in \eqref{tla65} follow using the $L^2\times L^\infty$ argument with Lemma \ref{PhiLocLem}.

{\bf{Step 3.}} We prove now \eqref{tla64} for $[f,g,h]=[f^\mu_{j_1,k_1},f^\beta_{j_3,k_3},f^\gamma_{j_4,k_4}](s)$. It suffices to show that
\begin{equation}\label{tla66}
2^{4\overline{k}}2^{m-30\delta m}\big\|S[f^\mu_{j_1,k_1}(s),H_1]\big\|_{L^2}\lesssim 1
\end{equation}
for any $s\in I_m$, where
\begin{equation}\label{tla66.5}
\mathcal{F}\{S[f,g]\}(\xi):=|\varphi_k(\xi)|\int_{\mathbb{R}^2}\big|\widehat{f}(\xi-\eta)\varphi(\kappa_{\theta}^{-1}\Theta(\xi,\eta))2^{-l}\widetilde{\varphi}_{l}(\Phi(\xi,\eta))\varphi_{[k_2-2,k+2]}(\eta)\widehat{g}(\eta)\big|\,d\eta.
\end{equation}
This follows using Schur's lemma, the bound \eqref{tla65}, and Proposition \ref{volume} (iii). Indeed, we have $|\nabla_\eta\Phi(\xi,\eta)|+|\nabla_\xi\Phi(\xi,\eta)|\gtrsim 2^{-4\delta m}$ in the support of the integral (due to the location of space-time resonances), therefore the left-hand side of \eqref{tla66} is dominated by
\begin{equation*}
C2^{4\overline{k}}2^{m-30\delta m}2^{-l}(2^{10\overline{k}}\kappa_\theta 2^{3l/4}2^{4\delta m})\|\widehat{f^\mu_{j_1,k_1}}(s)\|_{L^\infty}\|\widehat{H_1}\|_{L^2}\lesssim 2^{30\overline{k}}2^{-l/4}2^{-m/3}.
\end{equation*}
This suffices to prove \eqref{tla66} since $2^{-l}\leq 2^m$. Moreover, \eqref{tla64} follows in the same way for $[f,g,h]=[f^\mu_{j_1,k_1},2^m\partial_sf^\beta_{j_3,k_3},f^\gamma_{j_4,k_4}](s)$ or $[f,g,h]=[f^\mu_{j_1,k_1},f^\beta_{j_3,k_3},2^m\partial_sf^\gamma_{j_4,k_4}](s)$, since the $L^2$ bounds on $2^mH_2$ and $2^mH_3$ are the same as for $H_1$.

It remains to prove \eqref{tla64} for $[f,g,h]=[2^m\partial_sf^\mu_{j_1,k_1},f^\beta_{j_3,k_3},f^\gamma_{j_4,k_4}](s)$. It suffices to prove that
\begin{equation}\label{tla67}
2^{4\overline{k}}2^{m-30\delta m}\big\|S[2^m\partial_sf^\mu_{j_1,k_1}(s),H_1]\big\|_{L^2}\lesssim 1,
\end{equation}
for any $s\in I_m$. Let $f=2^m\partial_sf^\mu_{j_1,k_1}(s)$ and $f_{2\gamma_0}:=A_{\geq \D-11,2\gamma_0} f$. We decompose, using \eqref{vd70.1},
\begin{equation*}
f=f_{2\gamma_0}+f_2+f_\infty,\quad\|f_{2\gamma_0}\|_{L^2}\lesssim 2^{7\delta m},\quad \|f_2\|_{L^2}\lesssim 2^{-m/2+5\delta m},\quad \|\widehat{f_\infty}\|_{L^\infty}\lesssim 2^{3\overline{k}+15\delta m}.
\end{equation*}
The contribution of $f_\infty$ can be estimated as before, using Schur's lemma, \eqref{tla65}, and Proposition \ref{volume} (iii). To estimate the other contributions, we also use the bound (see \eqref{vd70})
\begin{equation*}
\|\widehat{H_{1,\infty}}\|_{L^\infty}\lesssim 2^{3\overline{k}}2^{-m+14\delta m}\quad\text{ where }\quad H_1=H_{1,2\gamma_0}+H_{1,\infty}=A_{\geq \D+1,2\gamma_0}H_1+A_{\leq \D,2\gamma_0}H_1.
\end{equation*}
As before, we use Schur's test and Proposition \ref{volume} (iii), together with the fact that space-time resonances are possible only when $|\xi|,|\eta|,|\xi-\eta|$ are all close to either $\gamma_1$ or $\gamma_1/2$. We estimate
\begin{equation*}
\begin{split}
\big\|S[f_2,H_{1,\infty}]\big\|_{L^2}&\lesssim 2^{-l}(2^{12\overline{k}}\kappa_\theta 2^{3l/4}2^{4\delta m})\|\widehat{f_2}\|_{L^2}\|\widehat{H_{1,\infty}}\|_{L^\infty}\lesssim 2^{20\overline{k}}2^{-l/4}2^{-2m+40\delta m},\\
\big\|S[f_{2\gamma_0},H_{1,\infty}]\big\|_{L^2}&\lesssim 2^{-l}(2^{12\overline{k}}\kappa_\theta 2^{3l/4}2^{4\delta m})\|\widehat{f_{2\gamma_0}}\|_{L^2}\|\widehat{H_{1,\infty}}\|_{L^\infty}\lesssim 2^{20\overline{k}}2^{-l/4}2^{-3m/2+40\delta m},\\
\big\|S[f_2,H_{1,2\gamma_0}]\big\|_{L^2}&\lesssim 2^{-l}(2^{10\overline{k}}\kappa_\theta 2^l2^{4\delta m})^{1/2}\|\widehat{f_2}\|_{L^2}\|\widehat{H_{1,2\gamma_0}}\|_{L^2}\lesssim 2^{15\overline{k}}2^{-l/2}2^{-19m/12+20\delta m},\\
S[f_{2\gamma_0},H_{1,2\gamma_0}]&\equiv 0.
\end{split}
\end{equation*}
These bounds suffice to prove \eqref{tla67}, which completes the proof of the lemma.
\end{proof}

\subsection{Higher order terms}\label{Sec:Z1Norm2}

In this subsection we consider the higher order components in the Duhamel formula \eqref{duhamel} and show how to control their $Z$ norms.

\begin{proposition}\label{rok1} With the hypothesis in Proposition \ref{bootstrap}, for any $t\in[0,T]$ we have 
\begin{equation}\label{rok2}
\|W_3(t)\|_{Z}+\Big\|\int_{0}^{t}e^{is\Lambda}\mathcal{N}_{\geq 4}(s)\,ds\Big\|_{Z}\lesssim \varepsilon_1^{2}.
\end{equation} 
\end{proposition}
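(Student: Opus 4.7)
The two estimates in \eqref{rok2} are handled by entirely different mechanisms, the second being nearly immediate and the first requiring a trilinear analysis patterned after, but strictly simpler than, the proof of Proposition \ref{ZNormProp}.

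For the higher-order contribution, since the $Z$ and $B_j$ norms are genuine norms (defined as suprema of $L^2$-type quantities over the atomic pieces $D^\alpha \Omega^m Q_{jk}A^{(j)}_{n,\gamma_1}$, cf.~Definition \ref{MainZDef}), Minkowski's inequality combined with the hypothesis \eqref{bootstrap2} gives
\begin{equation*}
\Big\|\int_0^t e^{is\Lambda}\mathcal{N}_{\geq 4}(s)\,ds\Big\|_Z \le \int_0^t \|e^{is\Lambda}\mathcal{N}_{\geq 4}(s)\|_Z\,ds \lesssim \varepsilon_1^2 \int_0^t (1+s)^{-1-\delta^2}\,ds \lesssim \varepsilon_1^2,
\end{equation*}
which is the desired bound.

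For the cubic term $W_3(t)$, I would proceed exactly as for $W_2$ in subsection \ref{QuadInt}: fix the dyadic partition of time $\{q_m\}_{m=0}^{L+1}$ from \eqref{nh2}, commute $D^\alpha \Omega^a$ through using \eqref{OmegaAN3}, and reduce matters to a trilinear analogue of Proposition \ref{ZNormProp}, namely an estimate of the form
\begin{equation*}
\sum_{k_1,k_2,k_3\in\mathbb{Z}}\|Q_{jk} T^{\mu\nu\beta}_{m}[P_{k_1}f_\mu, P_{k_2}f_\nu, P_{k_3}f_\beta]\|_{B_j} \lesssim 2^{-\delta^2 m}\varepsilon_1^2,
\end{equation*}
where $T^{\mu\nu\beta}_m$ is the natural trilinear analogue of \eqref{nh3} (with $\widetilde{\Phi}_{+\mu\nu\beta}$ and $\mathfrak{n}^b_{\mu\nu\beta}$ replacing $\Phi_{+\mu\nu}$ and $\mathfrak{m}_{\mu\nu}$), and $f_\mu = \varepsilon_1^{-1}D^{\alpha_i}\Omega^{a_i}\mathcal{V}_\mu$ satisfy \eqref{ni3.11}. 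The preliminary reductions analogous to Lemma \ref{ZNormEstSimpleLem1}, based on the cubic symbol bounds \eqref{Assumptions3}, the Sobolev decay in \eqref{bootstrap2}, and the approximate finite speed of propagation argument of Lemma \ref{FSPLem} (integration by parts in $\xi$, which controls the case $j \gg m$), confine attention to a range in which $\max_i|k_i|$ and $j$ are all of size $O(m)$.

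In this reduced range, the estimates are much easier than for $W_2$ because of the extra factor of $\varepsilon_1$ in the cubic term: an $L^2 \times L^\infty \times L^\infty$ H\"older estimate combined with the pointwise decay \eqref{LinftyBd3.5} gives, for bulk frequencies $|k_i| = O(1)$, the bound
\begin{equation*}
\|P_k T^{\mu\nu\beta}_m[P_{k_1}f_\mu, P_{k_2}f_\nu, P_{k_3}f_\beta]\|_{L^2} \lesssim \varepsilon_1^3 \cdot 2^m \cdot 2^{-5m/6 + O(\delta m)} \cdot 2^{-5m/6 + O(\delta m)} \lesssim \varepsilon_1^3 2^{-2m/3 + O(\delta m)},
\end{equation*}
which, even after weighting by $2^{(1-50\delta)j}$ with $j \lesssim m$, sums in $m$ to $O(\varepsilon_1^3)$ with substantial room to spare. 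The main obstacle is the interplay between the output localization near $|\xi|=\gamma_1$ built into the $B_j$ norm and the degenerate input configurations in which two or three input frequencies cluster near the slow-decay sphere $\{|\xi|=\gamma_0\}$; the naive decay $t^{-5/6}$ is not available there. These configurations are handled by the sharper dispersive bounds \eqref{LinftyBd2}--\eqref{LinftyBd3}, by integration by parts in the angular variable via Lemma \ref{RotIBP} where needed, and by the volume estimates of Proposition \ref{spaceres11} together with the iterated-resonance geometry of Lemma \ref{cubicphase}. Crucially, no integration by parts in $s$ of the type performed in Lemmas \ref{lLargeBBound} and \ref{lsmallBound} is required: the extra $\varepsilon_1$ already supplies the surplus needed to close every case.
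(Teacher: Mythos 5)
The estimate on the quartic remainder is correct, and it is what the paper does: the $Z$ norm is a genuine norm (a supremum over linear operators composed with $L^2$), so Minkowski applies and the hypothesis \eqref{bootstrap2} closes the argument.

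The treatment of $W_3$ has a genuine gap. You claim that after the preliminary reductions the naive $L^2\times L^\infty\times L^\infty$ bound $\|\ldots\|_{L^2}\lesssim 2^m\cdot 2^{-5m/6+O(\delta m)}\cdot 2^{-5m/6+O(\delta m)}=2^{-2m/3+O(\delta m)}$ ``even after weighting by $2^{(1-50\delta)j}$ with $j\lesssim m$, sums in $m$.'' It does not. For $j$ in the range $[2m/3+\D, m+\D+[k]/2]$, which the approximate finite speed of propagation does \emph{not} eliminate, one has $2^{(1-50\delta)j}\cdot 2^{-2m/3+O(\delta m)}\approx 2^{m/3+O(\delta m)}$, which diverges exponentially in $m$. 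This is exactly the content of the paper's reduction \eqref{rok5.8}: the naive bound only closes when $j\leq 2m/3+[k]/2+\D^2$, and the whole point of the remaining steps is to deal with the regime $j\in[2m/3,m]$.

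Consequently your claim that ``no integration by parts in $s$ of the type performed in Lemmas \ref{lLargeBBound} and \ref{lsmallBound} is required: the extra $\varepsilon_1$ already supplies the surplus'' is wrong. The extra $\varepsilon_1$ improves only the constant, not the $m$-growth of the $\varepsilon_1$-independent factor. In the troublesome range the paper decomposes the operator in the cubic modulation $\widetilde{\Phi}=\widetilde{\Phi}_{+\mu\nu\beta}$ as $C_m=\sum_p C_{m,p}$. For $p\geq -2m/3$ it integrates by parts in $s$ exactly as in Step 1 of the proof of Lemma \ref{lLargeBBound}, using the $L^2$ bound \eqref{vd7} on $\partial_s\mathcal{V}$ to recover the missing $2^{-m/3}$. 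For $p\leq -2m/3$, together with the localization $|\nabla_{\eta,\sigma}\widetilde{\Phi}|\lesssim\kappa$, it invokes the slow-propagation-of-iterated-resonances statement Lemma \ref{cubicphase} (i) (and, in a degenerate low-frequency sub-case, the corresponding structure of $\widetilde{\Phi}$ after restricting $|\eta|$) to conclude that $|\nabla_\xi\widetilde{\Phi}|$ is small, so integration by parts in $\xi$ gives rapid decay. Your proposal does mention Lemma \ref{cubicphase} for the ``degenerate input configurations,'' but you have not isolated the range $j\in[2m/3,m]$ where this is forced, nor the modulation split into $p\gtrless -2m/3$, and you have explicitly (and incorrectly) disclaimed the need for IBP in $s$, which is the mechanism that handles the large-modulation half.
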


The rest of this section is concerned with the proof of Proposition \ref{rok1}. The bound on $\mathcal{N}_{\geq 4}$ follows directly from the hypothesis $\|e^{is\Lambda}\mathcal{N}_{\geq 4}(s)\|_{Z}\leq \varep_{1}^2(1+s)^{-1-\delta^2}$, see \eqref{bootstrap2}. To prove the bound on $W_3$ we start from the formula 
\begin{equation}\label{rok3}
\begin{split}
\Omega^a_{\xi}\widehat{W_3}(\xi,t)=\sum_{\mu,\nu,\beta\in\{+.-\}}&\sum_{a_1+a_2+a_3=a}\int_{0}^{t}\int_{\mathbb{R}^2\times\mathbb{R}^2}e^{is\widetilde{\Phi}_{+\mu\nu\beta}(\xi,\eta,\sigma)}\mathfrak{n}_{\mu\nu\beta}(\xi,\eta,\sigma)\\
&\times(\Omega^{a_1}\widehat{\mathcal{V}_{\mu}})(\xi-\eta,s)(\Omega^{a_2}\widehat{\mathcal{V}_{\nu}})(\eta-\sigma,s)(\Omega^{a_3}\widehat{\mathcal{V}_{\beta}})(\sigma,s)\,d\eta d\sigma ds.
\end{split}
\end{equation}

We define the functions $q_m$ as in \eqref{nh2} and the trilinear operators $C_m=C^{\mu\nu\beta}_{m,b}$
\begin{equation}\label{rok4}
\begin{split}
\mathcal{F}\big\{C_m[f,g,h]\big\}(\xi):=&\int_{\mathbb{R}}q_m(s)\int_{\mathbb{R}^2\times\mathbb{R}^2}e^{is\widetilde{\Phi}(\xi,\eta,\sigma)} n_0(\xi,\eta,\sigma)\widehat{f}(\xi-\eta,s)\widehat{g}(\eta-\sigma,s)\widehat{h}(\sigma,s)\,d\eta d\sigma ds,
\end{split}
\end{equation}
where $\widetilde{\Phi}:=\widetilde{\Phi}_{+\mu\nu\beta}$ and $n_0:=\mathfrak{n}_{\mu\nu\beta}$. It remains to prove that, for any $(k,j)\in\mathcal{J}$ and $m\in[0,L+1]$,
\begin{equation}\label{rok5}
\sum_{k_1,k_2,k_3\in\mathbb{Z}}2^{j-50\delta j}\big\|Q_{jk}C_m[P_{k_1}D^{\alpha_1}\Omega^{a_1}\mathcal{V}_{\mu},P_{k_2}D^{\alpha_2}\Omega^{a_2}\mathcal{V}_{\nu},P_{k_3}D^{\alpha_3}\Omega^{a_3}\mathcal{V}_{\beta}]\big\|_{L^2}\lesssim 2^{-\delta^2m}\varepsilon_1^3
\end{equation} 
for any $\mu,\nu,\beta\in\{+,-\}$, provided that $a_1+a_2+a_3=a$ and $\alpha_1+\alpha_2+\alpha_3=\alpha$. Let
\begin{equation}\label{rok6}
f^\mu:=\varepsilon^{-1}D^{\alpha_1}\Omega^{a_1}\mathcal{V}_{\mu},\quad f^\nu:=\varepsilon^{-1}D^{\alpha_2}\Omega^{a_2}\mathcal{V}_{\nu},\quad 
f^\beta:=\varepsilon^{-1}D^{\alpha_3}\Omega^{a_3}\mathcal{V}_{\beta}.
\end{equation}
The bootstrap assumption \eqref{bootstrap2} gives, for any $s\in[0,t]$ and $\gamma\in\{\mu,\nu,\beta\}$,
\begin{equation}\label{rok7}
\Vert f^{\gamma}(s)\Vert_{H^{N'_0}\cap Z_1\cap H^{N'_1}_\Omega}\lesssim (1+s)^{\delta^2}.
\end{equation}

Simple estimates, as in the proof of Lemma \ref{ZNormEstSimpleLem1}, show that the parts of the sum in 
\eqref{rok5} over $\max(k_1,k_2,k_3)\geq 2(j+m)/N'_0-\D^2$ or over $\min(k_1,k_2,k_3)\leq -(j+m)/2$ are bounded as claimed. For \eqref{rok5} it remains to prove that
\begin{equation}\label{rok5.5}
2^{j-50\delta j}\big\|Q_{jk}C_m[P_{k_1}f^\mu,P_{k_2}f^{\nu},P_{k_3}f^{\beta}]\big\|_{L^2}\lesssim 2^{-2\delta^2m-\delta^2j}
\end{equation} 
for any fixed $m\in[0,L+1]$, $(k,j)\in\mathcal{J}$, and $k_1,k_2,k_3\in\mathbb{Z}$ satisfying
\begin{equation}\label{rok5.6}
k_1,k_2,k_3\in[-(j+m)/2,2(j+m)/N'_0-\D^2].
\end{equation}

Let $\overline{k}:=\max(k,k_{1},k_{2},k_{3},0)$, $\underline{k}:=\min(k,k_{1},k_{2},k_{3})$ and $[k]:=\max(|k|,|k_1|,|k_2|,|k_3|)$. The $S^\infty$ bound in \eqref{Assumptions3} and Lemma \ref{touse} (ii) show that
\begin{equation}\label{rok5.7}
\begin{split}
\big\|C_m&[P_{k_1}f^\mu,P_{k_2}f^{\nu},P_{k_3}f^{\beta}]\big\|_{L^2}\\
&\lesssim 2^{\underline{k}/2}2^{3\overline{k}}2^m\sup_{s\in I_m}\|e^{-is\Lambda_\mu}P_{k_1}f^\mu\|_{L^{p_1}}\|e^{-is\Lambda_\nu}P_{k_2}f^\nu\|_{L^{p_2}}\|e^{-is\Lambda_\beta}P_{k_3}f^\beta\|_{L^{p_3}},
\end{split}
\end{equation}
if $p_1,p_2,p_3\in\{2,\infty\}$ and $1/p_1+1/p_2+1/p_3=1/2$. The desired bound \eqref{rok5.5} follows unless
\begin{equation}\label{rok5.8}
j\geq 2m/3+[k]/2+\D^2,
\end{equation}
using the pointwise bounds in \eqref{LinftyBd3.5}. Also, by estimating $\|P_kH\|_{L^2}\lesssim 2^{k}\|P_kH\|_{L^1}$, and using a bound similar to \eqref{rok5.7}, the desired bound \eqref{rok5.5} follows unless
\begin{equation}\label{rok5.9}
k\geq -(2/3)(j+m/6+\delta m).
\end{equation} 

Next, we notice that if $j\geq m+\D+[k]/2$, and \eqref{rok5.9} holds then the desired bound \eqref{rok5.5} follows. Indeed, we use the approximate finite speed of propagation argument as in the proof of \eqref{Alx24}. First we define $f^\mu_{j_1,k_1}, f^\nu_{j_2,k_2},f^\beta_{j_3,k_3}$ as in \eqref{Alx25.5}. Then we notice that the contribution in the case $\min(j_1,j_2,j_3)\geq 9j/10$ is suitably controlled, due to \eqref{rok5.7}. On the other and, if
\begin{equation*}
\min(j_1,j_2,j_3)\leq 9j/10,
\end{equation*} 
then we may assume that $j_1\leq 9j/10$ (using changes of variables) and it follows that the contribution is negligible, using integration by parts in $\xi$ as before. To summarize, in proving \eqref{rok5.5} we may assume that
\begin{equation}\label{rok5.10}
2m/3+[k]/2+\D^2\leq j\leq m+\D+[k]/2,\quad \max(j,[k])\leq 2m+2\D,\quad \overline{k}\leq 6m/N'_0.
\end{equation}

We define now the functions $f^\mu_{j_1,k_1}, f^\nu_{j_2,k_2},f^\beta_{j_3,k_3}$ as in \eqref{Alx25.5}. The contribution in the case $\max(j_1,j_2,j_3)\geq 2m/3$ can be bounded using \eqref{rok5.7}. On the other hand, if $\max(j_{1},j_{2},j_{3})\leq 2m/3$ then we can argue as in the proof of Lemma \ref{lLargeBBound} when $2^l\approx 1$. More precisely, we define
\begin{equation}\label{rok5.12}
g_1:=A_{\geq \D_1,\gamma_0}f^\mu_{j_1,k_1},\qquad g_2:=A_{\geq \D_1-10,\gamma_0}f^\nu_{j_2,k_2},\qquad A_{\geq \D_1-20,\gamma_0}f^\beta_{j_3,k_3}.
\end{equation} 
As in the proof of Lemma \ref{lLargeBBound}, see \eqref{hu20.5}--\eqref{hu22}, (and after inserting cutoff functions of the form $\varphi_{\leq l}(\eta)$ and $\varphi_{>l}(\eta)$, $l=m-\delta m$, to bound the other terms) for \eqref{rok5.5} it suffices to prove that
\begin{equation}\label{rok5.15}
2^{j-50\delta j}\big\|Q_{jk}C_m[g_1,g_2,g_3]\big\|_{L^2}\lesssim 2^{-\delta m}.
\end{equation}

In proving \eqref{rok5.15}, we may assume that $\max(j_1,j_2,j_3)\leq m/3$ and $m\leq L$ (otherwise we could use directly \eqref{rok5.7}) and that $k\geq -100$ (otherwise the contribution is negligible, by integrating by parts in $\eta$ and $\sigma$). Therefore, using \eqref{rok5.10}, we may assume that
\begin{equation}\label{rok5.16}
[k]\leq 100,\qquad m\leq L,\qquad 2m/3+\D^2\leq j\leq m+2\D,\qquad j_1,j_2,j_3\in[0,m/3].
\end{equation}

As in the proof of Lemma \ref{lLargeBBound}, we decompose the operator $C_m$ in dyadic pieces depending on the size of the modulation. More precisely, let
\begin{equation*}
\widehat{\mathcal{J}_{p}[f,g,h]}(\xi,s):=\int_{\mathbb{R}^2\times\mathbb{R}^2}e^{is\widetilde{\Phi}(\xi,\eta,\sigma)}\varphi_p(\widetilde{\Phi}(\xi,\eta,\sigma))n_0(\xi,\eta,\sigma)\widehat{f}(\xi-\eta,s)\widehat{g}(\eta-\sigma,s)\widehat{h}(\sigma,s)\,d\sigma d\eta.
\end{equation*}
Let $\mathcal{J}_{\leq p}=\sum_{q\leq p}\mathcal{J}_{q}$ and
\begin{equation*}
C_{m,p}[f,g,h]:=\int_{\mathbb{R}}q_m(s)\mathcal{J}_{l,p}[f,g,h](s)\,ds.
\end{equation*}
For $p\geq -2m/3$ we integrate by parts in $s$. As in {\bf{Step 1}} in the proof of Lemma \ref{lLargeBBound}, using also the $L^2$ bound \eqref{vd7}, it follows easily that
\begin{equation*}
2^{j-50\delta j}\sum_{p\geq -2m/3}\big\|P_kC_{m,p}[g_1,g_2,g_3]\big\|_{L^2}\lesssim 2^{-\delta m}.
\end{equation*}

To complete the proof of \eqref{rok5.15}, it suffices to show that
\begin{equation}\label{rok5.20}
2^{j-50\delta j}2^m\sup_{s\in I_m}\big\|Q_{jk}\mathcal{J}_{\leq -m/2}[g_1,g_2,g_3](s)\big\|_{L^2}\lesssim 2^{-\delta m}.
\end{equation}
Let $\kappa=2^{-m/3}$ and define the operators $\mathcal{J}_{\leq -m/2,\leq 0}$ and $\mathcal{J}_{\leq -m/2,l}$ by inserting the factors $\varphi(\kappa^{-1}\nabla_{\eta,\sigma}\widetilde{\Phi}(\xi,\eta,\sigma))$ and $\varphi_{l}(\kappa^{-1}\nabla_{\eta,\sigma}\widetilde{\Phi}(\xi,\eta,\sigma))$, $l\geq 1$, in the definition of the operators $\mathcal{J}_p$ above. The point is to observe that $|\nabla_\xi\widetilde{\Phi}(\xi,\eta,\sigma)|\leq 2^{-m/3+\D}$ in the support of the integral defining the operator $\mathcal{J}_{\leq -m/2,\geq 0}$, due to Lemma \ref{cubicphase} (i). Since $j\geq 2m/3+\D^2$, see \eqref{rok5.16}, the contribution of this operator is negligible, using integration by parts in $\xi$.

To estimate the operators $\mathcal{J}_{\leq -m/2,l}$ notice that we may insert a factor of $\varphi(2^{2m/3+l-\delta m}\eta)$, at the expense of a negligible error (due to Lemma \ref{tech5} (i)). To summarize, we define
\begin{equation*}
\begin{split}
\widehat{\mathcal{J}'_{\leq -m/2,l}[f,g,h]}&(\xi,s):=\int_{\mathbb{R}^2\times\mathbb{R}^2}e^{is\widetilde{\Phi}(\xi,\eta,\sigma)}\varphi_{l}(\kappa^{-1}\nabla_{\eta,\sigma}\widetilde{\Phi}(\xi,\eta,\sigma))\varphi_{\leq -m/2}(\widetilde{\Phi}(\xi,\eta,\sigma))\\
&\times \varphi(2^{2m/3+l-\delta m}\eta)n_0(\xi,\eta,\sigma)\widehat{f}(\xi-\eta,s)\widehat{g}(\eta-\sigma,s)\widehat{h}(\sigma,s)\,d\sigma d\eta,
\end{split}
\end{equation*}
and it remains to show that, for $l\geq 1$ and $s\in I_m$,
\begin{equation}\label{rok5.25}
2^{j-50\delta j}2^m\big\|Q_{jk}\mathcal{J}'_{\leq -m/2,l}[g_1,g_2,g_3](s)\big\|_{L^2}\lesssim 2^{-2\delta m}.
\end{equation}

Using $L^\infty$ estimates in the Fourier space, \eqref{rok5.25} follows when $l\geq m/3-\delta m$, since $2^j\lesssim 2^m$ (see \eqref{rok5.16}). On the other hand, if $l\leq m/3-\delta m$ then the operator is nontrivial only if 
\begin{equation*}
\widetilde{\Phi}(\xi,\eta,\sigma)=\Lambda(\xi)-\Lambda(\xi-\eta)-\Lambda_\nu(\eta-\sigma)+\Lambda_\nu(\sigma),\qquad\nu\in\{+,-\},
\end{equation*}
due to the smallness of $|\eta|$, $|\nabla_{\sigma}\widetilde{\Phi}(\xi,\eta,\sigma)|$, and $|\widetilde{\Phi}(\xi,\eta,\sigma)|$ (recall the support restrictions in \eqref{rok5.12}). In this case $|\nabla_\xi\widetilde{\Phi}(\xi,\eta,\sigma)|\leq 2^{-m/2}$ in the support of the integral, and the contribution is again negligible using integration by parts in $\xi$. This completes the proof of Proposition \ref{rok1}.

\section{Analysis of phase functions}\label{phacolle}

In this section we collect and prove some important facts about the phase functions $\Phi$.

\subsection{Basic properties}  Recall that 
\begin{equation}\label{ph1}
\begin{split}
&\Phi(\xi,\eta)=\Phi_{\sigma\mu\nu}(\xi,\eta)=\Lambda_\sigma(\xi)-\Lambda_{\mu}(\xi-\eta)-\Lambda_{\nu}(\eta),\qquad\sigma,\mu,\nu\in \{+,-\},\\
&\Lambda_\kappa(\xi)=\lambda_\kappa(|\xi|)=\kappa\lambda(|\xi|)=\kappa\sqrt{|\xi|+|\xi|^3}.
\end{split}
\end{equation}
We have
\begin{equation}\label{ph2}
\begin{split}
&\lambda'(x)=\frac{1+3x^2}{2\sqrt{x+x^3}},\qquad \lambda''(x)=\frac{3x^4+6x^2-1}{4(x+x^3)^{3/2}},\qquad\lambda'''(x)=\frac{3(1+5x^2-5x^4-x^6)}{8(x+x^3)^{5/2}}.
\end{split}
\end{equation}
Therefore
\begin{equation}\label{ph3}
\lambda''(x)\geq 0\,\text{ if }\,x\geq \gamma_0,\qquad \lambda''(x)\leq 0\,\text{ if }\,x\in[0,\gamma_0],\qquad \gamma_0:=\sqrt{\frac{2\sqrt{3}-3}{3}}\approx 0.393.
\end{equation}
It follows that
\begin{equation}\label{ph3.1}
\lambda(\gamma_0)\approx 0.674,\qquad \lambda'(\gamma_0)\approx 1.086,\qquad \lambda'''(\gamma_0)\approx 4.452,\qquad \lambda''''(\gamma_0)\approx -28.701. 
\end{equation}

Let $\gamma_1:=\sqrt{2}\approx 1.414$ denote the radius of the space-time resonant sphere, and notice that
\begin{equation}\label{kn1}
\lambda(\gamma_1)=\sqrt{3\sqrt 2}\approx 2.060,\quad \lambda'(\gamma_1)=\frac{7}{2\sqrt{3\sqrt 2}}\approx 1.699,\quad \lambda''(\gamma_1)=\frac{23}{4\sqrt{54\sqrt 2}}\approx 0.658.
\end{equation}

The following simple observation will be used many times: if $U_2\geq 1$, $\xi,\eta\in\mathbb{R}^2$, $\max(|\xi|,|\eta|,|\xi-\eta|)\leq U_2$,  $\min(|\xi|,|\eta|,|\xi-\eta|)=a\leq 2^{-10}U_2^{-1}$, then 
\begin{equation}\label{try5.5}
|\Phi(\xi,\eta)|\geq\lambda(a)-\sup_{b\in[a,U_2]}(\lambda(a+b)-\lambda(b))\geq\lambda(a)-a\max\{\lambda'(a),\lambda'(U_2+1)\}\geq\lambda(a)/4.
\end{equation}

\begin{lemma}\label{LambdaBasic}
(i) The function $\lambda'$ is strictly decreasing on the interval $(0,\gamma_0]$ and strictly increasing on the interval $[\gamma_0,\infty)$, and
\begin{equation}\label{zc8}
\lim_{x\to\infty}\Big[\lambda'(x)-\frac{3\sqrt{x}}{2}\Big]=0,\qquad \lim_{x\to 0}\Big[\lambda'(x)-\frac{1}{2\sqrt{x}}\Big]=0.
\end{equation}
The function $\lambda'$ is concave up on the interval $(0,1]$ and concave down on the interval $[1,\infty)$. For any $y>\lambda'(\gamma_0)$ the equation $\lambda'(r)=y$ has two solutions $r_1(y)\in(0,\gamma_0)$ and $r_2(y)\in(\gamma_0,\infty)$.

(ii) If $a\neq b\in(0,\infty)$ then
\begin{equation}\label{Bas1}
\lambda'(a)=\lambda'(b)\quad\text{ if and only if }\quad (a-b)^2=\frac{(3ab+1)(3a^2b^2+6ab-1)}{1-9ab}.
\end{equation}
In particular, if $a\neq b\in(0,\infty)$ and $\lambda'(a)=\lambda'(b)$ then $ab\in(1/9,\gamma_0^2]$.

(iii) Let $b:[\gamma_0,\infty)\to(0,\gamma_0]$ be the implicit function defined by $\lambda'(a)=\lambda'(b(a))$. Then $b$ is a smooth decreasing function and\footnote{In a neighborhood of $\gamma_0$, $\lambda'(x)$ behaves like $A+B(x-\gamma_0)^2-C(x-\gamma_0)^3$, where $A,B,C>0$. The asymptotics described in \eqref{Bas1.5}--\eqref{Bas1.6} are consistent with this behaviour.}
\begin{equation}\label{Bas1.5}
\begin{split}
&b'(a)\in[-1,-b(a)/a],\qquad a+b(a)\text{ is increasing on }[\gamma_0,\infty),\qquad b(a)\approx 1/a,\\
&-b'(a)\approx 1/a^2,\qquad b'(a)+1\approx (a-\gamma_0)/a.
\end{split}
\end{equation}
In particular, 
\begin{equation}\label{Bas1.7}
a+b(a)-2\gamma_0\approx \frac{(a-\gamma_0)^2}{a}.
\end{equation}
Moreover,
\begin{equation}\label{Bas1.6}
-[\lambda''(b(a))+\lambda''(a)]\approx a^{-1/2}(a-\gamma_0)^2.
\end{equation}

(iv) If $a,b\in(0,\infty)$ then
\begin{equation}\label{Bas2}
\lambda(a+b)=\lambda(a)+\lambda(b)\quad\text{ if and only if }\quad (a-b)^2=\frac{4+8ab-32a^2b^2}{9ab-4}.
\end{equation}
In particular, if $a, b\in(0,\infty)$ and $\lambda(a+b)=\lambda(a)+\lambda(b)$ then $ab\in[4/9,1/2]$. Moreover,
\begin{equation}\label{Bas3}
\begin{split}
&\text{ if }\,\,ab>1/2\,\,\text{ then }\,\,\lambda(a+b)-\lambda(a)-\lambda(b)>0,\\
&\text{ if }\,\,ab<4/9\,\,\text{ then }\,\,\lambda(a+b)-\lambda(a)-\lambda(b)<0.
\end{split}
\end{equation}
\end{lemma}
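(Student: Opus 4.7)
My approach would treat each of the four parts separately, reducing each to explicit computations with the formulas \eqref{ph2} and asymptotic expansions near the critical points $x = 0$, $x = \gamma_0$, and $x = \infty$. For part (i), the monotonicity of $\lambda'$ reduces to the sign of $3x^4+6x^2-1$, which, viewed as a quadratic in $y = x^2$, has a unique positive root at $y = (2\sqrt{3}-3)/3 = \gamma_0^2$. The asymptotics in \eqref{zc8} follow by writing $\lambda'(x) = (1+3x^2)/(2x^{1/2}\sqrt{1+x^2})$ and expanding as $x \to 0$ and $x \to \infty$. The concavity of $\lambda'$ reduces to the sign of the numerator of $\lambda'''$, namely $1+5x^2-5x^4-x^6$; with $y=x^2$ this factors as $-(y-1)(y^2+6y+1)$, and since $y^2+6y+1>0$ for $y>0$, the only sign change is at $x=1$. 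The two-solution statement then follows from strict monotonicity of $\lambda'$ on each of $(0,\gamma_0]$ and $[\gamma_0,\infty)$.

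Parts (ii) and (iv) are purely algebraic. For (ii), clearing denominators, $\lambda'(a)=\lambda'(b)$ is equivalent to $(1+3a^2)^2 b(1+b^2) = (1+3b^2)^2 a(1+a^2)$; factoring $(a-b)$ out of the difference and introducing the symmetric variables $s=a+b$, $p=ab$ yields $s^2(9p-1) = 1 + 15p^2 - 7p - 9p^3$, so that $(a-b)^2 = s^2 - 4p = (9p^3+21p^2+3p-1)/(1-9p)$, which one checks factors as $(3p+1)(3p^2+6p-1)/(1-9p)$. The constraint $ab \in (1/9,\gamma_0^2]$ is then forced by requiring the right-hand side nonnegative, noting $3p^2+6p-1=0$ exactly at $p=\gamma_0^2$. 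For (iv), squaring the relation $\lambda(a+b) = \lambda(a)+\lambda(b)$ and using $(a+b)^3-a^3-b^3 = 3ab(a+b)$ reduces to $9ab(a+b)^2 = 4(1+a^2)(1+b^2)$, which in $(s,p)$-variables becomes $s^2(9p-4) = 4(1-p)^2$; thus $(a-b)^2 = (4+8p-32p^2)/(9p-4)$, matching \eqref{Bas2}. The numerator factors as $4(4p+1)(1-2p)$, forcing $p\in[4/9,1/2]$. The sign statements \eqref{Bas3} follow by continuity: $\lambda(a+b)-\lambda(a)-\lambda(b)$ has constant sign on each of the connected open sets $\{ab>1/2\}$ and $\{ab<4/9\}$, and the diagonal test $\lambda(2t)^2 - 4\lambda(t)^2 = 2t(2t^2-1)$ pins down those signs.

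Part (iii) is the main obstacle, since it combines the implicit function theorem with delicate asymptotic analysis near $\gamma_0$, where $\lambda''(\gamma_0)=0$. Smoothness and strict decrease of $b$ on $(\gamma_0,\infty)$ follow from the implicit function theorem and the formula $b'(a) = \lambda''(a)/\lambda''(b(a))$, since $\lambda''(a)>0$ and $\lambda''(b(a))<0$ for $a>\gamma_0$. The large-$a$ asymptotics $b(a)\approx 1/a$, $b'(a)\approx -1/a^2$ follow from the matched asymptotics $\lambda'(a) \sim 3\sqrt{a}/2$ and $\lambda'(b)\sim 1/(2\sqrt{b})$, yielding $ab\to 1/9$; the constraint $ab \in (1/9,\gamma_0^2]$ from part (ii) then gives $-b'(a) \geq b(a)/a$ and forces $b'(a)\geq -1$ (hence monotonicity of $a+b(a)$) via differentiation. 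The subtle estimates \eqref{Bas1.7} and \eqref{Bas1.6} near $\gamma_0$ I would obtain by Taylor-expanding $\lambda'$ at $\gamma_0$: writing $u = a-\gamma_0$, $w = b-\gamma_0$, the relation $\lambda'(a)=\lambda'(b)$ gives, after dividing by $u-w$,
\begin{equation*}
\frac{\lambda'''(\gamma_0)}{2}(u+w) + \frac{\lambda''''(\gamma_0)}{6}(u^2+uw+w^2) + O(u^3) = 0,
\end{equation*}
so $u+w \approx -\lambda''''(\gamma_0)/(3\lambda'''(\gamma_0)) \cdot u^2$, which is positive by \eqref{ph3.1}; this is exactly \eqref{Bas1.7}. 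Expanding $\lambda''(a)+\lambda''(b) = \lambda'''(\gamma_0)(u+w) + (\lambda''''(\gamma_0)/2)(u^2+w^2) + O(u^3)$ and substituting the value of $u+w$ gives $\lambda''(a)+\lambda''(b) \approx (2\lambda''''(\gamma_0)/3)\,u^2$, which is negative and of order $(a-\gamma_0)^2$. To obtain the uniform statement \eqref{Bas1.6} valid for all $a\geq \gamma_0$, one interpolates this local expansion with the large-$a$ asymptotics $\lambda''(a)\sim 3/(4\sqrt{a})$ and $\lambda''(b(a))\sim -(9a)^{3/2}/4$, which contribute the factor $a^{-1/2}$ and match $a^{3/2} \approx a^{-1/2}\cdot a^2$.
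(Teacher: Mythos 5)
Parts (i), (ii), and (iv) are handled correctly and in more detail than the paper itself (which simply invokes ``elementary arguments''); your algebra for \eqref{Bas1} and \eqref{Bas2} in terms of $s=a+b$, $p=ab$ checks out, and the connectedness-plus-diagonal-test argument for \eqref{Bas3} is a valid alternative to the paper's intermediate-value argument with $G(x)=\lambda(a+x)-\lambda(a)-\lambda(x)$. For the local estimates in (iii) you take the Taylor-expansion route at $\gamma_0$ suggested in the footnote, rather than the paper's route (which integrates $1+b'(x)\approx(x-\gamma_0)/x$ and rewrites $-[\lambda''(b)+\lambda''(a)]=\lambda''(a)\tfrac{1+b'}{-b'}$); both are sound near $\gamma_0$, so that is a legitimate difference in approach.

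The genuine gap is in the first line of \eqref{Bas1.5}, namely the claim $b'(a)\in[-1,-b(a)/a]$, which you justify only by the phrase ``the constraint $ab\in(1/9,\gamma_0^2]$ from part (ii) then gives $-b'(a)\geq b(a)/a$ and forces $b'(a)\geq -1$ \dots via differentiation.'' This does not follow. Knowing that $a\,b(a)$ has range contained in $(1/9,\gamma_0^2]$, with $a\,b(a)\to\gamma_0^2$ as $a\to\gamma_0^+$ and $a\,b(a)\to 1/9$ as $a\to\infty$, says nothing about the sign of $\tfrac{d}{da}(a\,b(a))=b+ab'$ at any intermediate $a$; a priori $ab$ could oscillate. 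The missing ingredient is what the paper supplies: from \eqref{Bas1} one gets $(a+b(a))^2=F(a\,b(a))$ for the explicit rational function $F(Y)=\tfrac{32/81}{9Y-1}-Y^2+\tfrac{14}{9}Y-\tfrac{49}{81}$, and a computation gives $F'(Y)\leq -1/10$ on $(1/9,\gamma_0^2]$. Differentiating and using $a+b>0$, one finds $1+b'$ and $b+ab'$ have opposite signs; combined with $b(a)<a$ (so $-b/a>-1$), the possibility $b+ab'>0$ would force $-1<-b/a<b'<-1$, a contradiction, hence $b'\in[-1,-b/a]$. Without this (or an equivalent argument) the monotonicity of $a+b(a)$ is unproved, and with it goes the rigor of your ``interpolation'' step: the uniform bound in \eqref{Bas1.6} on the compact middle range $[\gamma_0+\epsilon,R]$ needs $-[\lambda''(b(a))+\lambda''(a)]=-(1+b'(a))\lambda''(b(a))$ to be \emph{strictly} positive there, i.e., $b'(a)>-1$ strictly, which is precisely the unproved claim.
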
 

\begin{proof} The conclusions (i) and (ii) follow from \eqref{ph2}--\eqref{ph3.1} by elementary arguments. For part (iii) we notice that, with $Y=ab$.
\begin{equation*}
(a+b(a))^2=F(Y):=\frac{-9Y^3-21Y^2-3Y+1}{9Y-1}+4Y=\frac{32/81}{9Y-1}-Y^2+14Y/9-49/81,
\end{equation*}
as a consequence of \eqref{Bas1}. Taking the derivative with respect to $a$ it follows that
\begin{equation}\label{Bas3.5}
2(a+b(a))(1+b'(a))=[ab'(a)+b(a)]F'(Y).
\end{equation}
Since $F'(Y)\leq-1/10$ for all $Y\in(1/9,\gamma_0^2]$, it follows that $b'(a)\in[-1,-b(a)/a]$ for all $a\in[\gamma_0,\infty)$. The claims in the first line of \eqref{Bas1.5} follow.

The claim $-b'(a)\approx 1/a^2$ follows from the identity $\lambda''(a)-\lambda''(b(a))b'(a)=0$. The last claim in \eqref{Bas1.5} is clear if $a-\gamma_0\gtrsim 1$; on the other hand, if $a-\gamma_0=\rho\ll 1$ then \eqref{Bas3.5} gives 
\begin{equation*}
-\frac{1+b'(a)}{b'(a)+b(a)/a}\approx 1,\qquad \gamma_0-b(a)\approx \rho.
\end{equation*} 
In particular $1-b(a)/a\approx \rho$ and the last conclusion in \eqref{Bas1.5} follows.

The claim in \eqref{Bas1.7} follows by integrating the approximate identity $b'(x)+1\approx (x-\gamma_0)/x$ between $\gamma_0$ and $a$. To prove \eqref{Bas1.6} we recall that $\lambda''(a)-\lambda''(b(a))b'(a)=0$. Therefore
\begin{equation*}
-[\lambda''(b(a))+\lambda''(a)]=-\lambda''(b(a))(1+b'(a))=\lambda''(a)\frac{1+b'(a)}{-b'(a)},
\end{equation*}
and the desired conclusion follows using also \eqref{Bas1.5}.

To prove (iv), we notice that \eqref{Bas2} and the claim that $ab\in[4/9,1/2]$ follow from \eqref{ph2}--\eqref{ph3.1} by elementary arguments. To prove \eqref{Bas3}, let $G(x):=\lambda(a+x)-\lambda(a)-\lambda(x)$. For $a\in(0,\infty)$ fixed we notice that $G(x)>0$ if $x$ is sufficiently large and $G(x)<0$ if $x>0$ is sufficiently small. The desired conclusion follows from the continuity of $G$. 
\end{proof}

\subsection{Resonant sets} We prove now an important proposition describing the geometry of resonant sets.

\begin{proposition}(Structure of resonance sets)\label{spaceres11} The following claims hold:

(i) There are functions $p_{++1}=p_{--1}:(0,\infty)\to (0,\infty)$, $p_{++2}=p_{--2}:[2\gamma_0,\infty)\to (0,\gamma_0]$, $p_{+-1}=p_{-+1}:(0,\infty)\to (\gamma_0,\infty)$ such that, if $\sigma,\mu,\nu\in\{+,-\}$ and $\xi\neq 0$ then
\begin{equation}\label{try1}
(\nabla_\eta\Phi_{\sigma\mu\nu})(\xi,\eta)=0\,\,\text{ if and only if }\,\,\eta\in P_{\mu\nu}(\xi):=\Big\{p_{\mu\nu k}(|\xi|)\frac{\xi}{|\xi|},\xi-p_{\mu\nu k}(|\xi|)\frac{\xi}{|\xi|}:\,k\in\{1,2\}\Big\}.
\end{equation}

(ii) (Space resonances) With $\mathcal{D}_{k,k_1,k_2}$ as in \eqref{Rset}, assume that
\begin{equation}\label{zc1}
(\xi,\eta)\in\mathcal{D}_{k,k_1,k_2}\qquad\text{ and }\qquad |(\nabla_\eta\Phi_{\sigma\mu\nu})(\xi,\eta)|\leq \eps_2\leq 2^{-\D_1}2^{k-\max(k_1,k_2)},
\end{equation}
for some constant $\mathcal{D}_1$ sufficiently large. Then $\big||k_1|-|k_2|\big|\leq 20$ and, for some $p\in P_{\mu\nu}(\xi)$\footnote{The set $P_{\mu\nu}(\xi)$ contains $2$ points if $(\mu,\nu)\in\{(+.-),(-,+)\}$ and at most $3$ points if $(\mu,\nu)\in\{(+.+),(-,-)\}$.},

$\bullet\,\,$ if $|k|\leq 100$ then $\max(|k_1|,|k_2|)\leq 200$ and
\begin{equation}\label{zc2}
\begin{cases}
\text{ either }&\mu=-\nu,\,\,\,\big|\eta-p\big|\lesssim \eps_2,\\
\text{ or }&\mu=\nu,\,\,\,\big|(\eta-p)\cdot\xi^\perp/|\xi|\big|\lesssim \eps_2,\text{ and }\big|(\eta-p)\cdot\xi/|\xi|\big|\lesssim\frac{\eps_2}{\eps_2^{2/3}+\big||\xi|-2\gamma_0\big|};
\end{cases}
\end{equation}

$\bullet\,\,$ if $k\leq -100$ then
\begin{equation}\label{zc2.1}
\begin{cases}
\text{ either }&\mu=-\nu,\,\,k_1,k_2\in[-10,10],\text{ and }\big|\eta-p\big|\lesssim \eps_22^{|k|},\\
\text{ or }&\mu=\nu,\,\,k_1,k_2\in[k-10,k+10],\text{ and }|\eta-\xi/2|\lesssim 2^{-3|k|/2}\eps_2;
\end{cases}
\end{equation}

$\bullet\,\,$ if $k\geq 100$ then
\begin{equation}\label{zc2.2}
\big|\eta-p\big|\lesssim \eps_22^{k/2}.
\end{equation}

(iii) (Space-time resonances) Assume that $(\xi,\eta)\in\mathcal{D}_{k,k_1,k_2}$,
\begin{equation}\label{try1.1}
|\Phi_{\sigma\mu\nu}(\xi,\eta)|\leq\eps_1\leq 2^{-\D_1}2^{\min(k,k_1,k_2,0)/2},\quad |(\nabla_\eta\Phi_{\sigma\mu\nu})(\xi,\eta)|\leq \eps_2\leq 2^{-\D_1}2^{k-\max(k_1,k_2)}2^{-2k^+}.
\end{equation}
 Then, with $\gamma_1:=\sqrt{2}$,
\begin{equation}\label{try1.2}
\pm(\sigma,\mu,\nu)=(+,+,+),\quad |\eta-p_{++1}(\xi)|=|\eta-\xi/2|\lesssim\eps_2,\quad \big||\xi|-\gamma_1\big|\lesssim \eps_1+\eps_2^2.
\end{equation}
\end{proposition}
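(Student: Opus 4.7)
\textbf{Proof plan for Proposition \ref{spaceres11}.}

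\emph{Part (i).} A direct computation gives
\[
\nabla_\eta \Phi_{\sigma\mu\nu}(\xi,\eta)=\mu\lambda'(|\xi-\eta|)\frac{\xi-\eta}{|\xi-\eta|}-\nu\lambda'(|\eta|)\frac{\eta}{|\eta|},
\]
so $\nabla_\eta\Phi=0$ forces $\eta$ to lie on the line $\mathbb{R}\xi$. Parametrizing $\eta=t\xi/|\xi|$ and using Lemma \ref{LambdaBasic}(ii)--(iv), I would split into $\mu=\nu$ and $\mu=-\nu$. In the first case the equation $\lambda'(t)=\lambda'(|\xi|-t)$ with $0<t<|\xi|$ has the trivial solution $t=|\xi|/2$ (giving $p_{\pm\pm 1}(|\xi|)=|\xi|/2$) and, when $|\xi|\ge 2\gamma_0$, a second pair $\{t,|\xi|-t\}$ produced by the function $b$ of Lemma \ref{LambdaBasic}(iii) (giving $p_{\pm\pm 2}$). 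In the second case the identity $\lambda'(t)=\lambda'(t-|\xi|)$ (or its reflection) yields the unique pair $(a,b(a))$ with $a-b(a)=|\xi|$, defining $p_{+-1}$. This accounts for all the points in $P_{\mu\nu}(\xi)$.

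\emph{Part (ii).} Pick a critical point $\eta_\ast\in P_{\mu\nu}(\xi)$ and Taylor expand $\nabla_\eta\Phi$ around it. Decomposing $\eta-\eta_\ast$ along $\xi/|\xi|$ and $\xi^\perp/|\xi|$, the Hessian is diagonal with eigenvalues roughly $-\mu\lambda''(|\xi-\eta_\ast|)-\nu\lambda''(|\eta_\ast|)$ (radial) and $-\mu\lambda'(|\xi-\eta_\ast|)/|\xi-\eta_\ast|-\nu\lambda'(|\eta_\ast|)/|\eta_\ast|$ (angular). In the $\mu=-\nu$ branch both eigenvalues are nondegenerate (the angular one is $\approx 2\lambda'/r$ and the radial one is controlled by Lemma \ref{LambdaBasic}(iii) combined with \eqref{Bas1.6}), which gives the first alternative in \eqref{zc2}. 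In the $\mu=\nu$ branch centered at $\eta_\ast=\xi/2$, the angular eigenvalue is still $\approx 2\lambda'(|\xi|/2)/|\xi|$, but the radial eigenvalue is $-2\mu\lambda''(|\xi|/2)$, which \emph{vanishes} when $|\xi|=2\gamma_0$; since $\lambda'''(\gamma_0)\neq 0$ (see \eqref{ph3.1}), including the cubic Taylor term gives the sublevel bound
\[
|(\eta-\eta_\ast)\cdot\xi/|\xi||\lesssim \frac{\eps_2}{\eps_2^{2/3}+||\xi|-2\gamma_0|},
\]
which is precisely the second alternative in \eqref{zc2}. The low--/high--frequency cases \eqref{zc2.1}--\eqref{zc2.2} are obtained from the asymptotics \eqref{zc8}: for $k\le -100$, $\lambda'$ is large and decreasing, which forces $\eta\approx\xi/2$ when $\mu=\nu$ or $|\eta|\approx 1$ when $\mu=-\nu$; for $k\ge 100$, $\lambda'$ behaves like $\tfrac32\sqrt{\cdot}$, and the same Hessian analysis gives the rescaled bound.

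\emph{Part (iii).} Given \eqref{try1.1} we may feed the estimates from part (ii) into the scalar equation $\Phi(\xi,\eta)=O(\eps_1)$. For any $\eta_\ast\in P_{\mu\nu}(\xi)\setminus\{\xi/2\}$ a direct computation using Lemma \ref{LambdaBasic}(iv) shows that $|\Phi(\xi,\eta_\ast)|\gtrsim 1$ on the range where $|\xi|$ satisfies $\min(|\xi|,|\xi-\eta_\ast|,|\eta_\ast|)\gtrsim 2^{\min(k,k_1,k_2,0)}$, so the assumption $\eps_1\ll 2^{\min(k,k_1,k_2,0)/2}$ rules out those branches. The only surviving configuration is $\eta_\ast=\xi/2$ with $\mu=\nu=\sigma$, for which $\Phi(\xi,\xi/2)=\sigma[\lambda(|\xi|)-2\lambda(|\xi|/2)]$; by Lemma \ref{LambdaBasic}(iv) with $a=b=|\xi|/2$ this vanishes exactly when $(|\xi|/2)^2=1/2$, i.e.\ $|\xi|=\gamma_1$. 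Using $\lambda(|\xi|)-2\lambda(|\xi|/2)$ vanishes transversally at $\gamma_1$ and the quadratic control $|\Phi(\xi,\eta)-\Phi(\xi,\xi/2)|\lesssim |\eta-\xi/2|^2\lesssim\eps_2^2$, one concludes $||\xi|-\gamma_1|\lesssim\eps_1+\eps_2^2$.

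\emph{Main obstacle.} The delicate point is the degenerate branch $\mu=\nu$ with $|\xi|$ near $2\gamma_0$, where $p_{\pm\pm 1}$ and $p_{\pm\pm 2}$ coalesce and the radial Hessian vanishes; quantifying the sublevel set of the resulting cubic--linear function to produce the sharp $\eps_2^{2/3}+||\xi|-2\gamma_0|$ denominator in \eqref{zc2} is the real content, and the corresponding asymptotic tracking of Lemma \ref{LambdaBasic}(iii) (estimates \eqref{Bas1.5}--\eqref{Bas1.7}) is what makes this bookkeeping work.
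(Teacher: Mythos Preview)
Your approach is essentially the same as the paper's: reduce to the line through $\xi$, separate the angular and radial components, and in the degenerate $\mu=\nu$ case near $|\xi|=2\gamma_0$ analyze the cubic structure of the radial gradient (the paper does this via the one-variable function $G(\beta)=\lambda'(\beta)-\lambda'(\alpha-\beta)$ and a case split $\rho\lessgtr C_0^4\eps_2^{2/3}$ with $\rho=||\xi|-2\gamma_0|$, which is the coordinate version of your Taylor-expansion-with-cubic-term argument). Your identification of the main obstacle and of the tools from Lemma~\ref{LambdaBasic}(iii) needed to handle it is correct; one small imprecision is that the cubic coefficient in the expansion of $\partial_s\Phi(\xi,\xi/2+se)$ is governed by $\lambda''''(\gamma_0)\neq 0$ (the odd symmetry kills the $s^2$ term), while $\lambda'''(\gamma_0)\neq 0$ is what makes the linear coefficient $-2\lambda''(|\xi|/2)$ vanish \emph{simply} in $\rho$.
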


\begin{proof} (i) We have
\begin{equation}\label{try2}
(\nabla_\eta\Phi_{\sigma\mu\nu})(\xi,\eta)=\mu\lambda'(|\xi-\eta|)\frac{\xi-\eta}{|\xi-\eta|}-\nu\lambda'(|\eta|)\frac{\eta}{|\eta|}.
\end{equation}
Assume that $\xi=\alpha e$ for some $\alpha\in(0,\infty)$ and $e\in\mathbb{S}^1$. In view of \eqref{try2}, $(\nabla_\eta\Phi_{\sigma\mu\nu})(\xi,\eta)=0$ if and only if
\begin{equation}\label{try3}
\eta=\beta e,\,\,\,\,\beta\in\mathbb{R}\setminus \{0,\alpha\},\,\,\,\,\mu\lambda'(|\alpha-\beta|)\,\mathrm{sgn}(\alpha-\beta)=\nu\lambda'(|\beta|)\,\mathrm{sgn}(\beta).
\end{equation}
We observe that it suffices to define the functions $p_{++1},p_{++2}$, and $p_{+-1}$ satisfying \eqref{try1}, since clearly $p_{--1}=p_{++1}$, $p_{--2}=p_{++2}$, and $p_{-+1}=p_{+-1}$. 

If $(\mu,\nu)=(+,+)$ then, as a consequence of \eqref{try3}, $\beta\in(0,\alpha)$ and $\lambda'(\alpha-\beta)=\lambda'(\beta)$. Therefore, according to Lemma \ref{LambdaBasic} (i)--(iii), there are two possible solutions,
\begin{equation}\label{try4}
\begin{split}
&\beta=p_{++1}(\alpha):=\alpha/2,\\
&\beta=p_{++2}(\alpha)\quad\text{ uniquely determined by }\lambda'(\beta)=\lambda'(\alpha-\beta)\text{ and }\beta\in(0,\gamma_0].
\end{split}
\end{equation}
The uniqueness of the point $p_{++2}(\alpha)$ is due to the fact that the function $x\to x+b(x)$ is increasing on $[\gamma_0,\infty)$, see \eqref{Bas1.5}. On the other hand, if $(\mu,\nu)=(+,-)$ then, as a consequence of \eqref{try3}, $\beta<0$ or $\beta>\alpha$ and $\lambda'(|\alpha-\beta|)=\lambda'(|\beta|)$. Therefore, according to Lemma \ref{LambdaBasic}, there is only one solution $\beta\geq\gamma_0$, 
\begin{equation}\label{try5}
\beta=p_{+-1}(\alpha)\quad\text{ uniquely determined by }\lambda'(\beta)=\lambda'(\beta-\alpha)\text{ and }\beta\in[\max(\alpha,\gamma_0),\alpha+\gamma_0].
\end{equation}
The conclusions in part (i) follow.

(ii) Assume that \eqref{zc1} holds and that $(\mu,\nu)\in\{(+,+),(+,-)\}$. Let $\xi=\alpha e$, $|e|=1$, $\alpha\in [2^{k-4},2^{k+4}]$, $\eta=\beta e+v$, $v\cdot e=0$, $(\beta^2+|v|^2)^{1/2}\in [2^{k_2-4},2^{k_2+4}]$. The condition $|(\nabla_\eta\Phi_{\sigma\mu\nu})(\xi,\eta)|\leq \eps_2$ gives, using \eqref{try2}, $\big||k_1|-|k_2|\big|\leq 20$ and 
\begin{equation}\label{try6}
\Big|\mu\lambda'(|\xi-\eta|)\frac{(\alpha-\beta)}{|\xi-\eta|}-\nu\lambda'(|\eta|)\frac{\beta }{|\eta|}\Big|\leq\eps_2,\quad \Big|-\mu\frac{\lambda'(|\xi-\eta|)}{|\xi-\eta|}-\nu\frac{\lambda'(|\eta|)}{|\eta|}\Big||v|\leq\eps_2.
\end{equation}
Since $\alpha\gtrsim 2^k$ and $|\xi-\eta|^{-1}\lambda'(|\xi-\eta|)\gtrsim 2^{|k_1|/2-k_1}$, the first inequality in \eqref{try6} shows that
\begin{equation*}
\Big|\mu\lambda'(|\xi-\eta|)\frac{-\beta}{|\xi-\eta|}-\nu\lambda'(|\eta|)\frac{\beta }{|\eta|}\Big|\gtrsim 2^{k+|k_1|/2-k_1}.
\end{equation*}
Since $1/|\beta|\geq 2^{-k_2-4}$, using also the second inequality in \eqref{try6} it follows that
\begin{equation}\label{try7}
|v|\lesssim \eps_22^{-k-|k_1|/2+k_1+k_2}
\end{equation}
and
\begin{equation*}
\Big|-\mu\frac{\lambda'(|\xi-\eta|)}{|\xi-\eta|}-\nu\frac{\lambda'(|\eta|)}{|\eta|}\Big|\gtrsim 2^{k+|k_1|/2-k_1-k_2}.
\end{equation*}
In particular $|v|\leq 2^{-20}2^{\min(k_1,k_2)}$, 
\begin{equation}\label{try8}
\big||\eta|-|\beta|\big|\lesssim \eps_2^22^{-2k-|k_1|+2k_1+k_2},\qquad\big||\xi-\eta|-|\alpha-\beta|\big|\lesssim \eps_2^22^{-2k-|k_1|+k_1+2k_2}.
\end{equation}
Using the first inequality in \eqref{try6} it follows that
\begin{equation}\label{try8.1}
\big|\mu\lambda'(|\alpha-\beta|)\mathrm{sgn}(\alpha-\beta)-\nu\lambda'(|\beta|)\mathrm{sgn}(\beta)\big|\leq \eps_2+C\eps_2^22^{-2k-|k_1|/2+2\max(k_1,k_2)}.
\end{equation}

{\bf{Proof of \eqref{zc2}.}} Assume first that $|k|\leq 100$. Then $\max(|k_1|,|k_2|)\leq 200$, since otherwise \eqref{try8.1} cannot hold (so there are no points $(\xi,\eta)$ satisfying \eqref{zc1}). The conclusion $\big|(\eta-p)\cdot\xi^\perp/|\xi|\big|\lesssim \eps_2$ in \eqref{zc2} follows from \eqref{try7}.

{\bf{Case ${\mathbf{1}}$.}} If $(\mu,\nu)=(+,-)$ then \eqref{try8.1} gives
\begin{equation*}
\big|\lambda'(|\alpha-\beta|)-\lambda'(|\beta|)\big|\leq 2\eps_2,\qquad \mathrm{sgn}(\alpha-\beta)+\mathrm{sgn}(\beta)=0.
\end{equation*}
Therefore either $\beta>\alpha$ and $|\lambda'(\beta-\alpha)-\lambda'(\beta)\big|\leq 2\eps_2$, in which case $\beta-\alpha<\gamma_0$, $\beta>\gamma_0$, and $|\beta-p_{+-1}(\alpha)|\lesssim \eps_2$, or $\beta<0$ and $|\lambda'(\alpha-\beta)-\lambda'(-\beta)\big|\leq 2\eps_2$, in which case $\alpha-\beta>\gamma_0$, $-\beta<\gamma_0$, and $|\alpha-\beta-p_{+-1}(\alpha)|\lesssim \eps_2$. The desired conclusion follows in the stronger form $|\eta-p|\lesssim \eps_2$.

{\bf{Case $\mathbf{2}$.}} If $(\mu,\nu)=(+,+)$ then \eqref{try8.1} gives
\begin{equation*}
\big|\lambda'(|\alpha-\beta|)-\lambda'(|\beta|)\big|\leq 2\eps_2,\qquad \mathrm{sgn}(\alpha-\beta)=\mathrm{sgn}(\beta).
\end{equation*}
Therefore
\begin{equation}\label{try8.2}
\beta\in(0,\alpha)\qquad\text{ and }\qquad\big|\lambda'(\alpha-\beta)-\lambda'(\beta)\big|\leq 2\eps_2.
\end{equation}
Assume $\alpha$ fixed and let $G(\beta):=\lambda'(\beta)-\lambda'(\alpha-\beta)$. The function $G$ vanishes when $\beta=\alpha/2$ or $\beta\in\{p_{++2}(\alpha),\alpha-p_{++2}(\alpha)\}$ (if $\alpha\geq 2\gamma_0$).

Assume that $\alpha=2\gamma_0+\rho\geq 2\gamma_0$, $\rho\in[0,2^{110}]$. Then, using Lemma \ref{LambdaBasic} (iii),
\begin{equation}\label{zc6}
p_{++2}(\alpha)\leq\gamma_0\leq\alpha/2\leq\alpha-p_{++2}(\alpha),\qquad \alpha/2-\gamma_0=\rho/2,\,\,\gamma_0-p_{++2}(\alpha)\approx\sqrt\rho,
\end{equation}
where the last conclusion follows from \eqref{Bas1.7} with $a=\alpha-p_{++2}(\alpha)$, $b(a)=p_{++2}(\alpha)$. Moreover, $|G'(\beta)|=|\lambda''(\beta)+\lambda''(\alpha-\beta)|\approx \rho$ if $\beta\in\{\alpha/2,p_{++2}(\alpha),\alpha-p_{++2}(\alpha)\}$, using \eqref{Bas1.6} and \eqref{zc6}. Also, $|G''(\beta)|=|\lambda'''(\beta)-\lambda'''(\alpha-\beta)|\lesssim \sqrt{\rho}$ if $|\beta-\alpha/2|\lesssim \sqrt{\rho}$, therefore
\begin{equation}\label{zc6.1}
|G'(\beta)|\approx \rho\,\,\text{ if }\,\,\beta\in I_\alpha:=\{x:\,\min\big(|x-\alpha/2|,|x-p_{++2}(\alpha)|,|x-\alpha+p_{++2}(\alpha)|\big)\leq\sqrt{\rho}/C_0\},
\end{equation}
for some large constant $C_0$.

If $\rho\leq C_0^4\eps_2^{2/3}$ then the points $\alpha/2,p_{++2}(\alpha),\alpha-p_{++2}(\alpha)$ are within distance $\leq C_0^4\eps_2^{1/3}$. In this case it suffices to prove that $|G(\beta)|\geq 3\eps_2$ if $|\beta-\alpha/2|\geq 2C_0^4\eps_2^{1/3}$. Assume, for contradiction, that this is not true, so there is $\beta\leq \gamma_0-C_0^4\eps_2^{1/3}$ such that $|\lambda'(\beta)-\lambda'(\alpha-\beta)|\leq 3\eps_2$. So there is $x$ close to $\beta$, $|x-\beta|\lesssim \eps_2^{2/3}$, such that $\lambda'(x)=\lambda'(\alpha-\beta)$. In particular, using \eqref{Bas1.7} with $a=\alpha-\beta$, $b(a)=x$, we have $\alpha-\beta+x-2\gamma_0\geq C_0^7\eps_2^{2/3}$. Therefore $\alpha-2\gamma_0\geq C_0^6\eps_2^{2/3}$, in contradiction with the assumption $\alpha-2\gamma_0=\rho\leq C_0^4\eps_2^{2/3}$.

Assume now that $\rho\geq C_0^4\eps_2^{2/3}$. In view of \eqref{zc6.1}, it suffices to prove that if $\beta\notin I_\alpha$ then $|G(\beta)|\geq 3\eps_2$. Assume, for contradiction, that this is not true, so there is $\beta\in(0,\alpha/2]\setminus I_\alpha$ such that $|\lambda'(\beta)-\lambda'(\alpha-\beta)|\leq 3\eps_2$. Since $\beta\leq\alpha/2-\sqrt{\rho}/C_0$, we may in fact assume that $\beta\leq\gamma_0-\sqrt{\rho}/(2C_0)$, provided that the constant $\D_1$ in \eqref{zc1} is sufficiently large. So there is $x$ close to $\beta$, $|x-\beta|\lesssim \eps_2C_0/\sqrt{\rho}$, such that $\lambda'(x)=\lambda'(\alpha-\beta)$. Using \eqref{Bas1.5}, it follows there is a point $y$ close to $x$, $|y-x|\lesssim \eps_2C_0^2/\rho$, such that $\lambda'(y)=\lambda'(\alpha-y)$. Therefore $y=p_{++2}(\alpha)$. In particular $|\beta-p_{++2}(\alpha)|\lesssim \eps_2C_0^2/\rho$, in contradiction with the assumption $\beta\notin I_\alpha$, so $|\beta-p_{++2}(\alpha)|\geq \sqrt{\rho}/C_0$ (recall that $\rho\geq C_0^4\eps_2^{2/3}$).

The case $\alpha=2\gamma_0-\rho\leq 2\gamma_0$ is easier, since there is only one point to consider, namely $\alpha/2$. As in \eqref{zc6.1}, $|G'(\beta)|\approx \rho$ if $|\beta-\alpha/2|\leq\sqrt{\rho}/C_0$. The proof then proceeds as before, by considering the two cases $\rho\leq C_0^4\eps_2^{2/3}$ and $\rho\geq C_0^4\eps_2^{2/3}$.

{\bf{Proof of \eqref{zc2.1}.}} Assume now that $k\leq -100$, so $|k_1-k_2|\leq 20$, and consider two cases:

{\bf{Case ${\mathbf{1}}$.}} Assume first that $(\mu,\nu)=(+,-)$. In view of \eqref{try2} we have
\begin{equation}\label{zc6.2}
\Big|\lambda'(|\eta|)\frac{\eta}{|\eta|}-\lambda'(|w|)\frac{w}{|w|}\Big|\leq\eps_2,\qquad\text{ where }w=\eta-\xi.
\end{equation}
If $\max(|\eta|,|w|)\leq \gamma_0-2^{-10}$ or $\min(|\eta|,|w|)\geq \gamma_0+2^{-10}$ it follows from \eqref{zc6.2} that $\big|\lambda'(|\eta|)-\lambda'(|w|)\big|\leq \eps_2$, therefore $\big||\eta|-|w|\big|\lesssim \eps_22^{-|k_1|/2+k_1}$. Therefore 
\begin{equation*}
\Big|\frac{\eta}{|\eta|}-\frac{w}{|w|}\Big|\lesssim \eps_22^{-|k_1|/2}\quad\text{ and }\quad \Big|\frac{1}{|\eta|}-\frac{1}{|w|}\Big|\lesssim \eps_22^{-|k_1|/2-k_1}.
\end{equation*}
As a consequence $|\eta-w|\lesssim \eps_22^{-|k_1|/2+k_1}$. On the other hand $|\eta-w|=|\xi|\gtrsim 2^{k}$, in contradiction with the assumption $\eps_2\leq 2^{-\D_1}2^{k-k_1}$. 

Since $|\eta-w|\leq 2^{-90}$ it remains to consider the case
\begin{equation}\label{zc6.3}
|\eta|,|\eta-\xi|\in[\gamma_0-2^{-9},\gamma_0+2^{-9}].
\end{equation}

In particular $k_1,k_2\in[-10,10]$, as claimed. Moreover $|v|\lesssim \eps_22^{|k|}$ as desired, in view of \eqref{try7}. The condition \eqref{try8.1} gives
\begin{equation*}
\big|\lambda'(|\alpha-\beta|)-\lambda'(|\beta|)\big|\leq \eps_2+C\eps_2^22^{-2k},\qquad \mathrm{sgn}(\alpha-\beta)+\mathrm{sgn}(\beta)=0.
\end{equation*}  
Without loss of generality, we may assume that 
\begin{equation}\label{zc9.2}
\beta>\alpha,\qquad \big|\lambda'(\beta-\alpha)-\lambda'(\beta)\big|\leq \eps_2+C\eps_2^22^{-2k}.
\end{equation}

Notice that $p_{+-1}(\alpha)\in(\gamma_0,\alpha+\gamma_0)$. We have two cases: if $\eps_2\geq 2^{-\D_1}2^{2k}$ then we need to prove that $|\beta-\gamma_0|\leq 2^{4\D_1}\eps_22^{|k|}$. This follows from \eqref{zc6.2}: otherwise, if $|\beta-\gamma_0|=d\geq 2^{4\D_1}\eps_22^{|k|}\geq 2^{3\D_1}2^k$ then $\big||\eta|-\gamma_0\big|\approx d$ and $\big||w|-\gamma_0\big|\approx d$, using also \eqref{try7}. As a consequence of \eqref{zc6.2}, we have $\big||\eta|-|w|\big|\lesssim \eps_2d^{-1}$, so
\begin{equation*}
\Big|\frac{\eta}{|\eta|}-\frac{w}{|w|}\Big|\lesssim \eps_2\qquad\text{ and }\qquad \Big|\frac{1}{|\eta|}-\frac{1}{|w|}\Big|\lesssim \eps_2d^{-1}.
\end{equation*}
Thus $|\eta-w|\lesssim \eps_2+\eps_2d^{-1}\lesssim \eps_2+2^{k-4\D_1}$, in contradiction with the fact that $|\eta-w|=|\xi|\gtrsim 2^k$.

On the other hand, if $\eps_2\leq 2^{-\D_1}2^{2k}$ then \eqref{zc9.2} gives $\big|\lambda'(\beta-\alpha)-\lambda'(\beta)\big|\leq 2\eps_2$ and $\beta\in (\gamma_0,\gamma_0+\alpha)$. Let $H(\beta):=\lambda'(\beta)-\lambda'(\beta-\alpha)$ and notice that 
\begin{equation*}
|H'(\beta)|\gtrsim |\beta-\gamma_0|+|\beta-\alpha-\gamma_0|\gtrsim 2^k
\end{equation*}
if $\beta$ is in this set. The desired conclusion follows since $H(p_{+-1}(\alpha))=0$. 

{\bf{Case $\mathbf{2}$.}} If $(\mu,\nu)=(+,+)$ then \eqref{try8.1} gives
\begin{equation*}
\big|\lambda'(\alpha-\beta)-\lambda'(\beta)\big|\leq \eps_2+C\eps_2^22^{-2k-|k_1|/2+2\max(k_1,k_2)},\qquad \beta\in (0,\alpha).
\end{equation*}
This shows easily that $k_1,k_2\in[k-10,k+10]$ and $|\alpha-2\beta|\lesssim 2^{-3|k|/2}\eps_2$. The desired conclusion follows using also \eqref{try7}.

{\bf{Proof of \eqref{zc2.2}.}} Assume now that $k\geq 100$ and consider two cases:

{\bf{Case $\mathbf{1}$.}} If $(\mu,\nu)=(+,-)$ then \eqref{try8.1} gives
\begin{equation*}
\big|\lambda'(|\alpha-\beta|)-\lambda'(|\beta|)\big|\leq \eps_2+C\eps_2^22^{-2k-|k_1|/2+2\max(k_1,k_2)},\qquad \mathrm{sgn}(\alpha-\beta)+\mathrm{sgn}(\beta)=0.
\end{equation*}
We may assume $\beta>\alpha$, $|\max(k_1,k_2)-k|\leq 20$, and $\big|\lambda'(\beta-\alpha)-\lambda'(\beta)\big|\leq 2\eps_2$. In particular $\beta\in(\alpha,\alpha+\gamma_0)$. Let $H(\beta):=\lambda'(\beta)-\lambda'(\beta-\alpha)$ as before and notice that $|H'(\beta)|\gtrsim 2^{3k/2}$ in this set. The desired conclusion follows since $H(p_{+-1}(\alpha))=0$, using also \eqref{try7}.

{\bf{Case $\mathbf{2}$.}} If $(\mu,\nu)=(+,+)$ then \eqref{try8.1} gives
\begin{equation}\label{zc9}
\big|\lambda'(\alpha-\beta)-\lambda'(\beta)\big|\leq \eps_2+C\eps_2^22^{-2k-|k_1|/2+2\max(k_1,k_2)},\qquad \beta\in (0,\alpha).
\end{equation}
If both $\beta$ and $\alpha-\beta$ are in $[\gamma_0,\infty)$ then \eqref{zc9} gives $|\beta-\alpha/2|\lesssim \eps_22^{k/2}$, which suffices (using also \eqref{try7}). Otherwise, assuming for example that $\beta\in(0,\gamma_0)$, it follows from \eqref{zc9} that $\beta\leq 2^{-k+20}$. Let, as before, $G(\beta):= \lambda'(\beta)-\lambda'(\alpha-\beta)$ and notice that $|G'(\beta)|\gtrsim 2^{3k/2}$ if $\beta\in(0,2^{-k+20}]$. The desired conclusion follows since $G(p_{++2}(\alpha))=0$, using also \eqref{try7}.

(iii) If $k\leq -100$ then $\Phi_{\sigma\mu\nu}(\xi,\eta)\gtrsim 2^{k/2}$, in view of \eqref{try5.5} and \eqref{zc2.1}, which is not not allowed by the condition on $\eps_1$.

If $k\geq 100$ and $(\mu,\nu)=(+,-)$ then $p_{+-1}(\alpha)-\alpha\leq 2^{-k+10}\leq 2^{k-10}\leq \alpha$ and
\begin{equation*}
|\Phi(\xi,\eta)|\geq \big|\pm\lambda(\alpha)-\lambda(p_{+-1}(\alpha))+\lambda(p_{+-1}(\alpha)-\alpha)\big|-C\eps_22^{k},
\end{equation*}
for some constant $C$ sufficiently large. Moreover, in view of Lemma \ref{LambdaBasic} (i), $\alpha(p_{+-1}(\alpha)-\alpha)\leq\gamma_0^2\leq 0.2$. In particular, using also Lemma \ref{LambdaBasic} (iv), $|\Phi(\xi,\eta)|\gtrsim 2^{-k/2}$, which is impossible in view of the assumption on $\eps_1$. A similar argument works also in the case $k\geq 100$ and $(\mu,\nu)=(+,+)$ to show that there are no points $(\xi,\eta)$ satisfying \eqref{try1.1}.

Finally, assume that $|k|\leq 100$, so $|k_1|,|k_2|\in[0,200]$. If $(\mu,\nu)=(+,-)$ then there are still no solutions $(\xi,\eta)$ of \eqref{try1.1}, using the same argument as before: in view of Lemma \ref{LambdaBasic} (i), $\alpha(p_{+-1}(\alpha)-\alpha)\leq\gamma_0^2\leq 0.2$, so $|\Phi(\xi,\eta)|\gtrsim 1$ as a consequence of Lemma \ref{LambdaBasic} (iv). 

On the other hand, if $(\mu,\nu)=(+,+)$ then we may also assume that $\sigma=+$. If $\beta$ is close to $p_{++2}(\alpha)$ or to $\alpha-p_{++2}(\alpha)$ then $\Phi(\xi,\eta)\gtrsim 1$, for the same reason as before. We are left with the case $|\beta-\alpha/2|\lesssim \eps_2$ and $\alpha\geq 1$. Therefore $|\eta-\xi/2|\lesssim\eps_2$. We notice now that the equation $\lambda(x)-2\lambda(x/2)=0$ has the unique solution $x=\sqrt{2}=:\gamma_1$, and the desired bound on $\big||\xi|-\gamma_1\big|$ follows since
\begin{equation*}
\big||\xi|-\gamma_1\big|\lesssim |\Phi_{\sigma\mu\nu}(\xi,\xi/2)|\lesssim |\Phi_{\sigma\mu\nu}(\xi,\eta)|+|\Phi_{\sigma\mu\nu}(\xi,\xi/2)-\Phi_{\sigma\mu\nu}(\xi,\eta)|\lesssim \eps_1+\eps_2^2.
\end{equation*}
This completes the proof of the proposition.
\end{proof}

\subsection{Bounds on sublevel sets} In this subsection we analyze the sublevel sets of the phase functions $\Phi$, and the interaction of these
sublevel sets with several other structures. We start with a general bound on the size of sublevel sets of functions, see \cite[Lemma 8.5]{DIP} for the proof.

\begin{lemma}\label{lemma00}
Suppose $L,R,M\in\mathbb{R}$, $M\geq \max(1,L,L/R)$, and $Y:B_R:=\{x\in\mathbb{R}^n:|x|<R\}\to\mathbb{R}$ is a function satisfying $\|\nabla Y\|_{C^{l}(B_R)}\leq M$, for some $l\geq 1$. Then, for any $\epsilon>0$,
\begin{equation}\label{scale1}
\big|\big\{x\in B_R:|Y(x)|\leq\epsilon\text{ and }\sum_{|\alpha|\leq l}|\partial_{x}^{\alpha}Y(x)|\geq L\big\}\big|\lesssim R^{n}ML^{-1-1/l}\epsilon^{1/l}.
\end{equation} 
Moreover, if $n=l=1$, $K$ is a union of at most $A$ intervals, and $|Y'(x)|\geq L$ on K, then
\begin{equation}\label{scale2}\left|\{x\in K:|Y(x)|\leq\epsilon\}\right|\lesssim AL^{-1}\epsilon.\end{equation}
\end{lemma}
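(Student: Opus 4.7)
The plan is to prove the second claim by an elementary monotonicity argument, and the first by reducing, through a careful partition of the sublevel set, to a one-dimensional Christ-type sublevel set estimate on lines.

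For the second statement, since $|Y'(x)| \geq L$ on each of the at most $A$ intervals comprising $K$, the function $Y$ is strictly monotone on each interval, so the preimage $\{t : |Y(t)| \leq \epsilon\}$ intersected with such an interval has length at most $2\epsilon/L$. Summing over the $A$ intervals yields $|\{x \in K : |Y(x)| \leq \epsilon\}| \lesssim A \epsilon / L$, which is \eqref{scale2}.

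For the first statement, I would first dispose of the trivial regime: if $\epsilon \geq L/(2C_l)$, where $C_l$ counts multi-indices of order $\leq l$, then using $M \geq L$ one checks that $R^n M L^{-1-1/l}\epsilon^{1/l} \gtrsim R^n \gtrsim |B_R|$, and the inequality follows from the trivial volume bound. So assume $\epsilon < L/(2C_l)$. Then at each $x$ in the set to be estimated, since $|Y(x)| \leq \epsilon < L/(2C_l)$, the hypothesis forces some derivative $\partial^\alpha Y(x)$ with $1 \leq |\alpha| =: l_0 \leq l$ to satisfy $|\partial^\alpha Y(x)| \geq L/(2C_l)$. Partition the set according to the largest such $l_0$ and the choice of $\alpha$. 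For each $\alpha$ of order $l_0$, express the mixed derivative $\partial^\alpha$ as a finite linear combination of pure directional derivatives $\partial_e^{l_0}$ (using the standard polarization identity for symmetric $l_0$-linear forms): at least one such pure derivative in a direction $e$ from a fixed finite set then satisfies $|\partial_e^{l_0} Y(x)| \gtrsim L$.

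The main step is to foliate $B_R$ by lines in that direction $e$ and, on each line, control the one-dimensional set where $|Y| \leq \epsilon$ and $|\partial_e^{l_0} Y| \gtrsim L$. Let $f(t) := Y(x_0 + te)$; then $\|f^{(l_0+1)}\|_\infty \lesssim M$ from the hypothesis $\|\nabla Y\|_{C^l} \leq M$ (since $l_0 \leq l$). By continuity, on an interval of length $\sim L/M$ around any point where $|f^{(l_0)}| \gtrsim L$ we still have $|f^{(l_0)}| \gtrsim L$. Cover the portion of the line inside $B_R$ (of length $\lesssim R$) by $\lesssim RM/L$ such intervals (using $M \geq L/R$ so that $L/M \leq R$). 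On each such interval the classical one-dimensional sublevel set lemma (Christ/Bruna--Nagel--Wainger), proved by induction on $l_0$ starting from the monotonicity argument of part two, gives $|\{t \in I : |f(t)| \leq \epsilon\}| \lesssim_{l_0} (\epsilon/L)^{1/l_0}$. Summing yields a one-dimensional bound of $RM L^{-1-1/l_0} \epsilon^{1/l_0}$ on each line, and integrating over the $(n-1)$-dimensional transverse cross-section of diameter $\lesssim R$ produces $R^n M L^{-1-1/l_0}\epsilon^{1/l_0}$. Since $\epsilon/L \leq 1$ and $l_0 \leq l$, we have $(\epsilon/L)^{1/l_0} \leq (\epsilon/L)^{1/l}$, and summing over the finitely many pairs $(l_0, \alpha)$ gives the claimed bound \eqref{scale1}.

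The main obstacle is the one-dimensional sublevel set estimate with derivative of order $l_0$: though classical, it requires its own induction on $l_0$, passing from $|f^{(l_0)}| \gtrsim L$ to information on $f^{(l_0-1)}$ via monotonicity and a splitting of the interval according to where $|f^{(l_0-1)}|$ is small, with an optimization of the splitting parameter. A secondary technical nuisance is the polarization step converting a bound on a mixed partial $\partial^\alpha Y$ into a bound on some pure directional derivative $\partial_e^{|\alpha|}Y$; this costs only a dimensional constant but needs to be stated carefully.
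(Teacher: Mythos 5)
The paper does not supply its own argument for this lemma; it is quoted with a pointer to \cite[Lemma 8.5]{DIP}, so there is no internal proof to compare against. That said, your proof is correct. The monotonicity argument for \eqref{scale2} is exactly right. For \eqref{scale1}, your reduction to the one-dimensional van~der~Corput/Christ sublevel estimate is sound: after discarding the trivial regime $\epsilon\gtrsim L$ via the volume bound (using $M\ge L$), some $|\partial^\alpha Y|\gtrsim L$ with $1\le|\alpha|=l_0\le l$, polarization replaces this with $|\partial_e^{l_0}Y|\gtrsim L$ for a direction $e$ from a fixed finite set, and you then foliate $B_R$ by lines in direction $e$. The bookkeeping on a line works because $\|f^{(l_0+1)}\|_\infty\lesssim M$ (the hypothesis controls derivatives of $Y$ of order up to $l+1$) keeps $|f^{(l_0)}|\gtrsim L$ on intervals of length $\sim L/M$, there are $\lesssim RM/L$ of those (here $M\ge L/R$ ensures the count is $\gtrsim 1$), each contributes $\lesssim (\epsilon/L)^{1/l_0}$ by the one-dimensional lemma, the transverse cross-section contributes $\lesssim R^{n-1}$, and $(\epsilon/L)^{1/l_0}\le(\epsilon/L)^{1/l}$ since $\epsilon<L$ in the non-trivial case. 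The two technical steps you flag---the polarization identity turning a mixed partial of order $l_0$ into a fixed finite combination of pure directional $l_0$-th derivatives, and the induction on $l_0$ establishing the one-dimensional sublevel bound from the monotonicity base case---are standard and, once written out, close the argument.
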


We prove now several important bounds on the sets of time resonances. Assume $\Phi=\Phi_{\sigma\mu\nu}$, for some choice of $\sigma,\mu,\nu\in\{+.-\}$, and $\mathcal{D}_1$ is the large constant fixed in Proposition \ref{spaceres11}.

\begin{proposition}[Volume bounds of sublevel sets]\label{volume} 
Assume that $k,k_1,k_2\in\mathbb{Z}$, define $\mathcal{D}_{k,k_1,k_2}$ as in \eqref{Rset}, let $\overline{k}:=\max(k,k_1,k_2)$, and assume that 
\begin{equation}\label{Alx64.5}
\min(k,k_1,k_2)+\max(k,k_1,k_2)\geq -100.
\end{equation} 

(i) Let
\begin{equation*} 
E_{k,k_1,k_2;\eps}:=\{(\xi,\eta)\in\mathcal{D}_{k,k_1,k_2}:\,|\Phi(\xi,\eta)|\leq \epsilon\}.
\end{equation*}
Then
\begin{equation}\label{cas4} 
\begin{split}
&\sup_{\xi}\int_{\mathbb{R}^{2}}\mathbf{1}_{E_{k,k_1,k_2;\eps}}(\xi,\eta)\,d\eta\lesssim 2^{-\overline{k}/2}\epsilon\log(2+1/\epsilon)2^{4\min(k_1^+,k_2^+)},\\
&\sup_{\eta}\int_{\mathbb{R}^{2}}\mathbf{1}_{E_{k,k_1,k_2;\eps}}(\xi,\eta)\,d\xi\lesssim 2^{-\overline{k}/2}\epsilon\log(2+1/\epsilon)2^{4\min(k_1^+,k^+)}.
\end{split}
\end{equation}

(ii) Assume that $r_0\in [2^{-\D_1},2^{\D_1}]$, $\epsilon\leq 2^{\min(k,k_1,k_2,0)/2-\D_1}$, $\eps'\leq 1$ and let
\begin{equation*}
E'_{k,k_1,k_2;\eps,\eps'}=\{(\xi,\eta)\in \mathcal{D}_{k,k_1,k_2},|\Phi(\xi,\eta)|\leq \epsilon,\big||\xi-\eta|-r_0\big|\leq \epsilon'\}.
\end{equation*}
Then we can write $E'_{k,k_1,k_2;\eps,\eps'}=E'_{1}\cup E'_{2}$ such that
\begin{equation}\label{cas5.5} \sup_{\xi}\int_{\mathbb{R}^{2}}\mathbf{1}_{E_{1}'}(\xi,\eta)\,d\eta+\sup_{\eta}\int_{\mathbb{R}^{2}}\mathbf{1}_{E_{2}'}(\xi,\eta)\,d\xi\lesssim \epsilon\log(1/\eps)\cdot 2^{2\overline{k}}(\epsilon')^{1/2}.
\end{equation}

(iii) Assume that $\epsilon\leq 2^{\min(k,k_1,k_2,0)/2-\D_1}$, $\kappa\leq 1$, $p,q\leq 0$, and let
\begin{equation*}
E''_{k,k_1,k_2;\eps,\kappa}=\{(\xi,\eta)\in \mathcal{D}_{k,k_1,k_2},|\Phi(\xi,\eta)|\leq \epsilon,|(\Omega_\eta\Phi)(\xi,\eta)|\leq \kappa\}.
\end{equation*}
Then
\begin{equation}\label{Alx64.1}
\begin{split}
&\sup_{\xi}\int_{\mathbb{R}^{2}}\mathbf{1}_{E''_{k,k_1,k_2;\eps,\kappa}}(\xi,\eta)\varphi_{\geq q}(\nabla_\eta\Phi(\xi,\eta))\,d\eta\lesssim 2^{8\min(|k_1|,|k_2|)}\epsilon\log(1/\epsilon)\cdot\kappa 2^{-q}2^{2\overline{k}},\\
&\sup_{\eta}\int_{\mathbb{R}^{2}}\mathbf{1}_{E''_{k,k_1,k_2;\eps,\kappa}}(\xi,\eta)\varphi_{\geq p}(\nabla_\xi\Phi(\xi,\eta))\,d\xi\lesssim 2^{8\min(|k_1|,|k|)}\epsilon\log(1/\epsilon)\cdot\kappa 2^{-p}2^{2\overline{k}}.
\end{split}
\end{equation}
As a consequence, we can write $E''_{k,k_1,k_2;\eps,\kappa}=E''_{1}\cup E''_{2}$ such that
\begin{equation}\label{Alx64.2}
\sup_{\xi}\int_{\mathbb{R}^{2}}\mathbf{1}_{E''_{1}}(\xi,\eta)\,d\eta+\sup_{\eta}\int_{\mathbb{R}^{2}}\mathbf{1}_{E''_{2}}(\xi,\eta)\,d\xi\lesssim \epsilon\log(1/\epsilon)\cdot\kappa 2^{12\overline{k}}.
\end{equation}

Moreover, if $\kappa\leq 2^{-8\max(k,k_1,k_2)-\D_1}$ then
\begin{equation}\label{Alx64.3}
\begin{split}
&\sup_{\xi}\int_{\mathbb{R}^{2}}\mathbf{1}_{E''_{k,k_1,k_2;\eps,\kappa}}(\xi,\eta)\varphi_{\leq q}(\nabla_\eta\Phi(\xi,\eta))\,d\eta\lesssim \kappa 2^{q}2^{8\overline{k}},\\
&\sup_{\eta}\int_{\mathbb{R}^{2}}\mathbf{1}_{E''_{k,k_1,k_2;\eps,\kappa}}(\xi,\eta)\varphi_{\leq p}(\nabla_\xi\Phi(\xi,\eta))\,d\xi\lesssim \kappa 2^{p}2^{8\overline{k}}.
\end{split}
\end{equation}
\end{proposition}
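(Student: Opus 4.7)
My plan is to reduce each of the three measure bounds to one-dimensional sublevel-set estimates from Lemma \ref{lemma00}, applied on carefully chosen slices of $\mathcal{D}_{k,k_1,k_2}$, with the case analysis organized around the location of the space resonances of Proposition \ref{spaceres11}.

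For part (i), I would fix $\xi$ with $|\xi|\in[2^{k-4},2^{k+4}]$ and pass to polar coordinates $\eta=r\theta$ with $r\in[2^{k_2-4},2^{k_2+4}]$ and $\theta\in\mathbb{S}^1$, so that
\begin{equation*}
\partial_r\Phi(\xi,r\theta)=\mu\,\lambda'(|\xi-r\theta|)\,\tfrac{(\xi-r\theta)\cdot\theta}{|\xi-r\theta|}-\nu\,\lambda'(r).
\end{equation*}
By \eqref{zc8} and Lemma \ref{LambdaBasic}(i), this derivative has size $\gtrsim 2^{\overline{k}/2}$ away from an $O(1)$ set of directions $\theta$ in which the two summands can cancel (these are almost parallel to $\xi$, by \eqref{try2}). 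On the non-cancelling region Lemma \ref{lemma00} with $n=l=1$ gives $|\{r:|\Phi(\xi,r\theta)|\le\eps\}|\lesssim 2^{-\overline{k}/2}\eps$, and integration in $\theta$ with Jacobian $r\,dr\,d\theta$ produces \eqref{cas4}. Near the bad directions I would switch to $l=2$ using the nondegeneracy of $\partial_r^2\Phi$ (which follows from \eqref{ph3} and the vanishing of $\lambda''$ only at $\gamma_0$), at the cost of a dyadic logarithm in the angle; the factor $2^{4\min(k_1^+,k_2^+)}$ absorbs harmless losses in the extreme-frequency regimes. The $\sup_\eta$ bound is symmetric.

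For part (ii), the extra constraint $\bigl||\xi-\eta|-r_0\bigr|\le \eps'$ confines $\xi-\eta$ to an annulus of thickness $2\eps'$. I would choose the splitting $E'=E'_1\cup E'_2$ by separating regions of $\mathcal{D}_{k,k_1,k_2}$ according to whether $\nabla_\xi\Phi$ or $\nabla_\eta\Phi$ better controls the level set of $\Phi$; on $E'_1$ I integrate in $\eta$ first and on $E'_2$ in $\xi$. Parametrizing the annulus by $(\rho,\omega)$ and applying Lemma \ref{lemma00} in the direction transverse to the space-resonance curves of Proposition \ref{spaceres11}(i) gives the bound, with the square-root $(\eps')^{1/2}$ produced when that transverse direction meets a resonance: there $\Phi$ is only quadratically nondegenerate (cf.\ \eqref{Bas1.6}) and the lemma is applied with $l=2$.

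For part (iii), the identity
\begin{equation*}
\Omega_\eta\Phi(\xi,\eta)=\tfrac{\lambda'(|\xi-\eta|)}{|\xi-\eta|}\,(\xi\cdot\eta^\perp)
\end{equation*}
turns $|\Omega_\eta\Phi|\le\kappa$ into an upper bound on $|\xi\cdot\eta^\perp|$ of order $\kappa\,2^{k_1-|k_1|/2}$, confining $\eta$ to a thin strip around the line $\mathbb{R}\cdot\xi$. Writing $\eta=\alpha\,\xi/|\xi|+\beta\,\xi^\perp/|\xi|$ inside this strip, the map $\alpha\mapsto\Phi(\xi,\eta)$ has derivative essentially $\nabla_\eta\Phi\cdot\xi/|\xi|$; on the region $\{|\nabla_\eta\Phi|\gtrsim 2^q\}$ Lemma \ref{lemma00} gives $|\{\alpha:|\Phi|\le\eps\}|\lesssim \eps\,2^{-q}\,\log(1/\eps)$, the logarithm coming from dyadic shells around the radial space resonances $p\in P_{\mu\nu}(\xi)$. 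Multiplying by the strip width yields \eqref{Alx64.1}; \eqref{Alx64.2} then follows by assigning the region $\{|\nabla_\eta\Phi|\le 2^{-6\overline{k}}\}$ to $E''_1$ and the analogous $\nabla_\xi$-region to $E''_2$. The improved bound \eqref{Alx64.3} for $\kappa\le 2^{-8\overline{k}-\D_1}$ comes from Proposition \ref{spaceres11}(iii): the joint smallness of $\Phi$ and $\nabla_\eta\Phi$ pushes $(\xi,\eta)$ so close to the space-time resonant configuration $|\xi|=\gamma_1$ that a direct measure count supplies the extra $2^p$ or $2^q$ gain. The main obstacle throughout will be reconciling small $|\nabla\Phi|$ with the sublevel condition on $\Phi$ itself near the resonance sets $P_{\mu\nu}(\xi)$: there the first derivative degenerates and one must exploit the second-order information in the correct direction (the restricted nondegeneracy $\Upsilon$ used in Lemma \ref{L2Prop0}), which is what makes the sharp powers of $\eps$, $\eps'$, and $\kappa$ delicate to pin down.
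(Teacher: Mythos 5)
Your overall plan — polar coordinates, reduction to one-dimensional sublevel sets via Lemma \ref{lemma00}, case analysis around the resonances of Proposition \ref{spaceres11} — is the right family of ideas, and parts (ii) and (iii) are close in spirit to the paper's argument (same split of $E'$ by which gradient is nondegenerate, same use of $\Omega_\eta\Phi$ to confine $\eta$ to a strip). But there is a concrete gap in the part (i) argument and a misattribution in part (ii).

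\textbf{The key flaw in part (i): the wrong slicing direction.} You propose to use the \emph{radial} derivative $\partial_r\Phi(\xi,r\theta)$ and claim that away from an $O(1)$ set of near-aligned directions $\theta$ the two summands cannot cancel, citing \eqref{try2}. That citation is a mischaracterization: \eqref{try2}, through Proposition \ref{spaceres11}(i)--(ii), describes the vanishing of the \emph{full} gradient $\nabla_\eta\Phi$, not its radial component $\nabla_\eta\Phi\cdot\theta$. The radial component $\lambda'(|\xi-r\theta|)\cos\psi - \lambda'(r)$ (with $\psi$ the angle between $\xi-r\theta$ and $\theta$) vanishes for a generic direction $\theta$ at some $r$ in the dyadic range, not only for aligned $\theta$; only the angular component $\Omega_\eta\Phi\propto\sin\theta$ localizes its zeros to alignment. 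So the ``bad'' set is not an $O(1)$ set of angles, and the fallback to $l=2$ does not save you uniformly either: $\partial_r^2\Phi$ involves $\lambda''$, which vanishes at $\gamma_0$, so near $|\eta|\approx\gamma_0$ or $|\xi-\eta|\approx\gamma_0$ the second radial derivative degenerates as well. The paper sidesteps all of this by integrating first in $\theta$ for each fixed $r$, exploiting $|\partial_\theta\Phi|\approx 2^{\min(k,k_2)}2^{\overline{k}/2}|\sin\theta|$ (see \eqref{car7.11}--\eqref{car7.12}), which degenerates only at alignment; the residual $r$-integral is then controlled by the nondegeneracy of the one-dimensional function $r\mapsto\Phi$ along the aligned rays $\theta\in\{0,\pi\}$ (\eqref{car7.13} for large $\overline{k}$, and a quadratic argument at the space-time resonance for bounded $\overline{k}$). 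That is the crucial structural point your write-up misses.

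\textbf{Part (ii): the source of $(\eps')^{1/2}$.} You attribute the square-root gain to quadratic degeneracy of $\Phi$ at resonance, invoking \eqref{Bas1.6} and Lemma \ref{lemma00} with $l=2$. In the paper's actual proof (see \eqref{cas50}--\eqref{cas51}) the mechanism is geometric balancing, not quadratic degeneracy: on the region $|\sin\theta|\le(\eps')^{1/2}2^{-2k_2}$ the angular measure is $\lesssim(\eps')^{1/2}$ while $\partial_r\Phi$ is bounded below so the radial measure is $\lesssim\eps$; on the complementary region the annulus constraint $\bigl||\xi-\eta|-r_0\bigr|\le\eps'$ forces $r$ into an interval of length $\lesssim\eps'/|\sin\theta|\lesssim(\eps')^{1/2}$, and the $\theta$-direction supplies the $\eps$. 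These compose to give $\eps(\eps')^{1/2}$. No $l=2$ application is needed.

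Finally, the appeal to the restricted nondegeneracy function $\Upsilon$ at the end is a red herring for this proposition: $\Upsilon$ enters only in Lemma \ref{L2Prop0} and the proof of the main $L^2$ lemma in Section \ref{L2proof}, not in the proof of Proposition \ref{volume}.
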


\begin{proof} The condition \eqref{Alx64.5} is natural due to \eqref{try5.5}, otherwise $|\Phi(\xi,\eta)|\approx 2^{\min(k,k_1,k_2)/2}$ 
in $\mathcal{D}_{k,k_1,k_2}$. Compare also with the condition $\epsilon\leq 2^{\min(k,k_1,k_2,0)/2-\D_1}$ in (ii) and (iii).

(i) By symmetry, it suffices to prove the inequality in the first line of \eqref{cas4}. We may assume that $k_2\leq k_1$, so, using \eqref{Alx64.5},
\begin{equation}\label{Alx64.55}
k_1,\max(k,k_2)\in[\overline{k}-10,\overline{k}],\qquad k,k_2\geq -\overline{k}-100.
\end{equation}
Assume that $\xi=(s,0),\eta=(r\cos\theta,r\sin\theta)$, so
\begin{equation}\label{cas7.1}
-\Phi(\xi,\eta)=-\sigma\lambda(s)+\nu\lambda(r)+\mu\lambda((s^2+r^2-2sr\cos\theta)^{1/2})=:Z(r,\theta).
\end{equation}
We may assume that $\epsilon\leq 2^{\min(k,k_2)}2^{\overline{k}/2-\D_1}$. Notice that
\begin{equation}\label{car7.11}
\Big|\frac{d}{d\theta}Z(r,\theta)\Big|=\Big|\lambda'((s^2+r^2-2sr\cos\theta)^{1/2})\frac{sr\sin\theta}{(s^2+r^2-2sr\cos\theta)^{1/2}}\Big|.
\end{equation}

Assume that $|s-r|\geq 2^{\overline{k}-100}$, $s\in[2^{k-4},2^{k+4}]$, $r\in[2^{k_2-4},2^{k_2+4}]$. Then, for $r,s$ fixed,
\begin{equation}\label{car7.12}
\big|\{\theta\in[0,2\pi]:\,|Z(r,\theta)|\leq\epsilon\}\big|\lesssim\sum_{b\in\{0,1\}}\frac{\epsilon}{\sqrt{2^{\overline{k}/2}2^{\min(k,k_2)}(\epsilon+Z(r,b\pi))}}.
\end{equation}
Indeed, this follows from \eqref{car7.11} since in this case $\big|\partial_\theta Z(r,\theta)\big|\approx 2^{\min(k,k_2)}2^{\overline{k}/2}|\sin\theta|$ for all $\theta\in[0,2\pi]$. Next, we observe that
\begin{equation}\label{car7.13}
\big|\{r\in[2^{k_2-4},2^{k_2+4}]:\,|s-r|\geq 2^{\overline{k}-100}\,\,\text{ and }\,\,|Z(r,b\pi)|\leq \kappa 2^{\min(k,k_2)}2^{\overline{k}/2}\}\big|\lesssim \kappa 2^{k_2},
\end{equation}
provided that $\overline{k}\geq 200$ and $b\in\{0,1\}$. Indeed, in proving \eqref{car7.13} we may assume that $\kappa\leq 2^{-\D_1}$. Then we notice that the set in the left-hand side of \eqref{car7.13} is nontrivial only if 
\begin{equation*}
\begin{split}
\text{ either}\,\,&\pm Z(r,b\pi)=\lambda(s)-\lambda(s\pm r)\pm\lambda(r)\,\,\text{ and }\,\,s\in[2^{\overline{k}-10},2^{\overline{k}+10}],\,r\in[2^{-\overline{k}-10},2^{-\overline{k}+10}],\\
\text{ or}\,\,&\pm Z(r,b\pi)=\lambda(r)-\lambda(r\pm s)\pm\lambda(s)\,\,\text{ and }\,\,r\in[2^{\overline{k}-10},2^{\overline{k}+10}],\,s\in[2^{-\overline{k}-10},2^{-\overline{k}+10}].
\end{split}
\end{equation*}  
In all cases, the desired conclusion \eqref{car7.13} follows easily, since $|\partial_rZ(r,b\pi)|$ is suitably bounded away from $0$. Using also \eqref{car7.12} it follows that
\begin{equation}\label{car7.14}
\big|\{\eta:\,|\eta|\in[2^{k_2-4},2^{k_2+4}],\,\big||\xi|-|\eta|\big|\geq 2^{\overline{k}-100}\text{ and }\,|\Phi(\xi,\eta)|\leq\eps\}\big|\lesssim \eps 2^{-\overline{k}/2}2^{4k_2^+}
\end{equation}
provided that $|\xi|\in[2^{k-4},2^{k+4}]$, $\overline{k}\geq 200$, and \eqref{Alx64.55} holds. 

The case $\overline{k}\leq 200$ is easier. In this case we have $2^k,2^{k_1},2^{k_2}\approx 1$, due to \eqref{Alx64.55}. In view of Proposition \ref{spaceres11} (iii), if $|Z(r,b\pi)|\leq \kappa\leq 2^{-2\D_1}$ and $\big|\partial_rZ(r,b\pi)\big|\leq 2^{-2\D_1}$
then $s$ is close to $\gamma_1$, $r$ is close to $\gamma_1/2$, $b=0$. As a consequence $\big|\partial^2_rZ(r,b\pi)\big|\gtrsim 1$. It follows from Lemma \ref{lemma00} that
\begin{equation*}
\big|\{r\in[2^{k_2-4},2^{k_2+4}]:\,|s-r|\geq 2^{\overline{k}-100}\,\,\text{ and }\,\,|Z(r,b\pi)|\leq \kappa \}\big|\lesssim \kappa^{1/2},
\end{equation*}
provided that $\overline{k}\leq 200$ and $\kappa\in\mathbb{R}$. Using \eqref{car7.12} again it follows that 
\begin{equation}\label{car7.15}
\big|\{\eta:\,|\eta|\in[2^{k_2-4},2^{k_2+4}],\,\big||\xi|-|\eta|\big|\geq 2^{\overline{k}-100}\text{ and }\,|\Phi(\xi,\eta)|\leq\eps\}\big|\lesssim \eps \log(2+1/\eps)
\end{equation}
provided that $|\xi|\in[2^{k-4},2^{k+4}]$ and $\overline{k}\leq 200$.

Finally, we estimate the contribution of the set where $\big||\xi|-|\eta|\big|\leq 2^{\overline{k}-100}$. In this case we may assume that $k,k_1,k_2\geq \overline{k}-20$. We replace \eqref{car7.12} by
\begin{equation}\label{car7.18}
\big|\{\theta\in[2^{-\D_1},2\pi-2^{-\D_1}]:\,|Z(r,\theta)|\leq\epsilon\}\big|\lesssim\frac{\epsilon}{\sqrt{2^{3\overline{k}/2}(\epsilon+Z(r,\pi))}},
\end{equation}
which follows from \eqref{car7.11} (since $\big|\partial_\theta Z(r,\theta)\big|\approx 2^{3\overline{k}/2}|\sin\theta|$ for all $\theta\in[2^{-\D_1},2\pi-2^{-\D_1}]$). The proof proceeds as before, by analyzing the vanishing of the function $r\to Z(r,\pi)$ (it is in fact slightly easier since $|Z(r,\pi)|\gtrsim 2^{3\overline{k}/2}$ if $\overline{k}\geq 200$). It follows that
\begin{equation*}
\big|\{\eta:\,|\eta|\in[2^{k_2-4},2^{k_2+4}],\,\big||\xi|-|\eta|\big|\leq 2^{\overline{k}-100}\text{ and }\,|\Phi(\xi,\eta)|\leq\eps\}\big|\lesssim \eps \log(2+1/\eps)2^{\overline{k}/2}.
\end{equation*}
The desired bound in the first line of \eqref{cas4} follows using also \eqref{car7.14}--\eqref{car7.15}.

(ii) We may assume that $\min(k,k_2)\geq -2\D_1$ and that $\eps'\leq 2^{-\D_1^2}$.  Define
\begin{equation}\label{cas50}
\begin{split}
&E'_{1}:=\{(\xi,\eta)\in E'_{k,k_1,k_2;\eps,\eps'}:\,|\nabla_{\eta}\Phi(\xi,\eta)|\geq 2^{-20\D_1}\},\\
&E'_{2}:=\{(\xi,\eta)\in E'_{k,k_1,k_2;\eps,\eps'}:\,|\nabla_{\xi}\Phi(\xi,\eta)|\geq 2^{-20\D_1}\}.
\end{split}
\end{equation}
It is easy to see that $E'_{k,k_1,k_2;\eps,\eps'}=E'_1\cup E'_2$, using Proposition \ref{spaceres11} (ii). By symmetry, it suffices to prove \eqref{cas5.5} for the first term in the left-hand side. Let $\xi=(s,0)$, $\eta=(r\cos\theta,r\sin\theta)$, and 
\begin{equation}\label{cas51}
\begin{split}
E'_{1,\xi,1}:&=\{\eta:(\xi,\eta)\in E'_{1},\,|\sin\theta|\leq(\eps')^{1/2}2^{-2k_2}\},\\
E'_{1,\xi,2}:&=\{\eta:(\xi,\eta)\in E'_{1},\,|\sin\theta|\geq(\eps')^{1/2}2^{-2k_2}\}.
\end{split}
\end{equation}
It follows from Lemma \ref{lemma00} that $\big|E'_{1,\xi,1}\big|\lesssim \epsilon\cdot(\epsilon')^{1/2}$. Indeed, since $|\nabla_{\eta}\Phi(\xi,\eta)|\geq 2^{-20\D_1}$ and $|\sin\theta|\leq(\eps')^{1/2}2^{-2k_2}$, it follows from formula \eqref{cas7.1} that $|\partial_r[\Phi(\xi,\eta)]|\geq 2^{-21\D_1}$ in $E'_{1,\xi,1}$. The desired conclusion follows by applying Lemma \ref{lemma00} for every suitable angle $\theta$.

To estimate $\big|E'_{1,\xi,2}\big|$ we use the formula \eqref{cas7.1}. It follows from definitions that 
\begin{equation*}
E'_{1,\xi,2}\subseteq\{\eta:r\in[2^{k_2-4},2^{k_2+4}],\,\lambda(r)\in K_{s,r_0},\,|\sin\theta|\geq (\eps')^{1/2} 2^{-2k_2},\,|\Phi(\xi,\eta)|\leq \eps\},
\end{equation*}
where $K_{s,r_0}$ is an interval of length $\lesssim \eps'$ and $k_2\geq -2\D_1$. Therefore, using the formula \eqref{cas7.1} as before, $\big|E'_{1,\xi,2}\big|\lesssim 2^{2k_2}\eps(\eps')^{1/2}$, as desired.

(iii) For \eqref{Alx64.1} it suffices to prove the inequality in the first line. We may also assume that \eqref{Alx64.5} holds, and that $\kappa\leq 2^{q-2\max(k,k_1,k_2)-\D_1}$. Assume, as before, that $\xi=(s,0)$, $\eta=(r\cos\theta,r\sin\theta)$. Since $$|(\Omega_\eta\Phi)(\xi,\eta)|=\frac{\lambda'(|\xi-\eta|)}{|\xi-\eta|}|(\xi\cdot\eta^\perp)|,$$ the condition $|(\Omega_\eta\Phi)(\xi,\eta)|\leq\kappa$ gives
\begin{equation}\label{Alx64.6}
|\sin\theta|\lesssim \kappa 2^{k_1-k-k_2-|k_1|/2},
\end{equation}
in the support of the integral. The formula \eqref{cas7.1} shows that
\begin{equation*}
r^{-1}|\partial_\theta\Phi(\xi,\eta)|=\frac{\lambda'(|\xi-\eta|)}{|\xi-\eta|}s|\sin\theta|\lesssim \kappa 2^{-k_2}
\end{equation*}
in the support of the integral. Therefore $|\partial_r\Phi(\xi,\eta)|\geq 2^{q-4}$ in the support of the integral. 

We assume now that $\theta$ is fixed satisfying \eqref{Alx64.6}. If $||k_2|-|k_1||\geq 100$ then $|\partial_r\Phi(\xi,\eta)|\gtrsim 2^{|k_1|/2}+2^{|k_2|/2}$ for all $(\xi,\eta)\in\mathcal{D}_{k,k_1,k_2}$, and the desired bound follows from \eqref{scale1}, with $l=1$ and $n=1$. If $||k_2|-|k_1||\leq 100$ then we use still use \eqref{scale1} to conclude that the integral is dominated by
\begin{equation*}
C\eps 2^{-2q}2^{5|k_1|/2}\cdot \kappa 2^{k_1-k-|k_1|/2}\lesssim \eps\kappa 2^{-2q} 2^{4|k_1|}.
\end{equation*}
This suffices to prove \eqref{Alx64.1} if $2^q\geq 2^{-6\max(k,k_1,k_2)-\D_1}$. Finally, if
\begin{equation*}
||k_2|-|k_1||\leq 100,\quad 2^q\leq 2^{-6\max(k,k_1,k_2)-\D_1},\quad \kappa\leq 2^{q-2\max(k,k_1,k_2)-\D_1},
\end{equation*} 
then we would like to apply \eqref{scale2}. For this it suffices to verify that for any $\theta$ fixed satisfying \eqref{Alx64.6} the number of intervals (in the variable $r$) where $|\partial_r\Phi(\xi,\eta)|\leq 2^{q-4}$ is uniformly bounded. In view of Proposition \ref{spaceres11} (iii) these intervals are present only when $k,k_1,k_2\in[-10,10]$, $|s-\gamma_1|\ll 1$, $|r-\gamma_1/2|\ll 1$, and $\Phi(\xi,\eta)=\pm[\lambda(s)-\lambda(r)-\lambda( (s^2+r^2-2sr\cos\theta)^{1/2})]$. In this case, however $|\partial_r^2 \Phi(\xi,\eta)|\gtrsim 1$. As a consequence, for any $s$ and $\theta$ there is at most one interval in $r$ where $|\partial_r\Phi(\xi,\eta)|\leq 2^{q-4}$, and the desired bound follows from \eqref{scale2}. 

The decomposition \eqref{Alx64.2} follows from \eqref{Alx64.1} and Proposition \ref{spaceres11} (iii), by setting $2^p=2^q=2^{-2\D_1}2^{-2\max(k,k_1,k_2)}$. 

To prove the first inequality in \eqref{Alx64.3}, we may assume that $q\leq -5\max(k,k_1,k_2)-\D_1$ (due to \eqref{Alx64.6}). In view of 
Proposition \eqref{spaceres11} (iii) we may assume that $k,k_1,k_2\in[-10,10]$, $|s-\gamma_1|\ll 1$, $|r-\gamma_1/2|\ll 1$ and 
$\Phi(\xi,\eta)=\pm[\lambda(s)-\lambda(r)-\lambda( (s^2+r^2-2sr\cos\theta)^{1/2})]$. As before, $|\partial_r^2 \Phi(\xi,\eta)|\gtrsim 1$ in this case. 
As a consequence, for any $s$ and $\theta$ fixed, the measure of the set of numbers $r$ for which $|\partial_r\Phi(\xi,\eta)|\lesssim 2^q$ is 
bounded by $C2^q$, and the desired bound follows.
\end{proof}

We will also need a variant of Schur's lemma for suitably localized kernels.

\begin{lemma}\label{Shur2Lem}
Assume that $n,p\leq -\D/10$, $k,k_1,k_2\in\mathbb{Z}$, $l\leq\min(k,k_1,k_2,0)/2-\D/10$, $\rho_1,\rho_2\in\{\gamma_0,\gamma_1\}$. Then, with $\mathcal{D}_{k,k_1,k_2}$ as in \eqref{Rset}, and assuming that $\big\Vert \sup_{\omega\in\mathbb{S}^1}|\widehat{f}(r\omega)|\,\big\Vert_{L^2(rdr)}\leq 1$,
\begin{equation}\label{Shur2Lem1}
\Big\Vert \int_{\mathbb{R}^2}\mathbf{1}_{\mathcal{D}_{k,k_1,k_2}}(\xi,\eta)\varphi_l(\Phi(\xi,\eta))\varphi_n(|\xi-\eta|-\rho_1)\widehat{f}(\xi-\eta)\widehat{g}(\eta)\,d\eta\Big\Vert_{L^2_\xi}\lesssim 2^{(l+n)/2}\Vert g\Vert_{L^2},
\end{equation}
\begin{equation}\label{Shur2Lem3}
\begin{split}
\Big\Vert \int_{\mathbb{R}^2}\mathbf{1}_{\mathcal{D}_{k,k_1,k_2}}(\xi,\eta)\varphi_l(\Phi(\xi,\eta))&\varphi_n(|\xi-\eta|-\rho_1)\varphi_p(|\eta|-\rho_2)\widehat{f}(\xi-\eta)\widehat{g}(\eta)\,d\eta\Big\Vert_{L^2_\xi}\\
&\lesssim \min\{2^{l/2},2^{p/2}\}2^{(l+n)/2}\Vert g\Vert_{L^2},
\end{split}
\end{equation}
and
\begin{equation}\label{Shur2Lem2}
\Big\Vert \int_{\mathbb{R}^2}\mathbf{1}_{\mathcal{D}_{k,k_1,k_2}}(\xi,\eta)\varphi_l(\Phi(\xi,\eta))\widehat{f}(\xi-\eta)\widehat{g}(\eta)d\eta\Big\Vert_{L^2_\xi}\lesssim 2^{5|k_1|}2^{3l/4}(1+|l|)\Vert g\Vert_{L^2}.
\end{equation} 
\end{lemma}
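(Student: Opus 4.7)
My approach is to apply Schur's lemma (Lemma \ref{ShurLem}) combined with the volume bounds from Proposition \ref{volume} and a polar-coordinate decomposition that exploits the $L^2(rdr)$--sup-in-angle structure of $\widehat f$. Throughout I change variables $z = \xi - \eta$, write $z = r\omega$ with $r \geq 0$ and $\omega \in S^{1}$, and set $F(r) := \sup_{\omega \in S^{1}} |\widehat f(r\omega)|$, so that $\|F\|_{L^{2}(rdr)} \leq 1$. The key reduction is to the one-dimensional sublevel-set estimate for $\phi \mapsto \Phi(\xi, \xi - r\omega)$, since the $r$-integration can be separated by Cauchy--Schwarz against $F$.

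For \eqref{Shur2Lem1}, applying $|\widehat f(r\omega)| \leq F(r)$ pointwise, Cauchy--Schwarz in $\omega$, and then Cauchy--Schwarz in $r$ (absorbing $\|F\|_{L^{2}(rdr)}$), followed by Fubini to recognize $\int\!\!\int r |\widehat g(\xi-r\omega)|^{2} d\omega\, dr\, d\xi \lesssim \|g\|_{L^{2}}^{2}$, reduces matters to the uniform bound
\begin{equation*}
\sup_{\xi,\, r \approx \rho_{1}}\big|\{\omega \in S^{1} : (\xi, \xi - r\omega) \in \mathcal{D}_{k,k_{1},k_{2}},\ |\Phi(\xi,\xi-r\omega)| \leq 2^{l}\}\big| \lesssim 2^{l}.
\end{equation*}
This angular bound follows from the parametrization \eqref{cas7.1} and the derivative identity $|\partial_{\phi}\Phi| \approx \lambda'(|\eta|) \, sr\,|\sin\phi|/|\eta|$ combined with Lemma \ref{lemma00}. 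The degenerate angles $\phi \in \{0,\pi\}$ are treated separately: the constraints $r \approx \rho_{1} \in \{\gamma_{0},\gamma_{1}\}$ and $l \leq \min(k,k_{1},k_{2},0)/2 - \mathcal{D}/10$ force $\xi$ away from the second-order vanishing set of $\Phi$ (by Proposition \ref{spaceres11}(iii) and Lemma \ref{LambdaBasic}(iv)), so $|\partial_{\phi}^{2}\Phi| \gtrsim 1$ near those angles and \eqref{scale1} with $l=2$ yields the contribution $\lesssim 2^{l/2}\cdot 2^{l/2} = 2^{l}$. The $r$-integration over an interval of length $2^{n}$ contributes the remaining $2^{n/2}$.

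For \eqref{Shur2Lem3}, the additional cutoff $\varphi_{p}(|\eta|-\rho_{2})$ together with $\varphi_{n}(|\xi-\eta|-\rho_{1})$ confines $\omega$ more tightly; in the regime $p \leq l$ the two spherical constraints force the admissible angular set to have measure $\lesssim 2^{p}$ (rather than $2^{l}$), giving the improvement $\min\{2^{l/2},2^{p/2}\}$. The $2^{p/2}$ factor can alternatively be obtained by running the polar decomposition in $\eta$ rather than in $z$ and then taking the minimum of the two bounds. For \eqref{Shur2Lem2}, which lacks the annular cutoff, I dyadically decompose $1 = \sum_{\rho \in \{\gamma_{0},\gamma_{1}\}}\sum_{n}\varphi_{n}(|\xi-\eta|-\rho) + (\mathrm{remainder})$ in the radial variable $|\xi-\eta|$. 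On the remainder, where $|\xi-\eta|$ is bounded away from both critical radii, Proposition \ref{spaceres11}(iii) gives $|\partial_{r}\Phi(\xi,\xi-r\omega)| \gtrsim 2^{-5|k_{1}|}$ for each fixed $\omega$, so \eqref{scale2} applied in $r$ combined with the radial-$L^{2}$ control of $F$ yields a Schur constant $\lesssim 2^{5|k_{1}|}2^{l/2}(1+|l|)^{1/2}$; on each critical annulus of width $2^{n}$, where $n$ ranges over $\lesssim 1+|l|$ scales, apply \eqref{Shur2Lem1} to get $\lesssim 2^{(l+n)/2}$. Balancing the two estimates at $n_{\ast} \approx l/2$ and summing the $O(1+|l|)$ dyadic pieces yields $\lesssim 2^{5|k_{1}|}2^{3l/4}(1+|l|)$, as claimed.

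The main obstacle is the uniform angular sublevel-set estimate underlying \eqref{Shur2Lem1}, because the vanishing of $\partial_{\phi}\Phi$ at $\phi \in \{0,\pi\}$ prevents a direct application of the first-order bound \eqref{scale2}; one must simultaneously control the number of connected components of the angular level set and upgrade to the second-order estimate near the degenerate directions, using the separation hypothesis $l \leq \min(k,k_{1},k_{2},0)/2 - \mathcal{D}/10$ to stay away from space-time resonances (where the second derivative would also vanish). A secondary subtlety in \eqref{Shur2Lem2} is that the dyadic sum in $n$ must be arranged so that the logarithmic $(1+|l|)$ loss appears only once, not compounded with the logarithmic factor already present in the volume bound \eqref{cas4}; this is why the remainder region is treated by \eqref{scale2} (which has no log) rather than by the raw bound \eqref{cas4}.
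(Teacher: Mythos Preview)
Your overall strategy (Schur's test combined with polar coordinates $z=r\omega$ and angular sublevel-set estimates, exploiting the $L^2(rdr)$ control on $F(r)=\sup_\omega|\widehat f(r\omega)|$) is exactly what the paper does. However, several steps as stated are incorrect.

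\textbf{The angular bound in \eqref{Shur2Lem1} is $2^{l/2}$, not $2^{l}$.} With $|\partial_\theta\Phi|\approx|\sin\theta|$ the worst case is when $|\Phi(\xi,\xi-r\omega)|$ vanishes at an aligned angle $\theta\in\{0,\pi\}$, and this does happen: e.g.\ for $(\sigma,\mu,\nu)=(+,+,+)$, $r=\gamma_0$, one can choose $|\xi|$ so that $\lambda(|\xi|)-\lambda(\gamma_0)-\lambda(|\xi|-\gamma_0)=0$ (Lemma~\ref{LambdaBasic}(iv)). Near such a point $\Phi\approx c\theta^2$ and the sublevel set has measure $\approx 2^{l/2}$, which is \emph{larger} than $2^l$ since $l<0$. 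Your sentence ``$|\partial_\phi^2\Phi|\gtrsim 1$ \ldots\ yields the contribution $\lesssim 2^{l/2}\cdot 2^{l/2}=2^{l}$'' is the error: the second-order estimate \eqref{scale1} gives $2^{l/2}$, full stop. The paper states and uses the correct bound $\lesssim 2^{l/2}$ in \eqref{Alx34}; plugging this into both sides of Schur (with the $2^{n/2}$ from Cauchy--Schwarz against $F$ over the $r$-interval of width $2^n$) gives \eqref{Shur2Lem1}. With your claimed $2^l$ the scheme you describe would give only $2^{l}2^{n/2}$.

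\textbf{For \eqref{Shur2Lem3} the crucial missing step is the non-alignment check.} The reason the angular bound improves from $2^{l/2}$ to $\min(2^l,2^p)$ is that the \emph{two} constraints $||\xi-\eta|-\rho_1|\lesssim 2^n$ and $||\eta|-\rho_2|\lesssim 2^p$ with $\rho_1,\rho_2\in\{\gamma_0,\gamma_1\}$ force $|\sin\theta|\gtrsim 1$: one must verify that $\pm\lambda(|\xi|)\pm\lambda(\rho_1)\pm\lambda(\rho_2)\neq 0$ whenever $\pm|\xi|\pm\rho_1\pm\rho_2=0$, which the paper checks explicitly via Lemma~\ref{LambdaBasic}(iv). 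You do not address this.

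\textbf{The argument for \eqref{Shur2Lem2} has a genuine gap.} Your claim that on the ``remainder'' region (where $|z|$ is bounded away from $\gamma_0$ and $\gamma_1$) one has $|\partial_r\Phi|\gtrsim 2^{-5|k_1|}$ is false. Since $\nabla_z\Phi=-\nabla_\eta\Phi$, the space-time resonance of Proposition~\ref{spaceres11}(iii) occurs at $\eta=\xi/2$, $|\xi|=\gamma_1$, i.e.\ at $|z|=\gamma_1/2\approx 0.707$, which lies in your remainder region; there $\partial_r\Phi=0$. The paper avoids any dyadic decomposition in $|z|$: it bounds the Schur kernel by Cauchy--Schwarz as $\|F\|_{L^2(rdr)}\cdot\|\int_\omega\varphi_l\mathbf{1}_{\mathcal{D}}\,d\omega\|_{L^2(rdr)}$ and then estimates the second factor by $\big[\sup_r\int_\omega\big]^{1/2}\cdot\big[\int_{r,\omega}\big]^{1/2}$, using the angular bound \eqref{Alx34} for the first piece and the two-dimensional volume bound \eqref{cas4} from Proposition~\ref{volume}(i) for the second. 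This product $(2^{l/2})^{1/2}\cdot(2^l\log)^{1/2}$ is what produces the $2^{3l/4}(1+|l|)$.
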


\begin{proof} 
In view of \eqref{try5.5}, we may assume that $\min(k,k_1,k_2)+\overline{k}\geq -100$, where $\overline{k}=\max(k,k_1,k_2)$.

We start with \eqref{Shur2Lem1}. We may assume that $\min(k,k_1,k_2)\geq -200$. By Schur's test, it suffices to show that
\begin{equation}\label{Shur2suff}
\begin{split}
\sup_{\xi}\int_{\mathbb{R}^2}\mathbf{1}_{\mathcal{D}_{k,k_1,k_2}}(\xi,\eta)\varphi_l(\Phi(\xi,\eta))\varphi_n(|\xi-\eta|-\rho_1)|\widehat{f}(\xi-\eta)|\,d\eta&\lesssim 2^{(l+n)/2},\\
\sup_{\eta}\int_{\mathbb{R}^2}\mathbf{1}_{\mathcal{D}_{k,k_1,k_2}}(\xi,\eta)\varphi_l(\Phi(\xi,\eta))\varphi_n(|\xi-\eta|-\rho_1)|\widehat{f}(\xi-\eta)|\,d\xi &\lesssim 2^{(l+n)/2}.
\end{split}
\end{equation}
We focus on the first inequality. Fix $\xi\in\mathbb{R}^2$ and introduce polar coordinates, $\eta=\xi-r\omega$, $r\in(0,\infty)$, $\omega\in\mathbb{S}^1$.  The left-hand side is dominated by
\begin{equation*}
C\int_{\omega\in\mathbb{S}^1}\int_{2^{k_1-4}}^{2^{k_1+4}} \mathbf{1}_{\mathcal{D}_{k,k_1,k_2}}(\xi,\xi-r\omega)\varphi_l(\Phi(\xi,\xi-r\omega))\varphi_{n}(r-\rho_1)\vert \widehat{f}(r\omega)\vert rdrd\omega,
\end{equation*}
for a constant $C$ sufficiently large. Therefore it suffices to show that
\begin{equation}\label{Alx34}
\sup_{r,\xi}\int_{\omega\in\mathbb{S}^1}\mathbf{1}_{\mathcal{D}_{k,k_1,k_2}}(\xi,\xi-r\omega)\varphi_l(\Phi(\xi,\xi-r\omega))\,d\omega\lesssim 2^{l/2}2^{|k_1|/2},
\end{equation}
which is easily verified as in Proposition \ref{volume}, using the identity \eqref{cas7.1}. Indeed for $\xi$ and $r$ fixed, and letting $\omega=(\cos\theta,\sin\theta)$, the absolute value of the $d/d\theta$ derivative of the function $\theta\to \Phi(\xi,\xi-r(\cos\theta,\sin\theta))$ is bounded from below by $c|\sin\theta|2^{k+k_1-k_2}2^{|k_2|/2}\gtrsim |\sin\theta|2^{-|k_1|/2}$. The bound \eqref{Alx34} follows using also \eqref{scale2}. The second inequality in \eqref{Shur2suff} follows similarly.

We prove now \eqref{Shur2Lem3}. We may assume that $k,k_1,k_2\in[-80,80]$ and it suffices to show that
\begin{equation*}
\begin{split}
\sup_{\xi}\int_{\mathbb{R}^2}\mathbf{1}_{\mathcal{D}_{k,k_1,k_2}}(\xi,\eta)\varphi_l(\Phi(\xi,\eta))\varphi_n(|\xi-\eta|-\rho_1)\varphi_p(|\eta|-\rho_2)|\widehat{f}(\xi-\eta)|\,d\eta&\lesssim 2^{n/2}\min(2^l,2^p),\\
\sup_{\eta}\int_{\mathbb{R}^2}\mathbf{1}_{\mathcal{D}_{k,k_1,k_2}}(\xi,\eta)\varphi_l(\Phi(\xi,\eta))\varphi_n(|\xi-\eta|-\rho_1)\varphi_p(|\eta|-\rho_2)|\widehat{f}(\xi-\eta)|\,d\xi
&\lesssim 2^{l+n/2}.
\end{split}
\end{equation*}

We proceed as for \eqref{Shur2suff} but replace \eqref{Alx34} by
\begin{equation}\label{Alx34.1}
\begin{split}
\sup_{|\xi|\approx 1}\sup_{r}\int_{\omega\in\mathbb{S}^1}\varphi_l(\Phi(\xi,\xi-r\omega))\varphi_{n}(r-\rho_1)\varphi_p(|\xi-r\omega|-\rho_2)\,d\omega&\lesssim \min\{2^l,2^p\},\\
\sup_{\eta}\sup_{r}\int_{\omega\in\mathbb{S}^1}\varphi_l(\Phi(\eta+r\omega,\eta))\varphi_{n}(r-\rho_1)\varphi_p(|\eta|-\rho_2)\varphi_{\geq-90}(\eta+r\omega)\,d\omega&\lesssim 2^l.
\end{split}
\end{equation}
The bounds \eqref{Alx34.1} follow easily, using also the formula \eqref{cas7.1} to prove the $2^l$ bounds, once we notice that $|\sin\theta|\gtrsim 1$ in the support of the integrals. For this we only need to verify that the points $\xi$ and $\eta$ cannot be almost aligned; more precisely, we need to verify that if $\xi$ and $\eta$ are aligned then $|\Phi(\xi,\xi-\eta)|+\big||\xi-\eta|-\rho_2\big|+||\eta|-\rho_1|\gtrsim 1$. For this it suffices to notice that
\begin{equation*}
\big|\pm\lambda(|\xi|)\pm\lambda(\rho_1)\pm\lambda(\rho_2)\big|\gtrsim 1\qquad\text{ if }\qquad |\xi|\gtrsim 1\text{ and }\pm|\xi|\pm\rho_1\pm\rho_2=0.
\end{equation*}
Recalling that $\rho_1,\rho_2\in\{\gamma_0,\gamma_1\}$, it suffices to verify that $\lambda(2\gamma_0)-2\lambda(\gamma_0)\neq 0$, $\lambda(2\gamma_1)-2\lambda(\gamma_1)\neq 0$, $\lambda(\gamma_0+\gamma_1)-\lambda(\gamma_0)-\lambda(\gamma_1)\neq 0$, $\lambda(-\gamma_0+\gamma_1)+\lambda(\gamma_0)-\lambda(\gamma_1)\neq 0$. These claims follow from Lemma \ref{LambdaBasic} (iv), since the numbers $\gamma_0^2, \gamma_1^2$, $\gamma_0\gamma_1$, and $\gamma_0(\gamma_1-\gamma_0)$ are not in the interval $[4/9,1/2]$.

We now turn to \eqref{Shur2Lem2}. By Schur's lemma it suffices to show that
\begin{equation}\label{Alx30}
\begin{split}
\sup_{\xi}\int_{\mathbb{R}^2}\varphi_l(\Phi(\xi,\eta))\mathbf{1}_{\mathcal{D}_{k,k_1,k_2}}(\xi,\eta)\vert \widehat{f}(\xi-\eta)\vert \,d\eta & \lesssim 2^{5|k_1|}2^{3l/4}(1+|l|),\\
\sup_\eta\int_{\mathbb{R}^2}\varphi_l(\Phi(\xi,\eta))\mathbf{1}_{\mathcal{D}_{k,k_1,k_2}}(\xi,\eta)\vert \widehat{f}(\xi-\eta)\vert \,d\xi& \lesssim 2^{5|k_1|}2^{3l/4}(1+|l|).
\end{split}
\end{equation}
We show the first inequality. Introducing polar coordinates, as before, we estimate
\begin{equation*}
\begin{split}
&\int_{\mathbb{R}^2} \varphi_l(\Phi(\xi,\xi-r\omega))\mathbf{1}_{\mathcal{D}_{k,k_1,k_2}}(\xi,\xi-r\omega)\vert \widehat{f}(r\omega)\vert \,rdr d\omega\\
&\lesssim \big\Vert\sup_\omega |\widehat{f}(r\omega)|\,\big\Vert_{L^2(rdr)}\Big\Vert \int_{\mathbb{S}^1}\varphi_l(\Phi(\xi,\xi-r\omega))\mathbf{1}_{\mathcal{D}_{k,k_1,k_2}}(\xi,\xi-r\omega)\,d\omega\Big\Vert_{L^2(rdr)}\\
&\lesssim \Vert \varphi_{\leq l+2}(\Phi(\xi,\xi-\eta))\mathbf{1}_{\mathcal{D}_{k,k_1,k_2}}(\xi,\xi-\eta)\Vert_{L^2_\eta} \big\|\varphi_{\leq l+2}(\Phi(\xi,\xi-r\omega))\mathbf{1}_{\mathcal{D}_{k,k_1,k_2}}(\xi,\xi-r\omega)\big\|_{L^\infty_rL^2_\omega}\\
&\lesssim 2^{5|k_1|}2^{3l/4}(1+|l|),
\end{split}
\end{equation*}
using Proposition \ref{volume} (i) and \eqref{Alx34}. The second inequality in \eqref{Alx30} follows similarly.
\end{proof}

\subsection{Iterated resonances}\label{cubph}

In this subsection we prove a lemma concerning some properties of the cubic phases
\begin{equation}\label{ub1}
\widetilde{\Phi}(\xi,\eta,\sigma)=\widetilde{\Phi}_{+\mu\beta\gamma}(\xi,\eta,\sigma)=\Lambda(\xi)-\Lambda_\mu(\xi-\eta)-\Lambda_\beta(\eta-\sigma)-\Lambda_\gamma(\sigma).
\end{equation}
These properties are used only in the proof of Lemma \ref{lLargeBBound} and Lemma \ref{lsmallBound}.

\begin{lemma}\label{cubicphase} 
(i) Assume that $\xi,\eta,\sigma\in\mathbb{R}^2$ satisfy
\begin{equation}\label{phaseassump1}
\max(||\xi-\eta|-\gamma_{0}|,||\eta-\sigma|-\gamma_{0}|,||\sigma|-\gamma_{0}|)\leq 2^{-\D_1/2},
\end{equation} 
and
\begin{equation}\label{phaseassump2}
|\nabla_{\eta,\sigma}\widetilde{\Phi}(\xi,\eta,\sigma)|\leq \kappa_{1}\leq 2^{-4\D_1}.
\end{equation}
Then, for $\nu\in\{+,-\}$,
\begin{equation}\label{ub2}
\Lambda(\xi)-\Lambda_\mu(\xi-\eta)-\Lambda_\nu(\eta)\gtrsim |\eta|.
\end{equation}
Moreover,
\begin{equation}\label{phaseresult}
\text{ if }\,\,\,\,|\nabla_{\xi}\widetilde{\Phi}(\xi,\eta,\sigma)|\geq \kappa_{2}\geq 2^{\D_1}\kappa_1\,\,\,\,\text{ then }\,\,\,\,|\widetilde{\Phi}(\xi,\eta,\sigma)|\gtrsim \kappa_{2}^{3/2}.
\end{equation}

(ii) Assume that $\xi,\eta,\sigma\in\mathbb{R}^2$ satisfy $|\xi-\eta|,\,|\eta-\sigma|,\,|\sigma|\in [2^{-10},2^{10}]$ and
\begin{equation}\label{kro2}
\begin{split}
&|\Phi_{+\mu\nu}(\xi,\eta)|=|\Lambda(\xi)-\Lambda_\mu(\xi-\eta)-\Lambda_\nu(\eta)|\leq 2^{-2\D_1},\\
&|\Phi_{\nu\beta\gamma}(\eta,\sigma)|=|\Lambda_\nu(\eta)-\Lambda_\beta(\eta-\sigma)-\Lambda_\gamma(\sigma)|\leq 2^{-2\D_1}.
\end{split}
\end{equation}
If 
\begin{equation}\label{kro3}
|\nabla_{\eta,\sigma}\widetilde{\Phi}(\xi,\eta,\sigma)|\leq \kappa\leq 2^{-4\D_1}
\end{equation}
then
\begin{equation}\label{kro4}
\mu=-,\,\nu=\beta=\gamma=+,\qquad|\eta-2\sigma|+|\xi-\sigma|\lesssim \kappa,\qquad|\nabla_{\xi}\widetilde{\Phi}(\xi,\eta,\sigma)|\lesssim \kappa.
\end{equation}
\end{lemma}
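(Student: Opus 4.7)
The plan is to treat (i) and (ii) separately, in each case exploiting the rigid geometry forced by the smallness of the quadratic phase or its gradient together with the special values of $\lambda$ at $\gamma_0$ and $\gamma_1$. For part (i), I would first observe that $\nabla_\eta\widetilde\Phi=\mu\nabla\Lambda(\xi-\eta)-\beta\nabla\Lambda(\eta-\sigma)$ and $\nabla_\sigma\widetilde\Phi=\beta\nabla\Lambda(\eta-\sigma)-\gamma\nabla\Lambda(\sigma)$; combined with \eqref{phaseassump1}--\eqref{phaseassump2} and the fact from Lemma \ref{LambdaBasic}(i) that $\lambda'$ attains its strict minimum on $(0,\infty)$ at $\gamma_0$, this forces $\mu(\xi-\eta),\beta(\eta-\sigma),\gamma\sigma$ to be parallel to a common unit vector $e$ with magnitude $\gamma_0+O(\kappa_1)$, hence $\eta=(\beta+\gamma)\gamma_0 e+O(\kappa_1)$ and $\xi=(\mu+\beta+\gamma)\gamma_0 e+O(\kappa_1)$. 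The bound \eqref{ub2} is then a finite case analysis. When $\beta=\gamma$ one has $|\eta|\approx 2\gamma_0$ and one computes numerically that each of $\lambda(3\gamma_0)-\mu\lambda(\gamma_0)-\nu\lambda(2\gamma_0)$ and $2\lambda(\gamma_0)-\nu\lambda(2\gamma_0)$ is a nonzero constant (which can also be verified qualitatively from Lemma \ref{LambdaBasic}(iv), since $\gamma_0^2\notin[4/9,1/2]$). When $\beta=-\gamma$ we have $|\eta|\ll 1$, and for $\mu=-$ the phase reduces to a quantity $\approx 2\lambda(\gamma_0)\gtrsim 1$, while for $\mu=+$ one writes $\Phi=[\lambda(|\xi|)-\lambda(|\xi-\eta|)]-\nu\lambda(|\eta|)$ and bounds the bracket by $O(|\eta|)$, so the $\nu\lambda(|\eta|)\sim|\eta|^{1/2}$ term dominates and yields $|\Phi|\gtrsim|\eta|^{1/2}\gtrsim|\eta|$.

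For \eqref{phaseresult}, a direct evaluation shows that $\widetilde\Phi$ and $\nabla_\xi\widetilde\Phi$ vanish identically at the critical configuration precisely when $(\mu,\beta,\gamma)$ contains exactly one minus sign; in all other sign patterns $|\widetilde\Phi|\gtrsim 1$ there, and hence $|\widetilde\Phi|\gtrsim 1\gtrsim\kappa_2^{3/2}$ by Taylor expansion in $(\eta,\sigma)$ up to errors of size $O(\kappa_1^2)$. The delicate case (say $\mu=-,\beta=\gamma=+$, the others being analogous) is where the critical set is a $2$-parameter family along which both $\widetilde\Phi$ and $\nabla_\xi\widetilde\Phi$ vanish. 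The Hessian of $\widetilde\Phi$ in $(\eta,\sigma)$ is nondegenerate in the $e^\perp$-directions (with scale $\lambda'(\gamma_0)/\gamma_0$), so $|\nabla_{\eta,\sigma}\widetilde\Phi|\le\kappa_1\ll\kappa_2\leq|\nabla_\xi\widetilde\Phi|$ confines the perturbation off the critical manifold to the degenerate $e$-directions. Writing $\eta=\eta^*+\eta'_e\,e$ and $\sigma=\sigma^*+\sigma'_e\,e$ and using the crucial vanishing $\lambda''(\gamma_0)=0$ together with $\lambda'''(\gamma_0)\neq 0$ from \eqref{ph3.1}, a Taylor expansion yields
\begin{equation*}
\widetilde\Phi=\tfrac{\lambda'''(\gamma_0)}{2}\eta'_e\sigma'_e(\eta'_e-\sigma'_e)+O(\text{quartic}),\qquad \nabla_\xi\widetilde\Phi\cdot e=-\tfrac{\lambda'''(\gamma_0)}{2}(\eta'_e)^2+O(\text{cubic}),
\end{equation*}
together with $\nabla_\eta\widetilde\Phi\cdot e\sim\sigma'_e(2\eta'_e-\sigma'_e)$ and $\nabla_\sigma\widetilde\Phi\cdot e\sim\eta'_e(\eta'_e-2\sigma'_e)$. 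The gradient lower bound gives $|\eta'_e|\gtrsim\kappa_2^{1/2}$, and then $|\nabla_\eta\widetilde\Phi|\le\kappa_1$ forces $\sigma'_e=\eta'_e/2+O(\kappa_1/\kappa_2^{1/2})$; substituting back yields $|\widetilde\Phi|\gtrsim|\eta'_e|^3\gtrsim\kappa_2^{3/2}$. This Taylor computation, leveraging the inflection of $\lambda'$ at $\gamma_0$ to produce the sharp $3/2$ exponent, is the main obstacle of the proof.

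For part (ii) the proof is essentially a composition of two applications of Proposition \ref{spaceres11}(iii). The smallness of $\Phi_{\nu\beta\gamma}(\eta,\sigma)$ together with $\nabla_\sigma\widetilde\Phi=\nabla_\sigma\Phi_{\nu\beta\gamma}=O(\kappa)$ places $(\eta,\sigma)$ in the space-time resonant set for the triple $(\nu,\beta,\gamma)$, which by that proposition forces $\nu=\beta=\gamma=+$, $\sigma=\eta/2+O(\kappa)$, and $||\eta|-\gamma_1|\lesssim\kappa$. Plugging this into $\nabla_\eta\widetilde\Phi=\mu\nabla\Lambda(\xi-\eta)-\nabla\Lambda(\eta-\sigma)=O(\kappa)$ together with $|\eta-\sigma|\approx\gamma_1/2$ shows that $\mu(\xi-\eta)$ is parallel to $\eta$ with $\lambda'(|\xi-\eta|)=\lambda'(\gamma_1/2)+O(\kappa)$; the two choices from Lemma \ref{LambdaBasic}(iii) combined with the additional constraint $|\Phi_{+\mu+}(\xi,\eta)|\le 2^{-2\D_1}$ and the identity $2\lambda(\gamma_1/2)=\lambda(\gamma_1)$ (which defines $\gamma_1$), together with the numerical non-degeneracies of Lemma \ref{LambdaBasic}(iv) ruling out the other cases, force $\mu=-$, $|\xi-\eta|=\gamma_1/2+O(\kappa)$ and $\xi=\sigma+O(\kappa)$, i.e.\ $|\eta-2\sigma|+|\xi-\sigma|\lesssim\kappa$. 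Finally, $\nabla_\xi\widetilde\Phi=\nabla\Lambda(\xi)+\nabla\Lambda(\xi-\eta)=\nabla\Lambda(\sigma)+\nabla\Lambda(-\sigma)+O(\kappa)=O(\kappa)$ by the antisymmetry $\nabla\Lambda(-v)=-\nabla\Lambda(v)$, completing \eqref{kro4}.
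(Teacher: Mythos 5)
Your analysis of part (i), \eqref{ub2}, and of part (ii) is essentially the paper's argument (the finite case analysis of $|\lambda(\gamma_0)\pm\lambda(\gamma_0)\pm\lambda(\gamma_0)|$ for \eqref{ub2}, and the double use of Proposition \ref{spaceres11}(iii) plus the exclusion of the ``reflected'' magnitude $b\approx 0.207$ via the numerics of $\lambda$ for part (ii)). One small slip in (ii): Proposition \ref{spaceres11}(iii) gives $||\eta|-\gamma_1|\lesssim \eps_1+\eps_2^2$, hence $||\eta|-\gamma_1|\lesssim 2^{-2\D_1}$, not $\lesssim\kappa$; fortunately this is not used afterwards.

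The real problem is in the Taylor-expansion argument for \eqref{phaseresult}. Your parameterization uses only two radial variables $\eta'_e,\sigma'_e$ and implicitly fixes $|\xi|=\gamma_0$, i.e.\ it omits the third independent radial degree of freedom $w:=|\xi|-\gamma_0$ (equivalently $||\xi-\eta|-\gamma_0|$). The hypothesis \eqref{phaseassump1} only controls $||\xi-\eta|-\gamma_0|,||\eta-\sigma|-\gamma_0|,||\sigma|-\gamma_0|$ up to $2^{-\D_1/2}$, so $|w|$ can be of size $2^{-\D_1/2}$, far larger than the scales $\kappa_1^{1/2},\kappa_2^{1/2}$ you are working at. Because $\lambda''(\gamma_0+w)=\lambda'''(\gamma_0)w+O(w^2)\ne 0$, the correct expansions acquire $w$-dependent linear-in-$t$ terms; for instance $\nabla_\xi\widetilde\Phi\cdot e=-\tfrac{\lambda'''(\gamma_0)}{2}\eta'_e\bigl(2w+\eta'_e\bigr)+O(\text{cubic})$ and $\widetilde\Phi=\tfrac{\lambda'''(\gamma_0)}{2}\sigma'_e(\eta'_e-\sigma'_e)(2w+\eta'_e)+O(\text{quartic})$, not the $w=0$ formulas you wrote. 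On your slice $w=0$ the system is actually empty: from your formulas, $|\nabla_\xi\widetilde\Phi|\ge\kappa_2$ forces $|\eta'_e|\gtrsim\kappa_2^{1/2}$; then $|\nabla_\sigma\widetilde\Phi|\le\kappa_1$ (not $\nabla_\eta$, which you cite) forces $\sigma'_e=\eta'_e/2+O(\kappa_1/\kappa_2^{1/2})$; but substituting back gives $\nabla_\eta\widetilde\Phi\cdot e\sim\sigma'_e(2\eta'_e-\sigma'_e)\approx\tfrac34(\eta'_e)^2\gtrsim\kappa_2\gg\kappa_1$, contradicting the hypothesis $|\nabla_\eta\widetilde\Phi|\le\kappa_1$. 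So your reduced scenario never occurs, and the derived conclusion proves nothing about the actual configurations. The genuine configurations compatible with \eqref{phaseassump2} and $|\nabla_\xi\widetilde\Phi|\ge\kappa_2$ have $|\xi-\eta|-\gamma_0\approx-(|\sigma|-\gamma_0)$, i.e.\ $|\xi-\eta|$ sits at the \emph{reflection} $z'$ of $z:=|\sigma|$ across the inflection point $\gamma_0$ (where $\lambda'(z')=\lambda'(z)$, $z'\ne z$), and $|\eta-\sigma|\approx z$. Identifying this case, and ruling out the other three choices of $(\sigma_1,\sigma_2)\in\{z,z'\}^2$ via the constraint $|\lambda'(|\xi|)-\lambda'(|\xi-\eta|)|\ge\kappa_2/2$, is exactly the content of the paper's argument around \eqref{phaseassump5}--\eqref{ub7}. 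With the reflection identified, $d:=|z-\gamma_0|\gtrsim\kappa_2^{1/2}$, and the cubic Taylor expansion (now with all three radial displacements) yields $\widetilde\Phi=24c_3(z-\gamma_0)^3+O(d^4+\kappa_1 d)$, which gives $|\widetilde\Phi|\gtrsim d^3\gtrsim\kappa_2^{3/2}$. To repair your argument you must either include $w$ as a third parameter or, equivalently, run the reflection case analysis as in the paper.
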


\begin{proof} (i) If \eqref{phaseassump1} and \eqref{phaseassump2} hold then the vectors $\xi-\eta,\eta-\sigma,\sigma$ are almost aligned. Thus either $|\eta|\leq 2^{-\D_1/2+10}$ or $||\eta|-2\gamma_{0}|\leq 2^{-\D_1/2+10}$. We will assume that we are in the second case, $||\eta|-2\gamma_{0}|\leq 2^{-\D_1/2+10}$ (the other case is similar, in fact slightly easier because the inequality \eqref{ub2} is a direct consequence of \eqref{try5.5}). Therefore either $||\xi|-3\gamma_0|\leq 2^{-\D_1/2+20}$, and in this case the desired conclusions are trivial, or $||\xi|-\gamma_0|\leq 2^{-\D_1/2+20}$. In this case \eqref{ub2} follows since $|\lambda(\gamma_0)\pm\lambda(\gamma_0)\pm\lambda(2\gamma_0)|\gtrsim 1$; it remains to prove \eqref{phaseresult} in the case $\mu=-,\beta=\gamma=+$,
\begin{equation}\label{ub5}
\begin{split}
&\widetilde{\Phi}(\xi,\eta,\sigma)=\Lambda(\xi)+\Lambda(\xi-\eta)-\Lambda(\eta-\sigma)-\Lambda(\sigma),\\
&||\eta|-2\gamma_{0}|\leq 2^{-\D_1/2+20},\qquad ||\xi|-\gamma_0|\leq 2^{-\D_1/2+20}.
\end{split}
\end{equation} 

In view of \eqref{phaseassump2}, the angle between any two of the vectors $\{\xi-\eta,\eta-\sigma,\sigma\}$ is either $O(\kappa_{1})$ or $\pi+O(\kappa_{1})$. Given $\sigma= ze$ for some $e\in\mathbb{S}^{1}$, we write $\eta= y e+\eta'$, $\xi= x e +\xi'$, with $e\cdot \eta'=e\cdot\xi'=0$ and $|\eta'|+|\xi'|\lesssim \kappa_1$. Notice that $|\widetilde{\Phi}(\xi,\eta,\sigma)-\widetilde{\Phi}(xe,ye,ze)|\lesssim \kappa_1^2$. Therefore, we may assume that 
\begin{equation}\label{phaseassump4}
\begin{split}
&|x-\gamma_0|+|y-2\gamma_0|+|z-\gamma_0|\leq 2^{-\D_1/2+30},\\
&|\lambda'(y-z)-\lambda'(z)|\leq 2\kappa_{1},\quad|\lambda'(y-x)-\lambda'(y-z)|\leq 2\kappa_{1},\quad|\lambda'(x)-\lambda'(y-x)|\geq\kappa_{2}/2,
\end{split}
\end{equation}
and it remains to prove that
\begin{equation}\label{ub6}
|\widetilde{\Phi}(xe,ye,ze)|=|\lambda(x)+\lambda(y-x)-\lambda(y-z)-\lambda(z)|\gtrsim \kappa_2^{3/2}.
\end{equation}

Let $z'\neq z$ denote the unique solution to the equation $\lambda'(z')=\lambda'(z)$, and let $d:=|z-\gamma_{0}|$. Then $|z'-\gamma_{0}|\approx d$, in view of \eqref{Bas1.7}. Moreover $d\geq \sqrt{\kappa_1}$; otherwise $|y-z-\gamma_0|\lesssim\sqrt{\kappa_1}$, $|y-x-\gamma_0|\lesssim\sqrt{\kappa_1}$, so $|x-\gamma_0|\lesssim\sqrt{\kappa_1}$, in contradiction with the assumption  $|\lambda'(x)-\lambda'(y-x)|\geq\kappa_{2}/2$. Moreover,  \begin{equation}\label{phaseassump5}
\text{ there are }\sigma_{1},\sigma_{2}\in\{z,z'\}\text{ such that }|y-z-\sigma_{1}|+|y-x-\sigma_{2}|\lesssim \kappa_{1}/d.
\end{equation} 
In fact, we may assume $d\geq 2^{-\D_1/4}\kappa_{2}^{1/2}$, since otherwise $|x-\gamma_{0}|+|y-x-\gamma_{0}|\lesssim d$, and hence $|\lambda'(x)-\lambda'(y-x)|\lesssim d^{2}$, which contradicts \eqref{phaseassump2}.

Now we must have $\sigma_{1}=z$; in fact, if $\sigma_{1}=z'$, then $x=z+z'-\sigma_{2}+O(\kappa_{1}/d)$, thus \[|\lambda'(x)-\lambda'(\sigma_{2})|\lesssim \kappa_{1},\] 
which again contradicts \eqref{phaseassump4}. Similarly $\sigma_{2}=z'$. Therefore
\begin{equation}\label{ub7}
y=2z+O(\kappa_{1}/d),\quad x=2z-z'+O(\kappa_{1}/d),\quad y-x=z'+O(\kappa_{1}/d).
\end{equation}

We expand the function $\lambda$ at $\gamma_{0}$ in its Taylor series
\[\lambda(v)=\lambda(\gamma_{0})+c_{1}(v-\gamma_{0})+c_{3}(v-\gamma_{0})^{3}+O(v-\gamma_{0})^{4},\] 
where $c_{1},c_{3}\neq 0$. 
Using \eqref{ub7} we have
\begin{equation*}
\begin{split}
\widetilde{\Phi}(xe,ye,ze)&=c_{3}[(x-\gamma_{0})^{3}+(y-x-\gamma_{0})^{3}-(z-\gamma_{0})^{3}-(y-z-\gamma_{0})^{3}]+O(d^{4})\\
&=c_{3}[(2z-z'-\gamma_{0})^{3}+(z'-\gamma_{0})^{3}-2(z-\gamma_{0})^{3}]+O(d^{4}+\kappa_{1}d).
\end{split}
\end{equation*}
In view of \eqref{Bas1.7}, $z+z'-2\gamma_0=O(d^2)$. Therefore $\widetilde{\Phi}(xe,ye,ze)=24(z-\gamma_{0})^{3}+O(d^{4}+\kappa_{1}d)$ which shows that $|\widetilde{\Phi}(xe,ye,ze)|\gtrsim d^{3}$. The desired conclusion \eqref{ub6} follows.

(ii) The conditions $|\Phi_{\nu\beta\gamma}(\eta,\sigma)|\leq 2^{-2\D_1}$ and $|(\nabla_\sigma\Phi_{\nu\beta\gamma})(\eta,\sigma)|\leq \kappa$ show that $\eta$ corresponds to a space-time resonance output. It follows from Lemma \ref{spaceres11} (iii) that
\begin{equation}\label{kro5}
|\eta-ye|+|\sigma-ye/2|\lesssim \kappa,\quad |y-\gamma_1|\lesssim 2^{-2\D_1},\quad \nu=\beta=\gamma,
\end{equation}
 for some $e\in\mathbb{S}^1$. Let $b\approx 0.207$ denote the unique nonnegative number $b\neq\gamma_1/2$ with the property that $\lambda'(b)=\lambda'(\gamma_1/2)$. The condition $|\nabla_{\eta}\widetilde{\Phi}(\xi,\eta,\sigma)|\leq \kappa$ shows that $\xi-\eta$ is close to one of the vectors $(\gamma_1/2)e,-(\gamma_1/2)e,be,-be$. However, $\lambda(b)\approx 0.465$, $\lambda(\gamma_1+b)\approx 2.462$, $\lambda(\gamma_1-b)\approx 1.722$, $\lambda(\gamma_1)\approx 2.060$. Therefore, the condition $|\Phi_{+\mu\nu}(\xi,\eta)|\leq 2^{-2\D_1}$ prevents $\xi-\eta$ from being close to one of the vectors $be$ or $-be$. Similarly $\xi-\eta$ cannot be close to the vector $ (\gamma_1/2)e$, since $\lambda(\gamma_1/2)\approx 1.030$, $\lambda(3\gamma_1/2)\approx 3.416$. It follows that $|(\xi-\eta)+(\gamma_1/2)e|\lesssim 2^{-2\D_1}$, $||\xi|-\gamma_1/2|\lesssim 2^{-2\D_1}$, $\mu=-$, $\nu=+$. The condition $|\nabla_{\eta}\widetilde{\Phi}(\xi,\eta,\sigma)|\leq \kappa$ then gives $|(\eta-\xi)-(\eta-\sigma)|\lesssim \kappa$, and remaining bounds in \eqref{kro4} follow using also \eqref{kro5}. 
\end{proof}

\section{The functions $\Upsilon$}\label{upsilon}

The analysis in the proofs of the crucial $L^2$ lemmas in section \ref{L2proof} depends on understanding the properties of the functions
$\Upsilon: \mathbb{R}^2\times \mathbb{R}^2 \to \mathbb{R}$,
\begin{equation}\label{ph6}
\Upsilon(\xi,\eta):=(\nabla^2_{\xi,\eta}\Phi)(\xi,\eta)\big[(\nabla^\perp_\xi\Phi)(\xi,\eta),(\nabla^\perp_\eta\Phi)(\xi,\eta)\big].
\end{equation}

We calculate
\begin{equation}\label{ph4}
\begin{split}
&(\nabla_\eta\Phi)(\xi,\eta)=-\lambda'_\nu(|\eta|)\frac{\eta}{|\eta|}+\lambda'_\mu(|\xi-\eta|)\frac{\xi-\eta}{|\xi-\eta|},\\
&(\nabla_\xi\Phi)(\xi,\eta)=\lambda'_\sigma(|\xi|)\frac{\xi}{|\xi|}-\lambda'_\mu(|\xi-\eta|)\frac{\xi-\eta}{|\xi-\eta|},
\end{split}
\end{equation}
and
\begin{equation}\label{ph5}
\begin{split}
(\nabla^2_{\xi,\eta}\Phi)(\xi,\eta)[\partial_i,\partial_j]&=\lambda''_\mu(|\xi-\eta|)\frac{(\xi_i-\eta_i)(\xi_j-\eta_j)}{|\xi-\eta|^2}\\
&+\lambda'_\mu(|\xi-\eta|)\frac{\delta_{ij}|\xi-\eta|^2-(\xi_i-\eta_i)(\xi_j-\eta_j)}{|\xi-\eta|^3}.
\end{split}
\end{equation}

Using these formulas and the identity $(v\cdot w^\perp)^2+(v\cdot w)^2=|v|^2|w|^2$ we calculate
\begin{equation}\label{ph5.1}
\begin{split}
-\Upsilon(\xi,\eta)&=\frac{\lambda''_\mu(|z|)}{|z|^2}\frac{\lambda'_\sigma(|\xi|)}{|\xi|}\frac{\lambda'_\nu(|\eta|)}{|\eta|}(\eta\cdot\xi^{\perp})^2\\
&+\frac{\lambda'_\mu(|z|)}{|z|^3}\Big\{\lambda'_\mu(|z|)|z|-\frac{\lambda'_\sigma(|\xi|)}{|\xi|}\xi\cdot z\Big\}\Big\{\lambda'_\mu(|z|)|z|-\frac{\lambda'_\nu(|\eta|)}{|\eta|}\eta\cdot z\Big\},
\end{split}
\end{equation}
where $z:=\xi-\eta$. We define also the normalized function $\hat{\Upsilon}$,
\begin{equation}\label{hatUps}
\hat{\Upsilon}(\xi,\eta):=\frac{\Upsilon(\xi,\eta)}{|(\nabla_\xi\Phi)(\xi,\eta)|\cdot |(\nabla_\eta\Phi)(\xi,\eta)|}.
\end{equation}

We consider first the case of large frequencies:

\begin{lemma}\label{Geomgamma0}

Assume that $\sigma=\nu=+$, $k\ge \D_1$, and $p-k/2\le -\D_1$.

(i) Assume that
\begin{equation}\label{AssHM}
\vert \Phi(\xi,\eta)\vert\le 2^p,\qquad \vert \xi\vert,\vert\eta\vert\in[2^{k-2},2^{k+2}],\qquad 2^{-20}\le  \vert \xi-\eta\vert\le 2^{20}.
\end{equation}
Let $z:=\xi-\eta$. Then, with $p^+=\max(p,0)$,
\begin{equation}\label{Orthoxx-y}
\frac{|\xi\cdot\eta^\perp|}{|\xi||\eta|}\approx 2^{-k},\qquad \frac{|\xi\cdot z|}{|\xi||z|}+\frac{|\eta\cdot z|}{|\eta||z|}\lesssim 2^{p^+-k/2}.
\end{equation}
Moreover, we can write
\begin{equation}\label{ModUp}
\begin{split}
&-\mu \Upsilon(\xi,\eta)=\lambda''(\vert z\vert)A(\xi,\eta)+B(\xi,z)B(\eta,z),\\
&\vert A(\xi,\eta)\vert\gtrsim 2^{k},\quad \| D^\alpha A\|_{L^\infty}\lesssim_\alpha 2^k,\quad \|B\|_{L^\infty}\lesssim 2^{p^+},\quad \| D^\alpha B\|_{L^\infty}\lesssim_\alpha 2^{k/2}.
\end{split}
\end{equation}

(ii) Assume that $z=(\rho\cos\theta,\rho\sin\theta)$, $|\rho|\in[2^{-20},2^{20}]$. There exists functions $\theta^1=\theta^1_{|\xi|,\mu}$ and $\theta^2=\theta^2_{|\eta|,\mu}$ such that,
\begin{equation}\label{Deftheta1}
\begin{split}
&\text{if}\quad 2^{k-2}\le \vert \xi\vert\le 2^{k+2}\,\,\text{ and }\,\,\vert \Phi(\xi,\xi-z)\vert\le 2^p\,\,\text{ then }\,\,\min_{\mp}\vert\theta-\arg(\xi)\mp\theta^1(\rho)\vert\lesssim 2^{p-k/2},\\
&\text{if}\quad 2^{k-2}\le \vert \eta\vert\le 2^{k+2}\,\,\text{ and }\,\,\vert \Phi(\eta+z,\eta)\vert\le 2^p\,\,\text{ then }\,\,\min_{\mp}\vert\theta-\arg(\eta)\mp\theta^2(\rho)\vert\lesssim 2^{p-k/2}.
\end{split}
\end{equation}
Moreover
\begin{equation}\label{Deftheta2}
|\theta^1(\rho)-\pi/2|+|\theta^2(\rho)-\pi/2|\lesssim 2^{-k/2},\qquad\vert\partial_\rho\theta^1\vert+\vert\partial_\rho\theta^2\vert\lesssim 2^{-k/2}.
\end{equation}

(iii) Assume that $|\xi|,|\eta|\in[2^{k-2},2^{k+2}]$. For $0<\kappa\leq 2^{-\D_1}$ and integers $r$, $q$ such that $q\leq -\D_1$, $|\kappa r|\in[1/4,4]$, define
\begin{equation}\label{DefRect1}
\begin{split}
\mathcal{S}_{p,q,r}^{1,\mp}(\xi):=&\{z:\,\,|z|=\rho\in[2^{-15},2^{15}],\,\, \vert \Phi(\xi,\xi-z)\vert\le 2^p,\\
&\vert\arg(z)-\arg(\xi)\mp\theta^1(\rho)\vert\leq 2^{-\D_1/2},\,\,\vert\hat{\Upsilon}(\xi,\xi-z)-\kappa r 2^q\vert\le 10\kappa 2^q\},
\end{split}
\end{equation}
and
\begin{equation}\label{DefRect2}
\begin{split}
\mathcal{S}_{p,q,r}^{2,\mp}(\eta):=&\{z:\,\,|z|=\rho\in[2^{-15},2^{15}],\,\, \vert \Phi(\eta+z,\eta)\vert\le 2^p,\\
&\vert\arg(z)-\arg(\eta)\mp\theta^1(\rho)\vert\leq 2^{-\D_1/2},\,\,\vert\hat{\Upsilon}(\eta+z,\eta)-\kappa r 2^q\vert\le 10\kappa 2^q\}.
\end{split}
\end{equation}
Then, for any $\iota\in\{+,-\}$,
\begin{equation}\label{VolumeSpq}
\begin{split}
\vert\mathcal{S}^{1,\iota}_{p,q,r}(\xi)\vert+\vert\mathcal{S}^{2,\iota}_{p,q,r}(\eta)\vert\lesssim 2^{q+p-k/2},\quad\hbox{diam}(\mathcal{S}^{1,\iota}_{p,q,r}(\xi))+\hbox{diam}(\mathcal{S}^{2,\iota}_{p,q,r}(\eta))\lesssim 2^{p-k/2}+\kappa 2^q.
\end{split}
\end{equation}
Moreover, if $2^{p-k/2}\ll \kappa 2^{q}$ then there exist intervals $I^1_{p,q,r}$ and $I^2_{p,q,r}$ such that
\begin{equation}\label{VolumeSpq2}
\begin{split}
&\mathcal{S}^{1,\mp}_{p,q,r}(\xi)\subseteq\{(\rho\cos\theta,\rho\sin\theta):\,\, \rho\in I^1_{p,q,r},\,\,\vert\theta-\arg(\xi)\mp\theta^1(\rho)\vert\lesssim 2^{p-k/2}\},\quad\vert I^1_{p,q,r}\vert\lesssim\kappa 2^q,\\
&\mathcal{S}^{2,\mp}_{p,q,r}(\eta)\subseteq\{(\rho\cos\theta,\rho\sin\theta):\,\, \rho\in I^2_{p,q,r},\,\,\vert\theta-\arg(\eta)\mp\theta^2(\rho)\vert\lesssim 2^{p-k/2}\},\quad\vert I^2_{p,q,r}\vert\lesssim\kappa 2^q.
\end{split}
\end{equation}
\end{lemma}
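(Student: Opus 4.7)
The strategy is to extract leading-order behavior from the explicit formulas for $\Phi$, $\nabla\Phi$, and $\Upsilon$ (cf.\ \eqref{ph4}, \eqref{ph5.1}) in the regime $|\xi|,|\eta|\approx 2^k$ large with $|z|:=|\xi-\eta|\in[2^{-20},2^{20}]$; throughout $\sigma=\nu=+$, so $\Phi=\lambda(|\xi|)-\mu\lambda(|z|)-\lambda(|\eta|)$. For (i), the mean value theorem applied to $\lambda(|\xi|)-\lambda(|\eta|)$ together with $|\Phi|\le 2^p$ and $\lambda'(c)\approx 2^{k/2}$ yields $\big||\xi|-|\eta|\big|\lesssim 2^{-k/2}(1+2^p)$; the cosine law $|z|^2=(|\xi|-|\eta|)^2+2|\xi||\eta|(1-\cos(\arg\xi-\arg\eta))$ then forces $|\arg\xi-\arg\eta|\approx 2^{-k}$ and hence $|\xi\cdot\eta^\perp|\approx 2^k$, giving the first bound of \eqref{Orthoxx-y}, while $2\xi\cdot z=|z|^2+(|\xi|^2-|\eta|^2)$ (and the analogue for $\eta\cdot z$) gives the second. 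For the decomposition \eqref{ModUp} I set
\[
A(\xi,\eta):=\tfrac{\lambda'(|\xi|)\lambda'(|\eta|)}{|\xi||\eta||z|^2}(\eta\cdot\xi^\perp)^2,\qquad B(w,z):=\Big[\tfrac{\lambda'(|z|)}{|z|^3}\Big]^{1/2}\Big(\mu\lambda'(|z|)|z|-\tfrac{\lambda'(|w|)}{|w|}w\cdot z\Big);
\]
the identity follows by reading off \eqref{ph5.1} (recall $\lambda_\mu=\mu\lambda$), $|A|\gtrsim 2^k$ is immediate, and $|B|\lesssim 2^{p^+}$ uses the second bound of \eqref{Orthoxx-y} to control $|\lambda'(|w|)w\cdot z/|w||\lesssim 2^{k/2}\cdot 2^{p^+-k/2}=2^{p^+}$; the derivative bounds on $A,B$ are direct from the formulas.

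For (ii), parameterizing $z=\rho(\cos\theta,\sin\theta)$, the equation $\Phi(\xi,\xi-z)=0$ is equivalent to $\cos(\theta-\arg\xi)=(|\xi|^2+\rho^2-R_0(\rho)^2)/(2|\xi|\rho)$ with $R_0(\rho):=\lambda^{-1}(\lambda(|\xi|)-\mu\lambda(\rho))$. From $|R_0-|\xi||\lesssim 2^{-k/2}$ the right-hand side is $O(2^{-k/2})$, so the two solutions $\pm\theta^1(\rho)$ for $\theta-\arg\xi$ lie within $2^{-k/2}$ of $\pm\pi/2$, and implicit differentiation gives $|\partial_\rho\theta^1|\lesssim 2^{-k/2}$. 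The angular width in \eqref{Deftheta1} is then Lemma \ref{lemma00} with $l=1$, using $|\partial_\theta\Phi(\xi,\xi-z)|=\lambda'(|\xi-z|)|\xi|\rho|\sin(\theta-\arg\xi)|/|\xi-z|\approx 2^{k/2}$ on the relevant branch. The construction of $\theta^2_{|\eta|,\mu}$ is symmetric.

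For (iii), granted (ii), I parameterize one branch of $\mathcal{S}^{1,\iota}_{p,q,r}(\xi)$ by $(\rho,\theta)$ with $\theta-\arg\xi-\iota\theta^1(\rho)=O(2^{p-k/2})$, so the angular width at each fixed $\rho$ is $\lesssim 2^{p-k/2}$. Set
\[
F(\rho):=\hat\Upsilon\big(\xi,\,\xi-\rho(\cos(\arg\xi+\iota\theta^1(\rho)),\sin(\arg\xi+\iota\theta^1(\rho)))\big);
\]
the admissible $\rho$ satisfy $|F(\rho)-\kappa r 2^q|\le 10\kappa 2^q$. The plan is to show $|F'(\rho)|\gtrsim 1$ at every $\rho$ with $|F(\rho)|\le 2^{q+4}$; granted this, Lemma \ref{lemma00} with $l=1$ bounds the $\rho$-measure of the admissible set by $\lesssim\kappa 2^q$ and places it in an interval $I^1_{p,q,r}$ of comparable length. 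The Jacobian then yields $|\mathcal{S}^{1,\iota}_{p,q,r}(\xi)|\lesssim\kappa 2^q\cdot 2^{p-k/2}\le 2^{q+p-k/2}$ (using $\kappa\le 1$), the diameter bound $\lesssim 2^{p-k/2}+\kappa 2^q$, and the refined fibered description \eqref{VolumeSpq2} in the regime $2^{p-k/2}\ll\kappa 2^q$. The case $\mathcal{S}^{2,\iota}_{p,q,r}(\eta)$ is symmetric.

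The main obstacle is the nondegeneracy $|F'(\rho)|\gtrsim 1$ on $\{|F|\le 2^{q+4}\}$: since $F$ is smooth and bounded, this amounts to the zeros of $F$ on the branch being simple (``vanishing up to order one'', as anticipated in the introduction, where two such zeros are noted near $|z|=\gamma_0$). Substituting the decomposition from (i) gives $-\mu F\cdot|\nabla_\xi\Phi||\nabla_\eta\Phi|=\lambda''(\rho)A+B(\xi,z)B(\eta,z)$ on the branch, and $|\nabla_\xi\Phi||\nabla_\eta\Phi|\approx 2^k$ with controlled $\rho$-derivative, so the variation of $F$ is driven principally by $\lambda''(\rho)A$ modulo a lower-order correction from $B^2$. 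Since $\lambda''(\gamma_0)=0$ but $\lambda'''(\gamma_0)\ne 0$ (see \eqref{ph3.1}), differentiating along the branch and using the orthogonality in (i) to estimate $A'$ and the $B$-terms yields $|F'(\rho)|\gtrsim|\rho-\gamma_0|$ for $|\rho-\gamma_0|\gtrsim 2^{-k}$, while a direct expansion at $\rho=\gamma_0$ gives $|F'(\gamma_0)|\gtrsim 1$; combined with $q\le -\mathcal{D}_1$, this is enough to conclude. This verification, which is the technical heart of the restricted nondegeneracy condition \eqref{IntroUpsilon}, amounts to an explicit though tedious computation using the formulas for $\lambda$, $\Phi$, and $\Upsilon$.
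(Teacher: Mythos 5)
Your treatment of parts (i) and (ii) matches the paper's argument. Part (iii) is where you deviate: instead of the paper's two-variable regular change-of-variables $(\rho,\theta)\mapsto[2^{-k/2}\Phi(\xi,\xi-z),\hat\Upsilon(\xi,\xi-z)]$ (whose Jacobian is shown to be $\approx 1$ because the $2\times 2$ matrix is diagonally small but off-diagonally $O(1)$), you slice along the $\Phi=0$ branch, reducing to a one-variable nondegeneracy of $\rho\mapsto\hat\Upsilon$. This slicing is a valid alternative; it buys a more elementary picture of where the gain comes from, at the cost of having to control separately the $\theta$-induced drift of the radial zero — which you implicitly do via the hypothesis $2^{p-k/2}\ll\kappa 2^q$ for the refined claim \eqref{VolumeSpq2}, though this point deserves to be made explicit.

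However, the stated derivative bound in your key step is wrong as written and would not suffice. You claim ``$|F'(\rho)|\gtrsim|\rho-\gamma_0|$ for $|\rho-\gamma_0|\gtrsim 2^{-k}$.'' Since $q\le-\D_1$ forces the relevant $\rho$ to satisfy $|\rho-\gamma_0|\lesssim 2^{q}\ll 1$, that bound only gives $|F'(\rho)|\gtrsim 2^q$, not the uniform lower bound $|F'|\gtrsim 1$ that you need (and correctly assert at the single point $\rho=\gamma_0$). What actually holds — and this is the content of the paper's computation for $\partial_\rho G$ — is that
\begin{equation*}
-\mu\,\partial_\rho\hat\Upsilon(\xi,\xi-z)=\lambda'''(\rho)\,\frac{A(\xi,\xi-z)}{|\nabla_\xi\Phi||\nabla_\eta\Phi|}+O\big(2^{-\D_1}+|\lambda''(\rho)|\big),
\end{equation*}
so on the whole set $\{|\rho-\gamma_0|\le 2^{-\D_1/2}\}$ one has $|\partial_\rho\hat\Upsilon|\approx|\lambda'''(\gamma_0)|\gtrsim 1$ uniformly (the leading term is $\lambda'''$, not $\lambda''$; it is $F$ itself, not $F'$, that degenerates like $|\rho-\gamma_0|$). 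You appear to have transposed $F$ and $F'$; with the corrected bound $|F'(\rho)|\gtrsim 1$ on the sublevel set, the one-dimensional sublevel-set lemma applies as you intended and the argument closes. You should also verify explicitly the smallness of $\partial_\theta\hat\Upsilon$ on the relevant region, since the chain-rule term $\partial_\theta\hat\Upsilon\cdot\partial_\rho\theta^1$ must not swamp the $\lambda'''$ contribution; the paper's estimate $|\partial_\theta G|\lesssim 2^{-\D_1}+|\lambda''(\rho)|$ supplies exactly this.
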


\begin{proof} (i) Notice that if $|\xi|=s$, $|\eta|=r$, and $z=\xi-\eta=(\rho\cos\theta,\rho\sin\theta)$ then
\begin{equation}\label{ph5.3}
\begin{split}
&2\xi\cdot\eta=r^2+s^2-\rho^2,\qquad 2z\cdot \xi=\rho^2+s^2-r^2,\qquad 2z\cdot \eta=s^2-r^2-\rho^2,\\
&(2\eta\cdot \xi^\perp)^2=4r^2s^2-(r^2+s^2-\rho^2)^2.
\end{split}
\end{equation}
Under the assumptions \eqref{AssHM}, we see that $|\lambda(r)-\lambda(s)|\lesssim 2^{p^+}$, therefore $|r-s|\lesssim 2^{-k/2}2^{p^+}$. The bounds \eqref{Orthoxx-y} follow using also \eqref{ph5.3}. The decomposition \eqref{ModUp} follows from \eqref{ph5.1}, with
\begin{equation*}
\begin{split}
A(x,y)&:=\frac{\lambda^\prime(\vert x\vert)}{\vert x\vert}\frac{\lambda^\prime(\vert y\vert)}{\vert y\vert}\frac{(x\cdot y^\perp)^2}{\vert x-y\vert^2},\qquad B(w,z):=\frac{\sqrt{\lambda'(\vert z\vert)}}{\vert z\vert^{3/2}}\Big\{\vert z\vert\lambda'(\vert z\vert)-\frac{\lambda^\prime(\vert w\vert)}{\vert w\vert}(w\cdot z)\Big\}.
\end{split}
\end{equation*}
The bounds in the second line of \eqref{ModUp} follow from this definition and \eqref{Orthoxx-y}.

(ii) We will show the estimates for fixed $\xi$, since the estimates for fixed $\eta$ are similar. We may assume that $\xi=(s,0)$, so
\begin{equation}\label{tri1}
\Phi(\xi,\xi-z)=\lambda(s)-\lambda_\mu(\rho)-\lambda\big(\sqrt{s^2+\rho^2-2s\rho\cos\theta}\big).
\end{equation}
Let $f(\theta):=-\lambda(s)+\lambda_\mu(\rho)+\lambda\big(\sqrt{s^2+\rho^2-2s\rho\cos\theta}\big)$. We notice that $-f(0)\gtrsim 2^{k/2}$, $f(\pi)\gtrsim 2^{k/2}$, and $f'(\theta)\approx 2^{k/2}\sin\theta$ for $\theta\in[0,\pi]$. Therefore $f$ is increasing on the interval $[0,\pi]$ and vanishes at a unique point $\theta^1(\rho)=\theta^1_{s,\mu}(\rho)$. Moreover, it is easy to see that $|\cos(\theta^1(\rho))|\lesssim 2^{-k/2}$, therefore $|\theta^1(\rho)-\pi/2|\lesssim 2^{-k/2}$. The remaining conclusions in \eqref{Deftheta1}--\eqref{Deftheta2} follow easily.

(iii) We will only prove the estimates for the sets $\mathcal{S}_{p,q,r}^{1,-}(\xi)$, since the others are similar. With $z=(\rho\cos\theta,\rho\sin\theta)$ 
and $\xi=(s,0)$, we define $F(\rho,\theta):=\Phi(\xi,\xi-z)$ and $G(\rho,\theta):=\hat{\Upsilon}(\xi,\xi-z)$. 
The condition $|\hat{\Upsilon}(\xi,\xi-z)|\lesssim  2^{-\D_1}$ shows that $|\Upsilon(\xi,\xi-z)|\lesssim 2^{k-\D_1}$, thus 
$|\rho-\gamma_0|\leq 2^{-\D_1/2}$ (see \eqref{ModUp}). Moreover, $|\theta-\pi/2|\lesssim 2^{-\D_1/2}$ in view of \eqref{Deftheta1}--\eqref{Deftheta2}.
Using \eqref{tri1},
\begin{equation*}
|\partial_\theta F(\rho,\theta)|\approx 2^{k/2},\qquad |\partial_\rho F(\rho,\theta)|\lesssim 2^{k/2-\D_1/2}
\end{equation*}
in the set $\{(\rho,\theta):|\rho-\gamma_0|\leq 2^{-\D_1/2},\,\,|\theta-\pi/2|\lesssim 2^{-\D_1/2}\}$. In addition, using \eqref{ModUp} we have
\begin{equation*}
\begin{split}
&-\mu\partial_\rho G(\rho,\theta)=\lambda'''(\rho)\frac{A(\xi,\xi-z)}{|\Lambda'(\xi)||\Lambda'(\xi-z)|}+O(2^{-\D_1/2}),\qquad |\partial_\theta G(\rho,\theta)|=O(2^{-\D_1/2}).
\end{split}
\end{equation*}
Therefore, the mapping $(\rho,\theta)\mapsto [2^{-k/2}F(\rho,\theta),G(\rho,\theta)]$ is a regular change of variables for $\rho,\theta$ satisfying $|\rho-\gamma_0|\leq 2^{-\D_1/2},\,|\theta-\pi/2|\lesssim 2^{-\D_1/2}$. The desired conclusions follow.
\end{proof}

It follows from \eqref{ph5.1} and \eqref{ph5.3} that if $|\xi|=s,\,|\eta|=r,\,|\xi-\eta|=\rho$ then
\begin{equation}\label{ph6.5}
-4\Upsilon(\xi,\eta)\frac{\rho^3}{\lambda'_\mu(\rho)}\frac{s}{\lambda'_\sigma(s)}\frac{r}{\lambda'_\nu(r)}=G(s,r,\rho),
\end{equation}
where
\begin{equation}\label{ph102}
\begin{split}
G(s,r,\rho):&=\frac{\rho\lambda''(\rho)}{\lambda'(\rho)}\big[4r^2s^2-(r^2+s^2-\rho^2)^2\big]\\
&+\Big[2\rho s\frac{\lambda'(\rho)}{\lambda'(s)}-\rho^2-s^2+r^2\Big]\Big[2\rho r\frac{\lambda'(\rho)}{\lambda'(r)}+\rho^2+r^2-s^2\Big].
\end{split}
\end{equation}

We assume now that $|\xi-\eta|$ is close to $\gamma_0$ and consider the case of bounded frequencies.

\begin{lemma}\label{lemmaD1} If $|\xi|=s,\,|\eta|=r,\,|\xi-\eta|=\rho$, $\big|\rho-\gamma_0\big|\leq 2^{-8\D_1}$, and $2^{-200}\leq r,s\leq 2^{2\D_1}$ then
\begin{equation}\label{ph9}
|\Phi(\xi,\eta)|+|\Upsilon(\xi,\eta)|\gtrsim 1.
\end{equation}
\end{lemma}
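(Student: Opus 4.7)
The plan is to reduce the claim to an algebraic non-vanishing statement at the limit $\rho = \gamma_0$ and then conclude via compactness. Since both $\Phi$ and $\Upsilon$ depend on $(\xi,\eta)$ only through the three scalars $(s,r,\rho) = (|\xi|,|\eta|,|\xi-\eta|)$ (using $O(2)$-invariance and \eqref{ph6.5}), we work on the compact region
\[
\widetilde K := \{(s,r,\rho):\, s,r\in[2^{-20}, 2^{2\mathcal{D}_1}],\ |\rho-\gamma_0|\leq 2^{-8\mathcal{D}_1}\},
\]
on which $\Phi$ and $\Upsilon$ are continuous. By compactness, the desired lower bound $|\Phi|+|\Upsilon|\gtrsim 1$ follows if we show $\Phi$ and $\Upsilon$ have no common zero on $\widetilde K$.

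The starting point is the factorization \eqref{ph6.5}. Since $\lambda''(\gamma_0)=0$ by \eqref{ph3}, on $\widetilde K$ we have $|\lambda''(\rho)|\lesssim 2^{-8\mathcal{D}_1}$, so the first term on the right of \eqref{ph6.5} is bounded by $2^{-8\mathcal{D}_1}\cdot 2^{C\mathcal{D}_1}\leq 2^{-6\mathcal{D}_1}$, while the multiplicative prefactor on the left lies in $[2^{-C\mathcal{D}_1},2^{C\mathcal{D}_1}]$. Writing $B_1 := 2\rho s\lambda'_\mu(\rho)/\lambda'_\sigma(s)-(\rho^2+s^2-r^2)$ and $B_2 := 2\rho r\lambda'_\mu(\rho)/\lambda'_\nu(r)+(\rho^2+r^2-s^2)$ for the two bracket factors, the condition $\Upsilon=0$ therefore forces $|B_1 B_2|\leq 2^{-4\mathcal{D}_1}$ and hence $\min(|B_1|,|B_2|)\leq 2^{-2\mathcal{D}_1}$. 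Replacing $\rho$ by $\gamma_0$ everywhere, at the cost of additive errors of order $2^{-7\mathcal{D}_1}$, and using the symmetry $(\xi,\eta,\sigma,\nu)\leftrightarrow(\eta,\xi,\nu,\sigma)$ (which exchanges the roles of $B_1$ and $B_2$), we may assume a point $(r,s)\in[2^{-20},2^{2\mathcal{D}_1}]^2$ at which $|\Phi(r,s,\gamma_0)|\leq 2^{-6\mathcal{D}_1}$ and $|B_1(r,s,\gamma_0)|\leq 2^{-2\mathcal{D}_1+1}$.

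A direct computation from \eqref{ph4} gives the geometric identity $\sigma\lambda'(s)B_1=-2s\,(\nabla_\xi\Phi)\cdot z$, so $B_1(r,s,\gamma_0)=0$ is the condition $(\nabla_\xi\Phi)\cdot z = 0$, equivalent in the scalar variables to
\[
\cos\alpha_1 \;:=\; \frac{\gamma_0^2+s^2-r^2}{2\gamma_0 s} \;=\; \mu\sigma\,\frac{\lambda'(\gamma_0)}{\lambda'(s)}.
\]
Combined with $\Phi(r,s,\gamma_0) = \sigma\lambda(s) - \mu\lambda(\gamma_0) - \nu\lambda(r) = 0$, this is a closed two-equation algebraic system in $(r,s)$, and it suffices to show this system has no solution with $r,s\in[2^{-20},2^{2\mathcal{D}_1}]$ for any sign triple $(\sigma,\mu,\nu)\in\{\pm\}^3$. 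By the symmetry $\Phi_{-\sigma,-\mu,-\nu}=-\Phi_{\sigma,\mu,\nu}$ we may take $\sigma=+$; the subcase $\mu=\nu=-$ is then trivially excluded since $\Phi = \lambda(s)+\lambda(\gamma_0)+\lambda(r)>0$, leaving three genuine cases. In each, one solves the $B_1$ equation for $r^2 = s^2+\gamma_0^2-2\mu\sigma\gamma_0 s\lambda'(\gamma_0)/\lambda'(s)=:r_1(s)^2$, substitutes into $\Phi=0$ to obtain a transcendental equation $F(s)=0$ in $s$ alone, and verifies $F(s)\neq 0$ for $s\in[2^{-20},2^{2\mathcal{D}_1}]$ using the monotonicity properties of $\lambda,\lambda'$ and the strict-minimum property $\lambda'(\gamma_0)<\lambda'(s)$ for $s\neq\gamma_0$ from Lemma \ref{LambdaBasic}. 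The only zeros of $F$ correspond to the degenerate points $(s,r)=(\gamma_0,0)$ (outside the range $r\geq 2^{-20}$) or, where relevant, to $(s,r)=(\gamma_0,2\gamma_0)$, which then forces the numerical identity $\nu\lambda(2\gamma_0)=2\sigma\lambda(\gamma_0)$, contradicted by $\lambda(2\gamma_0)/(2\lambda(\gamma_0))\approx 0.838\notin\{\pm 1\}$ computable from \eqref{ph3.1}.

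The main obstacle is this case-by-case algebraic verification that $F(s)$ does not vanish on the allowed range: although elementary, it requires explicit use of the numerical values $\gamma_0,\lambda(\gamma_0),\lambda'(\gamma_0)$ and a careful monotonicity argument for each of the three sign subcases. Once completed, compactness of $\widetilde K$ and continuity of $\Phi,\Upsilon$ immediately upgrade the pointwise non-vanishing to the desired uniform lower bound $|\Phi|+|\Upsilon|\gtrsim 1$, with implicit constant depending on $\mathcal{D}_1$.
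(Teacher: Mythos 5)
Your overall strategy mirrors the paper's: use the factorization \eqref{ph6.5}, exploit that $\lambda''(\gamma_0)=0$ kills the first term, and reduce to showing the system $\{\Phi=0,\ B_1=0\}$ (or $\{\Phi=0,\ B_2=0\}$) has no solutions in the allowed region. However, there are two problems, one of which is a genuine gap.

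First, a correctable technical error: the swap you invoke, $(\xi,\eta,\sigma,\nu)\leftrightarrow(\eta,\xi,\nu,\sigma)$ with $\mu$ held fixed, is \emph{not} a symmetry of $\Phi$ or $\Upsilon$ and does \emph{not} exchange $B_1$ with $B_2$. The actual symmetry of the phase is $\Phi_{\sigma\mu\nu}(\xi,\eta)=-\Phi_{\nu(-\mu)\sigma}(\eta,\xi)$ (cf.\ \eqref{ph11.1}), which also flips $\mu$. Concretely, under $(s,\sigma)\leftrightarrow(r,\nu)$ with $\mu$ fixed, $B_1$ becomes $2\rho r\lambda'_\mu/\lambda'_\nu-(\rho^2+r^2-s^2)$, which differs from $B_2=2\rho r\lambda'_\mu/\lambda'_\nu+(\rho^2+r^2-s^2)$ by the sign of the second term. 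So your ``WLOG $|B_1|$ small'' step is not justified as stated; you must treat $|B_1|$ small and $|B_2|$ small as genuinely separate cases (the paper's $F_1$ and $F_2$).

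Second, and more seriously: your claim that the non-vanishing of the resulting one-variable function $F(s)$ ``follows from monotonicity properties of $\lambda,\lambda'$ and the strict-minimum property $\lambda'(\gamma_0)<\lambda'(s)$'' is asserted, not proved, and there is no reason to believe a soft monotonicity argument settles it. This is precisely the step the paper handles by an explicit Mathematica verification that $F_1(s),F_2(s)\gtrsim 1$ on the bounded range. Moreover, there is strong evidence that a purely qualitative argument cannot work here: in the parallel Lemma \ref{lemmaD2} the analogous function $G_1(s)$ \emph{does} vanish at an interior point $s\approx 1.94$, and the paper is forced to bring in the extra derivative term $(\nabla_\eta\Upsilon)\cdot(\nabla_\eta^\perp\Phi)$ as compensation. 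Nothing in the structure of $\lambda$, $\lambda'$, or $\gamma_0$ a priori forbids a similar interior zero for $F$; ruling it out is a quantitative, not qualitative, fact. Your identification of ``the only zeros'' with the degenerate points $(\gamma_0,0)$ and $(\gamma_0,2\gamma_0)$ is an unsupported assumption, not a consequence of the stated properties. So the proposal reduces to the same computational verification the paper performs but presents no substitute for it.
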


\begin{proof} {\bf{Case 1: $(\sigma,\mu,\nu)=(+,+,+)$}}. Notice first that the function $f(r):=\lambda(r)+\lambda(\gamma_0)-\lambda(r+\gamma_0)$ is concave down for $r\in[0,\gamma_0]$ (in view of \eqref{ph3}) and satisfies $f(0)=0$, $f(\gamma_0)\geq 0.1$. Therefore $f(r)\gtrsim 1$ if $r\in[2^{-200},\gamma_0]$, so
\begin{equation}\label{ph9.1}
|\Phi(\xi,\eta)|\gtrsim 1\qquad\text{ if }\qquad r\leq \gamma_0\,\,\text{ or }\,\,s\leq 2\gamma_0.
\end{equation}

Assume, for contradiction, that \eqref{ph9} fails. In view of \eqref{ph6.5}, $|\Phi(\xi,\eta)|\ll 1$ and
\begin{equation}\label{ph10.5}
\Big|\Big[2\rho r\frac{\lambda'(\rho)}{\lambda'(r)}+(\rho^2+r^2-s^2)\Big]\Big[2\rho s\frac{\lambda'(\rho)}{\lambda'(s)}-(\rho^2+s^2-r^2)\Big]\Big|\ll 1+s+r.
\end{equation}
It is easy to see that if $|\Phi(\xi,\eta)|=|\lambda(s)-\lambda(\rho)-\lambda(r)|\ll 1$, $r\geq 100$, and $|\rho-\gamma_0|\leq 2^{-8\D_1}$ then
\begin{equation*}
r\leq s-\frac{\lambda(\rho)-0.1}{\lambda'(s)}\qquad\text {and }\qquad s\geq r+\frac{\lambda(\rho)-0.1}{\lambda'(r)}.
\end{equation*}
Therefore, using \eqref{ph2}--\eqref{ph3.1}, if $r\geq 100$ then
\begin{equation*}
\begin{split}
&-2\rho s\frac{\lambda'(\rho)}{\lambda'(s)}+\rho^2+s^2-r^2\geq \frac{2s}{\lambda'(s)}\big(\lambda(\rho)-0.1-\rho\lambda'(\rho)\big)\gtrsim \sqrt{s}\\
&-2\rho r\frac{\lambda'(\rho)}{\lambda'(r)}-\rho^2-r^2+s^2\geq \frac{2r}{\lambda'(r)}\big(\lambda(\rho)-0.1-\rho\lambda'(\rho)\big)-\rho^2\gtrsim\sqrt{r}.
\end{split}
\end{equation*}
In particular, \eqref{ph10.5} cannot hold if $r\geq 100$.

For $y\in[0,\infty)$, the equation $\lambda(x)=y$ admits a unique solution $x\in[0,\infty)$,
\begin{equation}\label{Sol}
x=-\frac{1}{Y(y)}+\frac{Y(y)}{3},\qquad Y(y):=\Big(\frac{27y^2+\sqrt{27}\sqrt{27y^4+4}}{2}\Big)^{1/3}.
\end{equation}

Assuming $|\rho-\gamma_0|\leq 2^{-8\D_1}$, $2\gamma_0\leq s\leq 110$, and $|\lambda(s)-\lambda(r)-\lambda(\rho)|\ll 1$, we show now that $G(s,r,\rho)\gtrsim 1$, where $G$ is as in \eqref{ph102}. Indeed, we solve the equation $\lambda(r(s))=\lambda(s)-\lambda(\gamma_0)$ according to \eqref{Sol} and define the function $G_0(s):=G(s,r(s),\gamma_0)$. A simple {\it{Mathematica}} program shows that $G_0(s)\gtrsim 1$ if $2\gamma_0\leq s\leq 110$. This completes the proof of \eqref{ph9} when $(\sigma,\mu,\nu)=(+,+,+)$.

{\bf{Case 2: the other triplets.}} Notice that if $(\sigma,\mu,\nu)=(+,-,+)$ then
\begin{equation}\label{ph11.1}
\Phi_{+-+}(\xi,\eta)=-\Phi_{+++}(\eta,\xi),\qquad \Upsilon_{+-+}(\xi,\eta)=-\Upsilon_{+++}(\eta,\xi).
\end{equation}
The desired bound in this case follows from the case $(\sigma,\mu,\nu)=(+,+,+)$ analyzed earlier.

On the other hand, if $(\sigma,\mu,\nu)=(+,-,-)$ then $\Phi(\xi,\eta)=\lambda(s)+\lambda(r)+\lambda(\rho)\gtrsim 1$, so \eqref{ph9} is clearly verified. Finally, if $(\sigma,\mu,\nu)=(+,+,-)$ then $\Phi(\xi,\eta)=\lambda(s)+\lambda(r)-\lambda(\rho)$ and we estimate, assuming $2^{-200}\leq r\leq \rho/2$,
\begin{equation*}
\lambda(s)+\lambda(r)-\lambda(\rho)\geq \lambda(r)+\lambda (\rho-r)-\lambda(\rho)=\int_0^r[\lambda'(x)-\lambda'(x+\rho-r)]\,dx\gtrsim 1.
\end{equation*}
A similar estimate holds if $2^{-200}\leq s\leq \rho/2$ or if $s,r\geq \rho/2$. Therefore $\Phi(\xi,\eta)\gtrsim 1$ in this case.

The cases corresponding to $\sigma=-$ are similar by replacing $\Phi$ with $-\Phi$ and $\Upsilon$ with $-\Upsilon$. This completes the proof of the lemma.
\end{proof}

Finally, we consider the case when $|\xi-\eta|$ is close to $\gamma_1$.

\begin{lemma}\label{lemmaD2} If $|\xi|=s,\,|\eta|=r,\,|\xi-\eta|=\rho$, $|\rho-\gamma_1|\leq 2^{-\D_1}$, and $2^{-200}\leq r,s$ then
\begin{equation}\label{kn2}
\begin{split}
&|\Phi(\xi,\eta)|+\frac{|\Upsilon(\xi,\eta)|}{|\xi|+|\eta|}+\frac{|(\nabla_\eta\Upsilon)(\xi,\eta)\cdot (\nabla_\eta^\perp\Phi)(\xi,\eta)|}{(|\xi|+|\eta|)^6}\gtrsim 1,\\
&|\Phi(\xi,\eta)|+\frac{|\Upsilon(\xi,\eta)|}{|\xi|+|\eta|}+\frac{|(\nabla_\xi\Upsilon)(\xi,\eta)\cdot (\nabla_\xi^\perp\Phi)(\xi,\eta)|}{(|\xi|+|\eta|)^6}\gtrsim 1,
\end{split}
\end{equation}
and
\begin{equation}\label{kn3}
\begin{split}
&|\Phi(\xi,\eta)|+\frac{|\Upsilon(\xi,\eta)|}{|\xi|+|\eta|}+\frac{|(\xi-\eta)\cdot (\nabla_\eta^\perp\Phi)(\xi,\eta)|}{(|\xi|+|\eta|)^6}\gtrsim 1,\\
&|\Phi(\xi,\eta)|+\frac{|\Upsilon(\xi,\eta)|}{|\xi|+|\eta|}+\frac{|(\xi-\eta)\cdot (\nabla_\xi^\perp\Phi)(\xi,\eta)|}{(|\xi|+|\eta|)^6}\gtrsim 1.
\end{split}
\end{equation}
\end{lemma}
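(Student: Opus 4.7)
The plan is to follow the template of Lemma \ref{lemmaD1} and argue by contradiction, but now allowing $\Upsilon$ to vanish on the resonant set provided the remaining derivative quantities compensate. First I would split by the signs $(\sigma,\mu,\nu)$. The case $(+,-,-)$ gives $|\Phi| \geq \lambda(\gamma_1) \gtrsim 1$ immediately, and the symmetry \eqref{ph11.1} reduces $(+,-,+)$ to $(+,+,+)$. For $(+,+,-)$ we have $\Phi = \lambda(s)+\lambda(r)-\lambda(\rho)$ with $\lambda(\rho) \approx 2$, so $|\Phi| \ll 1$ forces $r+s$ bounded and the problem reduces to the compact analysis below. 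The main case is therefore $(\sigma,\mu,\nu) = (+,+,+)$ with $\Phi = \lambda(s) - \lambda(\rho) - \lambda(r)$.

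For large frequencies, when $r + s \geq C_0$ for an absolute constant $C_0$, I would exploit the identity
\[
4(\eta\cdot\xi^\perp)^2 \;=\; (\rho^2 - (s-r)^2)\bigl((s+r)^2 - \rho^2\bigr).
\]
The hypothesis $|\Phi| \ll 1$ together with the asymptotics $\lambda(x) \sim x^{3/2}$ forces $|s-r| = O(1/\sqrt{r})$, so $\rho^2 - (s-r)^2 \gtrsim 1$ near $\gamma_1^2 = 2$, and hence $(\eta\cdot\xi^\perp)^2 \gtrsim (s+r)^2$. Plugging this into \eqref{ph6.5} and using $\lambda''(\gamma_1)>0$, the leading contribution $\frac{\rho\lambda''(\rho)}{\lambda'(\rho)}[4r^2s^2 - (r^2+s^2-\rho^2)^2] \approx 2(s+r)^2$ dominates, and after normalizing by $\lambda'(\rho)/\rho^3 \cdot \lambda'(s)/s \cdot \lambda'(r)/r \approx (s+r)^{-1}$ one obtains $|\Upsilon|/(|\xi|+|\eta|) \gtrsim 1$. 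This controls the middle term of both \eqref{kn2} and \eqref{kn3} in the large-frequency regime, so the conclusion follows without appealing to the third term at all.

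For bounded frequencies with $\rho$ near $\gamma_1$, the argument is a compact-region non-degeneracy analysis in the spirit of the final step of Lemma \ref{lemmaD1}. For \eqref{kn3}, the explicit formula $(\xi-\eta)\cdot\nabla^\perp_\eta\Phi = -\nu\lambda'(r)(\xi\cdot\eta^\perp)/r$ shows that smallness of the third term forces $\xi,\eta$ to be nearly parallel. On this sub-locus, $|\Phi| \ll 1$ becomes $\lambda(s) = \lambda(r) + \lambda(\rho)$ with $(r,s,\rho)$ on one of the rays $s = r \pm \rho$, a one-parameter family of algebraic configurations, which combined with $|\rho - \gamma_1| \leq 2^{-\D_1}$ and Lemma \ref{LambdaBasic} pins down a small explicit set. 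At each such point one computes $\Upsilon$ directly from \eqref{ph5.1} and checks that $|\Upsilon|$ stays bounded below, yielding a contradiction. The bound \eqref{kn2} is then obtained by the same strategy one order deeper: at points where $\Phi$ and $\Upsilon$ simultaneously vanish, the quantity $\nabla_\eta\Upsilon \cdot \nabla_\eta^\perp\Phi$ (respectively $\nabla_\xi\Upsilon \cdot \nabla_\xi^\perp\Phi$) measures the transversality of the hypersurface $\{\Upsilon = 0\}$ to the tangent direction of $\{\Phi = 0\}$, and non-vanishing of this transversality is precisely the replacement for $|\Upsilon| \gtrsim 1$.

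The hard part will be the final verification step in the compact region: classifying the simultaneous zeros of $\Phi$ and $\Upsilon$ for $|\rho - \gamma_1| \leq 2^{-\D_1}$ and $r,s \in [2^{-20}, C_0]$, and checking non-degeneracy of the derivative terms at each such zero. Unlike in Lemma \ref{lemmaD1}, where one proves $|\Phi| + |\Upsilon| \gtrsim 1$ outright, here $\Upsilon$ genuinely does vanish on a portion of the resonant hypersurface (consistent with the remarks about the two degeneracy points in the discussion preceding Lemma \ref{L2Prop0}), so the required non-degeneracy is second order and more delicate. It is likely to be reduced to a finite list of exceptional configurations arising from the classification of the solutions to $\lambda'(a) = \lambda'(b)$ and $\lambda(a+b) = \lambda(a) + \lambda(b)$ in Lemma \ref{LambdaBasic} (ii)--(iv), and then verified by explicit algebraic manipulation of \eqref{ph5.0}--\eqref{ph6.5}, with Mathematica-assisted numerical checks in the style of the last paragraph of Lemma \ref{lemmaD1}.
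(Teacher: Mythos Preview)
Your proposal is correct and follows essentially the same approach as the paper: the sign reductions via \eqref{ph11.1} and triviality of $(+,-,-)$ are identical; the large-frequency argument showing $|\Upsilon|/(|\xi|+|\eta|) \gtrsim 1$ via dominance of the $\lambda''(\gamma_1)(\eta\cdot\xi^\perp)^2$ term in \eqref{ph6.5} matches the paper's treatment of $r \geq 100$; and the compact-region strategy you describe --- parametrize the resonant set by $s$ via $\lambda(r(s)) = \lambda(s) - \lambda(\gamma_1)$, observe that $\Upsilon$ vanishes at an isolated $s \approx 1.94$, then verify numerically that the tangential derivatives $\nabla_\eta\Upsilon\cdot\nabla_\eta^\perp\Phi$ and $\nabla_\xi\Upsilon\cdot\nabla_\xi^\perp\Phi$ (the paper calls the relevant one-variable reductions $G_{11}, G_{12}$) do not vanish there --- is exactly what the paper does. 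Two minor remarks: your reference to ``the two degeneracy points preceding Lemma~\ref{L2Prop0}'' concerns the $\gamma_0$ case, not this one; and for $(\sigma,\mu,\nu)=(+,+,-)$ the paper actually proves the stronger bound $|\Phi|+|\Upsilon|\gtrsim 1$ by noting that $\lambda(x)+\lambda(\gamma_1-x)-\lambda(\gamma_1)$ vanishes only at $x\in\{0,\gamma_1/2,\gamma_1\}$ and checking $\Upsilon$ directly at the midpoint, rather than folding it into the general compact analysis.
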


\begin{proof}
{\bf{Case 1: $(\sigma,\mu,\nu)=(+,+,+)$}}. Notice first that the function $f(r):=\lambda(r)+\lambda(\gamma_1)-\lambda(r+\gamma_1)$ is concave down for $r\in[0,0.3]$ (in view of \eqref{ph3}) and satisfies $f(0)=0$, $f(0.3)\geq 0.02$. Therefore $f(r)\gtrsim 1$ if $r\in[2^{-200},0.3]$, so
\begin{equation}\label{kn6.5}
|\Phi(\xi,\eta)|\gtrsim 1\qquad\text{ if }\qquad r\leq 0.3\,\,\text{ or }\,\,s\leq\gamma_1+0.3.
\end{equation}

On the other hand, if $|\Phi(\xi,\eta)|\ll 1$, $r\geq 1000$, and $|\rho-\gamma_1|\leq 2^{-\D_1}$ then
\begin{equation*}
s\leq r+\frac{\lambda(\rho)+0.2}{\lambda'(r)}\qquad\text{ and }\qquad r\geq s-\frac{\lambda(\rho)+0.2}{\lambda'(s)}.
\end{equation*}
Therefore, using also \eqref{kn1}, if $r\geq 1000$ then
\begin{equation*}
\begin{split}
&2\rho r\frac{\lambda'(\rho)}{\lambda'(r)}+\rho^2+r^2-s^2\geq \frac{2r}{\lambda'(r)}(\rho\lambda'(\rho)-\lambda(\rho)-0.2)\gtrsim \sqrt{r},\\
&2\rho s\frac{\lambda'(\rho)}{\lambda'(s)}-\rho^2-s^2+r^2\geq \frac{2s}{\lambda'(s)}(\rho\lambda'(\rho)-\lambda(\rho)-0.2)-\rho^2\gtrsim \sqrt{s},\\
&\frac{\rho\lambda''(\rho)}{\lambda'(\rho)}\big[4r^2s^2-(r^2+s^2-\rho^2)^2\big]\gtrsim r^2.
\end{split}
\end{equation*}
Using the formula \eqref{ph6.5} and assuming $|\rho-\gamma_1|\leq 2^{-\D_1}$, it follows that
\begin{equation}\label{kn6.6}
\text{ if }\,\,|\Phi(\xi,\eta)|\ll 1\,\,\text{ and }\,\,r\geq 1000\,\,\text{ then }\,\,-\Upsilon(\xi,\eta)\gtrsim r.
\end{equation}
Therefore both \eqref{kn2} and \eqref{kn3} follow if $r\geq 1000$.

It remains to consider the case $\gamma_1+0.3\leq s\leq 1010$. We show first that
\begin{equation}\label{ph101}
\text{ if }\,\,3\leq s\leq 1010\,\,\text{ and }\,\,|\lambda(s)-\lambda(r)-\lambda(\rho)|\ll 1\,\,\text{ then }-\Upsilon(\xi,\eta)\gtrsim 1.
\end{equation}
Indeed, we solve the equation $\lambda(r(s))=\lambda(s)-\lambda(\gamma_1)$ according to \eqref{Sol}, and define the function $G_1(s):=G(s,r(s),\gamma_1)$, see \eqref{ph6.5}--\eqref{ph102}. A simple {\it{Mathematica}} program shows that $G_1(s)\gtrsim 1$ if $3\leq s\leq 1010$. The bound \eqref{ph101} follows, so both \eqref{kn2} and \eqref{kn3} follow if $3\leq s\leq 1010$.

On the other hand the function $G_1(s)$ does vanish for some $s\in[\gamma_1+0.3,3]$ (more precisely at $s\approx 1.94$). In this range we can only prove the weaker estimates in the lemma. Notice that
\begin{equation*}
\begin{split}
\Upsilon(\xi,\eta)=\widetilde{\Upsilon}(|\xi|,|\eta|,|\xi-\eta|),\qquad\widetilde{\Upsilon}(s,r,\rho):=-\frac{1}{4}G(s,r,\rho)\frac{\lambda'(\rho)}{\rho^3}\frac{\lambda'(s)}{s}\frac{\lambda'(r)}{r}.
\end{split}
\end{equation*}
Then, using also \eqref{ph4}, we have
\begin{equation}\label{Sol2}
\begin{split}
(\nabla_\eta\Upsilon)(\xi,\eta)\cdot (\nabla_\eta^\perp\Phi)(\xi,\eta)&=(r\rho)^{-1}(\eta\cdot\xi^\perp)\big[(\partial_r\widetilde{\Upsilon})(s,r,\rho)\lambda'(\rho)-(\partial_\rho\widetilde{\Upsilon})(s,r,\rho)\lambda'(r)\big],\\
(\nabla_\xi\Upsilon)(\xi,\eta)\cdot (\nabla_\xi^\perp\Phi)(\xi,\eta)&=(s\rho)^{-1}(\xi\cdot\eta^\perp)\big[(\partial_s\widetilde{\Upsilon})(s,r,\rho)\lambda'(\rho)+(\partial_\rho\widetilde{\Upsilon})(s,r,\rho)\lambda'(s)\big].
\end{split}
\end{equation}
It is easy to see, using the formulas \eqref{ph5.3} and \eqref{ph6.5}, that
\begin{equation}\label{Sol3}
|\Phi(\xi,\eta)|+|\Upsilon(\xi,\eta)|+|\xi\cdot\eta^\perp|\gtrsim 1
\end{equation}
if $s\in[\gamma_1+0.3,3]$. Moreover, let
\begin{equation*}
\begin{split}
G_{11}(s)&:=(\partial_r\widetilde{\Upsilon})(s,r(s),\gamma_1)\lambda'(\gamma_1)-(\partial_\rho\widetilde{\Upsilon})(s,r(s),\gamma_1)\lambda'(r(s)),\\
G_{12}(s)&:=(\partial_s\widetilde{\Upsilon})(s,r(s),\gamma_1)\lambda'(\gamma_1)+(\partial_\rho\widetilde{\Upsilon})(s,r(s),\gamma_1)\lambda'(s),
\end{split}
\end{equation*}
where, as before, $r(s)$ is the unique solution of the equation $\lambda(r(s))=\lambda(s)-\lambda(\gamma_1)$, according to \eqref{Sol}. A simple {\it{Mathematica}} program shows that $G_1(s)+G_{11}(s)\gtrsim 1$ and $G_1(s)+G_{12}(s)\gtrsim 1$ if $s\in[\gamma_1+0.3,3]$. Using also \eqref{Sol2} and \eqref{Sol3} it follows that
\begin{equation}\label{Sol4}
\begin{split}
&|\Upsilon(\xi,\eta)|+|(\nabla_\eta\Upsilon)(\xi,\eta)\cdot (\nabla_\eta^\perp\Phi)(\xi,\eta)|\gtrsim 1,\\
&|\Upsilon(\xi,\eta)|+|(\nabla_\xi\Upsilon)(\xi,\eta)\cdot (\nabla_\xi^\perp\Phi)(\xi,\eta)|\gtrsim 1,
\end{split}
\end{equation}
if $s\in[\gamma_1+0.3,3]$, $|\Phi(\xi,\eta)|\ll 1$, and $|\rho-\gamma_0|\leq 2^{-\D_1}$. The bounds \eqref{kn2} follow from \eqref{kn6.5}--\eqref{ph101} and \eqref{Sol4}. The bounds \eqref{kn3} follow from \eqref{kn6.5}--\eqref{ph101}, and \eqref{Sol3}.

{\bf{Case 2: the other triplets.}} The desired bounds in the case $(\sigma,\mu,\nu)=(+,-,+)$ follow from the corresponding bounds the case $(\sigma,\mu,\nu)=(+,+,+)$ and \eqref{ph11.1}. Moreover, if $(\sigma,\mu,\nu)=(+,-,-)$ then $\Phi(\xi,\eta)=\lambda(s)+\lambda(r)+\lambda(\rho)\gtrsim 1$, so \eqref{kn2}--\eqref{kn3} are clearly verified.

Finally, if $(\sigma,\mu,\nu)=(+,+,-)$ then $\Phi(\xi,\eta)=\lambda(s)+\lambda(r)-\lambda(\rho)$. We may assume that $s,r\in[2^{-20},\gamma_1]$. In this case we prove the stronger bound
\begin{equation}\label{Sol6}
|\Phi(\xi,\eta)|+|\Upsilon(\xi,\eta)|\gtrsim 1.
\end{equation}
Indeed, for this is suffices to notice that the function $x\to \lambda(x)+\lambda(\gamma_1-x)-\lambda(\gamma_1)$ is nonnegative for $x\in[0,\gamma_1]$ and vanishes only when $x\in\{0,\gamma_1/2,\gamma_1\}$. Moreover $\Upsilon((\gamma_1/2)e,-(\gamma_1/2)e)\neq 0$ if $|e|=1$ (using \eqref{ph5.1}), and the lower bound \eqref{Sol6} follows.

The cases corresponding to $\sigma=-$ are similar by replacing $\Phi$ with $-\Phi$ and $\Upsilon$ with $-\Upsilon$. This completes the proof of the lemma.
\end{proof}

\appendix
\section{Paradifferential calculus}\label{SecParaOp}

The paradifferential calculus allows us to understand the high frequency structure of our system. In this section we record the definitions, and state and prove several useful lemmas. 

\subsection{Operators bounds}\label{BdPara}

In this subsection we define our main objects, and prove several basic nonlinear bounds. 

\subsubsection{Fourier multipliers}\label{secmultipliers}
We will mostly work with bilinear and trilinear multipliers. Many of the simpler estimates follow from the following basic result (see \cite[Lemma 5.2]{IoPu2} for the proof).

\begin{lemma}\label{touse}
(i) Assume $l\geq 2$, $f_1,\ldots,f_l,f_{l+1}\in L^2(\mathbb{R}^2)$, and $m:(\mathbb{R}^2)^l\to\mathbb{C}$ is a continuous compactly supported function. Then
\begin{equation}\label{ener62}
\begin{split}
\Big|\int_{(\mathbb{R}^2)^l}m(\xi_1,\ldots,\xi_l)\widehat{f_1}(\xi_1)\cdot\ldots\cdot\widehat{f_l}(\xi_l)\cdot\widehat{f_{l+1}}(-\xi_1-\ldots-\xi_l)\,d\xi_1\ldots d\xi_l\Big|\\
\lesssim \big\|\mathcal{F}^{-1}(m)\big\|_{L^1}\|f_1\|_{L^{p_1}}\cdot\ldots\cdot\|f_{l+1}\|_{L^{p_{l+1}}},
\end{split}
\end{equation}
for any exponents $p_1,\ldots p_{l+1}\in[1,\infty]$ satisfying $\frac{1}{p_1}+\ldots+\frac{1}{p_{l+1}}=1$. 

(ii) Assume $l\geq 2$ and $L_m$ is the multilinear operator defined by
\begin{equation*}
\mathcal{F}\{L_m[f_1,\ldots,f_l]\}(\xi)=\int_{(\mathbb{R}^2)^{l-1}}m(\xi,\eta_2,\ldots,\eta_l)\widehat{f_1}(\xi-\eta_2)\cdot\ldots\cdot \widehat{f_{l-1}}(\eta_{l-1}-\eta_l)\widehat{f_l}(\eta_l)\,d\eta_2\ldots d\eta_l.
\end{equation*}
Then, for any exponents $p,q_1,\ldots q_l\in[1,\infty]$ satisfying $\frac{1}{q_1}+\ldots+\frac{1}{q_{l}}=\frac{1}{p}$, we have
\begin{equation}\label{mk6}
\big\|L_m[f_1,\ldots,f_l]\big\|_{L^p}\lesssim \big\|\mathcal{F}^{-1}(m)\big\|_{L^1}\|f_1\|_{L^{q_1}}\cdot\ldots\cdot\|f_{l}\|_{L^{q_{l}}}.
\end{equation}
\end{lemma}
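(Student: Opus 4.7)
The plan is to reduce both inequalities to a single spatial representation obtained from the Fourier inversion of $m$, and then to close the estimate by Hölder's inequality. Write $M := \mathcal{F}^{-1}(m)$ on $(\mathbb{R}^2)^l$, so that
\begin{equation*}
m(\xi_1,\ldots,\xi_l) = \int_{(\mathbb{R}^2)^l} M(x_1,\ldots,x_l)\, e^{i(\xi_1\cdot x_1+\cdots+\xi_l\cdot x_l)}\,dx_1\cdots dx_l.
\end{equation*}
The hypothesis that $m$ is continuous and compactly supported makes all manipulations below absolutely convergent, so Fubini applies freely and no distribution theory is needed.

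For part (i), I would first use the identity $\widehat{f_{l+1}}(-\xi_1-\cdots-\xi_l)=\int f_{l+1}(y)e^{iy\cdot(\xi_1+\cdots+\xi_l)}\,dy$ to recast the $l$-fold frequency integral as
\begin{equation*}
\int_{\mathbb{R}^2} f_{l+1}(y) \Big(\int m(\xi)\widehat{f_1}(\xi_1)\cdots\widehat{f_l}(\xi_l)e^{iy\cdot(\xi_1+\cdots+\xi_l)}\,d\xi\Big)dy.
\end{equation*}
Plug in the representation of $m$ through $M$, interchange the orders of integration, and perform the $\xi_j$-integrals using Fourier inversion. Each one evaluates, up to universal constants, to $f_j(y+x_j)$, so the whole expression collapses to a constant multiple of
\begin{equation*}
\int_{(\mathbb{R}^2)^l} M(x_1,\ldots,x_l)\int_{\mathbb{R}^2} f_1(y+x_1)\cdots f_l(y+x_l)f_{l+1}(y)\,dy\,dx_1\cdots dx_l.
\end{equation*}
Now apply Hölder in $y$ with exponents $p_1,\ldots,p_{l+1}$, and use translation invariance of $L^{p_j}$ to pull out $\|f_j\|_{L^{p_j}}$ independently of $x_j$; the remaining $x$-integral is exactly $\|M\|_{L^1}=\|\mathcal{F}^{-1}(m)\|_{L^1}$, which is the bound \eqref{ener62}.

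For part (ii), the argument is essentially the same after changing variables in frequency. Writing $\xi_1=\xi-\eta_2$, $\xi_j=\eta_j-\eta_{j+1}$ for $2\leq j\leq l-1$, and $\xi_l=\eta_l$, one recognizes $L_m[f_1,\ldots,f_l]$ as a multiplier operator whose kernel is obtained from the same Fourier expansion of $m$ as above. Concretely, replicating the computation in part (i) (now without the pairing against $f_{l+1}$), one obtains the spatial formula
\begin{equation*}
L_m[f_1,\ldots,f_l](y) = c\int_{(\mathbb{R}^2)^l} M(x_1,\ldots,x_l)\,f_1(y+x_1)\cdots f_l(y+x_l)\,dx_1\cdots dx_l.
\end{equation*}
Taking the $L^p$ norm in $y$ and applying Minkowski followed by Hölder with exponents $q_1,\ldots,q_l$ (where $\sum 1/q_j = 1/p$) gives $\|L_m[f_1,\ldots,f_l]\|_{L^p}\lesssim \|M\|_{L^1}\prod_j\|f_j\|_{L^{q_j}}$, which is \eqref{mk6} upon recalling the definition of $\|\cdot\|_{S^\infty}$ from \eqref{defclassA} as the $L^1$ norm of the inverse Fourier transform.

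There is no real obstacle here beyond bookkeeping constants and justifying the interchange of integrals, which is immediate under the compact-support hypothesis. The only mild subtlety is that in applications $m$ will typically only satisfy $\|\mathcal{F}^{-1}(m)\|_{L^1}<\infty$ without being compactly supported, so one then concludes by a density/truncation argument (multiplying $m$ by a smooth compactly supported cutoff that tends to $1$ and using the dominated convergence theorem together with the uniform bound in terms of $\|\mathcal{F}^{-1}(m)\|_{L^1}$).
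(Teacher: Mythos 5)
Your proof is correct and follows the standard route for such Coifman--Meyer-type bounds: expand $m$ via its inverse Fourier transform, reduce to a superposition of translated products in physical space, and then apply H\"older (in $y$) and Minkowski/Fubini (in $x$). The paper itself defers to \cite[Lemma 5.2]{IoPu2} for the proof, and that argument is essentially the same as yours; your handling of part (ii) via the linear change of frequency variables (whose unit Jacobian leaves the $L^1$ norm of the kernel unchanged), and your closing density remark for the case of non-compactly-supported $m$, are both appropriate.
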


Given a multiplier $m:(\mathbb{R}^2)^2\to\mathbb{C}$, we define the bilinear operator $M$ by the formula
\begin{equation}\label{symnot0}
\mathcal{F}[M[f,g])](\xi)=\frac{1}{4\pi^2}\int_{\mathbb{R}^2}m(\xi,\eta)\widehat{f}(\xi-\eta)\widehat{g}(\eta)\,d\eta.
\end{equation}
With $\Omega=x_1\partial_2-x_2\partial_1$, we notice the formula
\begin{equation}\label{symnot1}
\Omega M[f,g]= M[\Omega f,g]+M[f,\Omega g]+\widetilde{M}[f,g],
\end{equation}
where $\widetilde{M}$ is the bilinear operator defined by the multiplier $\widetilde{m}(\xi,\eta)=(\Omega_\xi+\Omega_\eta)m(\xi,\eta)$. 

For simplicity of notation, we define the following classes of bilinear multipliers:
\begin{equation}\label{defclassA}
\begin{split}
&S^\infty:= \{m: (\R^2)^n \to \mathbb{C} : \,m \text{ continuous and } 
  {\| m \|}_{S^\infty} :=\|\mathcal{F}^{-1} m \|_{L^1} < \infty \},\\
&S^\infty_{\Omega} := \{m: (\R^2)^2 \to \mathbb{C} : \,m \text{ continuous and } 
  {\| m \|}_{S^\infty_\Omega} := \sup_{l\le N_1}\|(\Omega_\xi+\Omega_\eta)^l m \|_{S^\infty} < \infty \}.
	\end{split}
\end{equation}

We will often need to analyze bilinear operators more carefully, by localizing in the frequency space. We therefore define, for any symbol $m$,
\begin{align}
\label{mloc}
m^{k, k_1, k_2}(\xi,\eta) := \varphi_k(\xi)\varphi_{k_1}(\xi-\eta)\varphi_{k_2}(\eta) m(\xi,\eta).
\end{align}

For any $t\in[0,T]$, $p\geq -N_3$, and $m\geq 1$ let $\langle t\rangle=1+t$ and let $\mathcal{O}_{m,p}=\mathcal{O}_{m,p}(t)$ denote the Banach spaces of functions $f\in L^2$ defined by the norms
\begin{align}\label{fw1}
\|f\|_{\mathcal{O}_{m,p}}:=\langle t\rangle ^{(m-1)(5/6-20\delta^2)-\delta^2}\big[\| f\|_{H^{N_0+p}}+\|f\|_{H_\Omega^{N_1,N_3+p}}+ \langle t\rangle^{5/6-2\delta^2}\|f\|_{\widetilde{W}_{\Omega}^{N_1/2,N_2+p}}\big].
\end{align}
This is similar to the definition of the spaces $O_{m,p}$ in Definition \ref{DefOHierarchy}, except for the supremum over $t\in[0,T]$. We show first that these spaces are compatible with $S^\infty_{\Omega}$ multipliers.

\begin{lemma}\label{lemmaaux1}
Assume $M$ is a bilinear operator with symbol $m$ satisfying $\|m^{k,k_1,k_2}\|_{S^\infty_{\Omega}}\leq 1$, for any $k,k_1,k_2\in\mathbb{Z}$. Then, if $p\in[-N_3,10]$, $t\in[0,T]$, and $m,n\geq 1$,
\begin{align}
\label{aux1}
\langle t\rangle^{12\delta^2}\|M[f,g]\|_{\mathcal{O}_{m+n,p}}\lesssim \|f\|_{\mathcal{O}_{m,p}}\|g\|_{\mathcal{O}_{n,p}}.
\end{align}
\end{lemma}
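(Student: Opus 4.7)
The plan is to reduce to localized $L^2 \times L^\infty$ (or $L^\infty \times L^2$) bilinear estimates via Lemma \ref{touse}(ii), exploiting the $S^\infty$ hypothesis on every dyadic piece $m^{k,k_1,k_2}$ and the extra $\langle t\rangle^{5/6-2\delta^2}$ decay built into the $\widetilde{W}^{\cdot,\cdot}_\Omega$ component of the $\mathcal{O}_{m,p}$ norm. First I would frequency-localize: writing $P_k M[f,g] = \sum_{k_1,k_2} P_k M[P_{k_1}f, P_{k_2}g]$ and using the standard trichotomy $k_1 \approx k_2 \geq k - O(1)$, $k_1 \approx k \geq k_2 - O(1)$, $k_2 \approx k \geq k_1 - O(1)$, so only $O(1)$-many relevant $(k_1,k_2)$ configurations contribute for each $k$, and the symbol bound $\|m^{k,k_1,k_2}\|_{S^\infty}\leq 1$ gives a uniform constant in \eqref{mk6}.

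For the Sobolev part $\|M[f,g]\|_{H^{N_0+p}}$, I would put the factor of higher frequency in $L^2$ (absorbing the $N_0+p$ derivatives into it), and the factor of lower frequency in $L^\infty$ using the $\widetilde{W}^{N_2+p}$ component (noting $N_2 \geq N_3 + 10$ which gives room to spare at high frequencies of the $L^\infty$ factor). Multiplying the time weights yields
\begin{equation*}
\langle t\rangle^{-(m-1)(5/6-20\delta^2)-\delta^2}\cdot\langle t\rangle^{-(n-1)(5/6-20\delta^2)-\delta^2}\cdot\langle t\rangle^{-(5/6-2\delta^2)} = \langle t\rangle^{-(m+n-1)(5/6-20\delta^2)-2\delta^2-(5/6-2\delta^2)},
\end{equation*}
which exceeds the target weight $\langle t\rangle^{-(m+n-1)(5/6-20\delta^2)-\delta^2+12\delta^2}$ by a factor of $\langle t\rangle^{-31\delta^2}$, giving room to absorb the $\langle t\rangle^{12\delta^2}$ on the left. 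The $H^{N_1,N_3+p}_\Omega$ piece is handled identically after applying \eqref{symnot1} iteratively up to $N_1$ times, distributing $\Omega^a$ onto $f$, $g$, or the symbol (the latter preserves the $S^\infty_\Omega$ bound by hypothesis); at each step at most one of the two factors carries more than $N_1/2$ rotations, so that factor goes in $L^2$ via the $H^{N_1,N_3+p}_\Omega$ norm, and the other goes in $L^\infty$ via $\widetilde{W}^{N_1/2,N_2+p}_\Omega$.

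For the $\widetilde{W}^{N_1/2,N_2+p}_\Omega$ component on the left, I would put \emph{both} factors in $L^\infty$ using their $\widetilde{W}^{N_1/2,N_2+p}_\Omega$ norms (after applying Bernstein to recover $L^\infty$ from the symbol action on localized pieces, using $\|\mathcal{F}^{-1}m^{k,k_1,k_2}\|_{L^1}\lesssim 1$). This generates two powers of $\langle t\rangle^{-(5/6-2\delta^2)}$, enough to absorb both the target weight $\langle t\rangle^{-(m+n-1)(5/6-20\delta^2)-\delta^2+(5/6-2\delta^2)}$ in the definition of $\mathcal{O}_{m+n,p}$ and the extra $\langle t\rangle^{12\delta^2}$. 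The rotation distribution via \eqref{symnot1} is again admissible because $k_1+k_2 \leq N_1/2$ implies both $k_1, k_2 \leq N_1/2$.

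The main technical annoyance — not really a deep obstacle — will be the bookkeeping at very high or very low frequencies: one needs to sum dyadically in $k$ (and $k_1,k_2$) and verify that the $N_2+p$, $N_3+p$, $N_0+p$ orders are all compatible with the losses of derivatives from placing different factors in $L^2$ versus $L^\infty$. Since $N_0 - N_3 = 2N_1$ and $N_2 \geq N_3 + 10$, all relevant Sobolev embeddings and summations converge, and the dyadic sums are controlled by the $S^\infty$ bound plus a harmless $\langle k\rangle$-factor that is absorbed by the logarithmic slack in the weights. No single frequency regime poses a genuine difficulty; the entire argument is a careful $L^2 \times L^\infty$ Hölder trichotomy combined with the time-decay hierarchy encoded in \eqref{fw1}.
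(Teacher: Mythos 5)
Your high-level strategy — the $\mathcal{X}_k^1, \mathcal{X}_k^2, \mathcal{X}_k^3$ trichotomy, the $L^2\times L^\infty$ duality for the Sobolev and weighted-Sobolev components, putting the factor carrying more than $N_1/2$ rotations in $L^2$, and $L^\infty\times L^\infty$ for the $\widetilde{W}$ component — matches the paper's proof, and so does the time-weight accounting. However, your closing claim that ``no single frequency regime poses a genuine difficulty,'' together with the ``harmless $\langle k\rangle$-factor absorbed by the logarithmic slack'' sentence, is exactly where the argument as written would fail.

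The problematic regime is the high--high-to-low interaction: $(k_1,k_2)\in\mathcal{X}_k^3$ with $2^k\ll 2^{k_1}\approx 2^{k_2}$. There the $L^2\times L^\infty$ estimate for the output at frequency $2^k$ produces a bound that is \emph{independent of $k$} for $k\le 0$ (the Sobolev weight $2^{(N_0+p)k^+}$ is trivial and the right-hand side of the localized bilinear bound involves only $k_1,k_2$), so the $\ell^2_k$ summation over $k\to -\infty$ diverges rather than contributing a logarithm; there is no $\langle k\rangle$-factor and no logarithmic slack that rescues it. The paper closes this regime with a separate estimate valid for $2^k\le\langle t\rangle^{-10}$: a Bernstein inequality on the \emph{output}, namely $\|P_kM[P_{k_1}f,P_{k_2}g]\|_{L^2}\lesssim 2^k\|P_kM[P_{k_1}f,P_{k_2}g]\|_{L^1}\lesssim 2^k\|P_{k_1}f\|_{L^2}\|P_{k_2}g\|_{L^2}$, which restores geometric decay $2^k$ in the output frequency and makes the $\ell^2_k$ sum converge; the analogous $L^1\to L^\infty$ Bernstein gain of $2^{2k}$ is what closes the $\widetilde{W}^{N_1/2,N_2+p}_\Omega$ component at low output frequency. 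Without this low-frequency $L^2\times L^2$-plus-Bernstein step, your $L^2\times L^\infty$ / $L^\infty\times L^\infty$ scheme does not close. The rest of the proposal — reduction to $m=n=1$, rotation distribution via \eqref{symnot1}, the compatibility of $N_0,N_1,N_2,N_3$, and the time-weight arithmetic (which does leave $O(\delta^2)$ to spare) — is correct and in line with the paper.
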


\begin{proof} In view of the definition we may assume that $m=n=1$ and $\|f\|_{\mathcal{O}_{m,p}}=\|g\|_{\mathcal{O}_{n,p}}=1$. Therefore, we may assume that
\begin{equation}\label{Ali1}
\|h\|_{H^{N_0+p}}+\sup_{j\leq N_1}\|\Omega^jh\|_{H^{N_3+p}}\leq \langle t\rangle^{\delta^2},\qquad \sup_{j \leq N_1/2} {\| \Omega^j h \|}_{\widetilde{W}^{N_2+p}}\leq \langle t\rangle^{3\delta^2-5/6},
\end{equation}
where $h\in\{f(t),g(t)\}$. With $F:=M[f(t),g(t)]$, it suffices to prove that
\begin{equation}\label{Ali2}
\begin{split}
\|F\|_{H^{N_0+p}}+\sup_{j\leq N_1}\|\Omega^jF\|_{H^{N_3+p}}&\lesssim \langle t\rangle^{6\delta^2-5/6},\\
\sup_{j \leq N_1/2} {\| \Omega^j P_kF\|}_{\widetilde{W}^{N_2+p}}&\lesssim \langle t\rangle^{8\delta^2-5/3}.
\end{split}
\end{equation}

For $k,k_1,k_2\in\mathbb{Z}$ let
\begin{equation*}
F_k:=P_k M[f(t),g(t)],\qquad F_{k,k_1,k_2}:=P_k M[P_{k_1}f(t),P_{k_2}g(t)]. 
\end{equation*}
For $k\in\mathbb{Z}$ let
\begin{equation*}
\begin{split}
&\mathcal{X}_k^1:=\{(k_1,k_2)\in\mathbb{Z}\times\mathbb{Z}: k_1\leq k-8,\, |k_2-k|\leq 4\},\\
&\mathcal{X}_k^2:=\{(k_1,k_2)\in\mathbb{Z}\times\mathbb{Z}: k_2\leq k-8,\, |k_1-k|\leq 4\},\\
&\mathcal{X}_k^3:=\{(k_1,k_2)\in\mathbb{Z}\times\mathbb{Z}: \min(k_1,k_2)\geq k-7,\, |k_1-k_2|\leq 20\}, 
\end{split}
\end{equation*}
and let $\mathcal{X}_k:=\mathcal{X}_k^1\cup\mathcal{X}_k^2\cup\mathcal{X}_k^3$. Let
\begin{equation}\label{Ali4}
\begin{split}
&a_k:=\|P_kh\|_{H^{N_0+p}},\quad b_k:=\sup_{0\leq j\leq N_1}\|\Omega^jP_kh\|_{H^{N_3+p}},\quad c_k:=\sup_{0 \leq j \leq N_1/2} \| \Omega^j P_kh \|_{\widetilde{W}^{N_2+p}},\\
&\widetilde{a}_k:=\sum_{m\in\mathbb{Z}}a_{k+m}2^{-|m|/100},\quad \widetilde{b}_k:=\sum_{m\in\mathbb{Z}}b_{k+m}2^{-|m|/100},\quad \widetilde{c}_k:=\sum_{m\in\mathbb{Z}}c_{k+m}2^{-|m|/100}.
\end{split}
\end{equation}

We can prove now \eqref{Ali2}. Assuming $k\in\mathbb{Z}$ fixed we estimate, using Lemma \ref{touse} (ii),
\begin{equation}\label{Ali5}
\begin{split}
&\|F_{k,k_1,k_2}\|_{H^{N_0+p}}\lesssim a_{k_1}(2^{-4\max(k_2,0)}c_{k_2})\qquad \text{ if }(k_1,k_2)\in\mathcal{X}_k^2,\\
&\|F_{k,k_1,k_2}\|_{H^{N_0+p}}\lesssim a_{k_2}(2^{-4\max(k_1,0)}c_{k_1})\qquad \text{ if }(k_1,k_2)\in\mathcal{X}_k^1\cup\mathcal{X}_k^3.
\end{split}
\end{equation}
Since $\sum_{l}c_l\leq \langle t\rangle^{3\delta^2-5/6}$, it follows that
\begin{equation}\label{Ali5.5}
\sum_{(k_1,k_2)\in\mathcal{X}_k}\|F_{k,k_1,k_2}\|_{H^{N_0+p}}\lesssim \langle t\rangle^{3\delta^2-5/6}\big[\widetilde{a}_k+\sum_{l\geq k}\widetilde{a}_l2^{-4l_+}\big].
\end{equation}
Therefore, since $\sum_{k\in\mathbb{Z}}\widetilde{a}_k^2\lesssim \langle t\rangle^{2\delta^2}$, it follows that
\begin{equation}\label{Ali6}
\Big[\sum_{2^k\geq (1+t)^{-10}}\|F_{k}\|_{H^{N_0+p}}^2\Big]^{1/2}\lesssim \langle t\rangle^{6\delta^2-5/6}.
\end{equation}
To bound the contribution of small frequencies, $2^k\leq \langle t\rangle^{-10}$, we also use the bound
\begin{equation}\label{Ali6.5}
\|F_{k,k_1,k_2}\|_{L^2}\lesssim 2^k\|F_{k,k_1,k_2}\|_{L^1}\lesssim 2^k a_{k_1}a_{k_2}.
\end{equation}
when $(k_1,k_2)\in \mathcal{X}_k^3$, in addition to the bounds \eqref{Ali5}. Therefore
\begin{equation}\label{Ali7}
\sum_{(k_1,k_2)\in\mathcal{X}_k}\|F_{k,k_1,k_2}\|_{H^{N_0+p}}\lesssim \langle t\rangle^{3\delta^2-5/6}\widetilde{a}_k+2^k\sum_{l\in\mathbb{Z}}a_l^2,
\end{equation}
if $2^k\leq \langle t\rangle^{-10}$. It follows that
\begin{equation}\label{Ali8}
\Big[\sum_{2^k\leq \langle t\rangle^{-10}}\|F_{k}\|_{H^{N_0+p}}^2\Big]^{1/2}\lesssim \langle t\rangle^{6\delta^2-5/6},
\end{equation}
and the desired bound $\|F\|_{H^{N_0+p}}\lesssim (1+t)^{6\delta^2-5/6}$ in \eqref{Ali2} follows.

The proof of the second bound in \eqref{Ali2} is similar. We start by estimating, as in \eqref{Ali5},
\begin{equation*}
\|\Omega^jF_{k,k_1,k_2}\|_{H^{N_3+p}}\lesssim 2^{(N_3+p)k^+}\big[b_{k_1}2^{-(N_3+p)k_1^{+}}c_{k_2}2^{-(N_2+p)k_2^{+}}+b_{k_2}2^{-(N_3+p)k_2^{+}}c_{k_1}2^{-(N_2+p)k_1^{+}}\big]
\end{equation*}
for any $j\in[0,N_1]$. We remark that this is weaker than \eqref{Ali5} since the $\Omega$ derivatives can distribute on either $P_{k_1}f(t)$ or $P_{k_2}(t)$, and we are forced to estimate the factor with more than $N_1/2$ $\Omega$ derivatives in $L^2$. To bound the contributions of small frequencies we also estimate
\begin{equation*}
\|\Omega^jF_{k,k_1,k_2}\|_{H^{N_3+p}}\lesssim 2^{\min(k,k_1,k_2)}b_{k_1}b_{k_2},
\end{equation*}
as in \eqref{Ali6.5}. Recall that $N_2-N_3\geq 5$. We combine these two bounds to estimate
\begin{equation*}
\sum_{(k_1,k_2)\in\mathcal{X}_k}\|\Omega^jF_{k,k_1,k_2}\|_{H^{N_3+p}}\lesssim \langle t\rangle^{3\delta^2-5/6}\big[\widetilde{b}_k+\sum_{l\geq k}\widetilde{b}_l2^{-4l^+}\big]+\langle t\rangle^{2\delta^2}2^{-(N_2-N_3)k^+}\widetilde{c}_k.
\end{equation*}
When $2^k\leq (1+t)^{-10}$ this does not suffice; we have instead the bound
\begin{equation*}
\sum_{(k_1,k_2)\in\mathcal{X}_k}\|\Omega^jF_{k,k_1,k_2}\|_{H^{N_3+p}}\lesssim \langle t\rangle^{3\delta^2-5/6}\widetilde{b}_k+2^k\sum_{l\in\mathbb{Z}}b_l^2+\langle t\rangle^{2\delta^2}2^{-(N_2-N_3)k^+}\widetilde{c}_k.
\end{equation*}
The desired estimate $\|\Omega^j F\|_{H^{N_3+p}}\lesssim \langle t\rangle^{6\delta^2-5/6}$ in \eqref{Ali2} follows.

For the last bound in \eqref{Ali2}, we estimate as before for any $j\in[0,N_1/2]$,
\begin{equation*}
\|\Omega^jF_{k,k_1,k_2}\|_{\widetilde{W}^{N_2+p}}\lesssim 2^{(N_2+p)k^+}c_{k_1}2^{-(N_2+p)k_1^{+}}c_{k_2}2^{-(N_2+p)k_2^{+}},\quad \|\Omega^jF_{k,k_1,k_2}\|_{\widetilde{W}^{N_2+p}}\lesssim 2^{2k}b_{k_1}b_{k_2},
\end{equation*}
where the last estimate holds only for $k\leq 0$. The desired bound follows as before.
\end{proof}

\subsubsection{Paradifferential operators}

We recall first the definition of paradifferential operators (see \eqref{Tsigmaf}: given a symbol $a=a(x,\zeta):\mathbb{R}^2\times\mathbb{R}^2\to\mathbb{C}$, we define the operator $T_a$ by
\begin{equation}\label{Tsigmaf2}
\begin{split}
\mathcal{F}\left\{T_{a}f\right\}(\xi)=\frac{1}{4\pi^2}\int_{\mathbb{R}^2}\chi\Big(\frac{\vert\xi-\eta\vert}{\vert\xi+\eta\vert}\Big)\widetilde{a}(\xi-\eta,(\xi+\eta)/2)\widehat{f}(\eta)d\eta,
\end{split}
\end{equation}
where $\widetilde{a}$ denotes the partial Fourier transform of $a$ in the first coordinate and $\chi=\varphi_{-20}$. We define the Poisson bracket between two symbols $a$ and $b$ by
\begin{align}
\label{Poisson2}
\{ a,b \} := \nabla_x a \cdot\nabla_\zeta b - \nabla_\zeta a \cdot\nabla_x b.
\end{align}

We will use several norms to estimate symbols of degree $0$. For $q\in\{2,\infty\}$, $r\in\mathbb{Z}_+$, let
\begin{equation}\label{SymNorm1}
\Vert a\Vert_{\mathcal{M}_{r,q}}:=\sup_{\zeta}\|\,|a|_{r}(.,\zeta) \|_{L^q_x},\quad\text{ where }\quad |a|_{r}(x,\zeta):=\sum_{\vert\alpha\vert+\vert\beta\vert\le r}\vert\zeta\vert^{\vert\beta\vert}|\partial_\zeta^\beta\partial_x^\alpha a(x,\zeta)|.
\end{equation}
At later stages we will use more complicated norms, which also keep track of multiplicity and degree. For now we record a few simple properties, which follow directly from definitions:
\begin{equation}\label{PropSymb1}
\begin{split}
\Vert ab\Vert_{\mathcal{M}_{r,q}}+\Vert \,|\zeta|\{a,b\}\Vert_{\mathcal{M}_{r-2,q}}&\lesssim \Vert a\Vert_{\mathcal{M}_{r,q_1}}\Vert b\Vert_{\mathcal{M}_{r,q_2}},\qquad \{\infty,q\}=\{q_1,q_2\},\\ 
\Vert P_ka\Vert_{\mathcal{M}_{r,q}}&\lesssim 2^{-sk}\Vert P_ka\Vert_{\mathcal{M}_{r+s,q}},\,\,\,\,\qquad q\in\{2,\infty\},\,\,k\in\mathbb{Z},\,\,s\in\mathbb{Z}_+.
\end{split}
\end{equation}

We start with some simple properties.

\begin{lemma}\label{PropSym} (i) Let $a$ be a symbol and $1\le q\le\infty$, then
\begin{equation}\label{LqBdTa}
\begin{split}
\Vert P_kT_af\Vert_{L^q}&\lesssim \Vert a\Vert_{\mathcal{M}_{8,\infty}}\Vert P_{[k-2,k+2]}f\Vert_{L^q}
\end{split}
\end{equation}
and
\begin{equation}
\label{L2BdTa}
\begin{split}
\Vert P_kT_af\Vert_{L^2}&\lesssim \Vert a\Vert_{\mathcal{M}_{8,2}}\Vert P_{[k-2,k+2]}f\Vert_{L^\infty}.
\end{split}
\end{equation}

(ii) If $a\in \mathcal{M}_{8,\infty}$ is real-valued then $T_a$ is a bounded self-adjoint operator on $L^2$.

(iii) We have
\begin{equation}\label{Alu2}
\overline{T_af}=T_{a'}\overline{f},\quad\text{ where }\quad a'(y,\zeta):=\overline{a(y,-\zeta)}
\end{equation} 
and
\begin{equation}\label{Alu3}
\Omega (T_af)=T_a(\Omega f)+T_{a''}f\quad \text{ where }\quad a''(y,\zeta)=(\Omega_ya)(y,\zeta)+(\Omega_{\zeta}a)(y,\zeta).
\end{equation} 
\end{lemma}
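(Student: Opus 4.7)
My plan is to derive all three parts directly from the defining formula \eqref{Tsigmaf2} by passing to the variables $\lambda=\xi-\eta$ and $\mu=(\xi+\eta)/2$, in which the Weyl quantization takes its natural symmetric form and the paradifferential cutoff $\chi(|\lambda|/|2\mu|)$ is radial in each variable separately.

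For part (i), the strategy is to use Lemma \ref{touse} (ii) applied to the bilinear symbol
\[
m_k(\xi,\eta) := \varphi_k(\xi)\,\varphi_{[k-3,k+3]}(\eta)\,\chi\!\left(\tfrac{|\xi-\eta|}{|\xi+\eta|}\right) \widetilde{a}\!\left(\xi-\eta,\tfrac{\xi+\eta}{2}\right),
\]
where the extra localization $\varphi_{[k-3,k+3]}(\eta)$ is free because the cutoff $\chi$ forces $|\xi-\eta|\lesssim 2^{-5}|\xi+\eta|$. Changing variables to $(\lambda,\mu)$ on the Fourier side and using $\widetilde{a}(\lambda,\mu)=\int e^{-ix\cdot\lambda}a(x,\mu)\,dx$, I would write the kernel of $P_kT_a$ in the form
\[
K(x,y) = \int e^{i(x-y)\cdot\mu}\,\varphi_{[k-3,k+3]}(\mu)\,\Big[\int \kappa_{\mu}\!\left(\tfrac{x+y}{2}-z\right)a(z,\mu)\,dz\Big]\,d\mu + \text{lower order},
\]
where $\kappa_\mu$ is the inverse Fourier transform of $\chi(|\lambda|/|2\mu|)\varphi_k$, a Schwartz bump of unit mass concentrated on the scale $|w|\lesssim 2^{-k}$. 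Integrating by parts up to eight times in $\mu$ and $\lambda$ (allowed since $|\mu|\approx 2^k$ and $|\lambda|\ll|\mu|$) produces, via $|\mu|^{|\beta|}\partial^{\beta}_{\mu}$ and $\partial^{\alpha}_{z}$ falling on $a$, kernel bounds of the form $|K(x,y)|\lesssim 2^{2k}(1+2^k|x-y|)^{-8}\,\|a\|_{\mathcal{M}_{8,\infty}}$ and a similar estimate with $\|a\|_{\mathcal{M}_{8,2}}$ on one side only. Schur's test and Young's inequality then give \eqref{LqBdTa} and \eqref{L2BdTa} respectively.

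For part (ii), I observe that reality of $a$ translates to $\widetilde{a}(-\lambda,\mu)=\overline{\widetilde{a}(\lambda,\mu)}$, together with the even symmetry $\chi(|\xi-\eta|/|\xi+\eta|)=\chi(|\eta-\xi|/|\eta+\xi|)$ and the symmetry of the midpoint $(\xi+\eta)/2=(\eta+\xi)/2$. Hence, switching the roles of $\xi$ and $\eta$ in
\[
\langle T_a f, g\rangle = \frac{1}{4\pi^2}\iint \chi\!\left(\tfrac{|\xi-\eta|}{|\xi+\eta|}\right) \widetilde{a}\!\left(\xi-\eta,\tfrac{\xi+\eta}{2}\right)\widehat{f}(\eta)\,\overline{\widehat{g}(\xi)}\,d\eta\,d\xi
\]
yields $\overline{\langle T_a g,f\rangle}=\langle f,T_a g\rangle$; boundedness from part (i) justifies the computation for $f,g\in L^2$.

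For part (iii), the identity \eqref{Alu2} follows from $\mathcal{F}[\overline{h}](\xi)=\overline{\widehat{h}(-\xi)}$ and the relation $\overline{\widetilde{a}(\lambda,\mu)}=\widetilde{a'}(-\lambda,-\mu)$ obtained from $a'(y,\zeta):=\overline{a(y,-\zeta)}$, after changing $\eta\mapsto -\eta$ inside the formula \eqref{Tsigmaf2}. For \eqref{Alu3}, the key algebraic identity is that under $(\xi,\eta)\mapsto(\lambda,\mu)$ one has $\Omega_\xi+\Omega_\eta=\Omega_\lambda+\Omega_\mu$, so that the cutoff $\chi(|\lambda|/|2\mu|)$ is annihilated by $\Omega_\xi+\Omega_\eta$, while $(\Omega_\lambda+\Omega_\mu)\widetilde{a}(\lambda,\mu)=\widetilde{a''}(\lambda,\mu)$ with $a''=(\Omega_y+\Omega_\zeta)a$, by the standard fact that $\Omega_\lambda$ on the Fourier side corresponds to $\Omega_y$ on the physical side. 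Writing $\Omega_\xi m(\xi,\eta)=-\Omega_\eta m(\xi,\eta)+\chi\,\widetilde{a''}$ and integrating by parts in $\eta$ (using $\mathcal{F}[\Omega f](\eta)=\Omega_\eta\widehat{f}(\eta)$) produces \eqref{Alu3}.

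The main obstacle is the kernel estimate in part (i): the cutoff $\chi(|\xi-\eta|/|\xi+\eta|)$ is not a tensor product and its $\mu$-derivatives, which arise naturally when one integrates by parts to get decay in $2^k|x-y|$, generate an $L^\infty_x$-norm of $a(\cdot,\mu)$ rather than a Fourier-analytic norm; carefully accounting for these and for the derivatives of the amplitude $\widetilde{a}(\lambda,\mu)$, while keeping all the weights $|\mu|^{|\beta|}$ that convert into $|\zeta|^{|\beta|}$ inside the norm \eqref{SymNorm1}, is what forces the choice of $r=8$ derivatives in the definition of $\mathcal{M}_{8,q}$ and is the only non-routine point in the argument.
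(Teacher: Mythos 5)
Your proof takes essentially the same route as the paper: reduce to $k=0$, write the kernel of $P_0T_a$ as an oscillatory integral over $(\xi,\eta,z)$ (equivalently $(\lambda,\mu,z)$), integrate by parts to get decay in $|x-y|$ at the cost of differentiating $a$, and conclude by Schur/Young. Parts (ii) and (iii) are fine and match what the paper means by ``follow directly from definitions''; your identity $(\Omega_\xi+\Omega_\eta)\chi(|\xi-\eta|/|\xi+\eta|)=0$ is precisely the point.

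The one place where your write-up is genuinely incomplete is the $\mathcal{M}_{8,2}$ bound \eqref{L2BdTa}. You state a pointwise kernel bound $|K(x,y)|\lesssim 2^{2k}(1+2^k|x-y|)^{-8}\Vert a\Vert_{\mathcal{M}_{8,\infty}}$, and then say ``a similar estimate with $\Vert a\Vert_{\mathcal{M}_{8,2}}$ on one side only.'' But a pointwise, translation-invariant kernel bound in terms of $\Vert a\Vert_{\mathcal{M}_{8,2}}$ cannot give $\Vert P_kT_af\Vert_{L^2}\lesssim \Vert a\Vert_{\mathcal{M}_{8,2}}\Vert f\Vert_{L^\infty}$: convolving an $L^\infty$ function against a fixed decaying kernel produces only an $L^\infty$ output, not $L^2$. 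What is needed is to \emph{not} collapse the $z$-integral. After the integrations by parts one must retain a bound of the form
\[
(1+|x-y|^2)^2\,|K(x,y)|\lesssim \int_{\mathbb{R}^2}\frac{|a|_8(z,\cdot)}{(1+|z-y|^2)^2}\,dz,
\]
so the kernel is pointwise controlled by $(1+|x-y|^2)^{-2}\,H(y)$ with $H=(1+|\cdot|^2)^{-2}*|a|_8\in L^2$ and $\Vert H\Vert_{L^2}\lesssim\Vert a\Vert_{\mathcal{M}_{8,2}}$; only then does Young's inequality produce $\Vert P_kT_af\Vert_{L^2}\lesssim\Vert H\Vert_{L^2}\Vert f\Vert_{L^\infty}$. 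This is exactly what the paper's kernel estimate \eqref{Alu1} is engineered to preserve. So the plan is right but you should replace the pointwise kernel bound by the mixed bound depending on $y$ (equivalently on $x$) via the $L^2$-localized factor $H$, and note that the $\mathcal{M}_{8,\infty}$ case is simply the $L^1$-in-$z$ version of the same inequality.
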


\begin{proof}
(i) Inspecting the Fourier transform, we directly see that $P_kT_af=P_kT_aP_{[k-2,k+2]}f$. By rescaling, we may assume that $k=0$ and write 
\begin{equation*}
\begin{split}
\langle P_0T_ah,g\rangle&=C\int_{\mathbb{R}^4} \overline{g}(x)h(y)I(x,y)dxdy,\\
I(x,y)&=\int_{\mathbb{R}^6} a(z,(\xi+\eta)/2)e^{i\xi\cdot (x-z)}e^{i\eta\cdot (z-y)}\chi\Big(\frac{\vert \xi-\eta\vert }{\vert\xi+\eta\vert}\Big)\varphi_0(\xi) \,d\eta d\xi dz\\
&=\int_{\mathbb{R}^6} a(z,\xi+\theta/2)e^{i\theta\cdot(z-y)}e^{i\xi\cdot (x-y)}\chi\Big(\frac{\vert \theta\vert }{\vert 2\xi+\theta\vert}\Big)\varphi_0(\xi) \,d\xi d\theta dz.
\end{split}
\end{equation*}
We observe that
\begin{equation*}
\begin{split}
(1+\vert x-y\vert^2)^2&I(x,y)=\int_{\mathbb{R}^6} \frac{a(z,\xi+\theta/2)}{(1+\vert z-y\vert^2)^2}\chi\Big(\frac{\vert \theta\vert }{\vert 2\xi+\theta\vert}\Big)\varphi_0(\xi)\\
&\times \left[(1-\Delta_\theta)^2(1-\Delta_\xi)^2\{e^{i\theta\cdot(z-y)}e^{i\xi\cdot (x-y)}\}\right] \,d\xi d\theta dz.
\end{split}
\end{equation*}
By integration by parts in $\xi$ and $\theta$ it follows that
\begin{equation}\label{Alu1}
(1+\vert x-y\vert^2)^2|I(x,y)|\lesssim \int_{\mathbb{R}^6} \frac{|a|_{8}(z,\xi+\theta/2)}{(1+\vert z-y\vert^2)^2}\varphi_{[-4,4]}(\xi)\varphi_{\leq -10}(\theta)\,d\xi d\theta dz,
\end{equation}
where $|a|_{8}$ is defined as in \eqref{SymNorm1}.

The bounds \eqref{LqBdTa} and \eqref{L2BdTa} now follow easily. Indeed, it follows from \eqref{Alu1} that $$(1+\vert x-y\vert^2)^2|I(x,y)|\lesssim \Vert a\Vert_{\mathcal{M}_{8,\infty}}.$$
Therefore $|\langle P_0T_ah,g\rangle|\lesssim \Vert a\Vert_{\mathcal{M}_{8,\infty}}\|h\|_{L^q}\|g\|_{L^{q'}}$. This gives \eqref{LqBdTa}, and \eqref{L2BdTa} follows similarly.

Part (ii) and \eqref{Alu2} follow directly from definitions. To prove \eqref{Alu3} we start from the formula
\begin{equation*}
\mathcal{F}\left\{\Omega T_{a}f\right\}(\xi)=\frac{1}{4\pi^2}\int_{\mathbb{R}^2}(\Omega_{\xi}+\Omega_{\eta})\Big[\chi\Big(\frac{\vert\xi-\eta\vert}{\vert\xi+\eta\vert}\Big)\widetilde{a}(\xi-\eta,(\xi+\eta)/2)\widehat{f}(\eta)\Big]d\eta,
\end{equation*} 
and notice that $(\Omega_{\xi}+\Omega_{\eta})\Big[\chi\Big(\frac{\vert\xi-\eta\vert}{\vert\xi+\eta\vert}\Big)\Big]\equiv 0$. The formula \eqref{Alu3} follows.
\end{proof}

The paradifferential calculus is useful to linearize products and compositions. More precisely:

\begin{lemma}\label{PropHHSym}
(i) If $f,g\in L^2$ then
\begin{equation*}
fg=T_fg+T_gf+\mathcal{H}(f,g)
\end{equation*}
where $\mathcal{H}$ is smoothing in the sense that
\begin{equation*}
\Vert P_k\mathcal{H}(f,g)\Vert_{L^q}\lesssim \sum_{k',k''\geq k-40,\,|k'-k''|\leq 40}\min\big(\Vert P_{k'}f\Vert_{L^q}\Vert P_{k''}g\Vert_{L^\infty},\Vert P_{k'}f\Vert_{L^\infty}\Vert P_{k''}g\Vert_{L^q}\big).
\end{equation*}
As a consequence, if $f\in \mathcal{O}_{m,-5}$ and $g\in \mathcal{O}_{n,-5}$ then
\begin{equation}\label{aux1.1}
\langle t\rangle^{12\delta^2}\|\mathcal{H}(f,g)\|_{\mathcal{O}_{m+n,5}}\lesssim \|f\|_{\mathcal{O}_{m,-5}}\|g\|_{\mathcal{O}_{n,-5}}.
\end{equation}

(ii) Assume that $F(z)=z+h(z)$, where $h$ is analytic for $\vert z\vert <1/2$ and satisfies $\vert h(z)\vert\lesssim \vert z\vert^3$. If $\Vert u\Vert_{L^\infty}\le 1/100 $ and $N\geq 10$ then
\begin{equation}\label{Paracomp}
\begin{split}
F(u) & = T_{F^\prime(u)}u + E(u),
\\
\langle t\rangle^{12\delta^2}\Vert E(u)\Vert_{\mathcal{O}_{3,5}}&\lesssim \Vert u\Vert_{\mathcal{O}_{1,-5}}^3\qquad\text{ if }\qquad\Vert u\Vert_{\mathcal{O}_{1,-5}}\leq 1.
\end{split}
\end{equation}
\end{lemma}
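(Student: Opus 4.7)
\medskip

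\noindent\textbf{Proof proposal.} For part (i), the plan is to implement the standard Bony paraproduct decomposition at the dyadic level and then verify the $\mathcal{O}_{m,p}$ bound via the same bookkeeping as in Lemma \ref{lemmaaux1}. Concretely, I write $fg=\sum_{k_1,k_2\in\mathbb{Z}}P_{k_1}f\cdot P_{k_2}g$ and split the sum according to whether $k_1\leq k_2-10$, $k_2\leq k_1-10$, or $|k_1-k_2|\leq 10$. Matching the cutoff $\chi=\varphi_{\leq-20}$ appearing in \eqref{Tsigmaf2}, the first two regions reconstitute $T_f g$ and $T_g f$ (up to harmless $O(1)$-frequency shifts that can be absorbed into the diagonal), while the third defines $\mathcal{H}(f,g)$. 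Since $P_k\mathcal{H}(f,g)$ receives contributions only from dyadic pairs with $k_1,k_2\geq k-40$ and $|k_1-k_2|\leq 40$, the claimed $L^q$ estimate is immediate from Bernstein and H\"older.

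For the consequence \eqref{aux1.1}, I would reprise the argument of Lemma \ref{lemmaaux1} verbatim but restricted to the diagonal set $\mathcal{X}_k^3$. The restriction produces, for every $k$, an extra factor of the form $\sum_{l\geq k}2^{-s(l-k)_+}$ for any chosen $s\geq 0$; taking $s\geq 10$ converts the loss of $5$ derivatives on each factor (replace $N_0+p$ by $N_0-5$, etc.) into a \emph{gain} of $5$ derivatives on the output, i.e.\ an estimate in $H^{N_0+5}$, $H_\Omega^{N_1,N_3+5}$, and $\widetilde{W}_\Omega^{N_1/2,N_2+5}$, which is exactly the norm $\mathcal{O}_{m+n,5}$. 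The time-decay factor $\langle t\rangle^{-12\delta^2}$ comes out of the same product of two bounds of the form \eqref{Ali1}, as in \eqref{Ali2}.

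For part (ii), the plan is a telescoping/Taylor scheme combined with (i). Write $F(u)=u+h(u)=u+\sum_{n\geq 3}c_n u^n$, so that it suffices to paralinearize each monomial $u^n$ and sum; analyticity of $h$ on $|z|<1/2$ gives $|c_n|\leq C\cdot 2^n$, and $\|u\|_{L^\infty}\leq 1/100$ ensures absolute convergence of the series in $L^\infty$ and in $\mathcal{O}_{1,-5}$. For a monomial, I use the telescoping identity
\begin{equation*}
u^n=\sum_{k\in\mathbb{Z}}\bigl[(S_k u)^n-(S_{k-1}u)^n\bigr],\qquad S_k:=P_{\leq k},
\end{equation*}
and Taylor's formula with integral remainder
\begin{equation*}
(S_k u)^n-(S_{k-1}u)^n=n(S_{k-1}u)^{n-1}P_k u+R_{k,n},\qquad R_{k,n}=\binom{n}{2}\int_0^1(1-t)(S_{k-1}u+tP_ku)^{n-2}dt\,(P_ku)^2.
\end{equation*}
The quadratic remainder $\sum_k R_{k,n}$ is manifestly cubic in $u$ and is estimated in $\mathcal{O}_{3,5}$ using Lemma \ref{lemmaaux1} and the gain in part (i) applied to $(P_k u)^2$. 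The linear main term $\sum_k n(S_{k-1}u)^{n-1}P_k u$ is compared with $T_{nu^{n-1}}u=\sum_k T_{nu^{n-1}}P_ku$: the definition \eqref{Tsigmaf2} of $T_a$ restricts the first argument to frequencies $\lesssim 2^{k-10}$, so modulo $O(1)$-frequency shifts the difference equals $\sum_k n\,[u^{n-1}-(S_{k-1}u)^{n-1}]P_ku$, which is a Bony-type diagonal sum (high times high, falling onto frequency $\lesssim 2^k$) and is again absorbed by the $\mathcal{H}$-estimate of part (i).

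The main technical obstacle I foresee is the uniform-in-$n$ control of the cubic error $\sum_n c_n\sum_k R_{k,n}$ in the $\widetilde{W}_\Omega^{N_1/2,N_2+5}$ component of $\mathcal{O}_{3,5}$, because each factor of $(S_{k-1}u+tP_ku)^{n-2}$ distributes $\Omega$-derivatives among up to $n-2$ factors and one must retain the $L^\infty$ smallness to sum in $n$. The remedy is to place \emph{all but two} factors in $\widetilde{W}^{N_2}$ (using $\|u\|_{\mathcal{O}_{1,-5}}\leq 1$ and the algebra property \eqref{aux1}), keep one factor of $P_k u$ in $L^2$ with $\Omega$-derivatives distributed by Leibniz, and use the remaining factor of $P_k u$ together with the diagonal sum-in-$k$ to produce the $+5$ derivative gain; the resulting bound is $C^n$ and is defeated by $|c_n|\leq C\cdot 2^n$ because $\|u\|_{\mathcal{O}_{1,-5}}$ provides the needed smallness in each additional factor.
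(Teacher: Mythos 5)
Your proposal is correct and follows essentially the same path as the paper's very terse proof: for (i) one defines $\mathcal{H}(f,g)=fg-T_fg-T_gf$, observes that $P_k\mathcal{H}(P_{k'}f,P_{k''}g)\equiv 0$ unless $k',k''\geq k-40$ and $|k'-k''|\leq 40$, and obtains \eqref{aux1.1} by rerunning the proof of Lemma \ref{lemmaaux1} restricted to the diagonal set $\mathcal{X}_k^3$, while for (ii) the paper likewise reduces to monomials $x^n$ by analyticity and invokes the Littlewood--Paley decomposition of $u$, which your telescoping/Taylor scheme implements and for which you also correctly flag and handle the uniform-in-$n$ summability issue that the paper leaves implicit. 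One small imprecision: since $T_{nu^{n-1}}P_ku$ is a paraproduct (only low frequencies of $nu^{n-1}$ enter, per the cutoff in \eqref{Tsigmaf2}), its difference with $n(S_{k-1}u)^{n-1}P_ku$ is not literally $n[u^{n-1}-(S_{k-1}u)^{n-1}]P_ku$, but the extra discrepancy again consists of high-high diagonal contributions absorbed by the $\mathcal{H}$-estimate, so the conclusion you draw is unaffected.
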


\begin{proof}
(i) This follows easily by defining $\mathcal{H}(f,g)=fg-T_fg-T_gf$ and observing that 
\begin{equation*}
P_k\mathcal{H}(P_{k'}f,P_{k''}g)\equiv 0\quad\text{ unless }\quad k',k''\geq k-40,\,|k'-k''|\leq 40. 
\end{equation*}
The bound \eqref{aux1.1} follows as in the proof of Lemma \ref{lemmaaux1} (the remaining bilinear interactions correspond essentially to the set $\mathcal{X}_k^3$)

(ii) Since $F$ is analytic, it suffices to show this for $F(x)=x^n$, $n\ge 3$. This follows, however, as in part (i), using the Littlewood--Paley decomposition for $u$.
\end{proof}

We show now that compositions of paradifferential operators can be approximated well by paradifferential operators with suitable symbols. More precisely: 

\begin{proposition}\label{PropCompSym}
Let $1\le q\le\infty$. Given symbols $a$ and $b$, we may decompose
\begin{equation}
\label{PropCompSym1}
T_aT_b=T_{ab}+\frac{i}{2}T_{\{a,b\}}+E(a,b).
\end{equation}
The error $E$ obeys the following bounds: assuming $k\geq -100$,
\begin{equation}
\label{RemRemBounds}
\Vert P_kE(a,b)f\Vert_{L^q}\lesssim 2^{-2k}\Vert a\Vert_{\mathcal{M}_{16,\infty}}\Vert b\Vert_{\mathcal{M}_{16,\infty}}\Vert P_{[k-5,k+5]}f\Vert_{L^q},\quad\text{ for }q\in\{2,\infty\},
\end{equation}
\begin{equation}
\label{RemRemBounds'}
\begin{split}
\Vert P_kE(a,b)f\Vert_{L^2}\lesssim 2^{-2k}\Vert a\Vert_{\mathcal{M}_{16,2}}\Vert b\Vert_{\mathcal{M}_{16,\infty}}\Vert P_{[k-5,k+5]}f\Vert_{L^\infty},\\
\Vert P_kE(a,b)f\Vert_{L^2}\lesssim 2^{-2k}\Vert a\Vert_{\mathcal{M}_{16,\infty}}\Vert b\Vert_{\mathcal{M}_{16,2}}\Vert P_{[k-5,k+5]}f\Vert_{L^\infty}.
\end{split}
\end{equation}
Moreover $E(a,b)=0$ if both $a$ and $b$ are independent of $x$.
\end{proposition}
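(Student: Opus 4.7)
The approach is to expand the composition $T_aT_bf$ using the Fourier definition \eqref{Tsigmaf2} and perform a Taylor expansion, in the frequency variable, around the natural midpoint $\zeta_0 := (\xi+\theta)/2$ between the ``input frequency'' $\theta$ and the ``output frequency'' $\xi$. Writing
\begin{equation*}
\mathcal{F}\{T_aT_bf\}(\xi) = \frac{1}{(4\pi^2)^2}\iint \chi\Big(\frac{|\xi-\eta|}{|\xi+\eta|}\Big)\chi\Big(\frac{|\eta-\theta|}{|\eta+\theta|}\Big)\widetilde{a}\big(\xi-\eta,\tfrac{\xi+\eta}{2}\big)\widetilde{b}\big(\eta-\theta,\tfrac{\eta+\theta}{2}\big)\widehat{f}(\theta)\,d\eta\,d\theta,
\end{equation*}
I would use $(\xi+\eta)/2 = \zeta_0 + (\eta-\theta)/2$ and $(\eta+\theta)/2 = \zeta_0 - (\xi-\eta)/2$ to Taylor expand both symbols in the $\zeta$-slot around $\zeta_0$, retaining the constant and linear terms and keeping an integral remainder of order two.

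For the leading (zeroth order) contribution, the $\eta$-integration becomes the convolution $\widetilde{a}(\cdot,\zeta_0)\ast\widetilde{b}(\cdot,\zeta_0)$ at $\xi-\theta$, which is precisely the partial Fourier transform $\widetilde{ab}(\xi-\theta,\zeta_0)$ and so reproduces $\mathcal{F}\{T_{ab}f\}(\xi)$ up to the cutoff discrepancy below. For the linear terms, the factors $(\eta-\theta)/2$ (paired with $\widetilde{b}$) and $-(\xi-\eta)/2$ (paired with $\widetilde{a}$) convert via $\theta\,\widetilde{h}(\theta) = -i\mathcal{F}\{\nabla_xh\}(\theta)$ into
\begin{equation*}
\tfrac{i}{2}\,\mathcal{F}_1\big\{\nabla_xa\cdot\nabla_\zeta b - \nabla_\zeta a\cdot\nabla_xb\big\}(\xi-\theta,\zeta_0) = \tfrac{i}{2}\,\widetilde{\{a,b\}}(\xi-\theta,\zeta_0),
\end{equation*}
which reproduces $\tfrac{i}{2}T_{\{a,b\}}f$. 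The cutoff discrepancy between $\chi(|\xi-\eta|/|\xi+\eta|)\chi(|\eta-\theta|/|\eta+\theta|)$ and $\chi(|\xi-\theta|/|\xi+\theta|)$ needed to complete the identification is a smoothing error of the same order as the quadratic remainder, because on the joint support both small ratios force $|\xi-\theta|\lesssim 2^{-18}|\xi+\theta|$ and a one-dimensional Taylor expansion of $\chi$ itself gains two powers of $2^{-k}$.

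Third, the quadratic Taylor remainder contains two factors drawn from $\{(\xi-\eta),(\eta-\theta)\}$, each accompanied by a $\nabla_\zeta$ derivative of $\widetilde{a}$ or $\widetilde{b}$. On the output-frequency localization $|\xi|\approx 2^k$ (so $\zeta_0$ is of size $\approx 2^k$), each $\nabla_\zeta$ yields a factor $2^{-k}$ by the definition of $\|\cdot\|_{\mathcal{M}_{r,\cdot}}$ in \eqref{SymNorm1}, producing the key $2^{-2k}$ gain in \eqref{RemRemBounds}--\eqref{RemRemBounds'}. The $L^q$ and mixed $L^2$ kernel bounds then follow by repeating the Schwartz-kernel calculation carried out in the proof of Lemma \ref{PropSym}(i): we integrate by parts eight times in $\eta$ and eight times in the ``output'' variable to manufacture decay in $(1+|x-y|^2)^{-2}$ of the operator kernel, estimate $\widetilde{a},\widetilde{b}$ by the appropriate $\mathcal{M}_{16,\infty}$ or $\mathcal{M}_{16,2}$ norm, and integrate the remaining variables. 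The case when both $a,b$ are independent of $x$ is immediate because $\widetilde{a},\widetilde{b}$ become delta distributions in the first variable, forcing $\xi=\eta=\theta$, hence $T_aT_b=T_{ab}$ exactly, while $\{a,b\}=0$.

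The main obstacle is handling the cutoff $\chi$ simultaneously with the Taylor expansion: $\chi$ depends jointly on $(\xi,\eta)$ or $(\eta,\theta)$ and does not commute with the expansion in $\zeta$. The cleanest way to organize this is to first substitute the Taylor expansion of the symbols into the triple integral with the \emph{original} product of cutoffs, and then absorb the resulting cutoff mismatch into the error $E(a,b)$ using the scale-invariance of the ratios $|\xi-\eta|/|\xi+\eta|$ and the fact that, on the overlap of supports, $\xi,\eta,\theta$ are all comparable in magnitude to $\zeta_0$. A secondary bookkeeping matter is that the number $16$ of derivatives appearing in the hypothesis $\|\cdot\|_{\mathcal{M}_{16,\cdot}}$ comfortably covers the eight derivatives used for the kernel integration by parts plus the two derivatives per symbol for the quadratic Taylor remainder.
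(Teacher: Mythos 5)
Your approach is in essence the same as the paper's: expand both symbols to first order in the $\zeta$-slot around the midpoint $(\xi+\eta)/2$ between input and output frequencies, identify the zeroth-order term with $T_{ab}$ and the first-order term with $\tfrac{i}{2}T_{\{a,b\}}$ (which your small computation confirms), and treat the Lagrange remainder as the error $E(a,b)$, whose two $\nabla_\zeta$ derivatives each contribute $2^{-k}$ via the scaling built into \eqref{SymNorm1}. The paper organizes the remainder into three pieces $m^1,m^2,m^3$ (expanding one slot at a time plus a cross term), but that is only bookkeeping; your single two-variable Taylor remainder is equivalent. The kernel estimates via integration by parts with the operators $(2^{-2k}-\Delta)^2$ in the frequency variables also match the paper's computation (which reuses the scheme from Lemma~\ref{PropSym}(i)).

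There is, however, a genuine gap in your handling of the cutoff mismatch, and you correctly identify it as the ``main obstacle'' before proposing an argument that does not work. The discrepancy between $\chi\bigl(\tfrac{|\xi-\theta|}{|\xi+\theta|}\bigr)\chi\bigl(\tfrac{|\theta-\eta|}{|\theta+\eta|}\bigr)$ (from $T_aT_b$) and $\chi\bigl(\tfrac{|\xi-\eta|}{|\xi+\eta|}\bigr)$ (from $T_{ab}$ and $T_{\{a,b\}}$) is supported where at least one of these ratios is near the transition value $\approx 2^{-20}$. That region forces the relevant first argument of $\widetilde{a}$ or $\widetilde{b}$ (i.e., the frequency of $a$ or $b$) to be $\approx 2^{k-20}$, but the argument of $\chi$ itself is a \emph{dimensionless} ratio of frequencies each of size $\approx 2^k$, so Taylor expanding $\chi$ does not produce any negative power of $2^k$ — its derivatives are $O(1)$ in the variables at hand. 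Your claim that ``a one-dimensional Taylor expansion of $\chi$ itself gains two powers of $2^{-k}$'' therefore does not hold.

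The paper sidesteps this entirely by the preliminary reduction $a=P_{\leq k-100}a$, $b=P_{\leq k-100}b$. Under that reduction, all the $\chi$ ratios are $\lesssim 2^{-90}$ on the relevant frequency supports, so every $\chi$ factor is identically $1$ and the mismatch vanishes; only the clean Taylor remainder survives. The discarded contributions, where $a$ or $b$ has frequency $\geq 2^{k-100}$, are estimated directly: Lemma~\ref{PropSym}(i) bounds the operators, and the second line of \eqref{PropSymb1} converts the frequency constraint into the required $2^{-2k}$ gain at the cost of two extra derivatives on the symbol norm (which is exactly why the hypotheses use $\mathcal{M}_{16,\cdot}$ rather than $\mathcal{M}_{14,\cdot}$). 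You should incorporate this reduction step; without it the identification of the cutoffs and hence the decomposition \eqref{PropCompSym1} is not justified, and the proposed ``Taylor expansion of $\chi$'' does not close the gap.
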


\begin{proof}
We may assume that $a=P_{\leq k-100}a$ and $b=P_{\leq k-100}$, since the other contributions can also be estimated using Lemma \ref{PropSym} (i) and \eqref{PropSymb1}. In this case we write
\begin{equation*}
\begin{split}
(16\pi^4)&\mathcal{F}\left\{P_k(T_aT_b-T_{ab})f\right\}(\xi)= \varphi_k(\xi)\int_{\mathbb{R}^4}\widehat{f}(\eta)\varphi_{\leq k-100}(\xi-\theta)\varphi_{\leq k-100}(\theta-\eta)\\
&\times\Big[\widetilde{a}(\xi-\theta,\frac{\xi+\theta}{2})\widetilde{b}(\theta-\eta,\frac{\eta+\theta}{2})-\widetilde{a}(\xi-\theta,\frac{\xi+\eta}{2})\widetilde{b}(\theta-\eta,\frac{\xi+\eta}{2}) \Big]\,d\eta d\theta.
\end{split}
\end{equation*}
Moreover, using the definition,
\begin{equation*}
\begin{split}
&(16\pi^4)\mathcal{F}\left\{P_k(i/2)T_{\{a,b\}}f\right\}(\xi)= \varphi_k(\xi)\int_{\mathbb{R}^4}\widehat{f}(\eta)\varphi_{\leq k-100}(\xi-\theta)\varphi_{\leq k-100}(\theta-\eta)\\
&\times\Big[\frac{\theta-\eta}{2}(\nabla_\zeta\widetilde{a})(\xi-\theta,\frac{\xi+\eta}{2})\widetilde{b}(\theta-\eta,\frac{\xi+\eta}{2})-\widetilde{a}(\xi-\theta,\frac{\xi+\eta}{2})\frac{\xi-\theta}{2}(\nabla_\zeta\widetilde{b})(\theta-\eta,\frac{\xi+\eta}{2}) \Big]\,d\eta d\theta.
\end{split}
\end{equation*}
Therefore
\begin{equation}\label{Alu10}
\begin{split}
&(16\pi^4)P_kE(a,b)f= U^1f+U^2f+U^3f,\\
&\mathcal{F}(U^jf)(\xi)=\varphi_k(\xi)\int_{\mathbb{R}^4}\widehat{f}(\eta)\varphi_{\leq k-100}(\xi-\theta)\varphi_{\leq k-100}(\theta-\eta)m^j(\xi,\eta,\theta)\,d\eta d\theta,
\end{split}
\end{equation}
where
\begin{equation}\label{Alu11}
\begin{split}
m^1(\xi,\eta,\theta)&:=\widetilde{a}(\xi-\theta,\frac{\xi+\eta}{2})\widetilde{b}(\theta-\eta,\frac{\eta+\theta}{2})-\widetilde{a}(\xi-\theta,\frac{\xi+\eta}{2})\widetilde{b}(\theta-\eta,\frac{\xi+\eta}{2})\\
&-\widetilde{a}(\xi-\theta,\frac{\xi+\eta}{2})\frac{\theta-\xi}{2}(\nabla_\zeta\widetilde{b})(\theta-\eta,\frac{\xi+\eta}{2}),
\end{split}
\end{equation}
\begin{equation}\label{Alu12}
\begin{split}
m^2(\xi,\eta,\theta)&:=\widetilde{a}(\xi-\theta,\frac{\xi+\theta}{2})\widetilde{b}(\theta-\eta,\frac{\eta+\theta}{2})-\widetilde{a}(\xi-\theta,\frac{\xi+\eta}{2})\widetilde{b}(\theta-\eta,\frac{\eta+\theta}{2})\\
&-\frac{\theta-\eta}{2}(\nabla_\zeta\widetilde{a})(\xi-\theta,\frac{\xi+\eta}{2})\widetilde{b}(\theta-\eta,\frac{\eta+\theta}{2}),
\end{split}
\end{equation}
and
\begin{equation}\label{Alu13}
\begin{split}
m^3(\xi,\eta,\theta)&:=\frac{\theta-\eta}{2}(\nabla_\zeta\widetilde{a})(\xi-\theta,\frac{\xi+\eta}{2})\Big[\widetilde{b}(\theta-\eta,\frac{\eta+\theta}{2})-\widetilde{b}(\theta-\eta,\frac{\xi+\eta}{2})\Big].
\end{split}
\end{equation}

It remains to prove the bounds \eqref{RemRemBounds} and \eqref{RemRemBounds'} for the operators $U^j$, $j\in\{1,2,3\}$. The operators $U^j$ are similar, so we will only provide the details for the operator $U^1$. We rewrite
\begin{equation}\label{Alu13.5}
m^1(\xi,\eta,\theta)=\int_0^1\widetilde{a}(\xi-\theta,\frac{\xi+\eta}{2})\frac{(\theta-\xi)_j(\theta-\xi)_k}{4}(\partial_{\zeta_j}\partial_{\zeta_k}\widetilde{b})(\theta-\eta,\frac{\xi+\eta}{2}+s\frac{\theta-\xi}{2})(1-s)\, d s.
\end{equation}
Therefore
\begin{equation}\label{Alu14}
U^1f(x)=\int_{\mathbb{R}^2}f(y)K^1(x,y)\,dy
\end{equation}
where
\begin{equation*}
K^1(x,y):=C\int_{\mathbb{R}^6}e^{-iy\cdot \eta}e^{ix\cdot\xi}\varphi_k(\xi)\varphi_{\leq k-100}(\xi-\theta)\varphi_{\leq k-100}(\theta-\eta)m^1(\xi,\eta,\theta)\,d\eta d\theta d\xi.
\end{equation*}

We use the formula \eqref{Alu13.5} and make changes to variables to rewrite
\begin{equation*}
\begin{split}
K^1(x,y)&=C\int_0^1\,ds(1-s)\int_{\mathbb{R}^{10}}e^{-iy\cdot (\xi+\mu+\nu)}e^{ix\cdot\xi}e^{iz\cdot \mu}e^{iw\cdot \nu}\varphi_k(\xi)\varphi_{\leq k-100}(\mu)\varphi_{\leq k-100}(\nu)\\
&\times (\partial_{x_j}\partial_{x_k}a)(z,\xi+\mu/2+\nu/2)(\partial_{\zeta_j}\partial_{\zeta_k}b)(w,\xi+\mu/2+\nu/2+s\mu/2)\,d\mu d\nu d\xi dz dw.
\end{split}
\end{equation*}
We integrate by parts in $\xi,\mu,\nu$, using the operators $(2^{-2k}-\Delta_\xi)^2$, $(2^{-2k}-\Delta_\mu)^2$ and $(2^{-2k}-\Delta_\nu)^2$. It follows that
\begin{equation}\label{Alu15}
|K^1(x,y)|\lesssim \int_{\mathbb{R}^{10}}\frac{2^{-2k}}{(2^{-2k}+|x-y|^2)^2}\frac{2^{-2k}}{(2^{-2k}+|z-y|^2)^2}\frac{2^{-2k}}{(2^{-2k}+|w-y|^2)^2}F_{a,b}(z,w)\,dzdw,
\end{equation}
where, with $\underline{\varphi}(X,Y,Z):=\varphi_0(X)\varphi_{\leq -100}(Y)\varphi_{\leq -100}(Z)$,
\begin{equation*}
\begin{split}
F_{a,b}(z,w):=&2^{6k}\int_0^1 ds\int_{\mathbb{R}^6}
\Big|\big[(2^{-2k}-\Delta_\xi)^2(2^{-2k}-\Delta_\mu)^2(2^{-2k}-\Delta_\nu)^2\big]\big\{\underline{\varphi}(2^{-k}\xi,2^{-k}\mu,2^{-k}\nu)\\
&\times(\partial_{x_j}\partial_{x_k}a)(z,\xi+\mu/2+\nu/2)(\partial_{\zeta_j}\partial_{\zeta_k}b)(w,\xi+\mu/2+\nu/2+s\mu/2)\big\}\Big|\,d\xi d\mu d\nu.
\end{split}
\end{equation*}

With $|a|_{16}$ and $|b|_{16}$ defined as in \eqref{SymNorm1}, it follows that 
\begin{equation*}
\begin{split}
|F_{a,b}(z,w)|\lesssim 2^{-2k}&\int_0^1 ds\int_{\mathbb{R}^6}
|a|_{16}(z,\xi+\mu/2+\nu/2)|b|_{16}(w,\xi+\mu/2+\nu/2+s\mu/2)\\
&\times \varphi_{[-4,4]}(2^{-k}\xi)\varphi_{\leq -10}(2^{-k}\mu)\varphi_{\leq -10}(2^{-k}\nu)\,\frac{d\xi d\mu d\nu}{2^{6k}}.
\end{split}
\end{equation*}
The desired bounds \eqref{RemRemBounds} and \eqref{RemRemBounds'} for $U^1$ follow using also \eqref{Alu14} and \eqref{Alu15}.
\end{proof}

We also make the following observation: if $a=a(\zeta)$ is a Fourier multiplier, $b$ is a symbol, and $f$ is a function, then 
\begin{equation}\label{ExEab}
\begin{split}
\widehat{E(a,b)f}(\xi) & =\frac{1}{4\pi^2}\int_{\mathbb{R}^2} \chi(\frac{\vert\xi-\eta\vert}{\vert\xi+\eta\vert})
  \Big( a(\xi)-a(\frac{\xi+\eta}{2})-\frac{\xi-\eta}{2}\cdot\nabla a(\frac{\xi+\eta}{2}) \Big)
  \widetilde{b}(\xi-\eta,\frac{\xi+\eta}{2})\widehat{f}(\eta)d\eta,
\\
\widehat{E(b,a)f}(\xi)& = \frac{1}{4\pi^2}\int_{\mathbb{R}^2} \chi(\frac{\vert\xi-\eta\vert}{\vert\xi+\eta\vert})
  \Big( a(\eta)-a(\frac{\xi+\eta}{2})-\frac{\eta-\xi}{2}\cdot\nabla a(\frac{\xi+\eta}{2}) \Big)
  \widetilde{b}(\xi-\eta,\frac{\xi+\eta}{2})\widehat{f}(\eta)d\eta.
\end{split}
\end{equation}

\subsection{Decorated norms and estimates}

In the previous subsection we proved bounds on paraproduct operators. In our study of the water wave problem, we need to keep track of several parameters, such as order, decay, and vector-fields. It is convenient to use two compatible hierarchies of bounds, one for functions and one for symbols of operators.

\subsubsection{Decorated norms}

Recall the spaces $\mathcal{O}_{m,p}$ defined in \eqref{fw1}. We define now the norms we will use to measure symbols.

\begin{definition}\label{DefSym}
For $l\in[-10,10]$, $r \in\mathbb{Z}_+$, $m \in \{1,2,3,4\}$, $t\in[0,T]$, and $q\in\{2,\infty\}$,
we define classes of symbols 
$\mathcal{M}^{l,m}_{r,q}=\mathcal{M}^{l,m}_{r,q}(t)\subseteq C(\mathbb{R}^2\times\mathbb{R}^2:\mathbb{C})$
by the norms
\begin{equation}\label{Alu20}
\Vert a\Vert_{\mathcal{M}^{l,m}_{r,\infty}}
:=\sup_{j \leq N_1/2} \sup_{\vert \alpha\vert+\vert\beta\vert\le r}\sup_{\zeta\in\mathbb{R}^2}\,
 \langle t\rangle^{m(5/6-20\delta^2)+16\delta^2} \langle \zeta\rangle^{-l}\|\, 
   \vert\zeta\vert^{\vert\beta\vert} \partial^\beta_\zeta \partial_x^\alpha \Omega_{x,\zeta}^ja \Vert_{L^\infty_x},
\end{equation}
\begin{equation}\label{Alu21}
\Vert a\Vert_{\mathcal{M}^{l,m}_{r,2}}:=\sup_{j \leq N_1} \sup_{\vert \alpha\vert+\vert\beta\vert\le r}\sup_{\zeta\in\mathbb{R}^2}
\,\langle t\rangle^{(m-1)(5/6-20\delta^2)-2\delta^2}\langle \zeta\rangle^{-l}\Vert  \,  
   \vert\zeta\vert^{\vert\beta\vert} \partial^\beta_\zeta \partial_x^\alpha \Omega_{x,\zeta}^j a \Vert_{L^2_x}.
\end{equation}
Here
\begin{equation*}
\Omega_{x,\zeta}a:=\Omega_xa+\Omega_\zeta a=(x_1\partial_{x_2}-x_2\partial_{x_1}+\zeta_1\partial_{\zeta_2}-\zeta_2\partial_{\zeta_1})a,
\end{equation*}
see \eqref{Alu3}. We also define
\begin{align}
\Vert a\Vert_{\mathcal{M}^{l,m}_r}:=
  \Vert a\Vert_{\mathcal{M}^{l,m}_{r,\infty}}+\Vert a\Vert_{\mathcal{M}^{l,m}_{r,2}},\quad m\ge 1.
\end{align}
\end{definition}

Note that this hierarchy is naturally related to the hierarchy in terms of $\mathcal{O}_{m,p}$. In this definition the parameters $m$ (the ``multiplicity'' of $a$, related to the decay rate) and $l$ (the ``order'') will play an important role. Observe that for a function $f=f(x)$, and $m\in [1,4]$,
\begin{align}\label{OimpliesS}
\|f\|_{\M^{0,m}_{N_3+p}}\lesssim \|f\|_{\mathcal{O}_{m,p}}.
\end{align}
Note also that we have the simple linear rule
\begin{equation}\label{SymbDer}
\begin{split}
\Vert P_ka\Vert_{\mathcal{M}^{l,m}_{r,q}}&\lesssim 2^{-sk}\Vert P_ka\Vert_{\mathcal{M}^{l,m}_{r+s,q}},\qquad k\in\mathbb{Z},\,s\geq 0,\,q\in\{2,\infty\},
\end{split}
\end{equation}
and the basic multiplication rules
\begin{equation}\label{SymbMult}
\langle t\rangle^{2\delta^2}\big[\Vert ab\Vert_{\mathcal{M}^{l_1+l_2,m_1+m_2}_{r}}+\Vert \zeta\{a,b\}\Vert_{\mathcal{M}^{l_1+l_2,m_1+m_2}_{r-2}}\big]\lesssim \Vert a\Vert_{\mathcal{M}^{l_1,m_1}_{r}}\Vert b\Vert_{\mathcal{M}^{l_2,m_2}_{r}}.
\end{equation}

\subsubsection{Bounds on operators}
We may now pass the bounds proved in subsection \ref{BdPara} to decorated norms. We consider the action of paradifferential operators on the classes $\mathcal{O}_{k,p}$. 
We will often use the following simple facts:
paradifferential operators preserve frequency localizations,
\begin{align}
\label{CommFreqP}
P_k T_{a} f & = P_k T_{a} P_{[k-4,k+4]}f = P_k T_{a(x,\zeta)\varphi_{\leq k+4}(\zeta)} f ;
\end{align}
the rotation vector-field $\Omega$ acts nicely on such operators, see \eqref{Alu3},
\begin{align}
\label{CommFreqPR}
\Omega (T_{a} f)&=T_{\Omega_{x,\zeta} a}f + T_{a} (\Omega f);
\end{align}
the following relations between basic and decorated norms for symbols hold:
\begin{align}
\label{CommFreqP'}
\begin{split}
{\| \Omega_{x,\zeta}^j a(x,\zeta) \varphi_{\leq k}(\zeta) \|}_{\M_{r,\infty}} 
  & 
  \lesssim 2^{lk^+} {\| a\|}_{\M^{l,m}_{r,\infty}}\langle t\rangle^{-m(5/6-20\delta^2)-16\delta^2}, \quad 0 \leq j \leq N_1/2,
\\
{\| \Omega_{x,\zeta}^j a(x,\zeta) \varphi_{\leq k}(\zeta) \|}_{\M_{r,2}} & 
  \lesssim 2^{lk^+}{\| a\|}_{\M^{l,m}_{r,2}}\langle t\rangle^{-(m-1)(5/6-20\delta^2) + 2\delta^2}, \quad 0\leq j \leq N_1.
\end{split}
\end{align}
A simple application of the above remarks and Lemma \ref{PropSym} (i) gives the bound
\begin{equation}\label{ParaSob}
\Vert T_{\sigma}f\Vert_{H^s}\lesssim \langle t\rangle^{-m(5/6-20\delta^2)-16\delta^2}\Vert \sigma\Vert_{\mathcal{M}^{l,m}_{8}}\Vert f\Vert_{H^{s+l}}.
\end{equation}

We prove now two useful lemmas:

\begin{lemma}\label{lemmaaux2}
If $q,q-l\in[-N_3,10]$ and $m,m_1\geq 1$ then
\begin{align}\label{aux21}
\langle t\rangle^{12\delta^2}T_{a} \mathcal{O}_{m,q}\subseteq \mathcal{O}_{m+m_1,q-l}, \qquad \mbox{for} \quad a \in \M^{l,m_1}_{10},
\end{align}
In particular, using also \eqref{OimpliesS},
\begin{align}
\label{aux25}
\begin{split}
\langle t\rangle^{12\delta^2}T_{\mathcal{O}_{m_1,-10}} \mathcal{O}_{m,q}\subseteq \mathcal{O}_{m+m_1,q}.
\end{split}
\end{align}
\end{lemma}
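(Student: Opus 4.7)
The plan is to reduce $T_a f$ to frequency-localized pieces via \eqref{CommFreqP}, distribute $\Omega$-derivatives using the commutation identity \eqref{Alu3}, and then apply the basic operator bounds of Lemma \ref{PropSym} (i) separately for each of the three norms entering the definition \eqref{fw1} of $\|\cdot\|_{\mathcal{O}_{m+m_1,q-l}}$. The structural fact that makes the bookkeeping close is recorded in \eqref{CommFreqP'}: multiplication of the symbol by the cutoff $\varphi_{\leq k+4}(\zeta)$ forced by Littlewood--Paley localization costs exactly a factor $2^{lk_+}$ on the undecorated symbol norms, and this cost is absorbed by the shift from the weight $2^{(N+q)k_+}$ in $\|\cdot\|_{\mathcal{O}_{m,q}}$ to $2^{(N+q-l)k_+}$ in $\|\cdot\|_{\mathcal{O}_{m+m_1,q-l}}$, so that the argument at each frequency $k$ becomes weight-neutral and the sum in $k$ is handled by the derivative gaps $N_0-N_3\geq 2N_1$ and $N_2\geq N_3+10$ built into \eqref{fw1}.

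For the pure Sobolev piece $\|T_a f\|_{H^{N_0+q-l}}$ I would apply \eqref{ParaSob} directly; combining its $\langle t\rangle^{-m_1(5/6-20\delta^2)-16\delta^2}$ gain with the $\langle t\rangle^{-(m-1)(5/6-20\delta^2)+\delta^2}$ control of $\|f\|_{H^{N_0+q}}$ by $\|f\|_{\mathcal{O}_{m,q}}$ and with the pre-factor $\langle t\rangle^{(m+m_1-1)(5/6-20\delta^2)-\delta^2}$ from the definition of $\|T_a f\|_{\mathcal{O}_{m+m_1,q-l}}$ leaves a residual $\langle t\rangle^{-17\delta^2}$ that comfortably accommodates the claimed $\langle t\rangle^{12\delta^2}$ margin. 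For the weighted norm $\|T_a f\|_{H^{N_1,N_3+q-l}_\Omega}$ I would iterate \eqref{Alu3} to write
\begin{equation*}
\Omega^j(T_a f) = \sum_{j_1+j_2=j} c_{j_1,j_2}\, T_{\Omega_{x,\zeta}^{j_1} a}\, \Omega^{j_2} f,\qquad j\leq N_1,
\end{equation*}
and split into the two regimes $j_2 \leq N_1/2$ and $j_2>N_1/2$. In the first regime $j_1$ may be as large as $N_1$, which forces the use of the $L^2$ symbol norm $\mathcal{M}^{l,m_1}_{10,2}$ via \eqref{L2BdTa}, paired with an $L^\infty$ bound on $\Omega^{j_2}f$ drawn from $\|f\|_{\widetilde W^{N_1/2, N_2+q}_\Omega}$; in the second regime $j_1<N_1/2$, so one uses instead the $L^\infty$ symbol norm $\mathcal{M}^{l,m_1}_{10,\infty}$ via \eqref{LqBdTa} with $q=2$, paired with the $L^2$ control of $\Omega^{j_2}f$ from $\|f\|_{H^{N_1,N_3+q}_\Omega}$. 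The $\widetilde W^{N_1/2,N_2+q-l}_\Omega$ piece is handled identically, using only $\mathcal{M}^{l,m_1}_{10,\infty}$; the extra $\langle t\rangle^{5/6-2\delta^2}$ weight attached to that norm in \eqref{fw1} is exactly absorbed by the stronger $\langle t\rangle^{-m_1(5/6-20\delta^2)-16\delta^2}$ gain that the $L^\infty$ symbol class provides.

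The inclusion \eqref{aux25} then follows by specializing \eqref{aux21} to $l=0$ and invoking the embedding \eqref{OimpliesS} applied to $a\in\mathcal{O}_{m_1,-10}$. The only mildly delicate point throughout is the $\Omega$-derivative bookkeeping: one must match the admissible ranges $j_1\leq N_1/2$ (for $\mathcal{M}^{l,m_1}_{r,\infty}$) and $j_1\leq N_1$ (for $\mathcal{M}^{l,m_1}_{r,2}$) with the corresponding Lebesgue exponent carried by $\Omega^{j_2}f$. Because the two decorated symbol classes together cover every admissible $(j_1,j_2)$ split with $j_1+j_2\leq N_1$, no loss ever occurs, and the proof reduces to elementary verification of $\langle t\rangle$-exponents and geometric summation in the Littlewood--Paley index~$k$.
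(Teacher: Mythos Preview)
Your proposal is correct and follows essentially the same approach as the paper: reduce to frequency-localized pieces, distribute $\Omega$-derivatives via \eqref{Alu3}, split according to whether the bulk of the $\Omega$'s land on the symbol or on $f$, and then apply \eqref{LqBdTa} or \eqref{L2BdTa} with \eqref{CommFreqP'} in each regime. The paper's proof differs only cosmetically, splitting at $j/2$ rather than at $N_1/2$, but the pairing of symbol class with Lebesgue exponent on $f$ is identical.
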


\begin{proof}
The estimate \eqref{aux21} follows using the definitions and the linear estimates \eqref{LqBdTa} and \eqref{L2BdTa} in Lemma \ref{PropSym}. We may assume $m=m_1=1$. Using \eqref{LqBdTa} and \eqref{CommFreqP'} we estimate
\begin{align*}
\begin{split}
2^{(N_0+q-l)k^+} {\| P_k T_{a} f\|}_{L^2} &\lesssim  
  {\| a \|}_{\M_{8,\infty}} 2^{(N_0+q-l)k^+} {\| P_{[k-2,k+2]} f\|}_{L^2}\\
	&\lesssim \langle t\rangle^{-5/6+4\delta^2}{\| a \|}_{\M_{8,\infty}^{l,1}}2^{(N_0+q)k^+} {\| P_{[k-2,k+2]} f\|}_{L^2},
	\end{split}
\end{align*}
for any $f\in \mathcal{O}_{1,q}$. By orthogonality we deduce the desired bound on the $H^{N_0}$ norm. 

To estimate the weighted norm we use \eqref{LqBdTa}, \eqref{L2BdTa}, and \eqref{CommFreqP'} to estimate
\begin{align*}
&2^{(N_3+q-l)k^+} {\| \Omega^j P_k T_{a} f\|}_{L^2} \lesssim  
  \sum_{n\leq j/2} 2^{(N_3+q-l)k^+} \big[{\| P_k T_{\Omega_{x,\zeta}^n a} \Omega^{j-n} f\|}_{L^2} 
  + {\| P_k T_{\Omega_{x,\zeta}^{j-n} a} \Omega^{n} f\|}_{L^2}\big]
\\
& \lesssim  
  \sum_{n\leq j/2} 2^{(N_3+q-l)k^+} \big[\|\Omega^{n}_{x,\zeta}a\|_{\mathcal{M}_{8,\infty}}{\| P_{[k-2,k+2]}\Omega^{j-n} f\|}_{L^2} +\|\Omega^{j-n}_{x,\zeta}a\|_{\mathcal{M}_{8,2}}{\| P_{[k-2,k+2]}\Omega^{n} f\|}_{L^\infty}\big]
\\
& \lesssim  
  \sum_{n\leq j/2} 2^{(N_3+q)k^+} \|a\|_{\mathcal{M}^{l,1}_{8}}\big[\langle t\rangle^{-5/6+4\delta^2}{\| P_{[k-2,k+2]}\Omega^{j-n} f\|}_{L^2} +\langle t\rangle^{2\delta^2}{\| P_{[k-2,k+2]}\Omega^{n} f\|}_{L^\infty}\big],
\end{align*}
for every $j\in[0,N_1]$. The desired weighted $L^2$ bound follows since
\begin{equation*}
\begin{split}
\Big[\sum_{k\in\mathbb{Z}}2^{2(N_3+q)k^+}{\| P_{[k-2,k+2]}\Omega^{j-n} f\|}_{L^2}^2\Big]^{1/2}+\langle t\rangle^{5/6-2\delta^2}\Big[\sum_{k\in\mathbb{Z}}2^{2(N_3+q)k^+}{\| P_{[k-2,k+2]}\Omega^{n} f\|}_{L^\infty}^2\Big]^{1/2}\\
\lesssim \langle t\rangle^{2\delta^2}\|f\|_{\mathcal{O}_{1,q}}.
\end{split}
\end{equation*}

Finally, for the $L^\infty$ bound we use \eqref{LqBdTa} to estimate
\begin{align*}
2^{(N_2+q-l)k^+} {\| \Omega^j P_k T_{a}f\|}_{L^\infty} & \lesssim  
  \sum_{j_1,j_2 \leq N_1/2} 2^{(N_2+q-l)k^+} {\|\Omega_{x,\zeta}^{j_1} a\|}_{\mathcal{M}_{8,\infty}} {\| P_{[k-2,k+2]}\Omega^{j_2} f\|}_{L^\infty}\\
& \lesssim  \langle t\rangle^{-5/6+4\delta^2} {\| a\|}_{\M^{l,1}_{8,\infty}}
  \sum_{j_2\leq N_1/2} 2^{(N_2+q)k^+} {\| P_{[k-2,k+2]} \Omega^{j_2} f\|}_{L^\infty},
\end{align*}
for any $j\in[0,N_1/2]$. The desired bound follows by summation over $k$.
\end{proof}

\begin{lemma}\label{lemmaaux3}
Let $E$ be defined as in Proposition \ref{PropCompSym}. Assume that $m,m_1,m_2\geq 1$, $q,q-l_1,q-l_2,q-l_1-l_2\in[-N_3,10]$ and consider $a \in \M^{l_1,m_1}_{20}$, $b \in \M^{l_2,m_2}_{20}$. Then
\begin{align}
\begin{split}
\label{auxEab}
&\langle t\rangle^{12\delta^2}P_{\geq -100}E(a,b) \mathcal{O}_{m,q}\subseteq \mathcal{O}_{m+m_1+m_2,q -l_1-l_2+2},\\
&\langle t\rangle^{12\delta^2}P_{\geq -100}(T_aT_b+T_bT_a-2T_{ab})\mathcal{O}_{m,q}\subseteq \mathcal{O}_{m+m_1+m_2,q -l_1-l_2+2}.
\end{split}
\end{align}
In addition,
\begin{align}
\label{auxcomm}
\begin{split}
\langle t\rangle^{12\delta^2}[T_a, T_b] \mathcal{O}_{m,q} & \subseteq \mathcal{O}_{m+m_1+m_2,q -l_1-l_2+1},
\\
\langle t\rangle^{12\delta^2}(T_a T_b - T_{ab}) \mathcal{O}_{m,q} & \subseteq \mathcal{O}_{m+m_1+m_2,q -l_1-l_2+1}.
\end{split}
\end{align}
Moreover, if $a \in \M^{0,m_1}_{20}$, $b \in \M^{0,m_2}_{20}$ are functions then
\begin{align}\label{auxcomm2}
\langle t\rangle^{12\delta^2}(T_a T_b - T_{ab}) \mathcal{O}_{m,-5} \subseteq \mathcal{O}_{m+m_1+m_2,5}.
\end{align}
\end{lemma}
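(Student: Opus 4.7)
The plan is to deduce all four assertions from Proposition~\ref{PropCompSym}, the bracket rule~\eqref{SymbMult}, and Lemma~\ref{lemmaaux2}. For the first line of~\eqref{auxEab} we fix a dyadic scale $k\geq -100$ in the output. By~\eqref{CommFreqP} we may truncate the symbols to $|\zeta|\lesssim 2^{k+5}$, and then~\eqref{CommFreqP'} converts the undecorated $\mathcal{M}_{16,q}$ norms appearing in~\eqref{RemRemBounds}--\eqref{RemRemBounds'} into the decorated norms $\mathcal{M}^{l_i,m_i}_{16,q}$, producing factors $2^{l_ik_+}\langle t\rangle^{-m_i(5/6-20\delta^2)-16\delta^2}$. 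The total frequency weight $2^{-2k}\cdot 2^{(l_1+l_2)k_+}$ matches exactly the smoothing required to pass from $H^{N_0+q}$ of $f$ to $H^{N_0+q-l_1-l_2+2}$ of the output, while the two $\langle t\rangle$-powers combine with $\langle t\rangle^{12\delta^2}$ and with the $\langle t\rangle^{-(m-1)(5/6-20\delta^2)+\delta^2}$ gained from $f\in\mathcal{O}_{m,q}$ to reproduce the multiplicity $m+m_1+m_2$ of the target space with a surplus of $\langle t\rangle^{-20\delta^2}$ to spare. The weighted $H^{N_1,N_3+\bullet}_\Omega$ and $\widetilde{W}^{N_1/2,N_2+\bullet}_\Omega$ norms are handled identically after distributing $\Omega$-derivatives via~\eqref{Alu3}, just as in the proof of Lemma~\ref{lemmaaux2}; the margin of twenty symbol derivatives is more than enough. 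The second line of~\eqref{auxEab} is then immediate: by antisymmetry $\{a,b\}+\{b,a\}=0$, so $T_aT_b+T_bT_a-2T_{ab}=E(a,b)+E(b,a)$.

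For~\eqref{auxcomm} we apply Proposition~\ref{PropCompSym} to write $[T_a,T_b]=iT_{\{a,b\}}+E(a,b)-E(b,a)$ and $T_aT_b-T_{ab}=\tfrac{i}{2}T_{\{a,b\}}+E(a,b)$. By~\eqref{SymbMult} we have $\{a,b\}\in\mathcal{M}^{l_1+l_2-1,m_1+m_2}_{18}$, one order below the product $ab$, so Lemma~\ref{lemmaaux2} gives $\langle t\rangle^{12\delta^2}T_{\{a,b\}}\mathcal{O}_{m,q}\subseteq\mathcal{O}_{m+m_1+m_2,q-l_1-l_2+1}$, and the $E$ remainders contribute lower-order terms by the first line of~\eqref{auxEab}. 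At low frequencies $k\leq -100$, where~\eqref{RemRemBounds}--\eqref{RemRemBounds'} do not apply, we bound $P_kT_aT_bf$, $P_kT_{ab}f$, and $P_kT_{\{a,b\}}f$ directly by Lemma~\ref{PropSym}(i) together with~\eqref{SymbMult}; only finitely many dyadic scales contribute and $\mathcal{O}_{\cdot,\cdot}$ carries no positive frequency weight there, so this part is immediate.

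The main obstacle is~\eqref{auxcomm2}, whose $+10$ derivative smoothing (from $\mathcal{O}_{m,-5}$ to $\mathcal{O}_{m+m_1+m_2,5}$) goes far beyond the $+2$ gain available from a single application of~\eqref{auxEab}. The key observation is that when $a=a(x)$ and $b=b(x)$ are independent of $\zeta$, inspection of the error symbols $m^1,m^2,m^3$ in~\eqref{Alu11}--\eqref{Alu13} shows that each of them either contains a factor $\nabla_\zeta\widetilde{a}$ or $\nabla_\zeta\widetilde{b}$, or else compares $\widetilde{a}$ or $\widetilde{b}$ at two different $\zeta$-arguments; all such terms vanish identically. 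Hence the low--low contribution $P_{\leq k-100}a\cdot P_{\leq k-100}b$ to $P_k(T_aT_bf-T_{ab}f)$ acting on $P_{[k-5,k+5]}f$ is exactly zero. The remaining piece, where at least one of $a,b$ has Fourier support at frequency $\gtrsim 2^k$, is controlled using the Bernstein-type bound implicit in~\eqref{SymbDer}: each dyadic block $P_{k'}a$ with $k'\geq k-100$ obeys $\|P_{k'}a\|_{L^\infty}\lesssim 2^{-20k'_+}\|a\|_{\mathcal{M}^{0,m_1}_{20,\infty}}\langle t\rangle^{-m_1(5/6-20\delta^2)-16\delta^2}$, producing up to twenty derivatives of gain on the output, far more than the ten needed. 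The bookkeeping of $\Omega$-derivatives and of the $\widetilde{W}$-norm proceeds exactly as in Lemma~\ref{lemmaaux2}, with the surplus regularity absorbed by the margin between $N_0,N_1,N_2,N_3$ and the indices appearing here.
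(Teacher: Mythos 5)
Your proposal is correct and follows essentially the same route as the paper: \eqref{auxEab} and \eqref{auxcomm} come from Proposition~\ref{PropCompSym} (decomposing into $T_{\{a,b\}}$ plus $E$-remainders) together with \eqref{SymbMult} and Lemma~\ref{lemmaaux2}, with low frequencies handled by the insensitivity of $\mathcal{O}_{m,p}$ to $p$ there, and \eqref{auxcomm2} hinges on the exact cancellation of the low--low dyadic pieces for $\zeta$-independent symbols followed by a Bernstein-type gain from the $20$-derivative margin in the $\mathcal{M}^{0,m_i}_{20}$ class. The only cosmetic difference is how that cancellation is seen: you read it off from the vanishing of the error symbols $m^1,m^2,m^3$ and the Poisson bracket, while the paper states the equivalent identity $P_k[(T_aT_b-T_{ab})f]=\sum_{\max(k_1,k_2)\geq k-40}P_k[(T_{P_{k_1}a}T_{P_{k_2}b}-T_{P_{k_1}aP_{k_2}b})f]$ directly and then invokes the argument of Lemma~\ref{lemmaaux1}.
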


\begin{proof}
We record the formulas
\begin{equation}\label{Alu30}
\Omega_{x,\zeta}(ab)=(\Omega_{x,\zeta}a)b+a(\Omega_{x,\zeta}b),\qquad \Omega_{x,\zeta}(\{a,b\})=\{\Omega_{x,\zeta}a,b\}+\{a,\Omega_{x,\zeta}b\}.
\end{equation}
Therefore, letting $U(a,b):=T_aT_b-T_{ab}$, we have
\begin{equation}\label{Alu32}
\begin{split}
&[T_a,T_b]=U(a,b)-U(b,a),\qquad E(a,b)=U(a,b)-(i/2)T_{\{a,b\}},\\
&T_aT_b+T_bT_a-2T_{ab}=E(a,b)+E(b,a),
\end{split}
\end{equation}
and
\begin{equation}\label{Alu31}
\begin{split}
\Omega(U(a,b)f)&=U(\Omega_{x,\zeta}a,b)f+U(a,\Omega_{x,\zeta}b)f+U(a,b)\Omega f,\\
\Omega(T_{\{a,b\}}f)&=T_{\{\Omega_{x,\zeta}a,b\}}f+T_{\{a,\Omega_{x,\zeta}b\}}f+T_{\{a,b\}}\Omega f,\\
\Omega(E(a,b)f)&=E(\Omega_{x,\zeta}a,b)f+E(a,\Omega_{x,\zeta}b)f+E(a,b)\Omega f.
\end{split}
\end{equation}

The bound \eqref{auxcomm2} follows as in the proof of Lemma \ref{lemmaaux1}, once we notice that
\begin{equation*}
P_k[(T_a T_b - T_{ab}) f]=\sum_{\max(k_1,k_2)\geq k-40}P_k[(T_{P_{k_1}a} T_{P_{k_2}b} - T_{P_{k_1}aP_{k_2}b}) f].
\end{equation*}

The bounds \eqref{auxEab} follow from \eqref{RemRemBounds}--\eqref{RemRemBounds'} and \eqref{CommFreqP'}, in the same way the bound \eqref{aux21} in Lemma \ref{lemmaaux2} follows from \eqref{LqBdTa}--\eqref{L2BdTa}. Moreover, using \eqref{SymbMult},
\begin{equation*}
\langle t\rangle^{12\delta^2}\|\{a,b\}(x,\zeta)\varphi_{\geq -200}(\zeta)\|_{\mathcal{M}^{l_1+l_2-1,m_1+m_2}_{18}}\lesssim \|a\|_{\M^{l_1,m_1}_{20}}\|b\|_{\M^{l_2,m_2}_{20}}.
\end{equation*}
Therefore, using \eqref{aux21} and frequency localization,
\begin{equation}\label{Alu33}
\langle t\rangle^{12\delta^2}P_{\geq -100}T_{\{a,b\}}\mathcal{O}_{m,q}\subseteq \mathcal{O}_{m+m_1+m_2,q -l_1-l_2+1}.
\end{equation}
Therefore, using \eqref{Alu32} and \eqref{auxEab}, 
\begin{equation*}
\langle t\rangle^{12\delta^2}P_{\geq -100}U(a,b)\mathcal{O}_{m,q}\subseteq \mathcal{O}_{m+m_1+m_2,q -l_1-l_2+1}.
\end{equation*}

For \eqref{auxcomm} it remains to prove that
\begin{equation}\label{Alu35}
\langle t\rangle^{12\delta^2}P_{\leq 0}U(a,b)\mathcal{O}_{m,q}\subseteq \mathcal{O}_{m+m_1+m_2,q -l_1-l_2+1}.
\end{equation}
However, using \eqref{aux21} and \eqref{SymbMult},
\begin{equation*}
\langle t\rangle^{12\delta^2}T_aT_b\mathcal{O}_{m,q}\subseteq \mathcal{O}_{m+m_1+m_2,q -l_1-l_2},\qquad \langle t\rangle^{12\delta^2}T_{ab}\mathcal{O}_{m,q}\subseteq \mathcal{O}_{m+m_1+m_2,q -l_1-l_2},
\end{equation*}
and \eqref{Alu35} follows. This completes the proof of \eqref{auxcomm}. 
\end{proof}

\section{The Dirichlet-Neumann operator}\label{DNOpe}

Assume $(h,\phi)$ are as in Proposition \ref{MainBootstrapEn} and let $\Omega:=\{(x,z)\in\mathbb{R}^3:\,z\le h(x)\}$. Let $\Phi$ denote the (unique in a suitable space, see Lemma \ref{LemFirstLinearization}) harmonic function in $\Omega$ satisfying $\Phi(x,h(x))=\phi(x)$. We define the Dirichlet-Neumann\footnote{To be precise this is $\sqrt{1+\vert\nabla h\vert^2}$ times the standard Dirichlet-Neumann operator,
but we will slightly abuse notation and call $G(h)\phi$ the Dirichlet-Neumann operator.} map as
\begin{equation}\label{DefDN}
G(h)\phi=\sqrt{1+\vert\nabla h\vert^2}(\nu\cdot\nabla\Phi)
\end{equation}
where $\nu$ denotes the outward pointing unit normal to the domain $\Omega$.
The main result of this section is the following paralinearization of the Dirichlet-Neumann map.

\begin{proposition}
\label{DNmainpro}
Assume that $t\in[0,T]$ is fixed and $(h,\phi)$ satisfy 
\begin{equation}\label{Asshu}
\|\langle \nabla\rangle h\|_{\mathcal{O}_{1,0}}+\|\,|\nabla|^{1/2}\phi\|_{\mathcal{O}_{1,0}}\lesssim \varep_1.
\end{equation}
Define
\begin{align}\label{DefBV}
B := \frac{G(h)\phi+\nabla_xh\cdot\nabla_x\phi}{1+\vert\nabla h\vert^2}, 
  \qquad V := \nabla_x \phi - B\nabla_x h, \qquad \omega := \phi - T_B h.
\end{align}
Then we can paralinearize the Dirichlet-Neumann operator as
\begin{equation}
\label{DNmainformula}
G(h)\phi = T_{\lambda_{DN}}\omega- \div (T_V h) + G_2 + \varep_1^3\mathcal{O}_{3,3/2},
\end{equation}
recall the definition \eqref{fw1}, where
\begin{align}
\label{deflambda}
\begin{split}
\lambda_{DN} &:= \lambda^{(1)}+\lambda^{(0)}, 
\\
\lambda^{(1)}(x,\zeta) &:= \sqrt{(1+\vert\nabla h\vert^2)\vert\zeta\vert^2-(\zeta\cdot\nabla h)^2},
\\
\lambda^{(0)}(x,\zeta) &:= \Big( \frac{(1+|\nabla h|^2)^2}{2\lambda^{(1)}} 
  \Big\{ \frac{\lambda^{(1)}}{1+|\nabla h|^2}, \frac{\zeta \cdot \nabla h}{1+|\nabla h|^2} \Big\} + \frac{1}{2}\Delta h \Big) \varphi_{\geq 0}(\zeta).
\end{split}
\end{align}
The quadratic terms are given by
\begin{equation}
 \label{DNquadratic}
 \begin{split}
G_2 = G_2(h,\vert\nabla\vert^{1/2}\omega)\in\varep_1^2\mathcal{O}_{2,5/2},\qquad \widehat{G_2}(\xi)=\frac{1}{4\pi^2}\int_{\mathbb{R}^2} g_2(\xi,\eta) \widehat{h}(\xi-\eta) \vert\eta\vert^{1/2}\widehat{\omega}(\eta)\,d\eta,
\end{split}
\end{equation}
where $g_2$ is a symbol satisfying (see the definition of the class $S^\infty_\Omega$ in \eqref{defclassA})
\begin{align}
\begin{split}
 \label{DNquadraticsym}
{\| g_2^{k, k_1, k_2}(\xi,\eta) \|}_{S^\infty_\Omega} & \lesssim 2^k2^{\min\{k_1,k_2\}/2}\Big(\frac{1+2^{\min\{k_1,k_2\}}}{1+2^{\max\{k_1,k_2\}}}\Big)^{7/2}.
\end{split}
\end{align}
.
\end{proposition}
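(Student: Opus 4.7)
\medskip

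My plan is to prove Proposition \ref{DNmainpro} by solving the harmonic extension problem in a flattened domain and paralinearizing the resulting degenerate elliptic equation, following the Alinhac good-unknown strategy but working throughout in the decorated norms $\mathcal{O}_{m,p}$ and $\mathcal{M}^{l,m}_r$ introduced in Appendix \ref{SecParaOp}. First I would reduce to the half-space by a change of variables $(x,z)\mapsto(x,z-h(x))$ (or a smoothed regularization thereof, $\rho(x,z)=z+e^{z\langle\nabla\rangle}h(x)$, to avoid derivative losses coming from low regularity of the boundary). In the new coordinates the harmonic extension $\Phi$ satisfies an equation of the form $(\partial_z^2+\alpha\Delta_x+\beta\cdot\nabla_x\partial_z+\gamma\partial_z)\Phi=0$, with coefficients $\alpha,\beta,\gamma$ that are explicit rational expressions in $\nabla h$ and $\partial_z\Phi$; a standard fixed-point argument in the spaces $\mathcal{O}_{m,p}$, exploiting \eqref{Asshu}, produces a solution for which $\partial_z\Phi|_{z=0}=B$ and $\nabla_x\Phi|_{z=0}=V$ with the desired $\varepsilon_1\mathcal{O}_{1,-1/2}$ bounds.

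Next I would paralinearize using the good unknown $\omega=\phi-T_B h$. The key algebraic fact—which is precisely why $\omega$ (and not $\phi$) is the right variable—is that upon substituting $\Phi\leftrightarrow T_{\partial_z\Phi}\rho+\widetilde\Phi$ into the harmonic equation, the leading commutator terms cancel and $\widetilde\Phi$ satisfies a paradifferential equation
\begin{equation*}
\bigl(\partial_z^2+T_\alpha\Delta_x+T_\beta\cdot\nabla_x\partial_z+T_\gamma\partial_z\bigr)\widetilde\Phi = R,
\end{equation*}
with remainder $R$ of acceptable order, estimated via Lemmas \ref{lemmaaux2}--\ref{lemmaaux3} and \eqref{Paracomp}. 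The principal part factors as $(\partial_z-T_a)(\partial_z-T_A)+E$ with $A,a\in\mathcal{M}^{1,0}_{r}+\varepsilon_1\mathcal{M}^{1,1}_r$, where $A$ (the ``outgoing'' root) satisfies $A|_{z=0}=\lambda^{(1)}+\lambda^{(0)}+\cdots$ with $\lambda^{(1)},\lambda^{(0)}$ as in \eqref{deflambda}. The subprincipal term $\lambda^{(0)}$ comes from applying Proposition \ref{PropCompSym} and solving the Ricatti-type equation $a+A=T_\gamma+\text{l.o.t.}$ and $aA=-T_\alpha|\zeta|^2+T_\beta\cdot\zeta\partial_z+\cdots$ at the order-zero level.

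Evaluating the parabolic ``outgoing'' evolution at $z=0$ gives $\partial_z\widetilde\Phi|_{z=0}=T_{A|_{z=0}}\omega+(\text{error})$, which after translating back to $(x,z)$ coordinates and using the identity $G(h)\phi=\partial_z\Phi|_{z=0}-\nabla h\cdot\nabla_x\Phi|_{z=0}$ yields
\begin{equation*}
G(h)\phi=T_{\lambda_{DN}}\omega-\operatorname{div}(T_V h)+G_2+\text{cubic remainder}.
\end{equation*}
The quadratic remainder $G_2$ is obtained by expanding the coefficients $\alpha,\beta,\gamma$ and the symbol $A$ to quadratic order in $(h,|\nabla|^{1/2}\omega)$, explicitly computing the bilinear contributions to $\partial_z\Phi|_{z=0}$; the bound \eqref{DNquadraticsym} on $g_2^{k,k_1,k_2}$ then reduces to verifying the standard low-high and high-high symbol estimates for products of $\lambda^{(1)},\lambda^{(0)}$-type symbols, which follow from \eqref{PropSymb1}, \eqref{SymbMult}.

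The hardest step will be isolating $G_2$ with the sharp low-frequency behavior encoded in \eqref{DNquadraticsym}, in particular the factor $2^k2^{\min(k_1,k_2)/2}\bigl((1+2^{\min})/(1+2^{\max})\bigr)^{7/2}$: this is a ``null-type'' structure (the quadratic term must vanish at the correct rate as either input frequency tends to zero) that reflects the fact that constant $h$ gives the flat-interface DN operator $|\nabla|$. Getting this requires carefully tracking how $h$ enters the coefficients and the symbol $A$, and in particular showing that every occurrence produces at least one factor of $\zeta\cdot\nabla h$ or $|\nabla h|^2$ (hence a gain of one full derivative on the low-frequency $h$). Once this is in hand, the cubic remainder $\varepsilon_1^3\mathcal{O}_{3,3/2}$ is controlled by combining the fixed-point estimates for $\Phi$, the composition bounds \eqref{auxEab}--\eqref{auxcomm} for the paradifferential errors, and the product rule \eqref{aux1} for the multilinear pieces.
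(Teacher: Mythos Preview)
Your proposal is correct and follows essentially the same route as the paper: flatten the domain, solve the resulting elliptic equation by a fixed point in the $\mathcal{O}_{m,p}$-based spaces, paralinearize in the good unknown $\omega$, factor the second-order operator into two first-order parabolic pieces, and invert the elliptic (outgoing) factor to read off $T_{\lambda_{DN}}\omega$ plus the explicit quadratic remainder. The only notable technical differences are that the paper uses the simple flattening $u(x,y)=\Phi(x,h(x)+y)$ rather than a smoothed one, and (because it works in Weyl quantization) writes the factorization as $(T_{\sqrt{1+\alpha}}\partial_y-iT_a+T_b)(T_{\sqrt{1+\alpha}}\partial_y-iT_a-T_b)$ with real $a,b$ splitting the symmetric and skew parts; the quadratic piece $G_2$ then emerges concretely as $M_0[h,u]-\mathcal{H}(\nabla h,\nabla u)$ with an explicit symbol (see \eqref{fma28}, \eqref{QuadrTrans}), from which the bound \eqref{DNquadraticsym} is read off directly rather than argued abstractly.
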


\begin{remark}\label{RmkLambda}
Using \eqref{deflambda}, Definition \ref{DefSym}, and \eqref{OimpliesS}--\eqref{SymbMult} we see that, for any $t\in[0,T]$,
\begin{align}
\lambda^{(1)}=|\zeta|(1+\varep_1^2\M^{0,2}_{N_3-1}), \qquad \lambda^{(0)} \in \varep_1\M^{0,1}_{N_3-2}.
\end{align}
For later use we further decompose $\lambda^{(0)}$ into its linear and higher order parts:
\begin{align}
\label{lambda0dec}
\begin{split}
& \lambda^{(0)} = \lambda^{(0)}_1 + \lambda^{(0)}_2,\qquad\lambda^{(0)}_1 := \Big[\frac{1}{2} \Delta h - \frac{1}{2} \frac{\zeta_j\zeta_k\partial_j\partial_kh}{|\zeta|^2}\Big] 
  \varphi_{\geq 0}(\zeta),
  \qquad \lambda^{(0)}_2 \in \varep_1^3\M^{0,3}_{N_3-2} .
\end{split}
\end{align}
According to the formulas in \eqref{deflambda} and \eqref{lambda0dec} we have:
\begin{align}
\label{estlambda}
\begin{split}
& \lambda_{DN} - |\zeta| - \lambda^{(0)}_1 \in \varep_1^2\M^{1,2}_{N_3-2}, \qquad  \lambda_{DN} - \lambda^{(1)} - \lambda^{(0)}_1 \in \varep_1^3\M^{0,3}_{N_3-2}.
\end{split}
\end{align}
\end{remark}

The proof of Proposition \ref{DNmainpro} relies on several results and is given at the end of the section.

\subsection{Linearization}\label{DNLinear}
We start with a result that identifies the linear and quadratic part of the Dirichlet-Neumann operator. 

We first use a change of variable to flatten the surface. We thus define
\begin{equation}\label{udef}
\begin{split}
u(x,y) &:= \Phi(x,h(x)+y),\qquad(x,y)\in\mathbb{R}^2\times(-\infty,0],
\\
\Phi(x,z) &= u(x,z-h(x)).
\end{split}
\end{equation}
In particular $u_{|y=0} = \phi$, $\partial_y u_{|y=0} = B$,
and the Dirichlet-Neumann operator is given by
\begin{equation}\label{DNIny}
G(h)\phi=(1+\vert\nabla h\vert^2)\partial_yu_{|y=0}-\nabla_xh\cdot\nabla_xu_{|y=0}.
\end{equation}
A simple computation yields
\begin{equation}\label{EllEq}
\begin{split}
0&=\Delta_{x,z}\Phi=(1+\vert\nabla_xh\vert^2)\partial_y^2u+\Delta_xu-2\partial_y\nabla_xu\cdot\nabla_xh-\partial_yu\Delta_xh.
\end{split}
\end{equation}
Since we will also need to study the linearized operator, it is convenient to also allow for error terms and consider
the equation
\begin{equation}\label{EllEq2}
\begin{split}
(1+\vert\nabla_xh\vert^2)\partial_y^2u+\Delta_xu-2\partial_y\nabla_xu\cdot\nabla_xh-\partial_yu\Delta_xh
  =\partial_y\mathfrak{e}_a+\vert\nabla\vert \mathfrak{e}_b.
\end{split}
\end{equation}
With $\mathcal{R}:=|\nabla|^{-1}\nabla$ (the Riesz transform), this can be rewritten in the form
\begin{equation}\label{fma1}
\begin{split}
& (\partial_y^2-\vert\nabla\vert^2)u = \partial_yQ_a+\vert\nabla\vert Q_b,
\\
& Q_a:=\nabla u\cdot\nabla h-\vert\nabla h\vert^2\partial_yu+\mathfrak{e}_a, \qquad Q_b:=\mathcal{R}(\partial_yu\nabla h)+\mathfrak{e}_b.
\end{split}
\end{equation}

To study the solution $u$ we will need an additional class of Banach spaces, to measure functions that depend on $y\in(-\infty,0]$ and $x\in\mathbb{R}^2$. These spaces are only used in this section. 

\begin{definition}\label{Lspaces}
For $t\in[0,T]$, $p\geq -10$, and $m\geq 1$ let $\mathcal{L}_{m,p}=\mathcal{L}_{m,p}(t)$ denote the Banach space of functions $g\in C((-\infty,0]:\dot{H}^{1/2,1/2})$ defined by the norm
\begin{align}\label{fw2}
\|g\|_{\mathcal{L}_{m,p}}:=\||\nabla| g\|_{L^2_y\mathcal{O}_{m,p}}+\|\partial_yg\|_{L^2_y\mathcal{O}_{m,p}}+\||\nabla|^{1/2} g\|_{L^\infty_y\mathcal{O}_{m,p}}.
\end{align}
\end{definition}

The point of these spaces is to estimate solutions of equations of the form $(\partial_y-|\nabla|)u=\mathcal{N}$, in terms of the initial data $u(0)=\psi$. It is easy to see that if $|\nabla|^{1/2}\psi\in {\mathcal{O}_{m,p}}$ then 
\begin{equation}\label{fw3}
\|e^{y|\nabla|}\psi\|_{\mathcal{L}_{m,p}}\lesssim \||\nabla|^{1/2}\psi\|_{\mathcal{O}_{m,p}}.
\end{equation} 
To see this estimate for the $L^2_y\widetilde{W}_{\Omega}^{N_1/2,N_2+p}$ component we use the bound $\|c\|_{L^2_y\ell^1_k}\lesssim \|c\|_{\ell^1_kL^2_y}$ for any $c:\mathbb{Z}\times(-\infty,0]\to\mathbb{C}$. Moreover, if $Q\in L^2_y\mathcal{O}_{m,p}$ then
\begin{equation}\label{fma7}
\Big\||\nabla|^{1/2}\int_{-\infty}^0e^{-\vert y-s\vert\vert\nabla\vert}\mathbf{1}_{\pm}(y-s)Q(s)\,ds\Big\|_{L^\infty_y\mathcal{O}_{m,p}}\lesssim 
\langle t\rangle^{\delta^2/2}\|Q\|_{L^2_y\mathcal{O}_{m,p}}
\end{equation}
and
\begin{equation}\label{fma8}
\Big\||\nabla|\int_{-\infty}^0e^{-\vert y-s\vert\vert\nabla\vert}\mathbf{1}_{\pm}(y-s)Q(s)\,ds\Big\|_{L^2_y\mathcal{O}_{m,p}}\lesssim 
\langle t\rangle^{\delta^2/2}\|Q\|_{L^2_y\mathcal{O}_{m,p}}.
\end{equation}
Indeed, these bounds follow directly from the definitions for the $L^2$-based components of the space $\mathcal{O}_{m,p}$, which are $H^{N_0+p}$ and 
$H_\Omega^{N_1,N_3+p}$. For the remaining component one can control uniformly the $\widetilde{W}_{\Omega}^{N_1/2,N_2+p}$ norm of the function localized 
at every single dyadic frequency, without the factor of $\langle t\rangle^{\delta^2/2}$ in the right-hand side. The full bounds follow once we notice that 
only the frequencies satisfying $2^k\in[\langle t\rangle^{-8},\langle t\rangle^{8}]$ are relevant in the $\widetilde{W}_{\Omega}^{N_1/2,N_2+p}$ component of the space $\mathcal{O}_{1,p}$; the other frequencies are already accounted by the stronger Sobolev norms.

Our first result is the following:

\begin{lemma}\label{LemFirstLinearization}
(i) Assume that $t\in[0,T]$ is fixed, $\|\langle \nabla\rangle h\|_{\mathcal{O}_{1,0}}\lesssim\varep_1$, as in \eqref{Asshu}, and 
\begin{equation}\label{fw4}
\||\nabla|^{1/2}\psi\|_{\mathcal{O}_{1,p}}\leq A<\infty,\quad \|\mathfrak{e}_a\|_{L^2_y\mathcal{O}_{1,p}}+\|\mathfrak{e}_b\|_{L^2_y\mathcal{O}_{1,p}}\leq A\varepsilon_1\langle t\rangle^{-12\delta^2},
\end{equation}
for some $p\in[-10,0]$. Then there is a unique solution $u\in \mathcal{L}_{1,p}$ of the equation
\begin{equation}\label{fma2}
\begin{split}
u(y) & = e^{y\vert\nabla\vert}\Big(\psi-\frac{1}{2}\int_{-\infty}^0e^{s\vert\nabla\vert}(Q_a(s)-Q_b(s))ds\Big)
\\
&+\frac{1}{2}\int_{-\infty}^0e^{-\vert y-s\vert\vert\nabla\vert}(\mathrm{sgn}(y-s)Q_a(s)-Q_b(s))ds,
\end{split}
\end{equation}
where $Q_a$ and $Q_b$ are as in \eqref{fma1}. Moreover, $u$ is a solution of the equation $(\partial_y^2-\vert\nabla\vert^2)u = \partial_yQ_a+\vert\nabla\vert Q_b$ in \eqref{fma1} (and therefore a solution of \eqref{EllEq2} in $\mathbb{R}^2\times(-\infty,0]$), and
\begin{equation}\label{fma6}
\|u\|_{\mathcal{L}_{1,p}}=\||\nabla| u\|_{L^2_y\mathcal{O}_{1,p}}+\|\partial_yu\|_{L^2_y\mathcal{O}_{1,p}}+\||\nabla|^{1/2} u\|_{L^\infty_y\mathcal{O}_{1,p}}\lesssim A.
\end{equation}

(ii) Assume that we make the stronger assumptions, compare with \eqref{fw4},
\begin{equation}\label{fma6.1}
\||\nabla|^{1/2}\psi\|_{\mathcal{O}_{1,p}}\leq A<0,\quad \|\partial_y^j\mathfrak{e}\|_{L^2_y\mathcal{O}_{2,p-j}}+\|\partial_y^j\mathfrak{e}\|_{L^\infty_y\mathcal{O}_{2,p-1/2-j}}\leq A\varepsilon_1\langle t\rangle^{-12\delta^2},
\end{equation}
for $\mathfrak{e}\in\{\mathfrak{e}_a,\mathfrak{e}_b\}$ and $j\in\{0,1,2\}$. Then
\begin{equation}\label{fma6.2}
\|\partial_y^j(\partial_y u-|\nabla|u)\|_{L^2_y\mathcal{O}_{2,p-j}}+\|\partial_y^j(\partial_y u-|\nabla|u)\|_{L^\infty_y\mathcal{O}_{2,p-1/2-j}}\lesssim A\varepsilon_1.
\end{equation}
\end{lemma}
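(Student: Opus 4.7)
The strategy for both parts is to treat the elliptic problem as a first-order evolution in $y$ for each spatial frequency, combine the smoothing estimates \eqref{fw3}--\eqref{fma8} with the product bounds in Lemma \ref{lemmaaux1}, and use the smallness of $h$ (which multiplies every nonlinear term in $Q_a,Q_b$) to close a contraction, respectively to gain the extra multiplicity in \eqref{fma6.2}.

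\textbf{Part (i): contraction mapping.} I would define the map $\mathcal{T}:\mathcal{L}_{1,p}\to\mathcal{L}_{1,p}$ sending $u$ to the right-hand side of \eqref{fma2}, where $Q_a[u],Q_b[u]$ are defined by \eqref{fma1}. Writing $Q_a=Q_a^{NL}+\mathfrak{e}_a$ and $Q_b=Q_b^{NL}+\mathfrak{e}_b$, the nonlinear parts are products of $\nabla h$ with either $\nabla u$ or $\partial_y u$ (for $Q_a^{NL}$) or with $\partial_y u$ composed with a Riesz transform (for $Q_b^{NL}$). Since $\|\langle\nabla\rangle h\|_{\mathcal{O}_{1,0}}\lesssim\varepsilon_1$ and the Riesz transform is bounded on every component of $\mathcal{O}_{m,p}$, Lemma \ref{lemmaaux1} gives
\[
\|Q_a^{NL}\|_{L^2_y\mathcal{O}_{2,p}}+\|Q_b^{NL}\|_{L^2_y\mathcal{O}_{2,p}}\lesssim\varepsilon_1\langle t\rangle^{-12\delta^2}\|u\|_{\mathcal{L}_{1,p}}.
\]
Combining with the hypothesis \eqref{fw4} on $\mathfrak{e}_a,\mathfrak{e}_b$, we have $\|Q_a\|_{L^2_y\mathcal{O}_{1,p}}+\|Q_b\|_{L^2_y\mathcal{O}_{1,p}}\lesssim A\varepsilon_1\langle t\rangle^{-12\delta^2}+\varepsilon_1\langle t\rangle^{-12\delta^2}\|u\|_{\mathcal{L}_{1,p}}$. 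The bounds \eqref{fw3}, \eqref{fma7} and \eqref{fma8} applied termwise to \eqref{fma2} then yield
\[
\|\mathcal{T}(u)\|_{\mathcal{L}_{1,p}}\lesssim A+\varepsilon_1\|u\|_{\mathcal{L}_{1,p}},
\]
so $\mathcal{T}$ stabilizes a ball of radius $CA$, and the bilinear structure of $Q_a^{NL},Q_b^{NL}$ makes $\mathcal{T}(u_1)-\mathcal{T}(u_2)$ similarly bounded by $\varepsilon_1\|u_1-u_2\|_{\mathcal{L}_{1,p}}$. Banach's fixed point theorem produces a unique $u\in\mathcal{L}_{1,p}$, and the bound \eqref{fma6} is immediate. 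That $u$ solves \eqref{fma1} is a direct computation: apply $\partial_y^2-|\nabla|^2$ to \eqref{fma2}; the $e^{y|\nabla|}$ piece is annihilated, and the second Duhamel integral is precisely the inverse of $\partial_y^2-|\nabla|^2$ on data vanishing at $y=-\infty$.

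\textbf{Part (ii): ODE for the anti-harmonic part.} Set $v:=\partial_yu-|\nabla|u$. From the factorization $\partial_y^2-|\nabla|^2=(\partial_y+|\nabla|)(\partial_y-|\nabla|)$ together with \eqref{fma1}, $v$ satisfies the first-order equation $(\partial_y+|\nabla|)v=\partial_yQ_a+|\nabla|Q_b$. The unique solution with suitable decay at $y=-\infty$ is
\[
v(y)=\int_{-\infty}^y e^{-(y-s)|\nabla|}\bigl(\partial_sQ_a+|\nabla|Q_b\bigr)(s)\,ds;
\]
integration by parts in the $\partial_sQ_a$ term gives the cleaner representation
\[
v(y)=Q_a(y)+|\nabla|\int_{-\infty}^y e^{-(y-s)|\nabla|}\bigl(Q_b-Q_a\bigr)(s)\,ds.
\]
Under the stronger hypothesis \eqref{fma6.1}, Lemma \ref{lemmaaux1} applied to $Q_a^{NL},Q_b^{NL}$ and the bound \eqref{fma6} on $u$ give $\|Q_a\|_{L^2_y\mathcal{O}_{2,p}}+\|Q_b\|_{L^2_y\mathcal{O}_{2,p}}\lesssim A\varepsilon_1$, with analogous $L^\infty_y\mathcal{O}_{2,p-1/2}$ bounds. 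Applying \eqref{fma7}--\eqref{fma8} to the convolution then produces \eqref{fma6.2} for $j=0$. For $j=1,2$ I would differentiate the ODE to obtain $\partial_yv=-|\nabla|v+\partial_yQ_a+|\nabla|Q_b$ and
\[
\partial_y^2v=|\nabla|^2v-|\nabla|\partial_yQ_a+\partial_y^2Q_a-|\nabla|^2Q_b+|\nabla|\partial_yQ_b,
\]
and then estimate $\partial_y^jQ_a,\partial_y^jQ_b$ by using the equation \eqref{fma1} to trade $\partial_y^2u$ for $|\nabla|^2u+\partial_yQ_a+|\nabla|Q_b$, closing the chain at the cost of losing $j$ spatial derivatives, consistent with the index $p-j$ in \eqref{fma6.2}.

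\textbf{Main obstacle.} The delicate point is the $L^\infty_y$ component of \eqref{fma6.2}: the nonlinear ingredients $\partial_yu,\nabla u$ are naturally controlled only in $L^2_y\mathcal{O}_{1,p}$, so passing to $L^\infty_y\mathcal{O}_{2,p-1/2-j}$ requires the full $|\nabla|^{1/2}$ gain encoded in \eqref{fma7} together with precise bookkeeping of the multiplicity index $m$ through the product rule in Lemma \ref{lemmaaux1} (which costs a factor $\langle t\rangle^{-12\delta^2}$ per multiplication). Tracking this hierarchy uniformly for all three values $j\in\{0,1,2\}$, while allowing the loss of derivatives caused by the ODE reduction for $j=2$, is the only step that requires genuine care; everything else is routine once the contraction setup is in place.
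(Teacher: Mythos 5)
Your Part (i) is essentially identical to the paper's argument: contraction in a ball of radius $\approx A$ in $\mathcal{L}_{1,p}$, controlling $Q_a,Q_b$ via Lemma \ref{lemmaaux1} and the parabolic bounds \eqref{fw3}--\eqref{fma8}. Your Part (ii) for $j=0$ also matches, starting from the same Duhamel identity $v(y)=Q_a(y)+\int_{-\infty}^y |\nabla| e^{-(y-s)|\nabla|}(Q_b-Q_a)\,ds$.

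The one place you should tighten is the step for $j\in\{1,2\}$. You propose to handle $\partial_y^j Q_a$ by substituting $\partial_y^2u=|\nabla|^2u+\partial_yQ_a+|\nabla|Q_b$ from \eqref{fma1}, but this is circular: $\partial_yQ_a$ itself contains $-|\nabla h|^2\partial_y^2u$, so the substitution reinserts $\partial_yQ_a$ on the right. The clean way — and what the paper does — is to go back to the original elliptic equation \eqref{EllEq2} and divide by $1+|\nabla h|^2$, which yields
\[
\partial_y^2u-|\nabla|^2u=(1+|\nabla h|^2)^{-1}\bigl(-|\nabla|^2u\,|\nabla h|^2+2\partial_y\nabla u\cdot\nabla h+\partial_yu\,\Delta h+\partial_y\mathfrak{e}_a+|\nabla|\mathfrak{e}_b\bigr),
\]
an identity with no $\partial_y^2u$ on the right. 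Combined with \eqref{fma6} and the $j=0$ bound, this gives $\|\partial_y^2u-|\nabla|^2u\|_{L^2_y\mathcal{O}_{2,p-1}\cap L^\infty_y\mathcal{O}_{2,p-3/2}}\lesssim A\varepsilon_1$, hence the $j=1$ case of \eqref{fma6.2}; differentiating the displayed identity once more in $y$ gives $j=2$. Your route can be repaired by explicitly absorbing the small $|\nabla h|^2\,\partial_y^2u$ term (i.e.\ doing the same division), but as written the "trade" is not well defined.
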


\begin{proof}
(i) We use a fixed point argument in a ball of radius $\approx A$ in $\mathcal{L}_{1,p}$, for the functional 
\begin{equation}\label{fma3}
\begin{split}
\Phi(u):&=e^{y\vert\nabla\vert}\Big[\psi-\frac{1}{2}\int_{-\infty}^0e^{s\vert\nabla\vert}(Q_a(s)-Q_b(s))ds\Big]\\
&+\frac{1}{2}\int_{-\infty}^0e^{-\vert y-s\vert\vert\nabla\vert}(\mathrm{sign}(y-s)Q_a(s)-Q_b(s))ds.
\end{split}
\end{equation}
Notice that, using Lemma \ref{lemmaaux1} and \eqref{fw4}, if $\|u\|_{\mathcal{L}_{1,p}}\lesssim 1$ then
\begin{equation*}
\|Q_a\|_{L^2_y\mathcal{O}_{1,p}}+\|Q_b\|_{L^2_y\mathcal{O}_{1,p}}\lesssim A\varepsilon_1\langle t\rangle^{-12\delta^2}.
\end{equation*}
Therefore, using \eqref{fw3}--\eqref{fma8}, $\|\Phi(u)-e^{y|\nabla|}\psi\|_{\mathcal{L}_{1,p}}\lesssim A\varepsilon_1$. Similarly, one can show that $\|\Phi(u)-\Phi(v)\|_{\mathcal{L}_{1,p}}\lesssim \varepsilon_1\|u-v\|_{\mathcal{L}_{1,p}}$, and the desired conclusion follows.

(ii) The identity \eqref{fma2} shows that
\begin{equation}\label{QuadraticTermDN}
\partial_yu(y)-\vert\nabla\vert u(y)=Q_a(y) + \int_{-\infty}^y \vert\nabla\vert e^{-\vert s-y\vert \vert\nabla\vert}(Q_b(s)-Q_a(s))ds.
\end{equation}
Given \eqref{fma6}, the definition \eqref{fma1}, and the stronger assumptions in \eqref{fma6.1}, we have
\begin{equation}\label{fma8.4}
\|Q\|_{L^2_y\mathcal{O}_{2,p}}+\|Q\|_{L^\infty_y\mathcal{O}_{2,p-1/2}}\lesssim A\varepsilon_1\langle t\rangle^{-12\delta^2},
\end{equation}
for $Q\in\{Q_a,Q_b\}$. Using estimates similar to \eqref{fma7} and \eqref{fma8} it follows that
\begin{equation}\label{fma9.5}
\|\partial_yu-\vert\nabla\vert u\|_{L^2_y\mathcal{O}_{2,p}}+\|\partial_yu-\vert\nabla\vert u\|_{L^\infty_y\mathcal{O}_{2,p-1/2}}\lesssim A\varepsilon_1.
\end{equation}

To prove \eqref{fma6.2} for $j\in\{1,2\}$, we observe that, as a consequence of \eqref{EllEq2},
\begin{equation}\label{fma9}
\partial_y^2u-|\nabla|^2u= (1+\vert\nabla_xh\vert^2)^{-1}
  (-|\nabla|^2u|\nabla_xh|^2+2\partial_y\nabla_xu\cdot\nabla_xh+\partial_yu\Delta_xh+\partial_y\mathfrak{e}_a+\vert\nabla\vert \mathfrak{e}_b).
\end{equation}
Using \eqref{fma6} and \eqref{fma9.5}, together with Lemma \ref{lemmaaux1}, it follows that 
\begin{equation*}
\|\partial_y^2u-|\nabla|^2u\|_{L^2_y\mathcal{O}_{2,p-1}}+\|\partial_y^2 u-|\nabla|^2u\|_{L^\infty_y\mathcal{O}_{2,p-3/2}}\lesssim A\varepsilon_1.
\end{equation*}
The desired bound \eqref{fma6.2} for $j=1$ follows using also \eqref{fma9.5}. The bound for $j=2$ then follows by differentiating \eqref{fma9} with respect to $y$. This completes the proof of the lemma.
\end{proof}

\subsection{Paralinearization}

The previous analysis allows us to isolate the linear (and the higher order) components of the Dirichlet-Neumann operator.
However, this is insufficient for our purpose because we also need to avoid losses of derivatives in the equation. To deal with this we follow the approach of Alazard-Metivier \cite{AlMet1}, 
Alazard-Burq-Zuily \cite{ABZ1,ABZ2} and Alazard-Delort \cite{ADa} using paradifferential calculus.
Our choice is to work with the (somewhat unusual) Weyl quantization, instead of the standard one used by the cited authors.
We refer to Appendix \ref{SecParaOp} for a review of the paradifferential calculus using the Weyl quantization.

For simplicity of notation, we set $\alpha=\vert\nabla h\vert^2$ and let 
\begin{equation}
 \label{defomega0}
\omega := u - T_{\partial_yu} h.
\end{equation}
Notice that $\omega$ is naturally extended to the fluid domain, compare with the definition \eqref{DefBV}. We will also assume \eqref{Asshu} and use Lemma \ref{LemFirstLinearization}. Using \eqref{aux25} in Lemma \ref{lemmaaux2} and \eqref{fma6.2}, we see that
\begin{equation}
\label{relomegau}
\|\omega - u\|_{L^2_y\mathcal{O}_{2,1}\cap L^\infty_y\mathcal{O}_{2,1}}\lesssim \varep_1^2.
\end{equation}
Using Lemma \ref{PropHHSym} to paralinearize products, we may rewrite the equation \eqref{EllEq} as
\begin{equation}\label{fma25}
\begin{split}
T_{1+\alpha}\partial_y^2\omega+\Delta \omega-2T_{\nabla h}\nabla\partial_y\omega-T_{\Delta h}\partial_y\omega=\mathcal{Q}+\mathcal{C}
\end{split}
\end{equation}
where
\begin{equation}\label{QuadraticCubic1}
\begin{split}
-\mathcal{Q}&=-2\mathcal{H}(\nabla h,\nabla \partial_yu)-\mathcal{H}(\Delta h,\partial_yu),
\\
-\mathcal{C}&=\partial_y(T_{1+\alpha}T_{\partial_y^2u}+T_{\Delta u}-2T_{\nabla h}T_{\nabla\partial_yu}-T_{\Delta h}T_{\partial_yu})h+2(T_{\partial_y^2u}T_{\nabla h}-T_{\nabla h}T_{\partial_y^2u})\nabla h
\\
&+T_{\partial_y^2u}\mathcal{H}(\nabla h,\nabla h)+\mathcal{H}(\alpha,\partial_y^2u).
\end{split}
\end{equation}
Notice that the error terms are quadratic and cubic strongly semilinear. More precisely, 
using Lemma \ref{PropHHSym}, Lemma \ref{lemmaaux3}, and the equation \eqref{EllEq}, we see that
\begin{equation}\label{CO3}
\mathcal{Q}\in\varep_1^2[L^2_y\mathcal{O}_{2,4}\cap L^\infty_y\mathcal{O}_{2,4}],\qquad \mathcal{C}\in\varep_1^3\langle t\rangle^{-11\delta^2}[L^2_y\mathcal{O}_{3,4}\cap L^\infty_y\mathcal{O}_{3,4}].
\end{equation}

We now look for a factorization of the main elliptic equation into
\begin{equation*}
\begin{split}
& T_{1+\alpha}\partial_y^2+\Delta -2T_{\nabla h}\nabla\partial_y-T_{\Delta h}\partial_y
\\
& = (T_{\sqrt{1+\alpha}}\partial_y-A+B)(T_{\sqrt{1+\alpha}}\partial_y-A-B)+\mathcal{E}
\\
& = T_{\sqrt{1+\alpha}}^2\partial_y^2-\big\{(AT_{\sqrt{1+\alpha}}+T_{\sqrt{1+\alpha}}A)+[T_{\sqrt{1+\alpha}},B]\big\}\partial_y+A^2-B^2+[A,B]+\mathcal{E}
\end{split}
\end{equation*}
where the error term is acceptable (in a suitable sense to be made precise later), and $[A,\partial_y]=0,[B,\partial_y]=0$. Identifying the terms, this leads to the system
\begin{equation*}
\begin{split}
T_{\sqrt{1+\alpha}}A+AT_{\sqrt{1+\alpha}}+[T_{\sqrt{1+\alpha}},B] & = 2T_{i\zeta\cdot\nabla h}+\mathcal{E},
\\
A^2-B^2+[A,B]&=\Delta+\mathcal{E}.
\end{split}
\end{equation*}
We may now look for paraproduct solutions in the form
\begin{equation*}
A=iT_{a},\quad a=a^{(1)}+a^{(0)},\qquad B=T_b,\quad b=b^{(1)}+b^{(0)}
\end{equation*}
where both $a$ and $b$ are real and are a sum of a two symbols of order $1$ and $0$.
Therefore $A$ corresponds to the skew-symmetric part of the system,
while $B$ corresponds to the symmetric part. Using Proposition \ref{PropCompSym}, and formally identifying the symbols, we obtain the system
\begin{equation*}
\begin{split}
2ia\sqrt{1+\alpha}+i\{\sqrt{1+\alpha},b\} & =2i\zeta\cdot\nabla h +\varep_1^2\mathcal{M}^{-1,2}_{N_3-2},
\\
a^2 + b^2 + \{a,b\} & = \vert\zeta\vert^2+\varep_1^2\mathcal{M}^{0,2}_{N_3-2}.
\end{split}
\end{equation*}
We can solve this by letting
\begin{align*}
& a^{(1)} := \frac{\zeta\cdot\nabla h}{\sqrt{1+\alpha}}, \qquad a^{(0)} := -\frac{1}{2\sqrt{1+\alpha}}\{\sqrt{1+\alpha},b^{(1)}\}\varphi_{\geq 0}(\zeta),
\\
& b^{(1)} = \sqrt{|\zeta|^2 - (a^{(1)})^2}, 
  \qquad b^{(0)} = \frac{1}{2b^{(1)}} \big( -2a^{(1)}a^{(0)} - \{a^{(1)}, b^{(1)}\}\varphi_{\geq 0}(\zeta) \big).
\end{align*}
This gives us the following formulas:
\begin{align}
\label{Symbol1}
a^{(1)}&=\frac{1}{\sqrt{1+\vert\nabla h\vert^2}}\left(\zeta\cdot\nabla h\right)
  & = &\,\,(\zeta\cdot\nabla h)(1+\varep_1^2\mathcal{M}^{0,2}_{N_3}),
\\
b^{(1)}&=\sqrt{\frac{(1+\vert\nabla h\vert^2)\vert\zeta\vert^2-(\zeta\cdot\nabla h)^2}{1+\vert\nabla h\vert^2}}
  & = &\,\,\vert\zeta\vert(1+\varep_1^2\mathcal{M}^{0,2}_{N_3}),
\label{Symbol2}
\\
a^{(0)}&=-\frac{\big\{\sqrt{1+\vert\nabla h\vert^2},b^{(1)} \big\}}{2\sqrt{1+\vert\nabla h\vert^2}} \,\varphi_{\geq 0}(\zeta)
  & = &\,\,\varphi_{\geq 0}(\zeta)\varep_1^2\mathcal{M}^{0,2}_{N_3-1},
\label{Symbol3}
\\
b^{(0)}&=-\frac{\sqrt{1+\vert\nabla h\vert^2}}{2b^{(1)}}
  \left\{\frac{\zeta\cdot\nabla h}{1+\vert\nabla h\vert^2},b^{(1)}\right\}\,\varphi_{\geq 0}(\zeta)&=&\,\,\varphi_{\geq 0}(\zeta)\Big[-\frac{\zeta_j\zeta_k\partial_{j}\partial_kh}
  {2\vert\zeta\vert^2}+\varep_1^3\mathcal{M}^{0,3}_{N_3-1}\Big]
\label{Symbol4}.
\end{align}

We now verify that
\begin{equation}\label{fma26}
\begin{split}
& (T_{\sqrt{1+\alpha}}\partial_y-iT_a+T_b)(T_{\sqrt{1+\alpha}}\partial_y-iT_a-T_b)
\\
&= T_{1+\alpha}\partial_y^2-\big(2T_{a\sqrt{1+\alpha}}+T_{\{\sqrt{1+\alpha},b^{(1)}\}}\big)i\partial_y-T_{a^2}-T_{b^2}-T_{\{a^{(1)},b^{(1)}\}\varphi_{\geq 0}(\zeta)}+\mathcal{E},
\end{split}
\end{equation}
where
\begin{equation*}
\begin{split}
\mathcal{E}&:=(T_{\sqrt{1+\alpha}}T_{\sqrt{1+\alpha}}-T_{1+\alpha})\partial_y^2-\big(T_aT_{\sqrt{1+\alpha}}+T_{\sqrt{1+\alpha}}T_a-2T_{a\sqrt{1+\alpha}}\big)i\partial_y-[T_{\sqrt{1+\alpha}},T_{b^{(0)}}]\partial_y
\\
&-\big([T_{\sqrt{1+\alpha}},T_{b^{(1)}}]-iT_{\{\sqrt{1+\alpha},b^{(1)}\}}\big)\partial_y
+(T_{a^2}- T_a^2) + (T_{b^2}-T_b^2) + i[T_{a},T_{b}]+T_{\{a^{(1)},b^{(1)}\}\varphi_{\geq 0}(\zeta)}.
\end{split}
\end{equation*}
We also verify that
\begin{equation*}
\begin{split}
&2a\sqrt{1+\alpha}+\{\sqrt{1+\alpha},b^{(1)}\}=2\zeta\cdot\nabla h+\{\sqrt{1+\alpha},b^{(1)}\}\varphi_{\leq -1}(\zeta),\\
&a^2 + b^2 + \{a^{(1)},b^{(1)}\}\varphi_{\geq 0}(\zeta) = \vert\zeta\vert^2+(a^{(0)})^2+(b^{(0)})^2.
\end{split}
\end{equation*}

\begin{lemma}\label{lemmafma}
With the definitions above, we have 
\begin{equation}\label{factor1}
(T_{\sqrt{1+\alpha}}\partial_y-iT_a+T_b)(T_{\sqrt{1+\alpha}}\partial_y-iT_a-T_b)\omega= Q_0 + \widetilde{\mathcal{C}},
\end{equation}
where
\begin{equation}\label{fma27}
\begin{split}
&\widetilde{\mathcal{C}}\in \varep_1^3\langle t\rangle^{-11\delta^2}[L^\infty_y\mathcal{O}_{3,1/2}\cap L^2_y\mathcal{O}_{3,1}],\qquad Q_0\in \varep_1^2[L^\infty_y\mathcal{O}_{2,3/2}\cap L^2_y\mathcal{O}_{2,2}],\\
&\widehat{Q_0}(\xi,y)=\frac{1}{4\pi^2}\int_{\mathbb{R}^2}q_0(\xi,\eta)\widehat{h}(\xi-\eta)\widehat{u}(\eta,y)\,d\eta,
\end{split}
\end{equation}
and
\begin{equation}\label{fma28}
\begin{split}
q_0(\xi,\eta):&=\chi\big(\frac{|\xi-\eta|}{|\xi+\eta|}\big)\frac{(|\xi|-|\eta|)^2(|\xi|+|\eta|)}{2}\Big[\frac{2\xi\cdot\eta-2|\xi||\eta|}{|\xi+\eta|^2}\varphi_{\geq 0}\big(\frac{\xi+\eta}{2}\big)+\varphi_{\leq -1}\big(\frac{\xi+\eta}{2}\big)\Big]\\
&+\Big[1-\chi\big(\frac{|\xi-\eta|}{|\xi+\eta|}\big)-\chi\big(\frac{|\eta|}{|2\xi-\eta|}\big)\Big](|\eta|^2-|\xi|^2)|\eta|.
\end{split}
\end{equation}
Notice that (see \eqref{mloc} for the definition),
\begin{equation}\label{Quad1symbound}
\|q_0^{k,k_1,k_2}\|_{S^\infty_\Omega}\lesssim 2^{k_2}2^{2k_1}\big[2^{-(2k_2-2k_1)}\mathbf{1}_{[-40,\infty)}(k_2-k_1)+\mathbf{1}_{(-\infty,4]}(k_2)\big],\qquad (\Omega_\xi+\Omega_\eta)q_0=0.
\end{equation}
\end{lemma}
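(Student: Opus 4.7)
The plan is to expand both sides of \eqref{factor1} and match quadratic and cubic contributions carefully. First I would substitute \eqref{fma26} into the left-hand side and use the paralinearized equation \eqref{fma25} to replace $T_{1+\alpha}\partial_y^2\omega + \Delta\omega$. Together with the identities $2a\sqrt{1+\alpha}+\{\sqrt{1+\alpha},b^{(1)}\}=2\zeta\cdot\nabla h+\{\sqrt{1+\alpha},b^{(1)}\}\varphi_{\leq-1}(\zeta)$ and $a^2+b^2+\{a^{(1)},b^{(1)}\}\varphi_{\geq 0}(\zeta)=|\zeta|^2+(a^{(0)})^2+(b^{(0)})^2$, and the fact that $T_{|\zeta|^2}=-\Delta$ on Fourier multiplier symbols, this reduces \eqref{factor1} to showing that
\begin{equation*}
\mathcal{Q} + 2T_{\nabla h}\!\cdot\!\nabla\partial_y\omega + T_{\Delta h}\partial_y\omega - 2iT_{\zeta\cdot\nabla h}\partial_y\omega - iT_{\{\sqrt{1+\alpha},b^{(1)}\}\varphi_{\leq-1}}\partial_y\omega - T_{(a^{(0)})^2+(b^{(0)})^2}\omega
\end{equation*}
equals $Q_0$ modulo cubic terms, with $\mathcal{C}+\mathcal{E}\omega$ and the various remainders absorbed into $\widetilde{\mathcal{C}}$.

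For the quadratic bookkeeping, I would first replace $\partial_y\omega$ by $|\nabla|\omega$ and $\omega$ by $u$ throughout, since \eqref{QuadraticTermDN}, \eqref{relomegau}, and \eqref{fma6.2} ensure the differences contribute only to $\widetilde{\mathcal{C}}$. The main quadratic pieces then come from: (a) the difference $2T_{\nabla h}\!\cdot\!\nabla - 2iT_{\zeta\cdot\nabla h}$, which a direct Fourier computation using the Weyl formula \eqref{Tsigmaf2} produces a contribution with symbol $-\chi(|\xi-\eta|/|\xi+\eta|)\,|\xi-\eta|^2$ acting on $(h,|\nabla|u)$ (after symmetrizing and using $|\eta|\approx|\xi|$ on the paraproduct support); (b) the Bony remainder $\mathcal{Q}$ from \eqref{QuadraticCubic1}, which by Lemma \ref{PropHHSym}(i) produces the high--high tail carrying the cutoff $1-\chi(|\xi-\eta|/|\xi+\eta|)-\chi(|\eta|/|2\xi-\eta|)$ in the second line of \eqref{fma28}; (c) the quadratic parts of $iT_{\{\sqrt{1+\alpha},b^{(1)}\}\varphi_{\leq-1}}\partial_y\omega$ and $T_{\Delta h}\partial_y\omega$, which combine to give the low-frequency factor $\varphi_{\leq -1}((\xi+\eta)/2)$; while (d) $T_{(a^{(0)})^2+(b^{(0)})^2}\omega$ is automatically cubic in $h$. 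Collecting these using the elementary identities $|\xi-\eta|^2=(|\xi|-|\eta|)^2+2(|\xi||\eta|-\xi\cdot\eta)$ and $|\eta|^2-|\xi|^2=(|\eta|-|\xi|)(|\eta|+|\xi|)$ produces the explicit symbol \eqref{fma28}, with the prefactor $(|\xi|-|\eta|)^2(|\xi|+|\eta|)/2$ arising from the combination of (a) and (c).

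For the cubic remainder, I would show that each constituent of $\mathcal{E}$ (namely $T_aT_b-T_{ab}$, $[T_a,T_b]$, $T_aT_b+T_bT_a-2T_{ab}$, and the Poisson bracket corrections from \eqref{fma26}) has symbols in the appropriate $\M^{l,m}_r$ classes with $m_1+m_2\geq 2$, so that Lemma \ref{lemmaaux3} and Lemma \ref{lemmaaux2} together with the bounds \eqref{fma6.2} on $\omega$ and $\partial_y\omega$ yield the claimed $L^2_y\mathcal{O}_{3,1}\cap L^\infty_y\mathcal{O}_{3,1/2}$ estimate. The cubic contributions in \eqref{Symbol1}--\eqref{Symbol4}, the cubic part $\mathcal{C}$ from \eqref{CO3}, and the $\omega\leftrightarrow u$, $\partial_y\omega\leftrightarrow|\nabla|\omega$ corrections are all handled similarly.

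Finally, the $S^\infty_\Omega$ bound \eqref{Quad1symbound} would be verified by dyadic localization: when $k_1\leq k_2$, the prefactor $(|\xi|-|\eta|)^2$ yields the gain $2^{2k_1}$ (the symbol vanishing to order two on the diagonal $|\xi|=|\eta|$), while the remaining factor contributes $2^{k_2}$ from $(|\xi|+|\eta|)$; when $k_2\leq k_1$ (the high-high branch), the second line of \eqref{fma28} gives the bound $|\eta|^2|\eta|\lesssim 2^{3k_2}$, consistent with the second alternative in \eqref{Quad1symbound}. The rotation invariance $(\Omega_\xi+\Omega_\eta)q_0=0$ is immediate since every ingredient (products of norms and inner products of $\xi,\eta$, and $\chi,\varphi$ cutoffs of such) is jointly rotation-invariant. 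The main obstacle I anticipate is the careful bookkeeping of all quadratic contributions in the correct form, particularly confirming that the quadratic part of $-iT_{\{\sqrt{1+\alpha},b^{(1)}\}\varphi_{\leq-1}}\partial_y\omega$ assembles with $T_{\Delta h}\partial_y\omega$ exactly to produce the $\varphi_{\leq-1}((\xi+\eta)/2)$ branch with the right constant, since any mismatch here would spoil the clean algebraic form \eqref{fma28}.
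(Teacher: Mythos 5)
Your plan reproduces the paper's first step (substituting \eqref{fma26} into the left-hand side and replacing $T_{1+\alpha}\partial_y^2\omega+\Delta\omega$ via \eqref{fma25}), but the subsequent quadratic bookkeeping is wrong in a way that would not produce the symbol \eqref{fma28}. The decisive error is in item (a): with the Weyl quantization \eqref{Tsigmaf2}, the Fourier kernel of $2T_{\nabla h}\cdot\nabla\partial_y\omega$ is $-2(\xi-\eta)\cdot\eta$, that of $T_{\Delta h}\partial_y\omega$ is $-|\xi-\eta|^2$, and that of $-2iT_{\zeta\cdot\nabla h}\partial_y\omega$ is $|\xi|^2-|\eta|^2$; their sum is
\begin{equation*}
-2(\xi-\eta)\cdot\eta-|\xi-\eta|^2+(|\xi|^2-|\eta|^2)\equiv 0,
\end{equation*}
so these three terms cancel \emph{exactly} and give no quadratic contribution whatsoever. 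Likewise, item (c) is wrong: since $\sqrt{1+\alpha}-1$ is quadratic in $h$ and the Poisson bracket carries $\nabla_x\sqrt{1+\alpha}$, the symbol $\{\sqrt{1+\alpha},b^{(1)}\}\varphi_{\leq-1}(\zeta)$ is already quadratic in $h$, hence $iT_{\{\sqrt{1+\alpha},b^{(1)}\}\varphi_{\leq-1}}\partial_y\omega$ is cubic and contributes to $\widetilde{\mathcal{C}}$, not to $Q_0$; the $\varphi_{\leq-1}((\xi+\eta)/2)$ branch in \eqref{fma28} does not come from this term.

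The fatal consequence is your claim that ``$\mathcal{C}+\mathcal{E}\omega$ and the various remainders [are] absorbed into $\widetilde{\mathcal{C}}$.'' The remainder $\mathcal{E}$ from \eqref{fma26} is \emph{not} cubic: its pieces $T_{b^2}-T_b^2$, $i[T_a,T_b]$, and $T_{\{a^{(1)},b^{(1)}\}\varphi_{\geq 0}(\zeta)}$ all carry genuinely quadratic-in-$h$ parts (one has $b=|\zeta|+b^{(0)}_1+\cdots$ with $b^{(0)}_1$ linear in $h$, and $a=\zeta\cdot\nabla h+\cdots$ linear in $h$). Those quadratic pieces — after isolating them as, e.g., $T_{2|\zeta|b^{(0)}_1}-T_{|\zeta|}T_{b^{(0)}_1}-T_{b^{(0)}_1}T_{|\zeta|}$ and $i[T_{\zeta\cdot\nabla h},T_{|\zeta|}]+T_{\{\zeta\cdot\nabla h,|\zeta|\}\varphi_{\geq 0}(\zeta)}$, and applying the composition Proposition \ref{PropCompSym} to identify the $E(\cdot,\cdot)$-type kernels — are precisely what produce the first line of $q_0$ with the factor $(|\xi|-|\eta|)^2(|\xi|+|\eta|)/2$ and the $\varphi_{\geq 0}/\varphi_{\leq -1}$ split. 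With $\mathcal{E}\omega$ entirely discarded, your reduced expression gives only $\mathcal{Q}$ up to cubics, and $\mathcal{Q}$ alone accounts for nothing but the second line of \eqref{fma28}; the paraproduct-support part of $q_0$ is unaccounted for. Your item (b) and the final $S^\infty_\Omega$ dyadic bound are fine, but they cannot rescue the missing quadratic source.
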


\begin{proof} Using \eqref{fma25} and \eqref{fma26} we have
\begin{equation*}
\begin{split}
(T_{\sqrt{1+\alpha}}&\partial_y-iT_a+T_b)(T_{\sqrt{1+\alpha}}\partial_y-iT_a-T_b)\omega\\
&=\mathcal{Q}+\mathcal{C}+\mathcal{E}\omega-T_{(a^{(0)})^2+(b^{(0)})^2}\omega-T_{\{\sqrt{1+\alpha},b^{(1)}\}\varphi_{\leq -1}(\zeta)}i\partial_y\omega.
\end{split}
\end{equation*}
The terms $\mathcal{C}$, $T_{(a^{0})^2+(b^{(0)})^2}\omega$ and $T_{\{\sqrt{1+\alpha},b^{(1)}\}\varphi_{\leq -1}(\zeta)}i\partial_y\omega$ are in 
$\varep_1^3(1+t)^{-11\delta^2}[L^\infty_y\mathcal{O}_{3,1/2}\cap L^2_y\mathcal{O}_{3,1}]$. Moreover, using  Lemma \ref{LemFirstLinearization}, Lemma \ref{lemmaaux3}, and \eqref{Symbol1}--\eqref{Symbol4}, we can verify that
\begin{equation*}
\mathcal{E}\omega-\big[T_{2|\zeta|b^{(0)}_1}-T_{|\zeta|}T_{b^{(0)}_1}-T_{b^{(0)}_1}T_{|\zeta|}\big]\omega-\big[i[T_{\zeta\cdot\nabla h},T_{|\zeta|}]+T_{\{\zeta\cdot\nabla h,\vert\zeta\vert\}\varphi_{\geq 0}(\zeta)}\big]\omega
\end{equation*}
is an acceptable cubic error, where $b^{(0)}_1:=-\varphi_{\geq 0}(\zeta)\frac{\zeta_j\zeta_k\partial_j\partial_kh}{2|\zeta|^2}$. Indeed, most of the 
terms in $\mathcal{E}$ are already acceptable cubic errors; the last three terms become acceptable cubic errors after removing the quadratic components 
corresponding to the symbols $\zeta\cdot\nabla h$ in $a^{(1)}$, $|\zeta|$ in $b^{(1)}$, and $b^{(0)}_1$ in $b^{(0)}$. As a consequence, 
$\mathcal{E}\omega-Q'_0\in \varep_1^3\langle t\rangle^{-11\delta^2}[L^\infty_y\mathcal{O}_{3,1/2}\cap L^2_y\mathcal{O}_{3,1}]$, where
\begin{equation*}
\begin{split}
&\widehat{Q'_0}(\xi,y):=\frac{1}{4\pi^2}\int_{\mathbb{R}^2}\chi\Big(\frac{|\xi-\eta|}{|\xi+\eta|}\Big)q'_0(\xi,\eta)\widehat{h}(\xi-\eta)\widehat{\omega}(\eta,y)\,d\eta,\\
&q'_0(\xi,\eta):=\frac{(|\xi|-|\eta|)^2(|\xi|+|\eta|)(\xi\cdot\eta-|\xi||\eta|)}{|\xi+\eta|^2}\varphi_{\geq 0}\big(\frac{\xi+\eta}{2}\big)+\frac{(|\xi|-|\eta|)^2(|\xi|+|\eta|)}{2}\varphi_{\leq -1}\big(\frac{\xi+\eta}{2}\big).
\end{split}
\end{equation*}
The desired conclusions follow, using also the formula $\mathcal{Q}=2\mathcal{H}(\nabla h,\nabla \partial_yu)+\mathcal{H}(\Delta h,\partial_yu)$ in \eqref{QuadraticCubic1}, and the approximations $\partial_yu\approx |\nabla|u$, $\omega\approx u$, up to suitable quadratic errors. 
\end{proof}

In order to continue we want to invert the first operator in \eqref{factor1} which is elliptic in the domain under consideration.

\begin{lemma}\label{LemInt}
Let $U:=(T_{\sqrt{1+\alpha}}\partial_y-iT_a-T_b)\omega\in \varep_1[L^\infty_y\mathcal{O}_{1,-1/2}\cap L^2_y\mathcal{O}_{1,0}]$, so
\begin{equation}\label{fma39}
(T_{\sqrt{1+\alpha}}\partial_y-iT_a+T_b)U = Q_0 + \widetilde{\mathcal{C}}.
\end{equation}
Define
\begin{equation}\label{QuadrTrans}
\begin{split}
\widehat{M_0[f,g]}(\xi)&=\frac{1}{4\pi^2}\int_{\mathbb{R}^2}m_0(\xi,\eta)\widehat{f}(\xi-\eta)\widehat{g}(\eta)d\eta,\qquad m_0(\xi,\eta):=\frac{q_0(\xi,\eta)}{\vert\xi\vert+\vert\eta\vert}.
\end{split}
\end{equation}
Then, recalling the notation \eqref{fw1}, and letting $U_0:=U_{|y=0}$, $u_0:=u_{|y=0}=\phi$, we have
\begin{equation}\label{fma39.5}
P_{\ge -10} \big(U_0-M_0[h,u_0]\big)\in\varep_1^3\langle t\rangle^{-\delta^2}\mathcal{O}_{3,3/2}.
\end{equation}
\end{lemma}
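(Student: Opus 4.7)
\medskip

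\textbf{Plan.} The operator $\mathcal{L}_+ := T_{\sqrt{1+\alpha}}\partial_y - iT_a + T_b$ is, to principal order, $\partial_y + |\nabla|$, which is the ``outgoing'' (or elliptic) operator on the half-space $y \in (-\infty,0]$: its natural inverse integrates the forcing against $e^{s|\nabla|}$ from $-\infty$ up to $y=0$. Since $U \in \varepsilon_1 L^2_y \mathcal{O}_{1,0}$ decays in a suitable sense as $y \to -\infty$, I would first derive a Duhamel representation of the form
\begin{equation}\label{PlanDuhamel}
P_{\geq -10} U_0 = P_{\geq -10}\int_{-\infty}^{0} e^{s|\nabla|}\bigl[Q_0(s) + \widetilde{\mathcal{C}}(s)\bigr]\,ds + \varepsilon_1^3\langle t\rangle^{-\delta^2}\mathcal{O}_{3,3/2},
\end{equation}
by inverting $\mathcal{L}_+$ symbolically. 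Concretely, one writes $\mathcal{L}_+ = T_{\sqrt{1+\alpha}}(\partial_y + T_{\sqrt{1+\alpha}}^{-1}(-iT_a + T_b))$, uses Proposition \ref{PropCompSym} to replace the inner paradifferential operator by $T_{|\zeta|}$ up to symbols of order $\leq 0$ and lower multiplicity, and bounds the commutator and remainder terms using Lemmas \ref{lemmaaux2}--\ref{lemmaaux3} together with the estimates \eqref{CO3} and \eqref{fma27} on $Q_0$ and $\widetilde{\mathcal{C}}$. The cubic term $\widetilde{\mathcal{C}}$ is directly absorbed into the error since $\widetilde{\mathcal{C}} \in \varepsilon_1^3\langle t\rangle^{-11\delta^2} L^2_y\mathcal{O}_{3,1}$ and the $y$-integration against $e^{s|\nabla|}$ gains half a derivative (cf. \eqref{fma7}).

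\medskip

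Next I would extract the leading quadratic part of the integral involving $Q_0$. Using \eqref{fma27}, the Fourier representation gives
\begin{equation*}
\mathcal{F}\Bigl\{\int_{-\infty}^0 e^{s|\nabla|} Q_0(s)\,ds\Bigr\}(\xi) = \frac{1}{4\pi^2}\int_{\mathbb{R}^2} q_0(\xi,\eta)\widehat{h}(\xi-\eta)\Bigl(\int_{-\infty}^0 e^{s|\xi|}\widehat{u}(\eta,s)\,ds\Bigr)d\eta.
\end{equation*}
From Lemma \ref{LemFirstLinearization} (ii) applied with the source terms of $u$ identified from \eqref{fma1}, one has $\widehat{u}(\eta,s) = e^{s|\eta|}\widehat{u_0}(\eta) + \widehat{r}(\eta,s)$, where the remainder $r$ satisfies $\partial_y r - |\nabla| r \in \varepsilon_1^2 L^2_y \mathcal{O}_{2,-1/2}$ and hence $r \in \varepsilon_1^2\mathcal{O}_{2,0}$ uniformly in $y$. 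Carrying out the $s$-integral in the principal term produces exactly the factor $1/(|\xi|+|\eta|)$, giving
\begin{equation*}
\int_{-\infty}^0 e^{s|\xi|} e^{s|\eta|}\,ds = \frac{1}{|\xi|+|\eta|},
\end{equation*}
so the leading contribution is precisely $M_0[h,u_0]$ as defined in \eqref{QuadrTrans}. The contribution of $r$ produces a cubic error, controlled via the symbol bounds \eqref{Quad1symbound} and Lemma \ref{lemmaaux1} by $\varepsilon_1^3\langle t\rangle^{-\delta^2}\mathcal{O}_{3,3/2}$; here one crucially uses that $q_0$ is of order $\leq 2$ relative to the smaller frequency, with a two-derivative gain in the form $(|\xi|-|\eta|)^2$ visible in \eqref{fma28}, so that dividing by $|\xi|+|\eta|$ still leaves a symbol that places the output in $\mathcal{O}_{\cdot,3/2}$.

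\medskip

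The remaining task is to bound the errors coming from the discrepancy between the \emph{paradifferential} inverse of $\mathcal{L}_+$ and its symbolic ``$(\partial_y + |\nabla|)^{-1}$'' approximation. These split into three pieces: (i) commutator and composition errors between $T_{\sqrt{1+\alpha}}$, $T_a$, $T_b$ and their symbols, which Proposition \ref{PropCompSym} and \eqref{auxEab}--\eqref{auxcomm} convert into operators mapping $L^2_y\mathcal{O}_{2,0} \to L^2_y\mathcal{O}_{3,1}$; (ii) the difference between applying $T_{|\zeta|}^{-1}$ (or more precisely the Poisson semigroup for $\mathcal{L}_+$) versus applying $|\nabla|^{-1}$, which is where the restriction to $P_{\geq -10}$ enters, avoiding the singularity at $\zeta=0$; (iii) the boundary contribution at $y = -\infty$, which vanishes because $U \in L^2_y$ and $\mathcal{L}_+$ has positive lower symbol. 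The main obstacle I expect is step (i): one must carefully track the regularity/multiplicity indices so that every commutator error lands in $\varepsilon_1^3\mathcal{O}_{3,3/2}$, which forces using the full strength of the $\mathcal{M}^{l,m}_r$ hierarchy from Definition \ref{DefSym} together with the bounds \eqref{estlambda} on $\lambda_{DN}$. Once this bookkeeping is in place, summing the three error types with the estimate \eqref{PlanDuhamel} yields \eqref{fma39.5}.
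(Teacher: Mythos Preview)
Your overall strategy---reduce $\mathcal{L}_+ = T_{\sqrt{1+\alpha}}\partial_y - iT_a + T_b$ to $\partial_y + |\nabla|$ and integrate from $y=-\infty$---matches the paper's, and your identification of $M_0[h,u_0]$ via $\int_{-\infty}^0 e^{s|\xi|}e^{s|\eta|}\,ds = (|\xi|+|\eta|)^{-1}$ is exactly right. But there is a genuine gap in your step (i).

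The perturbation $\sigma' := \sigma - |\zeta|$ (where $\sigma = (b-ia)/\sqrt{1+\alpha}$) is \emph{not} of order $\leq 0$: it contains $-ia^{(1)}/\sqrt{1+\alpha} \in \varepsilon_1\mathcal{M}^{1,1}_{N_3-1}$, which is genuinely first order. So $T_{\sigma'}$ loses a derivative. In your Duhamel picture this produces a term $\int_{-\infty}^0 e^{s|\nabla|}\,T_{\sigma'}U(s)\,ds$. With only the a priori membership $U \in \varepsilon_1 L^2_y\mathcal{O}_{1,0}$ from the lemma statement, this term is merely quadratic, not cubic; and even if you first prove $U \in \varepsilon_1^2 L^2_y\mathcal{O}_{2,\cdot}$ (which is true but requires its own work from \eqref{fma9.5} and \eqref{relomegau}), a direct estimate lands you in $\varepsilon_1^3\mathcal{O}_{3,0}$, not $\mathcal{O}_{3,3/2}$, because the $y$-integration gains only one derivative against the first-order loss.

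The paper closes this by a different maneuver: after conjugating by $T_{(1+\alpha)^{1/4}}$ to reach $(\partial_y + T_\sigma)\widetilde{U} = Q_0 + \mathcal{C}_2$, it subtracts $M_0[h,u]$ \emph{as a function of $y$} (not just at $y=0$), so that $V := \widetilde{U} - M_0[h,u]$ satisfies $(\partial_y + T_\sigma)V = \mathcal{C}_3$ with $\mathcal{C}_3 \in \varepsilon_1^3\langle t\rangle^{-11\delta^2}L^2_y\mathcal{O}_{3,1}$ purely cubic (the key cancellation $Q_0 - (\partial_y+|\nabla|)M_0[h,u] = M_0[h,|\nabla|u-\partial_y u]$ uses \eqref{fma6.2}). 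It then runs a dyadic absorption argument: with $X_k := \sup_y 2^{(N_0+3/2)k}\|P_kV(y)\|_{L^2}$, the integral representation $V_k(y) = \int_{-\infty}^y e^{(s-y)|\nabla|}[P_k\mathcal{C}_3 - P_kT_{\sigma'}V]\,ds$ gives $X_k \lesssim \varepsilon_1\sum_{|k'-k|\leq 4}X_{k'} + 2^{(N_0+1)k}\|P_k\mathcal{C}_3\|_{L^2_yL^2_x}$, because the full-derivative gain from $\int e^{(s-y)2^k}\,ds = 2^{-k}$ exactly cancels the first-order loss from $T_{\sigma'}$, leaving the $\varepsilon_1$ smallness for absorption. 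The same is done for the $H^{N_1,N_3+3/2}_\Omega$ and $\widetilde{W}^{N_1/2,N_2+3/2}_\Omega$ norms. This Gronwall-in-$y$ mechanism is the real technical content your step (i) is missing; Proposition \ref{PropCompSym} and \eqref{auxEab}--\eqref{auxcomm} alone do not supply it.
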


\begin{proof}
Set
\begin{equation}\label{fma40}
\widetilde{U}:=T_{(1+\alpha)^{1/4}}U\in \varep_1[L^\infty_y\mathcal{O}_{1,-1/2}\cap L^2_y\mathcal{O}_{1,0}],\qquad \sigma:=\frac{b-ia}{\sqrt{1+\alpha}}=\vert\zeta\vert(1+\varep_1\mathcal{M}^{0,1}_{N_3-1}).
\end{equation}
Using \eqref{fma39} and Lemma \ref{lemmaaux3}, and letting $f:=(1+\alpha)^{1/4}-1\in\varep_1^2\mathcal{O}_{2,0}$, we calculate
\begin{equation*}
\begin{split}
& T_{(1+\alpha)^{1/4}}(\partial_y+T_{\sigma})\widetilde{U}=Q_0+\mathcal{C}_1,
\\
& \mathcal{C}_1:=\widetilde{\mathcal{C}}+[T^2_{f}-T_{f^2}]\partial_y U+\big[T_{f+1}T_{\sigma}T_{f+1}-T_{(f+1)^2\sigma}\big]U\in \varep_1^3\langle t\rangle^{-11\delta^2}\big[L^\infty_y\mathcal{O}_{3,1/2}\cap L^2_y\mathcal{O}_{3,1}\big].
\end{split}
\end{equation*}
Let $g=(1+f)^{-1}-1\in \varep_1^2\mathcal{O}_{2,0}$ and apply the operator $T_{1+g}$ to the identity above. Using Lemma \ref{lemmaaux3}, it follows that
\begin{equation}\label{NewParEq}
(\partial_y+T_{\sigma})\widetilde{U}=Q_0+\mathcal{C}_2,\qquad \mathcal{C}_2\in \varep_1^3\langle t\rangle^{-11\delta^2}\big[L^\infty_y\mathcal{O}_{3,1/2}\cap L^2_y\mathcal{O}_{3,1}\big].
\end{equation}

Notice that, using Lemma \ref{LemFirstLinearization}, \eqref{Quad1symbound}, \eqref{QuadrTrans} and Lemma \ref{lemmaaux1},
\begin{equation}\label{RegM0M1}
M_0[h,u]\in\varep_1^2[L^\infty_y\mathcal{O}_{2,5/2}\cap L^2_y\mathcal{O}_{2,3}],\qquad M_0[h,\partial_yu]\in\varep_1^2[L^\infty_y\mathcal{O}_{2,3/2}\cap L^2_y\mathcal{O}_{2,2}].
\end{equation}
We define $V:=\widetilde{U}-M_0[h,u]$. Since
\begin{equation*}
V=T_{(1+\alpha)^{1/4}}U-M_0[h,u]=T_{(1+\alpha)^{1/4}}\big(U-M_0[h,u]\big)+\mathcal{C'},\qquad\mathcal{C}'\in\varep_1^3\langle t\rangle^{-11\delta^2}L^\infty_y\mathcal{O}_{3,3/2},
\end{equation*}
for \eqref{fma39.5} it suffices to prove that
\begin{equation}\label{fma39.6}
P_{\ge -20} V(y)\in\varep_1^3\langle t\rangle^{-\delta^2}\mathcal{O}_{3,3/2}\quad\text{ for any }\quad y\in(-\infty,0].
\end{equation}

Using also \eqref{fma6.2} we verify that
\begin{equation}\label{fma74}
\begin{split}
(\partial_y+T_{\sigma})V&=(\partial_y+T_{\sigma})\widetilde{U}-(\partial_y+\vert\nabla\vert)M_0[h,u]-T_{(\sigma-\vert\zeta\vert)}M_0[h,u]\\
&=\mathcal{C}_2+M_0[h,\vert\nabla\vert u-\partial_yu]-T_{(\sigma-\vert\zeta\vert)}M_0[h,u]\\
&=\mathcal{C}_3\in \varep_1^3\langle t\rangle^{-11\delta^2}[L^\infty_y\mathcal{O}_{3,1/2}\cap L^2_y\mathcal{O}_{3,1}].
\end{split}
\end{equation}
Letting $\sigma':=\sigma-|\zeta|$ and $V_k:=P_k V$, $k\in\mathbb{Z}$, we calculate
\begin{equation*}
(\partial_y+T_{|\zeta|})V_k=P_k\mathcal{C}_3-P_{k}T_{\sigma'}V.
\end{equation*}
We can rewrite this equation in integral form,
\begin{equation}\label{fma71}
V_k(y)=\int_{-\infty}^ye^{(s-y)|\nabla|}[P_k\mathcal{C}_3(s)-P_{k}T_{\sigma'}V(s)]\,ds.
\end{equation}

To prove the desired bound for the high Sobolev norm, let, for $k\in\mathbb{Z}$,
\begin{equation*}
X_k:=\sup_{y\leq 0}2^{(N_0+3/2)k}\|V_k(y)\|_{L^2}.
\end{equation*}
Since $\sigma'/|\zeta|\in \varep_1\mathcal{M}^{0,1}_{N_3-1}$, it follows from Lemma \ref{lemmaaux2} that, for any $y\leq 0$,
\begin{equation*}
\begin{split}
2^{(N_0+3/2)k}\int_{-\infty}^y&\|e^{(s-y)|\nabla|}P_{k}T_{\sigma'}V(s)\|_{L^2}\,ds\\
&\lesssim 2^{(N_0+3/2)k}\varep_1\sum_{|k'-k|\leq 4}\int_{-\infty}^ye^{(s-y)2^{k-4}}2^k\|P_{k'}V(s)\|_{L^2}\,ds\lesssim \varep_1\sum_{|k'-k|\leq 4}X_{k'}.
\end{split}
\end{equation*}
It follows from \eqref{fma71} that, for any $k\in\mathbb{Z}$
\begin{equation*}
\begin{split}
X_k&\lesssim \varep_1\sum_{|k'-k|\leq 4}X_{k'}+\sup_{y\leq 0}2^{(N_0+3/2)k}\int_{-\infty}^ye^{(s-y)2^{k-4}}\|P_k\mathcal{C}_3(s)\|_{L^2}\,ds\\
&\lesssim \varep_1\sum_{|k'-k|\leq 4}X_{k'}+2^{(N_0+1)k}\Big[\int_{-\infty}^0\Vert P_k\mathcal{C}_3(s)\Vert_{L^2}^2\,ds\Big]^{1/2}.
\end{split}
\end{equation*}
We take $l^2$ summation in $k$, and absorb the first term in the right-hand side\footnote{To make this step rigorous, one can modify the definition of $X_k$ to $X'_k:=\sup_{y\leq 0}2^{(N_0+3/2)\min(k,K)}\|V_k(y)\|_{L^2}$, in order to make sure that $\sum_{k}(X'_k)^2<\infty$, and then prove uniform estimates in $K$ and finally let $K\to\infty$.} into the left-hand side, to conclude that
\begin{equation}\label{fma75}
\big(\sum_{k\in\mathbb{Z}}X_k^2\big)^{1/2}\lesssim \Big[\sum_{k\in\mathbb{Z}}2^{2(N_0+1)k}\int_{-\infty}^0 \Vert P_k\mathcal{C}_3(s)\Vert_{L^2}^2\,ds\Big]^{1/2}
\lesssim\varep_1^3\langle t\rangle^{-11\delta^2}\langle t\rangle^{-2(5/6-20\delta^2)+\delta^2},
\end{equation}
where the last inequality in this estimate is a consequence of $\mathcal{C}_3\in \varep_1^3\langle t\rangle^{-11\delta^2}L^2_y\mathcal{O}_{3,1}$. 
The desired bound $\|P_{\geq -20} V(y)\|_{H^{N_0+3/2}}\lesssim \varep_1^3\langle t\rangle^{-11\delta^2}\langle t\rangle^{-2(5/6-20\delta^2)+\delta^2}$ in \eqref{fma39.6} follows.

The proof of the bound for the weighted norms is similar. For $k\in\mathbb{Z}$ let
\begin{equation*}
Y_k:=\sup_{y\leq 0}2^{(N_3+3/2)k}\sum_{j\leq N_1}\|\Omega^jV_k(y)\|_{L^2}.
\end{equation*}
As before, we have the bounds,
\begin{equation*}
2^{(N_3+3/2)k}\int_{-\infty}^y\|e^{(s-y)|\nabla|}\Omega^jP_{k}T_{\sigma'}V(s)\|_{L^2}\,ds\lesssim \varep_1\sum_{|k'-k|\leq 4}[Y_{k'}+\langle t\rangle^{6\delta^2}X_{k'}],
\end{equation*}
for any $y\in(-\infty,0]$ and $j\leq N_1$, and therefore, using \eqref{fma71},
\begin{equation*}
Y_k\lesssim \varep_1\sum_{|k'-k|\leq 4}Y_{k'}+\varep_1\langle t\rangle^{6\delta^2}\sum_{|k'-k|\leq 4}X_{k'}+\sum_{j\leq N_1}2^{(N_3+1)k}\Big[\int_{-\infty}^0\Vert \Omega^jP_k\mathcal{C}_3(s)\Vert_{L^2}^2\,ds\Big]^{1/2}.
\end{equation*}
As before, we take the $l^2$ sum in $k$ and use \eqref{fma75} and the hypothesis $\mathcal{C}_3\in \varep_1^3\langle t\rangle^{-11\delta^2}L^2_y\mathcal{O}_{3,1}$. 
The desired bound $\|P_{\geq -20} V(y)\|_{H_\Omega^{N_1,N_3+3/2}}\lesssim \varep_1^3\langle t\rangle^{-4\delta^2}\langle t\rangle^{-2(5/6-20\delta^2)+\delta^2}$ in \eqref{fma39.6} follows.

Finally, for the $L^\infty$ bound, we let, for $k\in\mathbb{Z}$,
\begin{equation*}
Z_k:=\sup_{y\leq 0}2^{(N_2+3/2)k}\sum_{j\leq N_1/2}\|\Omega^jV_k(y)\|_{L^\infty}.
\end{equation*}
As before, using \eqref{fma71} it follows that
\begin{equation*}
Z_k\lesssim \varep_1\sum_{|k'-k|\leq 4}Z_{k'}+\sum_{j\leq N_1/2}2^{(N_2+1)k}\Big[\int_{-\infty}^0\Vert \Omega^jP_k\mathcal{C}_3(s)\Vert_{L^\infty}^2\,ds\Big]^{1/2}.
\end{equation*}
After taking $l^2$ summation in $k$ it follows that
\begin{equation*}
\begin{split}
\big(\sum_{k\in\mathbb{Z}}Z_k^2\big)^{1/2}&\lesssim \sum_{j\leq N_1/2}\Big[\sum_{k\in\mathbb{Z}}2^{2(N_2+1)k}\int_{-\infty}^0 \Vert \Omega^jP_k\mathcal{C}_3(s)\Vert_{L^\infty}^2\,ds\Big]^{1/2}\lesssim\varep_1^3\langle t\rangle^{-11\delta^2}\langle t\rangle^{-5/2+45\delta^2},
\end{split}
\end{equation*}
where the last inequality is a consequence of $\mathcal{C}_3\in \varep_1^3\langle t\rangle^{-11\delta^2}L^2_y\mathcal{O}_{3,1}$. The desired bound on $\|P_{\geq -20} V(y)\|_{\widetilde{W}_\Omega^{N_1/2,N_2+3/2}}$ in \eqref{fma39.6} follows, once we recall that only the sum over $2^{|k|}\leq \langle t\rangle^8$ is relevant when estimating the $\widetilde{W}_\Omega^{N_1/2,N_2+3/2}$ norm; the remaining frequencies are already accounted for by the stronger Sobolev norms. 
\end{proof}

We are now ready to obtain the paralinearization of the Dirichlet-Neumann operator.

\begin{proof}[Proof of Proposition \ref{DNmainpro}]
Recall that $G(h)\phi=(1+\vert\nabla h\vert^2)\partial_yu_{|y=0}-\nabla h \cdot \nabla u_{|y=0}$, see \eqref{DNIny}, and $B=\partial_yu_{|y=0}$. All the calculations below are done on the interface, at $y=0$. We observe that, using Corollary \ref{CorFixedPoint}, 
\begin{equation}\label{DNSmallFreq}
\begin{split}
P_{\le 6}&\big((1+\vert\nabla h\vert^2)\partial_yu - \nabla h \cdot \nabla u\big)=P_{\le 6}\left(\partial_yu- \nabla h \cdot \nabla u\right)+\varep_1^3\mathcal{O}_{3,3/2}\\
&=P_{\le 6}\left(\vert\nabla\vert\omega-\hbox{div}(T_Vh)\right)+P_{\le 6}\left(\hbox{div}(T_Vh)+\vert\nabla\vert T_{\vert\nabla\vert \omega}h+N_2[h,\omega]\right)+\varep_1^3\mathcal{O}_{3,3/2}.
\end{split}
\end{equation}
Thus low frequencies give acceptable contributions. To estimate high frequencies we compute
\begin{equation*}
\begin{split}
& (1+\vert\nabla h\vert^2)\partial_yu - \nabla h \cdot \nabla u
\\ 
& = T_{1+\alpha}\partial_yu - T_{\nabla h}\nabla u
  - T_{\nabla u}\nabla h+T_{\partial_yu}\alpha+\mathcal{H}(\alpha,\partial_yu)-\mathcal{H}(\nabla h,\nabla u)
\\
& = T_{1+\alpha}\partial_y\omega-T_{\nabla h}\nabla \omega-T_{\nabla u}\nabla h + T_{\nabla h}T_{\partial_yu}\nabla h
\\
& +(T_{\partial_yu}\alpha-2T_{\nabla h}T_{\partial_yu}\nabla h)+T_{1+\alpha}T_{\partial_y^2u}h-T_{\nabla h}T_{\nabla \partial_yu}h
  +\mathcal{H}(\alpha,\partial_yu)-\mathcal{H}(\nabla h,\nabla u).
\end{split}
\end{equation*}
Using Lemma \ref{LemInt} with $U= (T_{\sqrt{1+\alpha}}\partial_y-iT_a-T_b)\omega$ and \eqref{RegM0M1}, Lemma \ref{lemmaaux2}, and Lemma \ref{lemmaaux3}, we find that
\begin{equation*}
\begin{split}
T_{1+\alpha}\partial_y\omega & = T_{\sqrt{1+\alpha}}
  \big(iT_a\omega+T_b\omega+M_0[h,u]+\mathcal{C}'\big)+(T_{1+\alpha}-T_{\sqrt{1+\alpha}}^2)\partial_y\omega
\\
& =T_{\sqrt{1+\alpha}}(T_b+iT_a)\omega+M_0[h,u]+\mathcal{C}'',
\end{split}
\end{equation*}
where $\mathcal{C}''$ satisties $P_{\geq -6}\mathcal{C}''\in\varep_1^3\mathcal{O}_{3,3/2}$. Therefore, with $V=\nabla u-\partial_yu\nabla h$,
\begin{equation}
 \label{DNmainpro11}
\begin{split}
(1+\vert\nabla h\vert^2)\partial_yu-\nabla h\cdot\nabla u&=T_{\sqrt{1+\alpha}}(T_b+iT_{a})\omega+M_0[h,u]+\mathcal{C}''\\
&-T_{\nabla h}\nabla\omega-\hbox{div}(T_Vh)+\mathcal{C}_1+\mathcal{C}_2 -\mathcal{H}(\nabla h,\nabla u),
\end{split}
\end{equation}
with cubic terms $\mathcal{C}_1,\mathcal{C}_2$ given explicitly by
\begin{equation*}
\begin{split}
\mathcal{C}_1&=(T_{\partial_yu}\alpha-2T_{\nabla h}T_{\partial_yu}\nabla h)+\mathcal{H}(\alpha,\partial_yu),
\\
\mathcal{C}_2&=(T_{\mathrm{div}\,V}+T_{1+\alpha}T_{\partial_y^2u}-T_{\nabla h}T_{\nabla \partial_yu})h+(T_{\nabla h}T_{\partial_yu}
  -T_{\partial_yu\nabla h})\nabla h.
\end{split}
\end{equation*}
Notice that $\mathrm{div}\,V+(1+\alpha)\partial_y^2u-\nabla h\nabla \partial_yu=0$, as a consequence of \eqref{EllEq}. Using also Lemma \ref{lemmaaux3} it follows that $\mathcal{C}_1, \mathcal{C}_2\in \varep_1^3\mathcal{O}_{3,3/2}$. 

Moreover, using the formulas \eqref{Symbol2}, \eqref{Symbol4}, Lemma \ref{PropCompSym}, and Lemma \ref{lemmaaux3}, we see that
\begin{align*}
T_{\sqrt{1+\alpha}}T_b\omega &= T_{b\sqrt{1+\alpha}}\omega + \frac{i}{2}T_{\{\sqrt{1+\alpha},b\}}\omega + E(\sqrt{1+\alpha}-1,b)\omega
  \\
& = T_{\lambda^{(1)}}\omega + T_{b^{(0)}\sqrt{1+\alpha}}\omega + \frac{i}{2}T_{\{\sqrt{1+\alpha},b^{(1)}\}}\omega + \varep_1^3\mathcal{O}_{3,3/2}
\end{align*}
where $\lambda^{(1)}$ is the principal symbol in \eqref{deflambda}. Similarly, using \eqref{Symbol1}, \eqref{Symbol3},
\begin{align*}
iT_{\sqrt{1+\alpha}}T_{a}\omega - T_{\nabla h}\nabla \omega
  & = T_{i \zeta \cdot \nabla h}\omega - T_{\nabla h}\nabla \omega
  + iT_{ a^{(0)} \sqrt{1+\alpha}}\omega - \frac{1}{2}T_{\{\sqrt{1+\alpha},a\}}\omega + iE(\sqrt{1+\alpha}-1,a)\omega
\\
& = \frac{1}{2}T_{\Delta h}\omega + iT_{ a^{(0)} \sqrt{1+\alpha}}\omega - \frac{1}{2}T_{\{\sqrt{1+\alpha},a^{(1)} \}}\omega
  + \varep_1^3\mathcal{O}_{3,3/2}.
\end{align*}
Summing these last two identities and using \eqref{Symbol1}-\eqref{Symbol4} we see that
\begin{align}
\label{DNmainpro12}
T_{\sqrt{1+\alpha}} T_b\omega &+ iT_{\sqrt{1+\alpha}}T_{a}\omega - T_{\nabla h}\nabla \omega = T_{\lambda^{(1)}}\omega 
  + T_{m}\omega + \varep_1^3\mathcal{O}_{3,3/2}
\end{align}
where
\begin{align}
\label{DNmainpro13}
\begin{split}
m := & \, b^{(0)}\sqrt{1+\alpha} - \frac{1}{2} \{\sqrt{1+\alpha},a^{(1)} \} + \frac{1}{2}\Delta h
\\ 
= & \frac{(1+\alpha)^{3/2}}{2\lambda^{(1)}}\Big\{\frac{\lambda^{(1)}}{\sqrt{1+\alpha}},\frac{\zeta\cdot\nabla h}{1+\alpha}\Big\}\varphi_{\geq 0}(\zeta)-\frac{1}{2}\Big\{\sqrt{1+\alpha},\frac{\zeta\cdot\nabla h}{\sqrt{1+\alpha}}\Big\} 
  + \frac{1}{2}\Delta h\\
	=& \lambda^{(0)}-\frac{1}{2}\Big\{\sqrt{1+\alpha},\frac{\zeta\cdot\nabla h}{\sqrt{1+\alpha}}\Big\}\varphi_{\leq -1}(\zeta)+\frac{\Delta h}{2}\varphi_{\leq -1}(\zeta).
\end{split}
\end{align}
We conclude from \eqref{DNmainpro11} and \eqref{DNmainpro12} that
\begin{equation*}
\begin{split}
P_{\ge 7}\big( (1+\vert\nabla h\vert^2)\partial_yu-\nabla h\nabla u  \big)&=P_{\ge 7}\big(T_{\lambda_{DN}}\omega-\hbox{div}(T_Vh)+M_0[h,u]-\mathcal{H}(\nabla h,\nabla u)+\varep_1^3\widetilde{\mathcal{O}}_{3,3/2}\big).
\end{split}
\end{equation*}
Moreover, the symbol of the bilinear operator $M_0[h,u]-\mathcal{H}(\nabla h,\nabla u)$ is
\begin{equation*}
\frac{q_0(\xi,\eta)}{|\xi|+|\eta|}+\Big[1-\chi\big(\frac{|\xi-\eta|}{|\xi+\eta|}\big)-\chi\big(\frac{|\eta|}{|2\xi-\eta|}\big)\Big](\xi-\eta)\cdot\eta,
\end{equation*}
where $q_0$ is defined in \eqref{fma28}. The symbol bounds \eqref{DNquadraticsym} follow. Combining this with \eqref{DNSmallFreq}, we finish the proof.
\end{proof}

\section{Taylor expansion of the Dirichlet--Neumann operator}\label{AppC}

\subsection{A simple expansion} We start a simple expansion the Dirichlet-Neumann operator, using only the $O_{m,p}$ hierarchy, which suffices in many cases.  

\begin{corollary}\label{CorFixedPoint}
(i) Assume that $\|\langle\nabla\rangle h\|_{\mathcal{O}_{1,0}}+\||\nabla|^{1/2}\psi\|_{\mathcal{O}_{1,0}}\lesssim \varepsilon_1$ and $\mathfrak{e}_a=0$, $\mathfrak{e}_b=0$, and define $u$ as in Lemma \ref{LemFirstLinearization}. Then we have an expansion
\begin{equation}\label{TaylorExp}
\partial_yu=\vert\nabla\vert u+\nabla h\cdot\nabla u+N_2[h,u]+\mathcal{E}^{(3)},\qquad \|\mathcal{E}^{(3)}\|_{L^2_y\mathcal{O}_{3,0}\cap L^\infty_y\mathcal{O}_{3,-1/2}}\lesssim \varepsilon_1^3\langle t\rangle^{-11\delta^2},
\end{equation}
where 
\begin{equation}\label{cvb8.8}
\mathcal{F}\{N_{2}[h,\phi]\}(\xi)=\frac{1}{4\pi^2}\int_{\mathbb{R}^2}n_2(\xi,\eta)\widehat{h}(\xi-\eta)\widehat{\phi}(\eta)\,d\eta,\qquad n_2(\xi,\eta):=\xi\cdot\eta-|\xi||\eta|,
\end{equation}
In particular,
\begin{equation}\label{TaylorExp2}
\big\|G(h)\psi-|\nabla|\psi-N_2[h,\psi]\big\|_{\mathcal{O}_{3,-1/2}}\lesssim\varep_1^3\langle t\rangle^{-11\delta^2}.
\end{equation}
Moreover
\begin{equation}\label{Qsmooth}
\Vert n_2^{k,k_1,k_2}\Vert_{S^\infty_{\Omega}}\lesssim 2^{\min\{k,k_1\}}2^{k_2},\qquad (\Omega_\xi + \Omega_\eta)n_2\equiv 0.
\end{equation}

(ii) As in Proposition \ref{MainBootstrapEn}, assume that $(h,\phi)\in C([0,T]:H^{N_0+1}\times \dot{H}^{N_0+1/2,1/2})$
is a solution of the system \eqref{WW0} with $g=1$ and $\sigma=1$, $t\in[0,T]$ is fixed, and \eqref{Asshu} holds. Then
\begin{equation}\label{dtg}
\big\|\partial_t(G(h)\phi)-\vert\nabla\vert\partial_t\phi\big\|_{\mathcal{O}_{2,-2}}\lesssim\varep_1^2.
\end{equation}
\end{corollary}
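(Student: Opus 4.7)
My plan is to use the Hadamard shape-derivative identity for the Dirichlet-Neumann operator,
\[
\partial_t\bigl(G(h)\phi\bigr) \;=\; G(h)\bigl(\partial_t\phi - B\,\partial_t h\bigr) \;-\; \mathrm{div}\bigl(V\,\partial_t h\bigr),
\]
with $B,V$ as in \eqref{DefBV}. This identity follows from differentiating the boundary relation $\phi(t,x)=\Phi(t,x,h(t,x))$ and the Laplace equation $\Delta_{x,z}\Phi=0$ in $t$, together with the observation that the boundary trace of $\partial_t\Phi$ along $\{z=h\}$ equals $\partial_t\phi-B\,\partial_t h$. Subtracting $|\nabla|\partial_t\phi$ from both sides reduces the claim to bounding each of the three terms
\[
G(h)\partial_t\phi - |\nabla|\partial_t\phi,\qquad G(h)(B\,\partial_t h),\qquad \mathrm{div}(V\,\partial_t h)
\]
in $\varep_1^2\mathcal{O}_{2,-2}$.

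For the first term I would start from the Bernoulli equation in \eqref{WW0}, paralinearized as in Lemma \ref{bin9}, and combine it with part (i) of the corollary to control the $G(h)\phi$ contribution; the leading behaviour $-h+\Delta h$ then gives $\partial_t\phi\in\varep_1\mathcal{O}_{1,-1}$. Invoking the proof of part (i) with $\psi=\partial_t\phi$ at this lower regularity (which is legitimate since Lemma \ref{LemFirstLinearization} holds for any $p\in[-10,0]$) yields
\[
G(h)\partial_t\phi - |\nabla|\partial_t\phi - N_2[h,\partial_t\phi]\in\varep_1^3\langle t\rangle^{-11\delta^2}\mathcal{O}_{3,-2},
\]
and the symbol bound \eqref{Qsmooth} together with Lemma \ref{lemmaaux1} (after renormalizing $n_2$ by $\langle\xi-\eta\rangle\langle\eta\rangle$ to a bounded $S^\infty_\Omega$-symbol) gives $N_2[h,\partial_t\phi]\in\varep_1^2\mathcal{O}_{2,-2}$. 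For the two remaining terms, Remark \ref{RmkQuasilinearTerms}(ii) provides $B,V\in\varep_1\mathcal{O}_{1,-1/2}$, while part (i) provides $\partial_t h=G(h)\phi\in\varep_1\mathcal{O}_{1,-1/2}$; hence Lemma \ref{lemmaaux1} yields $B\,\partial_t h,V\,\partial_t h\in\varep_1^2\mathcal{O}_{2,-1/2}$, and applying $\mathrm{div}$ (respectively $G(h)\approx|\nabla|$, once more by part (i)) costs at most one derivative and produces $\mathrm{div}(V\,\partial_t h), G(h)(B\,\partial_t h)\in\varep_1^2\mathcal{O}_{2,-3/2}\subseteq\varep_1^2\mathcal{O}_{2,-2}$.

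The main obstacle will be to justify the shape-derivative identity within the $\mathcal{O}_{m,p}$ framework, since one cannot simply differentiate in $t$ the pointwise-in-$t$ bound of Lemma \ref{LemFirstLinearization}. I would carry this out by differentiating the integral formula \eqref{fma2} for $u$ in $t$ directly in the space $\mathcal{L}_{1,p}$, verifying that $\partial_t u$ is the unique solution of the analogous linearized elliptic equation with boundary value $\partial_t\phi-B\,\partial_t h$ and with a quadratic source term arising from the $t$-derivatives of the coefficient $\alpha=|\nabla h|^2$ and of $Q_a,Q_b$ in \eqref{fma1}. Reading off the normal derivative at $y=0$ via \eqref{DNIny} and accounting for the additional contributions produced by differentiating $(1+|\nabla h|^2)$ and $\nabla h$ in $t$ then produces the stated identity, with all remainder terms controlled via the fixed-point scheme already used in Lemma \ref{LemFirstLinearization}.
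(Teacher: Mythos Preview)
Your proposal is correct but takes a considerably longer route than the paper's proof. The paper proceeds much more directly: it sets $v:=\partial_t u$, differentiates the flattened elliptic equation \eqref{EllEq} in $t$, and observes that $v$ satisfies \eqref{EllEq2} with boundary data $\partial_t\phi$ and explicit quadratic source terms
\[
\mathfrak{e}_a=\nabla_x u\cdot\nabla_x\partial_t h-2\partial_y u\,\nabla_x h\cdot\nabla_x\partial_t h,\qquad \mathfrak{e}_b=\mathcal{R}(\partial_y u\,\nabla_x\partial_t h).
\]
Since $\partial_t h\in\varep_1\mathcal{O}_{1,-1/2}$ and $\partial_t\phi\in\varep_1\mathcal{O}_{1,-1}$, the hypothesis \eqref{fma6.1} of Lemma \ref{LemFirstLinearization}(ii) is verified with $p=-3/2$, and a single application of \eqref{fma6.2} gives $\|\partial_y v-|\nabla|v\|_{L^\infty_y\mathcal{O}_{2,-2}}\lesssim\varep_1^2$. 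The claim then follows immediately from \eqref{DNIny}. That is the entire argument.

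Your approach, by contrast, invokes the shape-derivative identity and then bounds the three pieces $G(h)\partial_t\phi-|\nabla|\partial_t\phi$, $G(h)(B\partial_t h)$, $\mathrm{div}(V\partial_t h)$ separately. This works, but each piece requires its own invocation of the elliptic machinery (part (i) at lower regularity for the first; another application for the second with quadratic input $B\partial_t h$), and as you yourself acknowledge in your third paragraph, rigorously justifying the shape-derivative identity within the $\mathcal{O}_{m,p}$ framework amounts to carrying out precisely the differentiation of \eqref{fma2} that the paper does directly. So you end up doing the paper's argument \emph{and} the extra decomposition on top. One minor slip: in the flattened coordinates $u|_{y=0}=\phi$, so the boundary value of $\partial_t u$ is $\partial_t\phi$, not $\partial_t\phi-B\partial_t h$; the shift by $-B\partial_t h$ is an Eulerian-picture artifact. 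This does not break your argument but suggests that staying in the flattened picture from the start, as the paper does, is cleaner.
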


\begin{proof} (i) Let $u^{(1)}:=e^{y|\nabla|}\psi$ and $Q_a^{(1)}:=\nabla u^{(1)}\cdot\nabla h$, $Q_b^{(1)}:=\mathcal{R}(\partial_yu^{(1)}\nabla h)$. It follows from \eqref{fma7}--\eqref{fma8} and Lemma \ref{LemFirstLinearization} (more precisely, from \eqref{fma6}, \eqref{fma6.2}, and \eqref{fma8.4}) that
\begin{equation}\label{fma10}
\begin{split}
\||\nabla|^{1/2}(u-u^{(1)})&\|_{L^\infty_y\mathcal{O}_{2,0}}+\||\nabla|(u-u^{(1)})\|_{L^2_y\mathcal{O}_{2,0}}\\
&+\|\partial_y(u-u^{(1)})\|_{L^\infty_y\mathcal{O}_{2,-1/2}}+\|\partial_y(u-u^{(1)})\|_{L^2_y\mathcal{O}_{2,0}}\lesssim \varep_1^2.
\end{split}
\end{equation}
Therefore, using Lemma \ref{lemmaaux1}, for $d\in\{a,b\}$,
\begin{equation}\label{fma11}
\|Q_d-Q_d^{(1)}\|_{L^\infty_y\mathcal{O}_{3,-1/2}}+\|Q_d-Q_d^{(1)}\|_{L^2_y\mathcal{O}_{3,0}}\lesssim \varep_1^3\langle t\rangle^{-12\delta^2}.
\end{equation}
Therefore, using \eqref{fma7}--\eqref{fma8} and \eqref{QuadraticTermDN},
\begin{equation*}
\begin{split}
\Big\|\partial_yu-\vert\nabla\vert u-\nabla h\cdot\nabla u-\int_{-\infty}^y \vert\nabla\vert e^{-\vert s-y\vert \vert\nabla\vert}&(Q_b^{(1)}(s)-Q_a^{(1)}(s))ds\Big\|_{L^2_y\mathcal{O}_{3,0}\cap L^\infty_y\mathcal{O}_{3,-1/2}}\lesssim \varep_1^3\langle t\rangle^{-11\delta^2}.
\end{split}
\end{equation*}
Since
\begin{equation*}
\mathcal{F}\big\{Q_b^{(1)}(s)-Q_a^{(1)}(s)\big\}(\xi)=\frac{1}{4\pi^2}\int_{\mathbb{R}^2}\Big[\eta\cdot (\xi-\eta)-\frac{\xi\cdot(\xi-\eta)}{|\xi|}|\eta|\Big]\widehat{h}(\xi-\eta)e^{s|\eta|}\widehat{\psi}(\eta)\,d\eta,
\end{equation*}
we have
\begin{equation*}
\begin{split}
&\mathcal{F}\Big\{\int_{-\infty}^y \vert\nabla\vert e^{-\vert s-y\vert \vert\nabla\vert}(Q_b^{(1)}(s)-Q_a^{(1)}(s))ds\Big\}(\xi)\\
&=\frac{1}{4\pi^2}\int_{\mathbb{R}^2}\Big[\eta\cdot (\xi-\eta)-\frac{\xi\cdot(\xi-\eta)}{|\xi|}|\eta|\Big]\frac{|\xi|}{|\xi|+|\eta|}\widehat{h}(\xi-\eta)e^{y|\eta|}\widehat{\psi}(\eta)\,d\eta\\
&=\mathcal{F}\{N_2[h,u^{(1)}]\}(\xi).
\end{split}
\end{equation*}
Moreover, using the assumption $\|\langle \nabla\rangle h\|_{\mathcal{O}_{1,0}}\lesssim \varepsilon_1$ and the bounds \eqref{fma10}, we have
\begin{equation*}
\|N_2[h,u-u^{(1)}]\|_{L^2_y\mathcal{O}_{3,0}\cap L^\infty_y\mathcal{O}_{3,-1/2}}\lesssim \varep_1^3\langle t\rangle^{-11\delta^2},
\end{equation*}
as a consequence of Lemma \ref{lemmaaux1}. The desired identity \eqref{TaylorExp} follows. 
The bound \eqref{TaylorExp2} follows using also the identity \eqref{DNIny}.

(ii) We define $u=u(x,y,t)$ as in \eqref{udef}, let $v=\partial_tu$, differentiate \eqref{EllEq} with respect to $t$, and find that $v$ satisfies \eqref{EllEq2} with
\begin{equation*}
\mathfrak{e}_a=\nabla_xu\cdot\nabla_x\partial_th-2\partial_yu\nabla_x h\cdot\nabla_x\partial_th,\qquad\mathfrak{e}_b
  =\mathcal{R}(\partial_yu\nabla_x\partial_th).
\end{equation*}
In view of \eqref{TaylorExp2},
\begin{equation*}
\|\partial_th\|_{\mathcal{O}_{1,-1/2}}+\|\partial_t\phi\|_{\mathcal{O}_{1,-1}}\lesssim\varep_1.
\end{equation*}
Therefore the triplet $(\partial_t\phi,\mathfrak{e}_a,\mathfrak{e}_b)$ satisfies \eqref{fma6.1} with $p=-3/2$. Therefore, using \eqref{fma6.2}, $$\|\partial_yv-|\nabla|v\|_{L^\infty_y\mathcal{O}_{2,-2}}\lesssim\varep_1^2,$$
and the desired bound \eqref{dtg} follows using also \eqref{DNIny}.
\end{proof}

\subsection{Proof of Proposition \ref{MainBootstrapDisp}}\label{ProofYu}

We show now that Proposition \ref{MainBootstrapDisp} follows from Proposition \ref{bootstrap}. The starting point is the 
system \eqref{WW0}. We need to verify that it can be rewritten in the form stated in Proposition \ref{bootstrap}. For this we need to expand the 
Dirichlet--Neumann operator
\begin{equation*}
 G(h)\phi=|\nabla|\phi+N_2[h,\phi]+N_3[h,h,\phi]+\text{Quartic\,Remainder},
\end{equation*}
and then prove the required claims. To justify this rigorously and estimate the remainder, the main issue is to prove space localization. We prefer not to work with the $Z$ norm itself, 
which is too complicated, but define instead certain auxiliary spaces which are used only in this section.

{\bf{Step 1.}} We assume that the bootstrap assumption (\ref{Ama32}) holds. Notice first that
\begin{align}
&\label{AssWeight2}\sup_{2a+|\alpha|\leq N_1+N_{4},\,a\leq N_1/2+20}\sum_{(k,j)\in\mathcal{J}}2^{\theta j}2^{-\theta|k|/2}\Vert Q_{jk} D^{\alpha}\Omega^a\mathcal{U}(t)\Vert_{L^2}\lesssim \varepsilon_1(1+t)^{\theta+6\delta^2},\\
&\label{AssWeight3}\sup_{2a+|\alpha|\leq N_1+N_{4},\,a\leq N_1/2+20}\sum_{(k,j)\in\mathcal{J}}2^{\theta j}2^{-\theta|k|/2}\Vert Q_{jk}D^{\alpha}\Omega^a\mathcal{U}(t)\Vert_{L^\infty}\lesssim \varepsilon_1(1+t)^{-5/6+\theta+6\delta^2},
\end{align}
for $\theta\in[0,1/3]$, where the operators $Q_{jk}$ are defined as in \eqref{qjk}. Indeed, let $f=e^{it\Lambda}\Omega^{a}D^{\alpha}\mathcal{U}(t)$ and assume that $t\in[2^m-1,2^{m+1}]$, $m\geq 0$. We have
\begin{equation}\label{extra01}
\|f\|_{H^{N_{0}'}\cap H_{\Omega}^{N'_{1}}}+\|f\|_{Z_1}\lesssim\varepsilon_{1}2^{\delta^2m},
\end{equation} 
as a consequence \eqref{Ama32}, where, as in \eqref{vd1}, $N'_1:=(N_1-N_4)/2=1/(2\delta)$ and $N'_0:=(N_0-N_3)/2-N_4=1/\delta$.
 To prove (\ref{AssWeight2}) we need to show that
\begin{equation}\label{extra02}
\sum_{(k,j)\in\mathcal{J}}2^{\theta j}2^{-\theta|k|/2}\|Q_{jk}e^{-it\Lambda}f\|_{L^{2}}\lesssim \varepsilon_{1}2^{\theta m+6\delta^2m}.
\end{equation}
The sum over $j\leq m+\delta^2m+|k|/2$ or over $j\leq |k|+\D$ is easy to control. On the other hand, if $j\geq \max(m+\delta^2m+|k|/2,|k|+\D)$ then we decompose $f=\sum_{(k',j')\in\mathcal{J}}f_{j',k'}$ as in \eqref{Alx100}. We may assume that $|k'-k|\leq 10$; the contribution of $j'\leq j-\delta^2j$ is negligible, using integration by parts, while for $j'\geq j-\delta^2j-10$ we have
\begin{equation*}
\|Q_{jk}e^{-it\Lambda}f_{j',k'}\|_{L^2}\lesssim \varep_12^{\delta^2m}\min(2^{-2j'/5},2^{-N'_0k^+}).
\end{equation*}
The desired bound \eqref{extra02} follows, which completes the proof of \eqref{AssWeight2}. The proof of \eqref{AssWeight3} is similar, using also the decay bound \eqref{LinftyBd3.5}. As a consequence, it follows that
\begin{equation}\label{AssWeight}
\begin{split}
&\sum_{(k,j)\in\mathcal{J}}2^{\theta j}2^{-\theta|k|/2}\Vert Q_{jk} g(t)\Vert_{L^2}\lesssim \varepsilon_12^{\theta m+6\delta^2m},\\
&\sum_{(k,j)\in\mathcal{J}}2^{\theta j}2^{-\theta|k|/2}\Vert Q_{jk} g(t)\Vert_{L^\infty}\lesssim \varepsilon_12^{-5m/6+\theta m+6\delta^2m}.
\end{split}
\end{equation}
for $g\in\{D^\alpha\Omega^a\langle\nabla\rangle h,D^\alpha\Omega^a|\nabla|^{1/2}\phi:\,2a+|\alpha| \leq N_1+N_{4},\,a\leq N_1/2+20\}$ and $\theta\in[0,1/3]$.
 
{\bf{Step 2.}} We need to define now certain norms that allow us to extend our estimates to the region $\{y\leq 0\}$, compare with the analysis in subsection \ref{DNLinear}.

\begin{lemma}\label{WeightedNormsLem}
For $q\geq 0$ and $\theta\in[0,1]$, $p,r\in[1,\infty]$, define the norms
\begin{equation*}
\begin{split}
\Vert f\Vert_{Y^p_{\theta,q}(\mathbb{R}^2)}&:=\sum_{(k,j)\in\mathcal{J}}2^{\theta j}2^{qk^+}\Vert Q_{jk}f\Vert_{L^p},\quad\Vert f\Vert_{L^r_yY_{\theta,q}^p(\mathbb{R}^2\times(-\infty,0])}:=\sum_{(k,j)\in\mathcal{J}}2^{\theta j}2^{qk^+}\Vert Q_{jk}f\Vert_{L^r_yL^p_x}.
\end{split}
\end{equation*}
(i) Then, for any $p\in[2,\infty]$ and $\theta\in[0,1]$,
\begin{equation}\label{cvb1}
\Vert e^{y\vert\nabla\vert}f\Vert_{L^\infty_yY_{\theta,q}^p}+\Vert |\nabla|^{1/2}e^{y\vert\nabla\vert}f\Vert_{L^2_yY_{\theta,q}^p}\lesssim \Vert f\Vert_{Y_{\theta,q}^p}
\end{equation}
and
\begin{equation}\label{cvb2}
\begin{split}
\Big\Vert \int_{-\infty}^0\vert\nabla\vert^{1/2} &e^{-\vert s-y\vert\vert\nabla\vert} \mathbf{1}_{\pm}(y-s)f(s)ds\Big\Vert_{L^\infty_yY^2_{\theta,q}}\\
&+\Big\Vert \int_{-\infty}^0\vert\nabla\vert e^{-\vert s-y\vert\vert\nabla\vert} \mathbf{1}_{\pm}(y-s)f(s)ds\Big\Vert_{L^2_yY^2_{\theta,q}}\lesssim \Vert f\Vert_{L^2_yY_{\theta,q}^2}.
\end{split}
\end{equation}

(ii) If $p_1,p_2,p,r_1,r_2,r\in\{2,\infty\}$, $1/p=1/p_1+1/p_2$, $1/r=1/r_1+1/r_2$ then 
\begin{equation}\label{cvb3}
\|(fg)\|_{L^r_yY^p_{\theta_1+\theta_2-\delta^2,q-\delta^2}}\lesssim \|f\|_{L^{r_1}_yY^{p_1}_{\theta_1,q}}\|g\|_{L^{r_2}_yY^{p_2}_{\theta_2,q}}
\end{equation}
provided that $\theta_1,\theta_2\in[0,1]$, $\theta_1+\theta_2\in[\delta^2,1]$, $q\geq \delta^2$. Moreover
\begin{equation}\label{cvb3.1}
\|(fg)\|_{L^2_yY^2_{\theta_1-\delta^2,q-\delta^2}}\lesssim \|f\|_{L^{\infty}_yY^{\infty}_{\theta_1,q}}\|g\|_{L^{2}_yH^q_x}.
\end{equation}
\end{lemma}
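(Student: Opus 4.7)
\medskip

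\noindent\textbf{Proof plan for Lemma \ref{WeightedNormsLem}.} The two parts are essentially independent. For part (i), the strategy is to analyze the Poisson-type kernel $K_k(x,y) := \mathcal{F}^{-1}[e^{y|\xi|}\varphi_k(\xi)](x)$, $y\leq 0$, which satisfies the pointwise bound $|K_k(x,y)|\lesssim 2^{2k}(1+2^k|x|+2^k|y|)^{-N}$ for any $N$, together with the $L^1_x$ bound $\|K_k(\cdot,y)\|_{L^1_x}\lesssim 1$ uniformly in $y$. These bounds show that $P_k e^{y|\nabla|}$ displaces spatial mass by at most $O(2^{-k}+|y|)$ with Schwartz tails, and since $(k,j)\in\mathcal J$ forces $2^j\geq 2^{-k}$, the localizer $Q_{jk}$ commutes with $e^{y|\nabla|}$ up to exponentially small errors. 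First I would prove
\[
\|Q_{jk}e^{y|\nabla|}f\|_{L^p}\lesssim \sum_{j'\geq -k-10}2^{-N|j-j'|}\|Q_{j',[k-2,k+2]}f\|_{L^p}+2^{-10j}\|P_{[k-2,k+2]}f\|_{L^p}
\]
by splitting $f=\sum_{j'}Q_{j'k}f$ and using the kernel bounds; summation against the weight $2^{\theta j}$ then yields the $L^\infty_yY^p_{\theta,q}$ half of \eqref{cvb1}. For the $L^2_y$ half, I would appeal to the square function identity $\int_{-\infty}^0\|\,|\nabla|^{1/2}e^{y|\nabla|}P_kf\|_{L^p}^2\,dy\lesssim\|P_kf\|_{L^p}^2$, which is Plancherel in $y$ for $p=2$ and, for $p=\infty$, follows from Young after observing that $\int_{-\infty}^0\|\,|\nabla|^{1/2}K_k(\cdot,y)\|_{L^1_x}^2\,dy\lesssim 1$.

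For \eqref{cvb2}, I would proceed similarly: the operator
\[
Tf(x,y):=\int_{-\infty}^0|\nabla| e^{-|s-y||\nabla|}\mathbf 1_\pm(y-s)f(x,s)\,ds
\]
localized at frequency $2^k$ has $y$-kernel $|\xi|e^{-|s-y||\xi|}\mathbf 1_\pm(y-s)$ of $L^1_s$-norm bounded by $1$ uniformly in $y$ (and vice versa), so by the vector-valued Minkowski/Schur bound it is bounded on $L^2_yL^p_x$. The spatial $Q_{jk}$-localization is handled exactly as before, using that the kernel $\mathcal F^{-1}[|\xi|e^{-|s-y||\xi|}\varphi_k(\xi)]$ has effective support at scale $\leq 2^{-k}+|s-y|$, with rapidly decaying tails; the integral over $s\in(-\infty,0]$ is then absorbed by the $L^2_y$ norm. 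The second half of \eqref{cvb2} is obtained analogously, integrating $|\nabla|^{1/2}$ kernels and losing the extra $|\nabla|^{1/2}$ into the $L^\infty_y$ estimate.

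For part (ii), I would decompose both factors into Littlewood--Paley pieces and then into spatial atoms: $f=\sum_{(k_1,j_1)\in\mathcal J}Q_{j_1k_1}f$, $g=\sum_{(k_2,j_2)\in\mathcal J}Q_{j_2k_2}g$. For a fixed output pair $(k,j)$, the bilinear term $Q_{jk}[(Q_{j_1k_1}f)(Q_{j_2k_2}g)]$ is nontrivial only when $|k-\max(k_1,k_2)|\leq 4$ or $k_1,k_2\geq k-5$ with $|k_1-k_2|\leq 10$ (standard Littlewood--Paley trichotomy), and furthermore the product $(Q_{j_1k_1}f)(Q_{j_2k_2}g)$ has spatial support contained in the intersection of the supports of $\varphi_{j_1}^{(k_1)}$ and $\varphi_{j_2}^{(k_2)}$, hence in a set of diameter $\lesssim \min(2^{j_1},2^{j_2})$. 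Combining this with Hölder in $x$ (and in $y$ when $r_1,r_2$ are involved) gives
\[
\|Q_{jk}[(Q_{j_1k_1}f)(Q_{j_2k_2}g)]\|_{L^r_yL^p_x}\lesssim 2^{-N(j-\min(j_1,j_2))_+}\|Q_{j_1k_1}f\|_{L^{r_1}_yL^{p_1}_x}\|Q_{j_2k_2}g\|_{L^{r_2}_yL^{p_2}_x},
\]
where the first factor comes from the fact that $Q_{jk}$ of a function supported at smaller spatial scale with frequency $2^k$ already essentially equals the input. Summing in $k_1,k_2,j_1,j_2$ against the weights $2^{\theta_1 j_1}2^{qk_1^+}$ and $2^{\theta_2 j_2}2^{qk_2^+}$ and in $(k,j)$ against $2^{(\theta_1+\theta_2-\delta^2)j}2^{(q-\delta^2)k^+}$ yields \eqref{cvb3}; the small losses $\delta^2$ in the indices are precisely what is needed to absorb the logarithmic sums arising from the high-high to low frequency cascade and from the summation over $k$. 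The estimate \eqref{cvb3.1} is proved in the same way, with the factor placed in $L^2_x$ not requiring any spatial localization, so that only the $L^\infty_yY^\infty$ factor contributes the spatial weights.

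\medskip

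\noindent\textbf{Main obstacle.} The most delicate point is the high-high interaction in part (ii): when $k_1,k_2\geq k+C$ with $|k_1-k_2|\leq 10$, there are infinitely many such pairs contributing to a fixed output frequency $2^k$, and the sum in $k_1$ only converges after using the small gain $2^{-\delta^2 q k_1^+}$ (i.e. the loss of $\delta^2$ in the $q$-index). Similarly, when $j_1,j_2$ are both much larger than $j$, the spatial decay factor $2^{-N(j-\min(j_1,j_2))_+}=1$ and convergence in $j_1,j_2$ is only secured by the loss of $\delta^2$ in the $\theta$-index, after observing that the product of two functions with poor spatial localization can nevertheless be localized at the intersection (giving a factor $2^{-j_1}+2^{-j_2}$ that, combined with frequency localization, produces the required gain). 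Tracking these losses carefully---and verifying that the parameter regime $\theta_1+\theta_2\in[\delta^2,1]$, $q\geq\delta^2$ is exactly what makes the estimate closed---is the main technical content.
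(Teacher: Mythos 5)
Your part (i) is essentially the paper's argument: bound the kernel of $e^{y|\nabla|}P_k$ by $C2^{2k}(1+2^k|x|+2^k|y|)^{-N}$, use parabolic smoothing to control the $L^2_y$-integral, and exploit the near-commutation of $Q_{jk}$ with $e^{y|\nabla|}$.

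Part (ii), however, has a genuine gap, and it is exactly at the spot the paper singles out. Your claimed atom-level estimate
\[
\Vert Q_{jk}\bigl[(Q_{j_1k_1}f)(Q_{j_2k_2}g)\bigr]\Vert_{L^r_yL^p_x}\lesssim 2^{-N(j-\min(j_1,j_2))_+}\Vert Q_{j_1k_1}f\Vert_{L^{r_1}_yL^{p_1}_x}\Vert Q_{j_2k_2}g\Vert_{L^{r_2}_yL^{p_2}_x}
\]
is false in the regime $j=-k\gg\min(j_1,j_2)$ with $k\leq 0$. The product $(Q_{j_1k_1}f)(Q_{j_2k_2}g)$ is spatially supported at scale $\ll 2^{-k}$, but it is \emph{not} frequency-localized at $2^k$; applying $P_k$ (part of $Q_{jk}$) spreads it over scale $\sim 2^{-k}\sim 2^j$, so the spatial cutoff $\phii^{(k)}_j=\varphi_{\leq -k}$ captures essentially all of the mass and no decay in $(j-\min(j_1,j_2))$ can be extracted. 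You do flag a ``delicate case,'' but it is the wrong one: you discuss $j_1,j_2\gg j$ (where the $\delta^2$ loss in $\theta$ does suffice), rather than $j=-k\gg\min(j_1,j_2)$. The paper handles the latter by a different mechanism: a Bernstein/Sobolev bound of the form $\Vert P_k(uv)\Vert_{L^2}\lesssim 2^k\Vert u\Vert_{L^2}\Vert v\Vert_{L^2}$ for $k\leq 0$, whose gain of $2^k$ exactly absorbs the output weight $2^{(\theta_1+\theta_2-\delta^2)(-k)}$, and this is precisely where the hypothesis $\theta_1+\theta_2\leq 1$ enters (it guarantees $2^{(1-\theta_1-\theta_2+\delta^2)k}\leq 1$ for $k\leq 0$). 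Your argument never invokes $\theta_1+\theta_2\leq 1$, which is a signal that the problematic regime is not being closed. To repair the proof you need to split off the case $j=\max(-k,0)$, $k\leq 0$, $\max(j_1,j_2)\ll -k$, and replace the spatial-decay estimate there by the Bernstein/Sobolev one.
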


\begin{proof} The linear bounds in part (i) follow by parabolic estimates, once we notice that the kernel of the operator $e^{y|\nabla|}P_k$ is essentially localized in a ball of radius $\lesssim 2^{-k}$ and is bounded by $C2^{2k}(1+2^k|y|)^{-4}$. 

The bilinear estimates in part (ii) follow by unfolding the definitions. The implicit factors $2^{-\delta^2j}2^{-\delta^2k^+}$ in the left-hand side allow one to prove the estimate for $(k,j)$ fixed. Then one can decompose $f=\sum f_{j_1,k_1}$, $g= \sum g_{j_2,k_2}$ as in \eqref{Alx100} and estimate $\|Q_{jk}(f_{j_1,k_1}\cdot g_{j_2,k_2})\|_{L^r_yL^p_x}$ using simple product estimates. The case $j=-k\gg \min(j_1,j_2)$ requires some additional attention; in this case one can use first Sobolev imbedding and the hypothesis $\theta_1+\theta_2\leq 1$.
\end{proof}
 
{\bf{Step 3.}} Recall now the formula \eqref{fma2}
\begin{equation*}
\begin{split}
&u = e^{y\vert\nabla\vert}\phi+L(u),\\
&L(u):=-\frac{1}{2}e^{y\vert\nabla\vert}\int_{-\infty}^0e^{s\vert\nabla\vert}(Q_a(s)-Q_b(s))ds+\frac{1}{2}\int_{-\infty}^0e^{-\vert y-s\vert\vert\nabla\vert}(\mathrm{sgn}(y-s)Q_a(s)-Q_b(s))ds,
\end{split}
\end{equation*}
where $Q_a[u]=\nabla u\cdot\nabla h-|\nabla h|^2\partial_yu$ and $Q_b[u]=\mathcal{R}(\partial_yu\nabla h)$. Let, as in Corollary \ref{CorFixedPoint},
\begin{equation}\label{cvb3.5}
u^{(1)} = e^{y\vert\nabla\vert}\phi,\qquad u^{(n+1)}= e^{y\vert\nabla\vert}\phi+L(u^{(n)}),\ n\geq 1.
\end{equation}

We can prove now a precise asymptotic expansion on the Dirichlet--Neumann operator.

\begin{lemma}\label{cvb100} We have
\begin{equation}\label{cvb8.7}
G(h)\phi=|\nabla|\phi+N_2[h,\phi]+N_3[h,\phi]+|\nabla|^{1/2}N_4[h,\phi],
\end{equation}
where $N_2$ is as in \eqref{cvb8.8}, 
\begin{equation}\label{cvb8.9}
\begin{split}
&\mathcal{F}\{N_{3}[h,\phi]\}(\xi)=\frac{1}{(4\pi^2)^2}\int_{(\mathbb{R}^2)^2}n_3(\xi,\eta,\sigma)\widehat{h}(\xi-\eta)\widehat{h}(\eta-\sigma)\widehat{\phi}(\sigma)\,d\eta d\sigma,\\
&n_3(\xi,\eta,\sigma):=\frac{|\xi||\sigma|}{|\xi|+|\sigma|}\big[(|\xi|-|\eta|)(|\eta|-|\sigma|)-(\xi-\eta)\cdot(\eta-\sigma)\big],
\end{split}
\end{equation}
and, for $\theta\in [\delta^2,1/3]$ and $V\in\{D^\alpha\Omega^a:a \leq N_1/2+20,\,2a+|\alpha|\leq N_1+N_{4}-2\}$,
\begin{equation}\label{cvb8.10}
\|VN_4[h,\phi]\|_{Y^2_{3\theta-3\delta^2,1-3\delta ^2}}\lesssim \varep_1^4 2^{3\theta m-5m/2+24\delta^2m}.
\end{equation}
\end{lemma}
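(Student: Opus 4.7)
\medskip

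\noindent\textbf{Proof proposal for Lemma \ref{cvb100}.}

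The plan is to iterate the fixed-point formula \eqref{cvb3.5} three times, extract explicit quadratic and cubic contributions, and control the quartic remainder in the weighted $Y^p_{\theta,q}$-spaces introduced in Lemma \ref{WeightedNormsLem}. Throughout, I will work under the bootstrap bounds \eqref{AssWeight}, which supply the required spatial localization for $h$ and $\phi$ and are the only ingredient linking the $Z$-norm to the weighted norms.

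First, I would establish a priori bounds on the iterates $u^{(n)}$ and on $u-u^{(n)}$. Using the linear parabolic estimates \eqref{cvb1}--\eqref{cvb2}, together with the bilinear estimate \eqref{cvb3} applied to the explicit formula for $Q_a,Q_b$ in terms of $h$ and $u$, each iteration of $L$ introduces one extra factor of $h$ and therefore one extra factor of $\varepsilon_1 \langle t\rangle^{-5/6+O(\delta^2)}$ in $L^\infty_y Y^\infty_{\theta,q}$, paying a $\delta^2$ penalty in $\theta$ and $q$ per application. By iterating and commuting with $V=D^\alpha\Omega^a$ (using that $\Omega$ commutes with $|\nabla|$ and $\Omega(fg)=(\Omega f)g+f(\Omega g)$ to redistribute vector fields), one obtains
\[
\bigl\| V(u-u^{(3)}) \bigr\|_{L^\infty_y Y^2_{3\theta-3\delta^2,1-3\delta^2}} + \bigl\| V |\nabla|^{1/2}(\partial_y u-\partial_y u^{(3)}) \bigr\|_{L^2_y Y^2_{3\theta-3\delta^2,1-3\delta^2}} \lesssim \varepsilon_1^4 \, 2^{3\theta m-5m/2+O(\delta^2)m}.
\]

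Second, I would compute $u^{(2)}-u^{(1)}=L(u^{(1)})$ and $u^{(3)}-u^{(2)}=L(u^{(2)})-L(u^{(1)})$ explicitly on the Fourier side. Inserting these into the identity $G(h)\phi=(1+|\nabla h|^2)\partial_yu|_{y=0}-\nabla h\cdot\nabla u|_{y=0}$ from \eqref{DNIny} and separating by multiplicity in $h$, the linear-in-$h$ piece reproduces $N_2[h,\phi]$ with symbol $\xi\cdot\eta-|\xi||\eta|$, exactly as in Corollary \ref{CorFixedPoint}. The quadratic-in-$h$ contribution comes from three sources: (a) the part of $L(u^{(2)}-u^{(1)})$ that is linear in each $h$ factor, (b) the cross term $\nabla h\cdot\nabla[u^{(2)}-u^{(1)}]|_{y=0}$, and (c) the $|\nabla h|^2\,|\nabla|\phi$ term. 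A direct Fourier computation, using $\int_{-\infty}^0 e^{s|\eta|}e^{-|y-s||\xi|}ds$ identities to resolve the double convolution, consolidates these three into the single symbol
\[
n_3(\xi,\eta,\sigma)=\frac{|\xi||\sigma|}{|\xi|+|\sigma|}\bigl[(|\xi|-|\eta|)(|\eta|-|\sigma|)-(\xi-\eta)\cdot(\eta-\sigma)\bigr]
\]
of \eqref{cvb8.9}. This is a bookkeeping exercise; the explicit cancellations between (a), (b), (c) are the reason why the naive cubic symbol simplifies to a product of a purely radial factor $|\xi||\sigma|/(|\xi|+|\sigma|)$ times an angular bracket.

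Third, I would define $|\nabla|^{1/2}N_4[h,\phi]:=G(h)\phi-|\nabla|\phi-N_2[h,\phi]-N_3[h,\phi]$. By construction $|\nabla|^{1/2}N_4$ collects exactly the quartic-and-higher terms coming from $u-u^{(3)}$ on the boundary, the $|\nabla h|^2$ factor hitting the quadratic part of $u$, and the $\nabla h\cdot\nabla$ factor hitting the cubic part of $u$. Each of these is a product of at least three $h$ factors (which each contribute $\langle t\rangle^{-5/6}2^{\theta m}$ in $Y^\infty_{\theta,q}$ via \eqref{AssWeight}) with one $\phi$ factor and a harmless operator of order at most $1/2$; the gain of $|\nabla|^{-1/2}$ offsets the worst frequency loss. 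The bound \eqref{cvb8.10} then follows from three applications of the bilinear product rule \eqref{cvb3} (or \eqref{cvb3.1} for the purely $L^2$-paired factor), producing the triple weight $3\theta-3\delta^2$ and $q-3\delta^2$ exactly once the three $h$-factors have been combined.

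The main obstacle I anticipate is the careful allocation of $\Omega$-derivatives and $D^\alpha$-derivatives across the three-fold product in Step 3, while staying within the $V$-budget $2a+|\alpha|\leq N_1+N_4-2$ required in \eqref{cvb8.10}. Because $\Omega$ does not commute with multiplication by $h$ or with the Riesz transform $\mathcal{R}=|\nabla|^{-1}\nabla$ in a way that preserves the weighted norms without loss, one must either (i) use the radial-like structure of the high-order weighted control in \eqref{AssWeight} to place the high-$\Omega$ factor in $L^2$ and the low-$\Omega$ factors in $L^\infty$, or (ii) exploit the fact that $\Omega_{x,\zeta}$ annihilates radial symbols such as $|\xi|/(|\xi|+|\sigma|)$ to absorb the extra commutators. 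Implementing this bookkeeping uniformly across all iteration levels, while keeping track of exactly the powers $3\theta-3\delta^2$ and $1-3\delta^2$ appearing in the statement, is the technically delicate step. Everything else reduces to the linear and bilinear estimates already provided by Lemma \ref{WeightedNormsLem}.
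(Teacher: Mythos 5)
Your proposal is correct and follows essentially the same route as the paper: iterate the fixed-point formula \eqref{cvb3.5}, read off the quadratic and cubic contributions to $G(h)\phi$ explicitly on the Fourier side (with the cubic sources being exactly the trilinear part of $L$ applied to the bilinear iterate, the cross term $\nabla h\cdot\nabla(u^{(2)}-u^{(1)})$, and the $|\nabla h|^2$ piece), and control the remainder via three applications of the bilinear estimates in Lemma \ref{WeightedNormsLem} together with the bootstrap input \eqref{AssWeight}. The only cosmetic difference is that the paper stops the iteration at $u^{(2)}$ and works from the reformulation $G(h)\phi=\partial_y u-Q_a$ via \eqref{QuadraticTermDN}, bounding $Q-Q^{(2)}$ directly (which is already quartic because $Q$ carries one explicit factor of $\nabla h$), whereas you iterate once more to $u^{(3)}$ and plug into \eqref{DNIny}; the two are equivalent up to bookkeeping and yield the same symbol $n_3$.
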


\begin{proof} Recall that $h$ is constant in $y$. In view of \eqref{AssWeight} we have, for $t\in[2^m-1,2^{m+1}]$,
\begin{equation}\label{cvb9}
\||\nabla|^{1/6}\langle\nabla\rangle^{5/6}Vh(t)\|_{L^\infty_yY^2_{\theta,1}}\lesssim \varep_1 2^{\theta m+6\delta^2m},\qquad\theta\in[0,1/3],
\end{equation}
and
\begin{equation}\label{cvb9.5}
\||\nabla|^{1/6}\langle\nabla\rangle^{5/6}Vh(t)\|_{L^\infty_yY^\infty_{\theta,1}}\lesssim \varep_1 2^{\theta m-5m/6+6\delta^2m},\qquad\theta\in[0,1/3],
\end{equation}
for $V\in\{D^\alpha\Omega^a:a \leq N_1/2+20,\,2a+|\alpha|\leq N_1+N_{4}-2\}$. Moreover, using also \eqref{fma6}
\begin{equation}\label{cvb9.6}
\||\nabla|Vu(t)\|_{L^2_yH^1_x}+\|(\partial_y Vu)(t)\|_{L^2_yH^1_x}\lesssim \varep_1 2^{6\delta^2m},
\end{equation}
for operators $V$ as before. Therefore, using \eqref{cvb3.1},
\begin{equation*}
\|V[Q[u]]\|_{L^2_yY^2_{\theta-\delta^2,1-\delta^2}}\lesssim \varep_1^2 2^{\theta m-5m/6+12\delta^2m},
\end{equation*}
for $Q\in \{Q_a,Q_b\}$ and $\theta\in[\delta^2,1/3]$. Therefore
\begin{equation}\label{cvb11}
\||\nabla|VL(u)\|_{L^2_yY^2_{\theta-\delta^2,1-\delta^2}}+\|\partial_yVL(u)\|_{L^2_yY^2_{\theta-\delta^2,1-\delta^2}}\lesssim \varep_1^2 2^{\theta m-5m/6+12\delta^2m},
\end{equation}
using \eqref{cvb1}--\eqref{cvb2}. Therefore, using the definition,
\begin{equation}\label{cvb11.1}
\||\nabla|V[u-u^{(1)}]\|_{L^2_yY^2_{\theta-\delta^2,1-\delta^2}}+\|\partial_yV[u-u^{(1)}]\|_{L^2_yY^2_{\theta-\delta^2,1-\delta^2}}\lesssim \varep_1^2 2^{\theta m-5m/6+12\delta^2m}.
\end{equation}

Since $u-u^{(2)}=L(u-u^{(1)})$, we can repeat this argument to prove that for $\theta\in [\delta^2,1/3]$ and $V\in\{D^\alpha\Omega^a:a \leq N_1/2+20,\,2a+|\alpha|\leq N_1+N_{4}-2\}$,
\begin{equation}\label{cvb8.6}
\||\nabla|V[u-u^{(2)}]\|_{L^2_yY^2_{2\theta-2\delta^2,1-2\delta^2}}+\|\partial_yV[u-u^{(2)}]\|_{L^2_yY^2_{2\theta-2\delta^2,1-2\delta^2}}\lesssim \varep_1^3 2^{2\theta m-5m/3+18\delta^2m}.
\end{equation}

To prove the decomposition \eqref{cvb8.7} we start from the identities \eqref{QuadraticTermDN} and \eqref{DNIny}, which gives $G(h)\phi=\partial_yu-Q_a$. Letting $Q_a^{(n)}=Q_a[u^{(n)}]$,  $Q_b^{(n)}=Q_b[u^{(n)}]$, $n\in\{1,2\}$, it follows that
\begin{equation}\label{cvb15}
\begin{split}
&G(h)\phi=|\nabla|\phi+\int_{-\infty}^0|\nabla|e^{-|s||\nabla|}(Q_b^{(2)}(s)-Q_a^{(2)}(s))\,ds+N_{4,1},\\
&N_{4,1}:=\int_{-\infty}^0|\nabla|e^{-|s||\nabla|}[(Q_b-Q_b^{(2)})(s)-(Q_a-Q_a^{(2)})(s)]\,ds.
\end{split}
\end{equation}
In view of \eqref{cvb8.6}, \eqref{cvb9.5}, and the algebra rule \eqref{cvb3.1}, we have
\begin{equation*}
\|V(Q-Q^{(2)})\|_{L^2_yY^2_{3\theta-3\delta^2,1-3\delta^2}}\lesssim \varep_1^4 2^{3\theta m-5m/2+24\delta^2m},
\end{equation*}
for $Q\in\{Q_a,Q_b\}$. Therefore, using \eqref{cvb2}, $|\nabla|^{-1/2}N_{4,1}$ satisfies the desired bound \eqref{cvb8.10}.

It remains to calculate the integral in the first line of \eqref{cvb15}. Letting $\alpha=|\nabla h|^2$ we have
\begin{equation}\label{cvb17}
\begin{split}
&\mathcal{F}\{u^{(1)}\}(\xi,y)=e^{y|\xi|}\widehat{\phi}(\xi),\\
&\mathcal{F}\{Q_a^{(1)}\}(\xi,y)=-\frac{1}{4\pi^2}\int_{\mathbb{R}^2}(\xi-\eta)\cdot\eta e^{y|\eta|}\widehat{h}(\xi-\eta)\widehat{\phi}(\eta)\,d\eta-\frac{1}{4\pi^2}\int_{\mathbb{R}^2} |\eta|e^{y|\eta|}\widehat{\alpha}(\xi-\eta)\widehat{\phi}(\eta)\,d\eta,\\
&\mathcal{F}\{Q_b^{(1)}\}(\xi,y)=-\frac{1}{4\pi^2}\int_{\mathbb{R}^2}\frac{(\xi-\eta)\cdot\xi}{|\xi|}|\eta| e^{y|\eta|}\widehat{h}(\xi-\eta)\widehat{\phi}(\eta)\,d\eta.
\end{split}
\end{equation}
Therefore 
\begin{equation*}
\begin{split}
\mathcal{F}\{L(u^{(1)})\}(\xi,y)&=\frac{1}{8\pi^2}\int_{\mathbb{R}^2}(e^{y|\xi|}-e^{y|\eta|})\Big[\frac{(\xi-\eta)\cdot\eta}{|\xi|+|\eta|}-\frac{|\eta|(\xi-\eta)\cdot\xi}{|\xi|(|\xi|+|\eta|)}\Big]\widehat{h}(\xi-\eta)\widehat{\phi}(\eta)\,d\eta\\
&+\frac{1}{8\pi^2}\int_{\mathbb{R}^2}(e^{y|\xi|}-e^{y|\eta|})\Big[\frac{(\xi-\eta)\cdot\eta}{-|\xi|+|\eta|}+\frac{|\eta|(\xi-\eta)\cdot\xi}{|\xi|(-|\xi|+|\eta|)}\Big]\widehat{h}(\xi-\eta)\widehat{\phi}(\eta)\,d\eta\\
&+\widehat{E_1}(\xi,y),
\end{split}
\end{equation*}
where
\begin{equation*}
\||\nabla|VE_1\|_{L^2_yY^2_{2\theta-2\delta^2,1-2\delta^2}}+\|\partial_yVE_1\|_{L^2_yY^2_{2\theta-2\delta^2,1-2\delta^2}}\lesssim \varep_1^3 2^{2\theta m-5m/3+18\delta^2m}.
\end{equation*}
After algebraic simplifications, this gives
\begin{equation*}
\mathcal{F}\{L(u^{(1)})\}(\xi,y)=-\frac{1}{4\pi^2}\int_{\mathbb{R}^2}(e^{y|\xi|}-e^{y|\eta|})|\eta|\widehat{h}(\xi-\eta)\widehat{\phi}(\eta)\,d\eta+\widehat{E_1}(\xi,y).
\end{equation*}

Since $u^{(2)}-u^{(1)}=L(u^{(1)})$ we calculate
\begin{equation}\label{cvb20}
\begin{split}
\mathcal{F}&\{Q_a^{(2)}-Q_a^{(1)}\}(\xi,y)\\
&=\frac{1}{16\pi^4}\int_{(\mathbb{R}^2)^2}|\sigma|(\xi-\eta)\cdot\eta (e^{y|\eta|}-e^{y|\sigma|})\widehat{h}(\xi-\eta)\widehat{h}(\eta-\sigma)\widehat{\phi}(\sigma)\,d\eta d\sigma+\widehat{E_2}(\xi,y)
\end{split}
\end{equation}
and
\begin{equation}\label{cvb21}
\begin{split}
\mathcal{F}&\{Q_b^{(2)}-Q_b^{(1)}\}(\xi,y)\\
&=\frac{1}{16\pi^4}\int_{(\mathbb{R}^2)^2}|\sigma|\frac{(\xi-\eta)\cdot\xi}{|\xi|} (|\eta|e^{y|\eta|}-|\sigma|e^{y|\sigma|})\widehat{h}(\xi-\eta)\widehat{h}(\eta-\sigma)\widehat{\phi}(\sigma)\,d\eta d\sigma+\widehat{E_3}(\xi,y)
\end{split}
\end{equation}
where
\begin{equation*}
\|VE_2\|_{L^2_yY^2_{3\theta-3\delta^2,1-3\delta^2}}+\|VE_3\|_{L^2_yY^2_{3\theta-3\delta^2,1-3\delta^2}}\lesssim \varep_1^4 2^{3\theta m-5m/2+24\delta^2m}.
\end{equation*}

We examine now the formula in the first line of \eqref{cvb15}. The contributions of $E_2$ and $E_3$ can be estimated as part of the quartic error term, using also \eqref{cvb2}. The main contributions can be divided into quadratic terms (coming from $Q_a^{(1)}$ and $Q_b^{(1)}$ in \eqref{cvb17}), and cubic terms coming from \eqref{cvb20}--\eqref{cvb21} and the cubic term in $Q_a^{(1)}$. The conclusion of the lemma follows.
\end{proof}

{\bf{Step 4.}} Finally, we can prove the desired expansion of the water-wave system.

\begin{lemma}\label{TaylorNew0}
Assume that $(h,\phi)$ satisfy \eqref{WW0} and \eqref{Ama32}. Then 
\begin{equation}\label{cvb30}
(\partial_t+i\Lambda)\mathcal{U}=\mathcal{N}_2+\mathcal{N}_3+\mathcal{N}_{\geq 4},
\end{equation}
where $\mathcal{U}=\langle\nabla\rangle h+i|\nabla|^{1/2}\phi$ and $\mathcal{N}_2$, $\mathcal{N}_3$, $\mathcal{N}_{\geq 4}$ are as in subsection \ref{Duhamel}.
\end{lemma}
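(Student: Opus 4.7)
The plan is to derive the decomposition \eqref{cvb30} by inserting the Taylor expansion of the Dirichlet--Neumann operator from Lemma \ref{cvb100} into the water-wave system \eqref{WW0} and collecting quadratic, cubic, and quartic-and-higher terms in $(h,|\nabla|^{1/2}\phi)$. Since $\mathcal{U}=\langle\nabla\rangle h+i|\nabla|^{1/2}\phi$, the computation $(\partial_t+i\Lambda)\mathcal{U}=\langle\nabla\rangle\partial_th+i|\nabla|^{1/2}\partial_t\phi+i\Lambda\mathcal{U}$ combined with the identity $\Lambda=|\nabla|^{1/2}\langle\nabla\rangle$ produces exact cancellation of the linear terms $\langle\nabla\rangle|\nabla|\phi$ and $-|\nabla|^{1/2}h$, leaving only nonlinear contributions. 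The quadratic pieces come from three sources: (i) $\langle\nabla\rangle N_2[h,\phi]$ from the expansion of $G(h)\phi$; (ii) the quadratic part of the mean-curvature expansion $\mathrm{div}[\nabla h/\sqrt{1+|\nabla h|^2}]=\Delta h+O(h^3)$, which contributes nothing at quadratic order; and (iii) the Bernoulli terms $-\tfrac{1}{2}|\nabla\phi|^2+\tfrac{1}{2}(G(h)\phi)^2+O(\mathrm{cubic})=-\tfrac12|\nabla\phi|^2+\tfrac12(|\nabla|\phi)^2$ at leading order. Expanding in Fourier and symmetrizing yields symbols $\mathfrak{m}_{\mu\nu}(\xi,\eta)$ which combine linearly from $n_2$ (see \eqref{cvb8.8}) multiplied by the $\langle\xi\rangle,|\xi|^{1/2}$ factors and from the Bernoulli multipliers $-\tfrac12(\xi-\eta)\cdot\eta\pm\tfrac12|\xi-\eta||\eta|$.

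The next step is to verify the symbol bounds \eqref{Assumptions2}. The rotational invariance $(\Omega_\xi+\Omega_\eta)\mathfrak{m}_{\mu\nu}=0$ follows because all the multipliers involved ($n_2$, Bernoulli, the Riesz factor) depend only on $|\xi|,|\eta|,|\xi-\eta|$. The crucial null-structure bound $\|m^{k,k_1,k_2}\|_{S^\infty}\lesssim 2^{k}2^{\min(k_1,k_2)/2}$ encapsulates the cancellation $\xi\cdot\eta-|\xi||\eta|$ at low frequencies (giving an extra factor $\min(|\eta|,|\xi-\eta|)$) combined with one loss of derivative from the factor $\langle\xi\rangle$ or $|\xi|^{1/2}$; this is routine once one writes each multiplier in the form $p(\xi,\eta)\cdot q(\xi,\eta)$ with $p$ a polynomial of degree one and $q$ homogeneous. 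The pointwise derivative bounds \eqref{Assumptions2} follow from the explicit homogeneity of all factors. The cubic symbols $\mathfrak{n}_{\mu\nu\beta}$ are computed analogously from $\langle\nabla\rangle N_3[h,\phi]$ using the explicit formula \eqref{cvb8.9}, from the cubic part of the mean-curvature expansion $-\tfrac12\mathrm{div}(|\nabla h|^2\nabla h)$, and from cubic cross-terms in the Bernoulli expression. The required symbol bounds \eqref{Assumptions3} are again verified by direct inspection of the homogeneity structure of $n_3$ and the other multipliers.

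The remaining piece is the quartic-and-higher nonlinearity $\mathcal{N}_{\geq 4}$, which is everything else. The bound $(1+t)^2\|\mathcal{N}_{\geq 4}\|_{H^{N_0-N_3}\cap H^{N_1,0}_\Omega}\lesssim\varepsilon_1^2$ follows from the already-established $\mathcal{O}_{m,p}$ estimates (Lemma \ref{lemmaaux1}, \eqref{Asshu}, and Corollary \ref{CorFixedPoint}) since $\mathcal{N}_{\geq 4}\in\varepsilon_1^4\mathcal{O}_{4,\text{something}}$ after absorbing the cubic term from $|\nabla|^{1/2}N_4$ and the quartic tails of the algebraic Taylor expansions. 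The main obstacle will be the $Z$-norm estimate $(1+t)^{1+\delta^2}\|e^{it\Lambda}\mathcal{N}_{\geq 4}\|_Z\lesssim\varepsilon_1^2$: the $Z$-norm measures space-frequency localization along the profile, and one needs to propagate this localization through the nonlinear Taylor tail. For this step one invokes the $Y^p_{\theta,q}$ spaces of Lemma \ref{WeightedNormsLem} together with the localization bounds \eqref{AssWeight2}--\eqref{AssWeight} and the refined estimate \eqref{cvb8.10} on $N_4$; combining these via the product rule \eqref{cvb3} and comparing with Definition \ref{MainZDef} shows that the $Z$-norm of $e^{it\Lambda}\mathcal{N}_{\geq 4}$ decays at rate $t^{-1-\delta^2}$ (with $\theta$ chosen close to $1/3$, the gain $2^{3\theta m-5m/2}$ in \eqref{cvb8.10} provides the needed margin). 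Finally, with all three pieces $\mathcal{N}_2,\mathcal{N}_3,\mathcal{N}_{\geq 4}$ assembled and verified, the hypotheses of Proposition \ref{bootstrap} are satisfied, and Proposition \ref{MainBootstrapDisp} follows immediately from its conclusion \eqref{bootstrap3}.
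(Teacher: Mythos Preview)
Your proposal is correct and follows essentially the same approach as the paper: insert the expansion of $G(h)\phi$ from Lemma~\ref{cvb100} into \eqref{WW0}, read off the explicit quadratic and cubic multipliers, check \eqref{Assumptions2}--\eqref{Assumptions3} by direct inspection, and control $\mathcal{N}_{\geq 4}$ via the $\mathcal{O}_{m,p}$ hierarchy for the Sobolev bound and the weighted $Y^p_{\theta,q}$ spaces together with \eqref{cvb8.10} for the $Z$-norm bound. The paper is slightly more explicit in writing out the symbols $n_{2,1},n_{2,2},n_{3,1},n_{3,2},n_{3,3}$ and in the final $Z$-norm step, where it proves the pointwise bound $2^{(1-50\delta)j}\|Q_{jk}e^{it\Lambda}D^\alpha\Omega^a\mathcal{N}_{\geq 4}\|_{L^2}\lesssim\varepsilon_1^4 2^{-m-\delta m}$ by combining \eqref{cvb40}--\eqref{cvb41} with an approximate finite speed of propagation argument for large $j$.
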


\begin{proof} We rewrite \eqref{WW0} in the form
\begin{equation}\label{cvb30.5}
\partial_t \mathcal{U}= \langle\nabla\rangle G(h) \phi+i|\nabla|^{1/2}\Big[-h  +\div \Big[ \dfrac{\nabla h}{ (1+|\nabla h|^2)^{1/2} } \Big]
  - \dfrac{1}{2} {|\nabla \phi|}^2 + \dfrac{( G(h)\phi + \nabla h \cdot \nabla \phi)^2}{2(1+{|\nabla h|}^2)}\Big].
\end{equation}
We use now the formula \eqref{cvb8.7} to extract the linear, the quadratic, and the cubic terms in the right-hand side of this formula. More precisely, we set
\begin{equation}\label{cvb31}
\begin{split}
&\mathcal{N}_1:=\langle\nabla\rangle|\nabla|\phi+i|\nabla|^{1/2}(-h+\Delta h)=-i\Lambda\mathcal{U},\\
&\mathcal{N}_2:=\langle\nabla\rangle N_2[h,\phi]+i|\nabla|^{1/2}\big[-\dfrac{1}{2} {|\nabla \phi|}^2 + \dfrac{1}{2}(|\nabla|\phi)^2\big],\\
&\mathcal{N}_3:=\langle\nabla\rangle N_3[h,h,\phi]+i|\nabla|^{1/2}\big[-\frac{1}{2}\div(\nabla h|\nabla h|^2)+|\nabla|\phi\cdot(N_2[h,\phi]+\nabla h\cdot\nabla\phi)\big].
\end{split}
\end{equation}

Then we substitute $h=\langle\nabla\rangle^{-1}(\mathcal{U}+\overline{\mathcal{U}})/2$ and $|\nabla|^{1/2}\phi=(\mathcal{U}-\overline{\mathcal{U}})/(2i)$. The symbols that define the quadratic component $\mathcal{N}_2$ are linear combinations of the symbols
\begin{equation*}
n_{2,1}(\xi,\eta)=\sqrt{1+\vert\xi\vert^2}\frac{\xi\cdot\eta-\vert\xi\vert\vert\eta\vert}{\vert\eta\vert^{1/2}\sqrt{1+\vert\xi-\eta\vert^2}},\qquad
n_{2,2}(\xi,\eta)=\vert\xi\vert^{1/2}\frac{(\xi-\eta)\cdot\eta+\vert\xi-\eta\vert\vert\eta\vert}{\vert\xi-\eta\vert^{1/2}\vert\eta\vert^{1/2}}.
\end{equation*}
It is easy to see that these symbols verify the properties \eqref{Assumptions2}. A slightly nontrivial argument is needed for $n_{2,1}$ in the case $k_1=\min(k,k_1,k_2)\ll k$.

The cubic terms in $\mathcal{N}_3$ in \eqref{cvb31} are defined by finite linear combinations of the symbols
\begin{equation*}
\begin{split}
&n_{3,1}(\xi,\eta,\sigma)=\sqrt{\frac{1+\vert\xi\vert^2}{(1+\vert\xi-\eta\vert^2)(1+\vert\eta-\sigma\vert^2)}}\frac{|\xi||\sigma|^{1/2}}{|\xi|+|\sigma|}\big[(|\xi|-|\eta|)(|\eta|-|\sigma|)-(\xi-\eta)\cdot(\eta-\sigma)\big],\\
&n_{3,2}(\xi,\eta,\sigma)=\vert\xi\vert^{1/2}\frac{(\xi\cdot(\xi-\eta))((\eta-\sigma)\cdot\sigma)}{\sqrt{(1+\vert\xi-\eta\vert^2)(1+\vert\eta-\sigma\vert^2)(1+\vert\sigma\vert^2)}},\\
&n_{3,3}(\xi,\eta,\sigma)=\vert\xi\vert^{1/2}\vert\xi-\eta\vert^{1/2}\vert\sigma\vert^{1/2}\frac{\vert\sigma-\eta\vert}{\sqrt{1+\vert\eta-\sigma\vert^2}}.
\end{split}
\end{equation*}
It is easy to verify the properties \eqref{Assumptions3} for these explicit symbols.

The higher order remainder in the right-hand of \eqref{cvb30.5} can be written in the form
\begin{equation}\label{cvb40}
\mathcal{N}_{\geq 4}=|\nabla|^{1/2}N'_4,\qquad \sup_{a \leq N_1/2+20,\,2a+|\alpha|\leq N_1+N_4-4}\|D^\alpha\Omega^aN'_4\|_{Y^2_{1-\delta,1-\delta}}\lesssim \varep_1^4 2^{-3m/2+\delta m},
\end{equation}
using \eqref{cvb8.10}, \eqref{AssWeight}, and the algebra property \eqref{cvb3}. Moreover, using only the $\mathcal{O}$ hierarchy as in the proof of Corollary \ref{CorFixedPoint}, we have $\|\mathcal{N}_{\geq 4}\|_{\mathcal{O}_{4,-4}}\lesssim\varep_1^4$, i.e.
\begin{equation}\label{cvb41}
\|\mathcal{N}_{\geq 4}\|_{H^{N_0-4}}+\|\mathcal{N}_{\geq 4}\|_{H^{N_1,N_3-4}}\lesssim\varep_1^4 2^{-5m/2+\delta m}.
\end{equation}
These two bounds suffice to prove the desired claims on $\mathcal{N}_{\geq 4}$ in \eqref{bootstrap2}. Indeed, the $L^2$ bound follows directly from \eqref{cvb41}. For the $Z$ norm bound it suffices to prove that, for any $(k,j)\in\mathcal{J}$,
\begin{equation}\label{cvb42}
\sup_{a \leq N_1/2+20,\,2a+|\alpha|\leq N_1+N_4}2^{j(1-50\delta)}\|Q_{jk}e^{it\Lambda}D^\alpha\Omega^a\mathcal{N}_{\geq 4}\|_{L^2}\lesssim \varep_1^42^{-m-\delta m}.
\end{equation}
This follows easily from \eqref{cvb41} and \eqref{cvb40}, unless
\begin{equation*}
j\geq 3m/2+(N_0/4)k^++\D\quad\text{ and }\quad j\geq 3m/2-k/2+\D.
\end{equation*}
On the other hand, if these inequalities hold then let $f=D^\alpha\Omega^a\mathcal{N}_{\geq 4}$, $a \leq N_1/2+20$, $2a+|\alpha|\leq N_1+N_4$, and decompose $f=\sum_{(k',j')\in\mathcal{J}} f_{j',k'}$ as in \eqref{Alx100}. 
The bound \eqref{cvb40} shows that
\begin{equation}\label{cvb43}
\sum_{(k',j')\in\mathcal{J}}2^{-4\max(k',0)}2^{j'(1-\delta)}\|f_{j',k'}\|_{L^2}\lesssim\varep_1^4 2^{-3m/2+\delta m}.
\end{equation}
The desired bound \eqref{cvb41} follows by the usual approximate finite speed of propagation argument: we may assume $|k'-k|\leq 4$ and consider the cases $j'\leq j-\delta j$ (which gives negligible contributions) and $j'\geq j-\delta j$ (in which case \eqref{cvb43} suffices). This completes the proof. 
\end{proof}


\end{document}